\DeclareFontShape{T1}{lmr}{b}{sc}{<->ssub*cmr/bx/sc}{}
\DeclareFontShape{T1}{lmr}{bx}{sc}{<->ssub*cmr/bx/sc}{}
\let\ps@plain=\ps@empty
\let\origdoublepage\cleardoublepage
\newcommand{\clearemptydoublepage}{%
  \clearpage
  {\pagestyle{empty}\origdoublepage}%
}
\let\cleardoublepage\clearemptydoublepage
\theoremstyle{plain} 
\newenvironment{dedicace}{%
  \newpage\thispagestyle{empty}
  \hfill\begin{minipage}{100mm}\begin{flushright}\it}{%
  \end{flushright}\end{minipage}\vfill}
\title{Mod\`ele de Littelmann pour cristaux g\'eom\'etriques, 
        fonctions de Whittaker sur des groupes de Lie semi-simples et mouvement brownien}
\author{Reda CHHAIBI}
\begin{document}

\def\half{\frac{1}{2}}

\def\l{\lambda}
\def\t{\theta}
\def\T{\Theta}
\def\m{\mu}
\def\a{\alpha}
\def\b{\beta}
\def\g{\gamma}
\def\o{\omega}
\def\p{\varphi}
\def\D{\Delta}
\def\O{\Omega}

\def\A{{\mathbb A}}
\def\N{{\mathbb N}}
\def\Z{{\mathbb Z}}
\def\Q{{\mathbb Q}}
\def\R{{\mathbb R}}
\def\C{{\mathbb C}}

\def\P{{\mathbb P}}
\def\E{{\mathbb E}}

\def\Ac{{\mathcal A}}
\def\Bc{{\mathcal B}}
\def\Cc{{\mathcal C}}
\def\Dc{{\mathcal D}}
\def\Ec{{\mathcal E}}
\def\Fc{{\mathcal F}}
\def\Gc{{\mathcal G}}
\def\Hc{{\mathcal H}}
\def\Ic{{\mathcal I}}
\def\Kc{{\mathcal K}}
\def\Lc{{\mathcal L}}
\def\qhat{{\mathcal R}}
\def\Oc{{\mathcal O}}
\def\Pc{{\mathcal P}}
\def\Qc{{\mathcal Q}}
\def\Rc{{\mathcal R}}
\def\Sc{{\mathcal S}}
\def\Tc{{\mathcal T}}
\def\Uc{{\mathcal U}}
\def\Zc{{\mathcal Z}}

\def\Acc{{\mathscr A}}
\def\Bcc{{\mathscr B}}
\def\Ccc{{\mathscr C}}

\def\Bfrak{{\mathfrak B}}

\def\afrak{{\mathfrak a}}
\def\bfrak{{\mathfrak b}}
\def\gfrak{{\mathfrak g}}
\def\hfrak{{\mathfrak h}}
\def\kfrak{{\mathfrak k}}
\def\nfrak{{\mathfrak n}}
\def\pfrak{{\mathfrak p}}
\def\ufrak{{\mathfrak u}}

\newtheorem{theorem}{Theorem}[section]

\newtheorem{definition}[theorem]{Definition}
\newtheorem{example}[theorem]{Example}
\newtheorem{properties}[theorem]{Properties}
\newtheorem{rmk}[theorem]{Remark}
\newtheorem{notation}[theorem]{Notation}

\newtheorem{lemma}[theorem]{Lemma}
\newtheorem{proposition}[theorem]{Proposition}
\newtheorem{thm}[theorem]{Theorem}
\newtheorem{corollary}[theorem]{Corollary}

\newtheorem{conjecture}{Conjecture}

\pdfbookmark[0]{Page de garde}{garde}

\thispagestyle{empty}

\begin{center}
  \begin{tabularx}{\textwidth}{m{1.5cm}Xm{3cm}X}
	& \large Universit\'e Paris VI - Pierre et Marie Curie
	& \includegraphics[width=7cm]{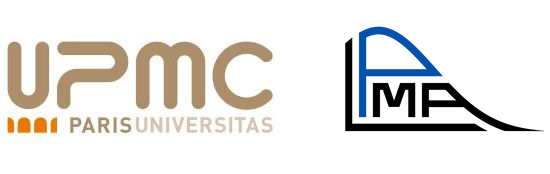}
        & 
  \end{tabularx}
\end{center}

\begin{center}
\vspace{\stretch{1}}
{\Large \textbf{Ecole Doctorale Paris Centre}}

\vspace{\stretch{2}}

{\Huge \textsc{Th\`ese de doctorat}}

\vspace{\stretch{1}}

{\LARGE Discipline : Math\'ematiques}

\vspace{\stretch{3}}

{\large pr\'esent\'ee par}
\vspace{\stretch{1}}

\textbf{{\LARGE Reda \textsc{CHHAIBI}}}

\vspace{\stretch{2}}
\hrule
\vspace{\stretch{1}}
{\LARGE \textbf{Mod\`ele de Littelmann pour cristaux g\'eom\'etriques, fonctions de Whittaker sur des groupes de Lie et mouvement brownien}}
\vspace{\stretch{1}}
\hrule
\vspace{\stretch{1}}

{\Large 
dirig\'ee par Philippe \textsc{Bougerol}}

\vspace{\stretch{5}}

{\Large Soutenue le 24 janvier 2013 devant le jury compos\'e de :}

\vspace{\stretch{2}}
{\Large
\begin{tabular}{lll}
M. Philippe \textsc{Biane}      & CNRS, Universit\'e Paris-Est       & examinateur\\
M. Alexei   \textsc{Borodin}    & MIT                                & rapporteur\\
M. Philippe \textsc{Bougerol}   & Universit\'e Paris VI              & directeur\\
M. Emmanuel \textsc{Breuillard} & Universit\'e Paris-Sud             & examinateur\\
M. Yves     \textsc{Lejan}      & Universit\'e Paris-Sud             & examinateur\\
M. Peter    \textsc{Littelmann} & Universit\"at zu K\"oln            & rapporteur\\
M. Marc     \textsc{Yor}        & Universit\'e Paris VI              & examinateur\\
\end{tabular}
}

\end{center}


\newpage

\vspace*{\fill}

\noindent\begin{center}
\begin{minipage}[t]{(\textwidth-2cm)/2}
Universit\'e Paris VI.\\
LPMA. Laboratoire de Probabilit\'es et Mod\`eles Al\'eatoires.\\
4, Place Jussieu. 16-26.
75005, Paris, France.
\end{minipage}
\hspace{1.5cm}
\begin{minipage}[t]{(\textwidth-2cm)/2}
\'Ecole doctorale Paris centre Case 188 \\ 
4 place Jussieu \\
75 252 Paris cedex 05
\end{minipage}
\end{center}


\begin{dedicace}
	Cette th\`ese est d\'edi\'ee \`a mes parents.
\end{dedicace}

\epigraph{``Les probabilit\'es? Un simple exercice d'int\'egration!''}{Attribu\'e \`a un membre du groupe Bourbaki}
\epigraph{``All deterministic mathematics are a particular case of Probability Theory.''}{ D.B., a good friend }
\epigraph{``Les math\'ematiques ne sont qu'une histoire de groupes.'' (All mathematics are just a tale about groups) }{ Henri Poincar\'e }



\pdfbookmark[0]{Remerciements}{remerciements}
\chapter*{Remerciements}

Ces remerciements sont \`a tous les gens ayant contribu\'e \`a cette th\`ese, parfois sans en avoir conscience. Je l'\'ecris en fran\c cais car c'est pour moi la langue du coeur.

Je remercie d'abord et avant tout Philippe Bougerol, mon directeur de th\`ese. Merci de m'avoir offert un tr\`es beau sujet de th\`ese, et d'avoir guid\'e dans mes premiers pas en recherche. 

Aussi, je remercie mes parents qui ont toujours attach\'e une tr\`es grande valeur \`a l'\'education, sous toute ses formes. Cette th\`ese leur est d\'edi\'ee.

Le tout aurait \'et\'e impossible sans le travail de la communaut\'e math\'ematique. Je suis \`a ce niveau redevable \`a tous les math\'ematiciens cit\'es dans la bibliographie. Avec Philippe Biane et Neil O'Connell, j'ai aussi partag\'e des conversations tr\`es enrichissantes.

Je suis reconnaissant aux rapporteurs, messieurs Littelmann et Borodin, pour avoir pris la peine de la lire, et d'avoir jug\'e le travail accompli. De m\^eme, je suis content que le jury prenne la peine de se d\'eplacer. 

Une mention sp\'eciale va \`a l'attention de mes professeurs: Emmanuel Roblet pour avoir su transmettre une v\'eritable passion des math\'ematiques, m\^eme si je ne l'ai pas toujours assum\'e. Saab Abou-jaoud\'e pour avoir parfaitement illustr\'e que les math\'ematiques sont un domaine trop important pour \^etre trop pris au s\'erieux. Nizar Touzi pour m'avoir donn\'e envie de connaitre plus en d\'etail le mouvement brownien. Valdo Durrleman pour m'avoir encourag\'e \`a poursuivre mes choix de cours hybrides \`a l'Ecole Polytechnique, m\^elant formule de Black-Scholes et cohomologie de De Rham. Nicole El Karoui pour sa bienveillance alors que je quittais le monde des math\'ematiques financi\`eres - ainsi que sa tr\`es belle lettre de recommandation qui m'a permis d'avoir ma bourse de th\`ese.

Le mot de la fin est adress\'e \`a mes amis, qui m'ont soutenu les 3 ans durant. Paul Bourgade pour m'avoir donn\'e envie de faire une th\`ese en parlant de la sienne dans un couloir. Amel Bentata, Karim Bounebache, Andreea Minca et Gilles Wainrib mes compagnons gal\'eriens - de bureau \`a Chevaleret. Les ins\'eparables Sophies (D\'ed\'e et Laruelle). Cyril Labb\'e pour avoir fourni les pauses cookie pendant la r\'edaction. Ceux qui ont commenc\'e une th\`ese avec moi: Yacine Barhoumi, Antoine Dahlqvist, Cl\'ement Foucart, Eric Lu\c con, Pascal Maillard, Salim Noreddine. Bonne chance \`a ceux qui finiront leur th\`ese un peu plus tard: Guillaume C\'ebron, Jean-Paul Daniel, Malik Drici, Xan Duhalde, Pablo Lessa, Bastien Mallein, Nelo Molter. Omer Adelman et Amaury Lambert, merci de m'avoir fait confiance pour les travaux dirig\'es d'int\'egration. Damien Simon pour les longues discussions math\'ematiques et physiques, toujours int\'eressantes. A Fran\c  cois Jaulin et la famille Jaulin, un grand merci de m'avoir adopt\'e.
Ceux qui ont suivi l'affaire d'un peu plus loin, mais dont j'ai toujours senti la pr\'esence bienveillante: Anouar Araamouch, Romain Balp, Alexandre Boritchev, Sally Lanar, Anouar Kiassi, Amine Naciri.

\pdfbookmark[0]{R\'esum\'e}{resume}
    
\chapter*{R\'esum\'e}

\section*{R\'esum\'e}
De fa\c con g\'en\'erale, cette th\`ese s'int\'eresse aux liens entre th\'eorie des repr\'esentations et probabilit\'es. Elle se subdivise en principalement trois parties.\\

Dans un premier volet plut\^ot alg\'ebrique, nous construisons un mod\`ele de chemins pour les cristaux g\'eom\'etriques de Berenstein et Kazhdan, pour un groupe de Lie complexe semi-simple. Il s'agira pour l'essentiel de d\'ecrire la structure alg\'ebrique, ses morphismes naturels et ses param\'etrisations. La th\'eorie de la totale positivit\'e y jouera un role particuli\`erement important.

Ensuite, nous avons choisi d'anticiper sur les r\'esultats probabilistes et d'exhiber une mesure canonique sur les cristaux g\'eom\'etriques. Celle-ci utilise comme ingr\'edients le superpotentiel de vari\'et\'e drapeau, et une mesure invariante sous les actions cristallines. La mesure image par l'application poids joue le role de mesure de Duistermaat-Heckman. Sa transform\'ee de Laplace d\'efinit les fonctions de Whittaker, fournissant une formule int\'egrale particuli\`erement int\'eressante pour tous les groupes de Lie. Il apparait alors clairement que les fonctions de Whittaker sont aux cristaux g\'eom\'etriques, ce que les caract\`eres sont aux cristaux combinatoires classiques. La r\`egle de Littlewood-Richardson est aussi expos\'ee.

Enfin nous pr\'esentons l'approche probabiliste permettant de trouver la mesure canonique. Elle repose sur l'id\'ee fondamentale que la mesure de Wiener induira la bonne mesure sur les structures alg\'ebriques du mod\`ele de chemins.

Dans une derni\`ere partie, nous d\'emontrons comment notre mod\`ele g\'eom\'etrique d\'eg\'en\`ere en le mod\`ele de Littelmann continu classique, pour retrouver des r\'esultats connus. Par exemple, la mesure canonique sur un cristal g\'eom\'etrique de plus haut poids d\'eg\'en\`ere en une mesure uniforme sur un polytope, et retrouve les param\'etrisations des cristaux continus.

\subsection*{Mots-clefs}
Cristaux g\'eom\'etriques, Mod\`ele de chemins de Littelmann g\'eom\'etrique, Mesure de Duistermaat-Heckman g\'eom\'etrique, Mouvement brownien, Th\'eor\`eme de Pitman 2M-X, Transform\'ees de Pitman, Hamiltonien de Toda, Fonctions de Whittaker, Identit\'es en loi Beta-Gamma, Totale positivit\'e.

\vspace{1cm}

\begin{center} \rule{\textwidth/3}{1pt} \end{center}

\vspace{1cm}

\begin{center} \Large \bf Littelmann path model for geometric crystals, Whittaker functions on Lie groups and Brownian motion \end{center}
\section*{Abstract}
Generally speaking, this thesis focuses on the interplay between the representations of Lie groups and probability theory. It subdivides into essentially three parts.\\

In a first rather algebraic part, we construct a path model for geometric crystals in the sense of Berenstein and Kazhdan, for complex semi-simple Lie groups. We will mainly describe the algebraic structure, its natural morphisms and parameterizations. The theory of total positivity will play a particularly important role.

Then, we anticipate on the probabilistic part by exhibiting a canonical measure on geometric crystals. It uses as ingredients the superpotential for the flag manifold and a measure invariant under the crystal actions. The image measure under the weight map plays the role of Duistermaat-Heckman measure. Its Laplace transform defines Whittaker functions, providing an interesting formula for all Lie groups. Then it appears clearly that Whittaker functions are to geometric crystals, what characters are to combinatorial crystals. The Littlewood-Richardson rule is also exposed.

Finally we present the probabilistic approach that allows to find the canonical measure. It is based on the fundamental idea that the Wiener measure will induce the adequate measure on the algebraic structures through the path model.

In the last chapter, we show how our geometric model degenerates to the continuous classical Littelmann path model and thus recover known results. For example, the canonical measure on a geometric crystal of highest weight degenerates into a uniform measure on a polytope, and recovers the parameterizations of continuous crystals.

\subsection*{Keywords}
Geometric crystals, Geometric Littelmann path model, Geometric Duistermaat-Heckman measure, Brownian motion, Pitman 2M-X theorem, Pitman transforms, Quantum Toda Hamiltonian, Whittaker functions, Givental-type integral representation of Toda eigenfunctions, Beta-Gamma algebra identities, Total positivity.

\setcounter{tocdepth}{2} 
\pdfbookmark[0]{Table of contents}{tableofcontents} 

\tableofcontents



\chapter{Introduction}
Si l'on s'int\'eresse aux liens entre th\'eorie des repr\'esentations et probabilit\'es, les mod\`eles combinatoires pour la th\'eorie des repr\'esentations de groupes de Lie tels que le mod\`ele de chemins de Littelmann constituent un pont naturel entre ces deux domaines. Car, apr\`es tout, les chemins al\'eatoires ou plut\^ot les marches al\'eatoires sont parmi les objets d'\'etude pr\'ef\'er\'es des probabilistes.

Dans le cas du groupe $SL_{n+1}$, il est connu que la combinatoire de la th\'eorie des repr\'esentations se retrouve dans celle des tableaux de Young (voir Fulton \cite{bib:Fulton97}). Et la correspondance de Robinson-Schensted-Knuth (RSK) met en bijection ces m\^emes chemins, appel\'es alors 'mots', avec des paires de tableaux de Young $(P,Q)$ de m\^eme forme. Finalement, cette correspondance fournit le lien le plus direct entre chemins et combinatoire de la th\'eorie des repr\'esentations. Plus d'informations dans ce sens sont donn\'ees au chapitre suivant. L'exemple phare qui nous a guid\'e et qui illustre parfaitement le genre de math\'ematiques qui nous int\'eresse est un r\'esultat de O'Connell (\cite{bib:OC03}). Il d\'emontre la propri\'et\'e de Markov pour le tableau $Q$ obtenu par RSK, si la variable d'entr\'ee est une marche al\'eatoire.

Dans les ann\'ees 1990, Littelmann a d\'ecrit un mod\`ele combinatoire o\`u il s'agit de compter des chemins discrets sur le r\'eseau des poids d'un groupe de Lie $G$ (\cite{bib:Littelmann}, \cite{bib:Littelmann95}, \cite{bib:Littelmann97}). Il g\'en\'eralise ainsi la combinatoire des tableaux de Young. Ces chemins sont une r\'ealisation d'objets alg\'ebriques, les cristaux de Kashiwara. Ils permettent d'obtenir un grand nombre d'informations sur les repr\'esentations de ce groupe: une formule des caract\`eres effective, une r\`egle de Littlewood-Richardson pour la d\'ecomposition de produits tensoriel en irr\'eductibles, une r\`egle de branchement etc... Les travaux de Biane et al. (\cite{bib:BBO}, \cite{bib:BBO2}) ont consist\'e, entre autres, \`a construire des cristaux continus qui peuvent \^etre vus comme la limite continue du mod\`ele de Littelmann. En consid\'erant des chemins browniens, les cristaux al\'eatoires engendr\'es sont d\'ecrits et munis de mesures canoniques. L'analogue de l'exemple phare cit\'e pr\'ec\'edemment est la propri\'et\'e de Markov du processus de plus haut poids $\Pc_{w_0} W$, qui s'interpr\`ete comme une g\'en\'eralisation du th\'eor\`eme de Pitman. Ici $W$ est un mouvement brownien standard multi-dimensionnel et $\Pc_{w_0}$ est une transformation de chemins qui g\'en\'eralise la transformation de Pitman:
$$ \Pc: W \mapsto \left( W_t - 2 \inf_{0 \leq s \leq t} W_t ; t \geq 0 \right)$$

Suite \`a cel\`a, dans les ann\'ees 2000, Berenstein et Kazhdan (\cite{bib:BK00}, \cite{bib:BK04}, \cite{bib:BK06}) ont introduit les cristaux g\'eom\'etriques, qui sont des ``relev\'es g\'eom\'etriques'' des cristaux de Kashiwara. La d\'enomination est d\^ue au fait que les cristaux de Berenstein et Kazhdan capturent des informations de nature g\'eom\'etrique sur la partie totalement positive du groupe et d\'eg\'en\`erent en les cristaux de Kashiwara, gr\^ace \`a une proc\'edure de tropicalisation. Au sein de la communaut\'e probabiliste commen\c cait aussi le relev\'e g\'eom\'etrique des r\'esultats cit\'es au paragraphe pr\'ec\'edent: Matsumoto et Yor (\cite{bib:MY00-1}, \cite{bib:MY00-2}) d\'emontrent une d\'eformation du th\'eor\`eme de Pitman \`a base de fonctionnelles exponentielles. Puis, O'Connell (\cite{bib:OConnell}) prouve la propri\'et\'e de Markov d'un processus de plus haut poids ``g\'eom\'etrique'', dans le cas du groupe lin\'eaire $GL_n$. Ses travaux \'etaient motiv\'es par une application aux polym\`eres dirig\'es et plus globalement, l'\'equation de KPZ.

Dans ce contexte, mon directeur m'a propos\'e de m'int\'eresser aux processus de plus haut poids li\'es \`a d'autres groupes. Les premi\`eres bribes de r\'esultats arrivant, nous nous sommes rendus compte que nous examinions des cristaux g\'eom\'etriques al\'eatoires constitu\'es de chemins browniens, et qu'il \'etait possible de d\'egager une th\'eorie g\'en\'erale pour tous les groupes de Lie complexes. Nous avons choisi de subdiviser cette th\`ese en trois parties principales et une derni\`ere qui explique comment on retrouve certains r\'esultats d\'ej\`a connus.

\paragraph{Mod\`ele de Littelmann pour cristaux g\'eom\'etriques:}

Ce premier volet est certainement le plus alg\'ebrique. Nous commen\c cons par d\'efinir axiomatiquement une notion de cristal g\'eom\'etrique, et une op\'eration alg\'ebrique de produit tensoriel. L'exemple typique, d\'ej\`a trait\'e par Berenstein et Kazhdan, est la vari\'et\'e totalement positive $\Bc = B_{>0}$, $B$ \'etant le sous-groupe de Borel inf\'erieur. Dans le cas du groupe $G = SL_{n+1}$, $B$ est constitu\'e des matrices triangulaires inf\'erieures et $\Bc \subset B$ de celles dont tous les mineurs sont strictement positifs. Un invariant essentiel fix\'e par les actions cristallines est le plus haut poids:
$$ hw: \Bc \longrightarrow \afrak $$
L'application $hw$ fournit un feuilletage de $\Bc$ en cristaux de plus haut poids:
$$ \Bc = \bigsqcup_{\lambda \in \afrak} \Bc(\lambda)$$

Une de nos valeurs ajout\'ees \`a ce niveau est le fait d'exhiber des param\'etrisations de $\Bc$ qui sont les relev\'es g\'eom\'etriques des param\'etrisations usuelles des cristaux classiques. Elles se r\'ev\`eleront ensuite non seulement utiles pour les calculs mais aussi naturelles, car compatibles avec des relev\'es g\'eom\'etriques au niveau du mod\`ele de chemin, qui se 'projette' sur $\Bc$.

Ensuite, nous passons \`a la description du mod\`ele de chemins proprement dit. Notons $\afrak \approx \R^n$ la partie r\'eelle de la sous-alg\`ebre de Cartan associ\'ee au groupe $G$. Toujours dans le cas $G = SL_{n+1}$, il s'agit simplement de l'ensemble des matrices diagonales de trace nulle. Pour un horizon $T>0$, nous consid\'erons $C_0\left( [0,T], \afrak \right)$ l'ensemble des chemins continus sur un segment $[0,T]$, nuls en $0$ et \`a valeurs dans $\afrak$. Nous munissons $C_0\left( [0,T], \afrak \right)$ d'une structure de cristal g\'eom\'etrique. Cette derni\`ere porte toutes les caract\'eristiques du mod\`ele de Littelmann classique dans le sens o\`u le poids d'un chemin est d\'efini comme son extr\'emit\'e, et le produit tensoriel de deux cristaux est isomorphe au cristal obtenu par la concat\'enation de leurs chemins.

La 'projection' $p(\pi)$ d'un chemin $\pi \in C_0\left( [0,T], \afrak \right)$ sur $\Bc$ a lieu en r\'esolvant une \'equation diff\'erentielle ordinaire sur le groupe r\'esoluble $B$, pilot\'ee par le chemin $\pi$. Moralement, il faut y penser comme le m\'ecanisme d'insertion du 'mot' $\pi$ dans $\Bc$, de la m\^eme fa\c on que la correspondance de Robinson-Schensted-Knuth ins\`ere un mot dans un tableau de Young semi-standard. Le plus haut poids $\lambda$ de $\pi \in C_0\left( [0,T], \afrak \right)$ est obtenu comme l'extr\'emit\'e du chemin $\Tc_{w_0}\pi$, o\`u $\Tc_{w_0}$ est un relev\'e g\'eom\'etrique de la transform\'ee de Pitman $\Pc_{w_0}$.

Nous d\'emontrons que l'application de 'projection' $p$ est un isomorphisme de cristaux entre un cristal de chemins connexe et un cristal de plus haut poids $\Bc(\lambda)$. Il en d\'ecoule un analogue du th\'eor\`eme d'ind\'ependance de Littelmann qui dit que la structure d'un cristal connexe ne d\'epend que de son plus haut poids. Enfin, nous interpr\'etons la bijection suivante comme une correspondance de Robinson-Schensted-Knuth g\'eom\'etrique:
$$ \begin{array}{cccc}
  RSK: & C_0\left( [0, T], \afrak \right) & \longrightarrow & \left\{ (x, \eta) \in \Bc \times C\left( ]0,T], \afrak \right) \ | \ hw(x) = \eta(T) \right\}\\
       &         \pi                      &   \mapsto       & \left( p(\pi), (\Tc_{w_0} \pi_t; 0 < t \leq T ) \right)
   \end{array}
 $$

\paragraph{Mesure canonique sur les cristaux g\'eom\'etriques:}

La deuxi\`eme partie anticipe un peu sur les r\'esultats probabilistes de la suivante. En quelques mots, en consid\'erant le cristal al\'eatoire engendr\'e par un chemin brownien, une mesure canonique sur le cristal g\'eom\'etrique $\Bc(\lambda)$ appara\^it par calcul. Il est question dans cette partie d'en tirer les cons\'equences.

Les ingr\'edients essentiels sont une mesure de r\'ef\'erence torique $\omega$ sur $\Bc(\lambda)$ ainsi que le superpotentiel $f_B: \Bc \rightarrow \R_{>0}$. En fait, il s'agissait d'objets introduits par Rietsch (\cite{bib:Rietsch11}) et $f_B$ quant \`a lui a \'et\'e utilis\'e par Berenstein et Kazhdan (\cite{bib:BK06}) pour tropicaliser leurs cristaux g\'eom\'etriques en cristaux de Kashiwara. La force de notre approche consiste \`a montrer qu'il s'agit en effet d'objets naturels, qui apparaissent dans une mesure canonique sur $\Bc(\lambda)$. Dans l'\'etude de $f_B$, nous r\'epondons aussi \`a une question ouverte de Rietsch au sujet de l'existence et l'unicit\'e d'un minimum sur chaque $\Bc(\lambda)$. 

La mesure image de cette mesure canonique par l'application de poids incarne naturellement la multiplicit\'e des poids sur le cristal $\Bc(\lambda)$. Elle joue le role de mesure de Duistermaat-Heckman. Sa transform\'ee de Laplace d\'efinit les fonctions de Whittaker, qui sont par cons\'equent l'\'equivalent des caract\`eres. La phrase qui consiste \`a dire que ``Les fonctions de Whittaker sont aux cristaux g\'eom\'etriques, ce que les caract\`eres sont aux cristaux discrets'' prend alors tout son sens. Pour ces fonctions importantes en th\'eorie des repr\'esentations et en th\'eorie des nombres, cel\`a fournit des formules int\'egrales int\'eressantes pour tous les groupes de Lie complexes semi-simples. Nous prendrons le temps de relier ces fonctions de Whittaker \`a celles introduites \`a la base par Jacquet dans \cite{bib:Jacquet67}, et de pr\'esenter un th\'eor\`eme de Plancherel qui implique, dans un certain sens, une orthogonalit\'e de ces caract\`eres g\'eom\'etriques.

La r\`egle de Littlewood-Richardson est aussi expos\'ee. La mesure induite sur les composantes connexes d'un produit tensoriel fait appara\^itre naturellement la charge centrale introduite par Berenstein et Kazhdan.

\paragraph{Cristaux al\'eatoires et mouvement brownien hypoelliptique sur le groupe r\'esoluble $B$:}

Au sein de cette th\`ese, ce chapitre incarne au mieux la philosophie probabiliste qui consiste caricaturalement \`a ``lancer une pi\`ece en l'air et voir jusqu'o\`u elle nous m\`ene''. Puisque la mesure de Wiener est la mesure naturelle sur l'ensemble des chemins continus, il s'agit de consid\'erer le cristal al\'eatoire engendr\'e par un chemin brownien et d'examiner les mesures qui en d\'ecoulent. Il s'agira d'une \'etude selon deux aspects.

\begin{itemize}
 \item D\'ecrire le processus de plus haut poids. On verra qu'il s'agit d'une diffusion reli\'ee au hamiltonien de Toda.
 \item D\'ecrire la mesure induite sur le cristal conditionnellement \`a son plus haut poids. Cette mesure sera la mesure canonique exploit\'ee dans la deuxi\`eme partie.
\end{itemize}

A cause de la correspondance de RSK g\'eom\'etrique, cette analyse tourne principalement autour de l'\'etude d'un mouvement brownien hypoelliptique $\left( B_t( W^{(\mu)} ), t \geq 0 \right)$ sur le groupe r\'esoluble $B$, pilot\'e par un mouvement brownien euclidien $W^{(\mu)}$ avec drift $\mu$. Lorsque ce drift est dans la chambre de Weyl, la partie $N$ de ce mouvement brownien hypoelliptique converge et donne une mesure invariante que l'on pourra explicitement calculer. 

Les fonctions de Whittaker sont des fonctions harmoniques pour ce processus. Plus pr\'ecis\'ement, elles sont induites par un caract\`ere du sous-groupe unipotent $N \subset B$. Ici $N$ jouera le role de 'fronti\`ere' au sens de Furstenberg. La mesure invariante sur $N$ donnera alors une int\'egrale de Poisson pour les fonctions de Whittaker. 

\paragraph{D\'eg\'en\'erescences:}

Il est possible de d\'eformer contin\^ument notre mod\`ele g\'eom\'etrique de chemins gr\^ace \`a un param\`etre $q>0$, qui peut s'interpr\'eter comme une temp\'erature. La limite lorsque $q$ tend vers z\'ero retrouve le mod\`ele de Littelmann continu pr\'esent\'e dans \cite{bib:BBO2}. Dans cette partie, nous d\'ecrivons le proc\'ed\'e de crystallisation, les structures restantes \`a temp\'erature nulle ainsi que les mesures naturelles correspondantes. Nous retrouvons alors la plupart des r\'esultats de \cite{bib:BBO} et \cite{bib:BBO2}.

\vspace{1cm}

\begin{center} \rule{\textwidth/3}{1pt} \end{center}

\vspace{1cm}

If one is interested in the interplay between representation theory and probability theory, combinatorial tools such as the Littelmann path model give a natural bridge. Afterall, random paths or more accurately random walks are among the probabilist's favorite objects.

In the case of $SL_{n+1}$, it is well-known that the combinatorics of representations are within the combinatorics of Young tableaux (see Fulton \cite{bib:Fulton97}). And the Robinson-Schensted-Knuth (RSK) correspondence gives a bijection between paths, that are called in this context 'words', and pairs $(P,Q)$ of Young tableaux with the same shape. In the end, this correspondence yields the most direct link between paths and the combinatorics of representation theory. More informations are presented in the next chapter. The key example that has guided us and that illustrates perfectly the kind of mathematics we are interested in, is a theorem by O'Connell (\cite{bib:OC03}). He proved the Markov property for the $Q$ tableau obtained through RSK, if the input variable is a random walk.

In the nineties, Littelmann described a path model based on the enumeration of certain discrete paths on the weight lattice of a Lie group $G$ (\cite{bib:Littelmann}, \cite{bib:Littelmann95}, \cite{bib:Littelmann97}). It generalizes the combinatorics of Young tableaux. These paths are a realization of algebraic objects, Kashiwara crystals. They encode a great deal of information on the group's representations: they yield an effective character formula, a Littlewood-Richardson rule for the decomposition of tensor products, a branching rule etc... In \cite{bib:BBO} and \cite{bib:BBO2}, Biane et al. constructed continuous crystals that can be seen as the continuous limit of Littelmann's path model. Then, by considering Brownian paths, the generated random crystals are described and endowed with canonical measures. There is a theorem analogous to the previous key example, which is the Markov property for the highest weight process $\Pc_{w_0} W$. It is interpreted as a generalization of Pitman's theorem. Here, $W$ is a multidimensional Brownian motion and $\Pc_{w_0}$ is a path transform generalizing the following simple Pitman transform:
$$ \Pc: W \mapsto \left( W_t - 2 \inf_{0 \leq s \leq t} W_t ; t \geq 0 \right)$$

Then, in the 2000s, Berenstein and Kazhdan (\cite{bib:BK00}, \cite{bib:BK04}, \cite{bib:BK06}) introduced geometric crystals, which are ``geometric liftings'' of Kashiwara crystals. Such a denomination is due to the fact that these objects use geometric information on the totally positive varieties in the group $G$ and degenerate into Kashiwara crystals after a tropicalisation procedure. Probabilists also started the geometric lifting of the results cited in the previous paragraph: Matsumoto and Yor (\cite{bib:MY00-1}, \cite{bib:MY00-2}) proved a deformation of Pitman's theorem based on exponential functionals of Brownian motion. And O'Connell (\cite{bib:OConnell}) proved the Markov property for a ``geometric'' highest weight process, in the case of the general linear group $GL_n$. His work was motivated by directed polymers and more generally the KPZ equation.

In this context, my advisor suggested I should investigate highest weight processes for other Lie groups. With the first bits of results, we realized that we were examining random geometric crystals made of Brownian paths and that it is possible to build a theory for all complex Lie groups. This thesis has three main parts and a final chapter that explains how to recover known results.

\paragraph{Littelmann path model for geometric crystals:}

This first part is certainly the most algebraic. We start by defining axiomatically a notion of geometric crystal and an algebraic tensor product operation. The typical example, already known to Berenstein and Kazhdan, is the totally positive variety $\Bc = B_{>0}$, where $B$ is the lower Borel subgroup. In the case of $G = SL_{n+1}$, $B$ is simply the set of lower triangular matrices and $\Bc \subset B$ is made of the matrices with positive minors. An essential invariant fixed by the crystal actions is the highest weight:
$$ hw: \Bc \longrightarrow \afrak $$
The map $hw$ gives rise to a foliation of $\Bc$ into highest weight crystals:
$$ \Bc = \bigsqcup_{\lambda \in \afrak} \Bc(\lambda)$$

At this level, one of our added values is to expose parametrizations for $\Bc$ that are geometric liftings of the parametrizations for usual crystals. These liftings are not only handy for computations but also natural, as they are commpatible with geometric liftings in the path model, which 'projects' onto $\Bc$.

Next, we deal with the path model itself. Let $\afrak \approx \R^n$ be the real part of the Cartan subalgebra associated to the group $G$. Still in the case of $G = SL_{n+1}$, it is simply the set of diagonal matrices with zero trace. For a fixed time horizon $T>0$, we consider $C_0\left( [0,T], \afrak \right)$, the set of $\afrak$-valued continuous paths on the interval $[0,T]$ and that vanish at $0$. We endow $C_0\left( [0,T], \afrak \right)$ with a geometric crystal structure. This structure has all the characteristics of the classical Littelmann path model as the weight of a path is given by its endpoint, and the tensor product of two crystals is isomorphic to the crystal obtained by the concatenation of their elements.

The 'projection' $p(\pi)$ of a path $\pi \in C_0\left( [0,T], \afrak \right)$ on $\Bc$ is the result obtained by solving on $B$ an ordinary differential equation driven by the path $\pi$. Morally speaking, one can think about it as an insertion procedure of the 'word' $\pi$ inside of $\Bc$, in the same fashion as the Robinson-Schensted-Knuth correspondence inserts a word inside a semi-standard Young tableau. The highest weight $\lambda$ for $\pi \in C_0\left( [0,T], \afrak \right)$ is obtained as the endpoint of $\Tc_{w_0}\pi$, where $\Tc_{w_0}$ is a geometric lifting of the Pitman transform $\Pc_{w_0}$.

We prove that the 'projection' map $p$ is a crystal isomorphism between a connected path crystal and a highest weight crystal $\Bc(\lambda)$. An analogue of Littelmann's independence theorem follows: the structure of a connected crystal depends only on its highest weight. Finally, we interpret the following bijection as the geometric counterpart of the Robinson-Schensted-Knuth correspondence:
$$ \begin{array}{cccc}
  RSK: & C_0\left( [0, T], \afrak \right) & \longrightarrow & \left\{ (x, \eta) \in \Bc \times C\left( ]0,T], \afrak \right) \ | \ hw(x) = \eta(T) \right\}\\
       &         \pi                      &   \mapsto       & \left( p(\pi), (\Tc_{w_0} \pi_t; 0 < t \leq T ) \right)
   \end{array}
 $$

\paragraph{Canonical measure on geometric crystals:}

The second part anticipates a little and uses the probabilistic results from the following chapter. In short, by considering the random crystal generated by a Brownian motion, a canonical measure on the geometric crystal $\Bc(\lambda)$ is computed. We think it is better to draw the full implications of that result before diving into stochastic analysis.

The essential ingredients are a toric reference measure $\omega$ on $\Bc(\lambda)$ and the superpotential $f_B: \Bc \rightarrow \R_{>0}$. In fact, both objects were introduced by Rietsch (\cite{bib:Rietsch11}) and $f_B$ was used by Berenstein et Kazhdan (\cite{bib:BK06}) in order to 'cut' tropicalized geometric crystals. The strength of our approach consists in showing that these are natural objects that appear in the canonical measure on $\Bc(\lambda)$. In the study of $f_B$, we also answer a question by Rietsch concerning the existence and uniqueness of a minimum on each $\Bc(\lambda)$. 

The image measure of our canonical measure through the weight map naturally embodies weight multiplicities in the geometric crystal $\Bc(\lambda)$. It plays the role of Duistermaat-Heckman measure. Its Laplace transform defines Whittaker functions, that are therefore analogous to characters. Thus, the sentence ``Whittaker functions are to geometric crystals, what characters are to combinatorial crystals'' takes on its full meaning. In the case of all complex semi-simple groups, we obtain integral formulae for these functions that are important in representation theory and number theory. We take the time of explaining how our Whittaker functions are related to those introduced originally by Jacquet in \cite{bib:Jacquet67}. Moreover, we present a Plancherel theorem, which in a way, implies the orthogonality of these geometric characters.

Furthermore, the Littlewood-Richardson rule is exposed. The central charge introduced by Berenstein and Kazhdan appears naturally in the measure induced by Brownian motion on the connected components of a tensor product, in the same way the superpotential showed up in the measure induced on geometric crystals.

\paragraph{Random crystals and hypoelliptic Brownian motion on the solvable group $B$:}

In the scope of this thesis, this chapter incarnates best the philosophy underlying probability theory, which can be very broadly summarized as ``let's toss a coin and see where it leads us''. Since the Wiener measure is the natural measure on continuous paths, we will consider a random crystal generated by a Brownian path and examine the induced measures. This study will have two aspects.

\begin{itemize}
 \item Describing the highest weight process. We will see it is a diffusion process related to the quantum Toda Hamiltonian.
 \item Describing the measure induced on a geometric crystal conditionally to its highest weight. This measure will be the canonical measure used in the second chapter.
\end{itemize}

Because of the geometric RSK correspondence, our analysis revolves mainly around the study of a hypoelliptic Brownian motion $\left( B_t( W^{(\mu)} ), t \geq 0 \right)$ on the solvable group $B$, driven by an Euclidian Brownian motion $W^{(\mu)}$ with drift $\mu$. When this drift is in the Weyl chamber, the $N$ part of this hypoelliptic Brownian motion converges and gives an invariant measure we will be able to compute.

Whittaker functions are harmonic for this process. More exactly, they are induced by a character for the unipotent subgroup $N \subset B$. Here, $N$ will play the role of boundary in Furstenberg's sense. And the invariant measure on $N$ will give rise to a Poisson integral for Whittaker functions.

\paragraph{Degenerations:}

It is possible to continuously deform our geometric path model thanks to a parameter $q>0$. This parameter can be interpreted as a temperature. In the limit as $q$ goes to zero, one recovers the continuous Littelmann path model presented in \cite{bib:BBO2}. In this final chapter, we describe the crystallization procedure, the remaining structures at zero temperature and the corresponding natural measures. This allows us to recover most of the results in \cite{bib:BBO} et \cite{bib:BBO2}.



\chapter{Classical related works: An informal panorama}
In this chapter, we informally depict known results that illustrate the interplay between probability theory and combinatorial representation theory, which is the kind of mathematics we will be dealing with. 

\section{The Robinson-Schensted-Knuth correspondence}
\label{section:classical_rsk}
This correspondence is probably the simplest example as it gives a correspondence between paths, in this case called 'words', and the representation theoretic objects that are Young tableaux. A standard reference is Fulton \cite{bib:Fulton97}.

An integer partition $\lambda$ is a tuple $\lambda_1 \geq \lambda_2 \geq \dots \geq 0$ such that
$$ |\lambda| := \sum_i \lambda_i < \infty $$
Every partition $\lambda$ can be visually represented as a Young diagram, a collection of left-justified cells: $\lambda_1$ cells on the first row, $\lambda_2$ on the second etc... 

\begin{example}
The Young diagram associated to the partition $\lambda = (5,3,2)$ is:
$$
\def\lr#1{\multicolumn{1}{|@{\hspace{.6ex}}c@{\hspace{.6ex}}|}{\raisebox{-.3ex}{$#1$}}}
\raisebox{-.6ex}{\begin{array}[b]{ccccc}
\cline{1-1}\cline{2-2}\cline{3-3}\cline{4-4}\cline{5-5}
\lr{ \ }&\lr{ \ }&\lr{ \ }&\lr{ \ }&\lr{ \ }\\
\cline{1-1}\cline{2-2}\cline{3-3}\cline{4-4}\cline{5-5}
\lr{ \ }&\lr{ \ }&\lr{ \ }\\
\cline{1-1}\cline{2-2}\cline{3-3}
\lr{ \ }&\lr{ \ }\\
\cline{1-1}\cline{2-2}
\end{array}} $$ 
\end{example}

Consider an alphabet of $n$ letters $\A = \left\{ 1, \dots, n \right\}$. A semi-standard (resp. standard) Young tableau is a filling of a Young diagram using the alphabet, such that the entries are weakly (resp. strictly) increasing from left to right and strictly increasing down the columns. We will use the abbreviations SST (resp. ST) for ``semi-standard tableau'' (resp. ``standard tableau''). 

In any case, the shape of a tableau $P$ is the integer partition obtained by erasing its entries and is denoted $sh(P)$.

\begin{example}
 The following $P$ and $Q$ are respectively a SST and a ST:
$$ P = 
\def\lr#1{\multicolumn{1}{|@{\hspace{.6ex}}c@{\hspace{.6ex}}|}{\raisebox{-.3ex}{$#1$}}}
\raisebox{-.6ex}{ $\begin{array}[b]{cccccc}
\cline{1-1}\cline{2-2}\cline{3-3}\cline{4-4}\cline{5-5}\cline{6-6}
\lr{1}&\lr{1}&\lr{1}&\lr{1}&\lr{2}&\lr{2}\\
\cline{1-1}\cline{2-2}\cline{3-3}\cline{4-4}\cline{5-5}\cline{6-6}
\lr{2}&\lr{2}\\
\cline{1-1}\cline{2-2}
\end{array}$} $$
$$ Q = 
\def\lr#1{\multicolumn{1}{|@{\hspace{.6ex}}c@{\hspace{.6ex}}|}{\raisebox{-.3ex}{$#1$}}}
\raisebox{-.6ex}{ $\begin{array}[b]{cccccc}
\cline{1-1}\cline{2-2}\cline{3-3}\cline{4-4}\cline{5-5}\cline{6-6}
\lr{1}&\lr{2}&\lr{4}&\lr{5}&\lr{7}&\lr{8}\\
\cline{1-1}\cline{2-2}\cline{3-3}\cline{4-4}\cline{5-5}\cline{6-6}
\lr{3}&\lr{6}\\
\cline{1-1}\cline{2-2}
\end{array}$} $$
\end{example}

Now let us describe an operation called row insertion. For a tableau $T$ and a letter $x \in \A$, one forms a new tableau that has one more entry labelled by $x$. Start with the first row and insert $x$ at the leftmost position that is strictly larger than $x$, in order to preserve the weakly increasing property from left to right. If that position is taken by a letter $y > x$, replace it by $x$ and continue the same procedure with $y$ on the next row. We say that $y$ has been bumped. If $x$ is at least as large as all the entries of the current row, $x$ is appended at the end. The procedure then stops.

The Robinson-Schensted-Knuth correspondence is given applying an algorithm. It has the following specifications:
\begin{itemize}
\item Input: A word $w \in \A^{ (\N) }$
\item Algorithm: Do row insertions of letters to obtain a tableau $P$ and record the growth in $Q$.
\item Output: A pair $(P, Q)$ of tableaux where $P$ is a SST and $Q$ is a ST.
\end{itemize}

\begin{thm}
The RSK correspondence is bijection between words and pairs $(P, Q)$ of tableaux, where $P$ is SST, $Q$ is ST and 
$sh(P) = sh(Q)$
\end{thm}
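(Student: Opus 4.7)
The plan is to exhibit an explicit two-sided inverse to the map $w \mapsto (P,Q)$, by running the row insertion procedure backwards. The argument naturally splits into three parts: well-definedness of the forward map, definition of a reverse insertion procedure, and verification that the two are mutually inverse.

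First, I would verify by induction on the length $k$ of the word that at each stage the current pair $(P,Q)$ is a valid pair of tableaux with a common shape. The key single-step lemma is that row-inserting a letter $x \in \A$ into a semi-standard tableau $T$ produces a new semi-standard tableau $T'$ whose shape differs from that of $T$ by the addition of exactly one corner cell. To see this, I would track the bumping chain: if $y$ is bumped from row $r$ down to row $r{+}1$, then the entry $x'$ that replaces $y$ in row $r$ satisfies $x' < y$, which preserves strict increase along columns, while weak increase along rows is preserved by construction of the insertion position. The chain terminates because bumped letters form a strictly decreasing sequence in value across successive rows (or rather, the bumping replaces each entry by a strictly smaller one), so eventually the procedure appends at the end of some row. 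The recording tableau $Q$ receives the label $k$ in the freshly added cell at step $k$; since labels are added in increasing order and each new cell is a corner of the growing shape, $Q$ is automatically standard.

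Second, I would define the reverse insertion and construct the inverse map. Given any pair $(P,Q)$ with $sh(P) = sh(Q) = \lambda$, $|\lambda| = k$, locate the cell $c$ carrying the entry $k$ in $Q$; by standardness of $Q$ this cell is a corner of $\lambda$. Remove the entry at position $c$ in $P$, call it $y$, and propagate upward: on the row immediately above, displace the rightmost entry strictly smaller than $y$, replace it by $y$, and repeat. The letter ejected from the first row is declared to be the $k$-th letter of the reconstructed word $w$. Deleting the cell $c$ from both $P$ and $Q$ yields a pair of shape $\lambda$ minus one corner, and one iterates. The crucial step is the \emph{bumping lemma}: one insertion step followed by one reverse step on the corresponding corner returns the original tableau and letter. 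This requires a careful case analysis comparing two consecutive bumps and verifying that ``leftmost strictly greater'' in the forward direction is dual to ``rightmost strictly smaller'' in the backward direction.

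Once the single-step bumping lemma is established, both compositions are identities by a straightforward induction on $k = |w| = |\lambda|$: from $w$, the recording tableau $Q$ tells the reverse procedure exactly which corner to strip at each step, and the reverse-inserted letters recover $w$ in reverse order; conversely, starting from $(P,Q)$ and applying the forward algorithm to the reconstructed word reproduces the same pair $(P,Q)$ cell by cell. The main obstacle is not conceptual but combinatorial bookkeeping: one has to handle the two sub-cases in the bumping lemma (the two successive bumping paths staying in the same columns versus crossing) with care, because a single misstated inequality collapses the whole duality between the forward ``leftmost strictly greater'' rule and the backward ``rightmost strictly smaller'' rule.
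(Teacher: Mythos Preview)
The paper does not supply its own proof of this theorem: it is stated in the informal panorama chapter as a classical fact, with Fulton \cite{bib:Fulton97} given as the standard reference. Your sketch is precisely the textbook argument one finds there --- forward row insertion paired with reverse bumping from the cell marked by the largest entry of $Q$, and an induction on the word length once the single-step inverse is established.

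One small slip worth flagging: you write that ``bumped letters form a strictly decreasing sequence in value across successive rows,'' which is the wrong direction --- the sequence of bumped entries is strictly \emph{increasing} (if $y$ is bumped from row $r$ then the entry directly below it in row $r{+}1$ is strictly larger than $y$, so $y$ bumps something at least as large as itself, in fact strictly larger). You immediately correct yourself parenthetically, and the termination argument survives either way (the column index of the bumping position is weakly decreasing and the number of rows is finite), but it is worth stating cleanly since the increasing property of the bumping chain is exactly what guarantees the column-strictness of the resulting tableau.
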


\begin{example}
\label{example:rsk_algorithm}
Applying the RSK algorithm to the word $w = \left( 1,2,1,1,2,1,2,2 \right)$, one obtains the following sequence of tableaux:
\begin{center}
{\def\lr#1{\multicolumn{1}{|@{\hspace{.6ex}}c@{\hspace{.6ex}}|}{\raisebox{-.3ex}{$#1$}}}
\raisebox{-.6ex}{$\begin{array}[b]{c}
\cline{1-1}
\lr{1}\\
\cline{1-1}
\end{array}$}
}
$\longrightarrow$
{\def\lr#1{\multicolumn{1}{|@{\hspace{.6ex}}c@{\hspace{.6ex}}|}{\raisebox{-.3ex}{$#1$}}}
\raisebox{-.6ex}{$\begin{array}[b]{cc}
\cline{1-1}\cline{2-2}
\lr{1}&\lr{2}\\
\cline{1-1}\cline{2-2}
\end{array}$}
}
$\longrightarrow$
{\def\lr#1{\multicolumn{1}{|@{\hspace{.6ex}}c@{\hspace{.6ex}}|}{\raisebox{-.3ex}{$#1$}}}
\raisebox{-.6ex}{$\begin{array}[b]{cc}
\cline{1-1}\cline{2-2}
\lr{1}&\lr{1}\\
\cline{1-1}\cline{2-2}
\lr{2}\\
\cline{1-1}
\end{array}$}
}
$\longrightarrow$
{\def\lr#1{\multicolumn{1}{|@{\hspace{.6ex}}c@{\hspace{.6ex}}|}{\raisebox{-.3ex}{$#1$}}}
\raisebox{-.6ex}{$\begin{array}[b]{ccc}
\cline{1-1}\cline{2-2}\cline{3-3}
\lr{1}&\lr{1}&\lr{1}\\
\cline{1-1}\cline{2-2}\cline{3-3}
\lr{2}\\
\cline{1-1}
\end{array}$}
}
$\longrightarrow$
{\def\lr#1{\multicolumn{1}{|@{\hspace{.6ex}}c@{\hspace{.6ex}}|}{\raisebox{-.3ex}{$#1$}}}
\raisebox{-.6ex}{$\begin{array}[b]{cccc}
\cline{1-1}\cline{2-2}\cline{3-3}\cline{4-4}
\lr{1}&\lr{1}&\lr{1}&\lr{2}\\
\cline{1-1}\cline{2-2}\cline{3-3}\cline{4-4}
\lr{2}\\
\cline{1-1}
\end{array}$}
}
$\longrightarrow$
{\def\lr#1{\multicolumn{1}{|@{\hspace{.6ex}}c@{\hspace{.6ex}}|}{\raisebox{-.3ex}{$#1$}}}
\raisebox{-.6ex}{$\begin{array}[b]{cccc}
\cline{1-1}\cline{2-2}\cline{3-3}\cline{4-4}
\lr{1}&\lr{1}&\lr{1}&\lr{1}\\
\cline{1-1}\cline{2-2}\cline{3-3}\cline{4-4}
\lr{2}&\lr{2}\\
\cline{1-1}\cline{2-2}
\end{array}$}
}
$\longrightarrow$
{\def\lr#1{\multicolumn{1}{|@{\hspace{.6ex}}c@{\hspace{.6ex}}|}{\raisebox{-.3ex}{$#1$}}}
\raisebox{-.6ex}{$\begin{array}[b]{ccccc}
\cline{1-1}\cline{2-2}\cline{3-3}\cline{4-4}\cline{5-5}
\lr{1}&\lr{1}&\lr{1}&\lr{1}&\lr{2}\\
\cline{1-1}\cline{2-2}\cline{3-3}\cline{4-4}\cline{5-5}
\lr{2}&\lr{2}\\
\cline{1-1}\cline{2-2}
\end{array}$}
}
$\longrightarrow$
{\def\lr#1{\multicolumn{1}{|@{\hspace{.6ex}}c@{\hspace{.6ex}}|}{\raisebox{-.3ex}{$#1$}}}
\raisebox{-.6ex}{$\begin{array}[b]{cccccc}
\cline{1-1}\cline{2-2}\cline{3-3}\cline{4-4}\cline{5-5}\cline{6-6}
\lr{1}&\lr{1}&\lr{1}&\lr{1}&\lr{2}&\lr{2}\\
\cline{1-1}\cline{2-2}\cline{3-3}\cline{4-4}\cline{5-5}\cline{6-6}
\lr{2}&\lr{2}\\
\cline{1-1}\cline{2-2}
\end{array} = P$}
}

\ \\

{\def\lr#1{\multicolumn{1}{|@{\hspace{.6ex}}c@{\hspace{.6ex}}|}{\raisebox{-.3ex}{$#1$}}}
\raisebox{-.6ex}{$\begin{array}[b]{c}
\cline{1-1}
\lr{1}\\
\cline{1-1}
\end{array}$}
}
$\longrightarrow$
{\def\lr#1{\multicolumn{1}{|@{\hspace{.6ex}}c@{\hspace{.6ex}}|}{\raisebox{-.3ex}{$#1$}}}
\raisebox{-.6ex}{$\begin{array}[b]{cc}
\cline{1-1}\cline{2-2}
\lr{1}&\lr{2}\\
\cline{1-1}\cline{2-2}
\end{array}$}
}
$\longrightarrow$
{\def\lr#1{\multicolumn{1}{|@{\hspace{.6ex}}c@{\hspace{.6ex}}|}{\raisebox{-.3ex}{$#1$}}}
\raisebox{-.6ex}{$\begin{array}[b]{cc}
\cline{1-1}\cline{2-2}
\lr{1}&\lr{2}\\
\cline{1-1}\cline{2-2}
\lr{3}\\
\cline{1-1}
\end{array}$}
}
$\longrightarrow$
{\def\lr#1{\multicolumn{1}{|@{\hspace{.6ex}}c@{\hspace{.6ex}}|}{\raisebox{-.3ex}{$#1$}}}
\raisebox{-.6ex}{$\begin{array}[b]{ccc}
\cline{1-1}\cline{2-2}\cline{3-3}
\lr{1}&\lr{2}&\lr{4}\\
\cline{1-1}\cline{2-2}\cline{3-3}
\lr{3}\\
\cline{1-1}
\end{array}$}
}
$\longrightarrow$
{\def\lr#1{\multicolumn{1}{|@{\hspace{.6ex}}c@{\hspace{.6ex}}|}{\raisebox{-.3ex}{$#1$}}}
\raisebox{-.6ex}{$\begin{array}[b]{cccc}
\cline{1-1}\cline{2-2}\cline{3-3}\cline{4-4}
\lr{1}&\lr{2}&\lr{4}&\lr{5}\\
\cline{1-1}\cline{2-2}\cline{3-3}\cline{4-4}
\lr{3}\\
\cline{1-1}
\end{array}$}
}
$\longrightarrow$
{\def\lr#1{\multicolumn{1}{|@{\hspace{.6ex}}c@{\hspace{.6ex}}|}{\raisebox{-.3ex}{$#1$}}}
\raisebox{-.6ex}{$\begin{array}[b]{cccc}
\cline{1-1}\cline{2-2}\cline{3-3}\cline{4-4}
\lr{1}&\lr{2}&\lr{4}&\lr{5}\\
\cline{1-1}\cline{2-2}\cline{3-3}\cline{4-4}
\lr{3}&\lr{6}\\
\cline{1-1}\cline{2-2}
\end{array}$}
}
$\longrightarrow$
{\def\lr#1{\multicolumn{1}{|@{\hspace{.6ex}}c@{\hspace{.6ex}}|}{\raisebox{-.3ex}{$#1$}}}
\raisebox{-.6ex}{$\begin{array}[b]{ccccc}
\cline{1-1}\cline{2-2}\cline{3-3}\cline{4-4}\cline{5-5}
\lr{1}&\lr{2}&\lr{4}&\lr{5}&\lr{7}\\
\cline{1-1}\cline{2-2}\cline{3-3}\cline{4-4}\cline{5-5}
\lr{3}&\lr{6}\\
\cline{1-1}\cline{2-2}
\end{array}$}
}
$\longrightarrow$
{\def\lr#1{\multicolumn{1}{|@{\hspace{.6ex}}c@{\hspace{.6ex}}|}{\raisebox{-.3ex}{$#1$}}}
\raisebox{-.6ex}{$\begin{array}[b]{cccccc}
\cline{1-1}\cline{2-2}\cline{3-3}\cline{4-4}\cline{5-5}\cline{6-6}
\lr{1}&\lr{2}&\lr{4}&\lr{5}&\lr{7}&\lr{8}\\
\cline{1-1}\cline{2-2}\cline{3-3}\cline{4-4}\cline{5-5}\cline{6-6}
\lr{3}&\lr{6}\\
\cline{1-1}\cline{2-2}
\end{array} = Q$}
}

\end{center}
\end{example}

\begin{rmk}
Strictly speaking, this correspondence is called the Robinson-Schensted correspondence. RSK usually refers to the generalization of Knuth when an integer matrix is taken as input.
\end{rmk}

Notice that the RSK correspondence gives a dynamically growing shape. If one takes in input a random walk, by discarding the $P$ tableau and looking only at the $Q$ tableau, one observes a shape process evolving in time.
\begin{thm}[O'Connell \cite{bib:OC03}]
\label{thm:shape_is_markov}
The shape is a Markov process.
\end{thm}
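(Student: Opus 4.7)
The plan is to exploit the bijectivity of RSK together with the i.i.d. structure of the random walk to establish a conditional independence property between the $P$ and $Q$ tableaux, from which the Markov property of the shape follows essentially by inspection.

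Suppose the letters $w_1, w_2, \dots$ are i.i.d. with some distribution $\mu$ on $\A$. Since row insertion preserves the multiset of inserted letters, the content of $P_t$ equals the content of $(w_1, \dots, w_t)$. By bijectivity of RSK, for any pair $(P,Q)$ of tableaux with common shape $\lambda$ and $|\lambda| = t$,
$$ \P\left( P_t = P, Q_t = Q \right) = \prod_{i \in \A} \mu(i)^{m_i(P)}, $$
where $m_i(P)$ is the number of occurrences of the letter $i$ in $P$. The key feature of this formula is that the weight depends on $(P,Q)$ only through $P$ and the common shape. Summing the numerator over $P$ of shape $\lambda$ produces the Schur polynomial $s_\lambda(\mu) := \sum_{sh(P) = \lambda} \prod_i \mu(i)^{m_i(P)}$, hence
$$ \P\left( P_t = P \mid Q_t = Q \right) = \frac{\prod_{i} \mu(i)^{m_i(P)}}{s_\lambda(\mu)} \, \mathds{1}[sh(P) = \lambda], $$
which depends on $Q$ only through the shape $\lambda_t := sh(Q_t)$.

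Next, I would observe that the filtration $\Fc_t^\lambda := \sigma(\lambda_0,\dots,\lambda_t)$ coincides with $\sigma(Q_t)$: at each step $s$, the tableau $Q_s$ records precisely the unique cell added when the shape jumps from $\lambda_{s-1}$ to $\lambda_s$, and conversely the sequence of shapes reconstructs $Q_t$. To conclude, write $\lambda_{t+1} = sh(P_t \leftarrow w_{t+1})$, where $\leftarrow$ denotes row insertion. Since $w_{t+1}$ is independent of $(P_t, Q_t)$, conditioning first on $P_t$ and then on $Q_t$ yields
$$ \P\left( \lambda_{t+1} = \lambda' \,\big|\, \Fc_t^\lambda \right) = \E\Bigl[\, \P\left( sh(P_t \leftarrow w_{t+1}) = \lambda' \,\big|\, P_t \right) \,\Big|\, Q_t \,\Bigr], $$
and by the conditional independence statement above, the outer conditional law of $P_t$ given $Q_t$ depends on $Q_t$ only through $\lambda_t$. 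This is the Markov property.

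The main obstacle is not in the probabilistic argument but in securing the right combinatorial input: one must genuinely use that row insertion preserves content, and that RSK is a bijection, in order to obtain the factorization of the joint law. Once this is available, the rest is a clean conditional independence argument; the one subtle point worth spelling out is the identification $\sigma(Q_t) = \Fc_t^\lambda$, which is what allows the shape process alone to be Markov without having to carry along the whole recording tableau $Q_t$.
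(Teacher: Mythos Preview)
Your argument is correct and is essentially the standard proof of this result. The paper itself does not prove this theorem: it is stated in the informal panorama chapter as a known result due to O'Connell \cite{bib:OC03}, with no proof given, so there is nothing in the paper to compare against beyond the citation.

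For what it is worth, your approach matches the one in the cited reference: the key ingredients are precisely the bijectivity of RSK (so that the joint law of $(P_t,Q_t)$ factorizes through the content of $P_t$), the identification $\sigma(Q_t)=\sigma(\lambda_0,\dots,\lambda_t)$, and the conditional independence of $P_t$ from $Q_t$ given the common shape. One small remark: you could make the transition kernel explicit at the end, since the paper mentions in passing that it is written in terms of Schur functions; from your computation one reads off directly that
\[
\P(\lambda_{t+1}=\lambda' \mid \lambda_t=\lambda) \;=\; \frac{s_{\lambda'}(\mu)}{s_\lambda(\mu)}\,\mathds{1}[\lambda'/\lambda \text{ is a single box}]\cdot \mu(\text{box content factor}),
\]
or more cleanly via the Pieri rule, which is the form the paper alludes to.
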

The transition kernel is written in term of Schur functions, which are the characters of the group $GL_n$.

\section{Pitman's theorem}

\begin{thm}[Pitman (1975)]
If $B$ is a standard Brownian motion, then
$$ 2 \sup_{0 \leq s \leq t}\left(B_s\right) - B_t$$
is a $3$-dimensional Bessel process, meaning that it is a Markovian diffusion process with infinitesimal generator
$$ \frac{1}{2} \frac{ d^2 }{dx^2} + \frac{ d }{dx}\left( \log(x) \right) \frac{ d }{dx}$$
\end{thm}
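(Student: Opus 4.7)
The plan is to identify $X_t := 2M_t - B_t$, with $M_t = \sup_{0 \leq s \leq t} B_s$, as the unique strong solution of the BES(3) stochastic differential equation $dR_t = d\beta_t + dt/R_t$. The argument has three steps: a reflection-principle computation at each fixed $t$, a projection of the semimartingale decomposition of $X$ onto its own filtration, and an SDE uniqueness step.

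First, I would record the Doob--Meyer decomposition of $X$ in the Brownian filtration $\mathcal{F}^B_t$, namely $X_t = -B_t + 2M_t$, in which $-B$ is a Brownian motion (L\'evy) and the increasing process $2M$ is supported on the contact set $\{s : B_s = M_s\}$. The reflection principle yields the joint density
$$ f_{(M_t, X_t)}(m,x) = \sqrt{\frac{2}{\pi t^3}}\, x\, e^{-x^2/(2t)}\, \mathbf{1}_{\{0 \leq m \leq x\}}, $$
from which one reads off both the one-dimensional marginal $X_t \stackrel{d}{=} |W_t|$, where $W$ is a standard three-dimensional Brownian motion, and---crucially---that conditionally on $X_t = x$ the maximum $M_t$ is uniformly distributed on $[0, x]$.

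Second, I would project $X$ onto its own filtration $\mathcal{F}^X_t$. Since the quadratic variation of $X$ is $t$, Stricker's theorem combined with L\'evy's criterion produces an $\mathcal{F}^X$-Brownian motion $\tilde\beta$ and a continuous increasing process $V$ such that $X_t = \tilde\beta_t + V_t$. The key step is to identify the dual predictable projection of $2M_t$ onto $\mathcal{F}^X$: a filtering computation based on the uniform conditional law of $M_t$ on $[0, X_t]$ yields $dV_t = dt/X_t$, so that $X$ solves the BES(3) SDE
$$ dX_t = d\tilde\beta_t + \frac{dt}{X_t}. $$
Pathwise uniqueness of this SDE on $(0, \infty)$, together with a scale-function calculation showing that $0$ is inaccessible, finishes the identification and recovers the stated generator $\tfrac{1}{2}\partial_x^2 + (1/x)\partial_x$.

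The main obstacle is the filtering step above: one must extend the one-time conditional law of $M_t$ given $X_t$ to the conditional law of $M_t$ given the entire history $\mathcal{F}^X_t$, which is precisely where an a priori Markov argument for $X$ (equivalently, Williams' time-reversal decomposition) has to be inserted. A wholly alternative route, closer to Pitman's original work, would be to verify discretely that $2\max_{k \leq n} S_k - S_n$ for a simple random walk is a birth-and-death Markov chain whose transitions coincide with those of an $h$-transform ($h(k) = k+1$) of a reflected simple random walk, and then to invoke Donsker's invariance principle to pass to the Brownian scaling limit.
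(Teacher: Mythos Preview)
The paper does not prove this theorem; it is quoted as classical background, and the surrounding text only remarks that Pitman established it first for the simple random walk and then passed to the Brownian limit. That is exactly your ``wholly alternative route'' at the end, so on that count you are aligned with what the paper records about the original proof.

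Your primary continuous argument is well organised, but the gap you flag is a genuine one and, as written, it is circular: to upgrade the one-time conditional law $M_t\mid X_t$ to $M_t\mid \mathcal{F}^X_t$ you invoke ``an a priori Markov argument for $X$'', which is precisely the conclusion you are after. The non-circular way to close this---and the mechanism the paper later develops in full generality in the chapter on intertwined Markov operators---is the Rogers--Pitman criterion: take the two-dimensional Markov process $Z_t=(B_t,M_t)$, the function $\phi(b,m)=2m-b$, and the kernel $\Kc(x,\cdot)$ that places $M$ uniformly on $[0,x]$ and sets $B=2M-x$. One checks the intertwining $\Kc P_t = Q_t \Kc$ directly from the reflection-principle density you already computed, where $Q_t$ is the BES(3) semigroup; Rogers--Pitman then delivers the Markov property of $X$ and the filtering identity $\E(f(Z_t)\mid\mathcal{F}^X_t)=\Kc f(X_t)$ simultaneously, with no appeal to Williams' time reversal. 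If you rewrite your second step in that language, the circularity disappears and the argument becomes complete.
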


The coefficient $2$ is essential and there is a very strong rigidity with respect to that coefficient. Indeed, further investigations by Rogers and Pitman prove that $k=0, k=1, k=2$ are the only cases where the Markov property holds among the processes of the form:
$$ k \sup_{0 \leq s \leq t}\left(B_s\right) - B_t$$

At first glance, this theorem seems like a puzzling oddity, a singularity. Indeed the Markov property is a very rare feature in general processes, and it is very surprizing that a process tailored with extrema of Brownian motion, ends up being Markovian. Afterall, $\sup_{ 0 \leq s \leq t}( B_s )$ is a typical example of non-Markovian behavior.

By replacing the Brownian motion by its opposite, one might prefer the use of the path transform, which we call the Pitman transform:
$$ \Pc(\pi)(t) = \pi(t) - 2 \inf_{0 \leq s \leq t}( \pi(s) ) $$

Here, the miracle at play is of algebraic nature and is more simply explained by going back to a discrete setting. In fact, Pitman proved his theorem in the discrete setting first, from which he deduced the continuous case.

\begin{thm}[Discrete Pitman's theorem (1975)]
\label{thm:discrete_pitman}
Let $B_n$ a standard random walk on $\mathbb{Z}$. Then:
    $$X_n = B_n - 2 \inf_{0 \leq k \leq n} B_k$$
is Markov with transition kernel:
$$Q(x, x+1) = \frac{1}{2}\frac{x+2}{x+1} \quad Q(x, x-1) = \frac{1}{2}\frac{x}{x+1}$$
Moreover (intertwining measure):
$$ \P\left( B_n = b | X_n = x \right) = \mathcal{U}\left( \{-x, \dots, x-4, x-2, x\} \right)$$
\end{thm}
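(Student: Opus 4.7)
The plan is to prove both assertions simultaneously by induction on $n$, carrying along the infimum process $I_n := \inf_{0 \leq k \leq n} B_k$ as an auxiliary variable, and then marginalizing it out at the end.

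First I would set up the bookkeeping. Since $X_n = B_n - 2 I_n$, the statement that $B_n$ is uniform on $\{-X_n, -X_n+2, \dots, X_n\}$ conditionally on $X_n$ is equivalent to the statement that $I_n$ is uniform on $\{-X_n, -X_n+1, \dots, 0\}$ conditionally on $X_n$. I would strengthen this into the induction hypothesis: \emph{the conditional law of $I_n$ given the whole history $(X_0,\dots,X_n)$ is uniform on $\{-X_n,\dots,0\}$}. The base case $n=0$ is trivial since $X_0=I_0=0$.

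Next comes the deterministic step-analysis. Fix $X_n=x$ and $I_n=-i$ with $i\in\{0,\dots,x\}$; then $B_n=x-2i$. There are three regimes for the next step: \textbf{(A)} if $B_{n+1}=B_n+1$ (probability $1/2$), then $I_{n+1}=I_n$ and $X_{n+1}=x+1$; \textbf{(B)} if $B_{n+1}=B_n-1$ and $B_n>I_n$ (equivalently $i<x$), then $I_{n+1}=I_n$ and $X_{n+1}=x-1$; \textbf{(C)} if $B_{n+1}=B_n-1$ and $B_n=I_n$ (equivalently $i=x$), then $I_{n+1}=I_n-1$ and $X_{n+1}=x+1$. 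Note that in every case $I_{n+1}$ is again the infimum of values lying between $-X_{n+1}$ and $0$, which is what one expects.

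From here, combining the induction hypothesis (the $1/(x+1)$ uniform weight on the $i$'s) with the $1/2$ step weights gives immediately
\[
\P(X_{n+1}=x+1\mid X_0,\dots,X_n)=\tfrac12+\tfrac12\cdot\tfrac1{x+1}=\tfrac12\cdot\tfrac{x+2}{x+1},
\]
and similarly $\P(X_{n+1}=x-1\mid\cdot)=\tfrac12\cdot\tfrac{x}{x+1}$, both depending on the past only through $X_n=x$. This proves both the Markov property and the transition kernel. To close the induction, I inspect the joint law of $(X_{n+1},I_{n+1})$ given the history: on $\{X_{n+1}=x-1\}$ only regime (B) contributes, producing $I_{n+1}=-i$ uniformly over $i\in\{0,\dots,x-1\}$; on $\{X_{n+1}=x+1\}$ regimes (A) and (C) together produce $I_{n+1}=-i$ uniformly over $i\in\{0,\dots,x+1\}$. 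Hence $I_{n+1}$ is uniform on $\{-X_{n+1},\dots,0\}$ given $(X_0,\dots,X_{n+1})$, completing the induction and yielding the intertwining identity by passing from $I_n$ back to $B_n=X_n+2I_n$.

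The only mildly delicate point is keeping the conditioning on the \emph{whole} past of $X$ (not merely on $X_n$) throughout, because this is what actually proves the Markov property rather than just the one-dimensional marginal transitions; the case analysis (A)--(B)--(C) must therefore be applied pathwise, one value of $i$ at a time, before averaging. Everything else is bookkeeping.
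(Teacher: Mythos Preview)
Your proof is correct and complete. The paper states this theorem only as a classical background result from Pitman (1975) and gives no proof of its own, so there is nothing to compare against; your inductive argument carrying the stronger hypothesis that $I_n$ is uniform on $\{-X_n,\dots,0\}$ conditionally on the whole history $(X_0,\dots,X_n)$ is the standard route and is executed cleanly.
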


Although it seems quite unrelated, Pitman's theorem is in fact a particular case of theorem \ref{thm:shape_is_markov} when $\A = \{1, 2\}$ is an alphabet of two letters. Indeed, the input word can be seen as random walk by reading $1$ as a 'down' move and $2$ as an 'up' move. Then the integer partition $\lambda = \left\{ \lambda_1 \geq \lambda_2 \geq 0\right\}$ can be simply recorded by the quantity $\lambda_1-\lambda_2$. On figure \ref{fig:rw_and_its_pitman_transform}, by comparing to example \ref{example:rsk_algorithm}, one can see that shape is exactly given by the random walk's Pitman transform. 

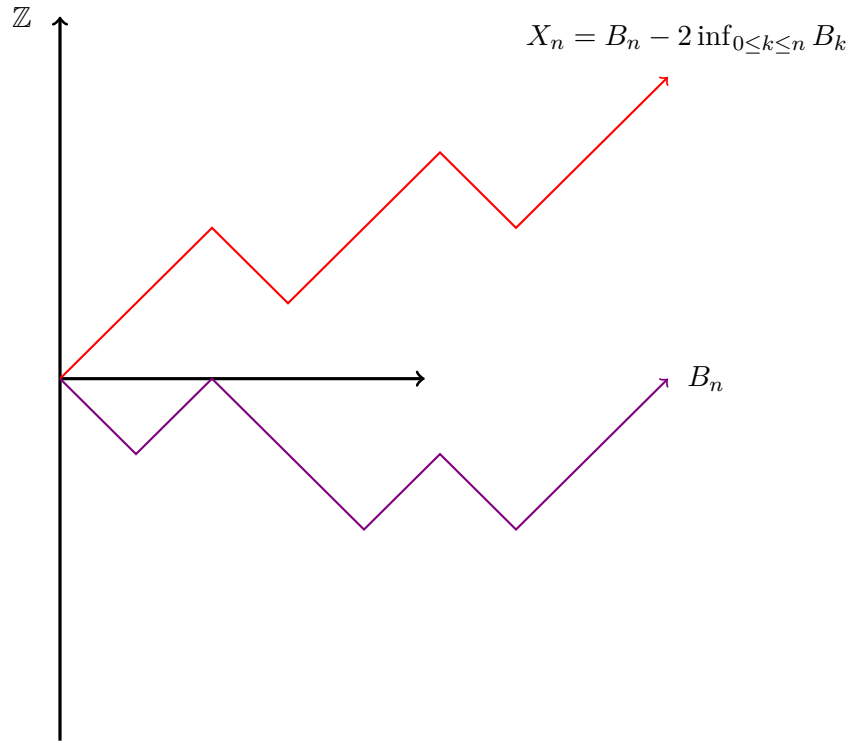
\begin{figure}[ht!]
\centering
\caption{An instance of random walk corresponding to the word $w = \left( 1,2,1,1,2,1,2,2 \right)$, and its Pitman transform}
\label{fig:rw_and_its_pitman_transform}

\begin{tikzpicture}[auto, bend right, scale=1]
\path [draw, very thick, color = black, ->] (0,0) -- (4.80000000000000 , 0 );
\node at ( -0.5, 4.80000000000000 ) {$\Z$};
\path [draw, very thick, color = black, ->] ( 0, -4.80000000000000 )
-- ( 0, 4.80000000000000 );

\definecolor{color_1}{rgb}{ 1.0,0.0,0.0 }
\path [draw, thick, color = color_1, ->] (0,0) -- (2,2) -- (3,1) -- (5,3) -- (6,2) -- (8,4);
\node at ( 8.25, 4.5 ) {$X_n = B_n - 2 \inf_{0 \leq k \leq n} B_k$};

\definecolor{color_0}{rgb}{ 0.5,0.0,0.5 }
\path [draw, thick, color = color_0, ->] (0,0) -- (1,-1) -- (2,0) -- (4,-2) -- (5,-1) -- (6,-2) -- (8,0);
\node at ( 8.5 , 0 ) {$B_n$};
\end{tikzpicture}

\end{figure}

The intertwining measure represents the missing information from the filtration generated by the $\left( X_n; n\geq 0 \right)$ only. A more representation-theoretic way of seeing the transition matrix $Q$ as:
$$Q(x, x+1) = \frac{1}{2}\frac{\dim V(x+1)}{\dim V(x)} \quad Q(x, x-1) = \frac{1}{2}\frac{\dim V(x-1)}{\dim V(x)}$$
where $V(x)$ is the representation with highest weight $x$ for $SL_2$ (more details in \cite{bib:Bia06}). Hence the idea that Pitman-type theorems should find their sources in the combinatorics of representation theory.

\section{Combinatorial representation theory}
This section plays the role of an informal introduction to the field of combinatorial representation theory, taking as a starting point the combinatorics of random walks and Young tableaux. Let $G$ be a complex semi-simple group. After adding the center, we will sometimes use $G=GL_n(\C)$ instead of the simple group $SL_n(\C)$.

\subsection{Combinatorial representation theory}

\begin{definition}
 A branch of mathematics that aims to extract information about irreductible representations of $G$ from combinatorial objects (diagrams, tableaux, paths)
\end{definition}

Let $P$ be weight lattice in $G$ and $P^+$ the dominant weights. In the case of $GL_{n}$:
$$ P = \left\{ \lambda \in \R^{n} \ | \lambda_i-\lambda_{i+1} \in \Z \right\}$$ 
$$ P^+ = \left\{ \lambda \in P \ | \ \lambda_1 \geq \dots \geq \lambda_{n} \right\}$$ 
A fundamental theorem tells us that irreductible representations are indexed by dominant weights. For every $\lambda \in P^+$, $V(\lambda)$ will denote the representation with highest weight $\lambda$, unique up to isomorphism.

A quantity for interest in combinatorial representation theory is the character $ch(V)$ of a representation $V$. It is a useful function on $\hfrak$ that packages efficiently weight multiplicities:
$$ ch(V)(x) = \sum_{\mu \in P} \dim V_\mu e^{\langle \mu, x \rangle}$$
For $GL_{n}$, the characters of irreductible representations are exactly the Schur functions.

\subsubsection{Weight diagrams}
Weight diagrams are a graphical presentation of characters, where larger multiplicities are represented by larger red dots (see figure \ref{fig:A2_WeightDiag} for the weight diagram of the $SL_3$ representation $V(\rho)$ ).

\begin{figure}[ht!]
\centering
\caption{Weight diagram for the representation in type $A_2$ with highest weight $\lambda = \rho$}
\label{fig:A2_WeightDiag}
\includegraphics{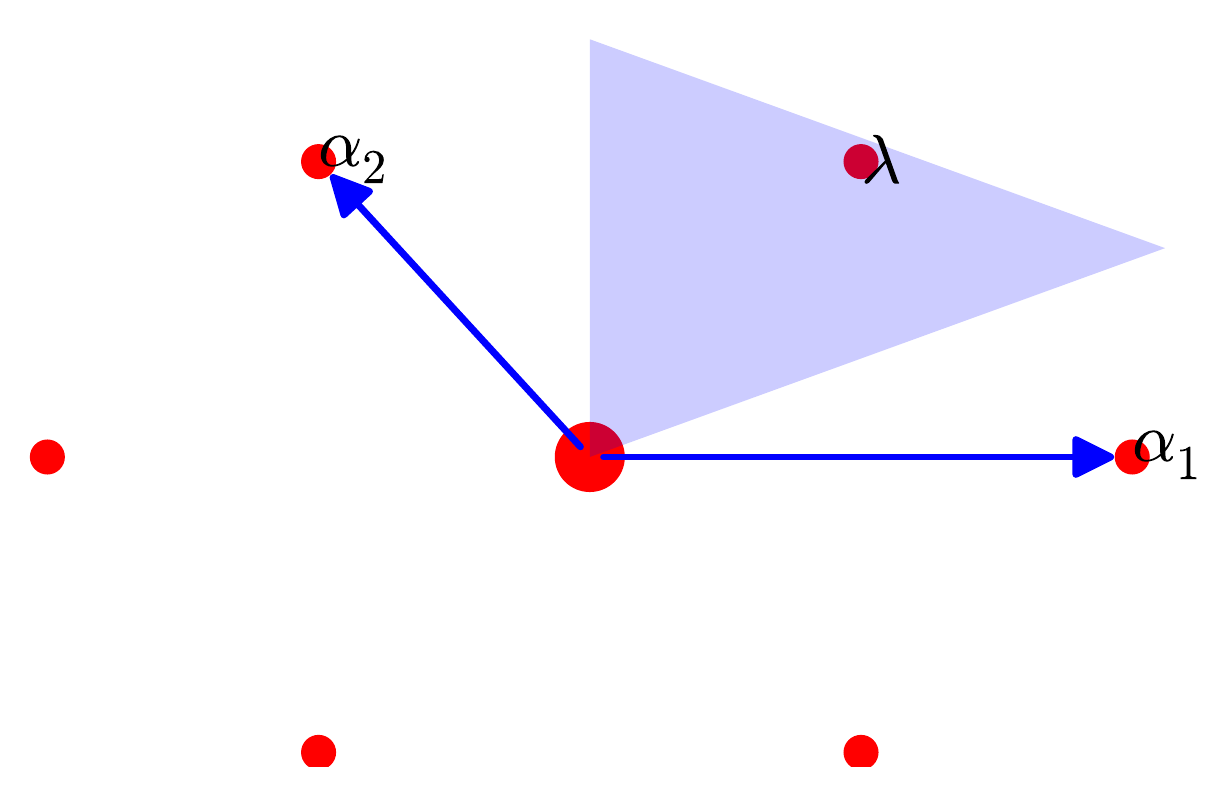}
\end{figure}

\subsubsection{Crystals of tableaux}
Here $G=GL_{n}(\C)$. Given a dominant weight $\lambda \in P^+$, an old theorem due to Littelwood relates semi-standard tableaux to the dimension of $V(\lambda)$.

\begin{thm}[Littlewood]
Let $\lambda = \{ \lambda_1 \geq \dots \geq \lambda_{n+1} \} \in P^+$. Semi-standard tableaux with $n+1$ letters and shape $\lambda$ give the dimension of $V(\lambda)$.
\end{thm}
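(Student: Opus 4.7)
The plan is to show that the character $\operatorname{ch}(V(\lambda))$ of the irreducible $GL_{n+1}$-representation of highest weight $\lambda$ is given by the generating function
$$ s_\lambda(x_1,\dots,x_{n+1}) = \sum_{T} \prod_{i=1}^{n+1} x_i^{\#\{\text{entries equal to }i\text{ in }T\}}, $$
where the sum ranges over semi-standard tableaux of shape $\lambda$ with entries in $\{1,\dots,n+1\}$, and then to specialize at $x_1=\cdots=x_{n+1}=1$. The specialization reads $s_\lambda(1,\dots,1)=\#\{\text{SSYT of shape }\lambda\}$ on the right, and $\operatorname{ch}(V(\lambda))(0)=\dim V(\lambda)$ on the left, which is the desired identity.

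First I would recall the Weyl character formula for $GL_{n+1}$, which expresses $\operatorname{ch}(V(\lambda))$ as a quotient of alternating sums:
$$ \operatorname{ch}(V(\lambda))(x_1,\dots,x_{n+1}) = \frac{\det\bigl(x_j^{\lambda_i+n+1-i}\bigr)_{1\leq i,j\leq n+1}}{\det\bigl(x_j^{n+1-i}\bigr)_{1\leq i,j\leq n+1}}. $$
The denominator is the Vandermonde determinant, so this is the bialternant formula for $s_\lambda$. Thus the problem reduces to the purely combinatorial claim that the tableau sum above coincides with the bialternant expression.

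The combinatorial identification of $s_\lambda$ with the tableau generating function can be carried out in several ways; I would favour the Lindström--Gessel--Viennot approach, interpreting the Jacobi--Trudi formula $s_\lambda = \det(h_{\lambda_i-i+j})$ in terms of non-intersecting lattice paths and bijecting these configurations with semi-standard tableaux of shape $\lambda$. Alternatively, one can first show by the Bender--Knuth involutions that $\sum_T x^T$ is a symmetric polynomial, then verify by a weight-counting argument that it satisfies the same bialternant identity, using that the leading monomial $x_1^{\lambda_1}\cdots x_{n+1}^{\lambda_{n+1}}$ is attained exactly once (by the tableau with all $i$'s on row $i$). Either road proves $s_\lambda = \sum_T x^T$.

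The main obstacle is precisely this combinatorial step: extracting the non-trivial content of the theorem---that a representation-theoretic quantity counts tableaux---amounts entirely to identifying the Schur polynomial with its tableau-expansion, since the Weyl character formula and the specialization at $x_i=1$ are routine. Once $\operatorname{ch}(V(\lambda))=s_\lambda=\sum_T x^T$ is established, setting every $x_i$ to $1$ yields $\dim V(\lambda)=\#\{\text{SSYT of shape }\lambda\text{ with entries in }\{1,\dots,n+1\}\}$, as claimed.
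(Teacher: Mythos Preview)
Your proof sketch is correct and follows the standard route: identify $\operatorname{ch}(V(\lambda))$ with the Schur polynomial via the Weyl/bialternant formula, prove the tableau expansion of $s_\lambda$ combinatorially (Jacobi--Trudi plus Lindstr\"om--Gessel--Viennot, or Bender--Knuth symmetry), and specialize at $x_i=1$.

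There is nothing to compare against, however: the paper does not prove this statement. It appears in Chapter~2, which is explicitly an ``informal panorama'' of classical background, and the theorem is cited as ``an old theorem due to Littlewood'' with no argument given. The reference for this material is Fulton \cite{bib:Fulton97}, and your outline is precisely the kind of proof one finds there.
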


In fact, there is way to arrange these semi-standard tableaux on the weight lattice $P$ and get the weight diagram (cf figures \ref{label:A1_CrystalOfTableaux} and \ref{label:A2_CrystalOfTableaux_Vrho}). Here, the weight of a semistandard tableau $P$ is defined as a vector $wt(P) \in \R^{n}$ with $wt(P)_k$ being the number of letters $k$ minus the number of letters $k+1$ in $P$. For instance, on figure \ref{label:A2_CrystalOfTableaux_Vrho}, we see that there are two tableaux of weight zero, giving a multiplicity of $2$.

\begin{figure}[ht!]
\centering
\caption{$A_1$-type crystal of tableaux for highest weight $\lambda = 2 \alpha$}
\label{label:A1_CrystalOfTableaux}

\begin{tikzpicture}[>=latex,line join=bevel,]
\node (1+1+1+2) at (24bp,238bp) [draw,draw=none] {${\def\lr#1{\multicolumn{1}{|@{\hspace{.6ex}}c@{\hspace{.6ex}}|}{\raisebox{-.3ex}{$#1$}}}\raisebox{-.6ex}{$\begin{array}[b]{cccc}\cline{1-1}\cline{2-2}\cline{3-3}\cline{4-4}\lr{1}&\lr{1}&\lr{1}&\lr{2}\\\cline{1-1}\cline{2-2}\cline{3-3}\cline{4-4}\end{array}$}}$};
  \node (1+2+2+2) at (24bp,86bp) [draw,draw=none] {${\def\lr#1{\multicolumn{1}{|@{\hspace{.6ex}}c@{\hspace{.6ex}}|}{\raisebox{-.3ex}{$#1$}}}\raisebox{-.6ex}{$\begin{array}[b]{cccc}\cline{1-1}\cline{2-2}\cline{3-3}\cline{4-4}\lr{1}&\lr{2}&\lr{2}&\lr{2}\\\cline{1-1}\cline{2-2}\cline{3-3}\cline{4-4}\end{array}$}}$};
  \node (2+2+2+2) at (24bp,10bp) [draw,draw=none] {${\def\lr#1{\multicolumn{1}{|@{\hspace{.6ex}}c@{\hspace{.6ex}}|}{\raisebox{-.3ex}{$#1$}}}\raisebox{-.6ex}{$\begin{array}[b]{cccc}\cline{1-1}\cline{2-2}\cline{3-3}\cline{4-4}\lr{2}&\lr{2}&\lr{2}&\lr{2}\\\cline{1-1}\cline{2-2}\cline{3-3}\cline{4-4}\end{array}$}}$};
  \node (1+1+1+1) at (24bp,314bp) [draw,draw=none] {${\def\lr#1{\multicolumn{1}{|@{\hspace{.6ex}}c@{\hspace{.6ex}}|}{\raisebox{-.3ex}{$#1$}}}\raisebox{-.6ex}{$\begin{array}[b]{cccc}\cline{1-1}\cline{2-2}\cline{3-3}\cline{4-4}\lr{1}&\lr{1}&\lr{1}&\lr{1}\\\cline{1-1}\cline{2-2}\cline{3-3}\cline{4-4}\end{array}$}}$};
  \node (1+1+2+2) at (24bp,162bp) [draw,draw=none] {${\def\lr#1{\multicolumn{1}{|@{\hspace{.6ex}}c@{\hspace{.6ex}}|}{\raisebox{-.3ex}{$#1$}}}\raisebox{-.6ex}{$\begin{array}[b]{cccc}\cline{1-1}\cline{2-2}\cline{3-3}\cline{4-4}\lr{1}&\lr{1}&\lr{2}&\lr{2}\\\cline{1-1}\cline{2-2}\cline{3-3}\cline{4-4}\end{array}$}}$};
  \draw [blue,->] (1+1+1+1) ..controls (24bp,292.79bp) and (24bp,273.03bp)  .. (1+1+1+2);
  \definecolor{strokecol}{rgb}{0.0,0.0,0.0};
  \pgfsetstrokecolor{strokecol}
  \draw (33bp,276bp) node {$1$};
  \draw [blue,->] (1+1+2+2) ..controls (24bp,140.79bp) and (24bp,121.03bp)  .. (1+2+2+2);
  \draw (33bp,124bp) node {$1$};
  \draw [blue,->] (1+1+1+2) ..controls (24bp,216.79bp) and (24bp,197.03bp)  .. (1+1+2+2);
  \draw (33bp,200bp) node {$1$};
  \draw [blue,->] (1+2+2+2) ..controls (24bp,64.789bp) and (24bp,45.027bp)  .. (2+2+2+2);
  \draw (33bp,48bp) node {$1$};
\end{tikzpicture}

\end{figure}
\begin{figure}[ht!]
\centering
\caption{$A_2$-type crystal of tableaux for highest weight $\lambda = \rho = \alpha_1 + \alpha_2$}
\label{label:A2_CrystalOfTableaux_Vrho}

\begin{tikzpicture}[auto, scale=2.0]
\begin{scope}[xshift=0, yshift=0]
\node (-2--1--1-) at (0.0,1.732) [draw,draw=none] 
{${\def\lr#1{\multicolumn{1}{|@{\hspace{.6ex}}c@{\hspace{.6ex}}|}{\raisebox{-.3ex}{$#1$}}}
\raisebox{-.6ex}{$\begin{array}[b]{cc}
\cline{1-1}\cline{2-2}
\lr{1}&\lr{1}\\
\cline{1-1}\cline{2-2}
\lr{2}\\
\cline{1-1}
\end{array}$}
}$};
\end{scope}
\begin{scope}[xshift=0, yshift=0]
\node (-2--1--2-) at (-1.5,0.866) [draw,draw=none]
{${\def\lr#1{\multicolumn{1}{|@{\hspace{.6ex}}c@{\hspace{.6ex}}|}{\raisebox{-.3ex}{$#1$}}}
\raisebox{-.6ex}{$\begin{array}[b]{cc}
\cline{1-1}\cline{2-2}
\lr{1}&\lr{2}\\
\cline{1-1}\cline{2-2}
\lr{2}\\
\cline{1-1}
\end{array}$}
}$};
\end{scope}
\begin{scope}[xshift=0, yshift=0]
\node (-2--1--3-) at (0.0,0.0) [draw,draw=none]
{${\def\lr#1{\multicolumn{1}{|@{\hspace{.6ex}}c@{\hspace{.6ex}}|}{\raisebox{-.3ex}{$#1$}}}
\raisebox{-.6ex}{$\begin{array}[b]{cc}
\cline{1-1}\cline{2-2}
\lr{1}&\lr{3}\\
\cline{1-1}\cline{2-2}
\lr{2}\\
\cline{1-1}
\end{array}$}
}$};
\end{scope}
\begin{scope}[xshift=0, yshift=0]
\node (-3--1--3-) at (1.5,-0.866) [draw,draw=none]
{${\def\lr#1{\multicolumn{1}{|@{\hspace{.6ex}}c@{\hspace{.6ex}}|}{\raisebox{-.3ex}{$#1$}}}
\raisebox{-.6ex}{$\begin{array}[b]{cc}
\cline{1-1}\cline{2-2}
\lr{1}&\lr{3}\\
\cline{1-1}\cline{2-2}
\lr{3}\\
\cline{1-1}
\end{array}$}
}$};
\end{scope}
\begin{scope}[xshift=0, yshift=0]
\node (-3--2--3-) at (0.0,-1.732) [draw,draw=none]
{${\def\lr#1{\multicolumn{1}{|@{\hspace{.6ex}}c@{\hspace{.6ex}}|}{\raisebox{-.3ex}{$#1$}}}
\raisebox{-.6ex}{$\begin{array}[b]{cc}
\cline{1-1}\cline{2-2}
\lr{2}&\lr{3}\\
\cline{1-1}\cline{2-2}
\lr{3}\\
\cline{1-1}
\end{array}$}
}$};
\end{scope}
\begin{scope}[xshift=0, yshift=0]
\node (-3--1--1-) at (1.5,0.866) [draw,draw=none]
{${\def\lr#1{\multicolumn{1}{|@{\hspace{.6ex}}c@{\hspace{.6ex}}|}{\raisebox{-.3ex}{$#1$}}}
\raisebox{-.6ex}{$\begin{array}[b]{cc}
\cline{1-1}\cline{2-2}
\lr{1}&\lr{1}\\
\cline{1-1}\cline{2-2}
\lr{3}\\
\cline{1-1}
\end{array}$}
}$};
\end{scope}
\begin{scope}[xshift=10, yshift=-10]
\node (-3--1--2-) at (0.0,0.0) [draw,draw=none]
{${\def\lr#1{\multicolumn{1}{|@{\hspace{.6ex}}c@{\hspace{.6ex}}|}{\raisebox{-.3ex}{$#1$}}}
\raisebox{-.6ex}{$\begin{array}[b]{cc}
\cline{1-1}\cline{2-2}
\lr{1}&\lr{2}\\
\cline{1-1}\cline{2-2}
\lr{3}\\
\cline{1-1}
\end{array}$}
}$};
\end{scope}
\begin{scope}[xshift=0, yshift=0]
\node (-3--2--2-) at (-1.5,-0.866) [draw,draw=none] {${\def\lr#1{\multicolumn{1}{|@{\hspace{.6ex}}c@{\hspace{.6ex}}|}{\raisebox{-.3ex}{$#1$}}}\raisebox{-.6ex}{$\begin{array}[b]{cc}\cline{1-1}\cline{2-2}\lr{2}&\lr{2}\\
\cline{1-1}\cline{2-2}\lr{3}\\\cline{1-1}\end{array}$}}$};
\end{scope}
\draw [blue,->] (-2--1--1-) to node {$1$} (-2--1--2-);
\draw [red,->] (-2--1--1-) to node {$2$} (-3--1--1-);
\draw [red,->] (-2--1--2-) to node {$2$} (-2--1--3-);
\draw [red,->] (-2--1--3-) to node {$2$} (-3--1--3-);
\draw [blue,->] (-3--1--1-) to node {$1$} (-3--1--2-);
\draw [blue,->] (-3--1--2-) to node {$1$} (-3--2--2-);
\draw [blue,->] (-3--1--3-) to node {$1$} (-3--2--3-);
\draw [red,->] (-3--2--2-) to node {$2$} (-3--2--3-);
\end{tikzpicture}

\end{figure} 

\subsubsection{Kashiwara crystals and parametrizations}
Kashiwara constructed operators that $\left( f_1, f_2, \dots, f_{n}\right)$ that give all tableaux from the highest weight one, given a certain shape. Their action is represented by the letters $1, \dots, n$ and give a graph called the crystal graph (see figures \ref{label:A1_CrystalOfTableaux} and \ref{label:A2_CrystalOfTableaux_Vrho}).

Then the idea is to get rid of tableaux and use only coordinates (see fig. \ref{label:A2_CrystalGraph_Vrho}) called string coordinates. For every ${\bf i} = (i_1, i_2, \dots, i_{\ell(w_0)})$ a reduced expressions of the longuest element $w_0 = \left(n \dots 3 2 1 \right)$, one can associate a coordinate chart $[x_{-\bf i}]$. Reduced expressions are factorizations into a minimal number of simple reflections. For $SL_{n+1}$, the length of $w_0$ is $\ell(w_0) = \frac{n(n+1)}{2}$

\begin{example}
 \begin{itemize}
  \item $n=1$: $W = S_2$ has the unique reflection as longest element.
  \item $n=2$: $W = S_3$ $w_0 = \left( 3 2 1 \right) = (12)(23)(12) = (23)(12)(23)$. 
 \end{itemize}
\end{example}

The string coordinates $[x_{-\bf i}](b)$ of an element $b$ in the crystal graph is given by specifying $\ell(w_0)$ integers. They correspond to the number of steps necessary to climb the crystal graph and reach the highest weight element, by going all the way successively along the directions in ${\bf i}$. For instance, in the crystal graph of figure \ref{label:A2_CrystalOfTableaux_Vrho}, pick ${\bf i} = (1,2,1)$. Then:
$$ [x_{-\bf i}]\left( {\def\lr#1{\multicolumn{1}{|@{\hspace{.6ex}}c@{\hspace{.6ex}}|}{\raisebox{-.3ex}{$#1$}}}
\raisebox{-.6ex}{$\begin{array}[b]{cc}
\cline{1-1}\cline{2-2}
\lr{1}&\lr{1}\\
\cline{1-1}\cline{2-2}
\lr{2}\\
\cline{1-1}
\end{array}$}
} \right)
= \left( 0, 0, 0 \right)$$

$$ [x_{-\bf i}]\left( {\def\lr#1{\multicolumn{1}{|@{\hspace{.6ex}}c@{\hspace{.6ex}}|}{\raisebox{-.3ex}{$#1$}}}\raisebox{-.6ex}{$\begin{array}[b]{cc}\cline{1-1}\cline{2-2}\lr{2}&\lr{2}\\
\cline{1-1}\cline{2-2}\lr{3}\\\cline{1-1}\end{array}$}} \right)
= \left( 2, 1, 0\right)$$

$$ [x_{-\bf i}]\left( {\def\lr#1{\multicolumn{1}{|@{\hspace{.6ex}}c@{\hspace{.6ex}}|}{\raisebox{-.3ex}{$#1$}}}
\raisebox{-.6ex}{$\begin{array}[b]{cc}
\cline{1-1}\cline{2-2}
\lr{2}&\lr{3}\\
\cline{1-1}\cline{2-2}
\lr{3}\\
\cline{1-1}
\end{array}$}
} \right)
= \left( 1, 2, 1\right)$$

In the end, one can discard the tableaux and keep only the coordinates as in figure \ref{label:A2_CrystalGraph_Vrho}. This gives a realization of Kashiwara crystals and such a construction can be generalized to other types in the Cartan-Killing classification.
\begin{figure}[ht!]
\centering
\caption{Crystal graph for a crystal of type $A_2$ with highest weight $\lambda = \rho$}
\label{label:A2_CrystalGraph_Vrho}
\includegraphics{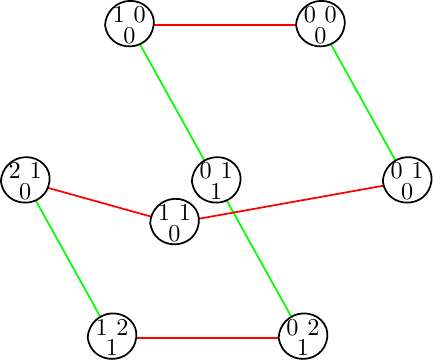}
\end{figure}

\subsubsection{Path crystals: Littelmann's path model}
Littelmann replaced tableaux by piece-wise linear paths in $\afrak$, the real Cartan subalgebra, changing directions at rational points. The weight of a path is simply its endpoint. Paths are combinatorial objects very appreciated by probabilists, considering them as random walks. Acting on a path $\pi$ indexed by the interval $[0,T]$, the Kashiwara operators have the expression:
\begin{align*}
 \forall 0 \leq t \leq T, & e^n_\alpha \pi(t) = \pi(t) + \inf_{0 \leq s \leq T} \alpha^\vee (\pi(s)) \alpha - \\
                          & \ \min\left( \inf_{0 \leq s \leq t} \alpha^\vee (\pi(s)) - n, \inf_{t \leq s \leq T} \alpha^\vee (\pi(s))\right) \alpha
\end{align*}
for an integer $n \in \Z$ such that:
$$ -\alpha^\vee (\pi(T)) + \inf_{0 \leq s \leq T} \alpha^\vee (\pi(s)) \leq n \leq -\inf_{0 \leq s \leq T} \alpha^\vee (\pi(s))$$

A theorem known as Littelmann's independence theorem, states that the crystal structure depends only on the dominant path's weight. Such fact can be observed by comparing figures \ref{label:A1_PathCrystal_LS} and \ref{label:A1_PathCrystal_Long}, which represent two isomorphic path crystals with different dominant paths.

\begin{figure}[ht!]
\centering
\caption{$A_1$-type path crystal of tableaux for highest weight $\lambda = 2 \alpha$}
\label{label:A1_PathCrystal_LS}

\begin{tikzpicture}[auto, bend right, scale=1]
\path [draw, very thick, color = black, ->] (0,0) -- (
4.80000000000000 , 0 );
\node at ( 0, 4.80000000000000 ) {$\mathbb{R} \omega$};
\path [draw, very thick, color = black, ->] ( 0, -4.80000000000000 )
-- ( 0, 4.80000000000000 );
\definecolor{color_0}{rgb}{ 0.75,0.0,0.25 }
\path [draw, thick, color = color_0, ->] (0,0) -- (1,-1) -- (4,2);
\definecolor{color_1}{rgb}{ 0.25,0.0,0.75 }
\path [draw, thick, color = color_1, ->] (0,0) -- (3,-3) -- (4,-2);
\definecolor{color_2}{rgb}{ 0.5,0.0,0.5 }
\path [draw, thick, color = color_2, ->] (0,0) -- (2,-2) -- (4,0);
\definecolor{color_3}{rgb}{ 1.0,0.0,0.0 }
\path [draw, thick, color = color_3, ->] (0,0) -- (4,4);
\definecolor{color_4}{rgb}{ 0.0,0.0,1.0 }
\path [draw, thick, color = color_4, ->] (0,0) -- (4,-4);
\end{tikzpicture}

\end{figure}
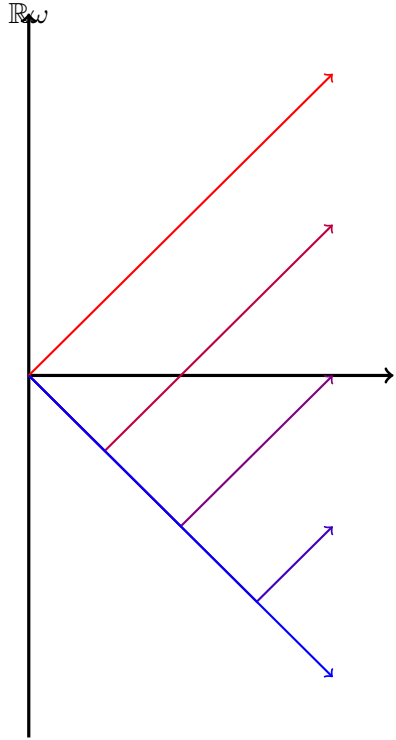
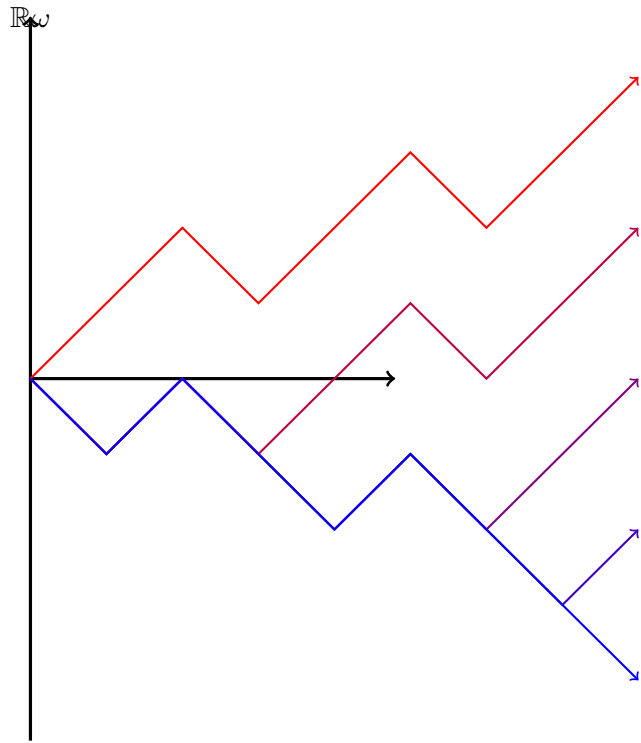
\begin{figure}[ht!]
\centering
\caption{$A_1$-type path crystal of tableaux for highest weight $\lambda = 2 \alpha$}
\label{label:A1_PathCrystal_Long}

\begin{tikzpicture}[auto, bend right, scale=1]
\path [draw, very thick, color = black, ->] (0,0) -- (
4.80000000000000 , 0 );
\node at ( 0, 4.80000000000000 ) {$\mathbb{R} \omega$};
\path [draw, very thick, color = black, ->] ( 0, -4.80000000000000 )
-- ( 0, 4.80000000000000 );
\definecolor{color_0}{rgb}{ 0.5,0.0,0.5 }
\path [draw, thick, color = color_0, ->] (0,0) -- (1,-1) -- (2,0) --
(4,-2) -- (5,-1) -- (6,-2) -- (8,0);
\definecolor{color_1}{rgb}{ 1.0,0.0,0.0 }
\path [draw, thick, color = color_1, ->] (0,0) -- (2,2) -- (3,1) --
(5,3) -- (6,2) -- (8,4);
\definecolor{color_2}{rgb}{ 0.25,0.0,0.75 }
\path [draw, thick, color = color_2, ->] (0,0) -- (1,-1) -- (2,0) --
(4,-2) -- (5,-1) -- (7,-3) -- (8,-2);
\definecolor{color_3}{rgb}{ 0.75,0.0,0.25 }
\path [draw, thick, color = color_3, ->] (0,0) -- (1,-1) -- (2,0) --
(3,-1) -- (5,1) -- (6,0) -- (8,2);
\definecolor{color_4}{rgb}{ 0.0,0.0,1.0 }
\path [draw, thick, color = color_4, ->] (0,0) -- (1,-1) -- (2,0) --
(4,-2) -- (5,-1) -- (8,-4);
\end{tikzpicture}

\end{figure}

For the $A_2$ type, we have produced the path crystal for $V(\lambda=\rho)$ in figure \ref{label:A2_PathCrystal_Vrho}, where one can observe six extremal paths and two paths ending at zero. This is exactly the same configuration as in the crystal of tableaux in figure \ref{label:A2_CrystalOfTableaux_Vrho}.
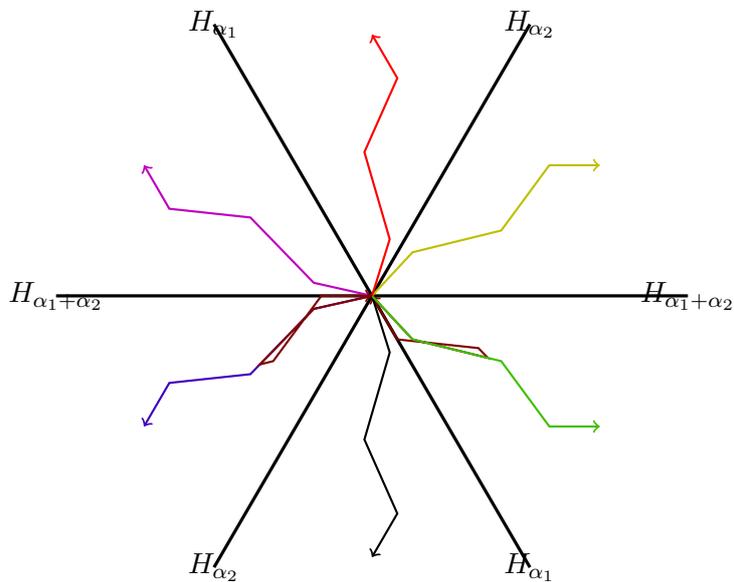
\begin{figure}[ht!]
\centering
\caption{$A_2$-type path crystal for highest weight $\lambda = \rho = \alpha_1 + \alpha_2$}
\label{label:A2_PathCrystal_Vrho}

\begin{tikzpicture}[auto, bend right, scale=2]
\node at(-1.0392,-1.8){$H_{\alpha_2}$};
\path [draw, very thick, color = black] (0,0) -- (-1.0392,-1.8);
\node at(-1.0392,1.8){$H_{\alpha_1}$};
\path [draw, very thick, color = black] (0,0) -- (-1.0392,1.8);
\node at(-2.0784,0.0){$H_{\alpha_1 + \alpha_2}$};
\path [draw, very thick, color = black] (0,0) -- (-2.0784,0.0);
\node at(1.0392,1.8){$H_{\alpha_2}$};
\path [draw, very thick, color = black] (0,0) -- (1.0392,1.8);
\node at(1.0392,-1.8){$H_{\alpha_1}$};
\path [draw, very thick, color = black] (0,0) -- (1.0392,-1.8);
\node at(2.0784,0.0){$H_{\alpha_1 + \alpha_2}$};
\path [draw, very thick, color = black] (0,0) -- (2.0784,0.0);
\definecolor{color_0}{rgb}{ 0.5,0.0,0.0 }
\path [draw, thick, color = color_0, ->] (0,0) -- (0.2667,-0.2887) --
(0.7625,-0.4114) -- (0.7,-0.3464) -- (0.1667,-0.2887) -- (0.0,0.0);
\definecolor{color_1}{rgb}{ 0.75,-0.75,0.75 }
\path [draw, thick, color = color_1, ->] (0,0) -- (-0.3833,0.0866) --
(-0.8,0.5196) -- (-1.3333,0.5773) -- (-1.5,0.866);
\definecolor{color_2}{rgb}{ 0.0,0.0,0.0 }
\path [draw, thick, color = color_2, ->] (0,0) -- (0.1167,-0.3753) --
(-0.05,-0.9526) -- (0.1667,-1.4433) -- (0.0,-1.732);
\definecolor{color_3}{rgb}{ 0.75,0.75,-0.75 }
\path [draw, thick, color = color_3, ->] (0,0) -- (0.2667,0.2887) --
(0.85,0.433) -- (1.1667,0.866) -- (1.5,0.866);
\definecolor{color_4}{rgb}{ 0.25,0.75,-0.75 }
\path [draw, thick, color = color_4, ->] (0,0) -- (0.2667,-0.2887) --
(0.85,-0.433) -- (1.1667,-0.866) -- (1.5,-0.866);
\definecolor{color_5}{rgb}{ 1.0,0.0,0.0 }
\path [draw, thick, color = color_5, ->] (0,0) -- (0.1167,0.3753) --
(-0.05,0.9526) -- (0.1667,1.4433) -- (0.0,1.732);
\definecolor{color_6}{rgb}{ 0.25,-0.75,0.75 }
\path [draw, thick, color = color_6, ->] (0,0) -- (-0.3833,-0.0866)
-- (-0.8,-0.5196) -- (-1.3333,-0.5773) -- (-1.5,-0.866);
\definecolor{color_7}{rgb}{ 0.5,0.0,0.0 }
\path [draw, thick, color = color_7, ->] (0,0) -- (-0.3833,-0.0866)
-- (-0.7375,-0.4547) -- (-0.65,-0.433) -- (-0.3333,0.0) -- (0.0,0.0);
\end{tikzpicture}

\end{figure}

In fact, the Littelmann path model goes beyond and can be generalized to all Lie groups in the Cartan-Killing classification (see fig.\ref{label:G2_PathCrystal_Vrho}).
\input{G2_PathCrystal_Vrho.tex}

\subsubsection{``Asymptotic'' representation theory}
Renormalisation is particularly easy to think about for paths. As such, building a continuous model that is the limit of Littelmann's is quite natural. This is the work of Biane and al. in \cite{bib:BBO} and \cite{bib:BBO2}. For example in \cite{bib:BBO} remark 5.8, one can see how asymptotic weight multiplicities appear by considering ``very long'' Littelmann paths.

\subsection{The philosophy of path models}
In general, we mean by path model, a device where one ``enumerates'' paths on a Euclidian space in order to extract representation-theoretic informations about a Lie group. Of course, ``counting'' paths in a continuous setting means computing the probability of certain events, under a certain canonical measure. Certain properties are expected:
\begin{itemize}
 \item The notion of weight is given by taking a path's endpoint.
 \item Irreducible representations should be in correspondence with connected path crystals.
 \item Tensor product should correspond to the concatenation of paths.
 \item Weight multiplicities will be given by the image measure of a natural measure through the weight map, hence a character formula.
 \item A Littlewood-Richardson rule.
\end{itemize}

\section{Pitman-type theorems in probability}

Pitman's theorem has known a remarkably long list of extensions, to the point it seems hard to draw a complete picture. For now, I can distinguish between several directions in generalizing the result.
\begin{itemize}
 \item The first direction is 'geometric lifting'. The usual Pitman transform $\Pc$ is the zero temperature limit of a smoother path transform $\Tc^q$, where $q$ can be seen as the temperature parameter:
$$ \Tc^q\left( \pi \right)(t) = \pi(t) + q \log( \int_0^t e^{ -\frac{2 \pi(s)}{q} }ds ) 
\stackrel{ q \rightarrow 0 }{\rightarrow} \Pc\left( \pi \right)(t)$$
It is worth mentioning that Pitman's path transform also has applications in physics, and more precisely for modelling random polymers in Brownian environnement ( O'Connell, Moriarty \cite{bib:MOC07}, O'Connell \cite{bib:OConnell} ). In such a case, the path $\pi$ is Brownian and the weight $e^{-\frac{2\pi(.)}{q}}$ is a Boltzmann weight.

This has strong links with tropical geometry a very recent field in mathematics. From a physical point of view, 'geometric lifting' means considering higher temperatures. Following those terms, one could say that the geometric lifting of Pitman's theorem was proven by Matsumoto and Yor in \cite{bib:MY00-1, bib:MY00-2}, in 2000. 

 \item A second direction is looking for non-trivial multidimensional extensions to Pitman's theorem. Here, the approach initiated by Biane, Bougerol and O'Connell (\cite{bib:BJ, bib:BBO, bib:BBO2}) is based on the intuition that such structure in probability can appear only from the world of rigid structures, Algebra and more specifically group theory.

In \cite{bib:BBO} were introduced Pitman path transforms associated to simple roots $\alpha$:
$$ \Pc_{\alpha^\vee}(\pi)(t) = \pi(t) - \inf_{0 \leq s \leq t}\alpha\left( \pi(s) \right) \alpha^\vee $$
There, it is also proven that these path transforms satisfy the braid relationships, giving thus a meaning to $\mathcal{P}_w$ for $w$ an element of a Weyl group. The transform $\mathcal{P}_{w_0}$ associated to the longest element $w_0$ plays a special role in the continuous Littelmann model, thus in representation theory, but also in probability: this transform folds Brownian motion in the Weyl chamber in such a way that it remains Markovian. The process thus obtained is a Brownian motion conditioned to stay inside that Weyl chamber. The construction goes beyond the cristallographic case and is valid for Coxeter groups.
\end{itemize}

For a global view we produced table \ref{tab:thesis_perspective} that puts this work in perspective with previous mathematical theory. In all cases, a miraculous Markov property appears, as well as a remarkable harmonic function and an intertwining measure. Finding such structures was our Ariadne's thread.

\begin{table}[ht!]
\begin{center}
  \caption{Perspectives}
  \label{tab:thesis_perspective}
  \begin{tabular}{| p{1.5cm} | p{6cm} | p{7cm} |}
  \hline
                   &  { \centering One dimensional setting / $SL_2$ }
                   &  { \centering Multidimensional setting / $G$ semi-simple group } \\
  \hline
  { \centering Crystal versions} &

Pitman and Rogers \cite{bib:RogersPitman}: If $B^{(\mu)}$ is a Brownian motion with drift $\mu$ then $B^{(\mu)}_t - 2 \inf_{ 0 \leq s \leq t} B^{(\mu)}_s$ is a Markov process with infinitesimal generator
$$ \frac{1}{2} \frac{d^2}{dx^2} + \frac{d}{dx} \log\left( h_\mu \right) \frac{d}{dx}$$

The function $h_\mu(x) = \frac{ \sinh( \mu x ) }{ \mu }$ satisfies the eigenfunction equation on $\mathbb{R}^+$, with Dirichlet boundary conditions:
$$ \frac{1}{2} \frac{d^2}{dx^2} h_\mu = \frac{ \mu^2 }{2} h_\mu$$

Intertwining measure: Uniform measure on $[0, r]$ when $\mu=0$.
                    &  

Biane, Bougerol, O'Connell \cite{bib:BBO}: In the context of the continuous Littelmann path model, consider the highest path transform $\Pc_{w_0}$. For $W$ a Brownian motion in the Cartan subalgebra, $\Pc_{w_0} W$ is a Brownian motion conditionned in the sense of Doob to stay in the Weyl chamber. It is a Markov process with infinitesimal generator:
$$ \frac{1}{2} \Delta + \nabla \log\left(h(x)\right) \cdot \nabla $$

The function $h(x)$ is (the unique up to a multiplicative constant) positive harmonic on the Weyl chamber with Dirichlet boundary conditions.

Intertwining measure: Continuous Duistermaat-Heckmann measure. \\
  \hline
  { \centering Geometric versions} &

Matsumoto and Yor \cite{bib:MY00-1, bib:MY00-2}: If $B^{(\mu)}$ is a Brownian motion with drift $\mu$ then $B^{(\mu)}_t + \log\left( \int_0^t e^{-2 B^{(\mu)}_s } ds \right)$ is a Markov process with infinitesimal generator 
$$ \frac{1}{2} \frac{d^2}{dx^2} + \frac{d}{dx} \log\left( K_\mu \right) \frac{d}{dx}$$

The function $K_\mu\left( x \right)$ satisfies the eigenfunction equation:
$$ \frac{1}{2} \frac{d^2}{dx^2} K_\mu - e^{-2x } K_\mu= \frac{ \mu^2 }{2} K_\mu$$

Intertwining measure: Generalized Inverse Gaussian law.
                    &
There is a highest path transform $\Tc_{w_0}$ in the context of a path model for geometric crystals. For $W^{(\mu)}$ a Brownian motion with drift $\mu$ in the Cartan subalgebra, $\Tc_{w_0} W^{(\mu)}$ is a Markov process with infinitesimal generator:
$$ \frac{1}{2} \Delta + \nabla \log\left(\psi_\mu\right) \cdot \nabla $$

The function $\psi_\mu(x)$ solves the quantum Toda eigenfunction equation:
$$ \half \Delta \psi_\mu - \sum_{\alpha \in \Delta } \frac{\langle \alpha, \alpha \rangle}{2} e^{ -\alpha(x) } \psi_\mu = \frac{\langle \mu, \mu \rangle}{2} \psi_\mu$$

Intertwining measure: A geometric Duistermaat-Heckmann measure. \\
  \hline
  \end{tabular}
\end{center} 
\end{table}
\chapter{Preliminaries}

\section{On Lie groups}
\index{$G$: Simply-connected complex semi-simple group}
Let $G$ be a simply-connected complex semi-simple group. The framework can be easily extented to the reductive case, since adding the center never causes much trouble. Such groups are just groups of complex matrices:

\begin{thm}[\cite{bib:Borel06}]
$G$ is an algebraic linear group and up to conjugation self-adjoint.
\end{thm}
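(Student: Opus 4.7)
The plan is to first exhibit a faithful finite-dimensional complex linear representation of $G$, and then use the existence of a compact real form to realize $G$, after conjugation, as a subgroup of some $GL_n(\C)$ stable under conjugate transpose.

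For the linearity and algebraicity, I would invoke the highest weight theory of semi-simple Lie algebras. Since $G$ is simply connected with semi-simple Lie algebra $\gfrak$, every finite-dimensional representation of $\gfrak$ integrates to a representation of $G$. Taking $V = \bigoplus_i V(\omega_i)$ where $\omega_i$ runs over the fundamental weights yields a representation whose differential is faithful on $\gfrak$; since $G$ is connected and simply connected, the kernel is a discrete normal subgroup of a simply connected group with trivial Lie algebra part, hence trivial. This gives an embedding $G \hookrightarrow GL(V)$. Zariski-closedness of the image is standard for connected subgroups whose Lie algebra equals its derived subalgebra: $\gfrak = [\gfrak,\gfrak]$ forces $\gfrak$ to be algebraic in $\mathfrak{gl}(V)$, and the corresponding algebraic subgroup is connected with the same Lie algebra, hence coincides with $G$.

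For the self-adjoint property, I would use Weyl's unitary trick: there exists a maximal compact subgroup $K \subset G$ with Lie algebra $\kfrak$ providing a real form $\gfrak = \kfrak \oplus i\kfrak$. Averaging any Hermitian inner product on $V$ against Haar measure on $K$ produces a $K$-invariant Hermitian form $h$. Picking an orthonormal basis with respect to $h$, we may conjugate the embedding $G \subset GL(V)$ inside $GL_n(\C)$ so that $K \subset U(n)$, that is $\kfrak \subset \ufrak(n)$ (anti-Hermitian matrices). The Cartan decomposition then places $i\kfrak$ inside the Hermitian matrices, so $\gfrak$ is stable under $X \mapsto X^*$. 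Since $(\exp X)^* = \exp(X^*)$ and $G$ is generated by $\exp(\gfrak)$ (as a connected Lie group), $G$ itself is stable under conjugate transpose, which is the asserted self-adjointness.

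The main obstacle is really the simply-connected hypothesis in obtaining a globally faithful representation: without it one would only recover a quotient by a finite central subgroup, and the argument would produce a matrix realization of a cover rather than of $G$. A secondary care point is checking Zariski-closedness of the image; this is where semi-simplicity (and not mere reductivity) is convenient, since the derived-algebra property removes the need to handle any central torus separately. Both ingredients, as well as the compact form used in the second paragraph, are classical features of the structure theory of complex semi-simple groups, which is why the statement is attributed to a reference rather than proved in detail here.
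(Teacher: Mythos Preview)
The paper does not prove this statement; it simply cites Borel's book. So there is no approach to compare against, and your sketch is in the spirit of what a standard reference would do. The unitary-trick argument for self-adjointness is correct.

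However, your justification for faithfulness contains a genuine gap. You write that the kernel of $G \to GL(V)$ is ``a discrete normal subgroup of a simply connected group with trivial Lie algebra part, hence trivial.'' This inference is false: simply-connected semi-simple groups typically have nontrivial finite centers (e.g.\ $\{\pm I\} \subset SL_2(\C)$), and any such central subgroup is a discrete normal subgroup with trivial Lie algebra. Simple connectivity does not kill discrete normal subgroups.

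The correct argument is the one you almost set up but did not use: the kernel, being discrete and normal in a connected group, is central. A central element $z$ acts on $V(\omega_i)$ by the scalar $z^{\omega_i}$ (all weights of $V(\omega_i)$ lie in $\omega_i + Q$). If $z$ lies in the kernel then $z^{\omega_i}=1$ for every $i$; since the fundamental weights generate the full weight lattice $P$ and, for the simply-connected group, $P$ is exactly the character lattice of the maximal torus, this forces $z=e$. This is where simple connectivity is genuinely used --- it identifies the character lattice with $P$, so that the $\omega_i$ separate points of $Z(G)$ --- and not in the way you stated.
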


Therefore, in our treatment, $G$ is a closed subgroup in $GL_p(\C)$, for $p \in \N$ large enough, and stable under the usual adjoint.

The Cartan subgroup $H$ is an abelian subgroup of maximal dimension. $H \approx \left(\C^*\right)^n$ and the integer $n$ is called the rank of $G$. $H$ is usually taken to be made of diagonal matrices.
The tangent space at the identity element is $\gfrak = T_e G \subset M_N(\C)$. It can be characterized using the usual matrix exponential as:
\index{$\gfrak$: Lie algebra of $G$}
$$ \gfrak = \{ x \in M_p(\C) \ | \ \forall t \in \R, \exp(tx) \in G\}$$
$\gfrak$ is endowed with the structure of a complex semi-simple Lie algebra $(\gfrak, [,])$. The Lie bracket $[,]$ is given by the commutator:
$$ \forall x, y \in \gfrak, [x, y] = xy - yx$$

\section{On Lie algebras}
As a standard reference for the structure of Lie algebras and their representation theory, we recommend \cite{bib:Humphreys72}.

\subsection*{Structure}
\index{$\hfrak$: Cartan subalgebra of $\gfrak$}
The Cartan subalgebra  $\hfrak = T_e H$ is a maximal abelian subalgebra in $\gfrak$.

The adjoint action valued in the space of endomorphisms of $\gfrak$, $ad: \gfrak \longrightarrow End(\gfrak)$ is defined as $ad(x)(y) = [x,y]$ for $x, y \in \gfrak$. It is the differential of the action $Ad: G \longrightarrow Aut(\gfrak)$:
$$\forall g \in G, Ad(g)(x) = g x g^{-1}$$
There is a symmetric bilinear form on $\gfrak$ called the Killing form:
$$K(x,y) := Tr\left( ad(x)\circ ad(y) \right)$$
By Cartan's criterion, since $\gfrak$ is semi-simple, the Killing form is non-degenerate. $K$ can then be transported to the dual $\gfrak^*$. Its restriction to $\hfrak$ (resp. $\hfrak^*)$ is in fact a scalar product written $\langle\cdot, \cdot \rangle$.

\index{$\Phi$: Root system}
\index{$\Phi^+$, $\Phi^-$: Positive and negative roots}
\index{$\Delta$: Simple roots}

A fundamental idea in the classification of these Lie groups is that the adjoint action is codiagonalizable, once restricted to $\hfrak$. For each $h \in \hfrak$, $ad(h) \in End(\gfrak)$ has eigenvalues $\alpha(h)$, and the dependency in $h$ is obviously linear. This gives rise to a family of linear forms $\{0\} \sqcup \Phi \subset \hfrak^*$ such that for all $h$, $ad(h)$ has eigenvalues $\{0\} \sqcup \left(\alpha(h)\right)_{\alpha \in \Phi}$. Hence a root-space decomposition:
$$\gfrak = \bigoplus_{\alpha \in \{0\} \sqcup \Phi} \gfrak_{\alpha} = \gfrak_0 \oplus \bigoplus_{\alpha \in \Phi^+} \left( \gfrak_{\alpha} \oplus \gfrak_{-\alpha} \right)$$
where for every $\alpha \in \{0\} \sqcup \Phi$:
$$\gfrak_\alpha = \left\{ x \in \gfrak \ | \ \forall h \in \hfrak, ad(h)(x) = \alpha(h)x \right\}$$
Here, $\gfrak_0 = \hfrak$ and $\Phi \subset \hfrak^*$ is called the set of roots. It is the disjoint union of $\Phi^+$, roots chosen to be called 'positive', and $\Phi^- := -\Phi^+$ the set of 'negative' roots. This choice uniquely determines $\Delta = (\alpha_i)_{i \in I} \subset \Phi^+$ a simple system spanning $\hfrak^*$ such that every positive root is a sum with positive integer coefficients of simple roots (\cite{bib:Humphreys90}) - and reciprocally, a simple system uniquely determines a positive system. Moreover, the simple system $\Delta$ forms a basis of $\hfrak^*$.

\index{$\afrak$: Real part of the Cartan subalgebra}
The Cartan subalgebra has a decomposition $\hfrak = \afrak + i \afrak$ with $\afrak$ chosen to be the real subspace of $\hfrak$ where roots are real valued.

For every root $\beta \in \Phi$, the coroot $\beta^\vee$ is the unique vector in $\hfrak$ such that 
$$\forall h \in \hfrak, K\left( \beta^\vee, h\right) = \frac{ 2 \beta(h)}{\langle \beta, \beta \rangle}$$
When identifying $\hfrak$ and $\hfrak^*$ thanks to the Killing form, it is customary to simply write $\beta^\vee = \frac{2 \beta}{\langle \beta, \beta \rangle}$.

The Cartan matrix $A = (a_{ij})_{1 \leq i,j \leq n} \in M_n(\Z)$ is the matrix with coefficients $a_{i,j} = \alpha_j\left( \alpha_i^\vee \right)$. It allows a complete classification of complex semi-simple algebras, and therefore of complex semi-simple groups. Dynkin diagrams are a convenient graphical way of representing Cartan matrices.

\subsection*{Classification}
A matrix $A \in M_n(\Z)$ is said to be symmetrizable if there are matrices $B$ and $D$ with $B$ symmetric and $D$ positive diagonal such that:
$$ A = D B$$
Any Cartan matrix of a semi-simple Lie algebra is symmetrizable, with the symmetric $B$ being positive definite. Moreover, diagonal elements are equal to $2$ while the others are $\leq 0$. And reciprocally, for a matrix $A$ satisfying those properties, a complex semi-simple Lie algebra with Cartan matrix $A$ can be constructed (theorem 2.111 in \cite{bib:Knapp02}). The simple Lie algebras are known, and are classified by types:
\begin{itemize}
 \item Type $A_n$: $\gfrak = \mathfrak{sl}_n$ $G = SL_n(\C)$
 \item Type $B_n$: $\gfrak = \mathfrak{so}_{2n+1}$ $G = SO_{2n+1}(\C)$
 \item Type $C_n$: $\gfrak = \mathfrak{sp}_{n}$ $G = Sp_{n}(\C)$
 \item Type $D_n$: $\gfrak = \mathfrak{so}_{2n}$ $G = SO_{2n}(\C)$
 \item Exceptional types: $E_6$, $E_7$, $E_8$, $F_4$, $G_2$.
\end{itemize}

\subsection*{Langlands dual}
\index{$G^\vee$: Langlands dual of $G$}
Since the transpose of Cartan matrix $A$ is still a Cartan matrix, there is a simply-connected complex semi-simple group $G^\vee$ whose Cartan matrix is the transpose of $A$. It is called the Langlands dual of $G$ or for short the L-group. Types $B$ and $C$ are dual to each other. The other types are self-dual ($ADE$, $F_4$, $G_2$).

Clearly, the associated simple root system $\Delta^\vee$ is formed by the simple coroots. $\Phi^\vee$ denotes the dual root system.

\subsection*{Simply-laced}
The $ADE$ types are called simply laced because their Dynkin diagrams have only single edges.

\subsection*{Splitting}
For each positive root $\alpha \in \Phi^+$, we can choose an $\mathfrak{sl}_2$-triplet $(e_\alpha, f_\alpha, h_\alpha) \in \gfrak_\alpha \times \gfrak_{-\alpha} \times \hfrak$ such that $[e_\alpha, f_\alpha] = h_\alpha$ and $h_\alpha = \alpha^\vee$. $(e_\alpha, f_\alpha, h_\alpha)_{\alpha \in \Delta}$ will be the set of simple $\mathfrak{sl}_2$-triplets.

The name '$\mathfrak{sl}_2$-triplet' becomes clear after exhibiting the Lie algebra homomorphisms $\phi_\alpha: \mathfrak{sl}_2 \longrightarrow \gfrak$ such that:
$$\left\{ \begin{array}{ll}
\phi_\alpha\left( x \right) = e_\alpha\\
\phi_\alpha\left( y \right) = f_\alpha\\
\phi_\alpha\left( h \right) = h_\alpha\\
\end{array} \right.$$
where $x = \left( \begin{array}{cc} 0 & 1 \\ 0 &  0 \end{array} \right) $, $y = \left( \begin{array}{cc} 0 & 0 \\ 1 &  0 \end{array} \right)$,
$ h = \left( \begin{array}{cc} 1 & 0 \\ 0 & -1 \end{array} \right) $.
The exponential map $e: \gfrak \rightarrow G$ lifts these homomorphisms from the Lie algebra $\gfrak$ to the group $G$:
 each $\phi_\alpha$ gives rise at the group level to a Lie group homomorphism that embed $SL_2$ in $G$ and that will be denoted in the same way. The following notations are common for $t \in \C$:
\begin{align*}
 t^{h_\alpha} & = e^{ \log(t) h_\alpha} = \phi_\alpha\left(  \left( \begin{array}{cc} t & 0 \\ 0 & t^{-1} \end{array}\right) \right), t \neq 0 \\
 x_\alpha(t) & = e^{ t e_\alpha} = \phi_\alpha\left(  \left( \begin{array}{cc} 1 & t \\ 0 & 1 \end{array}\right) \right)\\
 y_\alpha(t) & = e^{ t f_\alpha} = \phi_\alpha\left(  \left( \begin{array}{cc} 1 & 0 \\ t & 0 \end{array}\right) \right)\\
 x_{-\alpha}(t) & = y_\alpha(t) t^{-h_\alpha} = \phi_\alpha\left(  \left( \begin{array}{cc} t^{-1} & 0 \\ 1 & t \end{array}\right) \right)\\
 y_{-\alpha}(t) & = t^{-h_\alpha} x_\alpha(t) = \phi_\alpha\left(  \left( \begin{array}{cc} t^{-1} & 1 \\ 0 & t \end{array}\right) \right)
\end{align*}
\begin{rmk}
 $\log t$ is any determination of the natural logarithm on a simply connected domain. Since they differ by a multiple of $2i\pi$ and $h_\alpha$ is a coroot, the determination does not matter.
\end{rmk}

\subsection*{Subalgebras of $\gfrak$ and subgroups of G}

$$\gfrak = \mathfrak{n} \oplus \hfrak \oplus \mathfrak{u}$$
is a triangular decomposition where
\begin{itemize}
 \item $\hfrak$ is the Cartan subalgebra (unique up to conjugacy \cite{bib:Humphreys72})\\
 \item $\mathfrak{u}$ ( resp. $\mathfrak{n}$) is the algebra generated by the $(e_\alpha)_{\alpha \in \Phi^+}$ (resp. $(f_\alpha)_{\alpha \in \Phi^+}$). These generators are referred to as the Chevalley generators.\\
\end{itemize}
$$\mathfrak{b^+} = \hfrak \oplus \mathfrak{u} $$
$$\mathfrak{b} = \mathfrak{n} \oplus \hfrak $$
\linebreak
The corresponding subgroups are:
\begin{itemize}
 \item H a maximal torus with Lie algebra $\hfrak$\\
 \item $N$ lower (resp. $U$ upper) unipotent subgroup with Lie algebra $\nfrak$ (resp. $\mathfrak{u}$)\\
 \item $B$ lower (resp. $B^+$ upper) Borel subgroup with Lie algebra $\bfrak$ (resp. $\bfrak^+$)\\
\end{itemize}

The reader unfamiliar with such objects can think of type $A_n$, where $B$ (resp. $B^+$) is the lower (resp. upper) triangular matrices in the group $G=SL_{n+1}$. $N$ (resp. $U$) is the subset of $B$ (resp. $B^+$) with unit diagonal.

\section{Weyl group and root systems} 
\index{$W$: Weyl group}
\index{$C$: Weyl chamber}
To every linear form $\beta \in \hfrak^*$, define the associated reflection $s_\beta$ on $\hfrak$ with:
$$\forall \lambda \in \hfrak,  s_\beta \lambda = \lambda - \beta\left( \lambda \right) \beta^\vee $$
The reflections $(s_\alpha)_{\alpha \in \Delta}$ are called simple reflections and they generate a finite group $W$ called the Weyl group. Define $m_{s,s'}$ as the order of the element $s s'$. $W$ can be realized as $W = \textrm{Norm}( H )/ H $.\\
The choice of a positive system $\Delta$ fixes an open Weyl chamber:
$$C := \left\{ x \in \afrak | \forall \alpha \in \Delta, \alpha(x) > 0 \right\}$$
Then, a fundamental domain of action for the Weyl group on $\afrak$ is the closed Weyl chamber:
$$\bar{C} = \left\{ x \in \afrak \ | \ \alpha(x) \geq 0, \forall \alpha \in \Delta \right\}$$

For $w \in W$, the inversion set of $w$ is defined as:
$$ Inv(w) := \left\{ \beta \in \Phi^+, w      \beta \in \Phi^{-} \right\} $$
For $w \in W$, a reduced expression is given by writing $w$ as product of simple reflections with minimal length:
$$ w = s_{i_1} s_{i_2} \dots s_{i_{\ell}}$$
A reduced word is such a tuple ${\bf i} = \left( i_1, \dots, i_{\ell} \right)$ and the set of reduced words for $w \in W$ is denoted by $R(w)$.

\index{$\ell$: Length function on the Weyl group}
Since all reduced expressions have necessarily the same length, it defines unambiguously the length function $\ell: W \rightarrow \N$. It has another characterization as the cardinal of the inversion set:
\begin{thm}
\label{thm:weyl_group_properties}
For $w \in W$:
$$ \ell(w) = |Inv(w)|$$
\end{thm}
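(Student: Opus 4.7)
The plan is to prove the identity by induction on $\ell(w)$, using as the engine the classical geometric lemma that multiplying by a simple reflection on the right changes both $\ell(w)$ and $|Inv(w)|$ by exactly $\pm 1$ in a synchronized way.

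First I would establish the preliminary lemma that for a simple root $\alpha_i$, the reflection $s_i$ sends $\alpha_i$ to $-\alpha_i$ and permutes $\Phi^+ \setminus \{\alpha_i\}$. This follows from the explicit formula $s_i(\beta) = \beta - \beta(\alpha_i^\vee)\alpha_i$, together with the fact that any positive root $\beta \neq \alpha_i$ has at least one coefficient (in the simple root basis) at some index $j \neq i$ that is positive; since $s_i$ only modifies the $\alpha_i$-coefficient, $s_i(\beta)$ still has a positive coefficient in the expansion, and hence lies in $\Phi^+$ (using the fact that a root has either all nonnegative or all nonpositive coefficients).

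The key step is then the exchange formula: for any $w \in W$ and any simple $\alpha_i$,
$$|Inv(ws_i)| = \begin{cases} |Inv(w)| + 1 & \text{if } w(\alpha_i) \in \Phi^+, \\ |Inv(w)| - 1 & \text{if } w(\alpha_i) \in \Phi^-. \end{cases}$$
To see this, I note $\beta \in Inv(ws_i)$ iff $\beta \in \Phi^+$ and $ws_i(\beta) \in \Phi^-$. Setting $\gamma = s_i(\beta)$ and using the preliminary lemma to see that $s_i$ is an involution preserving $\Phi^+ \setminus \{\alpha_i\}$, one finds that $Inv(ws_i) \setminus \{\alpha_i\} = s_i(Inv(w) \setminus \{\alpha_i\})$, while the question of whether $\alpha_i$ itself belongs to $Inv(ws_i)$ reduces to the sign of $w(\alpha_i)$. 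This yields the $\pm 1$ jump. Simultaneously, a standard argument on reduced expressions (if $w'$ has length $k-1$ and $w's_i$ has a subword $w$ that is already known reduced of length $k$, then $w'(\alpha_i)$ must lie in $\Phi^+$) shows that $\ell(ws_i) = \ell(w) + 1$ precisely when $w(\alpha_i) \in \Phi^+$.

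Finally, I would conclude by induction: the base case $w = e$ gives $\ell(e) = 0 = |Inv(e)|$. For the inductive step, pick a reduced expression $w = s_{i_1} \cdots s_{i_k}$ and set $w' = s_{i_1} \cdots s_{i_{k-1}}$, so $\ell(w') = k-1$ and $w'(\alpha_{i_k}) \in \Phi^+$. By the exchange formula $|Inv(w)| = |Inv(w')| + 1$, and by the induction hypothesis $|Inv(w')| = k-1$, hence $|Inv(w)| = k = \ell(w)$. The main obstacle is really the exchange formula; the subtlety is the bookkeeping of where $\alpha_i$ itself lands under the involution $s_i$ and whether it enters or leaves the inversion set, but once the preliminary permutation lemma is in hand this is just a careful case analysis.
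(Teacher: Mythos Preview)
Your proof is correct and follows the standard textbook argument (essentially the one in Humphreys, \emph{Reflection groups and Coxeter groups}, which the paper cites). Note, however, that the paper does not actually supply a proof of this theorem: it is stated without proof in the preliminaries as a well-known characterization of the length function, so there is no ``paper's own proof'' to compare against.
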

The unique longuest element is denoted by $w_0$ and we set $m=\ell(w_0)$.\\

\subsection*{Braid relationships and braid moves}
If $s, s' \in W$ are simple reflections, a braid relationship in $W$ is the equality between $d=m_{s,s'}$ terms:
$$ s s' s \dots = s' s s' \dots $$
A braid move or a $d$-move occurs when substituting $ s s' s \dots$ for $s' s s' \dots $ within a reduced word. An important theorem is the following:

\begin{thm}[Tits lemma]
\label{thm:tits_lemma}
Two reduced expressions of the same $w \in W$ can be derived from each other using braid moves. 
\end{thm}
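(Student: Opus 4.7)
The plan is to argue by induction on $\ell := \ell(w)$. The cases $\ell \leq 1$ are immediate since $w$ then has at most one reduced expression. For the inductive step, fix two reduced words $\mathbf{i} = (i_1, \dots, i_\ell)$ and $\mathbf{j} = (j_1, \dots, j_\ell)$ for $w$, and analyse according to whether they begin with the same simple reflection.

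If $s_{i_1} = s_{j_1} = s$, then $sw = s_{i_2}\cdots s_{i_\ell} = s_{j_2}\cdots s_{j_\ell}$ are two reduced expressions of the shorter element $sw$, so the induction hypothesis produces a chain of braid moves relating the two suffixes; prepending $s$ to each intermediate word yields the required chain for $w$ itself. The harder case is when $s := s_{i_1} \neq s_{j_1} =: s'$. Since both $\mathbf{i}$ and $\mathbf{j}$ are reduced, we have $\ell(sw) < \ell(w)$ and $\ell(s'w) < \ell(w)$. I would then invoke the following auxiliary lemma, which is the real content of the statement: \emph{if $s \neq s'$ are simple reflections with $\ell(sw), \ell(s'w) < \ell(w)$, then $w$ admits a reduced expression with prefix $\underbrace{ss's\cdots}_{m_{s,s'}}$ and another with prefix $\underbrace{s'ss'\cdots}_{m_{s,s'}}$.} Granting this, write these two expressions as $(ss's\cdots)_{m} \cdot u$ and $(s'ss'\cdots)_{m} \cdot u$ for a common reduced tail $u$; they differ from one another by a single $d$-move on the prefix. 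Both $\mathbf{i}$ and the first of these new expressions begin with $s$, so they are connected by braid moves thanks to the easy case above; symmetrically for $\mathbf{j}$ and the second. Concatenating these three chains gives the desired sequence of braid moves between $\mathbf{i}$ and $\mathbf{j}$.

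The main obstacle is the auxiliary lemma, since it does not reduce mechanically to the Coxeter relations and is precisely the place where the geometry of the root system enters. My preferred route is a double induction: fix $\ell(w)$ and induct on $m = m_{s,s'}$. One observes that $\ell(sw) < \ell(w)$ together with the exchange condition on any reduced word of $w$ starting with $s'$ gives $\ell(s's w) < \ell(sw)$ (and by symmetry $\ell(ss'w) < \ell(s'w)$), so one can peel off alternating factors of $s$ and $s'$ from the left until after $m$ steps one reaches the identity of the rank-two dihedral subgroup $\langle s, s' \rangle$. Equivalently, and perhaps more transparently, one can argue geometrically: the hypothesis $\ell(sw), \ell(s'w) < \ell(w)$ means that the two simple roots $\alpha, \alpha'$ associated to $s, s'$ both lie in $\mathrm{Inv}(w^{-1})$, and then a careful look at the rank-two root subsystem $\Phi \cap (\R \alpha \oplus \R \alpha')$ shows that every positive root of this subsystem lies in $\mathrm{Inv}(w^{-1})$. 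Since the longest element of $\langle s, s' \rangle$ is the unique element with this inversion set and equals $(ss's\cdots)_m = (s'ss'\cdots)_m$, it must be a left factor of $w$ in both orders, producing the two reduced expressions required. Once this is in hand, the rest of the induction above proceeds as described, and Theorem 2.10 follows.
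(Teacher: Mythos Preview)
The paper does not actually prove this theorem: it is stated in the preliminaries chapter as a classical result (Tits' lemma, also known as Matsumoto's theorem), with the surrounding material referring the reader to standard references such as Bourbaki and Humphreys for Coxeter group facts. So there is no ``paper's own proof'' to compare against.

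That said, your sketch is correct and is essentially the standard argument. A couple of small remarks. First, in the sentence ``write these two expressions as $(ss's\cdots)_{m} \cdot u$ and $(s'ss'\cdots)_{m} \cdot u$ for a common reduced tail $u$'', the common $u$ is not part of the auxiliary lemma's conclusion but follows immediately from it: once you know the longest element $w_0^{\{s,s'\}}$ is a left factor of $w$, set $u = w_0^{\{s,s'\}} w$, pick any reduced word for it, and prepend either braid prefix. Second, your first route to the auxiliary lemma (``peel off alternating factors'') needs one more ingredient than you state: after removing $s$ you must check that $s'$ is a left descent of $sw$, which is true because $s\alpha_{s'} = \alpha_{s'} - \langle \alpha_{s'},\alpha_s^\vee\rangle \alpha_s$ is a nonnegative combination of $\alpha_s,\alpha_{s'}$ (off-diagonal Cartan entries are $\le 0$) and hence is sent to a negative root by $w^{-1}$; iterating requires re-establishing this at each step, which is doable but less clean than your second, geometric route. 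That geometric argument (all of $\Phi^+_{\{s,s'\}}$ lies in the inversion set of $w^{-1}$, hence $w_0^{\{s,s'\}}$ is a left factor) is the right way to present it, and works exactly as you describe in the crystallographic setting of the paper.
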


\subsection*{Representatives of W in G}
A common set of representatives for the generating reflections $(s_i)_{i \in I}$ is taken as:
$$ \bar{s}_i = \phi_i\left(  \left( \begin{array}{cc} 0 & -1 \\ 1 & 0 \end{array}\right) \right) = e^{-e_i} e^{f_i} e^{-e_i} = e^{f_i} e^{-e_i} e^{f_i}$$
Another common choice is:
$$ \bar{\bar{s}}_i := \bar{s}_i^{-1} = \phi_i\left(  \left( \begin{array}{cc} 0 & 1 \\ -1 & 0 \end{array}\right) \right) = e^{e_i} e^{-f_i} e^{e_i} = e^{-f_i} e^{e_i} e^{-f_i}$$

\begin{thm}[ \cite{bib:KacPeterson}, lemma 2.3 ]
The Weyl group representatives $\bar{s}_i$ (resp. $\bar{\bar{s}}_i$) satisfy the braid relationships:
$$ \bar{s}_i \bar{s}_j \bar{s}_i \dots = \bar{s}_j \bar{s}_i \bar{s}_j \dots $$
It allows us to define unambiguously $\bar{w} = \bar{u} \bar{v}$ if $w = uv$ and $l(w) = l(u) + l(v)$.
\end{thm}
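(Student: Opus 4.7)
The plan is to reduce the braid identity to a calculation inside the rank-$2$ subgroup $G_{ij}$ of $G$ generated by the images $\phi_i(SL_2)$ and $\phi_j(SL_2)$. Since both $\bar{s}_i$ and $\bar{s}_j$ lie in $G_{ij}$, the claimed relation is a statement about this subgroup alone. The Lie algebra $\gfrak_{ij}$ of $G_{ij}$ is a rank-$2$ complex semi-simple Lie algebra, so by the Cartan--Killing classification it is of one of the four types $A_1 \times A_1$, $A_2$, $B_2$, $G_2$, corresponding respectively to $m_{ij} = 2, 3, 4, 6$. In the degenerate case $m_{ij}=2$ (i.e.\ $a_{ij}=a_{ji}=0$), the root-space decomposition gives $[e_i, e_j] = [e_i, f_j] = [f_i, e_j] = [f_i, f_j] = 0$ and $[h_i, e_j] = [h_i, f_j] = 0$, so every exponential entering the definition $\bar{s}_k = e^{-e_k} e^{f_k} e^{-e_k}$ commutes pairwise across $k=i,j$, and $\bar{s}_i \bar{s}_j = \bar{s}_j \bar{s}_i$ is immediate.

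For the three non-trivial cases, I would first show that the putative obstruction
$$
X_{ij} := \underbrace{\bar{s}_i \bar{s}_j \bar{s}_i \cdots}_{m_{ij} \text{ factors}} \; \Bigl(\underbrace{\bar{s}_j \bar{s}_i \bar{s}_j \cdots}_{m_{ij} \text{ factors}}\Bigr)^{-1}
$$
lies in $H$. A short $\mathfrak{sl}_2$-computation inside $\phi_k(SL_2)$ shows that $\operatorname{Ad}(\bar{s}_k)$ acts on $\hfrak$ as the reflection $s_k$: it sends $h_k \mapsto -h_k$, and fixes $\ker(\alpha_k) \subset \hfrak$ since $e_k$ and $f_k$ centralize it. As the braid identity already holds in $W$, it follows that $\operatorname{Ad}(X_{ij})|_{\hfrak} = \mathrm{id}$, and since the centralizer of the maximal torus $H$ in the connected semi-simple group $G$ equals $H$ itself, we get $X_{ij} \in H$. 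To conclude $X_{ij} = 1$, one tests the adjoint action on a basis of root vectors of $\gfrak_{ij}$; since $G$ is simply-connected, an element of $H$ is detected by the full weight lattice, and this reduces to a finite verification in an explicit matrix realization of $G_{ij}$, namely $SL_3$ for $A_2$, $Sp_4$ (or $SO_5$) for $B_2$, and the split form of $G_2$ acting on its $7$-dimensional fundamental representation for $G_2$.

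The second assertion follows from Tits' lemma (Theorem \ref{thm:tits_lemma}): given $w = uv$ with $\ell(w)=\ell(u)+\ell(v)$, the concatenation of reduced expressions of $u$ and $v$ yields a reduced expression of $w$, and any two reduced expressions of $w$ are connected by a finite sequence of braid moves. Each such move leaves the product $\bar{s}_{i_1} \cdots \bar{s}_{i_\ell}$ invariant by the relations just established, so the element $\bar{w}$ is well defined and satisfies $\bar{w} = \bar{u}\bar{v}$.

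The main obstacle is the $G_2$ case, whose $6$-term relation is the most delicate and offers the least symmetry. However, the rank-$2$ reduction keeps the verification inside a fixed low-dimensional matrix group, so no genuine new algebraic difficulty appears beyond the bookkeeping. An alternative streamlining is to bypass the explicit matrix check altogether by observing that $X_{ij} \in H \cap G_{ij}$ is annihilated by $\alpha_i$ and $\alpha_j$ (which can be seen from the explicit action of $\bar{s}_k$ on the two Chevalley generators $e_i, e_j$ and a length count), forcing $X_{ij}=1$ in the simply-connected rank-$2$ group.
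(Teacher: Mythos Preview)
The paper does not prove this theorem; it is quoted from Kac--Peterson and used as a black box. So there is no paper-proof to compare against, and your proposal should be read as a standalone argument.

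Your main line---rank-$2$ reduction, the trivial case $m_{ij}=2$, and then an explicit matrix verification in $SL_3$, $Sp_4$, and (simply-connected) $G_2$---is a valid strategy. One expository wobble: you write that to conclude $X_{ij}=1$ ``one tests the adjoint action on a basis of root vectors of $\gfrak_{ij}$'', and then immediately pivot to ``an element of $H$ is detected by the full weight lattice''. These two sentences are in tension. Testing $\operatorname{Ad}(X_{ij})$ on root vectors only yields $X_{ij}^\beta = 1$ for all roots $\beta$, which places $X_{ij}$ in the center of $G_{ij}$, not at the identity; the center of $SL_3$ has order $3$ and that of $Sp_4$ has order $2$. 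You do need the weight lattice (equivalently, a faithful representation such as the standard one), and your explicit matrix verification provides exactly that. So the argument is fine once you drop the misleading reference to root vectors and go straight to the matrix check.

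Your ``alternative streamlining'' at the end, however, has a genuine gap. Showing that $\alpha_i$ and $\alpha_j$ vanish on $X_{ij}$ only shows $X_{ij}$ is central in $G_{ij}$; since the simply-connected rank-$2$ groups $SL_3$ and $Sp_4$ have nontrivial center, this does \emph{not} force $X_{ij}=1$. The alternative would work for $G_2$ (trivial center) but fails precisely in the two easiest cases. So either discard that paragraph, or supplement it with an argument ruling out the nontrivial central elements.
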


However they do not form a presentation of the Weyl group, since for example $ (\bar{s}_i)^2 = \phi_i(-id) \neq id $.\\

The representative $\bar{w}_0$ of the longest element $w_0$ has an important property:
\begin{proposition}[\cite{bib:BBBR92} lemma 4.9]
\label{proposition:w_0_action_ad}
Via the Ad action, $\bar{w}_0$ acts on the Chevalley generators as:
$$ \forall \alpha \in \Delta, Ad(\bar{w}_0)( e_\alpha ) = - f_\alpha$$
\end{proposition}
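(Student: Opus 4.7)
The strategy I would take is to reduce the claim to a direct computation inside a rank-one subgroup and combine it with a carefully chosen factorization of $\bar{w}_0$.

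First, I establish the base case inside $SL_2$. Using the Lie group homomorphism $\phi_\alpha: SL_2 \to G$, the adjoint action of $\bar{s}_\alpha = \phi_\alpha\bigl(\begin{smallmatrix} 0 & -1 \\ 1 & 0\end{smallmatrix}\bigr)$ on $e_\alpha$ can be computed directly in $SL_2$. A straightforward $2\times 2$ matrix calculation yields
\[
\begin{pmatrix} 0 & -1 \\ 1 & 0\end{pmatrix}\begin{pmatrix} 0 & 1 \\ 0 & 0\end{pmatrix}\begin{pmatrix} 0 & 1 \\ -1 & 0\end{pmatrix} \;=\; \begin{pmatrix} 0 & 0 \\ -1 & 0\end{pmatrix},
\]
so that $Ad(\bar{s}_\alpha)(e_\alpha) = -f_\alpha$. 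This settles the case $w_0 = s_\alpha$.

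Next, for a fixed simple root $\alpha$, I would exploit a reduced expression of $w_0$ ending in $s_\alpha$. Such an expression exists because $w_0$ maps every positive root to a negative one, so in particular $\alpha \in \mathrm{Inv}(w_0)$, which by Theorem~\ref{thm:weyl_group_properties} gives a factorization $w_0 = w \cdot s_\alpha$ with $\ell(w) = \ell(w_0) - 1$. By the braid-relationship theorem for the Weyl group representatives, this factorization lifts at the group level to $\bar{w}_0 = \bar{w} \cdot \bar{s}_\alpha$, unambiguously defined thanks to Tits' lemma~\ref{thm:tits_lemma}. Combining with the base case,
\[
 Ad(\bar{w}_0)(e_\alpha) \;=\; Ad(\bar{w})\bigl(Ad(\bar{s}_\alpha)(e_\alpha)\bigr) \;=\; -\,Ad(\bar{w})(f_\alpha).
\]

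The main obstacle is therefore to show that $Ad(\bar{w})(f_\alpha) = f_\alpha$, i.e., that $\bar{w}$ acts trivially on $f_\alpha$. Note that $w = w_0 s_\alpha$ sends $-\alpha$ to $w_0(\alpha)$, so $Ad(\bar{w})$ maps the line $\mathbb{C}f_\alpha \subset \gfrak_{-\alpha}$ into $\gfrak_{w_0(\alpha)}$. The delicate step is pinning down the normalisation (the sign and scalar coefficient picked up along the reduced word). This would be carried out by induction on $\ell(w)$: writing $\bar{w}$ as a product of the $\bar{s}_i$'s, one iteratively uses the $SL_2$ computation together with its off-diagonal analogue---namely that for $i \ne j$, $Ad(\bar{s}_i)$ maps a Chevalley generator $f_j$ into a root vector of $\gfrak_{-s_i(\alpha_j)}$ with an explicit sign dictated by the Serre relations---and tracks the accumulated signs. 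Consistency of the final answer is guaranteed by Tits' lemma, since the result must be independent of the chosen reduced expression. Alternatively, one may invoke the Chevalley involution $\theta$ of $\gfrak$ (sending $e_i \mapsto -f_i$, $f_i \mapsto -e_i$, and acting by $-\mathrm{id}$ on $\hfrak$) and identify $Ad(\bar{w}_0)$ with $\theta$ on the appropriate Chevalley generators, which bypasses some of the sign bookkeeping.
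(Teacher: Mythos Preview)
The paper does not give its own proof of this proposition; it simply cites \cite{bib:BBBR92}, lemma 4.9. More importantly, the statement as printed contains a typo: the correct conclusion is $Ad(\bar w_0)(e_\alpha)=-f_{\alpha^*}$, where $\alpha^*=-w_0\alpha$, and this is precisely how the paper uses the result later (see the proof of Properties~\ref{properties:involution_S}, where it invokes ``$Ad(\bar w_0)e_i=-f_{i^*}$''). Root-space considerations alone force this: $Ad(\bar w_0)$ sends $\gfrak_\alpha$ to $\gfrak_{w_0\alpha}=\gfrak_{-\alpha^*}$, so the image of $e_\alpha$ must be a multiple of $f_{\alpha^*}$, not of $f_\alpha$ in general.

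Your argument runs into exactly this issue. After the (correct) $SL_2$ computation and the factorisation $\bar w_0=\bar w\,\bar s_\alpha$, you are left with showing that $Ad(\bar w)(f_\alpha)$ equals $f_\alpha$. But you immediately observe that $w=w_0s_\alpha$ sends $-\alpha$ to $w_0(\alpha)=-\alpha^*$, so $Ad(\bar w)(f_\alpha)\in\gfrak_{-\alpha^*}$---which contradicts the target unless $\alpha^*=\alpha$. The claim you are trying to establish is therefore false whenever the $*$-involution is nontrivial (e.g.\ in type $A_n$, $n\ge 2$). The same problem affects your alternative via the Chevalley involution $\theta$: one has $\theta(e_i)=-f_i$ with the \emph{same} index, whereas $Ad(\bar w_0)$ differs from $\theta$ by the diagram automorphism $*$, so identifying $Ad(\bar w_0)$ with $\theta$ on Chevalley generators is not correct in general. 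The fix is to aim for $Ad(\bar w)(f_\alpha)=f_{\alpha^*}$; the sign can then be tracked as you outline, or one can argue via the Tits lift as in \cite{bib:BBBR92}.
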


\subsection*{Positive roots enumerations }

It is standard that reduced expressions of Weyl group elements produce positive roots enumerations. See for instance \cite{bib:Humphreys90}.
\begin{lemma}
\label{lemma:positive_roots_enumeration}
Let $(i_1, \dots, i_k)$ be a reduced expression of $w \in W$. Then for $j=1 \dots k$:
$$\beta_{{\bf i}, j} := s_{i_1} \dots s_{i_{j-1}} \alpha_{i_j}$$
produces all the positive roots in $Inv(w)$. For $w = w_0$, it produces all positive roots.
\end{lemma}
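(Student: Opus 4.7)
The plan is to verify three properties of the family $\{\beta_{\mathbf{i},j}\}_{1\leq j\leq k}$: positivity, pairwise distinctness, and membership in $Inv(w)$. Once these are established, a cardinality count against Theorem \ref{thm:weyl_group_properties} forces the enumeration to be exhaustive and proves the lemma.

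The cornerstone is the classical equivalence
$$ \ell(v s_i) > \ell(v) \iff v(\alpha_i) \in \Phi^+, $$
a consequence of the fact that the simple reflection $s_i$ permutes $\Phi^+ \setminus \{\alpha_i\}$ while sending $\alpha_i$ to $-\alpha_i$. Any prefix $s_{i_1}\cdots s_{i_{j-1}}$ of a reduced expression is itself reduced (otherwise we could shorten $w$), and appending $s_{i_j}$ preserves reducedness. Hence the criterion yields $\beta_{\mathbf{i},j} = s_{i_1}\cdots s_{i_{j-1}}(\alpha_{i_j}) \in \Phi^+$.

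For distinctness, I argue by contradiction. If $\beta_{\mathbf{i},j} = \beta_{\mathbf{i},l}$ with $j < l$, then cancelling the common prefix gives $\alpha_{i_j} = s_{i_j} s_{i_{j+1}}\cdots s_{i_{l-1}}(\alpha_{i_l})$, hence $-\alpha_{i_j} = s_{i_{j+1}}\cdots s_{i_{l-1}}(\alpha_{i_l})$. The contiguous subword $(i_{j+1},\ldots,i_l)$ is reduced, so the right-hand side is a positive root by the previous step, contradicting the left-hand side being negative.

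To locate each $\beta_{\mathbf{i},j}$ in the inversion set, I exploit the telescoping arising from $w = (s_{i_1}\cdots s_{i_{j-1}}) \cdot s_{i_j} \cdot (s_{i_{j+1}}\cdots s_{i_k})$ to compute $w \beta_{\mathbf{i},j}$ (respectively $w^{-1}\beta_{\mathbf{i},j}$, depending on sign convention), reducing it to the form $\pm s_{i_k}\cdots s_{i_{j+1}}(\alpha_{i_j})$. The suffix $(i_{j+1},\ldots,i_k)$ is reduced, so the same positivity criterion controls the sign and yields $w\beta_{\mathbf{i},j}\in\Phi^-$. Having produced $k=\ell(w)$ distinct positive roots inside $Inv(w)$, which has cardinality $\ell(w)$ by Theorem \ref{thm:weyl_group_properties}, I conclude $\{\beta_{\mathbf{i},j}\} = Inv(w)$. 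For $w = w_0$, $\ell(w_0) = |\Phi^+|$, so the enumeration exhausts $\Phi^+$. The main obstacle is really the positivity criterion in the first step; everything else flows mechanically from it together with the length formula.
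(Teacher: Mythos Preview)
Your argument is correct and follows the standard route (positivity via the length criterion, distinctness by contradiction, inversion-set membership via the suffix, then a cardinality match against Theorem~\ref{thm:weyl_group_properties}). The paper does not give its own proof of this lemma: it simply states the result as standard and refers to Humphreys \cite{bib:Humphreys90}, so your write-up is in fact more detailed than what the paper provides.

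One small remark on the convention issue you flagged: with the paper's definition $Inv(w)=\{\beta\in\Phi^+ : w\beta\in\Phi^-\}$ and $\beta_{\mathbf{i},j}=s_{i_1}\cdots s_{i_{j-1}}\alpha_{i_j}$, the telescoping gives $w^{-1}\beta_{\mathbf{i},j}=-s_{i_k}\cdots s_{i_{j+1}}\alpha_{i_j}\in\Phi^-$, so strictly speaking the $\beta_{\mathbf{i},j}$ enumerate $Inv(w^{-1})$. Since $\ell(w)=\ell(w^{-1})$ the cardinality count is unaffected, and your hedge ``depending on sign convention'' already covers this; the discrepancy is in the paper's own statement, not in your reasoning.
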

When the chosen reduced expression is obvious from context, we will drop the subscript ${\bf i}$. In appendix \ref{appendix:positive_roots_enumeration}, we give useful identities related to those enumerations and examples.

\section{Commutation identities}
Let $t \in \mathbb{C}$, $a \in H$. Then the following commutation relationships hold (easy to check on $SL_2$ then use the embeddings):
\begin{eqnarray}
  \label{lbl:comm1}
  a x_\alpha(t) a^{-1} & = & x_\alpha( a^{\alpha} t )
\end{eqnarray}
\begin{eqnarray}
  \label{lbl:comm2}
  a y_\alpha(t) a^{-1} & = & y_\alpha( a^{-\alpha} t )
\end{eqnarray}
\begin{eqnarray}
  \label{lbl:comm3}
  x_\alpha(t) y_\alpha(t') & = & \left\{ \begin{array}{ll}
				y_\alpha( \frac{t'}{1+tt'} ) (1+tt')^{h_\alpha} x_\alpha( \frac{t'}{1+tt'} ) \textrm{ if $1+tt' \neq 0$ } \\
				y_\alpha( \frac{1}{t} ) t^{h_\alpha} \bar{\bar{s}}_\alpha = \bar{\bar{s}}_\alpha t^{-h_\alpha} x_\alpha(-t) \textrm{ otherwise }
                               \end{array} \right.
\end{eqnarray}

\section{Weights and coweights}
\index{$P$: Weight lattice}
\index{$P^+$: Dominant weights}
The fundamental weights $\left( \omega_\alpha \right)_{\alpha \in \Delta}$ form the dual basis of simple coroots. They are the elements in $\hfrak^*$ such that:
$$ \forall (\alpha, \beta) \in \Delta^2, \omega_\alpha\left( h_\beta \right) = \delta_{\alpha, \beta}$$
They form a $\Z$-basis of the weight lattice:
$$ P := \left\{ x \in \hfrak^* \ | \ \forall \alpha \in \Delta, x(h_\alpha) \in \Z \right\}
      = \bigoplus_{\alpha \in \Delta} \Z \omega_{\alpha}$$
The dominant weights are:
$$ P^+ := \bigoplus_{\alpha \in \Delta} \N \omega_{\alpha}$$

Similarly, define the fundamental coweights $\left( \omega_\alpha^\vee \right)_{\alpha \in \Delta} \subset \afrak$ as the dual basis of simple roots.

\section{Involutions}
\label{lbl:preliminaries_involutions}
Since, $w_0 \in W$ transforms all simple positive roots to simple negative roots, there is an involution on $\Delta$ (or equivalently the index set $I$) denoted by $*$ such that:
$$ \forall \alpha \in \Delta, \beta^* = -w_0 \alpha$$

An antimorphism on the group is a map $G \rightarrow G$ that becomes a group morphism once composed with the transpose. We define the following group antimorphisms by their actions on a torus element $a \in H$ and the one-parameters subgroups generated by the Chevalley generators. For convenience, we also give their action at the level of the Lie algebra.
\index{${}^T$: Transpose}
\index{$\iota$: Positive inverse or Kashiwara involution}
\index{$S$: Sch\"utzenberger involution}
\begin{itemize}
 \item The usual transpose:
       $$ \begin{array}{ccc}
           a^T = a          & x_i(t)^T     = y_i(t) & y_i(t)^T     = x_i(t)
          \end{array}
       $$
       $$ \forall \alpha \in \Delta, 
          \begin{array}{ccc}
           h_\alpha^T = h_\alpha &  e_\alpha^T = f_\alpha & f_\alpha^T = e_\alpha
          \end{array}
       $$
 \item The 'positive inverse':
       $$ \begin{array}{ccc}
           a^\iota = a^{-1} & x_i(t)^\iota = x_i(t) & y_i(t)^\iota = y_i(t)
          \end{array}
       $$
       $$ \forall \alpha \in \Delta, 
          \begin{array}{ccc}
           h_\alpha^\iota = -h_\alpha &  e_\alpha^\iota = e_\alpha & f_\alpha^\iota = f_\alpha
          \end{array}
       $$
       When acting on the enveloping algebra $\Uc(\gfrak)$ or its quantum deformation, it is often referred to as the Kashiwara involution ( \cite{bib:Kashiwara91} (1.3) ).
 \item Sch\"utzenberger involution: $S(x) = \bar{w}_0 \left( x^{-1} \right)^{\iota T} \bar{w}_0^{-1} = \bar{w}_0^{-1} \left( x^{-1} \right)^{\iota T} \bar{w}_0$ \\
       It acts as ( relation 6.4 in \cite{bib:BZ01} or using proposition \ref{proposition:w_0_action_ad}):
       $$ S\left( x_{i_1}(t_1) \dots x_{i_q}(t_q) \right) = x_{i_q^*}(t_q) \dots x_{i_1^*}(t_1)$$
       $$ \forall \alpha \in \Delta, 
          \begin{array}{ccc}
           S(h_\alpha) = h_{\alpha^*} &  S(e_\alpha) = e_{\alpha*} & S(f_\alpha) = f_{\alpha*}
          \end{array}
       $$
       Notice that $S = \iota \circ S \circ \iota$
\end{itemize}
More informations are given in section \ref{section:involutions}, detailing their effect on crystals.

\section{On the Bruhat and Gauss decompositions}
The Bruhat decomposition states that $G$ is the disjoint union of cells:
$$ G = \bigsqcup_{\omega \in W} B^+ \omega B^+  = \bigsqcup_{\tau \in W} B \tau B^+ $$
In the case of $GL_n$, the second decomposition is known in linear algebra as the LPU decomposition which states that every invertible matrix can be decomposed into the product of a lower triangular matrix $L$, a permutation matrix $P$ and an upper triangular matrix $U$. $P$ is unique, and the cell corresponding to $P = id$ is dense as it is the locus where all principal minors are non-zero. The $LU$ decomposition is of utmost importance in numerical analysis as it allows efficient inversion of matrices.\\

In the largest opposite Bruhat cell $B B^+ = N H U$, every element $g$ admits a unique Gauss decomposition in the form $g = n a u$ with $ n \in N$, $a \in H$, $u \in U$.\\
In the sequel, we will write $g = [g]_- [g]_0 [g]_+$, $[g]_- \in N$, $[g]_0 \in H$ and $[g]_+ \in U$ for the Gauss decomposition. Also $[g]_{-0} := [g]_- [g]_0$.\\
Useful identities that can be proven writing the full Gauss decomposition in two forms then identifying terms, when they exist:
\begin{eqnarray}
  \label{lbl:gauss1}
  \forall (g_1, g_2) \in N H U \times N H U, [g_1 g_2]_{0+} & = & [ [g_1]_{0+} g_2]_{0+}
\end{eqnarray}
\begin{eqnarray}
  \label{lbl:gauss2}
  \forall (g, a) \in N H U \times H, [g a]_{+} & = & a^{-1} [g]_{+} a
\end{eqnarray}

\section{The universal enveloping algebra}
\paragraph{Invariant differential operators:} 
Here we consider right invariant group actions. The same presentation can be done using the left group action.\\

Every $X \in \gfrak$ can be viewed as a left invariant differential operator of order $1$. Its action on smooth functions is given by:
$$ \forall f \in \Cc^\infty\left( G \right), X f(g) := \lim_{t \rightarrow 0} \frac{f(g e^{tX})-f(g)}{t}$$
From such a point of view, it is easy to envision invariant different operators of arbitrary order. They should be obtained by composing elements $X_1, X_2, \dots, X_k$ in $\gfrak$ acting as differential operators. Their identification is subject to possible relations due to the Lie bracket $[\ ,\ ]$.

\index{$\Uc(\gfrak)$: Universal enveloping algebra}
This notion is formalized in algebra as the universal enveloping algebra $\Uc(\gfrak)$.

\paragraph{Definition from universal property:} 
The universal enveloping algebra of $\gfrak$ is constructed as the quotient of the tensor algebra $\bigoplus_n \gfrak^{\otimes n}$ by the two sided ideal generated by $ab - ba - [a, b]$, $a, b \in \gfrak$.\\
It has the universal property that any Lie algebra homomorphism $f: \gfrak \rightarrow A$, where $A$ is a unital algebra, factors into $f = g \circ i$. $g: \Uc(\gfrak) \rightarrow A$ is unique and $i: \gfrak \rightarrow \Uc(\gfrak)$ is the inclusion.

\paragraph{Definition with generators and relations:} 
An alternative definition uses generators and relations, with only the Cartan matrix $A = (a_{ij})_{1 \leq i,j \leq n}$ as input data.
$\Uc(\gfrak)$ is the unital associative algebra generated by $F_i, H_i, E_i$ with $1 \leq i \leq n$ with relations:
\begin{align*}
 [H_i, H_i] & = 0\\
 [E_i, F_j] & = \delta_{i,j} H_i\\
 [H_i, E_j] & = a_{ij} H_i\\
 [H_i, F_j] & = -a_{ij} H_i\\
 \textrm{Serre relations} & \textrm{ for } i \neq j:\\
 0 & = \sum_{s=0}^{1-a_{ij}} (-1)^s \binom{1-a_{ij}}{s}E_i^{1-a_{ij}-s} E_j E_i^s\\
 0 & = \sum_{s=0}^{1-a_{ij}} (-1)^s \binom{1-a_{ij}}{s}F_i^{1-a_{ij}-s} F_j F_i^s
\end{align*}
Since such a definition strips the algebra structure to its bare minimum, it has proved to be a fruitful starting point for generalizing the construction to Kac-Moody Lie algebras or quantum groups.

\section{On the representation theory of semisimple Lie algebras}
We are only concerned by finite dimensional modules. A representation of $\gfrak$ or a $\gfrak$-module is a (complex) vector space $V$ endowed with an action $\gfrak \rightarrow End(V)$ that is a homorphism of Lie algebras. The Lie algebra structure on $End(V)$ is simply given by the commutator bracket $[a, b] = ab -ba$.

Since every homomorphism of Lie algebras lifts to a homomorphism of the corresponding simply connected Lie groups, every $\gfrak$-module $V$ lifts to a unique Lie group representation $G \rightarrow GL(V)$.\\
An irreductible or simple $\gfrak$-module is a $\gfrak$-module $V$ with no non-trivial submodules, the trivial submodules being the zero module and $V$ itself.

Note that any $\gfrak$-module can be equivalently seen as a $\Uc\left( \gfrak \right)$-module. There is a weight space decomposition:
$$ V = \bigoplus_{\mu \in P} V_\mu$$
where $V_\mu = \left\{ v \in V \ | \forall h \in \hfrak, h v = \mu(h) v \right\}$. The non-zero $V_\mu$ are called weight spaces, and their vectors weight vectors of weight $\mu$.

A highest weight vector in $V$ is a non-zero weight vector $v$ such that $\Uc\left( \ufrak \right) v = \{0\}$. In a simple module, there is one and only one highest weight vector, up to scalar multiplication.

\paragraph{Highest weight modules:}
A classical theorem identifies the isomorphism classes of simple modules.
\begin{thm}
There is a bijection between dominant weights $P^+$ and the isomorphism classes of simple $\gfrak$-modules. To every $\lambda \in P^+$ corresponds a unique highest weight module $V(\lambda)$.
\end{thm}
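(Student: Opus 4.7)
The plan is to prove two complementary statements: for every $\lambda \in P^+$, there exists a finite-dimensional simple $\gfrak$-module $V(\lambda)$ with highest weight $\lambda$ (existence), and every finite-dimensional simple $\gfrak$-module has a unique dominant highest weight, with its isomorphism class determined by that weight (classification).

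For the existence part, I would construct the \emph{Verma module} $M(\lambda) = \Uc(\gfrak) \otimes_{\Uc(\bfrak^+)} \C_\lambda$, where $\C_\lambda$ is the one-dimensional $\bfrak^+$-module on which $\hfrak$ acts by $\lambda$ and $\ufrak$ acts trivially. By the Poincar\'e-Birkhoff-Witt theorem, $M(\lambda)$ is a free $\Uc(\nfrak)$-module on the highest weight vector $v_\lambda = 1\otimes 1$, so every weight subspace is finite-dimensional, every proper submodule lies in $\bigoplus_{\mu < \lambda} M(\lambda)_\mu$, and $M(\lambda)$ admits a unique maximal submodule $N(\lambda)$ whose quotient $L(\lambda)$ is simple with highest weight $\lambda$. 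When $\lambda \in P^+$, the key observation is that $f_i^{\lambda(h_i)+1} v_\lambda$ is a singular vector for each simple root $\alpha_i$ (checked by a direct $\mathfrak{sl}_2$-computation through the embedding $\phi_{\alpha_i}$). Killing these singular vectors forces the Chevalley generators $e_i, f_i$ to act locally nilpotently on $L(\lambda)$, so the set of weights of $L(\lambda)$ is stable under the Weyl group $W$; being also bounded above by $\lambda$ and contained in $\lambda - \N\Delta$, it lies in the finite set $W \cdot \{\mu \in P^+ \mid \mu \leq \lambda\}$. Combined with finite-dimensionality of each weight space, this shows $L(\lambda)$ is finite-dimensional, providing our $V(\lambda)$.

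For the classification, let $V$ be a finite-dimensional simple $\gfrak$-module with weight decomposition $V = \bigoplus_\mu V_\mu$, and order weights by $\mu \leq \nu \iff \nu - \mu \in \N \Delta$. Since $V$ is finite-dimensional, a maximal weight $\lambda$ exists, and any $0 \neq v_\lambda \in V_\lambda$ satisfies $e_i v_\lambda = 0$ for every simple root, because $e_i v_\lambda \in V_{\lambda + \alpha_i} = 0$. Restricting $V$ to each $\phi_{\alpha_i}(\mathfrak{sl}_2)$ and invoking the classification of finite-dimensional $\mathfrak{sl}_2$-modules forces $\lambda(h_i) \in \N$, so $\lambda \in P^+$. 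Uniqueness of $v_\lambda$ up to scalar follows because any other highest weight vector would generate a proper submodule, contradicting simplicity.

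Finally, to pin down the isomorphism class, I would invoke the universal property of $M(\lambda)$: the assignment $v_\lambda \mapsto v_\lambda^V$ extends to a $\gfrak$-morphism $M(\lambda) \to V$, surjective by simplicity of $V$, and factoring through $L(\lambda)$ because its kernel must contain the unique maximal submodule $N(\lambda)$ (otherwise the image of $N(\lambda)$ would be a nonzero proper submodule of $V$). Schur's lemma then gives $V \simeq L(\lambda)$. The main obstacle in this whole argument is the finite-dimensionality of $L(\lambda)$ for $\lambda \in P^+$; the cleanest route is the Weyl-group-orbit argument sketched above, and ultimately relies on the Serre relations that encode the Cartan matrix $A$.
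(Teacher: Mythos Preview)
Your proposal is correct and follows the standard textbook argument (Verma modules, singular vectors for dominant $\lambda$, Weyl-group stability of the weight set to force finite-dimensionality, then the universal property of $M(\lambda)$ for uniqueness). The paper itself, however, does not prove this statement: it appears in the preliminaries section as a ``classical theorem'' quoted without proof, with \cite{bib:Humphreys72} as the implicit reference, so there is no paper proof to compare against.
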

\index{$V(\lambda)$: Simple module with highest weight $\lambda \in P^+$}
For a simple module of highest weight $\lambda$, $V(\lambda)$, we choose a highest weight vector and denote it by $v_\lambda$.

\paragraph{Characters:}
For a $\gfrak$-module $V$, define the character $ch(V)$ as the function on $\hfrak$ defined as:
\index{$ch(V)$: Character associated to the $\gfrak$-module $V$}
$$ ch(V) = \sum_{\mu \in P} \dim V_\mu e^{\mu}$$
Characters encode an important amount of informations on the representation $V$. One of the goals of combinatorial representation theory is to develop combinatorial models to compute characters efficiently. Examples of combinatorial models are the combinatorics of Young tableaux for the $A_n$ type, Kashiwara crystals, the Littelmann path model and alcove walks.

\section{On Lusztig's canonical basis}
\label{section:preliminaries_canonical_basis}
In the sequel, we will never use Lusztig's canonical basis in itself. However, we will be extensively interested in its parametrizations. As such it is important to review this mathematical object. We will remain elusive concerning its precise definition, though. For more details, the reader could have a look at Morier-Genoud's excellent introduction, in French \cite{bib:Mor06}.

\index{$\Bfrak$: Canonical basis}
In the nineties, Lusztig introduced a basis $\Bfrak$ of the quantum group $\Uc_q\left( \nfrak \right)$ called the canonical basis. For $q=1$, one obtains a basis for the enveloping algebra.

\subsection*{Parametrizations}
There are two common parametrizations of the canonical basis. Both depend on a choice of reduced word for the longest Weyl group element $w_0$. Let ${\bf i} \in R(w_0)$ and $m = \ell(w_0)$.

The Lusztig parametrization is a bijection:
$$
 \begin{array}{llll}
[x_{\bf i}]: & \N^m              & \rightarrow & \Bfrak\\
             & (t_1, \dots, t_m) & \mapsto     & x_{\bf i}(t_1, \dots, t_m)
 \end{array}
$$
The string (or Kashiwara) parametrization uses the integer points of a convex polyhedral cone $\Cc_{\bf i} \subset \R_+^m$ (\cite{bib:Littelmann}), which we call the string cone. It is given by a bijection:
$$
 \begin{array}{llll}
[x_{\bf-i}]: & \Cc_{\bf i} \cap \N^m            & \rightarrow & \Bfrak\\
             & (c_1, \dots, c_m) & \mapsto     & x_{\bf-i}(c_1, \dots, c_m)
 \end{array}
$$

\subsection*{Kashiwara operators}
These are linear operators on $\Uc_q(\nfrak)$ defined by their action on the canonical basis $\Bfrak$. Let ${\bf i} \in R(w_0)$ and $\alpha = \alpha_{i_1}$. The Kashiwara operators $\tilde{e}_\alpha$ and $\tilde{f}_\alpha$ satisfy:

\begin{align}
\label{eqn:kashiwara_operator_f_1}
\tilde{f}_\alpha\left( [x_{\bf i}](t_1, \dots, t_m) \right) & = [x_{\bf i}](t_1 + 1, \dots, t_m)
\end{align}

\begin{align}
\label{eqn:kashiwara_operator_f_2}
\tilde{f}_\alpha\left( [x_{\bf-i}](c_1, \dots, c_m) \right) & = [x_{\bf-i}](c_1 + 1, \dots, c_m)
\end{align}

\begin{align}
\label{eqn:kashiwara_operator_e_1}
\tilde{e}_\alpha\left( [x_{\bf i}](t_1, \dots, t_m) \right) & = [x_{\bf i}](t_1 - 1, \dots, t_m) \textrm{ or } 0 \textrm{ if } t_1 = 0
\end{align}

\begin{align}
\label{eqn:kashiwara_operator_e_2}
\tilde{e}_\alpha\left( [x_{\bf-i}](c_1, \dots, c_m) \right) & = [x_{\bf-i}](c_1 - 1, \dots, c_m) \textrm{ or } 0 \textrm{ if } c_1 = 0
\end{align}

\begin{rmk}
 The Kashiwara operators are quasi-inverses of each other, in the sense that:
$$ \forall b \in \Bfrak, \tilde{e}_\alpha \circ \tilde{f}_\alpha(b) = b$$
$$ \forall b \in \Bfrak, \tilde{e}_\alpha(b) \neq 0 \Rightarrow \tilde{f}_\alpha \circ \tilde{e}_\alpha(b) = b$$
\end{rmk}

\subsection*{Compatibility properties}
Here consider $q=1$ and view $\Bfrak$ as a basis of the universal enveloping algebra $\Uc(\nfrak)$. The desirable properties of the canonical basis are compatibility properties regarding highest weight modules. Fix $\lambda \in P^+$ and consider the highest weight module $V(\lambda)$. $v_\lambda$ will denote a highest weight vector, unique up to a multiplicative scalar. It is well known that the canonical surjection:
$$
 \begin{array}{llll}
\pi_\lambda: & \Uc(\nfrak) & \rightarrow & V(\lambda)\\
             & n           & \mapsto     & n v_\lambda
 \end{array}
$$
has the kernel:
$$ \ker(\pi_\lambda) = \sum_{\alpha \in \Delta} \Uc(\nfrak) f_\alpha^{\lambda(\alpha^\vee)+1}$$
Hence the truncation:
$$ \Bfrak(\lambda) := \Bfrak - \Bfrak \cap \ker \pi_\lambda$$

\begin{thm}[ \cite{bib:Lusztig93} ]
 $\Bfrak(\lambda) v_\lambda$ is a basis for $V(\lambda)$ made of weight vectors.
\end{thm}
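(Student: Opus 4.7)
The plan is to obtain the theorem as a direct consequence of two structural properties of Lusztig's canonical basis $\Bfrak$ of $\Uc(\nfrak)$: its \emph{homogeneity} with respect to the weight grading, and its \emph{compatibility} with the ideals $\ker \pi_\lambda$ (i.e.\ that $\Bfrak \cap \ker \pi_\lambda$ is itself a basis of the subspace $\ker \pi_\lambda$). Both are fundamental features established by Lusztig when constructing $\Bfrak$, and once granted, the statement becomes essentially bookkeeping.

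First, I would address the spanning property. Since $\pi_\lambda$ is surjective and $\Bfrak$ is a $\C$-basis of $\Uc(\nfrak)$, the image $\pi_\lambda(\Bfrak) = \Bfrak \cdot v_\lambda$ spans $V(\lambda)$. Removing from this spanning set precisely the vectors that are zero, namely $\pi_\lambda\bigl(\Bfrak \cap \ker \pi_\lambda\bigr)$, leaves $\Bfrak(\lambda) \cdot v_\lambda$, which still spans $V(\lambda)$.

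Next comes linear independence, which is the crux. Assume $\sum_{b \in \Bfrak(\lambda)} c_b \, b \cdot v_\lambda = 0$ with only finitely many nonzero $c_b$. Then $u := \sum_b c_b b$ lies in $\ker \pi_\lambda$. By the compatibility property, $\Bfrak \cap \ker \pi_\lambda$ is a basis of $\ker \pi_\lambda$, so $u$ is also a $\C$-linear combination of elements of $\Bfrak \cap \ker \pi_\lambda$. But $\Bfrak$ is a basis of $\Uc(\nfrak)$ and the two subsets $\Bfrak(\lambda)$ and $\Bfrak \cap \ker \pi_\lambda$ are disjoint by definition. Comparing the two expressions of $u$ in the basis $\Bfrak$ forces $c_b = 0$ for every $b \in \Bfrak(\lambda)$. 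Finally, for the weight-vector assertion, one uses the fact that each element of $\Bfrak$ can be chosen to be homogeneous for the adjoint action of $\hfrak$ on $\Uc(\nfrak)$, i.e.\ of weight $-\beta$ for some $\beta$ in the positive root lattice; then $b \cdot v_\lambda$ is a weight vector of weight $\lambda - \beta$.

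The main obstacle here is not combinatorial at all: both ingredients used above -- homogeneity and ideal-compatibility of $\Bfrak$ -- are deep properties whose proofs lie at the heart of Lusztig's construction (via PBW bases, the bar involution, and either the geometric or the algebraic definition of $\Bfrak$). In the context of this thesis, they are imported as a black box from \cite{bib:Lusztig93}; attempting to reprove them from scratch would essentially mean redeveloping the theory of the canonical basis, which is well outside the scope of the preliminaries chapter.
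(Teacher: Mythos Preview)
Your proposal is correct, and in fact the paper does not give its own proof of this statement: it is quoted in the preliminaries as a result of Lusztig, cited from \cite{bib:Lusztig93}, with no argument supplied. Your sketch is exactly the standard reduction one makes---spanning from surjectivity of $\pi_\lambda$, independence from the compatibility of $\Bfrak$ with $\ker\pi_\lambda$, and the weight-vector property from homogeneity of $\Bfrak$---and you rightly identify that the substantive content (compatibility and homogeneity) is imported wholesale from Lusztig's construction rather than reproved here.
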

Therefore, the subsets $\left( \Bfrak(\lambda), \lambda \in P^+ \right)$ of the canonical basis $\Bfrak$, once identified with $\Bfrak(\lambda) v_\lambda$, form compatible bases of highest weight modules.

For $b \in \Bfrak(\lambda)$, denote by $\gamma(b)$ its weight in the representation $V(\lambda)$. If $b = x_{\bf i}(t_1, \dots, t_m)$ then, using the positive roots enumeration $(\beta_{{\bf i}, 1}, \dots, \beta_{{\bf i}, m})$ associated to ${\bf i}$:
\begin{align}
\label{eqn:weight_lusztig}
\gamma(b) & = \lambda - \sum_{j=1}^m t_j \beta_{{\bf i}, j}
\end{align}
If $b = x_{\bf-i}(c_1, \dots, c_m)$ then:
\begin{align}
\label{eqn:weight_string}
\gamma(b) & = \lambda - \sum_{j=1}^m c_j \alpha_{i_j}
\end{align}

\section{On total positivity}
The classical definition of a totally positive matrix is a matrix with all minors being positive. The subject itself dates back to the beginning of the XXth century and has many applications in combinatorics, graph theory and probability.

As he says himself, Lusztig got interested in the subject after Kostant pointed out that the combinatorics of the canonical basis are similar to the combinatorics of total positivity. Later, it was made explicit that totally positive varieties in $G$ have morally the same parametrizations as the canonical basis of the Langlands dual $G^\vee$, but in a different semi-field. The operation needed is tropicalization and will be detailed later.\\

\paragraph{Case of $GL_n$:}
There are two equivalent definitions for totally non-negative matrices.
\begin{thm}[Whitney \cite{bib:Whitney52}, Loewner \cite{bib:Lo55}, Cryer \cite{bib:Cr76} ]
\label{bib:total_positivity_gln}
An invertible $n \times n$ matrix is said to be totally non-negative if all its minors are $\geq 0$, or equivalently, if it has a decomposition
$$ y_{i_1}(t_1) \dots y_{i_m}(t_m) h x_{i_1}(t_1') \dots x_{i_m}(t_m')$$
where $h$ is diagonal with positive entries, $y_i(t) = e^{t E_{i+1,i}} = I_n + t E_{i+1,i}$, $x_i(t) = e^{t E_{i,i+1}} = I_n + t E_{i,i+1}$ (Jacobi matrices) for $t \geq 0$. Moreover, the space of totally non-negative matrices can be characterized as the semi-group generated by such elements.
\end{thm}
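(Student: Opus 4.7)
The plan is to prove the two directions of the equivalence and then read off the semi-group statement. For the forward direction (the factorization produces a totally non-negative matrix), I would rely on the Cauchy--Binet formula, which expresses every minor of a product $AB$ as a polynomial with non-negative integer coefficients in the minors of $A$ and $B$. It therefore suffices to check that each elementary factor $y_i(t)$, $x_i(t)$ with $t \geq 0$ and every positive diagonal $h$ has all its minors non-negative; a direct inspection shows that the minors of $x_i(t) = I_n + t E_{i, i+1}$ are either $0$, $1$, or $t$, and similarly for $y_i(t)$, while the non-zero minors of $h$ are products of its positive diagonal entries.

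The reverse direction is the real content. I would first reduce to the totally positive case (all minors strictly positive) by perturbation: given a totally non-negative $A$, there is a canonical totally positive matrix $M$ (e.g.\ a Vandermonde-type matrix) such that $A + \varepsilon M$ is totally positive for every $\varepsilon > 0$, and converges to $A$ as $\varepsilon \to 0$. This reduces the problem to factoring totally positive matrices, assuming a compactness/limit argument works at the end.

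For a totally positive matrix $A$, all leading principal minors are positive, so the Gauss decomposition $A = LDU$ exists with $L$ lower unipotent, $D$ positive diagonal, and $U$ upper unipotent. The Jacobi identities for minors of $L, U$ in terms of minors of $A$ show that $L$ and $U$ inherit total non-negativity. One then factors $L$ by induction on $n$: observing that the ratios of successive leading minors in the first column yield non-negative parameters $s_{n-1}, \dots, s_1$ such that $y_{n-1}(s_{n-1}) \cdots y_1(s_1)^{-1} L$ has its first column equal to $e_1$, i.e.\ reduces to a totally non-negative lower unipotent matrix in size $n-1$. A symmetric argument factors $U$ as a product of $x_{i_k}(t_k')$ with $t_k' \geq 0$.

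The main obstacle will be the limit step $\varepsilon \to 0$: one must ensure the extracted parameters $t_k, t_k'$ and the diagonal part stay in a compact set when $\varepsilon \to 0$, so that a subsequential limit yields a valid factorization of $A$ itself. This is controlled by the fact that each parameter is expressed as a ratio of minors of the matrix (and of the reduced matrices after each elimination step), hence depends continuously on the initial matrix wherever the denominator minors remain bounded away from zero; at places where they degenerate, the corresponding $t_k$ simply vanishes. The semi-group statement is then immediate: the set of products of generators is closed under multiplication by construction and coincides with the totally non-negative matrices by the two directions just proved.
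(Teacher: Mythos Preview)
The paper does not give a proof of this theorem; it is quoted as a classical result with references to Whitney, Loewner, and Cryer, and is then immediately generalized (without proof) to the reductive case via Lusztig's work. So there is no paper proof to compare against.

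On its own, your forward direction via Cauchy--Binet is correct and standard. For the reverse direction, the Gauss-decomposition-plus-induction strategy is the right skeleton, but the perturbation detour creates a genuine gap. Your claim that when a denominator minor degenerates ``the corresponding $t_k$ simply vanishes'' is not justified: the parameters in the Loewner factorization of a totally positive matrix are ratios of minors, and when a minor in the denominator tends to zero there is no a priori reason the ratio stays bounded rather than blowing up. In fact, for a totally non-negative matrix lying in a smaller Bruhat cell, the factorization with respect to a fixed reduced word of maximal length need not exist with finite parameters; one must allow a different (shorter) word. A compactness argument on a fixed word therefore fails.

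The clean fix is to drop the perturbation entirely. One can show directly that an invertible totally non-negative matrix admits a Gauss decomposition $A = LDU$ with $L, U$ totally non-negative unipotent and $D$ positive diagonal (this is the $GL_n$ case of the paper's Theorem~\ref{thm:totally_positive_gauss_decomposition}, due to Lusztig), and then run your inductive peeling argument on $L$ and $U$ directly: at each step the parameter extracted is a ratio of consecutive entries in the first column, which is non-negative by total non-negativity of the appropriate $2\times 1$ minors, and one checks via the Cauchy--Binet identities that the reduced matrix remains totally non-negative. This avoids limits altogether.
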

For more informations on the combinatorics of total positivity, see \cite{bib:Skandera03} and references therein. The link to the enumeration of non-intersecting paths hints directly to path models, and walks confined in cones.

\paragraph{Reductive case:}
In 1993, Lusztig generalized this definition to arbitrary complex reductive groups. The following sets are called totally non-negative parts of $G$.
\begin{itemize}
 \item The semi-group generated by $a \in H$ such that $a^\gamma>0$ for every weight $\gamma$: $ H_{>0} = A = exp\left( \afrak \right)$
 \item The semi-group generated by the $x_\alpha(t), t>0, \alpha \in \Delta$: $U_{\geq 0}$
 \item The semi-group generated by the $y_\alpha(t), t>0, \alpha \in \Delta$: $N_{\geq 0}$
 \item The totally non-negative of G is denoted $G_{\geq 0}$ and is formed by the semi-group generated by all of them.
\end{itemize}

Lusztig proved that totally non-negative elements admit a Gauss decomposition made of totally non-negative elements, and exhibited parametrizations as products of Jacobi matrices.
\begin{thm}[\cite{bib:Lusztig94} lemma 2.3]
\label{thm:totally_positive_gauss_decomposition}
Any element $g \in G_{\geq 0}$ has a unique Gauss decomposition $g = n a u$ with $n \in N_{\geq 0}$, $a \in A$ and $u \in U_{\geq 0}$.
\end{thm}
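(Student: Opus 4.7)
\textbf{Overall strategy.} Uniqueness is the easy half: since $N_{\geq 0}\subseteq N$, $A\subseteq H$, $U_{\geq 0}\subseteq U$, once existence is established the uniqueness follows immediately from the general uniqueness of the Gauss decomposition in the big cell $NHU$. So the work is to prove existence, and the idea is a normal-form/bubble-sort argument: every $g\in G_{\geq 0}$ is by definition a product of generators of three types — an element $a\in A$, a Chevalley $x_\alpha(t)$ with $t>0$, or a Chevalley $y_\alpha(t)$ with $t>0$ — and one rewrites such a word into the shape $y\cdots y\cdot a\cdot x\cdots x$ by using the commutation relations \eqref{lbl:comm1}, \eqref{lbl:comm2}, \eqref{lbl:comm3} as rewriting rules that preserve total positivity of all parameters.

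\textbf{Key rewriting rules and positivity preservation.} First I would check that each local swap stays inside the totally non-negative world:
\begin{itemize}
\item[(i)] $a\,x_\alpha(t)=x_\alpha(a^{\alpha}t)\,a$ with $a^\alpha t>0$, by \eqref{lbl:comm1}, and symmetrically $a\,y_\alpha(t)=y_\alpha(a^{-\alpha}t)\,a$ by \eqref{lbl:comm2}.
\item[(ii)] For $\alpha\neq\beta$ simple, $[e_\alpha,f_\beta]=0$ in $\gfrak$, hence $x_\alpha(t)$ and $y_\beta(t')$ commute.
\item[(iii)] For $\alpha=\beta$, the first branch of \eqref{lbl:comm3} gives $x_\alpha(t)y_\alpha(t')=y_\alpha(t'')\,(1+tt')^{h_\alpha}\,x_\alpha(t''')$ where $t'',t'''>0$ and $1+tt'>0$ since $t,t'>0$; and $(1+tt')^{h_\alpha}\in H_{>0}=A$ because any weight evaluates on it as a positive power of a positive number. (The second, singular, branch of \eqref{lbl:comm3} never appears since $1+tt'\neq 0$ in the positive regime.)
\end{itemize}
Thus each swap moves the word closer to the normal form while keeping every $x_\bullet,y_\bullet$ parameter strictly positive and every new torus factor inside $A$.

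\textbf{Termination.} This is the main technical obstacle. Attach to any word $w=g_1\cdots g_N$ in the generators the number $P(w)$ of pairs $i<j$ such that $g_i$ is an $x$-letter and $g_j$ is a $y$-letter. An adjacent swap of type (ii) decreases $P$ by exactly $1$. An adjacent swap of type (iii) replaces a single $xy$-adjacency by the block $y\cdot\text{torus}\cdot x$: inside the block there is no $x$-before-$y$ inversion, so the contribution to $P$ from the block drops by one; contributions with letters outside the block change only through the newly produced torus, which carries $x$-weight zero and so does not create or destroy $xy$-inversions. Swaps of type (i) involve a torus letter and do not affect $P$ at all. So $P$ is a non-negative integer strictly decreasing under every (i)/(ii)/(iii) swap that eliminates an $xy$-adjacency, and the process first reaches $P(w)=0$ in finitely many steps. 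At that stage the word is of the form $y\cdots y\,\mid\,(\text{torus and }x\text{'s interspersed})$; a second, obviously terminating, round of (i)-swaps pushes every torus letter to the left of all remaining $x$'s, and finally every torus letter is concatenated into one $a\in A$. This yields the desired factorization $g=n\,a\,u$ with $n\in N_{\geq 0}$, $a\in A$, $u\in U_{\geq 0}$.

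\textbf{Uniqueness.} Now $g\in N_{\geq 0}AU_{\geq 0}\subseteq NHU$, so $g$ lies in the open Bruhat cell and admits the unique decomposition $g=[g]_-[g]_0[g]_+$. Any factorization $g=nau$ with $(n,a,u)\in N\times H\times U$ must agree with this one, which forces $n=[g]_-\in N_{\geq 0}$, $a=[g]_0\in A$, $u=[g]_+\in U_{\geq 0}$, proving uniqueness.
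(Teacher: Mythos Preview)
Your argument is correct. The thesis itself does not supply a proof of this statement but simply cites Lusztig's original paper, so there is no in-paper proof to compare against; your bubble-sort rewriting via the commutation relations \eqref{lbl:comm1}--\eqref{lbl:comm3} is precisely Lusztig's own argument in the cited reference, and your termination invariant $P(w)$ makes the informal ``push all $y$'s left, all $x$'s right'' step rigorous.

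One small clarification worth adding: between two $P$-decreasing swaps of type (ii)/(iii) you may need several type-(i) swaps to clear intervening torus letters, so strictly speaking $P$ is only non-increasing along the full rewriting sequence; termination still follows because only boundedly many type-(i) swaps are needed to expose each adjacent $xy$-pair.
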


\begin{thm}[\cite{bib:Lusztig94} proposition 2.7, \cite{bib:BZ97}, Proposition 1.1]
\label{thm:totally_positive_parametrizations}
For any $w \in W$ with $k=\ell(w)$, every reduced word $\mathbf{i} = (i_1, \dots, i_k)$ in $R(w)$ gives rise to a parametrization of $U^{w}_{>0} := U_{\geq 0} \cap B w B$ by:
$$
 \begin{array}{cccc}
x_{\mathbf{i}}: & \mathbb{R}_{>0}^k & \rightarrow & U^{w}_{>0}\\
                & (t_1, \dots, t_k) & \mapsto     & x_{i_1}(t_1) \dots x_{i_k}(t_k)
 \end{array}
$$
\end{thm}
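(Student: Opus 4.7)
The plan is to split the argument into three stages: well\nobreakdash-definedness of the map into $U^{w}_{>0}$, independence of the chosen reduced word up to positive birational substitutions, and bijectivity for any one fixed word. Each stage relies on theorem \ref{thm:weyl_group_properties} and Tits' lemma \ref{thm:tits_lemma} together with the rank-one identities already recorded in the preliminaries.

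For the first stage I would proceed by induction on $k=\ell(w)$. The base ingredient is the rank-one fact that for $t>0$ one has $x_i(t)\in B s_i B$, obtained by a $2\times 2$ Gauss computation inside the copy $\phi_i(SL_2)$. For the inductive step, if $\mathbf{i}=(i_1,\dots,i_k)$ is reduced then $(i_2,\dots,i_k)$ is a reduced word for $w':=s_{i_2}\cdots s_{i_k}$ of length $k-1$, so by induction the tail lies in $B w' B$. The standard Bruhat multiplication rule $B s_{i_1} B \cdot B w' B = B s_{i_1} w' B = B w B$ (valid precisely when $\ell(s_{i_1}w')=\ell(w')+1$, which is our situation) then yields $x_{\mathbf{i}}(t_1,\dots,t_k)\in B w B$. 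Membership in $U_{\geq 0}$ is immediate from the semigroup structure of theorem \ref{bib:total_positivity_gln}.

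For the second stage, by Tits' lemma it suffices to track how $x_{\mathbf{i}}$ changes under a single elementary braid move. The commuting case $m_{ij}=2$ is trivial. In the case $m_{ij}=3$, a direct computation gives
\[
x_i(a)\,x_j(b)\,x_i(c) \;=\; x_j\!\left(\tfrac{bc}{a+c}\right)\,x_i(a+c)\,x_j\!\left(\tfrac{ab}{a+c}\right),
\]
and analogous subtraction-free rational formulas hold for $m_{ij}=4$ and $6$. All resulting substitutions are positive birational automorphisms of $\mathbb{R}_{>0}^{k}$, so establishing the theorem for one reduced word transports the statement to all others.

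For the third stage, fix $\mathbf{i}$. Injectivity follows from the Berenstein--Fomin--Zelevinsky inversion formulae, which express each $t_j$ as a Laurent monomial in generalized (flag) minors evaluated at $x_{\mathbf{i}}(t_1,\dots,t_k)$; these monomials pin down the $t_j$ uniquely and, crucially, take positive values on $U^{w}_{>0}$. Surjectivity combines a dimension count (both source and target are connected smooth semi\nobreakdash-algebraic sets of dimension $k$) with Lusztig's semigroup definition of $U_{\geq 0}$: every $u\in U^{w}_{>0}$ admits a factorization into $x_i(t)$'s with $t>0$ of some length $\ell\geq k$, and using the braid substitutions of stage two together with the absorption $x_i(a)x_i(b)=x_i(a+b)$ one can reduce any such factorization to minimal length $k$, hence to the form $x_{\mathbf{i}'}(\cdot)$ for some reduced word $\mathbf{i}'$; applying the positivity-preserving transition map of stage two then lands in the image of $x_{\mathbf{i}}$. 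The main obstacle is undoubtedly the positivity statement inside the inversion: proving that the generalized minors defining the candidate coordinates $t_j$ are strictly positive on all of $U^{w}_{>0}$ is the genuinely deep content, and is the reason the result is attributed to Lusztig and Berenstein--Zelevinsky rather than being a direct consequence of the Bruhat decomposition.
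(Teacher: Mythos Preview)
The paper does not prove this theorem; it is stated in the preliminaries as a known result with citations to \cite{bib:Lusztig94} (proposition 2.7) and \cite{bib:BZ97} (Proposition 1.1), and no argument is given. So there is no proof in the paper to compare your proposal against.

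That said, your outline is a reasonable reconstruction of the original arguments in those references. A couple of remarks on the sketch itself. In stage three, the surjectivity argument is the delicate point: you assert that any factorization of length $\ell\geq k$ can be shortened to length $k$ using braid substitutions and the absorption $x_i(a)x_i(b)=x_i(a+b)$. This is correct but requires the following observation, which you leave implicit: if the word $(j_1,\dots,j_\ell)$ is not reduced, one can use braid moves (which preserve positivity by stage two) to bring two equal adjacent indices together, then absorb. Iterating strictly decreases length until one reaches a reduced word for $w$. This is Lusztig's argument in \cite{bib:Lusztig94}. The dimension count you mention is then unnecessary. Also, for injectivity you invoke the Berenstein--Fomin--Zelevinsky minor formulas; these indeed give the result, but a more elementary inductive argument (peeling off $x_{i_1}(t_1)$ using a single chamber minor, then recursing on the tail) already suffices and is closer to what \cite{bib:BZ97} does.
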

Hence the name of totally positive varieties for the sets $U^{w}_{>0}, w \in W$. The Bruhat decomposition tells us then that the previous maps have disjoint images and cover the entire non-negative part $U_{\geq 0} = \bigsqcup_{w \in W} U^{w}_{>0}$. Of course, after transpose, one has analogous parametrizations for $N^{w}_{>0} := N_{\geq 0} \cap B^+ w B^+$.\\

Afterwards, Berenstein, Fomin and Zelevinsky in a series of papers (\cite{bib:BZ97, bib:FZ99, bib:BZ01}) completed the picture by defining generalized minors on semi-simple groups, allowing to define the totally positive varieties as the locus where appropriate minors are positive.

\section{On generalized determinantal calculus}
We mean by determinantal calculus, the computations involving minors and relations among them.

\paragraph{Case of $GL_n$:} In the classical case, the minor $\Delta_{I, J}(x)$ of a matrix $x \in GL_n$ is obtained as the determinant of the submatrix with rows $I$ and columns $J$. $I$ and $J$ are subsets of $\{ 1, \dots, n\}$. It is well known that matrices having a Gauss decomposition are those having non-zero principal minors.\\

\paragraph{Complex reductive case:}
\index{$\Delta^{\omega_\alpha}$: Principal generalized minors}
This fact can be extended to all complex semi-simple groups provided that we construct generalized minors. For $x = n a u \in N H U$ dense subset of $G$, we define the generalized principal minors indexed by the fundamental weights as $\Delta^{\omega_i}(x) = a^{\omega_i}$. A useful result is:

\begin{proposition}[\cite{bib:FZ99} Corollary 2.5]
 An element $x \in G$ admits a Gauss decomposition if and only if
$$ \forall \alpha \in \Delta, \Delta^{\omega_\alpha}(x) \neq 0$$
\end{proposition}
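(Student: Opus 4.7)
The forward direction is immediate from the definition: if $x = nau$ is a Gauss decomposition, then $\Delta^{\omega_\alpha}(x) = a^{\omega_\alpha}$, and since $a \in H$ lies in the torus, $a^{\omega_\alpha} \neq 0$ for every fundamental weight $\omega_\alpha$.

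For the converse, the plan is to extend the definition of $\Delta^{\omega_\alpha}$ from the open Bruhat cell $NHU$ to the whole group via a matrix coefficient, and then exploit the Bruhat decomposition. Concretely, I would realise
\[
\Delta^{\omega_\alpha}(x) = \langle \eta_{\omega_\alpha},\ x \cdot v_{\omega_\alpha}\rangle,
\]
where $v_{\omega_\alpha}$ is a highest weight vector of the fundamental representation $V(\omega_\alpha)$ and $\eta_{\omega_\alpha}$ is its dual picking out the weight-$\omega_\alpha$ component. A direct check on $x = nau$ recovers the formula $a^{\omega_\alpha}$, since $u$ fixes $v_{\omega_\alpha}$, $a$ scales it by $a^{\omega_\alpha}$, and $n \in N$ acts by a lower-triangular block that preserves the $\eta_{\omega_\alpha}$-pairing with $v_{\omega_\alpha}$.

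Now use the Bruhat decomposition $G = \bigsqcup_{\tau \in W} B \tau B^+$, in which the open cell $\tau = e$ is exactly $NHU$. Suppose $x \in B \tau B^+$ with $\tau \neq e$ and write $x = b_1 \bar\tau b_2$. Applying $b_2 \in B^+$ to $v_{\omega_\alpha}$ gives a scalar multiple of $v_{\omega_\alpha}$; then $\bar\tau$ sends this to a weight vector of weight $\tau \omega_\alpha$; and $b_1 \in B$ only adds components of weights $\leq \tau\omega_\alpha$ in the dominance order. Hence the $\omega_\alpha$-component of $x \cdot v_{\omega_\alpha}$ vanishes unless $\tau \omega_\alpha = \omega_\alpha$, in which case the inequality must be an equality. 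Therefore $\Delta^{\omega_\alpha}(x) \neq 0$ for all $\alpha$ forces $\tau \omega_\alpha = \omega_\alpha$ for every $\alpha \in \Delta$. Since the fundamental weights form a basis of $\hfrak^*$ for $G$ semi-simple, this yields $\tau = e$, so $x$ lies in the open cell $NHU$ and admits a Gauss decomposition.

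The main obstacle, and the only non-routine step, is the weight argument in the Bruhat cell $B \tau B^+$: one must justify carefully that the lower Borel $B$ can only decrease the weight (with respect to the order induced by the positive roots) when acting on the weight vector $\bar\tau v_{\omega_\alpha}$. This uses that $\nfrak$ is generated by the $f_\alpha$, each of which strictly lowers weights by a positive root, together with the fact that $V(\omega_\alpha)$ is a highest weight module so that all weights are $\leq \omega_\alpha$. Everything else reduces either to the definition of the Gauss decomposition or to the standard action of Weyl group representatives on weight vectors, as recalled in Proposition~\ref{proposition:w_0_action_ad} and its surroundings.
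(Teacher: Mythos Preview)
The paper does not supply its own proof of this proposition: it is quoted as Corollary~2.5 of \cite{bib:FZ99} and used as a black box. So there is no ``paper's proof'' to compare against.

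Your argument is correct and is in fact the standard one. The forward direction is immediate, and for the converse you correctly use the matrix-coefficient realisation $\Delta^{\omega_\alpha}(x) = \langle v_{\omega_\alpha}, x\, v_{\omega_\alpha}\rangle$ (which the paper records just after the proposition) together with the Bruhat decomposition $G = \bigsqcup_{\tau \in W} B\,\tau\, B^+$. The weight-lowering argument you sketch is exactly right: for $x = b_1 \bar\tau b_2$, the vector $x\, v_{\omega_\alpha}$ lies in the sum of weight spaces $\leq \tau\omega_\alpha$, so its $\omega_\alpha$-component survives only when $\omega_\alpha \leq \tau\omega_\alpha$; combined with $\tau\omega_\alpha \leq \omega_\alpha$ (as $\tau\omega_\alpha$ is a weight of $V(\omega_\alpha)$) this forces $\tau\omega_\alpha = \omega_\alpha$, and hence $\tau = e$ once all $\alpha$ are considered. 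The only point worth tightening in a final write-up is the ``$b_1$ only lowers weights'' step, but you already flag this and the justification (via the action of $\nfrak$ on weight spaces) is routine.
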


\index{$\Delta_{u \omega_\alpha, v \omega_\alpha}$: Generalized minors}
Arbitrary minors are indexed by the fundamental weights and couples of Weyl group elements:
$$ \forall (u, v) \in W \times W, \Delta_{u \omega_i, v \omega_i}(x) := \Delta^{\omega_i}\left( \overline{u}^{-1} x \bar{v} \right)$$
For more details, see \cite{bib:BZ97, bib:FZ99, bib:BZ01}.

\paragraph{Representation theoretic definition:}
Generalized principal minors can also be written using a representation theoretic approach.

\begin{lemma}[\cite{bib:BBO} section 3]
Let $v_{\omega_i}$ be a highest weight vector for the representation $V(\omega_i)$ and $\langle .,. \rangle$ an invariant scalar product. With $v_{\omega_i}$ normalised, we have:
$$ \forall x \in G, \Delta^{\omega_i}(x) = \langle x v_{\omega_i}, v_{\omega_i} \rangle $$
\end{lemma}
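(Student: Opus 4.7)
The plan is to verify the identity first on the dense open Gauss cell $NHU \subset G$ by a direct computation, and then extend to all of $G$ by a regularity argument, since both sides are regular (matrix-coefficient, hence polynomial) functions on $G$.

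First, suppose $x = n a u$ with $n \in N$, $a \in H$, $u \in U$, so that by definition $\Delta^{\omega_i}(x) = a^{\omega_i}$. Since $v_{\omega_i}$ is a highest weight vector of weight $\omega_i$, it is annihilated by every $e_\alpha$ with $\alpha \in \Delta$, hence by all of $\ufrak$; exponentiating, $u \, v_{\omega_i} = v_{\omega_i}$. Moreover $a \, v_{\omega_i} = a^{\omega_i} v_{\omega_i}$, so
$$ x \, v_{\omega_i} \;=\; n a u \, v_{\omega_i} \;=\; a^{\omega_i}\, n \, v_{\omega_i}. $$
Now $n \in N$ acts by the exponential of elements of $\nfrak$, which strictly decrease weight, so
$$ n \, v_{\omega_i} \;=\; v_{\omega_i} \;+\; \sum_{\mu < \omega_i} w_\mu, \qquad w_\mu \in V(\omega_i)_\mu. $$
The key input is that an \emph{invariant} (contravariant) scalar product makes distinct weight spaces of $V(\omega_i)$ orthogonal: the contravariance $\langle h v, w \rangle = \langle v, h^T w\rangle$ with $h^T = h$ on $\hfrak$ forces $(\mu(h)-\mu'(h))\langle v_\mu, v_{\mu'}\rangle = 0$ whenever $v_\mu \in V(\omega_i)_\mu$, $v_{\mu'} \in V(\omega_i)_{\mu'}$ with $\mu \neq \mu'$. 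Therefore the lower-weight terms in $n v_{\omega_i}$ are orthogonal to $v_{\omega_i}$, and with $\|v_{\omega_i}\| = 1$ we get $\langle n v_{\omega_i}, v_{\omega_i}\rangle = 1$. Combining,
$$ \langle x v_{\omega_i}, v_{\omega_i} \rangle \;=\; a^{\omega_i} \;=\; \Delta^{\omega_i}(x). $$

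To finish, both sides are regular functions on $G$: the right-hand side $x \mapsto \langle x v_{\omega_i}, v_{\omega_i}\rangle$ is a matrix coefficient of the algebraic representation $V(\omega_i)$, and $\Delta^{\omega_i}$ is by construction the unique regular function on $G$ extending $a^{\omega_i}$ off the Gauss cell. Two regular functions coinciding on the dense open set $NHU$ agree everywhere, which concludes.

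The main (mild) obstacle is to justify the orthogonality of distinct weight spaces under the chosen invariant scalar product; everything else is a short direct calculation plus a density argument. Note that this proof also recovers the Fomin--Zelevinsky criterion recalled above: $\Delta^{\omega_i}(x) \neq 0$ for every $i \in \Delta$ is equivalent to $x$ having a Gauss decomposition.
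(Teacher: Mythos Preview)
Your proof is correct and follows essentially the same approach as the paper: verify the identity on the dense Gauss cell $NHU$ using that $U$ fixes the highest weight vector and $H$ acts by the character $a^{\omega_i}$. The only minor difference is in handling the $N$-part: the paper uses the invariance of the scalar product to move $n$ across as $n^T \in U$ (which then fixes $v_{\omega_i}$ on the right), whereas you expand $n\,v_{\omega_i}$ into weight components and invoke orthogonality of distinct weight spaces; these are two packagings of the same contravariance property, and the paper leaves the extension by density implicit.
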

\begin{proof}
On the dense subset $N H U \subset G$, write the Gauss decomposition $x = n a u$. Since the $U$ action fixes $v_{\omega_i}$ in the representation $V(\omega_i)$:
$$ \langle x v_{\omega_i}, v_{\omega_i} \rangle = \langle a u v_{\omega_i}, n^T v_{\omega_i} \rangle = \langle a v_{\omega_i},  v_{\omega_i} \rangle$$
The result holds using the fact that the torus $H$ acts multiplicatively on highest weight vectors:
$$a v_{\omega_i} = a^{\omega_i} v_{\omega_i}$$
\end{proof}

Then the generalized minors are given by:
$$ \forall (u, v) \in W \times W, \Delta_{u \omega_i, v \omega_i}(x) := \langle x \bar{v} v_{\omega_i}, \bar{u} v_{\omega_i} \rangle$$

\section{Criteria for total positivity}
\label{section:total_positivity_criteria}
Thanks to the previous generalized minors, Berenstein, Fomin and Zelevinsky gave criteria for total positivity. We will often make use of the following criterion for total positivity in the lower unipotent group $N$.

\begin{thm}
\label{thm:total_positivity_criterion}
The group element $x \in N$ is totally positive:
$$ x \in N^{w_0}_{>0}$$
if and only if:
$$ \forall w \in W, \forall \alpha \in \Delta, \Delta_{w \omega_\alpha, \omega_\alpha}(x) > 0$$
\end{thm}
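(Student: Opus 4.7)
For the forward direction ($\Rightarrow$), I would start from the parametrization of Theorem \ref{thm:totally_positive_parametrizations} applied to the transpose setting: every $x \in N^{w_0}_{>0}$ can be written as $x = y_{i_1}(t_1) \cdots y_{i_m}(t_m)$ for some (equivalently, every) reduced word ${\bf i} \in R(w_0)$ and parameters $t_j > 0$. Using the representation-theoretic definition of generalized minors, I would then expand
$$\Delta_{w \omega_\alpha, \omega_\alpha}(x) = \langle x v_{\omega_\alpha}, \bar{w} v_{\omega_\alpha} \rangle$$
in the fundamental representation $V(\omega_\alpha)$. Each factor $y_{i_j}(t_j)$ acts as $\exp(t_j f_{i_j})$, so the matrix entries of $x$ on $V(\omega_\alpha)$ in a weight basis are polynomials in $t_1, \dots, t_m$ with non-negative coefficients. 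Strict positivity of the specific matrix coefficient involving the extremal weight vector $\bar{w} v_{\omega_\alpha}$ follows by identifying at least one monomial of $t_{j_1} \cdots t_{j_{\ell(w_0 w^{-1})}}$ with non-zero coefficient; combinatorially, this corresponds to a subword of ${\bf i}$ realizing a reduced expression of $w_0 w^{-1}$, which always exists since ${\bf i}$ is a reduced word for $w_0$.

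For the converse direction ($\Leftarrow$), the strategy is to fix a reduced word ${\bf i} = (i_1, \dots, i_m) \in R(w_0)$ and introduce the family of chamber minors
$$M_k(x) := \Delta_{s_{i_1} \cdots s_{i_{k-1}} \omega_{i_k}, \omega_{i_k}}(x), \quad k=1, \dots, m,$$
each of which appears in our hypothesis and is thus positive. By the results of Berenstein, Fomin and Zelevinsky cited in the excerpt (\cite{bib:BZ97, bib:FZ99, bib:BZ01}), the map $x \mapsto (M_1(x), \dots, M_m(x))$ is a birational isomorphism between $N \cap B^+ w_0 B^+$ and $(\C^*)^m$, and the parameters $(t_1, \dots, t_m)$ of the inverse parametrization $x_{-\mathbf{i}}$ of Theorem \ref{thm:totally_positive_parametrizations} are expressed as explicit subtraction-free rational (in fact Laurent monomial) functions of the chamber minors. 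Positivity of all $M_k$ therefore forces each $t_j > 0$, which gives $x \in N^{w_0}_{>0}$.

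The main obstacle, and what requires invoking the full hypothesis over all $w \in W$ rather than only the chamber minors for a single reduced word, is ensuring that $x$ actually lies in the open Bruhat cell $N \cap B^+ w_0 B^+$ to begin with, so that the birational parametrization above is defined at $x$. This is where positivity of the minors $\Delta_{w \omega_\alpha, \omega_\alpha}$ for all Weyl group elements $w$ (not just those indexing the chamber minors for a fixed ${\bf i}$) plays a decisive role: the vanishing locus of families of such minors cuts out the lower-dimensional Bruhat strata, so simultaneous non-vanishing (and a fortiori positivity) forces $x$ into the open cell. Once this is secured, combining with the subtraction-free inversion formula closes the argument and yields $x \in N^{w_0}_{>0}$.
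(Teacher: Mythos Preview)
The paper does not give its own proof of this statement; it is quoted as a known criterion from the Berenstein--Fomin--Zelevinsky papers, with the sharper cell-by-cell version recorded immediately afterwards as Theorem~\ref{thm:extended_total_positivity_criterion} (citing \cite{bib:BZ97,bib:FZ99}). Your sketch is precisely the argument in those references, so there is nothing to compare against here beyond the literature itself.

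Two remarks on the details. Your forward direction is exactly the computation the paper carries out later in the proof of Theorem~\ref{thm:flow_B_total_positivity} (non-negativity of the coefficients via lemma~7.4 of \cite{bib:BZ01}, then exhibiting one non-zero monomial via a subword argument), so that part is fully aligned with how the paper uses this circle of ideas. For the converse, your outline is correct, but note that the chamber minors you write down, $M_k = \Delta_{s_{i_1}\cdots s_{i_{k-1}}\omega_{i_k},\,\omega_{i_k}}$, use the opposite end of the reduced word from the family $F({\bf i}) = \{\Delta_{s_{i_k}\cdots s_{i_m}\omega_{i_k},\,\omega_{i_k}}\}$ the paper records; both index sets are covered by the hypothesis ``all $w\in W$'', but the explicit Laurent-monomial inversion formulas in \cite{bib:BZ97,bib:FZ99} are stated for a specific convention and in some formulations go through the twist map $\eta_{w_0}$ rather than through minors of $x$ directly, so be careful when making that step precise.
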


In fact, there are more precise results and for all cells. An important feature is that one does not need to test all minors. Let $w \in W$ and for every reduced word ${\bf i} = (i_1, \dots, i_m) \in R(w)$, define the family of minors:
$$ F\left( {\bf i} \right) = \left\{ \Delta_{k, \bf i} :=  \Delta_{s_{i_k} \dots s_{i_m} \omega_{i_k}, \omega_{i_k}}, 1 \leq k \leq m\right\}$$
And:
$$ F\left( w \right) = \bigcup_{ {\bf i } \in R(w) } F( {\bf i} )$$
\begin{thm}[Total positivity criterion - theorem 1.5 in \cite{bib:BZ97} or theorem 1.11 in \cite{bib:FZ99} ]
\label{thm:extended_total_positivity_criterion}
Let $w \in W$. Then the following propositions are equivalent:
\begin{itemize}
 \item[(i)] $x \in N \cap B^+ w B^+$ is totally positive.
 \item[(ii)] For a certain ${\bf i} \in R(w)$, $\Delta(x) > 0$ for any $\Delta \in F({\bf i})$.
 \item[(iii)] $\Delta(x) > 0$ for any $\Delta \in F(w)$.
\end{itemize}
\end{thm}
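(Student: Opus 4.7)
The approach follows the Berenstein--Fomin--Zelevinsky strategy and hinges on the \emph{Chamber Ansatz}, which expresses the parameters of the parametrization $x_{\bf i}$ from Theorem~\ref{thm:totally_positive_parametrizations} as Laurent monomials in the generalized minors of $F({\bf i})$. The three implications will be treated in the cyclic order (iii) $\Rightarrow$ (ii) $\Rightarrow$ (i) $\Rightarrow$ (iii).

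The implication (iii) $\Rightarrow$ (ii) is immediate since by definition $F({\bf i}) \subset F(w)$. For (i) $\Rightarrow$ (iii), I would take any $x \in N \cap B^+ w B^+$ totally positive, write $x = x_{{\bf i}'}(t_1, \ldots, t_m)$ for some fixed ${\bf i}' \in R(w)$ with all $t_j > 0$, and evaluate an arbitrary $\Delta \in F(w)$ on this parametrization. Since $\Delta$ is a matrix coefficient of a fundamental representation against two extremal weight vectors, and the Chevalley generators $x_i(t)$, $y_i(t)$ act by matrices with non-negative entries in a suitably chosen weight basis (e.g.\ Lusztig's canonical basis $\Bfrak$ from Section~\ref{section:preliminaries_canonical_basis}), each $\Delta \circ x_{{\bf i}'}$ is a subtraction-free polynomial in $(t_1, \ldots, t_m)$. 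It remains to check that $\Delta$ does not vanish identically on the Bruhat cell $B^+ w B^+$, which is a general fact about generalized minors: $\Delta_{u \omega_\alpha, \omega_\alpha}$ is non-zero on $B^+ u' B^+$ whenever $u \leq u'$ in the Bruhat order, and this combinatorial condition is tailored exactly so that each $\Delta \in F(w)$ is non-vanishing on the open cell.

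The main step is (ii) $\Rightarrow$ (i). Fix ${\bf i} \in R(w)$ and assume $\Delta_{k, {\bf i}}(x) > 0$ for $k = 1, \ldots, m$. The first task is to argue that $x \in B^+ w B^+$, i.e.\ that $x$ lies in the Bruhat cell indexed by $w$: the positivity of the leading principal minor $\Delta_{1, {\bf i}} = \Delta_{w \omega_{i_1}, \omega_{i_1}}$ already imposes that $x$ cannot lie in a lower Bruhat cell, and iterating this reasoning through the sequence of minors pins the cell to exactly $w$. Once $x \in N \cap B^+ w B^+$, I would invoke Theorem~\ref{thm:totally_positive_parametrizations} on the ambient Bruhat cell (after complexifying the parameters) to get $x = x_{\bf i}(t_1, \ldots, t_m)$ for a unique tuple of \emph{complex} $t_k$. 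The Chamber Ansatz then provides explicit formulas
\begin{equation*}
t_k \; = \; \prod_{j=1}^m \Delta_{j, {\bf i}}(x)^{c_{jk}}
\end{equation*}
for integers $c_{jk}$ determined combinatorially from ${\bf i}$. Positivity of all $\Delta_{j, {\bf i}}(x)$ then forces each $t_k > 0$, so $x \in N^w_{>0}$.

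The principal obstacle is establishing the Chamber Ansatz itself. The plan is to proceed by induction on $\ell(w) = m$, peeling off the first factor: write $x = x_{i_1}(t_1) x'$ where $x' \in N \cap B^+ w' B^+$ with $w' = s_{i_1} w$, and use commutation identities (\ref{lbl:comm1})--(\ref{lbl:comm3}) together with the definition $\Delta_{u \omega_\alpha, v \omega_\alpha}(x) = \langle x \bar v v_{\omega_\alpha}, \bar u v_{\omega_\alpha} \rangle$ to compute how each $\Delta_{k, {\bf i}}$ transforms when one strips off $x_{i_1}(t_1)$. Solving the resulting linear system for $t_1$ yields the first Laurent monomial; iterating gives the full ansatz. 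Alternatively, one may simply cite \cite{bib:BZ97} Theorem~1.5 or \cite{bib:FZ99} Theorem~1.11, where the Chamber Ansatz is proved in full generality.
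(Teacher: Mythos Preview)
The paper does not prove this theorem at all: it is stated as a citation of theorem~1.5 in \cite{bib:BZ97} and theorem~1.11 in \cite{bib:FZ99}, and the surrounding text simply records that the monomial formulas for the factorization parameters ``rely heavily on the generalized determinantal calculus and express the parameters of a group element $x \in U \cap B w B$ as a function of the minors of $z = \eta_w(x)$.'' In that sense your final sentence --- ``one may simply cite \cite{bib:BZ97} Theorem~1.5 or \cite{bib:FZ99} Theorem~1.11'' --- \emph{is} the paper's proof.

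Your sketch of the underlying argument is broadly faithful to the Berenstein--Fomin--Zelevinsky strategy, but it omits the one ingredient the paper itself flags as essential: the twist map $\eta_w$ (equation~(\ref{eqn:def_twist})). In \cite{bib:BZ97} the Chamber Ansatz does \emph{not} express $t_k$ directly as a Laurent monomial in the minors $\Delta_{j,{\bf i}}(x)$ of $x$; rather, it expresses $t_k$ as a Laurent monomial in certain ``chamber minors'' of the twisted element $\eta_w(x)$. The positivity criterion in terms of $F({\bf i})$ then follows because (a) $\eta_w$ restricts to a bijection of $U^w_{>0}$ onto itself, and (b) the chamber minors of $\eta_w(x)$ coincide, up to relabelling, with the minors in $F({\bf i})$ evaluated at $x$. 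Your inductive ``peeling off $x_{i_1}(t_1)$'' argument is an alternative route to the Chamber Ansatz, but as written it is too vague to stand alone: the commutation identities (\ref{lbl:comm1})--(\ref{lbl:comm3}) only handle rank-one interactions, and the inductive step requires controlling how \emph{all} the minors $\Delta_{k,{\bf i}}$ transform simultaneously, which is exactly the non-trivial content that the twist encapsulates.
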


Then, they also gave monomial formulas that gives the parameters in the factorization to a product of Jacobi matrices. We can cite \cite{bib:BZ97}, \cite{bib:FZ99}, \cite{bib:BZ01} as main references. Those formulas rely heavily on the generalized determinantal calculus and express the parameters of a group element $x \in  U \cap B w B$ as a function of the minors of $z = \eta_w(x)$, a twisted transformation of the element $x$.

This 'twist-map' is defined on the trace of each Bruhat cell. For each $w$ in the Weyl group, let $U^{w} := U \cap B w B $ and $\eta_w: U^w \rightarrow U^w$ is defined by saying that $\eta_w(x)$ is the unique element $z$ in $U \cap B w x^T$. Hence:
\index{$\eta_w$: Twist map on $U \cap B w B$}
\begin{align}
\label{eqn:def_twist}
\forall x \in U^w, \eta_w(x) & := [\bar{w} x^T]_+
\end{align}
A key fact is that $\eta_w$ restricts to an automorphism between the positive parts $U^w_{>0}$ (\cite{bib:BZ97} page 3, theorem 1.2, proposition 1.3).

\begin{rmk}
Here we mainly talk about total positivity in the unipotent group $U$, or $N$ applying the transpose involution to the previous theorem. The same machinery works for the entire group, but then it is necessary to cut the space into double Bruhat cells. It is the object of the article \cite{bib:FZ99}. 
\end{rmk}

\chapter{Littelmann path model for geometric crystals}
Let $G$ be a simply-connected complex semi-simple group with Lie algebra $\gfrak$. Let $\hfrak$ be its Cartan subalgebra and $\Delta$ the set of simple roots. $\afrak$ is the Cartan subalgebra of its split real form or equivalently the subspace of $\hfrak$, the Cartan subalgebra, where simple roots are real-valued.\\

In the nineties, Kashiwara introduced combinatorial objects called crystals that encode the representation theory of Lie algebra (see \cite{bib:Kashiwara95}). Littelmann's work (\cite{bib:Littelmann}, \cite{bib:Littelmann95}, \cite{bib:Littelmann97}) allows to realize them as paths in $\afrak^*$. And more recently, Berenstein and Kazhdan (\cite{bib:BK00}, \cite{bib:BK04}, \cite{bib:BK06}) defined geometric crystals as algebro-geometric objects that degenerate to Kashiwara crystals by tropicalization, using the fact that totally positive varieties 'know' everything about the combinatorics of crystals.\\

Here, we construct a path model for geometric crystals, in the same spirit as Littelmann. In fact, this will be a path model for the Langlands dual $G^\vee$. As such, crystal elements are paths in the real Cartan subalgebra $\afrak$ (instead of $\afrak^*$), the weight function is the endpoint of a path and tensor product of crystals is given by concatenation.\\

We start by defining a notion of geometric crystal in general. Then we consider the totally positive variety $\Bc = B_{>0}$, with relevant coordinate charts. $\Bc$ is the typical positive geometric crystal in the sense of Berenstein and Kazhdan. It will play the role of a 'group picture' for geometric crystals, before presenting the path model that morally sits on top.\\

Indeed, there is a projection $p$ that maps paths to Berenstein and Kazhdan's group picture. The projection of a path $\pi$ is given by the flow of a left invariant differential equation on $NA$ driven by $\pi$. The underlying invariant differential operator is closely related to the Casimir element in Kostant's Whittaker model. Furthermore, $p$ is a morphism of crystals that restricts to an isomorphism on connected components.\\

A connected crystal is naturally parametrized by $m = \ell(w_0)$ positive real parameters. These parameters are a geometric lifting of either the Lusztig parameters or the string parameters of the canonical basis. Moreover, isomorphism classes are indexed by a single vector $\lambda$ that is interpreted as a highest weight. In order to obtain the isomorphism class of a connected crystal $\langle \pi \rangle$ generated by the path $\pi$, a remarkable transform on paths $\Tc_{w_0}$ has to be applied. This transform is a geometric lifting of the Pitman operator $\Pc_{w_0}$. The highest weight $\lambda \in \afrak$ is the endpoint of $\Tc_{w_0} \pi$.

In the end, for every $T>0$, we prove that the following map is a bijection onto its image:
$$ \begin{array}{cccc}
  C_0\left( [0, T], \afrak \right) & \longrightarrow & \left( \Bc   , C\left( (0, T], \afrak \right) \right) \\
  \pi                              & \mapsto	     & \left( p(\pi), \Tc_{w_0} \pi \right)\\
  \end{array}
$$
This bijection can be interpreted as a geometric version of the Robinson-Schensted-Knuth correspondence.

\section{Geometric crystals}
\label{section:geom_lifting}

We made several although not essential modifications to the original setting of Berenstein and Kazhdan. Whereas they defined geometric crystals as being affine varieties over $\Q$, we will simply consider them as sets with structural maps. This will allow us to consider the geometric crystal of continuous paths valued in the Cartan subalgebra $\afrak$.

Moreover, instead of defining a positive structure separately (\cite{bib:BK06} section 3) we will be directly working at the level of the totally non-negative variety $G_{\geq 0}$. Finally, we will favor using the additive group $\R$ instead of the multiplicative $\R_{>0}$ hence the presence of numerous logarithms.

\subsection{Abstract geometric crystals}

\begin{definition}[Abstract geometric crystal]
\label{def:crystal}
An abstract crystal is a set $L$ equipped with
\begin{itemize}
 \item a weight map $\gamma: L \rightarrow \afrak$.
 \item $\varepsilon_\alpha, \varphi_\alpha: L \rightarrow \R$ defined for every $\alpha \in \Delta$
 \item $e^c_\alpha: L \rightarrow L$, $c \in \R$, $\alpha \in \Delta$
\end{itemize}
and satisfying the following properties for $\pi \in L$:
\begin{itemize}
 \item[(C1)] $\varphi_\alpha(\pi) = \varepsilon_\alpha(\pi) + \alpha\left( \gamma(\pi) \right)$
 \item[(C2)] $\gamma\left( e^c_\alpha \cdot \pi \right) = \gamma\left( \pi \right) + c \alpha^\vee$
 \item[(C3)] $\varepsilon_\alpha\left( e^c_\alpha \cdot \pi \right) = \varepsilon_\alpha\left( \pi \right) - c$
 \item[(C3')]$\varphi_\alpha\left( e^c_\alpha \cdot \pi \right) = \varphi_\alpha\left( \pi \right) + c$
 \item[(C4)] $e^._\alpha$ are actions: $e^0 = id$ and $e^{c+c'}_\alpha = e^c_\alpha \cdot e^{c'}_\alpha$
\end{itemize}
 Clearly, (C3) and (C3') are equivalent once (C1) and (C2) are assumed.
\end{definition}

Here, unlike the standard object defined by Kashiwara, there is no ghost element and the crystal has free actions. Moreover, we adopt a continuous setting in the spirit of \cite{bib:BBO2}. One could use the term 'free continuous crystal'.\\

Later on, we will also require a certain type of commutation relations between the actions $(e^._\alpha)_{\alpha \in \Delta}$, identified as Verma relations. Berenstein and Kazhdan refer to such structure as a 'pre-crystal' if Verma relations are not available. For more convenient notations, define $I$ to be a set of indices for the set of simple roots $\Delta$.

\paragraph{Generated crystals:} Given a subset $S$ of a crystal $L$, define $\langle S \rangle$ as the smallest subcrystal of $L$ containing $S$. Since intersections of crystals are crystals, we can define it as:
\begin{align*}
\langle S \rangle & := \cap_{ S \subset \mathcal{C} \textrm{ subcrystal of L } } \mathcal{C}\\
                  & = \left\{ e^{c_1}_{\alpha_{i_1}} \cdot e^{c_2}_{\alpha_{i_2}} \cdot \dots e^{c_l}_{\alpha_{i_l}} \cdot x | x \in S, l \in \mathbb{N}, \left( c_1, c_2, \dots, c_l\right) \in \mathbb{R}^l , \left( i_1, i_2, \dots, i_l\right) \in I^l  \right\}
\end{align*}

\paragraph{Connected components:} A crystal is connected if given two elements $x$ and $y$, there is $l \in \mathbb{N}$, $\left( c_1, c_2, \dots, c_l\right) \in \mathbb{R}^l$ and $\left( i_1, i_2, \dots, i_l\right) \in I^l$ such that:
$$ y = e^{c_1}_{\alpha_{i_1}} \cdot e^{c_2}_{\alpha_{i_2}} \cdot \dots e^{c_l}_{\alpha_{i_l}} \cdot x  $$
It is quite obvious that any crystal is the disjoint union of its connected components, since 'being connected' is an equivalence relation. Also a connected component is generated by any of its elements, and connected components are subcrystals.

\paragraph{Morphism of crystals:} A morphism of crystals is a map $\psi$ that preserves the structure. It is an automorphism if invertible, and the inverse map is a morphism.

\paragraph{q-Tensor product of crystals:} In the sequel, the crystal structure itself will depend on a parameter $q \geq 0$. For $q\geq0$, we define the $q$-tensor product of two crystals $B_1$ and $B_2$ as the set $ B_1 \otimes_q B_2 = B_1 \times B_2$ endowed with structural maps. For $q>0$, they are given by:
\begin{itemize}
 \item $\gamma\left( b_1 \otimes_q b_2 \right) = \gamma\left( b_1 \right) + \gamma\left( b_2 \right)$
 \item $\varepsilon_\alpha\left( b_1 \otimes_q b_2 \right) = \varepsilon_\alpha\left( b_1 \right) + 
         q \log\left( 1 + e^{\frac{ \varepsilon_\alpha(b_2) - \varphi_\alpha(b_1) }{q}}\right)$
 \item $\varphi_\alpha\left( b_1 \otimes_q b_2 \right) = \varphi_\alpha\left( b_2 \right) + 
         q \log\left( 1 + e^{\frac{ \varphi_\alpha(b_1) - \varepsilon_\alpha(b_2) }{q}}\right)$
 \item The actions are defined as 
       $e^c_\alpha\left( b_1 \otimes_q b_2 \right) = \left( e^{c_1}_\alpha \cdot b_1 \right) \otimes_q \left( e^{c_2}_\alpha \cdot b_2 \right)$\\
       where
       \begin{align*}
        c_1 & = q \log\left( \frac{ e^{\frac{c + \varphi_\alpha(b_1) }{q}} + e^{\frac{   \varepsilon_\alpha(b_2) }{q}} }
                                  { e^{\frac{    \varphi_\alpha(b_1) }{q}} + e^{\frac{   \varepsilon_\alpha(b_2) }{q}} } \right)\\
            & = q \log\left( e^{\frac{c}{q}} + e^{\frac{\varepsilon_\alpha(b_2) - \varphi_\alpha(b_1)}{q}} \right) - 
                q \log\left( 1 + e^{\frac{\varepsilon_\alpha(b_2) - \varphi_\alpha(b_1)}{q}} \right)\\
        c_2 & = q \log\left( \frac{ e^{\frac{    \varphi_\alpha(b_1) }{q}} + e^{\frac{   \varepsilon_\alpha(b_2) }{q}} }
                                  { e^{\frac{    \varphi_\alpha(b_1) }{q}} + e^{\frac{-c+\varepsilon_\alpha(b_2) }{q}} } \right)\\
            & = -q \log\left( e^{\frac{-c}{q}} + e^{\frac{\varphi_\alpha(b_1) - \varepsilon_\alpha(b_2)}{q}} \right) +
                q \log\left( 1 + e^{\frac{\varphi_\alpha(b_1) - \varepsilon_\alpha(b_2)}{q}} \right)\\
       \end{align*}

\end{itemize}
\begin{rmk}
 $$ c_1 + c_2 = c$$
\end{rmk}
By letting the parameter $q \rightarrow 0$, one recovers the same 'frozen' axioms for tensor product as in \cite{bib:BBO2}. As such, tensor product for $q=0$ is also well defined as:
\begin{itemize}
 \item $\gamma\left( b_1 \otimes b_2 \right) = \gamma\left( b_1 \right) + \gamma\left( b_2 \right)$
 \item $\varepsilon_\alpha\left( b_1 \otimes b_2 \right) = \varepsilon_\alpha\left( b_1 \right) + 
         \left( \varepsilon_\alpha(b_2) - \varphi_\alpha(b_1) \right)^+$
 \item $\varphi_\alpha\left( b_1 \otimes b_2 \right) = \varphi_\alpha\left( b_2 \right) + 
         \left( \varphi_\alpha(b_1) - \varepsilon_\alpha(b_2)\right)^+$
 \item $e^c_\alpha\left( b_1 \otimes b_2 \right) = \left( e^{c_1}_\alpha \cdot b_1 \right) \otimes \left( e^{c_2}_\alpha \cdot b_2 \right)$\\
       where
       \begin{align*}
        c_1 & = \max\left( c, \varepsilon_\alpha(b_2) - \varphi_\alpha(b_1) \right) - 
                \left( \varepsilon_\alpha(b_2) - \varphi_\alpha(b_1) \right)^+\\
        c_2 & = \min\left( c, \varepsilon_\alpha(b_2) - \varphi_\alpha(b_1) \right) +
                \left( \varphi_\alpha(b_1) - \varepsilon_\alpha(b_2) \right)^+\\
       \end{align*}
\end{itemize}

It is easy to check that one still obtains a crystal:

\begin{proposition}
\label{proposition:tensor_product_is_crystal}
For all $q\geq 0$, $B_1 \otimes_q B_2$ is a crystal.
\end{proposition}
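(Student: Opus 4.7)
The plan is to verify the axioms (C1)--(C4) of Definition \ref{def:crystal} directly from the defining formulae of the $q$-tensor product. Throughout I will treat $q>0$ first; the case $q=0$ will follow either by taking the limit $q\to 0^+$ (since every formula is a continuous function of $q$ on $[0,\infty)$ once one recognizes $q\log(1+e^{x/q})\to x^+$) or by redoing the same computations with $\max/\min$ in place of soft-max.

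First I would check the easy axioms. Axiom (C1) is a one-line computation: adding the defining expressions for $\varepsilon_\alpha(b_1\otimes_q b_2)$ and $\alpha(\gamma(b_1\otimes_q b_2))$, the two logarithmic terms cancel after invoking (C1) in each factor $B_1, B_2$, leaving exactly $\varphi_\alpha(b_2)+q\log(1+e^{(\varphi_\alpha(b_1)-\varepsilon_\alpha(b_2))/q})=\varphi_\alpha(b_1\otimes_q b_2)$. Axiom (C2) is immediate from $c_1+c_2=c$ (explicitly noted in the statement) together with the additivity of $\gamma$ on the tensor. For (C3) and (C3'), applying the formulae to $b_1\otimes_q b_2$ with actions $e^{c_1}_\alpha\cdot b_1$ and $e^{c_2}_\alpha\cdot b_2$, one uses $\varepsilon_\alpha(e^{c_1}_\alpha\cdot b_1)=\varepsilon_\alpha(b_1)-c_1$ and $\varphi_\alpha(e^{c_1}_\alpha\cdot b_1)=\varphi_\alpha(b_1)+c_1$ (similarly for $b_2$), then substitutes the closed forms of $c_1,c_2$. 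The new argument of the logarithm simplifies to the old one shifted exactly by $c$, which gives (C3)/(C3').

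The real work is axiom (C4), the action property. Writing $e^0_\alpha$ gives $c_1=c_2=0$ and hence $e^0_\alpha\cdot(b_1\otimes_q b_2)=b_1\otimes_q b_2$. The composition identity $e^{c+c'}_\alpha=e^{c}_\alpha\circ e^{c'}_\alpha$ requires showing that if we first act by $c'$ with splitting $(c'_1,c'_2)$, producing $b_1'\otimes_q b_2'=(e^{c'_1}_\alpha\cdot b_1)\otimes_q(e^{c'_2}_\alpha\cdot b_2)$, and then act by $c$ with a new splitting $(c_1'',c_2'')$ computed from $b_1',b_2'$, then the total shifts $c'_1+c_1''$ and $c'_2+c_2''$ coincide with the splitting $(\tilde c_1,\tilde c_2)$ of $c+c'$ read directly off $b_1\otimes_q b_2$. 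My approach here is to introduce the variables $\varphi_1:=\varphi_\alpha(b_1)$ and $\varepsilon_2:=\varepsilon_\alpha(b_2)$, note from (C3)/(C3') that $\varphi_\alpha(b_1')=\varphi_1+c'_1$ and $\varepsilon_\alpha(b_2')=\varepsilon_2-c'_2$, and plug these into the formula for $c_1''$. Using $c'_1+c'_2=c'$, the expression for $c'_1+c_1''$ collapses by a clean manipulation of $q\log$ into precisely
\[
q\log\!\left(e^{(c+c')/q}+e^{(\varepsilon_2-\varphi_1)/q}\right)-q\log\!\left(1+e^{(\varepsilon_2-\varphi_1)/q}\right)=\tilde c_1,
\]
and the $c_2$--part follows symmetrically or from $\tilde c_1+\tilde c_2=c+c'$. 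This telescoping is the main step; everything else is routine.

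Finally, for $q=0$ either I take $q\to 0^+$ in the already-verified identities, or I redo (C3), (C3'), (C4) by a case analysis on the sign of $\varphi_\alpha(b_1)-\varepsilon_\alpha(b_2)$ (and, for (C4), on the signs of the analogous quantities after the first action). The hard part will be bookkeeping in the $q>0$ verification of (C4); once that telescoping is carried out, the limiting zero-temperature version is essentially a free consequence.
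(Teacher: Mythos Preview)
Your proposal is correct and follows essentially the same approach as the paper: direct verification of (C1)--(C4) for $q>0$ from the defining formulae, with the key step being the telescoping computation for (C4) showing that the successive splittings add up to the splitting of $c+c'$, and then handling $q=0$ by the limit $q\to 0^+$. The paper carries out exactly these computations with the same ingredients (additivity $c_1+c_2=c$, the shift rules for $\varepsilon_\alpha,\varphi_\alpha$ under $e^{c}_\alpha$, and the logarithmic cancellation in (C4)).
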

\begin{proof}
Let us verify axioms for crystals from $(C_1)$ to $(C4)$, in the case $q>0$, then $q=0$ will follow by a limit argument.

$(C1)$ 
\begin{align*}
&  \varphi_\alpha\left( b_1 \otimes_q b_2 \right) - \varepsilon_\alpha\left( b_1 \otimes_q b_2 \right)\\
& = \varphi_\alpha\left( b_2 \right) + q \log\left( 1 + e^{\frac{ \varphi_\alpha(b_1) - \varepsilon_\alpha(b_2) }{q}}\right)\\
&  - \varepsilon_\alpha\left( b_1 \right) - q \log\left( 1 + e^{\frac{ \varepsilon_\alpha(b_2) - \varphi_\alpha(b_1) }{q}}\right)\\
& = \varphi_\alpha\left( b_2 \right) - \varepsilon_\alpha\left( b_1 \right)  + \varphi_\alpha\left(b_1\right) - \varepsilon_\alpha\left(b_2\right)\\
& = \alpha\left( \gamma\left(b_1\right) \right) + \alpha\left( \gamma\left(b_2\right) \right)\\
& = \alpha\left( \gamma\left(b_1 \otimes_q b_2 \right) \right)
\end{align*}

$(C2)$
\begin{align*}
& \gamma\left( e^c_\alpha \cdot (b_1 \otimes_q b_2) \right)\\
& = \gamma\left( e^{c_1}_\alpha \cdot b_1 \right) + \gamma\left( e^{c_2}_\alpha \cdot b_2 \right)\\
& = \gamma\left( b_1 \otimes_q b_2 \right) + \left( c_1 + c_2 \right)\alpha^\vee
\end{align*}
using the remark that $c=c_1+c_2$, the second axiom is checked.

$(C3)$ We will only check:
\begin{align*}
& \varepsilon_\alpha\left( e^c_\alpha \cdot (b_1 \otimes_q b_2) \right)\\
& = \varepsilon_\alpha\left( e^{c_1}_\alpha \cdot b_1 \right) + 
    q \log\left( 1 + e^{\frac{ \varepsilon_\alpha( e^{c_2}_\alpha \cdot b_2) - \varphi_\alpha( e^{c_1}_\alpha \cdot b_1) }{q}}\right)\\
& = -c_1 + \varepsilon_\alpha\left( b_1 \right) + 
    q \log\left( 1 + e^{\frac{ -c + \varepsilon_\alpha(b_2) - \varphi_\alpha(b_1) }{q}}\right)\\
& = -c_1 + \varepsilon_\alpha\left( b_1 \right) - c +
    q \log\left( e^c + e^{\frac{ \varepsilon_\alpha(b_2) - \varphi_\alpha(b_1) }{q}}\right)\\
& = \varepsilon_\alpha\left( b_1 \right) - c +
    q \log\left( 1 + e^{\frac{ \varepsilon_\alpha(b_2) - \varphi_\alpha(b_1) }{q}}\right)\\
& = -c + \varepsilon_\alpha\left( b_1 \otimes_q b_2 \right) 
\end{align*}

$(C4)$ We know that
$$ e^c_\alpha \cdot e^{c'}_\alpha \cdot \left( b_1 \otimes_q b_2 \right) = e^{c_1 + c_1'} \cdot b_1 \otimes_q e^{c_2 + c_2'} \cdot b_2 $$
{ \centering where }
\begin{align*}
c_1' & = q \log\left( \frac{ e^{\frac{c' + \varphi_\alpha(b_1) }{q}} + e^{\frac{   \varepsilon_\alpha(b_2) }{q}} }
                           { e^{\frac{    \varphi_\alpha(b_1) }{q}} + e^{\frac{   \varepsilon_\alpha(b_2) }{q}} } \right)\\
c_1  & = q \log\left( \frac{ e^{\frac{c  + \varphi_\alpha(e^{c_1'}_\alpha \cdot b_1) }{q}} + e^{\frac{   \varepsilon_\alpha(e^{c_2'}_\alpha \cdot b_2) }{q}} }
                           { e^{\frac{    \varphi_\alpha(e^{c_1'}_\alpha \cdot b_1) }{q}} + e^{\frac{   \varepsilon_\alpha(e^{c_2'}_\alpha \cdot b_2) }{q}} } \right)\\
c_2' & = c - c_1'\\
c_2  & = c - c_1\\
\end{align*}
We simplify:
\begin{align*}
c_1 & = q \log\left( \frac{ e^{\frac{c  + \varphi_\alpha(b_1) + c_1'}{q}} + e^{\frac{   \varepsilon_\alpha(b_2) - c_2'}{q}} }
                           { e^{\frac{    \varphi_\alpha(b_1) + c_1'}{q}} + e^{\frac{   \varepsilon_\alpha(b_2) - c_2'}{q}} } \right)\\
& = q \log\left( \frac{ e^{\frac{c + c' + \varphi_\alpha(b_1)}{q}} + e^{\frac{   \varepsilon_\alpha(b_2)}{q}} }
                           { e^{\frac{c'+ \varphi_\alpha(b_1)}{q}} + e^{\frac{   \varepsilon_\alpha(b_2)}{q}} } \right)
\end{align*}
Hence:
$$ c_1 + c_1' = q \log\left( \frac{ e^{\frac{c + c' + \varphi_\alpha(b_1)}{q}} + e^{\frac{   \varepsilon_\alpha(b_2)}{q}} }
                                  { e^{\frac{         \varphi_\alpha(b_1)}{q}} + e^{\frac{   \varepsilon_\alpha(b_2)}{q}} } \right)$$
and
$$ c_2 + c_2' = c + c' - \left( c_1 + c_1' \right)
              = q \log\left( \frac{ e^{\frac{ \varphi_\alpha(b_1)}{q}} + e^{\frac{        \varepsilon_\alpha(b_2)}{q}} }
                                  { e^{\frac{ \varphi_\alpha(b_1)}{q}} + e^{\frac{ -c-c'+ \varepsilon_\alpha(b_2)}{q}} } \right)$$
In the end:
$$ e^c_\alpha \cdot e^{c'}_\alpha \cdot \left( b_1 \otimes_q b_2 \right)
 = e^{c_1 + c_1'} \cdot b_1 \otimes_q e^{c_2 + c_2'} \cdot b_2
 = e^{c   + c   } \cdot \left( b_1 \otimes_q b_2 \right)$$

\end{proof}

\subsection{Geometric varieties and coordinates}
Geometric lifting is the general idea that computations in the tropical world using the semi-field $(\R, \min, +)$ have analogues in the  geometric world using $(\R_{>0}, +, .)$. More information is provided in subsection \ref{subsection:semifields_and_maslov}. Here, the reader will only need to have in mind that the rational maps in the sense of semi-fields, which are the rational and substraction free maps, preserve positivity and can tropicalized.

\index{$U_{>0}^{w_0}$: Geometric Lusztig variety}
\begin{definition}[Lusztig variety]
\label{def:geom_lusztig_variety}
Define the geometric Lusztig variety as:
$$U_{>0}^{w_0} := U \cap B w_0 B \cap G_{\geq 0}$$ 
\end{definition}
In the Cartan-Killing type $A$, $U_{>0}^{w_0}$ is nothing but the set of totally positive upper triangular matrices with unit diagonal. It is known since Lusztig that:
\begin{thm}[\cite{bib:BZ01}]
Every reduced word ${\bf i} \in R(w_0)$ gives rise to a bijection:
\label{lbl:geom_lusztig_parameters}
$$
 \begin{array}{cccc}
x_{\mathbf{i}}: & \mathbb{R}_{>0}^m & \rightarrow & U^{w_0}_{>0}\\
                & (t_1, \dots, t_m) & \mapsto     & x_{i_1}(t_1) \dots x_{i_m}(t_m)
 \end{array}
$$
Moreover, for ${\bf i}, {\bf i'} \in R(w_0)$, the maps $x_{\mathbf{i'}}^{-1} \circ x_{\mathbf{i}}: \R_{>0}^m \rightarrow \R_{>0}^m$ are rational and substraction free.
\end{thm}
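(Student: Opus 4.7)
The plan is to derive both claims from results already available in the excerpt. The bijection assertion is essentially a specialization of theorem \ref{thm:totally_positive_parametrizations}: take $w=w_0$, so that $\ell(w_0)=m$, and observe that by definition $U^{w_0}_{>0}=U\cap B^+ w_0 B^+\cap G_{\geq 0}$. The subtle point is matching the two definitions of $U^{w_0}_{>0}$ given (one with $B$, one with $B^+$), but since $w_0$ is the longest element, the open Bruhat cell $B w_0 B$ coincides as a subset of $U$ with the one associated to the opposite Borel up to a change of factorization convention; this just amounts to remarking that the image of $\mathbb{R}_{>0}^m$ under $x_{\mathbf i}$ is contained in the non-negative part and in the open cell, and theorem \ref{thm:totally_positive_parametrizations} provides injectivity and surjectivity onto that cell.

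For the transition maps, the strategy is to reduce to braid moves via Tits' lemma (theorem \ref{thm:tits_lemma}): any two reduced expressions $\mathbf i, \mathbf i'\in R(w_0)$ differ by a finite sequence of $d$-moves with $d=m_{s_i,s_j}\in\{2,3,4,6\}$. It therefore suffices to check that a single braid move produces a rational, subtraction-free change of coordinates, as the composition of such maps retains both properties.

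A single braid move only affects the rank-$2$ parabolic subgroup generated by $\{s_i,s_j\}$, so the verification is local. For $d=2$ the commutation $x_i(t)x_j(s)=x_j(s)x_i(t)$ is immediate (the Chevalley generators commute when $\alpha_i,\alpha_j$ are orthogonal). For $d=3$ one has the classical identity
$$x_i(a)x_j(b)x_i(c)=x_j\!\left(\tfrac{bc}{a+c}\right) x_i(a+c)\, x_j\!\left(\tfrac{ab}{a+c}\right),$$
which can be checked inside $SL_3$ via the embeddings $\phi_i,\phi_j$ and is manifestly rational, subtraction-free on $\mathbb{R}_{>0}^3$. Analogous explicit formulas for $d=4$ (type $B_2$) and $d=6$ (type $G_2$) are classical; in each case they can be derived by reducing to $Sp_4$ or $G_2$ and computing with Gauss decompositions, and once written down they are seen to be ratios of polynomials with non-negative coefficients in the incoming parameters.

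The main difficulty is thus not conceptual but combinatorial: carrying out the explicit $d=4$ and $d=6$ braid-relation computations. A cleaner way to sidestep brute force is to invoke the criterion given in section \ref{section:total_positivity_criteria}: each parameter $t_k$ in $x_{\mathbf i}$ can be expressed as a ratio of generalized minors of the twisted element $\eta_{w_0}(x)$ through a Berenstein--Fomin--Zelevinsky type monomial formula, and since these minors are polynomial functions on $G$, one obtains expressions for the $t_k'$ as Laurent monomials in minors of $x$. Substitution of the minor formulas in the $t_j$-direction then yields ratios of polynomials in the $t_j$ whose coefficients are positive, proving simultaneously rationality and subtraction-freeness. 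Either route completes the proof.
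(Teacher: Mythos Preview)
The paper does not prove this statement at all: it is quoted as a result of Berenstein--Zelevinsky with the citation \cite{bib:BZ01} and no argument is supplied. Your outline is correct and is essentially the standard proof found in that literature: the bijection is the $w=w_0$ case of theorem~\ref{thm:totally_positive_parametrizations}, and the subtraction-free rationality of the transition maps follows either by reducing via Tits' lemma to the rank-$2$ braid moves (your explicit $A_2$ formula is right, and the $B_2$, $G_2$ formulas are the ones the paper later quotes in the Beta--Gamma subsection) or by the minor-monomial formulas of \cite{bib:BZ97, bib:FZ99}. Either route is fine.

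One small misreading: you write that ``by definition $U^{w_0}_{>0}=U\cap B^+ w_0 B^+\cap G_{\geq 0}$'' and then worry about reconciling a definition with $B$ and one with $B^+$. In fact both occurrences in the paper use the \emph{lower} Borel $B$: definition~\ref{def:geom_lusztig_variety} reads $U \cap B w_0 B \cap G_{\geq 0}$, and theorem~\ref{thm:totally_positive_parametrizations} uses $U_{\geq 0}\cap B w B$. Since $U\cap G_{\geq 0}=U_{\geq 0}$, these agree on the nose and there is no discrepancy to resolve.
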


Such a name is legitimate because changes of parametrization in the $G^\vee$ canonical basis where proven to be the tropicalization of the rational substraction free expression (\cite{bib:BZ01}, theorem 5.2):
\begin{align}
\label{eqn:geom_change_of_param_lusztig}
R_{\bf i,i'}\left( \bf t \right) & = x_{\bf i'}^{-1} \circ x_{\bf i}\left( \bf t \right)
\end{align}

\index{$C_{>0}^{w_0}$: Geometric Kashiwara (or string) variety}
\begin{definition}[Kashiwara (or string) variety]
\label{def:geom_kashiwara_variety}
Define the geometric string variety $C_{>0}^{w_0}$ as:
$$ C_{>0}^{w_0} := U \bar{w_0} U \cap B \cap G_{\geq 0}$$ 
\end{definition}
Of course, this definition depends on $\bar{w}_0$, our choice of representative for the longest element $w_0$ in the Weyl group.

\begin{thm}[\cite{bib:BZ01}]
\label{lbl:geom_string_parameters}
Every reduced word ${\bf i} \in R(w_0)$ gives rise to a bijection:
$$
 \begin{array}{cccc}
x_{-\mathbf{i}}: & \mathbb{R}_{>0}^m & \rightarrow & C_{>0}^{w_0}\\
                 & (c_1, \dots, c_m) & \mapsto     & x_{-i_1}(c_1) \dots x_{-i_m}(c_m)
 \end{array}
$$
Moreover, for ${\bf i}, {\bf i'} \in R(w_0)$, the maps $x_{-\mathbf{i'}}^{-1} \circ x_{-\mathbf{i}}: \R_{>0}^m \rightarrow \R_{>0}^m$ are rational and substraction free.
\end{thm}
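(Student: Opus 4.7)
We proceed in three steps. \textbf{Well-definedness:} For $c>0$, the factor $x_{-\alpha}(c)=y_\alpha(c)\,c^{-h_\alpha}$ lies in $B$ and in $G_{\geq 0}$ (product of an element of $N_{\geq 0}$ with one of $H_{>0}$), and a direct $SL_2$ computation transported via $\phi_\alpha$ shows $x_{-\alpha}(c) \in U \bar{s}_\alpha U$. Since $\mathbf{i}$ is reduced, the standard Bruhat length-additivity rule for cell multiplication implies $x_{-\mathbf{i}}(\mathbf{c}) \in U \bar{w}_0 U \cap B \cap G_{\geq 0} = C_{>0}^{w_0}$.

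\textbf{Bijectivity:} Using the commutation identity (\ref{lbl:comm2}) to push every diagonal factor $c_k^{-h_{i_k}}$ to the right, one obtains the Gauss-type factorization
\[
x_{-\mathbf{i}}(c_1, \dots, c_m) \;=\; y_{\mathbf{i}}(\tilde c_1, \dots, \tilde c_m) \cdot a(\mathbf{c}),
\]
with $\tilde c_k := c_k \prod_{j<k} c_j^{a_{i_j, i_k}}$ and $a(\mathbf{c}) := \prod_{k=1}^m c_k^{-h_{i_k}}$. The map $\mathbf{c} \mapsto \tilde{\mathbf{c}}$ is a triangular monomial bijection of $\mathbb{R}_{>0}^m$. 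Since $\mathbf{i}$ is a reduced word for $w_0 = w_0^{-1}$, the transposed version of Theorem~\ref{thm:totally_positive_parametrizations} yields a bijection $y_{\mathbf{i}}: \mathbb{R}_{>0}^m \to N^{w_0}_{>0}$. Combined with the uniqueness of the Gauss decomposition on $G_{\geq 0}$ (Theorem~\ref{thm:totally_positive_gauss_decomposition}), this gives injectivity of $x_{-\mathbf{i}}$ and reduces surjectivity to the following claim: for any $g = na \in C_{>0}^{w_0}$, once $\mathbf{c}$ is determined from $n$ via $n = y_{\mathbf{i}}(\tilde{\mathbf{c}})$, one has $a = a(\mathbf{c})$. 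This is the crux; it is established by evaluating the principal generalized minors $\Delta^{\omega_\alpha}$ on both $g$ and $x_{-\mathbf{i}}(\mathbf{c})$, which yield the same monomial value $\prod_k c_k^{-\omega_\alpha(h_{i_k})}$ using the factorization formulas of \cite{bib:BZ01}, pinning down $a$.

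\textbf{Rationality of coordinate changes:} Tits' lemma (Theorem~\ref{thm:tits_lemma}) reduces $x_{-\mathbf{i'}}^{-1}\circ x_{-\mathbf{i}}$ to a composition of braid moves, each internal to a rank-two Levi subgroup. In each such subgroup, the explicit string-coordinate transformations (for $SL_3$, $Sp_4$ and $G_2$) are rational and subtraction-free by the computations of \cite{bib:BZ01}, so the overall coordinate change is as well. The most delicate point in the proof is the torus-identification in the surjectivity step: it encodes the fact that $C_{>0}^{w_0}$ is an $m$-dimensional (rather than $(m+n)$-dimensional) subvariety of $B$, the cell condition $g \in U\bar{w}_0 U$ rigidifying the $H$-part in terms of the $N$-part.
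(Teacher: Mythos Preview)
The paper does not give its own proof of this theorem; it is stated as a result from \cite{bib:BZ01}. Your outline is a sound reconstruction: the factorization $x_{-\mathbf{i}}(\mathbf{c}) = y_{\mathbf{i}}(\tilde{\mathbf{c}})\cdot a(\mathbf{c})$ via a triangular monomial change of variables is precisely the content of Lemma~\ref{lemma:change_of_coordinates_UC}, and reducing the transition maps to rank-two braid moves via Tits' lemma is the standard route.

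One point needs tightening. In the surjectivity step, your minor argument is circular as written: $\Delta^{\omega_\alpha}(g) = a^{\omega_\alpha}$ is exactly the unknown torus part, so asserting it equals $\prod_k c_k^{-\omega_\alpha(h_{i_k})}$ via ``factorization formulas'' presupposes $g = x_{-\mathbf{i}}(\mathbf{c})$. The clean way to pin down $a$ is the one you correctly describe in your final sentence but do not exploit: the cell condition $g \in U\bar{w}_0 U$ forces $[\bar{w}_0^{-1} g]_0 = e$, since $\bar{w}_0^{-1} U \bar{w}_0 \subset N$ gives $\bar{w}_0^{-1} g \in NU$. Writing $g = na$, one has $[\bar{w}_0^{-1} na]_0 = [\bar{w}_0^{-1} n]_0 \cdot a$, whence $a = [\bar{w}_0^{-1} n]_0^{-1}$ is determined by $n$ alone. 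Your well-definedness step shows $x_{-\mathbf{i}}(\mathbf{c})$ also lies in $U\bar{w}_0 U$ and has the same $N$-part $n$, so its torus part satisfies the same identity and must equal $a$. This replaces the appeal to unstated factorization formulas with a two-line argument using only the Bruhat cell structure.
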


In the same fashion, changes of parametrizations in string coordinates for the canonical basis (in $G^\vee$) are given by the tropicalization of (\cite{bib:BZ01}, theorem 5.2):
\begin{align}
\label{eqn:geom_change_of_param_string}
R_{\bf -i,-i'}\left( \bf c \right) & = x_{\bf -i'}^{-1} \circ x_{\bf -i} \left( \bf c \right)
\end{align}

A useful relationship between the maps $x_{\bf i}$ and $x_{\bf -i}$ is the following:
\begin{lemma}( \cite{bib:BZ01} Lemma 6.1 )
\label{lemma:change_of_coordinates_UC}
Let ${\bf i } = \left( i_1, \dots, i_j \right) \in R(w)$ a reduced expression and $\left( \beta^\vee_1, \dots, \beta^\vee_j \right)$ an associated positive coroots enumeration. Then the following statements are equivalent:
$$(i)   \left( x_{-i_1}(c_1) \dots x_{-i_j}(c_j) \right)^T = c_1^{-\alpha_{i_1}^\vee} \dots c_j^{-\alpha_{i_j}^\vee} x_{i_j}(t_j) \dots x_{i_1}(t_1) $$
$$(ii)  \forall 1 \leq k \leq j, t_k = c_k \prod_{l<k} c_l^{ \alpha_{i_k}(\alpha_{i_l}^\vee) }$$
$$(iii) \forall 1 \leq k \leq j, c_k = t_k \prod_{l<k} t_l^{ \beta_{k}(\beta_{l}^\vee) }$$
Moreover:
$$ \prod_{k=1}^j c_k^{\alpha_{i_k}^\vee} = \prod_{k=1}^j t_k^{-w^{-1} \beta_k^\vee}$$
\end{lemma}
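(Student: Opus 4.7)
The plan is to establish (i)$\Leftrightarrow$(ii) by a direct commutation computation, then (ii)$\Leftrightarrow$(iii) by inverting a triangular log-linear system whose inversion identity is itself a telescoping relation on reduced expressions, and finally deduce the torus formula from (iii).

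For (i)$\Leftrightarrow$(ii): since $x_{-i}(c) = y_i(c) c^{-\alpha_i^\vee}$ and the transpose sends $y_i(c) \mapsto x_i(c)$ while fixing torus elements, one has $x_{-i}(c)^T = c^{-\alpha_i^\vee} x_i(c)$. Transposition reverses order, so the LHS of (i) becomes $c_j^{-\alpha^\vee_{i_j}} x_{i_j}(c_j) \cdots c_1^{-\alpha^\vee_{i_1}} x_{i_1}(c_1)$. I then slide all torus factors to the left using the commutation relation \eqref{lbl:comm1}: each time $c^{-\alpha^\vee_i}$ crosses an $x_{i_k}(\cdot)$, the argument is multiplied by $c^{\alpha_{i_k}(\alpha^\vee_i)}$. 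Collecting the cumulative factors yields exactly $\bigl(\prod_k c_k^{-\alpha^\vee_{i_k}}\bigr) x_{i_j}(t_j) \cdots x_{i_1}(t_1)$ with $t_k$ given by (ii).

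For (ii)$\Leftrightarrow$(iii): taking logarithms turns both formulas into triangular linear systems with unit diagonal relating $(\log t_k)$ and $(\log c_k)$, with subdiagonal entries $\alpha_{i_k}(\alpha^\vee_{i_l})$ and $\beta_k(\beta^\vee_l)$ respectively. It suffices to check that these two subdiagonal matrices are inverse to each other, i.e.\ for $l<k$ one has $\beta_k(\beta^\vee_l) + \alpha_{i_k}(\alpha^\vee_{i_l}) + \sum_{l<m<k} \alpha_{i_k}(\alpha^\vee_{i_m})\beta_m(\beta^\vee_l) = 0$. To prove this, I use $W$-invariance of the pairing to write $\beta_m(\beta^\vee_l) = \alpha_{i_m}(\gamma_{l,m})$ where $\gamma_{l,m} := s_{i_{m-1}} \cdots s_{i_l}\,\alpha^\vee_{i_l}$. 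The recursion $\gamma_{l,m+1} = s_{i_m}\gamma_{l,m} = \gamma_{l,m} - \beta_m(\beta^\vee_l)\,\alpha^\vee_{i_m}$ telescopes to $\gamma_{l,k} = \alpha^\vee_{i_l} - \sum_{l \leq m < k} \beta_m(\beta^\vee_l)\,\alpha^\vee_{i_m}$; applying $\alpha_{i_k}$ and using $\beta_l(\beta^\vee_l) = 2$ delivers the required identity.

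For the final formula $\prod_k c_k^{\alpha^\vee_{i_k}} = \prod_k t_k^{-w^{-1}\beta^\vee_k}$: substituting (iii) and regrouping additively in the coroot lattice reduces the claim to $\alpha^\vee_{i_l} + \sum_{k>l} \beta_k(\beta^\vee_l)\,\alpha^\vee_{i_k} = -w^{-1}\beta^\vee_l$ for each $l$. Since $w^{-1}\beta^\vee_l = -\,s_{i_j} \cdots s_{i_{l+1}}\,\alpha^\vee_{i_l}$, this is the telescoped identity dual to the one above: setting $u_{l,k} := s_{i_k} \cdots s_{i_{l+1}}\,\alpha^\vee_{i_l}$, a short $W$-invariance computation shows $\alpha_{i_{k+1}}(u_{l,k}) = -\beta_{k+1}(\beta^\vee_l)$, so that $u_{l,k+1} - u_{l,k} = \beta_{k+1}(\beta^\vee_l)\,\alpha^\vee_{i_{k+1}}$, and telescoping from $k=l$ to $k=j$ yields the claim. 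The main obstacle is purely bookkeeping: running the two parallel telescopings while tracking the sign flips produced by $s_{i_l}\alpha^\vee_{i_l} = -\alpha^\vee_{i_l}$; the underlying algebra is elementary but sign-sensitive.
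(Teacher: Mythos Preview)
Your proof is correct. The overall architecture matches the paper's: (i)$\Leftrightarrow$(ii) by commutation, (ii)$\Leftrightarrow$(iii) by inverting the triangular relation, and the torus formula by substituting (iii) and regrouping exponents. The difference is packaging. The paper runs (ii)$\Leftrightarrow$(iii) as an induction on $j$ and, at the key step, invokes the Kumar identity $\lambda - w^{-1}\lambda = \sum_k \beta_k(\lambda)\,\alpha_{i_k}^\vee$ (Lemma~\ref{lbl:kumar}) with $\lambda = \beta_k^\vee$; the same lemma is used again for the torus formula. You instead frame (ii)$\Leftrightarrow$(iii) as showing two unipotent triangular matrices are mutually inverse, and prove the required off-diagonal identity by a direct telescoping on $\gamma_{l,m} = s_{i_{m-1}}\cdots s_{i_l}\alpha_{i_l}^\vee$; your second telescoping on $u_{l,k}$ for the torus formula is likewise exactly the mechanism underlying Kumar's lemma. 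So you are essentially re-deriving Lemma~\ref{lbl:kumar} inline rather than citing it. Your version is self-contained and the linear-algebra framing makes the equivalence (ii)$\Leftrightarrow$(iii) transparent; the paper's version is shorter given the lemma is already available.
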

\begin{proof}
The equivalence between the two first statements is immediate using commutation relations:
\begin{align*}
  & \left( x_{-i_1}(c_1) \dots x_{-i_j}(c_j) \right)^T\\
= &  c_j^{-\alpha_{i_j}^\vee} x_{i_j}(c_j) \dots c_1^{-\alpha_{i_1}^\vee} x_{i_1}(c_1)\\
= &  c_1^{-\alpha_{i_1}^\vee} \dots c_j^{-\alpha_{i_j}^\vee} \prod_{k=0}^{j-1} x_{i_{j-k}}(c_{j-k} \prod_{l<j-k} c_l^{ \alpha_{i_k}(\alpha_{i_l}^\vee) } )
\end{align*}
The equivalence between  the two last statements can be proved by induction over $j$. For $j=1$, it is immediate. Then, for $j \geq 1$, by induction hypothesis:
$$c_j = t_j \prod_{k=1}^{j-1} c_k^{-\alpha_{i_j}(\alpha_{i_k}^\vee)}
      = t_j \prod_{k=1}^{j-1} \left( t_k \prod_{l=1}^{k-1} t_l^{\beta_k(\beta_l^\vee)} \right)^{-\alpha_{i_j}(\alpha_{i_k}^\vee)} $$
Rearranging the double product gives:
\begin{align*}
c_j = & t_j \left( \prod_{k=1}^{j-1} t_k^{-\alpha_{i_j}(\alpha_{i_k}^\vee) } \right) \prod_{l=1}^{j-1} t_l^{-\sum_{k=l}^{j-1} \beta_k(\beta_l^\vee) \alpha_{i_j}(\alpha_{i_k}^\vee) }\\
= & t_j \prod_{k=1}^{j-1} t_k^{ -\alpha_{i_j}\left( \alpha_{i_k}^\vee + \sum_{l=k}^{j-1} \beta_l(\beta_k^\vee) \alpha_{i_l}^\vee \right) }
\end{align*}
Then using the second identity in lemma \ref{lbl:kumar} with $\lambda = \beta^\vee_k$:
\begin{align*}
  & \alpha_{i_k}^\vee + \sum_{l=k}^{j-1} \beta_l(\beta_k^\vee) \alpha_{i_l}^\vee\\
= & \alpha_{i_k}^\vee + \left( s_{i_1} \dots s_{i_k} \right)^{-1} \beta_k^\vee - \left( s_{i_1} \dots s_{i_{j-1}} \right)^{-1} \beta_k^\vee\\
= & - \left( s_{i_1} \dots s_{i_{j-1}} \right)^{-1} \beta_k^\vee
\end{align*}
In the end, as announced:
\begin{align*}
c_j = & t_j \prod_{k=1}^{j-1} t_k^{ \alpha_{i_j}\left( \left( s_{i_1} \dots s_{i_{j-1}} \right)^{-1} \beta_k^\vee \right) }\\
    = & t_j \prod_{k=1}^{j-1} t_k^{ \beta_j( \beta_k^\vee ) }\\
\end{align*}
The last equality is a straightforward calculation:
\begin{align*}
\prod_{k=1}^j c_k^{\alpha_{i_k}^\vee} 
= & \prod_{k=1}^j \left( t_k^{\alpha_{i_k}^\vee} \prod_{l < k} t_l^{\beta_k(\beta_l) \alpha_{i_k}^\vee}\right)\\
= & \prod_{k=1}^j t_k^{\alpha_{i_k}^\vee + \sum_{l=k+1}^j \beta_l(\beta_k) \alpha_{i_l}^\vee}
\end{align*}
Using again the lemma \ref{lbl:kumar}, we have:
$$\alpha_{i_k}^\vee + \sum_{l=k+1}^j \beta_l(\beta_k) \alpha_{i_l}^\vee = -w^{-1} \beta_k^\vee$$
\end{proof}

\paragraph{From Lusztig parametrization to string coordinates:}

Define 
\begin{align}
\label{eqn:geom_lusztig_2_kashiwara}
\forall u \in U \cap B w_0 B, \eta^{e, w_0}\left( u \right) & := [\bar{w}_0^{-1} u^T]_{-0}^{-1} = [\bar{w}_0^{-1} u^T]_{+} \bar{w}_0 S\left( u \right)^\iota
\end{align}
\begin{align}
\label{eqn:geom_kashiwara_2_lusztig}
\forall v \in B \cap U \bar{w}_0 U, \eta^{w_0, e}\left( v \right)& := [\left( \bar{w}_0 v^T\right)^{-1}]_+ 
\end{align}

\begin{thm}[ \cite{bib:BZ01}, corollary 5.6 ]
\label{thm:geom_from_lusztig_to_kashiwara}
The map $\eta^{e, w_0}$ is a bijection from $U \cap B w_0 B$ to $B \cap U \bar{w_0} U$ and restricts to a bijection from $U_{>0}^{w_0}$ to $C_{>0}^{w_0}$. The inverse map is $\eta^{e, w_0}$.
\end{thm}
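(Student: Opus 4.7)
\textbf{Plan}: The result is essentially [BZ01, Corollary 5.6]; I would prove it by three explicit verifications built around the Gauss decomposition.

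\textit{Well-definedness.} For $u \in U \cap Bw_0 B$, the element $\bar{w}_0^{-1} u^T$ must lie in the open cell $NHU$ for the Gauss decomposition to exist. Transposing the Bruhat condition gives $u^T \in B^+ \bar{w}_0^{-1} B^+$ (using $B^T = B^+$ and $\bar{w}_0^T = \bar{w}_0^{-1}$, which follows from $\bar{s}_i^T = \bar{s}_i^{-1}$ and the braid relations); applying $\bar{w}_0^{-1} B^+ \bar{w}_0 = B$ together with $\bar{w}_0^{-2} \in H$ (since $w_0^2 = e$ in $W$) yields $\bar{w}_0^{-1} u^T \in B \cdot B^+ = NHU$. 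Writing the Gauss decomposition $\bar{w}_0^{-1} u^T = nav$ with $(n, a, v) \in N \times H \times U$, the definition gives $\eta^{e, w_0}(u) = (na)^{-1} \in B$. Rearranging produces $a^{-1} n^{-1} = v u^{-T} \bar{w}_0$; using $u^{-T} \in N$ and the conjugation identity $\bar{w}_0^{-1} N \bar{w}_0 = U$ lets me rewrite this as $v \bar{w}_0 u''$ for some $u'' \in U$, placing $\eta^{e, w_0}(u)$ in $U \bar{w}_0 U$. A symmetric argument using $\bar{w}_0 U \bar{w}_0^{-1} = N$ handles $\eta^{w_0, e}$.

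\textit{Mutual inverse property.} To show $\eta^{w_0, e} \circ \eta^{e, w_0} = \mathrm{id}$, I set $x = \eta^{e, w_0}(u) = a^{-1} n^{-1}$ and compute $\bar{w}_0 x^T = \bar{w}_0 n^{-T} a^{-1}$. Since $n^{-T} \in U$ (as $N^T = U$) and $\bar{w}_0 U \bar{w}_0^{-1} = N$, one has $\bar{w}_0 n^{-T} = \tilde{n} \bar{w}_0$ for some $\tilde{n} \in N$. Inverting to obtain $(\bar{w}_0 x^T)^{-1}$ and then extracting its $U$-factor relies on the key identity $u = v^T a n^T \bar{w}_0^{-1}$, which results from transposing $\bar{w}_0^{-1} u^T = nav$; this identifies the $U$-factor as exactly $u$. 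The composition $\eta^{e, w_0} \circ \eta^{w_0, e}$ is handled in the same fashion.

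\textit{Restriction to positive parts.} For the bijection between $U^{w_0}_{>0}$ and $C^{w_0}_{>0}$, I would fix a reduced word $\mathbf{i} \in R(w_0)$ and express the string coordinates of $\eta^{e, w_0}(x_{\mathbf{i}}(t_1, \ldots, t_m))$ as quotients of generalized minors evaluated on $x_{\mathbf{i}}(t_1, \ldots, t_m)$. By the theory of Berenstein-Fomin-Zelevinsky, these minors are rational substraction-free expressions in the Lusztig parameters $t_j$, so the total positivity criterion (theorem \ref{thm:total_positivity_criterion}) forces the image to lie in $C^{w_0}_{>0}$. The reverse containment follows by the same argument applied to $\eta^{w_0, e}$, combined with the mutual inverse property from step two.

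The main obstacle is the third step: positivity preservation is not a formal consequence of the algebraic identities and genuinely requires the monomial formulas from generalized determinantal calculus that express the string coordinates as ratios of minors on the twisted element. The first two steps, while notationally dense, reduce to careful bookkeeping with Gauss decompositions and the conjugation action of $\bar{w}_0$ on the unipotent subgroups.
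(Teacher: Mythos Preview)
The paper does not supply its own proof for this theorem; it simply states the result with a citation to \cite{bib:BZ01}, Corollary~5.6, and moves on. So there is no ``paper's proof'' to compare against, and your proposal is effectively a reconstruction of the argument behind the cited result.

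Your sketch is sound. Steps one and two are pure bookkeeping with Gauss decompositions and the conjugation $\bar{w}_0^{-1} U \bar{w}_0 = N$, $\bar{w}_0^{-1} N \bar{w}_0 = U$; the identity $u = v^T a n^T \bar{w}_0^{-1}$ you extract by transposing the Gauss decomposition is exactly the hinge that makes the mutual inverse computation go through cleanly. In step two you should note that $(\bar{w}_0 x^T)^{-1} = a n^T \bar{w}_0^{-1} = v^{-T} u$, which is already in $NU$ form with trivial torus part, so its $U$-factor is $u$ on the nose.

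You are right that step three is where the real content lies: positivity preservation is not formal and requires exactly the subtraction-free monomial formulas of Berenstein--Fomin--Zelevinsky that express the coordinates of the twisted element as ratios of generalized minors. Your identification of this as the ``main obstacle'' is accurate, and the paper itself relies on precisely this machinery elsewhere (e.g.\ in the proof of theorem~\ref{thm:charts_positivity}, where the twist map $\eta_{w_0}$ is handled by the same circle of ideas). There is no gap in your plan; you have correctly located where the work is and what tools discharge it.
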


$\eta^{e, w_0}$ gives then the correspondence between the Lusztig variety and the string variety. The tropicalization of this correspondence is a very interesting map. In the notations of section \ref{section:preliminaries_canonical_basis}, we have:
\begin{thm}[ \cite{bib:BZ01}, theorem 5.7 ]
\label{thm:tropical_from_lusztig_to_kashiwara}
Changes of parametrization for the $G^\vee$ canonical basis are obtained by tropicalizing the following rational substraction free expressions. Going from ${\bf i}$-Lusztig parameters to ${\bf i'}$-string parameters is achieved by tropicalizing:
$$ x_{\bf-i'} \circ \eta^{e, w_0} \circ x_{\bf i}$$
Conversely, in order to obtain ${\bf i}$-string parameters from ${\bf i'}$-Lusztig parameters, tropicalize:
$$ x_{\bf i'} \circ \eta^{w_0, e} \circ x_{\bf-i}$$
\end{thm}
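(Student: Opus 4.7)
The plan has two stages: first confirm that the composition $x_{-{\bf i'}}^{-1} \circ \eta^{e, w_0} \circ x_{\bf i}$ defines a rational substraction-free bijection $\mathbb{R}_{>0}^m \to \mathbb{R}_{>0}^m$, so that tropicalization is meaningful, then identify its tropicalization with the combinatorial change of parametrization in $\Bfrak$.

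For the first stage, theorems \ref{lbl:geom_lusztig_parameters} and \ref{lbl:geom_string_parameters} already provide that the outer pieces are rational and substraction-free. What remains is to handle $\eta^{e, w_0}$. Formula \eqref{eqn:geom_lusztig_2_kashiwara} expresses $\eta^{e,w_0}(u)$ as $[\bar{w}_0^{-1} u^T]_+ \bar{w}_0 S(u)^\iota$, that is, as a composition of the transpose, the involutions $S$ and $\iota$, multiplication by the fixed element $\bar{w}_0$, and extraction of the $U$-factor from the Gauss decomposition. Each of these operations preserves total positivity by theorems \ref{thm:totally_positive_gauss_decomposition} and \ref{thm:total_positivity_criterion}, and admits a substraction-free description in terms of generalized minors. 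Composing, we obtain the desired rational substraction-free bijection. The same argument applies to $\eta^{w_0, e}$ via formula \eqref{eqn:geom_kashiwara_2_lusztig}.

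The second stage invokes the geometric lifting philosophy: the Lusztig and string parametrizations of $\Bfrak$ are the Maslov dequantization limits of their geometric counterparts. The main obstacle is matching the tropicalization of $\eta^{e, w_0}$ with the \emph{actual} combinatorial conversion between Lusztig and string parametrizations on $\Bfrak$. My approach is via Kashiwara operators: equations \eqref{eqn:kashiwara_operator_f_1}--\eqref{eqn:kashiwara_operator_e_2} characterize each parametrization by the requirement that $\tilde f_\alpha$ increment the first coordinate (for a distinguished initial letter in the relevant reduced word). I would introduce the corresponding geometric operators on $U_{>0}^{w_0}$ and on $C_{>0}^{w_0}$, verify that $\eta^{e, w_0}$ intertwines the two actions, and then tropicalize. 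Compatibility across different reduced expressions is then guaranteed by Tits' lemma \ref{thm:tits_lemma}, which reduces the check to a rank-two braid-move computation.

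Once the intertwining at the geometric level is established, the tropical limit yields the stated identity; the normalisation is pinned down by the fact that in both parametrizations the zero vector corresponds to the highest weight vector of $\Bfrak(\lambda)$. The converse formula involving $\eta^{w_0, e}$ then follows by inversion, using theorem \ref{thm:geom_from_lusztig_to_kashiwara} to identify $\eta^{w_0, e}$ as the inverse of $\eta^{e, w_0}$.
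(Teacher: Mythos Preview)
This theorem is not proved in the paper; it is quoted from \cite{bib:BZ01}, theorem 5.7, and used as an input. There is therefore no proof in the paper to compare your proposal against.

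On the substance of your sketch: the first stage is essentially right, though the claim that extraction of the $U$-factor from a Gauss decomposition is substraction-free needs the generalized minor formulas of \cite{bib:FZ99}, \cite{bib:BZ01}, not just the positivity statements you cite. The second stage has a real gap. You propose to check that $\eta^{e,w_0}$ intertwines geometric crystal operators and then tropicalize, invoking \eqref{eqn:kashiwara_operator_f_1}--\eqref{eqn:kashiwara_operator_e_2} for uniqueness. But those equations only constrain $\tilde f_\alpha$ for $\alpha=\alpha_{i_1}$, the \emph{first} letter of a chosen word; they do not determine a parametrization map uniquely, and the rank-two braid reduction via Tits' lemma handles consistency \emph{within} one family (Lusztig-to-Lusztig or string-to-string), not the Lusztig-to-string bridge that $\eta^{e,w_0}$ is meant to provide. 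To make your strategy work you would have to (a) specify independent geometric crystal structures on $U_{>0}^{w_0}$ and $C_{>0}^{w_0}$, (b) prove the intertwining for \emph{all} $\alpha\in\Delta$, and (c) argue uniqueness by connectedness plus a base-point check. None of these steps is carried out, and (b) is where the actual content lies. The argument in \cite{bib:BZ01} proceeds instead through explicit twist maps and minor formulas, comparing directly with Lusztig's piecewise-linear transition rules for the canonical basis.
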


\paragraph{Geometric crystal elements:}

\index{$\Bc(\lambda)$: Geometric crystal with highest weight $\lambda$}
\index{$\Bc$: Disjoint union of all highest weight geometric crystals}
\begin{definition}[Geometric crystals]
\label{def:geom_crystal}
Define the geometric crystal of highest weight $\lambda \in \afrak$ as the set:
$$\Bc\left(\lambda\right) := C_{>0}^{w_0} e^{\lambda}$$
The union of all highest weight crystals will be denoted by $\Bc$, which is nothing but the set of totally positive elements in $B$:
$$ \Bc = \bigsqcup_{ \lambda \in \afrak} \Bc(\lambda) $$
\end{definition}
Later, we will see that $\Bc$ is an abstract crystal in the sense of definition \ref{def:crystal}. Also, notice that the dependence in $\lambda$ is given by a flat torus fibration and the highest weight can easily be recovered from any element $x \in \Bc$ using the highest weight map:

\index{$hw$: Highest weight map on $\Bc$}
\begin{definition}[Highest and lowest weight, \cite{bib:BK06} relation 1.6]
\label{def:highest_lowest_weight}
Define the highest weight map $hw: \Bc \rightarrow \mathfrak{a}$ by:
$$ \forall x \in \Bc, hw(x) := \log[ \bar{w}_0^{-1} x ]_0$$
The lowest weight is given by:
$$ \forall x \in \Bc, lw(x) := \log[ \bar{w}_0^{-1} x^\iota ]_0^\iota = w_0 hw(x)$$
\end{definition}

Notice that highest weight crystals are disjoint in $\Bc$ and that $hw\left( \Bc(\lambda) \right) = \{ \lambda \}$

\begin{properties}
\phantomsection
\label{properties:hw}
\begin{itemize}
 \item[(i)] $hw$ can be extented to $B^+ w_0 B^+$ as:
$$\forall \left( z,u \right) \in U \times U, t \in H, hw\left( z \bar{w}_0 t u \right) = \log(t) $$
 \item[(ii)] $hw$ is an $U \times U$-invariant function.
 \item[(iii)] $$\forall \left(x, y \right) \in \mathfrak{a}^2, \forall g \in B^+ w_0 B^+, hw\left( e^x g e^y \right) = w_0 x + hw\left(g\right) + y$$ 
\end{itemize}
\end{properties}
\begin{proof}
\begin{itemize}
 \item[(i)] If $g = z \bar{w}_0 t u \in B^+ w_0 B^+$
\begin{align*}
hw\left( g \right) & = \log [\bar{w}^{-1}_0  g]_0\\
= & \log [\bar{w}^{-1}_0  z \bar{w}_0 t u]_0\\
= & \log(t)
\end{align*}
 \item[(ii)] Immediate from (i)
 \item[(iii)] 
\begin{align*}
hw\left( e^x g e^y \right) & = \log [\bar{w}^{-1}_0  e^x g e^y]_0\\
= & \log [\bar{w}^{-1}_0  e^x \bar{w}_0 \bar{w}^{-1}_0 g e^y]_0\\
= & \log [e^{w_0 x} \bar{w}^{-1}_0 g e^y]_0\\
= & w_0 x + hw\left(g\right) + y
\end{align*}
\end{itemize}
\end{proof}

Every element $x \in \Bc\left(\lambda\right)$ can be written using a certain associated parameter. The letters $u$ and $z$ will usually refer to an element in $U_{>0}^{w_0}$ and the letter $v$ will usually refer to an element in $C_{>0}^{w_0}$. An expression we will often use is:
\begin{thm}
For $x \in \Bc\left(\lambda\right)$, one can write uniquely:\\
$$ x = z \bar{w_0} e^{\lambda} u, z \in U^{w_0}_{>0}, u \in U^{w_0}_{>0}$$
Mapping $x$ to $z$ (resp. $u$) is a bijection from $\Bc(\lambda)$ to $U^{w_0}_{>0}$. The former will be refered to as the Lusztig parameter associated to $x$, and the latter the twisted Lusztig parameter.
\end{thm}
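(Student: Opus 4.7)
I would prove the statement in three stages: uniqueness, then existence with positivity, then bijectivity, exploiting the $U \bar{w_0} U$ Bruhat decomposition together with the involutions $\iota$ and $S$.

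\textbf{Uniqueness.} Suppose $x = z_1 \bar{w_0} e^\lambda u_1 = z_2 \bar{w_0} e^\lambda u_2$ with $z_i, u_i \in U$. Using $\bar{w_0} e^\lambda = e^{w_0 \lambda} \bar{w_0}$ and right-multiplying by $e^{-\lambda}$, the equation reduces to $z_1 \bar{w_0} u_1' = z_2 \bar{w_0} u_2'$, where $u_i' := e^\lambda u_i e^{-\lambda} \in U$. Rearranging gives $z_2^{-1} z_1 = \bar{w_0} u_2' (u_1')^{-1} \bar{w_0}^{-1}$, whose left-hand side lies in $U$ while the right-hand side lies in $\bar{w_0} U \bar{w_0}^{-1} = N$. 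Since $U \cap N = \{e\}$, both sides are trivial, so $z_1 = z_2$ and $u_1 = u_2$.

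\textbf{Existence and positivity.} Given $x = v e^\lambda$ with $v \in C^{w_0}_{>0} \subset U \bar{w_0} U$, Bruhat uniqueness produces $z, \tilde{u} \in U$ with $v = z \bar{w_0} \tilde{u}$; set $u := e^{-\lambda} \tilde u e^\lambda \in U$. The key identification is $\tilde{u} = S(\eta^{w_0, e}(v))$. Indeed, using $\bar{w_0}^T = \bar{w_0}^{-1}$, one gets $\bar{w_0} v^T = (\bar{w_0} \tilde u^T \bar{w_0}^{-1}) \cdot z^T$, an element of $U \cdot N$ with trivial torus part; hence $[(\bar{w_0} v^T)^{-1}]_+ = (\bar{w_0} \tilde u^T \bar{w_0}^{-1})^{-1} = \bar{w_0} (\tilde{u}^{-1})^T \bar{w_0}^{-1} = S(\tilde{u})$, so $\tilde{u} = S \circ \eta^{w_0,e}(v)$. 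By theorem \ref{thm:geom_from_lusztig_to_kashiwara}, $\eta^{w_0, e}$ is a bijection $C^{w_0}_{>0} \to U^{w_0}_{>0}$; and $S$ preserves $U^{w_0}_{>0}$ since the formula $S(x_{i_1}(t_1) \cdots x_{i_m}(t_m)) = x_{i_m^*}(t_m) \cdots x_{i_1^*}(t_1)$ sends a reduced-word parametrization of $w_0$ to another (as $(i_m^*, \dots, i_1^*) \in R(w_0)$). Therefore $\tilde{u} \in U^{w_0}_{>0}$.

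For the positivity of $z$, I exploit the symmetry provided by $\iota$. One checks that $\iota$ preserves $C^{w_0}_{>0}$: it preserves $B$ (as an antimorphism fixing $N$ and inverting $H$), it preserves $G_{\geq 0}$ (it fixes the positive Chevalley generators $x_i(t), y_i(t)$ and maps $A$ to itself), and from $v = z \bar{w_0} \tilde{u}$ together with $\iota(\bar{w_0}) = \bar{w_0}$ and $\iota$ acting trivially on $U$, we obtain $v^\iota = \tilde{u} \bar{w_0} z \in U \bar{w_0} U$. Applying the same identification to $v^\iota$ yields $z = S \circ \eta^{w_0,e}(v^\iota) \in U^{w_0}_{>0}$. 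Finally, $u = e^{-\lambda} \tilde u e^\lambda \in U^{w_0}_{>0}$ since the commutation $a x_\alpha(t) a^{-1} = x_\alpha(a^\alpha t)$ with $a \in A$ (so $a^\alpha > 0$) shows that torus conjugation preserves $U^{w_0}_{>0}$.

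\textbf{Bijectivity.} Each of the three maps $x \mapsto z$, $x \mapsto \tilde{u}$, $x \mapsto u$ is the composition of bijections $\Bc(\lambda) \to C^{w_0}_{>0}$ (multiplication by $e^{-\lambda}$), $\eta^{w_0,e} : C^{w_0}_{>0} \to U^{w_0}_{>0}$ (possibly precomposed with $\iota$), the involution $S$, and torus conjugation; hence all three are bijections $\Bc(\lambda) \to U^{w_0}_{>0}$. The main obstacle is precisely the positivity step: one must recognize that the Lusztig parameter is transported from $v$ by the Berenstein--Zelevinsky twist $\eta^{w_0,e}$ followed by the Schützenberger involution $S$, and that the $\iota$-symmetry swaps the two parameters, so positivity of $z$ follows from that of $\tilde{u}$ applied to $v^\iota$ instead of $v$.
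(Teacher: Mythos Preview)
Your overall strategy is sound and close in spirit to the paper's (which exhibits explicit inverse maps $\varrho^L, \varrho^T$ and then shows in Theorem~\ref{thm:charts_positivity} that they are rational and subtraction-free by reducing to the Berenstein--Zelevinsky twist $\eta_{w_0}$). However, there is a concrete error in your execution.

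You assert that ``$\iota$ acts trivially on $U$'' and use this both to identify $\tilde u = S(\eta^{w_0,e}(v))$ and to write $v^\iota = \tilde u\,\bar w_0\, z$. This is false: $\iota$ is an \emph{anti}morphism fixing each $x_i(t)$, so for $u = x_{i_1}(t_1)\cdots x_{i_m}(t_m)$ one has $u^\iota = x_{i_m}(t_m)\cdots x_{i_1}(t_1)$, which is generally different from $u$. Consequently your formula for $\tilde u$ is off by an $\iota$: from $\eta^{w_0,e}(v) = \bar w_0 (\tilde u^{-1})^T \bar w_0^{-1}$ and $S(g) = \bar w_0 (g^{-1})^{\iota T}\bar w_0^{-1}$ one gets
\[
\tilde u \;=\; \bar w_0^{-1}\bigl(\eta^{w_0,e}(v)^{-1}\bigr)^T \bar w_0 \;=\; \iota\circ S\circ \eta^{w_0,e}(v),
\]
not $S\circ\eta^{w_0,e}(v)$. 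Likewise $v^\iota = \tilde u^\iota\,\bar w_0\, z^\iota$, not $\tilde u\,\bar w_0\, z$.

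The fix is painless: $\iota$ preserves $U^{w_0}_{>0}$ (reversing a reduced word for $w_0 = w_0^{-1}$ gives another reduced word), so inserting the missing $\iota$'s everywhere leaves your conclusions intact. With the corrected formula $\tilde u = \iota\circ S\circ \eta^{w_0,e}(v)$ you still get $\tilde u \in U^{w_0}_{>0}$; and applying the same computation to $v^\iota \in C^{w_0}_{>0}$ gives $z^\iota \in U^{w_0}_{>0}$, hence $z \in U^{w_0}_{>0}$. After this repair your argument is complete and arguably more self-contained than the paper's, which routes positivity through the coordinate-level ``rational subtraction-free'' criterion.
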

\begin{proof}
The existence is a consequence of the definition of $\Bc(\lambda)$. The uniqueness comes, as we will see, from exhibiting inverse maps that preserve total positivity.
\end{proof}

There is also the possibility of using a parameter $v \in C_{>0}^{w_0}$ that we will call the string or Kashiwara parameter associated to $x$. Such names are justified by the fact that these choices give a geometric lifting of the parametrizations for crystal bases.\\

From the previous theorem, it is obvious that $hw^{-1}( \{ \lambda \} ) = \Bc(\lambda)$. In all the following formulas, the group elements considered belong to the double Bruhat cell $B \cap B^+ w_0 B^+$ and thus, every Gauss decomposition that we use is allowed.\\

\index{$\varrho^L, \varrho^K, \varrho^T$: Lusztig, Kashiwara or twisted Lusztig parameter corresponding to an element in $\Bc$}
\begin{definition}[Parameters associated to a crystal element]
\label{def:crystal_parameter}
Define the following maps on $\Bc$:
$$
\begin{array}{cccc}
\varrho^L : &  \Bc                           & \longrightarrow & U^{w_0}_{>0}\\
	    &  x = z \bar{w}_0 e^{\lambda} u &   \mapsto       & z = [\bar{w}_0^{-1} x^\iota]_+^\iota
\end{array}$$

$$
\begin{array}{cccc}
\varrho^K : &  \Bc                           & \longrightarrow & C^{w_0}_{>0}\\
	    &  x                             &   \mapsto       & v = [\bar{w}_0^{-1} [x]_-]_{0+}^T
\end{array}$$

$$
\begin{array}{cccc}
\varrho^T : &  \Bc                           & \longrightarrow & U^{w_0}_{>0}\\
	    &  x = z \bar{w}_0 e^{\lambda} u &   \mapsto       & u = [\bar{w}_0^{-1} x]_+
\end{array}$$

For $x \in \Bc$, the group elements $z = \varrho^L(x)$, $v = \varrho^K(x)$ and $z = \varrho^T(x)$ will be referred to as the Lusztig, Kashiwara and twisted Lusztig parameters associated to $x$.
\end{definition}

The following property shows that all highest weight crystals share the same parametrizations, hinting to the compatibility properties of the canonical basis. Recall that $\eta^{w_0, e}$ is given in equation (\ref{eqn:geom_kashiwara_2_lusztig}).

\index{$b_\lambda^L, b_\lambda^K, b_\lambda^T$: Geometric crystal element with corresponding Lusztig, Kashiwara or twisted Lusztig parameter}
\begin{proposition}
\label{proposition:crystal_param_maps}
Once restricted to $\Bc(\lambda)$ the maps $\varrho^L$, $\varrho^K$ and $\varrho^T$ are invertible with inverses:
$$
\begin{array}{cccc}
b_\lambda^L: &  U^{w_0}_{>0} & \longrightarrow & \Bc(\lambda)\\
	     &  z            &   \mapsto       & x = [z \bar{w}_0]_{-0} e^\lambda = z \bar{w}_0 e^\lambda \left( e^{-\lambda} [z \bar{w}_0]_+^{-1} e^{\lambda}\right)
\end{array}$$

$$
\begin{array}{cccc}
b_\lambda^K: &  C^{w_0}_{>0} & \longrightarrow & \Bc(\lambda)\\
	     &  v            &   \mapsto       & x = [\eta^{w_0, e}\left( v \right) \bar{w}_0]_{-0} e^\lambda = [(\bar{w}_0 v^T)^{-1} ]_+ \bar{w}_0 v^T [v^T]_0^{-1}e^\lambda
\end{array}$$

$$
\begin{array}{cccc}
b_\lambda^T: &  U^{w_0}_{>0} & \longrightarrow & \Bc(\lambda)\\
	     &  u            &   \mapsto       & x = S \circ \iota \left( e^{-\lambda} [ \bar{w}_0^{-1} u^T ]_{+} e^{\lambda} \right) \bar{w}_0 e^\lambda u 
\end{array}$$
\end{proposition}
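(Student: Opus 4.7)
The plan is to verify, for each of the three maps, that $\varrho \circ b_\lambda = \mathrm{id}$ and $b_\lambda \circ \varrho = \mathrm{id}$, with images landing in the prescribed sets. Underlying all three is the following uniqueness statement: any $x \in \Bc(\lambda)$ admits a unique factorization $x = z \bar{w}_0 e^\lambda u$ with $z, u \in U^{w_0}_{>0}$. Existence follows by writing $x = v e^\lambda$ with $v \in C^{w_0}_{>0} \subset U\bar{w}_0 U$ and applying Bruhat $UwU$ uniqueness to $v$; positivity of $z$ and $u$ follows from the compatibility of $\eta^{e,w_0}$ with positive varieties (Theorem \ref{thm:geom_from_lusztig_to_kashiwara}).

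For $\varrho^L$ and $\varrho^T$, the Gauss decomposition of $\bar{w}_0^{-1} x$ can be read directly from this factorization: since conjugation by $\bar{w}_0$ exchanges $U$ and $N$, one has $\bar{w}_0^{-1} x = (\bar{w}_0^{-1} z \bar{w}_0) \cdot e^\lambda \cdot u$, which immediately gives $[\bar{w}_0^{-1} x]_+ = u$, confirming $\varrho^T(x) = u$. Applying the same reasoning to $x^\iota$ (an antimorphism that preserves the unipotent Chevalley generators) exchanges the roles of $z$ and $u$, and yields $\varrho^L(x) = z$. Conversely, given $z \in U^{w_0}_{>0}$, one checks that $z\bar{w}_0$ lies in the big open cell $N H U$ and uses the Gauss identity (\ref{lbl:gauss2}) to rewrite $[z\bar{w}_0]_{-0} e^\lambda$ as $z \bar{w}_0 e^\lambda \cdot \left( e^{-\lambda} [z\bar{w}_0]_+^{-1} e^\lambda \right)$, exhibiting the required factorization and matching both expressions for $b_\lambda^L(z)$.

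For $\varrho^K$ and $b_\lambda^K$, the idea is to reduce to the Lusztig case via the bijection $\eta^{w_0,e}$ from Theorem \ref{thm:geom_from_lusztig_to_kashiwara}. Since $\Bc(\lambda) \subset B$, $[x]_-$ lies in $N^{w_0}_{>0}$, and a short Gauss decomposition computation on $\bar{w}_0^{-1}[x]_-$ identifies $\varrho^K(x)$ with the transpose image of $[x]_-$ under the bijection between the positive parts of $N$ and $C^{w_0}_{>0}$. The composition $b_\lambda^K = b_\lambda^L \circ \eta^{w_0,e}$ then produces the stated formula once one substitutes $\eta^{w_0,e}(v) = [(\bar{w}_0 v^T)^{-1}]_+$ from (\ref{eqn:geom_kashiwara_2_lusztig}) into the Lusztig inversion formula.

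The main obstacle is the twisted Lusztig formula $b_\lambda^T(u) = S \circ \iota \left( e^{-\lambda} [\bar{w}_0^{-1} u^T]_+ e^\lambda \right) \bar{w}_0 e^\lambda u$: one must check both that the first factor lies in $U^{w_0}_{>0}$ and that applying $\varrho^T$ returns $u$. The key observation is that $[\bar{w}_0^{-1} u^T]_+$ is precisely the Berenstein-Zelevinsky twist $\eta_{w_0}(u)$ from (\ref{eqn:def_twist}), which restricts to an automorphism of $U^{w_0}_{>0}$; conjugating by $e^\lambda$ and applying $S \circ \iota$ transports this positive element to the correct twisted Lusztig parameter $z$, using that $S$ preserves positivity by reversing reduced words ($S(x_i(t))=x_{i^*}(t)$) and that $\iota$ preserves each $x_i(t)$.
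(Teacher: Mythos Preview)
Your proposal is correct and follows essentially the same approach as the paper: both start from the unique factorization $x = z\bar{w}_0 e^\lambda u$, read off $u = [\bar{w}_0^{-1}x]_+$ directly and $z$ via $\iota$, reduce the Kashiwara case to the Lusztig case through $\eta^{w_0,e}$, and handle positivity in the twisted case using the Berenstein--Zelevinsky twist $\eta_{w_0}$. The only cosmetic difference is that the paper defers all positivity statements to a separate theorem (\ref{thm:charts_positivity}), whereas you weave them into the argument.
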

\begin{rmk}
It is easy to see that with such definitions, $\eta^{e, w_0}\left( z \right) = v$.
\end{rmk}

In the sequel, we will try to stick to the letters $z$, $v$ and $u$ when dealing with each choice of parameter. The figure \ref{fig:geom_parametrizations} shows the different charts for $\Bc(\lambda)$, together with the inverse maps $b^L_\lambda$, $b^K_\lambda$ and $b^T_\lambda$. Since those parametrizations will be important to us, we reproduce that commutative diagram among other ones in appendix \ref{appendix:parametrizations_reminder}.

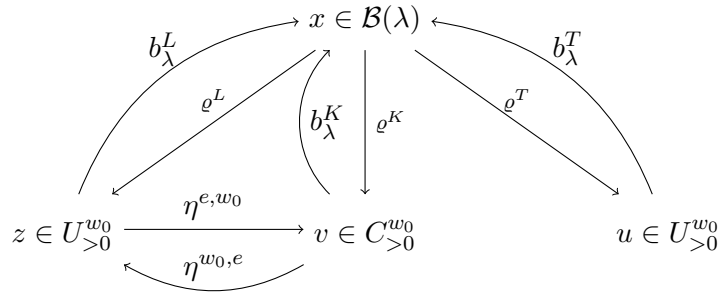
\begin{figure}[htp!]
\centering
\begin{tikzpicture}[baseline=(current bounding box.center)]
\matrix(m)[matrix of math nodes, row sep=5em, column sep=6em, text height=3ex, text depth=1ex, scale=2]
{
                    & x \in \Bc(\lambda) &                    \\
 z \in U^{w_0}_{>0} & v \in C^{w_0}_{>0} & u \in U^{w_0}_{>0} \\
};
\path[->, font=\scriptsize] (m-1-2) edge node[above]{$\varrho^L$} (m-2-1);
\path[->, font=\scriptsize] (m-1-2) edge node[right]{$\varrho^K$} (m-2-2);
\path[->, font=\scriptsize] (m-1-2) edge node[above]{$\varrho^T$} (m-2-3);

\draw [->] (m-2-1) to [bend left=30]  node[above, swap]{$b^L_\lambda$} (m-1-2);
\draw [->] (m-2-2) to [bend left=45]  node[auto, swap]{$b^K_\lambda$} (m-1-2);
\draw [->] (m-2-3) to [bend right=30] node[above, swap]{$b^T_\lambda$} (m-1-2);

\draw [->] (m-2-1) to [bend left=0 ]  node[above, swap]{$\eta^{e, w_0}$} (m-2-2);
\draw [->] (m-2-2) to [bend left=30]  node[auto , swap]{$\eta^{w_0, e}$} (m-2-1);
\end{tikzpicture}
\caption{Charts for the highest weight geometric crystal $\Bc(\lambda)$}
\label{fig:geom_parametrizations}
\end{figure}

\begin{proof}
The fact that these charts preserve total positivity is dealt with later in theorem \ref{thm:charts_positivity}. Let us start by writing:
$$ x = z \bar{w}_0 e^{\lambda} u $$

Lusztig parameters: It is easy to see that since $x \in B$, we have $x = [x]_{-0} = [z \bar{w}_0]_{-0} e^{\lambda}$. The second expression is obtained directly by the identity $[z \bar{w}_0]_{-0} = z \bar{w}_0 [z \bar{w}_0]_+^{-1}$.\\
In order to obtain $z$ from $x$, making use of the anti-automorphism $\iota$ gives:
$$ x^\iota = u^\iota e^{-\lambda} \bar{w}_0 z^\iota$$
Hence:
\begin{align*}
[ \bar{w}_0^{-1} x^\iota ]_+^\iota & = [ \bar{w}_0^{-1} u^\iota e^{-\lambda} \bar{w}_0 z^\iota ]_+^\iota\\
& = \left( z^\iota \right)^\iota\\
& = z
\end{align*}

Twisted Lusztig parameters are treated in a similar way. Write:
\begin{align*}
x = & [x^\iota]_{-0}^\iota\\
= & [ u^\iota e^{-\lambda} \bar{w}_0 ]_{-0}^\iota\\
= & \left([ u^\iota e^{-\lambda} \bar{w}_0 ]_{+}^\iota\right)^{-1} \bar{w}_0 e^\lambda u \\
= & S \circ \iota (y) \bar{w}_0 e^\lambda u
\end{align*}
where we have used the involutive automorphism $S \circ \iota$ defined in section \ref{section:involutions}:
\begin{align*}
y & = S \circ \iota \left( \left([ u^\iota e^{-\lambda} \bar{w}_0 ]_{+}^\iota\right)^{-1} \right)\\
& = \bar{w}_0^{-1} [ \bar{w}_0^{-1} e^{-\lambda} \left(u^\iota\right)^T ]_{-}^\iota \bar{w}_0\\
& = [ \bar{w}_0^{-1} e^{-\lambda} u^T e^{\lambda} ]_{+}\\
& = e^{-\lambda} [ \bar{w}_0^{-1} u^T ]_{+} e^{\lambda}
\end{align*}
And in order to obtain $u$ from $x$, write $[\bar{w}_0^{-1} x]_+ = [\bar{w}_0^{-1} z \bar{w}_0 e^\lambda u ]_+ = u$.

Finally, for the Kashiwara parameter, if $x \in \Bc(\lambda)$ and $\varrho^K(x) = v$, then:
\begin{align*}
\eta^{w_0, e}(v) = & [ \left( \bar{w}_0 v^T \right)^{-1} ]_+\\
= & [ [\bar{w}_0^{-1} [x]_- ]_{0+}^{-1} \bar{w}_0^{-1} ]_+\\
= & [ [x]_-^{-1} \bar{w}_0 [\bar{w}_0^{-1} [x]_- ]_{-} \bar{w}_0^{-1} ]_+\\
= & \bar{w}_0 [\bar{w}_0^{-1} [x]_- ]_{-} \bar{w}_0^{-1}
\end{align*}
Therefore:
\begin{align*}
x = & [ \bar{w}_0 [ \bar{w}_0^{-1} x ]_{-0} ]_{-0}\\
= & [ \bar{w}_0 [ \bar{w}_0^{-1} [x]_- ]_{-} ]_{-0} e^{\lambda}\\
= & [ \eta^{w_0, e}(v) \bar{w}_0 ]_{-0} e^\lambda
\end{align*}
Another possible expression is indeed:
\begin{align*}
x = & \eta^{w_0, e}(v) \bar{w}_0 [ \eta^{w_0, e}(v) \bar{w}_0 ]_{+}^{-1} e^\lambda\\
= & \eta^{w_0, e}(v) \bar{w}_0 [ \left( \bar{w}_0 v^T \right)^{-1} \bar{w}_0 ]_{+}^{-1} e^\lambda\\
= & \eta^{w_0, e}(v) \bar{w}_0 v^T [v^T]_0^{-1} e^\lambda
\end{align*}
\end{proof}

\begin{thm}
\label{thm:charts_positivity}
All maps $\varrho^L$, $\varrho^{K}$ and $\varrho^T$ (and their inverses) are rational and substraction free once written in coordinates. Thus they preserve total positivity.
\end{thm}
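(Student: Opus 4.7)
The plan is to fix reduced words ${\bf i}, {\bf j} \in R(w_0)$ and use the parametrizations $x_{\bf i}: \R_{>0}^m \to U^{w_0}_{>0}$ and $x_{-{\bf j}}: \R_{>0}^m \to C^{w_0}_{>0}$ as coordinate systems. By theorems \ref{lbl:geom_lusztig_parameters} and \ref{lbl:geom_string_parameters}, changes of reduced word are themselves rational and subtraction-free, so it suffices to verify the claim for one fixed pair of words. We parametrize $\Bc(\lambda)$ by any one of the three charts $b^L_\lambda$, $b^K_\lambda$, $b^T_\lambda$ of proposition \ref{proposition:crystal_param_maps}, and prove that the three resulting coordinate systems on $\Bc(\lambda)$ are related to one another by rational subtraction-free transition maps.

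For the transition between Lusztig and Kashiwara parameters, the composition $\varrho^K \circ b^L_\lambda$ coincides, up to the innocuous torus factor $e^\lambda$, with the map $\eta^{e,w_0}: U^{w_0}_{>0} \to C^{w_0}_{>0}$ (see the remark following proposition \ref{proposition:crystal_param_maps}). By theorem \ref{thm:tropical_from_lusztig_to_kashiwara}, $\eta^{e,w_0}$ and its inverse $\eta^{w_0,e}$ are rational and subtraction-free when expressed through $x_{\bf i}$ and $x_{-{\bf j}}$. This handles two of the six transitions at once.

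For the transition between Lusztig and twisted Lusztig parameters, the formula $u = e^{-\lambda}[z\bar w_0]_+^{-1} e^\lambda$ derived in the proof of proposition \ref{proposition:crystal_param_maps} is the key. The only nontrivial ingredient is the Gauss decomposition factor $[z\bar w_0]_+$. By the theory of generalized minors (theorem \ref{thm:extended_total_positivity_criterion} together with \cite{bib:BZ97,bib:FZ99,bib:BZ01}), each entry of this factor can be expressed as a Laurent monomial in generalized minors $\Delta_{u\omega_i, v\omega_i}(z\bar w_0)$, and these minors are themselves polynomials with non-negative coefficients in the Lusztig coordinates $(t_1, \dots, t_m)$ of $z$. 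Composing with the conjugation by $e^\lambda$ and the involution $S\circ \iota$ (which at the level of coordinates is simply the relabeling $\alpha \mapsto \alpha^*$ of the simple roots, hence manifestly positive), one reads off $u$ as a rational subtraction-free function of $z$. The inverse direction is obtained from the explicit formula for $b^T_\lambda$ by the same argument with the roles of $z$ and $u$ swapped. The last transition (Kashiwara $\leftrightarrow$ twisted Lusztig) follows by composition of the two already established.

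The main obstacle is controlling the Gauss decomposition step appearing in the twisted Lusztig formulas, since Gauss decomposition in a general reductive group is certainly not subtraction-free. This is precisely what the Berenstein--Fomin--Zelevinsky monomial/minor formulas resolve: inside the totally positive cells $U^{w_0}_{>0}$, all relevant generalized minors have explicit positive polynomial expansions in Lusztig coordinates, rendering the whole procedure subtraction-free. Once every map is shown to be rational subtraction-free, the final assertion that they preserve total positivity is automatic, since the evaluation of a rational subtraction-free expression on positive arguments is again positive.
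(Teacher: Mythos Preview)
Your overall strategy and the handling of the Lusztig--Kashiwara transition via $\eta^{e,w_0}$ are fine and match the paper. The difference, and the gap, is in the Lusztig~$\leftrightarrow$~twisted Lusztig step.

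The paper does not analyze $[z\bar w_0]_+$ directly. Instead it observes the identity $\varrho^T = \iota \circ \varrho^L \circ \iota$ (immediate from the definitions), so that once $\iota$ is seen to be subtraction-free in coordinates (trivial: $x_{\bf i^{op}}^{-1}\circ\iota\circ x_{\bf i} = \mathrm{id}$), everything reduces to $\varrho^L$ alone. For $\varrho^L$, the paper strips off the torus factor $e^\lambda$ and is left with the map $\varphi: x=[z\bar w_0]_{-0}\mapsto z$. Applying the monomial change of variable of lemma~\ref{lemma:change_of_coordinates_UC} to pass between the $x_{-\bf i}$ coordinates of $x$ and the $x_{\bf i^{op}}$ coordinates, one is reduced precisely to $x_{\bf i}^{-1}\circ \eta_{w_0}^{-1}\circ x_{\bf i^{op}}$, where $\eta_{w_0}(z)=[\bar w_0^{-1}z^T]_+$ is the Berenstein--Zelevinsky twist map, whose subtraction-freeness is established in \cite{bib:BZ97}.

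Your direct argument for $z\mapsto u$ via ``entries of $[z\bar w_0]_+$ are Laurent monomials in minors of $z\bar w_0$, which are positive polynomials in the $t_j$'' is not solid as stated. First, entries of a Gauss $+$-factor are \emph{ratios} of minors, not monomials, and it is not automatic that the relevant ratios are subtraction-free. Second, $\bar w_0$ carries signs, so passing from minors of $z\bar w_0$ to minors of $z$ is not manifestly positive. Third, and most importantly, what you actually need are the \emph{Lusztig coordinates} of $u$ as subtraction-free functions of those of $z$, not the matrix entries; extracting Lusztig coordinates from an element of $U^{w_0}_{>0}$ in a subtraction-free way is exactly the content of the BFZ monomial formulas, and those formulas are stated in terms of the twist map $\eta_{w_0}$, not in terms of arbitrary Gauss factors. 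So your argument implicitly needs the twist-map reduction anyway; the paper's route via $\varrho^T=\iota\circ\varrho^L\circ\iota$ and lemma~\ref{lemma:change_of_coordinates_UC} makes that reduction explicit and clean.
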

\begin{proof}
Notice that:
$$ \eta^{e, w_0} \circ \varrho^L = \varrho^K $$
$$ \iota \circ \varrho^L \circ \iota = \varrho^T$$
We already know that $\eta^{e, w_0}$ and its inverse are rational substraction free once written in the appropriate charts (theorems \ref{thm:geom_from_lusztig_to_kashiwara} and \ref{thm:tropical_from_lusztig_to_kashiwara}). The same goes for $\iota$ as $x_{\bf i^{op}}^{-1} \circ \iota \circ x_{\bf i} = id$ and $x_{\bf i'}^{-1} \circ \iota \circ x_{\bf i}$. Therefore, the theorem will be proved by dealing only with the mappings $\varrho^L$ and $b_\lambda^L$.

In order to further reduce the problem, introduce the twist map studied in \cite{bib:BZ97}:
$$
\begin{array}{cccc}
\eta_{w_0} : & \left( U \cap B w_0 B \right) & \longrightarrow & \left( U \cap B w_0 B \right)\\
	    &  z                             &   \mapsto       & [ \bar{w}_0^{-1} z^T]_+
\end{array}$$
It is easy to see that $\eta_{w_0}^{-1} = \iota \circ \eta_{w_0} \circ \iota$. Moreover, one can show that $x_{\bf i'}^{-1} \circ \eta_{w_0} \circ x_{\bf i}$ is rational and subtraction free, for every reduced words ${\bf i'}$ and ${\bf i}$.

Technically, in proposition \ref{proposition:crystal_param_maps}, we only proved that the following correspondence for the Lusztig parametrization is bijective:
$$
\begin{array}{ccc}
  \left( B \cap U \bar{w_0} U \right) e^\lambda & \longrightarrow & U \cap B w_0 B\\
  x = [z \bar{w}_0]_{-0} e^{\lambda}            &   \mapsto       & z = [\bar{w}_0^{-1} x^\iota]_+^\iota
\end{array}$$
Therefore, after getting rid of the dependence in $\lambda$, we will consider:
$$
\begin{array}{cccc}
\varphi: & B \cap U \bar{w_0} U   & \longrightarrow & U \cap B w_0 B\\
         & x = [z \bar{w}_0]_{-0} &   \mapsto       & z = [\bar{w}_0^{-1} x^\iota]_+^\iota
\end{array}$$
and prove that $x_{\bf i}^{-1} \circ \varphi \circ x_{\bf -i}(c_1, \dots, c_m)$ is rational and subtraction free in the variables $(c_1, \dots, c_m)$, hence preserving total positivity. Applying the monomial change of variable in lemma \ref{lemma:change_of_coordinates_UC}:
$$   \left( x_{\bf-i}(c_1, \dots, c_m) \right)^T = c_1^{-\alpha_{i_1}^\vee} \dots c_m^{-\alpha_{i_m}^\vee} x_{\bf i^{op}}(t_m, \dots, t_1) $$
we obtain $x_{\bf i}^{-1} \circ \eta_{w_0}^{-1} \circ x_{\bf i^{op}}(t_m, \dots, t_1)$, which we know is rational and subtraction free, as well as its inverse. All intermediate rearrangements were also rational and substraction free, hence the result.
\end{proof}

\subsection{The weight map}

\index{$\gamma$: Weight map}
\begin{definition}
\label{def:geom_weight_map}
Define the weight map $\gamma: \Bc\left( \lambda \right) \rightarrow \afrak$ by:
$$ e^{\gamma(x)} = [x]_0$$ 
\end{definition}

This weight map is the geometric analogue of the classical weight map for crystal bases. The similarity is particularly obvious when comparing the following result with equations \ref{eqn:weight_lusztig} and \ref{eqn:weight_string}. It uses the dual root system as the geometric crystal on $G$ has the properties of the  canonical basis for $G^\vee$.
\begin{thm}
\label{thm:geom_weight_map}
Let $x \in \Bc(\lambda)$, $z = \varrho^L(x)$, $v = \varrho^{K}(x)$ and $u = \varrho^T(x)$. Write for ${\bf i} \in R(w_0)$
$$ z = x_{ \bf i}\left( t_1, \dots, t_m \right)$$
$$ v = x_{-\bf i}\left( c_1, \dots, c_m \right)$$
$$ u = x_{ \bf i}\left( t_1', \dots, t_m' \right)$$
And $e^{-\tilde{t}_k} = t_k$, $e^{-\tilde{t}_k'} = t_k'$ and $e^{-\tilde{c}_k} = c_k$. Then, in terms of coordinates, the weight map is given by:
\begin{align*}
\gamma(x) & = \lambda - \sum_{k=1}^m \tilde{t}_k \beta^\vee_k\\
& = w_0 \left( \lambda - \sum_{k=1}^m \tilde{t}'_k \beta_{k}^\vee \right)\\
& = \lambda - \sum_{k=1}^m \tilde{c}_k \alpha^\vee_{i_k}
\end{align*}
\end{thm}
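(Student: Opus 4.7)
The plan is to reduce $\gamma(x) \in \afrak$ to its pairings $\omega_\alpha(\gamma(x)) = \log \Delta^{\omega_\alpha}(x)$ against the fundamental weights, then to evaluate these principal minors in each of the three parametrizations. I would tackle the Kashiwara identity first, as its Gauss decomposition structure is the most explicit; then obtain the Lusztig identity via a representation-theoretic computation of $\Delta^{\omega_\alpha}$; and finally deduce the twisted Lusztig identity from it by symmetry under the positive inverse $\iota$.

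For the Kashiwara identity, the starting point is the expression $x = b^K_\lambda(v) = \eta^{w_0,e}(v)\,\bar{w}_0\,v^T\,[v^T]_0^{-1}\,e^\lambda$ from proposition \ref{proposition:crystal_param_maps}. The defining relation $\eta^{w_0,e}(v) = [(\bar{w}_0 v^T)^{-1}]_+$ rewrites $\eta^{w_0,e}(v)\,\bar{w}_0\,v^T = a_1^{-1} n_1^{-1}$ with $a_1 = [(\bar{w}_0 v^T)^{-1}]_0$, and collecting torus factors via $[gh]_0 = [g]_0\,h$ for $h \in H$ yields $[x]_0 = a_1^{-1}\,[v]_0^{-1}\,e^\lambda$. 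The key point is to show $a_1 = 1$: using $v \in U \bar{w}_0 U$ from definition \ref{def:geom_kashiwara_variety}, write $v = u_1 \bar{w}_0 u_2$, and exploit $\bar{w}_0^T = \bar{w}_0^{-1}$ together with $\bar{w}_0 N \bar{w}_0^{-1} = U$ to obtain $(\bar{w}_0 v^T)^{-1} = (u_1^T)^{-1}\,\bar{w}_0\,(u_2^T)^{-1}\,\bar{w}_0^{-1} \in N \cdot U$, which automatically has trivial torus part (any element of the form $nu$ with $n \in N,\,u \in U$ has all principal minors equal to $1$, since $U$ and $N^T$ both fix $v_{\omega_i}$). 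Finally, $[v]_0 = \prod_k c_k^{-\alpha_{i_k}^\vee}$ follows from $x_{-i_k}(c_k) = y_{i_k}(c_k)\,c_k^{-h_{i_k}}$ by pushing torus elements to one side through (\ref{lbl:comm2}), which gives the Kashiwara identity.

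For the Lusztig identity, writing $x = [z\bar{w}_0]_{-0}\,e^\lambda$ and using that $u \in U$ fixes $v_{\omega_\alpha}$ yields $\Delta^{\omega_\alpha}(x) = e^{\omega_\alpha(\lambda)}\,\langle z\,v_{w_0\omega_\alpha},\,v_{\omega_\alpha}\rangle$ with $v_{w_0\omega_\alpha} := \bar{w}_0 v_{\omega_\alpha}$. Expanding $z = \exp(t_1 e_{i_1})\cdots\exp(t_m e_{i_m})$, this pairing picks out the $v_{\omega_\alpha}$-component of $z\,v_{w_0\omega_\alpha}$. I expect the main obstacle to be showing that this component is exactly the monomial $\prod_k t_k^{\omega_\alpha(\beta_k^\vee)}$: one argues, by a step-by-step $\mathfrak{sl}_2$ computation along the reduced word using the intermediate vectors $v^{(k)} := \bar{s}_{i_{k+1}}\cdots\bar{s}_{i_m}v_{\omega_\alpha}$, that the unique PBW-type monomial $e_{i_1}^{a_1}\cdots e_{i_m}^{a_m}$ whose action on $v_{w_0\omega_\alpha}$ has a nonzero $v_{\omega_\alpha}$-component has exponents $a_k = \omega_\alpha(\beta_k^\vee)$ and coefficient exactly $1$ under our normalization. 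Summing over the fundamental weights yields $\gamma(x) = \lambda - \sum_k \tilde t_k\,\beta_k^\vee$.

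For the twisted Lusztig identity, I apply the involution $\iota$ of section \ref{lbl:preliminaries_involutions}. The identity $\varrho^T = \iota\circ\varrho^L\circ\iota$ observed in the proof of theorem \ref{thm:charts_positivity} gives $\varrho^L(x^\iota) = x_{{\bf i}^{\mathrm{op}}}(t_m',\ldots,t_1')$ in the opposite reduced word. A direct Gauss computation from $(na)^\iota = a^{-1}n^{-1}$ on $B$ yields $\gamma(x^\iota) = -\gamma(x)$, and rewriting $x^\iota = u\,\bar{w}_0\,e^{-w_0\lambda}\,z$ together with properties \ref{properties:hw} shows $x^\iota \in \Bc(-w_0\lambda)$. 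Applying the just-established Lusztig identity to $x^\iota$ and using $\beta^{\mathrm{op}}_{m+1-k} = -w_0\,\beta_k$ (which follows from $s_{i_m}\cdots s_{i_{l+1}} = w_0\,s_{i_1}\cdots s_{i_l}$, a telescoping consequence of $w_0 = s_{i_1}\cdots s_{i_m}$) rearranges into $\gamma(x) = w_0\bigl(\lambda - \sum_k \tilde t_k'\,\beta_k^\vee\bigr)$.
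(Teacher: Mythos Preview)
Your proof plan is correct, and the twisted Lusztig case via $\iota$ is exactly what the paper does. The other two cases take genuinely different routes, though.

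For the Kashiwara identity, the paper works from the other side of the bijection: instead of starting from $b^K_\lambda(v)$ and proving that $a_1=[(\bar w_0 v^T)^{-1}]_0$ vanishes, it uses the definition $v=[\bar w_0^{-1}[x]_-]_{0+}^T$ directly. One line then gives
\[
[v]_0=[\bar w_0^{-1}[x]_-]_0=[\bar w_0^{-1}x]_0\,[x]_0^{-1}=e^\lambda\,[x]_0^{-1},
\]
since $[\bar w_0^{-1}x]_0=e^{hw(x)}=e^\lambda$. This bypasses your $a_1=1$ step entirely; your argument that $(\bar w_0 v^T)^{-1}\in NU$ is correct, but it is effectively reproving that $hw(x)=\lambda$.

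For the Lusztig identity, the paper avoids representation theory altogether. It applies lemma~\ref{lemma:change_of_coordinates_UC} to the reversed word ${\bf i}^{\mathrm{op}}$, which factors $z=\bigl(\prod_j t_j^{\beta_j^\vee}\bigr)\,b$ with $b=x_{-{\bf i}^{\mathrm{op}}}(\cdots)^T\in B^+\cap N\bar w_0^{-1}N$; then $b\bar w_0\in NU$ forces $[z\bar w_0]_0=\prod_j t_j^{\beta_j^\vee}$ immediately. Your representation-theoretic route via $\langle z\,\bar w_0 v_{\omega_\alpha},v_{\omega_\alpha}\rangle$ is valid, but the step you flag as the main obstacle---that the unique contributing monomial has coefficient exactly~$1$---genuinely requires tracking the $\mathfrak{sl}_2$ normalizations of the extremal vectors $\bar s_{i_{k+1}}\cdots\bar s_{i_m}v_{\omega_\alpha}$ carefully (equivalently, one invokes corollary~9.5 of \cite{bib:BZ01}). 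The paper's purely algebraic factorization sidesteps this.
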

\begin{proof}
For $x = [z \bar{w}_0]_{-0} e^\lambda$ with $z = x_{\bf i}\left( t_1, \dots, t_m\right)$. We apply lemma \ref{lemma:change_of_coordinates_UC} to the opposite reduced word ${\bf i^{op}} = \left( i_m, \dots, i_1 \right)$. As such, the positive roots enumeration is reversed as well as the order of the parameters $t_1, \dots, t_m$. There is a $b = x_{-\bf i^{op}}\left(c_1, \dots, c_m \right)^T \in B^+ \cap N \bar{w}_0^{-1} N$ such that:
$$z = \left( \prod_{j=1}^m t_j^{-w_0 \beta_{{\bf i^{op}},m-j+1}^\vee} \right) b$$
The exponents can be simplified as:
\begin{align*}
  & -w_0 \beta_{ {\bf i^{op}} ,m-j+1 }^\vee\\
= & -w_0 s_{i_m} \dots s_{i_{j+1}} \alpha_{i_j}^\vee\\
= & s_{i_1} \dots s_{i_{j-1}} \alpha_{i_j}^\vee\\
= & \beta_j^\vee
\end{align*}
Hence:
$$u = \left( \prod_{j=1}^m t_j^{\beta_j^\vee} \right) b$$
Because $b \bar{w}_0 \in N U$, we have:
\begin{align*}
  & [x]_0\\
= & [u \bar{w}_0]_0 e^{\lambda }\\
= & \left[\left( \prod_{j=1}^m t_j^{\beta_j^\vee} \right) b \bar{w}_0\right]_0 e^{\lambda}\\
= & e^{\lambda} \left( \prod_{j=1}^m t_j^{\beta_j^\vee} \right) 
\end{align*}

We can deduce the weight map expression in terms of $\left( t'_1, t_2', \dots, t_m'\right)$ quite simply from the above proof. Notice that applying $\iota$ to $x$, changes $u$ to $u'^\iota$, $\lambda$ to $-w_0 \lambda$ and $\gamma(x)$ to $-\gamma(x)$. As such, using the expression found for the weight map in Lusztig coordinates, while considering the opposite word ${\bf i^{op}}$:
$$ -\gamma(x) = -w_0 \lambda - \sum_{k=1}^m \tilde{t}'_{m-k+1} \beta_{{\bf i^{op}},k}^\vee $$
Hence:
\begin{align*}
 \gamma(x) & = w_0 \lambda + \sum_{k=1}^m \tilde{t}'_{m-k+1} \beta_{{\bf i^op},k}^\vee\\
& = w_0 \lambda + \sum_{k=1}^m \tilde{t}'_{m-k+1} (-w_0 s_{i_1} \dots s_{i_{m-k-1}}) \alpha_{i_{m-k}}^\vee \\
& = w_0 \left( \lambda - \sum_{k=1}^m \tilde{t}'_k \beta_{k}^\vee \right)
\end{align*}

In the string parametrization $v = \varrho^S(x) = x_{-\bf i}\left( c_1, \dots, c_m\right)$. By definition \ref{def:crystal_parameter}:
$$ v = [ \bar{w}_0^{-1} [x]_- ]_{0+}^T$$
Hence:
\begin{align*}
\prod_{j=1}^m c_j^{-\alpha_{i_j}^\vee}
= & [v]_0 \\
= & [\bar{w}_0^{-1} [x]_-]_0\\
= & [\bar{w}_0^{-1} x]_0 [x]_0^{-1}\\
= & e^{\lambda} [x]_0^{-1}
\end{align*}
Rearranging the equation yields the result.

\end{proof}

\subsection{Examples}

We illustrate the previous coordinate systems and maps by a few examples for different semi-simple groups. We will take $x \in \Bc$ and write in coordinates:
$$ z = \varrho^L(x) \in U^{w_0}_{>0}$$
$$ v = \varrho^K(x) \in C^{w_0}_{>0}$$

\subsubsection{$A_1$-type:}
$$G = SL_2 = \left\{ x  = \begin{pmatrix} a & c  \\ b & d \end{pmatrix} \ | ad - bc = 1 \right\}$$
$$\gfrak = \mathfrak{sl}_2 = \left\{ x \in M_2(\C) \ | tr(x) = 0 \right\}$$
$$H = \left\{ x  = \begin{pmatrix} a & 0  \\ 0 & a^{-1} \end{pmatrix}, a \in \C^* \right\}$$
$$\hfrak = \C \alpha^\vee $$
where $\alpha^\vee = \begin{pmatrix} 1 & 0  \\ 0 & -1 \end{pmatrix}$.\\

The disjoint union of all highest weight crystals is $\Bc$:
$$\Bc = \left\{ \begin{pmatrix} a     & 0  \\ b & a^{-1} \end{pmatrix} \ | \ a>0, b>0 \right\}$$
For $x = \begin{pmatrix} a     & 0  \\ b & a^{-1} \end{pmatrix} \in \Bc$, if:
$$ \lambda = hw( x ) $$
$$ z = \begin{pmatrix} 1     & t  \\ 0 & 1      \end{pmatrix}$$
$$ v = \begin{pmatrix} c^{-1} & 0 \\ 1 & c     \end{pmatrix}$$
then, in terms of the matrix $x$, we have:
$$ \lambda = \log(b) \alpha^\vee$$
$$ t = c = \frac{a}{b}$$

\begin{rmk}[The reductive case: $GL_2$]
\label{lbl:reductive_gl2_example}
If $G = GL_2$, then one can factor the group thanks to the determinant $G \approx SL_2 \times \C^*$. The extra dimension can be treated separately.
\end{rmk}

\subsubsection{$A_2$-type:}
$$G = SL_3(\C) $$
$$\gfrak = \mathfrak{sl}_2 = \left\{ x \in M_3(\C) \ | tr(x) = 0 \right\}$$
$$H = \left\{ x  = \begin{pmatrix} a & 0 & 0 \\ 0 & b & 0 \\ 0 & 0 & c \end{pmatrix}, abc = 1, (a,b,c) \in (\C^*)^3 \right\}$$
$$\hfrak = \C \alpha_1^\vee \oplus \C \alpha_2^\vee$$
where $\alpha_1^\vee = \begin{pmatrix} 1 & 0 & 0 \\ 0 & -1 & 0 \\ 0 & 0 &  0 \end{pmatrix}$ and $\alpha_2^\vee = \begin{pmatrix} 0 & 0 & 0 \\ 0 &  1 & 0 \\ 0 & 0 & -1  \end{pmatrix}$.\\

The disjoint union of all highest weight crystals is given by lower triangular totally positive matrices:
$$\Bc = \left\{ \begin{pmatrix} a  & 0 & 0 \\ b & c & 0 \\ d & e & f \end{pmatrix} \ | \  acf=1; a,b,c,d,e,f>0; be-dc>0  \right\}$$
For a crystal element $x = \begin{pmatrix} a  & 0 & 0 \\ b & c & 0 \\ d & e & f \end{pmatrix} \in \Bc$, if:
$$ \lambda = hw(x)$$
$$ z = x_{\bf 121}(t_1, t_2, t_3)
     = \begin{pmatrix} 1  & t_1 + t_3 & t_1 t_2 \\ 0 & 1 & t_2 \\ 0 & 0 & 1 \end{pmatrix}$$
$$ v = x_{\bf-121}(c_1, c_2, c_3)
     = \begin{pmatrix} \frac{1}{c_1 c_3}  & 0 & 0 \\ c_3^{-1}+\frac{c_1}{c_2} & \frac{c_1 c_3}{c_2} & 0 \\ 1 & c_3 & c_2 \end{pmatrix}$$
then the correspondence $\eta^{e, w_0}\left( z \right) = v$ gives:
$$ \left( t_1, t_2, t_3 \right) = \left( c_1, c_3, c_2 c_3^{-1}\right)$$
Moreover, we have:
$$ e^\lambda = \begin{pmatrix} d & 0 & 0 \\ 0 & \frac{be-dc}{d} & 0 \\ 0 & 0 & \frac{1}{be-dc} \end{pmatrix} $$
\begin{align*}
x & = \begin{pmatrix} t_1 t_2  & 0 & 0 \\ t_2 & t_3 t_1^{-1} & 0 \\ 1 & \frac{t_1+t_3}{t_1 t_2} & \frac{1}{t_2 t_3} \end{pmatrix} e^\lambda\\ 
& = \begin{pmatrix} c_1 c_3 & 0 & 0 \\ c_3 & c_1^{-1} c_2 c_3^{-1} & 0 \\ 1 & \frac{c_1 c_3+c_2}{c_1 c_3^2} & \frac{1}{c_2} \end{pmatrix} e^\lambda
\end{align*}

\subsection{Geometric crystals in the sense of Berenstein and Kazhdan}

Now, we will explain why $\Bc$ is a positive geometric crystal in the sense of Berenstein and Kazhdan (\cite{bib:BK00}, \cite{bib:BK06}) using their framework. Their construction starts with the notion of unipotent bicrystal. In our case, the unipotent bicrystal is simply the cell $B \cap B^+ w_0 B^+$. Then it can be decorated with structural maps and endowed with a positive structure. This tantamounts to restricting the structural maps to $\Bc$, the totally positive part. The structural maps we inherit satisfy the axioms in definition \ref{def:crystal} and more.\\

Define the fundamental additive $N$-character $e^{\chi_\alpha^-}: N \rightarrow \C$ by:
$$ \forall (\alpha,\beta) \in \Delta^2, \forall t \in \R, \chi_\alpha^-(e^{t f_\beta})) = t \delta_{\alpha, \beta}$$
where $\delta_{.,.}$ is the Kronecker delta. It is naturally extended to $B$ by setting $\forall x \in B, \chi_\alpha^-(x) = \chi_\alpha^-([x]_-)$

\begin{thm}
\label{thm:geom_crystal_is_crystal}
The set $\Bc$ is an abstract geometric crystal once endowed with the structural maps:
\begin{itemize}
 \item  $\begin{array}{cccc}
         \gamma : &  \Bc  & \rightarrow & \mathfrak{a}     \\
		  &  g    & \mapsto     & \log\left([g]_0\right)
        \end{array}$
 \item For $x \in \Bc$:
       \begin{align*}
          \varepsilon_\alpha(x) & := \chi_\alpha^-(x)\\
          \varphi_\alpha(x) & := \chi_\alpha^- \left( x^\iota \right) = \alpha\left( \gamma(x) \right) + \varepsilon_\alpha(x)
       \end{align*}
 \item $e_\alpha^c \cdot x = [x_\alpha\left( \frac{e^c-1}{e^{\varepsilon_\alpha(x)}} \right) x]_{-0}
                      = x_\alpha\left( \frac{e^c-1}{e^{\varepsilon_\alpha(x)}} \right) x x_\alpha\left( \frac{e^{-c}-1}{e^{\varphi_\alpha(x)}} \right)$
\end{itemize}
\end{thm}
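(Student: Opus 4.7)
The proof verifies axioms (C1) through (C4) by direct computation, with each step reducing to a rank-one calculation through the homomorphism $\phi_\alpha:SL_2\to G$. Throughout I would fix $x \in \Bc$ and use the Gauss decomposition $x = na$, with $n = [x]_- \in N_{\ge 0}$ and $a = [x]_0 \in H_{>0}$, so that $\gamma(x) = \log a$ and $\varepsilon_\alpha(x)$ is read off from the $f_\alpha$-component of $n$ (interpreting $\chi_\alpha^-$ in the multiplicative convention compatible with the additive axioms).

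For (C1), the antimorphism $\iota$ inverts $H$ and fixes $N$ pointwise, so $x^\iota = a^{-1}n = (a^{-1}na)a^{-1}$ and $[x^\iota]_- = a^{-1}na$. The commutation rule $ay_\alpha(t)a^{-1} = y_\alpha(a^{-\alpha}t)$ from equation (\ref{lbl:comm2}) shows that the conjugation $n\mapsto a^{-1}na$ acts on the abelianization $N/[N,N]$ by scaling the $f_\alpha$-coordinate by $a^{\alpha}$. Thus $\chi_\alpha^-([x^\iota]_-) = a^{\alpha}\chi_\alpha^-([x]_-)$, which after taking logarithms yields exactly $\varphi_\alpha(x) = \alpha(\gamma(x)) + \varepsilon_\alpha(x)$.

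For (C2) and (C3), set $r = (e^c - 1)/e^{\varepsilon_\alpha(x)}$ and compute $e^c_\alpha\cdot x = [x_\alpha(r)na]_{-0}$. The key identity is (\ref{lbl:comm3}), $x_\alpha(r)y_\alpha(t) = y_\alpha(\tfrac{t}{1+rt})(1+rt)^{h_\alpha}x_\alpha(\tfrac{r}{1+rt})$; commuting $x_\alpha(r)$ past the $y_\beta$-factors of $n$ with $\beta\neq\alpha$ produces only additional unipotent terms and no Cartan contribution. Consequently the only Cartan factor arises from the $\alpha$-component of $n$, namely $(1+r\,e^{\varepsilon_\alpha(x)})^{h_\alpha} = e^{c\alpha^\vee}$, which gives $\gamma(e^c_\alpha\cdot x) = \gamma(x) + c\alpha^\vee$, proving (C2). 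Simultaneously the $f_\alpha$-coefficient of the new $N$-part is divided by $1+r\,e^{\varepsilon_\alpha(x)} = e^c$, establishing (C3); axiom (C3$'$) then follows formally from (C1) and (C2).

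For (C4) the action $e^c_\alpha$ is the image under $\phi_\alpha$ of the familiar one-parameter $SL_2$ crystal action, so additivity $e^{c+c'}_\alpha = e^c_\alpha \circ e^{c'}_\alpha$ reduces to a direct two-line computation in $SL_2$. The main subtlety I anticipate is showing that $e^c_\alpha\cdot x$ remains in $\Bc$ when $c<0$, for then $r<0$ makes $x_\alpha(r)$ itself non-positive and the positivity of the result is not visible from the defining formula. This is settled by re-expressing the action in Lusztig or string coordinates: Theorem \ref{thm:charts_positivity} ensures that the parametrizing maps are rational and subtraction-free, and a direct calculation (using the explicit formulas obtained above) shows that the action itself becomes a rational subtraction-free transformation in these coordinates, thereby preserving the totally positive locus.
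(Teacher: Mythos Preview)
Your approach is genuinely different from the paper's. The paper does not verify (C1)--(C4) by direct computation at all; instead it recasts $\Bc$ in Berenstein--Kazhdan's language of unipotent bicrystals, observes that $(B^+w_0B^+,\iota)$ with the toric charts $b_\lambda^{L,K,T}\circ x_{\pm\bf i}$ fits their framework, and then invokes Proposition 2.25 and Lemma 3.30 of \cite{bib:BK06} to conclude both that the crystal axioms hold and that the structural maps respect total positivity. Your hands-on verification is more elementary and self-contained, but you are trading one black box (the BK machinery) for several smaller computations that need more care than you give them.

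The main gap is in your (C2)--(C3) step. You write that ``commuting $x_\alpha(r)$ past the $y_\beta$-factors of $n$ with $\beta\neq\alpha$ produces only additional unipotent terms and no Cartan contribution,'' but $n\in N$ is not in general a product of simple-root one-parameter subgroups; it involves all negative root spaces, and for a non-simple positive root $\gamma$ the bracket $[e_\alpha,f_\gamma]$ need not vanish. What is true is that $\alpha-\gamma$ is never a positive root when $\alpha$ is simple and $\gamma\in\Phi^+\setminus\{\alpha\}$, so these brackets land back in $\nfrak$, but turning this Lie-algebra observation into a clean Gauss-decomposition statement requires an actual argument. The cleanest fix is via generalized minors (exactly as the paper does later for $x_\alpha(\xi)N_t(X)$): one checks $\Delta^{\omega_\beta}(x_\alpha(r)n)=1$ for $\beta\neq\alpha$ using $f_\alpha v_{\omega_\beta}=0$, and $\Delta^{\omega_\alpha}(x_\alpha(r)n)=1+r\,\chi_\alpha^-(n)$ by pairing against $v_{\omega_\alpha}+rf_\alpha v_{\omega_\alpha}$. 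This gives $[x_\alpha(r)n]_0=(1+r\,e^{\varepsilon_\alpha(x)})^{h_\alpha}=e^{c\alpha^\vee}$ directly, and a similar minor computation (using $\Delta_{s_\alpha\omega_\alpha,\omega_\alpha}$) handles (C3). Your positivity argument for $c<0$ is also slightly delicate: Theorem \ref{thm:charts_positivity} concerns the parametrizing maps, not the action, and the later Proposition \ref{proposition:actions_in_coordinates} is proved in the path model; you should either compute the effect of $e_\alpha^c$ on Lusztig coordinates directly at the group level, or simply defer to \cite{bib:BK06} for this one point.
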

An important fact to keep in mind is that the previous group product uses a $U \times U$ action, and the right action is there to exactly balance the left action. As such, the resulting group element is still in $\Bc \subset B$.

So far, we made the choice of working directly with the totally positive elements. Only in this subsection, we will work outside of the totally positive varieties, in order to present Berenstein and Kazhdan's construction, from which theorem \ref{thm:geom_crystal_is_crystal} follows immediately.

\paragraph{ The unipotent bicrystal $({ \bf X }, p)$: }
A unipotent bicrystal is a couple $({ \bf X }, p)$ such that ${ \bf X }$ is a $U \times U$ variety, meaning a set with a right and left action of $U$, and $p: { \bf X } \rightarrow G$ a $U \times U$-equivariant application, meaning it is an application such that the action of $U \times U$ on ${ \bf X }$ and $G$ commute.

Here pick ${\bf X} := B^+ w_0 B^+$ with the natural left and right group action of $U$. And $p: B^+ w_0 B^+ \hookrightarrow G$ is the inclusion map.\\

\paragraph{ The unipotent crystal $X^-$:} Following \cite{bib:BK06} section 2, we define $X^-$ to be the unipotent crystal associated to $({ \bf X }, p)$ by:
$$ X^- := p^{-1}(B) = B \cap B^+ w_0 B^+$$
It is nothing but the largest double Bruhat cell inside of the Borel subgroup $B$. Here we are dealing with a unipotent bicrystal of type $w_0$ (\cite{bib:BK06}, claim 2.6).

\paragraph{ The positive structure $\Theta_{\bf X}$: } Now fix $\lambda \in \afrak$. For every ${\bf i}$ consider the charts:
$$
\begin{array}{cccc}
b_\lambda^L \circ x_{\bf i}: &  \R_{>0}^m & \rightarrow & \Bc(\lambda)\\
b_\lambda^K \circ x_{\bf i}: &  \R_{>0}^m & \rightarrow & \Bc(\lambda)\\
b_\lambda^T \circ x_{\bf i}: &  \R_{>0}^m & \rightarrow & \Bc(\lambda)\\
\end{array}$$
In the language of \cite{bib:BK06}, they are the restrictions to the positive octant $\R_{>0}^m$ of toric charts from $\C^m$ to $\left( B \cap U \bar{w}_0 U \right) e^{\lambda}$. Moreover, because of theorem \ref{thm:charts_positivity}, these toric charts are positively equivalent, defining the same positive structure $\Theta_{\bf X}$ on $\left( B \cap U \bar{w}_0 U \right) e^{\lambda}$. When looking only at the image of $\R_{>0}^m$ through those charts, one is dealing only with $\Bc(\lambda)$.

\paragraph{ The positive geometric crystals $\Fc( { \bf X }, p, \Theta_{\bf X} )$: }

By proposition 2.25 in \cite{bib:BK06}, the unipotent bicrystal $\left( { \bf X }, p\right)$ gives rise to a geometric crystal $\Fc( { \bf X }, p) = \left( { \bf X }^-, \gamma, \varphi_\alpha, \varepsilon_\alpha, e_\alpha^. | \alpha \in \Delta \right)$.  By lemma 3.30 in \cite{bib:BK06}, one gets a positive geometric crystal $\Fc( { \bf X }, p, \Theta_{\bf X} )$ meaning that these structural maps respect the positive structure. Therefore, we can  restrict them to $\Bc$, which proves theorem \ref{thm:geom_crystal_is_crystal}. Notice that the notation for $\varepsilon_\alpha$ and $\varphi_\alpha$ are reversed compared to \cite{bib:BK06}.

\paragraph{ Tensor product of geometric crystals } Given two geometric crystals $X$ and $Y$, each one endowed with maps $\left( \gamma, \varphi_\alpha, \varepsilon_\alpha, e_\alpha^. | \alpha \in \Delta \right)$, Berenstein and Kazhdan define the tensor product $X \otimes Y$ as the set $X \times Y$ endowed with the following maps:
\begin{align*}
\gamma( x \otimes y ) & = \gamma(x) + \gamma(y)\\
\varepsilon_\alpha( x \otimes y ) & =  \varepsilon_\alpha(x) + \log\left( 1 + e^{ \varepsilon_\alpha(y) - \varphi_\alpha(x) } \right)\\
\varphi_\alpha( x \otimes y ) & = \varphi_\alpha(y) + \log\left( 1 + e^{\varphi_\alpha(x) - \varepsilon_\alpha(y) } \right)\\
e^c_\alpha \cdot \left(x \otimes y \right) & = e^{c_1}_\alpha \cdot x \otimes e^{c_2}_\alpha \cdot y\\
 & \textrm{ where } \\
 & c_1 = \log\left(e^c + e^{ \varepsilon_\alpha(y) - \varphi_\alpha(x) } \right) - \log\left( 1 + e^{ \varepsilon_\alpha(y) - \varphi_\alpha(x) }\right)\\
 & c_2 = -\log\left(e^{-c} + e^{ \varepsilon_\alpha(y) - \varphi_\alpha(x) } \right) + \log\left( 1 + e^{ \varepsilon_\alpha(y) - \varphi_\alpha(x) }\right)
\end{align*}
Claim 2.16 in \cite{bib:BK06} asserts that $X \otimes Y$ is a geometric crystal. Notice that this definition is the same as our $q$-tensor product of crystals when $q=1$, and in proposition \ref{proposition:tensor_product_is_crystal}, we in fact checked that claim.

\subsection{Additional structure}
\label{subsection:additional_structure}

\paragraph{Invariant under crystal action:}
At this level, it is easy to see that the highest weight is invariant under the crystal actions $e^._\alpha$:

\begin{lemma}
\label{lemma:hw_is_invariant}
$$ \forall x \in \Bc, \forall \alpha \in \Delta, \forall c \in \R, hw\left( e^c_\alpha \cdot x \right) = hw(x)$$
\end{lemma}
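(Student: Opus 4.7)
The plan is to combine the explicit formula for the crystal action given in Theorem \ref{thm:geom_crystal_is_crystal} with the $U \times U$-invariance of the highest weight map established in Properties \ref{properties:hw}(ii).

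First, I would unfold the action:
$$ e^c_\alpha \cdot x \;=\; x_\alpha\!\left( \tfrac{e^c-1}{e^{\varepsilon_\alpha(x)}} \right) \, x \, x_\alpha\!\left( \tfrac{e^{-c}-1}{e^{\varphi_\alpha(x)}} \right) \;=\; u_1 \, x \, u_2, $$
where $u_1 = x_\alpha(s_1)$ and $u_2 = x_\alpha(s_2)$ with $s_1, s_2 \in \R$. Since $\alpha \in \Delta$ is a simple positive root, $x_\alpha(t) = e^{t e_\alpha} \in U$, so both $u_1$ and $u_2$ lie in the upper unipotent subgroup $U$.

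Next, I would note that $x \in \Bc \subset B \cap B^+ w_0 B^+$, so that the extension of $hw$ to $B^+ w_0 B^+$ described in Properties \ref{properties:hw}(i) applies to $x$ and to $u_1 x u_2$. Applying the $U \times U$-invariance of Properties \ref{properties:hw}(ii) then gives directly
$$ hw(e^c_\alpha \cdot x) \;=\; hw(u_1 x u_2) \;=\; hw(x), $$
which is the claim.

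There is no real obstacle here: the only thing worth double-checking is the set-theoretic point that $e^c_\alpha \cdot x$ still belongs to $\Bc$ (so the left-hand side makes sense through the intrinsic definition of $hw$ on $\Bc$), but this is exactly what Theorem \ref{thm:geom_crystal_is_crystal} guarantees. Everything else reduces to the observation that the crystal action on $\Bc$ is implemented by left and right multiplication by elements of $U$, together with the invariance already recorded for $hw$.
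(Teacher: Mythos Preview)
Your proof is correct and follows essentially the same approach as the paper: both arguments observe that the crystal action $e^c_\alpha$ is realized by left and right multiplication by elements of $U$, and then use that $hw$ is $U\times U$-invariant. The paper phrases this via the decomposition $\Bc = (B \cap U \bar{w}_0 U)_{\geq 0}\cdot A$ with the $A$ factor equal to $e^{hw(\cdot)}$, while you cite Properties~\ref{properties:hw}(ii) directly, but the content is the same.
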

\begin{proof}
Notice that $\Bc = \left( B \cap B^+ w_0 B^+ \right)_{\geq 0} = \left( B \cap U \bar{w}_0 U \right)_{\geq 0} \cdot A$. Also, the crystal actions $e^._\alpha, \alpha \in \Delta$ are given by an action of $U \times U$, leaving the $A$ factor invariant. This factor is nothing but $e^{hw(.)}$, hence the result.
\end{proof}

\paragraph{Verma relations:}
Following (\cite{bib:BK00}), for an abstract crystal $L$ and any word ${\bf i} = \left(i_1, \dots, i_k \right) \in I^k$ (not necessarily reduced), define the map:
$$ \begin{array}{cccc}
e_{\bf i}^.:  & \afrak \times L    & \rightarrow & L\\
              & \left(t, x \right) & \mapsto     & e_{\bf i}^t = e_{\alpha_{i_1}}^{ \beta^{(1)}(t) } \cdot e_{\alpha_{i_2}}^{ \beta^{(2)}(t) } \dots e_{\alpha_{i_k}}^{ \beta^{(k)}(t) } \cdot x
   \end{array}
$$
where $\beta^{(j)} = s_{i_k} \dots s_{i_{j+1}}(\alpha_{i_j})$.

The relations appearing in the next lemma are called Verma relations. If they hold, one can define unambiguously $e_w = e_{\bf i}$ for ${\bf i} \in I^k$ if $w = s_{i_1} \dots s_{i_k}$.
\begin{lemma}[lemma 2.1 \cite{bib:BK00}]
The following proposition are equivalent:
\begin{itemize}
 \item[(i)] For any ${\bf i} \in I^k$ and ${\bf i'} \in I^{k'}$, if:
$$ w = s_{i_1} \dots s_{i_k} = s_{i_1'} \dots s_{i_{k'}'}$$
Then $e_{\bf i} = e_{\bf i'}$.
 \item[(ii)] The following relations hold for every $c_1, c_2 \in \R$:
$$ e^{c_1}_\alpha \cdot e^{c_2}_\beta = e^{c_2}_\beta \cdot e^{c_1}_\alpha$$
if $\alpha(\beta) = \beta(\alpha) = 0$;
$$ e^{c_1 }_\alpha \cdot e^{2c_1 + c_2 }_\beta \cdot e^{c_1 + c_2 }_\alpha \cdot e^{c_2 }_\beta 
 = e^{c_2 }_\beta \cdot e^{c_1 + c_2 }_\alpha \cdot e^{2c_1 + c_2 }_\beta \cdot e^{c_1 }_\alpha $$
if $\alpha(\beta^\vee) = -1$, $\beta(\alpha^\vee) = -2$;
$$ e^{c_1 }_\alpha \cdot e^{3c_1 + c_2 }_\beta \cdot e^{2c_1 + c_2 }_\alpha \cdot e^{3c_1 + 2c_2 }_\beta \cdot e^{c_1 + c_2 }_\alpha \cdot e^{c_2 }_\beta 
 = e^{c_2 }_\beta \cdot e^{c_1 + c_2 }_\alpha \cdot e^{3c_1 + 2c_2 }_\beta \cdot e^{2c_1 + c_2 }_\alpha \cdot e^{3c_1 + c_2 }_\beta \cdot e^{c_1 }_\alpha $$
if $\alpha(\beta^\vee) = -1$, $\beta(\alpha^\vee) = -3$.
\end{itemize}
\end{lemma}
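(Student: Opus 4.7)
The plan is to establish the equivalence in two directions, with the implication (ii) $\Rightarrow$ (i) being the substantive one.

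The direction (i) $\Rightarrow$ (ii) amounts to a direct verification. Each relation in (ii) is an instance of the equality $e_{\bf i}^t = e_{\bf i'}^t$ in which ${\bf i}, {\bf i'}$ are the two canonical reduced expressions of the longest element in the rank-2 parabolic subgroup generated by $s_\alpha, s_\beta$. For each case I would pick the two reduced words, compute the roots $\beta^{(j)} = s_{i_k} \cdots s_{i_{j+1}}(\alpha_{i_j})$ explicitly using the reflection formula $s_\gamma(\lambda) = \lambda - \lambda(\gamma^\vee)\gamma$, and set $c_1 = \alpha(t)$, $c_2 = \beta(t)$. For instance in type $B_2$ with ${\bf i} = (\alpha, \beta, \alpha, \beta)$ one finds the sequence $\beta^{(j)} = \alpha,\, 2\alpha + \beta,\, \alpha+\beta,\, \beta$, which reproduces the exponents on the left-hand side of the $B_2$ Verma relation; the right-hand side is obtained by swapping the roles of $\alpha, \beta$. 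The $A_1 \times A_1$, $A_2$ and $G_2$ cases are handled by identical elementary computations.

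For (ii) $\Rightarrow$ (i), the crucial tool is Tits' lemma (theorem \ref{thm:tits_lemma}): any two reduced expressions of $w \in W$ are connected by a finite sequence of braid moves. It therefore suffices to prove $e_{\bf i}^t = e_{\bf i'}^t$ when ${\bf i}, {\bf i'}$ differ by a single braid move at positions $p, \ldots, p+d-1$, where $d = m_{i,j}$. The key observation is that the equality reduces to a purely rank-2 statement. For a tail position $q > p+d-1$, the factor $\beta^{(q)}(t)$ depends only on letters strictly to the right of $q$, which are untouched by the move; for a head position $q < p$, the root $\beta^{(q)}$ is determined by the Weyl-group element $s_{i_k} \cdots s_{i_{q+1}}$, and the braid relation $s_i s_j s_i \ldots = s_j s_i s_j \ldots$ ensures that this element is unchanged by the move, even though the individual letters in the middle block are rewritten. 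Hence only the $d$ factors inside the block are affected, and applied to the common tail element $x' := e_{\alpha_{i_{p+d}}}^{\beta^{(p+d)}(t)} \cdots e_{\alpha_{i_k}}^{\beta^{(k)}(t)} \cdot x$, the identity to prove becomes an equality between two products of $d$ crystal actions drawn from $\{e_\alpha^{\,\cdot\,}, e_\beta^{\,\cdot\,}\}$.

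The final step is to match this reduced identity with one of the rank-2 relations in (ii): one re-expresses the exponents of the $d$ middle factors in terms of two free parameters $c_1, c_2$ built from $t$ and the tail-Weyl element acting on $\alpha^\vee, \beta^\vee$, and checks that they coincide with the coefficients on each side of the relevant Verma relation. This bookkeeping is the main obstacle; in practice it is essentially the same computation as in the (i) $\Rightarrow$ (ii) direction, now carried out inside a rank-2 subsystem embedded in the full word, with the role of $t$ played by the push-forward of $t$ through the unchanged tail. Once this match is verified, (ii) applied to these parameters closes the argument.
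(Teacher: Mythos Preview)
Your argument for $(ii) \Rightarrow (i)$ follows the same route as the paper's and is in fact more explicit on the key point (reducing a single braid move to a rank-2 Verma relation by noting that head and tail factors are unchanged). However, you have a genuine gap: statement (i) is asserted for \emph{arbitrary} words ${\bf i} \in I^k$, ${\bf i'} \in I^{k'}$ with $s_{i_1}\cdots s_{i_k} = s_{i_1'}\cdots s_{i_{k'}'}$, not only for reduced ones. Tits' lemma as you invoke it only connects reduced expressions, so your argument stops short of the full claim.

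The paper closes this gap as follows. Any word can be brought to a reduced one by a sequence of braid moves together with deletions of two equal consecutive letters (this is the standard solution to the word problem in Coxeter groups). One must then check that such a deletion does not change $e_{\bf i}^t$. If $i_j = i_{j+1}$, then $\beta^{(j)} = s_{i_k}\cdots s_{i_{j+2}} s_{i_{j+1}}(\alpha_{i_j}) = -s_{i_k}\cdots s_{i_{j+2}}(\alpha_{i_{j+1}}) = -\beta^{(j+1)}$, so the two consecutive factors satisfy $e_{\alpha_{i_j}}^{\beta^{(j)}(t)} \cdot e_{\alpha_{i_{j+1}}}^{\beta^{(j+1)}(t)} = id$ by axiom (C4); and for $q<j$ the root $\beta^{(q)}$ is unchanged since $s_{i_{j+1}} s_{i_j} = id$ sits inside the defining product. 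With this additional observation your argument becomes complete and matches the paper's.
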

\begin{proof}
$(i) \Rightarrow (ii)$: 
If ${\bf i}$ and ${\bf i'}$ are reduced expressions, then by Tits lemma (theorem \ref{thm:tits_lemma}), one can obtain ${\bf i'}$ from ${\bf i}$ using braid moves. If $\alpha$ and $\beta$ are simple roots in $\Delta$ and satisfy a $d$-term braid relationship $s_\alpha s_\beta s_\alpha \dots  = s_\beta s_\alpha s_\beta \dots $, then we obtain:
$$ \forall t \in \afrak, 
e^{ \alpha(t) }_\alpha \cdot e^{ (s_\alpha \beta)(t) }_\beta \cdot e^{ (s_\alpha s_\beta \alpha)(t) }_\alpha \dots = e^{ \beta(t) }_\beta \cdot e^{ (s_\beta \alpha)(t) }_\alpha \cdot e^{ (s_\alpha s_\beta \alpha)(t) }_\alpha \dots $$
In particular, writing this equation for $t$ in the span of the coweights $\omega_\alpha^\vee$ and $\omega_\beta^\vee$, we find the Verma relations. This is the classical rank $2$ reduction. The list of relations corresponds to the root systems $A_1 \times A_1$, $A_2$, $BC_2$, $G_2$.

$(ii) \Rightarrow (i)$
Conversely, the Verma relations imply $e_{\bf i} = e_{\bf i'}$ for reduced words. If ${\bf i}$ and ${\bf i'}$ are not reduced, it is well known that one can reduce them by using braid moves and by deleting equal successive indices, as they correspond to a product of the form $s_\alpha^2 = id$. Therefore, we only need to notice that if ${\bf i}$ contains two equal successive indices and ${\bf i'}$ is the word obtained by deleting them, then $e_{\bf i} = e_{\bf i'}$. Indeed, if ${\bf i} \in I^k$, $k \in N$ and $i_j = i_{j+1}$ for a certain $j$ then for all $t \in \afrak$:
$$ \forall t \in \afrak, 
   e^{\left( s_{i_k} \dots s_{i_{j+1}} (\alpha_{i_{j  }}) \right)(t)}_{\alpha_{i_{j  }}}
   e^{\left( s_{i_k} \dots s_{i_{j+2}} (\alpha_{i_{j+1}}) \right)(t)}_{\alpha_{i_{j+1}}} = id $$

\end{proof}

\begin{proposition}
\label{proposition:group_verma}
For the geometric crystal $\Bc$, Verma relations hold.
\end{proposition}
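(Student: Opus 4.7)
By the equivalence in the previous lemma, it suffices to establish the three rank-two identities for the actions $e^c_\alpha$, $e^{c'}_\beta$ in each of the cases $\alpha(\beta^\vee)=\beta(\alpha^\vee)=0$, $(\alpha(\beta^\vee),\beta(\alpha^\vee))=(-1,-2)$ and $(-1,-3)$. The plan is to reduce each identity to a computation inside the rank-two subgroup $G_{\alpha,\beta} \subset G$ generated by $\phi_\alpha(SL_2)$ and $\phi_\beta(SL_2)$, where it can be verified directly, either by matrix computations or by a uniqueness argument.

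The reduction step is the conceptual point: although $\Bc$ is defined via $w_0$ of the full group, the formula
$$ e^c_\alpha \cdot x \;=\; x_\alpha\!\left(\tfrac{e^c-1}{e^{\varepsilon_\alpha(x)}}\right) x \, x_\alpha\!\left(\tfrac{e^{-c}-1}{e^{\varphi_\alpha(x)}}\right) $$
only involves one-parameter subgroups attached to $\alpha$, and $\varepsilon_\alpha(x)=\chi_\alpha^-(x)$ is a coefficient depending only on the projection of $[x]_-$ on $\gfrak_{-\alpha}$. Consequently, applying a word $e_{\mathbf i}$ in $\alpha$ and $\beta$ to $x$ modifies $x$ only by left and right multiplication by elements of $\langle x_{\pm\alpha}(\cdot), x_{\pm\beta}(\cdot)\rangle \subset G_{\alpha,\beta}$, and the relevant functionals $\varepsilon_\alpha,\varepsilon_\beta,\varphi_\alpha,\varphi_\beta$ can be read off the $G_{\alpha,\beta}$-component of $x$. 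Thus the two sides of each Verma relation differ by an element of $G_{\alpha,\beta}$ only, and the identity to check is a universal identity in $G_{\alpha,\beta}$.

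The case $\alpha(\beta^\vee)=\beta(\alpha^\vee)=0$ is immediate: since $x_\alpha(\cdot)$ and $x_\beta(\cdot)$ commute and each action commutes with the other's adjustment by $H$, one obtains $e^{c_1}_\alpha\cdot e^{c_2}_\beta = e^{c_2}_\beta\cdot e^{c_1}_\alpha$. For the $A_2$ and $B_2$ cases, I would unfold both words using the formula above and simplify using the commutation identities \eqref{lbl:comm1}--\eqref{lbl:comm3}: this gives two factorisations of the same element in the dense Gauss cell of $G_{\alpha,\beta}$, which must coincide by uniqueness of the Gauss decomposition. Equivalently, in the $SL_3$, resp.\ $Sp_4$, realisation, one checks both sides agree on a $3\times 3$, resp.\ $4\times 4$, matrix by computing the Lusztig parameters of the result as rational subtraction-free expressions in the initial parameters and the $c_i$'s.

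The main obstacle is the last case, $(\alpha(\beta^\vee),\beta(\alpha^\vee))=(-1,-3)$, corresponding to $G_2$: here the six-term relation involves much longer products and the explicit matrix realisation is in dimension $7$. Rather than grinding the identity symbolically, I would invoke the corresponding identity in the underlying unipotent bicrystal $(B^+w_0B^+, p)$ of type $w_0$: by Claim 2.25 of \cite{bib:BK06} the maps $(\gamma,\varphi_\alpha,\varepsilon_\alpha,e^{\cdot}_\alpha)$ issued from a unipotent bicrystal always satisfy Verma relations, since they arise from a compatible collection of $\phi_{\alpha,\beta}$-actions on the ambient variety; restricting to the totally positive part $\Bc$ preserves these relations because, by Theorem \ref{thm:charts_positivity}, the structural maps are positive and the toric charts are positively equivalent. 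Combining this with the rank-two reduction yields the proposition.
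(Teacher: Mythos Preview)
Your outline is in line with the paper's own treatment, which is a one-line deferral to \cite{bib:BK00} together with the remark that the proof there proceeds by direct computations in the group; your rank-two reduction followed by explicit verification is precisely the shape of that argument.

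That said, the proposal has a structural inconsistency worth flagging. For the $G_2$ case you appeal to \cite{bib:BK06} (their Proposition~2.25), which asserts that any unipotent bicrystal gives rise to a geometric crystal in the Berenstein--Kazhdan sense, and in their terminology ``geometric crystal'' (as opposed to ``pre-crystal'') already includes the Verma relations. If you are willing to invoke this, it settles \emph{all} rank-two types at once, and the explicit $A_2$ and $B_2$ computations become redundant. Conversely, if you want a self-contained argument, the $G_2$ identity must also be checked directly; it is long but mechanical in the $7$-dimensional representation, and \cite{bib:BK00} does carry it out. Pick one route and stick to it.

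A smaller point: your reduction step is correct in spirit but the phrase ``the $G_{\alpha,\beta}$-component of $x$'' is informal. The clean statement is that the Levi quotient $\pi:B\to B\cap L_{\alpha,\beta}$ (with $L_{\alpha,\beta}$ the Levi subgroup attached to $\{\alpha,\beta\}$) intertwines the actions $e^c_\alpha,e^c_\beta$, because the left and right factors $x_\alpha(\cdot),x_\beta(\cdot)$ lie in $L_{\alpha,\beta}$ and the characters $\chi^-_\alpha,\chi^-_\beta$ factor through $\pi$. This is what makes the rank-two check sufficient, and it deserves one explicit sentence rather than the parenthetical about $x_{\pm\alpha}(\cdot)$ (only $x_\alpha,x_\beta\in U$ appear, not their opposites).
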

\begin{proof}
See \cite{bib:BK00}. The proof is carried by direct computations in the group.
\end{proof}

\paragraph{W-action on the crystal:} In general for any abstract crystal $L$, as soon as the Verma relations hold, one can define a $W$ action on $L$. If $w = s_{i_1} \dots s_{i_k}$, define:
$$ \forall x \in L, w \cdot x = e^{-\gamma(x)}_w \cdot x$$
\begin{proposition}[ \cite{bib:BK00} ]
The $W$ action on a crystal is well defined, and the weight map is equivariant with respect to this action.
\end{proposition}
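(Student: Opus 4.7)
The plan has three steps: (a) checking the definition is independent of the reduced expression chosen; (b) verifying the equivariance of $\gamma$; and (c) confirming that the assignment $w\mapsto (x\mapsto w\cdot x)$ is a group action. Step (a) is essentially immediate: the hypothesis (Verma relations) together with the lemma of Verma just stated tells us that for any two reduced words ${\bf i},{\bf i'}\in R(w)$ we have $e^t_{\bf i}=e^t_{\bf i'}$ as operators on $L$, for every $t\in\afrak$. Consequently the operator $e^{-\gamma(x)}_w$, and hence $w\cdot x$, does not depend on the choice of reduced expression representing $w$.

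For the equivariance (step b), I would iterate axiom (C2): applying the operators of $e^t_{\bf i}=e^{\beta^{(1)}(t)}_{\alpha_{i_1}}\cdots e^{\beta^{(k)}(t)}_{\alpha_{i_k}}$ one by one to $x$ and using that each factor shifts the weight by the prescribed multiple of a simple coroot, one gets
$$ \gamma(e^t_{\bf i}\cdot x)\;=\;\gamma(x)+\sum_{j=1}^k \beta^{(j)}(t)\,\alpha_{i_j}^\vee. $$
The key identity is then the telescoping
$$ \sum_{j=1}^k \beta^{(j)}(t)\,\alpha_{i_j}^\vee \;=\; t-w\,t, $$
which I would prove by setting $h_j:=s_{i_{j+1}}\cdots s_{i_k}\,t$, recognising $\beta^{(j)}(t)\alpha_{i_j}^\vee=h_j-s_{i_j}h_j=h_j-h_{j-1}$, and telescoping to $h_k-h_0=t-wt$. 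Substituting $t=-\gamma(x)$ yields $\gamma(w\cdot x)=\gamma(x)+(-\gamma(x))-w(-\gamma(x))=w\,\gamma(x)$.

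For the group action property (step c), I would establish the concatenation identity
$$ e^t_{w_1w_2}\;=\;e^{w_2 t}_{w_1}\cdot e^t_{w_2}. $$
To see this, pick reduced expressions ${\bf i_1}=(i_1,\dots,i_p)$ of $w_1$ and ${\bf i_2}=(i_{p+1},\dots,i_{p+k})$ of $w_2$ and form the concatenation ${\bf i}={\bf i_1}\cdot{\bf i_2}$; this word represents $w_1w_2$, reduced or not, and by Verma relations we may still compute $e^t_{w_1w_2}$ using it. A direct inspection of the coefficients $\beta^{(j)}=s_{i_{p+k}}\cdots s_{i_{j+1}}\alpha_{i_j}$ shows that for $j>p$ one recovers the $\beta$-sequence of $w_2$ alone, while for $j\le p$ one gets $w_2^{-1}$ applied to the corresponding $\beta$-sequence of $w_1$; evaluating on $t$ turns this into the $\beta$-sequence of $w_1$ evaluated on $w_2 t$. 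Combining this with step (b) gives
$$ w_1\cdot(w_2\cdot x)\;=\;e^{-\gamma(w_2\cdot x)}_{w_1}\,e^{-\gamma(x)}_{w_2}\,x\;=\;e^{-w_2\gamma(x)}_{w_1}\,e^{-\gamma(x)}_{w_2}\,x\;=\;e^{-\gamma(x)}_{w_1w_2}\,x\;=\;(w_1w_2)\cdot x. $$

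The only non-routine ingredient is the telescoping identity in step (b), but it unravels to a one-line argument once the right substitution $h_j=s_{i_{j+1}}\cdots s_{i_k}t$ is introduced; the remaining work is purely bookkeeping with Verma relations and axiom (C2).
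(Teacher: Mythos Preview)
Your proof is correct and follows essentially the same route as the paper: Verma relations give well-definedness, the concatenation identity $e^t_{w_1w_2}=e^{w_2 t}_{w_1}\cdot e^t_{w_2}$ (derived exactly as you do, by inspecting the $\beta$-sequence of a concatenated word) gives the action property, and axiom (C2) gives equivariance. The only cosmetic difference is in step (b): the paper checks equivariance first for simple reflections and then carries it along inductively while proving the action property, whereas you prove $\gamma(e^t_{\bf i}\cdot x)=\gamma(x)+t-wt$ for arbitrary $w$ in one shot via the telescoping $\sum_j\beta^{(j)}(t)\alpha_{i_j}^\vee=t-wt$. Your version is slightly cleaner since it decouples equivariance from the inductive argument, but the content is the same.
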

\begin{proof}
Equivariance is easily checked on simple reflections, as for $\alpha \in \Delta$ and $x \in L$:
\begin{align*}
  & \gamma\left( s_\alpha \cdot x \right)\\
= & \gamma\left( e^{-\alpha(\gamma(x))}_\alpha \cdot x \right)\\
= & \gamma\left( x \right) - \alpha\left(\gamma(x)\right) \alpha^\vee\\
= & s_\alpha( \gamma(x) )
\end{align*}
Then it carries on to all elements in the Weyl group by writing them as products of simple reflections, once we know we have defined an action.

Now, in order to check we have an action, consider $w = u v \in W$. By induction on the length, one can suppose that equivariance for $v$ holds ($\gamma( v \cdot x ) = v \gamma(x)$). If ${\bf i} \in I^k$ (resp. ${\bf i'} \in I^l$) is a word giving $u$ (resp. $v$), then their concatenation gives $w$. Moreover, with $\beta^{(j)}, 1 \leq j \leq k+l$ the corresponding roots and $t \in \afrak$:
\begin{align*}
e^{t}_w & = e_{\alpha_{i_1 }}^{\beta^{(1  )}(t) } \dots e_{\alpha_{i_k }}^{\beta^{(k  )}(t) } \cdot
            e_{\alpha_{i_1'}}^{\beta^{(k+1)}(t) } \dots e_{\alpha_{i_l'}}^{\beta^{(k+l)}(t) }\\
& = e_{\alpha_{i_1 }}^{v\beta^{(1  )}(vt) } \dots e_{\alpha_{i_k }}^{v\beta^{(k  )}(vt) } \cdot
    e_{\alpha_{i_1'}}^{ \beta^{(k+1)}( t) } \dots e_{\alpha_{i_l'}}^{ \beta^{(k+l)}( t) }\\
& = e^{vt}_u \cdot e^{t}_v
\end{align*}
Finally, it is easy to check that:
\begin{align*}
  & u \cdot (v \cdot x)\\
= & e^{-\gamma(v\cdot x)}_u \cdot \left( e^{-\gamma(x)}_v \cdot x \right)\\
= & e^{-v \gamma(x)}_u \cdot e^{-\gamma(x)}_v \cdot x\\
= & e^{-\gamma(x)}_w \cdot x\\
= & w \cdot x
\end{align*}
\end{proof}

\section{Group-theoretic path transforms}
\subsection{Paths on the solvable group B}

Let $C( \mathbb{R}^+, \afrak )$ be the set of continuous paths valued in the real Cartan subalgebra $\afrak$. In the following, for every path $X \in C( \mathbb{R}^+, \afrak )$ we want to introduce $B$-valued processes that are solution of a certain differential equation driven by $X$. The differential equation can be understood as being formal if $X$ fails to be regular enough so that the differential equation has a meaning.

One can also note that all the algebraic operations on group elements can be interpreted as matrix operations in any finite dimensional representation of the group $G$.\\

Let $\left( B_t(X) \right)_{ t \in \mathbb{R}^+ }$ be the $B$-valued path, driven by $X$ and solution of the following equation:
\begin{align}
\label{lbl:process_B_ode}
\left\{ \begin{array}{ll}
dB_t(X) = B_t(X) \left( \sum_{\alpha \in \Delta} f_\alpha dt + dX_t\right) \\
B_0(X) = \exp( X_0 )
\end{array} \right.
\end{align}

The following expression is easy to check (\cite{bib:BBO} after transpose) and can be taken as a definition when discarding the smoothness assumption on $X$:
\index{$B_t(\pi)$: $B$-valued path driven by $\pi$}
\begin{thm}
\begin{align}
\label{lbl:process_B_explicit}
B_t(X) & = \left( \sum_{k \geq 0} \sum_{ i_1, \dots, i_k } \int_{ t \geq t_k \geq \dots \geq t_1 \geq 0} e^{ -\alpha_{i_1}(X_{t_1}) \dots -\alpha_{i_k}(X_{t_k}) } dt_1 \dots dt_k f_{i_1} \cdot f_{i_2} \dots f_{i_k} \right) e^{X_t}
\end{align} 
\end{thm}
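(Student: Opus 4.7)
The plan is to factor $B_t(X)$ as a product of a nilpotent part and a toric part, then solve the resulting linear ODE for the nilpotent part by Picard iteration, which yields precisely the iterated integral expression.

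First I would write $B_t(X) = N_t(X) e^{X_t}$, seeking $N_t(X) \in N$. Differentiating the product, using that $X_t \in \afrak \subset \hfrak$ is abelian so $e^{X_t}$ and $dX_t$ commute, we get
$$ dB_t = dN_t \cdot e^{X_t} + N_t \cdot e^{X_t} dX_t . $$
Substituting into the ODE $dB_t = B_t\left(\sum_\alpha f_\alpha dt + dX_t\right)$ and cancelling the common $N_t e^{X_t} dX_t$ term on both sides, one obtains
$$ dN_t = N_t \cdot e^{X_t}\!\left(\sum_\alpha f_\alpha\right)\!e^{-X_t}\, dt = N_t \left( \sum_{\alpha \in \Delta} e^{-\alpha(X_t)} f_\alpha \right) dt , $$
where the key step uses the commutation relation $e^{h} f_\alpha e^{-h} = e^{-\alpha(h)} f_\alpha$ for $h \in \hfrak$, which follows from $[h, f_\alpha] = -\alpha(h) f_\alpha$. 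The initial condition $B_0(X) = e^{X_0}$ forces $N_0(X) = e$, the group identity, and the fact that the right-hand side of the ODE lies in $\nfrak$ guarantees that the solution stays inside $N$.

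Next I would solve the linear ODE for $N_t$ by standard Picard iteration. Denoting $V_t = \sum_\alpha e^{-\alpha(X_t)} f_\alpha$, the solution is given by the time-ordered exponential (Chen series)
$$ N_t(X) = \sum_{k \geq 0} \int_{0 \leq t_1 \leq \dots \leq t_k \leq t} V_{t_1} V_{t_2} \cdots V_{t_k}\, dt_1 \cdots dt_k . $$
Expanding each $V_{t_j}$ as a sum over simple roots yields exactly
$$ N_t(X) = \sum_{k \geq 0} \sum_{i_1, \dots, i_k} \int_{0 \leq t_1 \leq \dots \leq t_k \leq t} e^{-\alpha_{i_1}(X_{t_1})} \cdots e^{-\alpha_{i_k}(X_{t_k})}\, dt_1 \cdots dt_k\; f_{i_1} \cdots f_{i_k} , $$
and right-multiplying by $e^{X_t}$ recovers the announced formula for $B_t(X)$.

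The only point that requires attention is the meaning of convergence of this series, which should be interpreted representation-wise: after fixing any finite-dimensional representation $\rho: G \to GL(V)$, the operators $\rho(f_{i_1}) \cdots \rho(f_{i_k})$ vanish for $k$ larger than the nilpotency index of $\rho(\nfrak)$, so the series is in fact a finite sum of integrals of bounded measurable functions of $X$. This is precisely the observation that allows the formula to serve as a \emph{definition} of $B_t(X)$ for merely continuous (non-differentiable) driving paths $X$, bypassing the need to make sense of the original ODE as a Stratonovich or Young equation. The main conceptual point is really the factorization $B_t = N_t e^{X_t}$, which linearizes the equation; everything else is a standard Chen–Picard expansion.
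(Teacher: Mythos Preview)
Your proof is correct and is exactly the approach the paper intends: the text merely says the formula ``is easy to check'' and cites \cite{bib:BBO}, then immediately afterwards records as a lemma the very ODE $dN_t = N_t\bigl(\sum_\alpha e^{-\alpha(X_t)} f_\alpha\,dt\bigr)$ that you derive, with the same proof via the factorization $B_t = N_t A_t$ and the torus commutation relation. Your Picard/Chen expansion and your remark on termination in any finite-dimensional representation are precisely the missing details behind ``easy to check.''
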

By convention, the term for $k=0$ is the identity element. Also, later, we will take $X$ to be a semi-martingale and view equation \ref{lbl:process_B_ode} as a stochastic differential equation (SDE) written in Stratonovich convention.\\

When $X$ is differentiable, equation \ref{lbl:process_B_ode} has to be understood the following way. In any finite dimensional group representation $V$, $B_.(X)$ is viewed as $GL(V)$-valued function of the time parameter:
$$ B_.(X): \R_+ \longrightarrow GL(V)$$
It is the solution of the system of ordinary differential equations written in matrix form as: 
\begin{align}
\left\{ \begin{array}{ll}
\frac{dB(X)}{dt}(t) = B_t(X) \left( \sum_{\alpha \in \Delta} f_\alpha + \frac{dX}{dt}(t)\right) \\
B_0(X) = \exp( X_0 )
\end{array} \right.
\end{align}

\begin{example}[$A_1$-type]
In the case of $SL_2$:
$$dB_t(X) = B_t(X) \begin{pmatrix} dX_t & 0\\ dt & -dX_t \end{pmatrix}  $$
Solving the differential equation leads to:
$$ B_t(X) = \begin{pmatrix} e^{X_t} & 0 \\ e^{X_t} \int_0^t e^{-2X_s}ds & e^{-X_t} \end{pmatrix} $$
\end{example}
\begin{example}[$A_2$-type]
For the canonical representation of $SL_3$, $\afrak = \{ x \in \mathbb{R}^3 | x_1 + x_2 + x_3 = 0 \}$:
$$dB_t(X) = B_t(X) \begin{pmatrix} dX^1_t & 0 & 0\\ dt & dX^2_t & 0 \\ 0 & dt & dX^3_t \end{pmatrix} $$
Solving the differential equation leads to:
$$ B_t(X) = \begin{pmatrix} e^{X^1_t}                                                               & 0                                        & 0
                         \\ e^{X^1_t} \int_0^t e^{-\alpha_1(X_s)}                                   & e^{X^2_t}                                & 0
                         \\ e^{X^1_t} \int_0^t e^{-\alpha_1(X_s) }ds \int_0^s e^{-\alpha_2(X_u) }du & e^{X^2_t} \int_0^t e^{-\alpha_2(X_s) }ds & e^{X^3_t} \end{pmatrix} $$
where $\left( \alpha_1 = (1,-1, 0), \alpha_2 = (0, 1, -1) \right) $ are the simple roots.
\end{example}

Now define $\left( A_t(X) \right)_{ t \in \mathbb{R}^+ }$ and $\left( N_t(X) \right)_{ t \in \mathbb{R}^+ }$ via the $NA$ decomposition of $B_.(X) = N_.(X) A_.(X)$:
\begin{align}
\label{lbl:process_A_explicit}
A_t(X) & = e^{X_t}
\end{align}
\begin{align}
\label{lbl:process_N_explicit}
N_t(X) & = \sum_{k \geq 0} \sum_{ i_1, \dots, i_k } \int_{ t \geq t_k \geq \dots \geq t_1 \geq 0 } e^{ -\alpha_{i_1}(X_{t_1}) \dots -\alpha_{i_k}(X_{t_k}) } f_{i_1} \cdot f_{i_2} \dots f_{i_k} dt_1 \dots dt_k
\end{align}

\begin{lemma}
$A_.(X)$ and $N_.(X)$ are solution of the following equations:
\begin{align}
\label{lbl:process_A_ode}
dA_t(X) = A_t(X) dX_t, & \quad A_0(X) = \exp( X_0 )
\end{align}
\begin{align}
\label{lbl:process_N_ode}
dN_t(X) = N_t(X) \left( \sum_{\alpha \in \Delta} e^{ -\alpha\left(X_t\right) }f_\alpha dt \right), & \quad N_0(X) = id
\end{align}
\end{lemma}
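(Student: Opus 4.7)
The plan is to derive both equations from the $NA$-decomposition $B_t(X) = N_t(X) A_t(X)$ combined with the defining equation (\ref{lbl:process_B_ode}) for $B_t(X)$, using the Leibniz rule (valid in Stratonovich calculus, and trivially so when $X$ is smooth enough for classical calculus).

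First, for (\ref{lbl:process_A_ode}): since $A_t(X) = \exp(X_t)$ by (\ref{lbl:process_A_explicit}) lives in the abelian group $A = \exp(\afrak)$, no commutator corrections arise when differentiating $t \mapsto \exp(X_t)$, giving directly $dA_t = A_t \, dX_t$ with $A_0 = \exp(X_0)$. This step is essentially trivial and uses only that $X$ takes values in the commutative Lie algebra $\afrak$.

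Next, for (\ref{lbl:process_N_ode}): I would apply the Leibniz rule to $B_t = N_t A_t$ to obtain
$$
dB_t = (dN_t)\,A_t + N_t\,dA_t = (dN_t)\,A_t + B_t\,dX_t.
$$
Matching this against (\ref{lbl:process_B_ode}) yields $(dN_t)\,A_t = B_t\bigl(\sum_\alpha f_\alpha\bigr) dt$, hence
$$
dN_t = N_t \, \mathrm{Ad}(A_t)\!\left(\sum_\alpha f_\alpha\right) dt.
$$
The key algebraic input is the identity $\mathrm{Ad}(e^{X_t})(f_\alpha) = e^{-\alpha(X_t)} f_\alpha$, which is the Lie-algebra shadow of the group commutation relation (\ref{lbl:comm2}) (equivalently, $\mathrm{Ad}(e^h) = e^{\mathrm{ad}(h)}$ together with $[h, f_\alpha] = -\alpha(h) f_\alpha$). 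Substituting reproduces exactly (\ref{lbl:process_N_ode}), with the initial value $N_0 = \mathrm{id}$ read off from (\ref{lbl:process_N_explicit}).

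The only (minor) point requiring attention is to ensure that $N_t(X)$ genuinely lies in the unipotent group $N$ at all times, so that the Gauss factorization $B_t = N_t A_t$ is well-defined along the path. This is immediate from (\ref{lbl:process_N_explicit}): the series only involves products of Chevalley generators $f_\alpha \in \nfrak$, which in any faithful finite-dimensional representation produces a lower-unipotent matrix. Alternatively, one observes that the vector field $\sum_\alpha e^{-\alpha(X_t)} f_\alpha$ driving the ODE for $N_t$ is tangent to $N$, so the flow stays in $N$. This is the main conceptual obstacle, but once it is noted, the derivation above is entirely formal.
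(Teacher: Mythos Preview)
Your proof is correct and follows essentially the same route as the paper: both treat the $A$-part as trivial, then apply the Leibniz rule to $B_t = N_t A_t$, match against (\ref{lbl:process_B_ode}), and finish via the $\mathrm{Ad}$-action of the torus on the Chevalley generators. Your additional remark on why $N_t$ stays in $N$ is a nice clarification that the paper leaves implicit.
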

\begin{proof}
It is quite trivial for the $A$-part. Then, since $B_t(X) = N_t(X) A_t(X)$, we have by differentiation (Stratonovich differentiation rule in the stochastic case):
\begin{align*}
   & \ dN_t(X) A_t(X) + N_t(X) dA_t(X) = B_t(X) \left(\sum_{\alpha \in \Delta} f_\alpha dt + dX_t\right) \\
\Leftrightarrow & \ dN_t(X) A_t(X) + N_t(X) A_t(X) dX_t = N_t(X) A_t(X) \left(\sum_{\alpha \in \Delta} f_\alpha dt + dX_t\right) \\
\Leftrightarrow & \ dN_t(X) A_t(X) = N_t(X) A_t(X) \left(\sum_{\alpha \in \Delta} f_\alpha dt\right) \\
\Leftrightarrow & \ dN_t(X) = N_t(X) A_t(X) \left(\sum_{\alpha \in \Delta} f_\alpha dt\right) A_t(X)^{-1}\\
\Leftrightarrow & \ dN_t(X) = N_t(X) \left(\sum_{\alpha \in \Delta} e^{-\alpha(X_t)} f_\alpha dt\right)
\end{align*}
The last step uses the $Ad$ action of the torus on the Chevalley generators.
\end{proof}

\subsection{Group considerations}

Morally speaking, $B_.(X)$ is obtained by infinitesimal increments that are totally non-negative. Therefore, as totally non-negative  matrices form a semigroup, the following theorem is no surprise. For the convenience of the reader, we recall the proof.
\begin{thm}[\cite{bib:BBO}, lemma 3.4 - Total positivity of the flow $B_.$]
\label{thm:flow_B_total_positivity}
Let $X \in C( \mathbb{R}^+, \afrak )$. Then for all $t \geq 0$, $B_t(X)$ is totally non-negative. More precisely:
$$ \forall t>0, B_t(X) \in N^{w_0}_{>0} A$$
\end{thm}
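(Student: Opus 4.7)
The plan is to realize $B_t(X)$ as a limit of manifestly totally non-negative products, exploiting Lusztig's semigroup description of $G_{\geq 0}$ as being generated by $A$ together with the one-parameter subgroups $y_\alpha(s),\, x_\alpha(s)$ for $s > 0$. Since the flow $B_.(X)$ depends continuously on $X$ in the sup-norm topology on compacts (standard ODE dependence applied inside any faithful finite-dimensional representation of $G$, using equation \eqref{lbl:process_B_ode}) and $G_{\geq 0}$ is closed, one may first reduce to the case where $X$ is $C^1$ or even piecewise linear.

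For such a regular $X$, I would use a fine partition $0 = t_0 < t_1 < \cdots < t_n = t$ and write, with $\Delta t_k = t_{k+1}-t_k$, $\Delta X_k = X_{t_{k+1}}-X_{t_k}$, the Euler product
$$
B_t(X) \;=\; e^{X_0}\cdot\lim_{n\to\infty}\prod_{k=0}^{n-1} \exp\!\Bigl(\Delta t_k\,\textstyle\sum_{\alpha\in\Delta} f_\alpha + \Delta X_k\Bigr).
$$
Because the Cartan displacement $\Delta X_k \in \afrak$ does not commute with the nilpotent piece $\sum_\alpha f_\alpha$, I would then apply Trotter's formula inside each factor:
$$
\exp\!\Bigl(\Delta t_k \textstyle\sum_\alpha f_\alpha + \Delta X_k\Bigr)
\;=\; \lim_{m\to\infty}\Bigl(\,e^{\Delta X_k / m}\,\prod_{\alpha\in\Delta} y_\alpha(\Delta t_k/m)\Bigr)^{m}.
$$
Each factor $e^{\Delta X_k/m}$ lies in $A = H_{>0}$ and each $y_\alpha(\Delta t_k/m)$ with $\Delta t_k > 0$ lies in $N_{\geq 0}$ by the parametrization of theorem \ref{thm:totally_positive_parametrizations}. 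Since $G_{\geq 0}$ is a semigroup, the products are totally non-negative, and taking the successive limits (closedness of $G_{\geq 0}$) yields $B_t(X) \in G_{\geq 0}$. Combined with the Gauss decomposition $B_t(X) = N_t(X)\,e^{X_t}$ and theorem \ref{thm:totally_positive_gauss_decomposition}, this gives $N_t(X) \in N_{\geq 0}$.

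To upgrade this to $N_t(X) \in N^{w_0}_{>0}$ for $t>0$, I would invoke the total positivity criterion in theorem \ref{thm:total_positivity_criterion}: it suffices to show $\Delta_{w\omega_\alpha,\,\omega_\alpha}(N_t(X)) > 0$ for every $w \in W$ and every $\alpha \in \Delta$. From the explicit series \eqref{lbl:process_N_explicit}, each such generalized minor is an integral, over an ordered simplex $\{0 \le t_1 \le \cdots \le t_k \le t\}$, of the strictly positive weight $\exp(-\sum_j \alpha_{i_j}(X_{t_j}))$ against the matrix coefficient $\langle f_{i_1}\cdots f_{i_k}\, v_{\omega_\alpha},\, \bar w\, v_{\omega_\alpha}\rangle$; summing over all sequences $(i_1,\dots,i_k)$ and using that the coefficients all have the same sign (in the Chevalley $\Z$-form of $V(\omega_\alpha)$ the vectors $f_{i_1}\cdots f_{i_k}\, v_{\omega_\alpha}$ expand with non-negative integer coefficients in a weight basis), it suffices to produce one sequence with a non-vanishing contribution. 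Choosing $(i_1,\dots,i_k)$ to be a reduced word for $w$ of length $k = \ell(w)$ makes $f_{i_1}\cdots f_{i_k}\, v_{\omega_\alpha}$ a non-zero extremal vector of weight $w\omega_\alpha$, so the coefficient is non-zero, and the integral is strictly positive because $t>0$.

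The main obstacle, as I see it, is precisely this last non-vanishing step: everything else is a soft semigroup-closure argument, but the strict positivity of all the generalized minors $\Delta_{w\omega_\alpha,\omega_\alpha}(N_t(X))$ needs genuinely representation-theoretic input (extremal weight vectors in fundamental representations) in order to exhibit the right sequence of Chevalley generators for each pair $(w,\alpha)$. Once this is done, theorem \ref{thm:total_positivity_criterion} closes the argument and delivers $B_t(X) \in N^{w_0}_{>0}\, A$ for every $t>0$.
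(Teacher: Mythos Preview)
Your Trotter-product approach to $B_t(X)\in G_{\geq 0}$ is a valid alternative to the paper's route, though somewhat redundant: the paper goes directly to the minor expansion and obtains non-negativity and strict positivity in one stroke, citing lemma~7.4 of \cite{bib:BZ01} for the non-negativity of each coefficient $\langle f_{i_1}\cdots f_{i_k}v_{\omega_\alpha},\,\bar w\, v_{\omega_\alpha}\rangle$. Your Chevalley-form remark gestures at this but does not quite pin down why $\bar w\, v_{\omega_\alpha}$ itself sits positively in the basis, which is part of what that lemma provides.

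The genuine gap is in your non-vanishing step. You claim that taking $(i_1,\dots,i_k)$ to be a reduced word for $w$ makes $f_{i_1}\cdots f_{i_k}\,v_{\omega_\alpha}$ a non-zero vector of weight $w\omega_\alpha$. This is false already in $SL_3$: with $w=s_1s_2$ and the standard representation $V(\omega_1)=\C^3$, the reduced word $(1,2)$ gives $f_1 f_2\, v_{\omega_1}=f_1\cdot 0=0$, and in any case the weight $\omega_\alpha-\sum_j\alpha_{i_j}$ is not $w\omega_\alpha$ in general (here $\omega_1-\alpha_1-\alpha_2\neq s_1s_2\,\omega_1=\omega_1-\alpha_1$). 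The correct argument, which is what the paper gives, is pure existence: the extremal weight space $V(\omega_\alpha)_{w\omega_\alpha}$ is one-dimensional and lies in $\Uc(\nfrak)\cdot v_{\omega_\alpha}$, so \emph{some} monomial $f_{i_1}\cdots f_{i_k}$ (with $\sum_j\alpha_{i_j}=\omega_\alpha-w\omega_\alpha$, but not tied to any reduced word of $w$) sends $v_{\omega_\alpha}$ to a non-zero multiple of $\bar w\, v_{\omega_\alpha}$. That single non-zero term, integrated over a simplex of positive volume when $t>0$, gives strict positivity of the minor.
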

\begin{proof}
For $t=0$, $B_0(X) = e^{X_0} \in A$ which is totally non-negative.\\
For $t>0$, clearly we need to prove that $N_t(X) \in N^{w_0}_{>0}$ or equivalently, thanks to theorem \ref{thm:total_positivity_criterion} that all minors $\Delta_{ w \omega_i, \omega_i }\left( N_t(X) \right), 1 \leq i \leq n, w \in W$ are positive:
\begin{align*}
  & \Delta_{ w \omega_i, \omega_i }\left( N_t(X) \right)\\
= & \langle N_t(X) v_{\omega_i}, \bar{w} v_{\omega_i} \rangle\\
= & \sum_{k \geq 0} \sum_{ i_1, \dots, i_k } \int_{ t \geq t_k \geq \dots \geq t_1 \geq 0} e^{ -\alpha_{i_1}(X_{t_1}) \dots -\alpha_{i_k}(X_{t_k}) } dt_1 \dots dt_k \\
  & \ \ \langle f_{i_1} \cdot f_{i_2} \dots f_{i_k} v_{\omega_i}, \bar{w} v_{\omega_i} \rangle
\end{align*}
Because of lemma 7.4 in \cite{bib:BZ01}, we have that:
$$\forall k \in \N, \forall w \in W, \langle f_{i_1} \cdot f_{i_2} \dots f_{i_k} v_{\omega_i}, \bar{w} v_{\omega_i} \rangle \geq 0$$
and therefore, we have a sum of non-negative terms. In order to see that $\Delta_{ w \omega_i, \omega_i }\left( N_t(X) \right)$ is strictly positive, only one of them needs to be non-zero.\\
As $\bar{w} v_{\omega_i}$ generates the one dimensional weight space $V(\omega_i)_{w \omega_i}$, there is some sequence $i_1, \dots, i_k$ such that $\alpha_{i_1} + \dots + \alpha_{i_k} = \omega_i - w \omega_i$ and
$\bar{w} v_{\omega_i}$ is proportional to $f_{i_1} \cdot f_{i_2} \dots f_{i_k} v_{\omega_i}$. Hence a non-zero scalar product.
\end{proof}

The following path transform will play a fundamental role in the sequel.
\index{$T_g$: Path transform}
\begin{definition}
 When it exists, for $g \in G$ and $X$ a continuous path in $\afrak$, define:
$$ T_g X(t) := \log \left[ g B_t(X)\right]_0$$
The previous expression makes sense when $g B_t(X)$ has a Gauss decomposition and $\left[ g B_t(X)\right]_0 \in A$, in order to be able to consider its logarithm.
\end{definition}

This path transform has the property:
\begin{thm}[\cite{bib:BBO2}, proposition 6.4]
Let $X$ be a continuous path in $\afrak$ and $g \in G$. Assume that $g B_t(X)$ has a Gauss decomposition on an open time interval $J$. Then $[ g B_t(X) ]_{-0}$ solves for $t \in J$:
$$d [ g B_t(X) ]_{-0} = [g B_t(X)]_{-0} \left( \sum_{\alpha} f_\alpha dt + d\left( T_g X \right)_t \right)$$
\end{thm}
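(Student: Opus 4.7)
The plan is to compute the logarithmic derivative of $C_t := [g B_t(X)]_{-0}$ directly and match it against $\sum_\alpha f_\alpha\, dt + dY_t$, where $Y_t := T_g X(t) = \log[g B_t(X)]_0$. Set $M_t := g B_t(X)$ and $U_t := [M_t]_+ \in U$, so that on $J$ the Gauss decomposition reads $M_t = C_t U_t$. Using $dM_t = g\, dB_t(X) = M_t(\sum_\alpha f_\alpha\, dt + dX_t)$ and differentiating $C_t = M_t U_t^{-1}$, I obtain
$$ C_t^{-1} dC_t \;=\; \mathrm{Ad}(U_t)\Bigl(\sum_{\alpha \in \Delta} f_\alpha\, dt + dX_t\Bigr) \;+\; U_t\, dU_t^{-1}. $$
Since $U_t\, dU_t^{-1} \in \ufrak$ while the left-hand side lies in $\bfrak$, projecting onto $\bfrak$ along the decomposition $\gfrak = \bfrak \oplus \ufrak$ gives
$$ C_t^{-1} dC_t \;=\; \pi_\bfrak\Bigl(\mathrm{Ad}(U_t) \textstyle\sum_\alpha f_\alpha\Bigr) dt \;+\; dX_t, $$
because $dX_t \in \afrak \subset \hfrak$ and $\mathrm{Ad}(U_t)\hfrak \subset \hfrak \oplus \ufrak$, so $\pi_\bfrak(\mathrm{Ad}(U_t) dX_t) = dX_t$.

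The $\hfrak$-component of $C_t^{-1}dC_t$ is read off by a second route. Writing the Gauss decomposition of $C_t$ as $C_t = N_t\, e^{Y_t}$ with $N_t = [M_t]_- \in N$, the torus part of $C_t^{-1}dC_t = \mathrm{Ad}(e^{-Y_t})(N_t^{-1}dN_t) + dY_t$ is exactly $dY_t$. Comparing with the previous display yields for free the torus identity $\pi_\hfrak(\mathrm{Ad}(U_t)\sum_\alpha f_\alpha)\, dt = dY_t - dX_t$.

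It remains to handle the $\nfrak$-component, which reduces to establishing the following algebraic lemma, to be the main technical point of the proof: for every \emph{simple} root $\alpha$ and every $u \in U$,
$$ \pi_\nfrak\bigl(\mathrm{Ad}(u) f_\alpha\bigr) \;=\; f_\alpha. $$
I plan to expand $\mathrm{Ad}(e^Z) f_\alpha = \sum_{k \geq 0} \frac{1}{k!}(\mathrm{ad}\, Z)^k f_\alpha$ for $Z \in \ufrak$. Each term $(\mathrm{ad}\,Z)^k f_\alpha$ is a sum of elements lying in root spaces $\gfrak_{-\alpha + \beta_1 + \dots + \beta_k}$ with $\beta_i \in \Phi^+$. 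For such a term to contribute to $\nfrak$ one needs $-\alpha + \sum \beta_i = -\gamma$ for some $\gamma \in \Phi^+$, i.e.\ $\gamma = \alpha - \sum \beta_i \leq \alpha$ in the dominance order on positive roots. Since simple roots are minimal in $\Phi^+$ for this order, this forces $\gamma = \alpha$ and $\sum \beta_i = 0$, hence $k = 0$; only the initial term $f_\alpha$ survives.

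Putting the three projections together, the $\nfrak$-part of $C_t^{-1} dC_t$ is $\sum_\alpha f_\alpha\, dt$, its $\hfrak$-part is $dY_t$, so $C_t^{-1}\, dC_t = \sum_\alpha f_\alpha\, dt + dY_t$, which is the announced equation. The heart of the argument is the simple-root minimality lemma above; everything else is a bookkeeping manipulation of the Gauss decomposition and the Stratonovich chain rule applied to $M_t = C_t U_t$.
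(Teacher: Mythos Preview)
Your proof is correct. The paper does not supply its own proof of this statement—it is simply quoted as Proposition~6.4 of \cite{bib:BBO2}—so there is nothing within the present paper to compare your argument against; your approach (differentiate $M_t = C_t U_t$, project $C_t^{-1}\,dC_t$ onto $\bfrak$ along $\ufrak$, and use minimality of simple roots to pin down the $\nfrak$-component as $\sum_\alpha f_\alpha\,dt$) is clean and self-contained.
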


There are certain sets $D \subset G$ such that for $g \in D$, $g B_t(X), t \geq 0$ has always a Gauss decomposition. The following will play an important role:
$$ D := N \cdot A \cdot U_{\geq 0} $$
\begin{proposition}
For $g \in D$, we have a well-defined path transform:
$$ T_g: C( \mathbb{R}^+, \afrak ) \rightarrow C( \mathbb{R}^+, \afrak )$$
such that for $X \in C( \mathbb{R}^+, \afrak )$, $T_g X$ is the unique path in $\afrak$ such that:
$$ [g B_t(X)]_{-0} = [g]_- B_t(T_g X) $$
\end{proposition}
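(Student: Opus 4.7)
The plan is to exploit total non-negativity to force the Gauss decomposition to exist for all $t \geq 0$, then use the ODE identification from the previous theorem together with uniqueness of ODE solutions.

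First I would establish the existence of the Gauss decomposition globally in $t$. Write $g = n a u$ with $n \in N$, $a \in A$, and crucially $u \in U_{\geq 0}$. By Theorem \ref{thm:flow_B_total_positivity}, $B_t(X) \in N^{w_0}_{>0} A \subset G_{\geq 0}$. Since $G_{\geq 0}$ is a semi-group, $u B_t(X) \in G_{\geq 0}$. Theorem \ref{thm:totally_positive_gauss_decomposition} then furnishes a Gauss decomposition $u B_t(X) = n' a' u'$ with $n' \in N_{\geq 0}$, $a' \in A$, $u' \in U_{\geq 0}$. Now multiplying by $na$ on the left and using that $A$ normalizes both $N$ and $U$ (via the adjoint action and commutation relations (\ref{lbl:comm1}), (\ref{lbl:comm2})):
$$ g B_t(X) = \bigl( n \cdot (a n' a^{-1}) \bigr)\bigl( a a' A_t(X) \bigr) \bigl( A_t(X)^{-1} u' A_t(X) \bigr),$$
with the three factors in $N$, $A$, and $U$ respectively. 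This gives the Gauss decomposition for every $t \geq 0$, and crucially $[g B_t(X)]_0 = a a' A_t(X) \in A$, so $T_g X(t) := \log [g B_t(X)]_0$ is a well-defined element of $\afrak$. Continuity of $T_g X$ in $t$ follows because the Gauss decomposition is a continuous (rational) map on the open Bruhat cell $N H U$ and $t \mapsto B_t(X)$ is continuous.

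Next I would prove the identity $[g B_t(X)]_{-0} = [g]_- B_t(T_g X)$ by an ODE argument. By the theorem quoted just before the proposition, on the (now full) interval where the Gauss decomposition exists, $[g B_t(X)]_{-0}$ solves
$$ dY_t = Y_t \Bigl( \sum_\alpha f_\alpha \, dt + d(T_g X)_t \Bigr).$$
On the other hand, by the defining equation (\ref{lbl:process_B_ode}), $[g]_- B_t(T_g X)$ satisfies the same ODE (left multiplication by the constant $[g]_-$ leaves the equation invariant). Comparing initial values: at $t=0$, $B_0(X) = e^{X_0} \in A$, so using Property \ref{properties:hw} style computations or just the Gauss decomposition of $g e^{X_0} = n a e^{X_0} \cdot e^{-X_0} u e^{X_0}$, we get $[g B_0(X)]_{-0} = n a e^{X_0}$ and $T_g X(0) = \log a + X_0$, while $[g]_- B_0(T_g X) = n e^{T_g X(0)} = n a e^{X_0}$. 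The two initial values agree, so by uniqueness of solutions to this left-invariant ODE on $B$, the two processes coincide for all $t \geq 0$, yielding the claimed identity and simultaneously the uniqueness of $T_g X$ (since $B_t(T_g X)$ determines $T_g X$ via its torus part).

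The only mild obstacle is ensuring the Gauss decomposition really exists for every $t \geq 0$ and not merely on an open set; this is exactly where the positivity assumption $u \in U_{\geq 0}$ enters, and without it (for instance for $u \in U$ arbitrary) one would only get a local statement and $T_g X$ could blow up in finite time. Everything else is bookkeeping: verifying the three Gauss factors land in the right subgroups, the ODE matching, and continuity, all of which follow from standard properties of the Gauss decomposition and the torus action.
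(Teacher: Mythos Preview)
Your proof is correct and follows essentially the same route as the paper: use the semigroup property of $G_{\geq 0}$ together with Theorem \ref{thm:totally_positive_gauss_decomposition} to force the Gauss decomposition to exist for all $t$, then invoke the ODE from the preceding theorem. The paper's proof is much terser and does not spell out the initial-condition check or the uniqueness argument, so your added detail is welcome. One small bookkeeping slip: having set $u B_t(X) = n' a' u'$, you then write $[g B_t(X)]_0 = a a' A_t(X)$, but the factor $A_t(X)$ is already absorbed into $a'$; the correct middle factor is simply $a a'$ (and the right factor is $u'$, not $A_t(X)^{-1} u' A_t(X)$). This does not affect the conclusion that $[g B_t(X)]_0 \in A$.
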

\begin{proof}
We only need to prove that the path transform is well defined. As for all $t \geq 0$, $B_t(X) \in G_{\geq 0}$, we have that $( A U_{\geq 0}) B_t(X) \subset G_{\geq 0}$, since the totally non-negative matrices form a semigroup. Hence $D B_t(X) \subset N G_{\geq 0}$, which is a set whose elements admit a Gauss decomposition (see theorem \ref{thm:totally_positive_gauss_decomposition}).

In order to prove that the equation driving $[g B_t(X)]_{-0}$ is of the required form, use the previous theorem.
\end{proof}

\begin{properties}
\label{lbl:path_transform_properties}
  Let $X$ be a continuous path. Then:\\
\begin{itemize}
 \item[(i)]$\forall g_1, g_2 \in G$ such that $g_1 \in D, g_1 g_2 \in D$, we have:
$$ T_{g_1 g_2} = T_{g_1 [g_2]_{-}} \circ T_{g_2} $$ 
In particular, $\forall u_1, u_2 \in U_{\geq 0}, T_{u_1 u_2} = T_{u_1} \circ T_{u_2}$.
 \item[(ii)]  $\forall g \in D, \forall n \in N, T_{ng} = T_g$
 \item[(iii)] $\forall g \in D, \forall a \in A, T_{a g} X = T_g X + \log a$
 \item[(iv)]  $\forall g \in D, x \in \afrak, T_g( X + x) = T_{ g e^x } (X)$
 \item[(v)]   If $\alpha \in \Delta$ and $g=x_\alpha(\xi), \xi>0$, then:
$$ \left(T_g X \right)_t = X_t + \log\left( 1 + \xi \int_0^t e^{-\alpha( X_s)} ds \right) \alpha^{\vee}$$
\end{itemize}
\end{properties}
\begin{proof}
The proof uses the properties of the Gauss decomposition in equations \ref{lbl:gauss1} and \ref{lbl:gauss2}.

(i) \begin{eqnarray*}
& & [ g_1 g_2 B_t(X) ]_{-0} = [ g_1 [g_2 B_t(X)]_{-0} ]_{-0}\\
\Rightarrow & & [ g_1 g_2]_- B_t(T_{g_1 g_2} X) = [  g_1 [g_2]_- B_t(T_{g_2} X)]_{-0} = [  g_1 [g_2]_- ]_- B_t( T_{ g_1 [g_2]_{-} } \circ T_{g_2} X)\\
\Rightarrow & & B_t(T_{g_1 g_2} X) = B_t( T_{ g_1 [g_2]_{-} } \circ T_{g_2} X)
\end{eqnarray*}

(ii) $[n g B_t(X)]_{-0} = [n g]_{-} B(T_{gn}X)$ by definition. And on the other hand, is it also equal to $n [g B_t(X)]_{-0} = n [g]_- B_t(T_gX)$

(iii) $[agB_t(X)]_{-0} = [ag]_{-} B(T_{a g}X) = a [g]_{-} a^{-1} B(T_{a g}X) $ by definition. And on the other hand, is it also equal to $a[gB_t(X)g]_{-0} = a [g]_- B_t(T_gX)$. 
Then, we have $a^{-1} B_t(T_{ag}(X) = B_t(T_g X)$.

(iv) One can check that $B_t(X+x) = \exp(x) B_t(X)$

(v) Direct computation, using the embedding from $SL_2$ into the closed subgroup of $G$ whose Lie algebra is generated by the $\mathfrak{sl}_2$-triplets $(e_\alpha, f_\alpha, h_\alpha)$. One can also use the lemma in the next subsection.
\end{proof}

\subsection{Extension of the path transform}
Now, looking at property $(v)$, it is natural to expect the path transform $T_{x_\alpha(\xi)}$ to be extended to negative values of $\xi$, although this will depend on the path taken as input. Let us examine first when a Gauss decomposition exists for $x_\alpha(\xi) B_t(X)$, or equivalently $x_\alpha(\xi) N_t(X)$.

\begin{lemma}
For different $\alpha$ and $\beta$ in $\Delta$:
$$ \forall t \geq 0, \Delta^{\omega_\beta}\left( x_\alpha(\xi) N_t(X) \right) = 1$$
$$ \forall t \geq 0, \Delta^{\omega_\alpha}\left( x_\alpha(\xi) N_t(X) \right) = 1 + \xi \int_0^t e^{-\alpha(X_s)} ds$$
\end{lemma}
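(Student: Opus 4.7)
The plan is to use the representation-theoretic expression of principal minors, namely $\Delta^{\omega_\gamma}(g) = \langle g v_{\omega_\gamma}, v_{\omega_\gamma}\rangle$ with $v_{\omega_\gamma}$ a unit highest-weight vector and $\langle\cdot,\cdot\rangle$ an invariant inner product satisfying $\langle xv,w\rangle = \langle v, x^Tw\rangle$. Since $x_\alpha(\xi)^T = y_\alpha(\xi)$, one rewrites
\[
\Delta^{\omega_\gamma}\!\bigl(x_\alpha(\xi)N_t(X)\bigr) = \langle N_t(X)v_{\omega_\gamma},\, y_\alpha(\xi)v_{\omega_\gamma}\rangle.
\]
So the first step is to evaluate $y_\alpha(\xi)v_{\omega_\gamma}$ in the appropriate fundamental representation.

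For $\gamma = \beta \neq \alpha$, the fact that $\omega_\beta(\alpha^\vee) = 0$ together with highest-weight considerations forces $f_\alpha v_{\omega_\beta} = 0$, so $y_\alpha(\xi)v_{\omega_\beta} = v_{\omega_\beta}$. Hence
\[
\Delta^{\omega_\beta}\!\bigl(x_\alpha(\xi)N_t(X)\bigr) = \langle N_t(X)v_{\omega_\beta}, v_{\omega_\beta}\rangle = \Delta^{\omega_\beta}(N_t(X)) = 1,
\]
the last equality because $N_t(X)\in N$ is lower unipotent and therefore fixes $v_{\omega_\beta}$ modulo strictly lower weight spaces orthogonal to $v_{\omega_\beta}$.

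For $\gamma = \alpha$, one has $\omega_\alpha(\alpha^\vee) = 1$, so $f_\alpha^2 v_{\omega_\alpha} = 0$ and $y_\alpha(\xi)v_{\omega_\alpha} = v_{\omega_\alpha} + \xi f_\alpha v_{\omega_\alpha}$. Thus
\[
\Delta^{\omega_\alpha}\!\bigl(x_\alpha(\xi)N_t(X)\bigr) = 1 + \xi\,\langle N_t(X)v_{\omega_\alpha},\, f_\alpha v_{\omega_\alpha}\rangle.
\]
The decisive observation is then weight-theoretic: in the explicit expansion \eqref{lbl:process_N_explicit} of $N_t(X)$, a summand indexed by $(i_1,\dots,i_k)$ sends $v_{\omega_\alpha}$ into the weight space of weight $\omega_\alpha - \sum_j \alpha_{i_j}$. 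Since $f_\alpha v_{\omega_\alpha}$ has weight $\omega_\alpha - \alpha$ and the simple roots are linearly independent over $\mathbb{Z}_{\geq 0}$, only the single term $k=1$, $i_1 = \alpha$ contributes. This collapses the expansion to
\[
\langle N_t(X)v_{\omega_\alpha}, f_\alpha v_{\omega_\alpha}\rangle = \int_0^t e^{-\alpha(X_s)}\,ds\;\|f_\alpha v_{\omega_\alpha}\|^2.
\]

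The final step is to check the normalization. Using the $\mathfrak{sl}_2$-triple relation $[e_\alpha,f_\alpha]=h_\alpha=\alpha^\vee$ and the fact that $e_\alpha v_{\omega_\alpha}=0$,
\[
\|f_\alpha v_{\omega_\alpha}\|^2 = \langle v_{\omega_\alpha}, e_\alpha f_\alpha v_{\omega_\alpha}\rangle = \langle v_{\omega_\alpha}, h_\alpha v_{\omega_\alpha}\rangle = \omega_\alpha(\alpha^\vee) = 1.
\]
Combining gives $\Delta^{\omega_\alpha}(x_\alpha(\xi)N_t(X)) = 1 + \xi\int_0^t e^{-\alpha(X_s)}ds$. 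The only subtlety worth flagging is the bookkeeping of the inner product convention (ensuring $x_\alpha^T = y_\alpha$ with unit-length highest-weight vectors); once the conventions are fixed, both identities follow from the representation-theoretic description of minors plus the elementary weight argument above.
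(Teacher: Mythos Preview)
Your proof is correct and uses essentially the same machinery as the paper's: the representation-theoretic expression of principal minors, weight-space orthogonality applied to the explicit expansion \eqref{lbl:process_N_explicit} of $N_t(X)$, and the $\mathfrak{sl}_2$ computation $e_\alpha f_\alpha v_{\omega_\alpha} = h_\alpha v_{\omega_\alpha} = v_{\omega_\alpha}$ for the normalization.

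There are two small organizational differences. First, you move $x_\alpha(\xi)$ to the right-hand slot via the transpose, obtaining $y_\alpha(\xi)v_{\omega_\gamma}$, which truncates immediately (to order $0$ or $1$ in $\xi$) by the $\mathfrak{sl}_2$-string argument; the paper instead keeps $x_\alpha(\xi)$ on the left and expands $e^{\xi e_\alpha}$ as a power series, then argues that the double sum over $n$ and $k$ collapses by weight considerations and by $f_\alpha^2 v_{\omega_\alpha}=0$. Your version is marginally cleaner since the truncation happens before the weight bookkeeping rather than being entangled with it. Second, for the first identity the paper simply cites \cite{bib:FZ99}, whereas you give the self-contained one-line argument $f_\alpha v_{\omega_\beta}=0$ (from $\omega_\beta(\alpha^\vee)=0$), which is a nice bonus.
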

\begin{proof}
The first identity is a consequence of proposition 2.2 in \cite{bib:FZ99}. For the second, we start by using equation \ref{lbl:process_N_explicit} and work with the highest weight representation $V(\omega_\alpha)$. We have:
\begin{align*}
  & \Delta^{\omega_\alpha}\left( x_\alpha(\xi) N_t(X) \right) \\
= & \langle \exp(\xi e_\alpha) N_t(X) v_{\omega_\alpha}, v_{\omega_\alpha} \rangle\\
= & \sum_{k \geq 0} \sum_{i_1, i_2, \dots, i_k} \int_{t\geq t_1 \geq \dots \geq t_k \geq 0} e^{ -\alpha_{i_1}(X_{t_1}) - \dots \alpha_{i_k}(X_{t_k})} dt_1 \dots dt_k \\
  & \ \ \langle \exp( \xi e_\alpha) f_{i_1} \dots f_{i_k}  v_{\omega_\alpha}, v_{\omega_\alpha} \rangle
\end{align*}
Hence, as we will write the expansion $e^{\xi e_\alpha} = \sum_{n \in \N} \frac{\xi^n}{n!} e_\alpha^n$, we need to consider vectors of the form:
$$e^n_\alpha f_{i_1} \dots f_{i_k} v_{\omega_\alpha}$$
Now notice that:
$$ \forall n \in \N, \forall k \in \N, e^n_\alpha f_{i_1} \dots f_{i_k} v_{\omega_\alpha} \in V(\omega_\alpha)_\mu$$
where $V(\omega_\alpha)_\mu$ is the weight space in $V(\omega_\alpha)$ corresponding to the weight
$$ \mu = \omega_\alpha + n \alpha - \sum_{j=1}^k \alpha_{i_j}$$
Weight spaces in $V(\omega_\alpha)$ corresponding to different weights are orthogonal under the invariant scalar product $\langle., . \rangle$. Therefore, if $k \neq n$ or there is a $j$ such that $\alpha_{i_j} \neq \alpha$, we have:
$$ \langle e^n_\alpha f_{i_1} \dots f_{i_k} v_{\omega_\alpha}, v_{\omega_\alpha} \rangle = 0$$
Moreover, in the representation  $V(\omega_\alpha)$, we have:
$$ \forall n\geq 2, f_\alpha^n v_{\omega_\alpha} = 0$$
Therefore, most terms are zero:
\begin{align*}
  & \Delta^{\omega_\alpha}\left( x_\alpha(\xi) N_t(X) \right) \\
= & \langle v_{\omega_\alpha}, v_{\omega_\alpha} \rangle + \xi \langle e_\alpha f_\alpha v_{\omega_\alpha}, v_{\omega_\alpha} \rangle \int_0^t e^{-\alpha(X_s)}ds\\
= & 1 + \xi \int_0^t e^{-\alpha(X_s)}ds
\end{align*}
The last equality is due to the fact that:
\begin{align*}
  & e_\alpha f_\alpha v_{\omega_\alpha}\\
= & [e_\alpha, f_\alpha] v_{\omega_\alpha} + f_\alpha e_\alpha v_{\omega_\alpha}\\
= & h_\alpha v_{\omega_\alpha}\\
= & v_{\omega_\alpha}
\end{align*}
\end{proof}

This lemma encourages us to consider paths only up to a certain horizon $T$ and when applying $T_{x_\alpha(\xi)}$ to $X \in C\left([0; T], \afrak\right)$, one can only take $\xi \in (-\frac{1}{\int_0^T e^{-\alpha(X)}}; +\infty)$. This can be summarized in the following proposition.
\begin{proposition}
 For every $\alpha \in \Delta$ and $c \in \R$, there is a path transform:
$$ e^c_\alpha: C\left([0; T], \afrak\right) \longrightarrow C\left([0; T], \afrak\right)$$
such that for $X \in C\left([0; T], \afrak\right)$:
$$ e^c_\alpha \cdot X = T_{x_\alpha\left(\frac{e^c-1}{\int_0^T e^{-\alpha(X)}}\right) }\left( X \right) $$
\end{proposition}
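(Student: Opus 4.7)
The plan is to verify that, for the specific value $\xi = \xi(c,X) := \frac{e^c-1}{\int_0^T e^{-\alpha(X_s)}ds}$, the group element $x_\alpha(\xi) B_t(X)$ admits a Gauss decomposition for every $t \in [0,T]$, so that $T_{x_\alpha(\xi)}X$ is a well-defined element of $C([0,T],\afrak)$. Once this is settled, the announced formula defines the path transform unambiguously, and $e^c_\alpha \cdot X$ depends continuously on $X$ since all involved operations (integration, Gauss projection onto the torus and its logarithm) are continuous on the locus where the Gauss decomposition exists.

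The criterion for existence of the Gauss decomposition of $x_\alpha(\xi)B_t(X)$ is that all principal minors $\Delta^{\omega_\beta}$ be nonzero, and it suffices to check this on $x_\alpha(\xi)N_t(X)$ since the torus part $A_t(X) = e^{X_t}$ only scales them. The lemma just established gives
\[
\Delta^{\omega_\beta}\bigl(x_\alpha(\xi)N_t(X)\bigr) = 1 \quad (\beta \neq \alpha),\qquad
\Delta^{\omega_\alpha}\bigl(x_\alpha(\xi)N_t(X)\bigr) = 1 + \xi \int_0^t e^{-\alpha(X_s)}\,ds,
\]
so the only minor that can degenerate is the $\omega_\alpha$ one. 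Substituting $\xi = \xi(c,X)$, the critical quantity becomes
\[
1 + (e^c - 1)\,\frac{\int_0^t e^{-\alpha(X_s)}ds}{\int_0^T e^{-\alpha(X_s)}ds}.
\]
The ratio $r_t := \int_0^t e^{-\alpha(X)} / \int_0^T e^{-\alpha(X)}$ ranges monotonically in $[0,1]$, so the above expression equals $1 + (e^c-1)r_t$. For $c \geq 0$ this is bounded below by $1$; for $c < 0$ it lies in the interval $[e^c, 1]$, with its infimum $e^c > 0$ attained at $t=T$. In both cases it stays strictly positive on $[0,T]$, giving the required Gauss decomposition.

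The main (and only) point to be careful about was the sign analysis for $c < 0$; the normalisation of $\xi$ by $\int_0^T e^{-\alpha(X)}$ is precisely what forces the minor to stay $\geq e^{c \wedge 0} > 0$ uniformly in $t$. Having done this, one reads off from properties \ref{lbl:path_transform_properties}(v) that
\[
(e^c_\alpha \cdot X)_t = X_t + \log\!\left(1 + (e^c-1)\,\frac{\int_0^t e^{-\alpha(X_s)}ds}{\int_0^T e^{-\alpha(X_s)}ds}\right)\alpha^\vee,
\]
which is manifestly continuous in $t$ and in $X$, confirming that $e^c_\alpha$ maps $C([0,T],\afrak)$ into itself.
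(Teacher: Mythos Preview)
Your proof is correct and follows exactly the approach the paper has in mind: the paper simply observes, from the preceding lemma on minors, that $T_{x_\alpha(\xi)}$ is well defined on $C([0,T],\afrak)$ precisely when $\xi \in (-\frac{1}{\int_0^T e^{-\alpha(X)}},+\infty)$, and since $e^c-1>-1$ for all $c\in\R$ the specific value $\xi = \frac{e^c-1}{\int_0^T e^{-\alpha(X)}}$ always lies in this range. Your explicit sign analysis $1+(e^c-1)r_t \geq e^{c\wedge 0}>0$ is a clean way of unpacking this same observation.
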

This path transform is in fact the corner stone of the geometric path model we will now present. For instance, as we will see, we have:
$$ e^{c+c'}_\alpha = e^{c}_\alpha e^{c'}_\alpha$$
It is in fact the geometric lifting of the Littelmann operators.

\section{The geometric path model}
In this section, we define a continuous family of Littelmann path models depending on a parameter $q$, as well as $q$-tensor product. For $q \rightarrow 0$, we recover the continuous 'frozen' setting presented at the beginning of \cite{bib:BBO2}. Since all $q$-Littelmann models for $q>0$ are equivalent in certain sense, our study will focus on the $q=1$ case and prove that tensor product of crystals is given by the concatenation of their elements.\\

The path transforms described in the previous section naturally appear as the building blocks for the Littelmann operators $e^c_\alpha$. We will also benefit from the construction by Berenstein and Kazhdan (\cite{bib:BK00, bib:BK04, bib:BK06}) while exhibiting Verma relations and finally we show that a simple projection exists between the path model and the group picture $\Bc$.

\subsection{Path models}

\subsubsection{Definitions }
Let $C\left( [0; T], \afrak \right)$ be set of $\afrak$-valued continuous functions on $[0, T]$. Its elements are loosely referred to as paths in $\afrak$. We call a 'model' a candidate for becoming a crystal. Hence a path model will be a set of paths endowed with structure maps. The subscript $0$ will indicate that they are starting at zero.

\begin{definition}
\label{def:path_model}
A path crystal $L$ is a subset $C_0\left( [0; T], \afrak \right)$, where $\afrak$ is the real Cartan subalgebra, endowed with maps $\gamma$, $\varepsilon_\alpha$, $\varphi_\alpha$ and actions $\left(e^._\alpha\right)_{ \alpha \in \Delta }$ such that
\begin{itemize}
 \item The 'weight' map $\gamma: L \rightarrow \afrak$ gives the endpoint:
       $$ \gamma(\pi) = \pi(T) $$
 \item $L$ is an abstract geometric crystal as in definition \ref{def:crystal}.
\end{itemize}
\end{definition}

\subsubsection{Duality}
Define the duality map $\iota: C_0\left( [0; T], \afrak \right) \rightarrow C_0\left( [0; T], \afrak \right)$ that associates to each path $\pi$ its dual $\pi^\iota$. It is defined as $$\pi^\iota(t) = \pi(T-t) - \pi(T)$$
A crystal structure is said to behave well with respect to duality if:
$$e^{-c}_\alpha = \iota \circ e^c_\alpha \circ \iota$$
$$\varphi_\alpha = \varepsilon_\alpha \circ \iota $$

\subsubsection{Continuous q-Littelmann model}
Here we define a family of path models indexed by $q$, a parameter that can be understood as temperature. When $q=0$, we recover the continuous path model introduced and studied in \cite{bib:BBO2} as the continuous counterpart of Littelmann's path model (\cite{bib:Littelmann95} \cite{bib:Littelmann97}).

\paragraph{Models:}
When $q>0$, a continuous $q$-Littelmann model is a subset $L$ of $C_0\left( [0; T], \afrak \right)$ endowed with the structure $L_q = \left( \gamma, \left( \varepsilon_\alpha, \varphi_\alpha, e^._\alpha \right)_{\alpha \in \Delta} \right)$: 
\begin{itemize}
 \item The 'weight' map $\gamma: L \rightarrow \afrak$ is the endpoint:
       $$ \gamma(\pi) = \pi(T) $$
 \item $\varepsilon_\alpha, \varphi_\alpha: L \rightarrow \R$ defined for every $\alpha \in \Delta$ as
$$\varepsilon_\alpha( \pi ) := q \log\left( \int_0^T e^{ -q^{-1} \alpha(\pi(s)) }ds \right)$$
$$\varphi_\alpha( \pi ) := \alpha\left( \pi(T) \right) + q \log\left( \int_0^T e^{-q^{-1} \alpha(\pi(s))}ds \right)$$
 \item $e^c_\alpha: L \rightarrow L$, $c \in \R$, $\alpha \in \Delta$ defined as
$$ e^c_\alpha \cdot \pi(t) := \pi(t) + q \log\left( 1 + \frac{ e^{q^{-1} c} - 1 }{e^{q^{-1}\varepsilon_\alpha(\pi)}}\int_0^t e^{ -q^{-1} \alpha(\pi(s)) }ds \right) \alpha^\vee $$
\end{itemize}
When $q=0$, we take as defining axioms the limit $q \rightarrow 0$:
$$\varepsilon_\alpha\left( \pi \right) = -\inf_{0 \leq s \leq T} \alpha\left( \pi(s) \right) $$
$$\varphi_\alpha\left( \pi \right) = \alpha\left( \pi(T) \right)-\inf_{0 \leq s \leq T} \alpha\left( \pi(s) \right) $$
$$\forall 0<t<T, e^c_\alpha\left( \pi \right)(t) = \pi(t) + \inf_{0 \leq s \leq T} \alpha\left( \pi(s) \right) \alpha^\vee 
    - \min\left( \inf_{0 \leq s \leq t} \alpha\left( \pi(s) \right) - c,
                 \inf_{t \leq s \leq T} \alpha\left( \pi(s) \right) \right) \alpha^\vee $$
$$ e^c_\alpha\left( \pi \right)(0) = \pi(0) = 0$$
$$ e^c_\alpha\left( \pi \right)(T) = \pi(T) + c \alpha^\vee$$
Indeed, the limits for $\varepsilon_\alpha$ and $\varphi_\alpha$ are an immediate application of the Laplace method. 
The latter limit comes from re-arranging the expression before using the Laplace method as well:
\begin{align*}
e^c_\alpha \cdot \pi(t) & = \pi(t) + q \log\left( 1 + \frac{ e^{q^{-1} c} - 1 }{e^{q^{-1}\varepsilon_\alpha(\pi)}}\int_0^t e^{ -q^{-1} \alpha(\pi(s)) }ds \right) \alpha^\vee\\
& = \pi(t) + q \log\left( e^{q^{-1} c} \int_0^t e^{ -q^{-1} \alpha(\pi(s)) }ds + \int_t^T e^{ -q^{-1} \alpha(\pi(s)) }ds \right) \alpha^\vee \\
& \ \ \ - q \log\left( \int_0^T e^{ -q^{-1} \alpha(\pi(s)) }ds \right) \alpha^\vee\\
& \stackrel{ q \rightarrow 0 }{ \rightarrow }
    \pi(t) + \inf_{0 \leq s \leq T} \alpha\left( \pi(s) \right) \alpha^\vee 
    - \alpha^\vee \min\left( \inf_{0 \leq s \leq t} \alpha\left( \pi(s) \right) - c,
                             \inf_{t \leq s \leq T} \alpha\left( \pi(s) \right) \right)
\end{align*}
Notice that the condition $0<t<T$ is essential in Laplace method, as certain integral terms disappear at $t=0$ and $t=T$. We claim that this expression is exactly the same as the one defining the generalized Littelmann operators defined in \cite{bib:BBO2} section 3. We discuss that point in the next subsection '$q=0$ limit'.

\paragraph{Crystals:}
A $q$-Littelmann model that satisfies the crystal axioms is called a $q$-Littelmann crystal. An important fact is that for $q>0$, all the continuous $q$-Littelmann structures $\left( L_q \right)_{q>0}$ on $C_0\left( [0; T], \afrak \right)$ are equivalent. That is why we can restrict our attention to the case $q=1$. We will use the term 'Geometric Littelmann crystal' to refer to the $q=1$ model as it is the path model for the geometric crystals introduced by Berenstein and Kazhdan. The link will be made clear further in the presentation.\\

The fact that $q$-Littelmann crystal structures on $C_0\left( [0; T], \afrak \right)$ are equivalent for $q>0$ can easily be checked by using the rescaling on reals and on paths.
$$ \forall x \in \R, \psi_{q,q'}\left( x \right) = \frac{q'}{q} x$$
$$ \forall \pi \in C_0\left( [0; T], \afrak \right), \psi_{q,q'}\left( \pi \right) = \frac{q'}{q} \pi$$
$\psi_{q,q'}$ intertwines structural maps. Let $L_q = \left( \gamma, \left( \varepsilon_\alpha, \varphi_\alpha, e^._\alpha \right)_{\alpha \in \Delta} \right)$ and 
    $L_{q'} = \left( \gamma', \left( \varepsilon^{'}_{\alpha}, \varphi^{'}_{\alpha}, e^{'.}_{\alpha} \right)_{\alpha \in \Delta} \right)$ 
the two continuous Littelmann structures on $C_0\left( [0; T], \afrak \right)$ associated to $q$ and $q'$. We have:
\begin{align*}
\gamma^{'}\left( \psi_{q,q'}\left( \pi \right) \right) & = \psi_{q,q'}\left( \gamma\left( \pi \right) \right)\\
\varepsilon^{'}_{\alpha}\left( \psi_{q,q'}\left( \pi \right) \right) & = \psi_{q,q'}\left( \varepsilon_\alpha\left( \pi \right) \right)\\
\varphi^{'}_{\alpha}\left( \psi_{q,q'}\left( \pi \right) \right) & = \psi_{q,q'}\left( \varphi_\alpha\left( \pi \right) \right)\\
e^{'\psi_{q,q'}(c)}_{\alpha} \cdot \psi_{q,q'}\left( \pi \right) & = \psi_{q,q'}\left(  e^{c}_\alpha \cdot \pi \right)\\
\end{align*}

\begin{rmk}
 Our choice of describing this relationship as 'equivalence' and not 'isomorphism' in the strict sense is because the real parameter $c$ in the actions $e^c_\alpha, \alpha \in \Delta$ is rescaled. The structural maps $\varepsilon_\alpha, \varphi_\alpha$ are also rescaled.\\
In fact, this transformation is more easily seen as a change of underlying semifields as we will see much later in chapter \ref{chapter:degenerations}.
\end{rmk}

Another fact worth mentioning is the commutation with respect to tensor product: If $B_q$ and $B_q^{'}$ are 
$q$-Littelmann crystals then $$ \psi_{q, q'}\left( B_q \otimes_q B_q^{'} \right) = \psi_{q, q'}\left( B_q \right) \otimes_{q'} \psi_{q, q'}\left( B_q^{'} \right) $$

\subsection{Classical Littelmann model as a limit}
In \cite{bib:BBO2} definition 3.3, the continuous path model described, which in fact coincides with Littelmann's original definition, has the following structural maps:
$$\varepsilon_\alpha\left( \pi \right) = -\inf_{0 \leq s \leq T} \alpha\left( \pi(s) \right) $$
$$\varphi_\alpha\left( \pi \right) = \alpha\left( \pi(T) \right)-\inf_{0 \leq s \leq T} \alpha\left( \pi(s) \right) $$
$$\textrm{If } 0 \leq c \leq \varepsilon_\alpha\left( \pi \right), \Ec^c_\alpha\left( \pi \right)(t) = \pi(t)
  - \min\left(0, -c - \inf_{0 \leq s \leq T} \alpha\left( \pi(s) \right) - \inf_{0 \leq s \leq t} \alpha\left( \pi(s) \right) \right)
 $$
$$\textrm{If } - \varphi_\alpha\left( \pi \right) \leq c \leq 0, \Ec^c_\alpha\left( \pi \right)(t) = \pi(t)
  - \min\left(-c, \inf_{t \leq s \leq T} \alpha\left( \pi(s) \right) - \inf_{0 \leq s \leq T} \alpha\left( \pi(s) \right) \right)
 $$
In this subsection, we give explicit indications on why these are exactly the same maps as our $q=0$ limit. The identification is quite immediate except when it comes to recognizing our actions $e_\alpha^.$. Recall that:
$$\forall 0<t<T, e^c_\alpha\left( \pi \right)(t) = \pi(t) + \alpha^\vee \inf_{0 \leq s \leq T} \alpha\left( \pi(s) \right) - \alpha^\vee \min\left( \inf_{0 \leq s \leq t} \alpha\left( \pi(s) \right) - c, \inf_{t \leq s \leq T} \alpha\left( \pi(s) \right) \right)$$
$$ e^c_\alpha\left( \pi \right)(0) = \pi(0) = 0$$
$$ e^c_\alpha\left( \pi \right)(T) = \pi(T) + c \alpha^\vee$$
It shows that our description has at least an advantage at $q=0$: Only one formula for $e_\alpha^c$ independently of the sign of $c$.\\

Notice that for paths starting from zero, if $c \geq 0$:
\begin{align*}
  & \min\left( \inf_{0 \leq s \leq t} \alpha(\pi(s)) - c, \inf_{t \leq s \leq T} \alpha(\pi(s)) \right)\\
= & \min\left( \inf_{0 \leq s \leq t} \alpha(\pi(s)) - c, \inf_{0 \leq s \leq t} \alpha(\pi(s)), \inf_{t \leq s \leq T} \alpha(\pi(s)) \right)\\
= & \min\left( \inf_{0 \leq s \leq t} \alpha(\pi(s)) - c, \inf_{0 \leq s \leq T} \alpha(\pi(s)) \right)\\
\end{align*}
While if $c \leq 0$:
\begin{align*}
  & \min\left( \inf_{0 \leq s \leq t} \alpha(\pi(s)) - c, \inf_{t \leq s \leq T} \alpha(\pi(s)) \right)\\
= & -c + \min\left( \inf_{0 \leq s \leq t} \alpha(\pi(s)), c + \inf_{t \leq s \leq T} \alpha(\pi(s)) \right)\\
= & -c + \min\left( \inf_{0 \leq s \leq t} \alpha(\pi(s)), \inf_{t \leq s \leq T} \alpha(\pi(s)), c + \inf_{t \leq s \leq T} \alpha(\pi(s)) \right)\\
= & -c + \min\left( \inf_{0 \leq s \leq T} \alpha(\pi(s)), c + \inf_{t \leq s \leq T} \alpha(\pi(s)) \right)\\
= & \min\left( -c + \inf_{0 \leq s \leq T} \alpha(\pi(s)), \inf_{t \leq s \leq T} \alpha(\pi(s)) \right)
\end{align*}
Replacing in each case, $\min\left( \inf_{0 \leq s \leq t} \alpha(\pi(s)) - c, \inf_{t \leq s \leq T} \alpha(\pi(s)) \right)$ in our expression for $e^c_\alpha$ recovers $\Ec^c, c \geq 0$ and $\Ec^c, c \leq 0$.\\

\begin{rmk}
The 'cutting' conditions $-\varphi_\alpha(\pi) \leq c \leq \varepsilon_\alpha(\pi)$ will appear naturally later. For now, we can notice that in order for $e^c_\alpha$ to preserve continuity at $t=0$, one needs $c \leq \varepsilon_\alpha(\pi)$. In order to preserve continuity at $t=T$, we need $-\varphi_\alpha(\pi) \leq c$.
\end{rmk}

\subsection{A rank 1 example}
In rank $1$, crystal actions on paths in $\afrak$ are in fact one dimensional, and via projection $\afrak$ can be considered as $\R$.

\paragraph{Connected crystal at $\bf q=1$:}
Let $\pi \in C_0\left( [0; T], \afrak \right)$ be a path and $\langle \pi \rangle$ be the connected crystal generated by $\pi$:
$$\langle \pi \rangle = \left\{ \pi_c = e^c_\alpha \cdot \pi, c \in \R \right\} = \left\{ t \mapsto \pi(t) + \log\left( 1 + (e^c - 1)\frac{\int_0^t e^{-\alpha\left( \pi \right)}}{\int_0^T e^{-\alpha\left( \pi \right)}} \right) \alpha^\vee \right\}$$ 
Notice that there is an extremal element $\eta = e^{-\infty}_\alpha \cdot \pi$ that does not belong to the crystal, as it diverges at its endpoint ($t=T$):
$$\eta\left(t\right) = \pi\left(t\right) + \log\left(1 - \frac{\int_0^t e^{-\alpha\left( \pi \right)}}{\int_0^T e^{-\alpha\left( \pi \right)}} \right)\alpha^\vee $$

The transform $e^{-\infty}_\alpha$ is a projection as it gives $\eta$ when applied to any element of the crystal, as a consequence of $e^{-\infty}_\alpha \cdot e^{c}_\alpha = e^{-\infty}_\alpha$. As such, it is clearly not an injective map. However there is only one real number that is lost in this process, and it is in fact $\int_0^T e^{-\alpha\left( \pi \right)}$. This basic remark will be a key element in parametrizing path crystals.

\subsection{Geometric Littelmann model} 

As announced, we will now restrict our attention to the $q=1$ case, which we call the geometric case. In the next subsection we prove there is a projection morphism to $\Bc$ the typical crystal in the sense of Berenstein and Kazhdan.

\subsubsection{Geometric Littelmann Crystal}
 A geometric Littelmann crystal $L$ is a subset of $C_0\left( [0; T], \mathfrak{a} \right)$ endowed with
\begin{itemize}
 \item A 'weight' map $\gamma: L \rightarrow \mathfrak{a}$ defined as
$$ \gamma(\pi) = \pi(T) $$
 \item For every $\alpha \in \Delta$, maps $\varepsilon_\alpha, \varphi_\alpha$ defined as:
$$\varepsilon_\alpha( \pi ) := \log\left( \int_0^T e^{ -\alpha(\pi(s)) }ds \right)$$
$$\varphi_\alpha( \pi ) := \varepsilon_\alpha \circ \iota ( \pi )
                        = \alpha\left( \pi(T) \right) + \log\left( \int_0^T e^{-\alpha(\pi(s))}ds \right)$$
 \item The actions $\left(e^._\alpha\right)_{ \alpha \in \Delta }$ defined as:
$$ e^c_\alpha \cdot \pi(t) := \pi(t) + \log\left( 1 + \frac{ e^c - 1 }{e^{\varepsilon_\alpha(\pi)}}\int_0^t e^{ -\alpha(\pi(s)) }ds \right) \alpha^\vee $$
\end{itemize}

\begin{example}
 The whole set $C_0\left( [0; T], \mathfrak{a} \right)$ is a crystal.
\end{example}

The following properties show that it is indeed a crystal in the usual sense:
\begin{properties}
A geometric Littelmann crystal is a geometric path crystal in the sense that for $\pi \in L$, the following properties are satisfied:
\begin{enumerate}
 \item[(i)]   $\varphi_\alpha(\pi) = \varepsilon_\alpha(\pi) + \alpha\left( \gamma(\pi) \right)$
 \item[(ii)]  $\gamma\left( e^c_\alpha \cdot \pi \right) = \gamma\left( \pi \right) + c \alpha^\vee$
 \item[(iii)] $\varepsilon_\alpha\left( e^c_\alpha \cdot \pi \right) = \varepsilon_\alpha\left( \pi \right) - c$
 \item[(iv)]  $\varphi_\alpha\left( e^c_\alpha \cdot \pi \right) = \varphi_\alpha\left( \pi \right) + c$
 \item[(v)]   $e^._\alpha$ are indeed actions as $e^{c}_\alpha \cdot e^{c'}_\alpha = e^{c+c'}_\alpha $
 \item[(vi)]  The Littelmann action behaves well with respect to time-reversal:
              $$e^{-c}_\alpha = \iota \circ e^c_\alpha \circ \iota$$
              $$\varphi_\alpha = \varepsilon_\alpha \circ \iota $$
 \end{enumerate}
\end{properties}
\begin{proof}
\begin{enumerate}
 \item[(i)] Obvious.
 \item[(ii)]  \begin{align*}
		 & \gamma\left( e^c_\alpha \cdot \pi \right)\\
               = & \pi(T) + \log\left( 1 + \frac{ e^c - 1 }{e^{\varepsilon_\alpha(\pi)}}\int_0^T e^{ -\alpha(\pi(s)) }ds \right) \alpha^\vee\\
               = & \pi(T) + \log\left( 1 + ( e^c - 1 ) \right) \alpha^\vee\\
               = & \gamma\left( \pi \right) + c \alpha^\vee
              \end{align*}
 \item[(iii)] \begin{align*}
                 & \varepsilon_\alpha\left( e^c_\alpha \cdot \pi \right)\\
               = & \log\left( \int_0^T \frac{ e^{ -\alpha(\pi(s)) } }{\left( 1 + \frac{ e^c - 1 }{e^{\varepsilon_\alpha(\pi)}}\int_0^s e^{ -\alpha(\pi(u)) }du\right)^2 }ds \right)\\
               = & \log\left( \frac{e^{\varepsilon_\alpha(\pi)}}{ e^c - 1 } \left( \int_0^T -\frac{d}{ds}\left( \frac{ 1 }{ 1 + \frac{ e^c - 1 }{e^{\varepsilon_\alpha(\pi)}}\int_0^s e^{ -\alpha(\pi(u)) }du } \right)ds \right) \right)\\
               = & \log\left( \frac{e^{\varepsilon_\alpha(\pi)}}{ e^c - 1 } \left( 1 - e^{-c} \right) \right)\\
               = & \varepsilon_\alpha(\pi) - c\\
              \end{align*}
 \item[(iv)]  Obvious using (i), (ii) and (iii).
 \item[(v)]    \begin{align*}
                & \left( e^{c}_\alpha \cdot e^{c'}_\alpha \cdot \pi \right) (t)\\
              = & \left( e^{c'}_\alpha \cdot \pi \right)(t) + \log\left( 1 + \frac{e^c - 1}{e^{ \varepsilon\left( e^{c'}_\alpha \cdot \pi \right)}}\int_0^t e^{ -\alpha\left( e^{c'}_\alpha \cdot \pi (s) \right) } ds \right) \alpha^{\vee}\\
              = & \left( e^{c'}_\alpha \cdot \pi \right)(t) + \log\left( 1 + \frac{e^c - 1}{e^{ \varepsilon\left( \pi \right) - c'}}\int_0^t \frac{ e^{ -\alpha\left( \pi (s) \right) } }{\left( 1 + \frac{ e^{c'} - 1 }{e^{\varepsilon_\alpha(\pi)}}\int_0^s e^{ -\alpha(\pi(u)) }du \right)^2 } ds \right) \alpha^{\vee}\\
              = & \left( e^{c'}_\alpha \cdot \pi \right)(t) + \log\left( 1 + \frac{e^c - 1}{e^{ \varepsilon\left( \pi \right) - c'}} \frac{ e^{\varepsilon\left(\pi\right)} }{e^{c'} - 1}\int_0^t -\frac{d}{ds}\left( \frac{ 1 }{ 1 + \frac{ e^{c'} - 1 }{e^{\varepsilon_\alpha(\pi)}}\int_0^s e^{ -\alpha(\pi(u)) }du } \right) ds \right) \alpha^{\vee}\\
              = & \left( e^{c'}_\alpha \cdot \pi \right)(t) + \log\left( 1 + e^{c'}\frac{e^c - 1}{e^{c'} - 1}\left( 1 - \frac{ 1 }{ 1 + \frac{ e^{c'} - 1 }{e^{\varepsilon_\alpha(\pi)}}\int_0^t e^{ -\alpha(\pi(s)) }ds } \right) \right) \alpha^{\vee}\\
              = & \left( e^{c'}_\alpha \cdot \pi \right)(t) + \log\left( 1 + e^{c'} \frac{ \frac{ e^{c} - 1 }{e^{\varepsilon_\alpha(\pi)}}\int_0^t e^{ -\alpha(\pi(s)) }ds }{ 1 + \frac{ e^{c'} - 1 }{e^{\varepsilon_\alpha(\pi)}}\int_0^t e^{ -\alpha(\pi(s)) }ds } \right) \alpha^{\vee}\\
              = & \left( e^{c'}_\alpha \cdot \pi \right)(t) + \log\left( \frac{ 1 + \frac{ e^{c+c'} - 1 }{e^{\varepsilon_\alpha(\pi)}}\int_0^t e^{ -\alpha(\pi(s)) }ds }{ 1 + \frac{ e^{c'} - 1 }{e^{\varepsilon_\alpha(\pi)}}\int_0^t e^{ -\alpha(\pi(s)) }ds } \right) \alpha^{\vee}\\
              = & \left( e^{c+c'}_\alpha \cdot \pi \right)(t)\\
               \end{align*}
 \item[(vi)]   \begin{align*}
                & \left( \iota \circ e^c_\alpha \circ \iota \right)(\pi) (t)\\
              = & \iota\left( t \mapsto \pi^\iota(t) + \log\left( 1 + \frac{ e^c - 1 }{e^{\varepsilon_\alpha(\pi^\iota)}}\int_0^t e^{ -\alpha(\pi^\iota(s)) }ds \right) \alpha^\vee \right)(t)\\
              = & \pi(t) + \iota\left( t \mapsto \log\left( 1 + \frac{ e^c - 1 }{e^{\varphi_\alpha(\pi)}}\int_0^t e^{ -\alpha(\pi^\iota(s)) }ds \right) \alpha^\vee \right)(t)\\
              = & \pi(t) + \iota\left( t \mapsto \log\left( 1 + \frac{ e^c - 1 }{e^{\varphi_\alpha(\pi)}} e^{\alpha(\pi(T))} \int_{T-t}^T e^{ -\alpha(\pi(s)) }ds \right) \alpha^\vee \right)(t)\\
              = & \pi(t) + \iota\left( t \mapsto \log\left( 1 + \frac{ e^c - 1 }{e^{\varepsilon_\alpha(\pi)}} \int_{T-t}^T e^{ -\alpha(\pi(s)) }ds \right) \alpha^\vee \right)(t)\\
              = & \pi(t) + \log\left( 1 + \frac{ e^c - 1 }{e^{\varepsilon_\alpha(\pi)}} \int_t^T e^{ -\alpha(\pi(s)) }ds \right) \alpha^\vee
                         - \log\left( 1 + \frac{ e^c - 1 }{e^{\varepsilon_\alpha(\pi)}} \int_0^T e^{ -\alpha(\pi(s)) }ds \right) \alpha^\vee \\
              = & \pi(t) + \log\left( 1 + \frac{ e^c - 1 }{e^{\varepsilon_\alpha(\pi)}} \left( \int_0^T e^{ -\alpha(\pi(s)) }ds - \int_0^t e^{ -\alpha(\pi(s)) }ds \right) \right) \alpha^\vee - c \alpha^\vee \\
              = & \pi(t) + \log\left( e^c - \frac{ e^c - 1 }{e^{\varepsilon_\alpha(\pi)}} \int_0^t e^{ -\alpha(\pi(s)) }ds \right) \alpha^\vee - c \alpha^\vee \\
              = & \pi(t) + \log\left( 1 + \frac{ e^{-c} - 1 }{e^{\varepsilon_\alpha(\pi)}} \int_0^t e^{ -\alpha(\pi(s)) }ds \right) \alpha^\vee
              \end{align*}
              As for $\varphi_\alpha = \varepsilon_\alpha \circ \iota $, it is obvious.
\end{enumerate}
\end{proof}

\subsubsection{Tensor products of crystals and concatenation of paths}
In this subsection, we will see that the seemingly complicated definition for the tensor product of crystals is in fact easily coded within a path model using the concatenation of paths. Define the concatenation of two paths $\pi_1: [0,T] \rightarrow \afrak$ and $\pi_2: [0,S] \rightarrow \afrak$ as 
the path $\pi_1 \ast \pi_2: [0,T+S] \rightarrow \afrak$ given by:
$$ \pi_1 \ast \pi_2 \left(t\right)
   = \left\{\begin{array}{cc}
             \pi_1(t)                    & \textrm{ if } 0 \leq t \leq T \\
             \pi_1(T) + \pi_2\left( t - T\right) & \textrm{ otherwise }
            \end{array}
     \right.
$$

\begin{thm}
\label{thm:concatenation_is_isomorphism}
$$ \begin{array}{cccc}
   \theta: & C_0\left( [0; T], \afrak \right) \otimes C_0\left( [0; S], \afrak \right) & \rightarrow & C_0\left( [0; T+S], \afrak \right)\\
       & \pi_1 \otimes \pi_2                                                 & \mapsto     & \pi_1 \ast \pi_2
   \end{array}
$$
is a crystal isomorphism. In fact, the following properties are true:
\begin{itemize}
	\item[(i)] $\gamma\left( \pi_1 \ast \pi_2 \right) = \gamma\left( \pi_1 \otimes \pi_2 \right)$
	\item[(ii)] $\varepsilon_\alpha\left( \pi_1 \ast \pi_2 \right) = \varepsilon_\alpha\left( \pi_1 \otimes \pi_2 \right)$ or equivalently $\varphi_\alpha\left( \pi_1 \ast \pi_2 \right) = \varphi_\alpha\left( \pi_1 \otimes \pi_2 \right)$
	\item[(iii)] $e^c_\alpha\left( \pi_1 \ast \pi_2 \right) = \theta\left( e^c_\alpha\left( \pi_1 \otimes \pi_2 \right) \right)$
\end{itemize}
\end{thm}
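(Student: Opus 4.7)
The plan is to verify the three identities directly, since $\theta$ is evidently a bijection of sets (a path $\pi$ on $[0,T+S]$ starting at $0$ splits uniquely as $\pi_{|[0,T]} \ast (\pi(T+\cdot)-\pi(T))_{|[0,S]}$). Identity (i) is immediate: $(\pi_1\ast\pi_2)(T+S)=\pi_1(T)+\pi_2(S)$, which matches the definition of $\gamma$ on a tensor. For (ii) the key is to split the defining integral at time $T$. Denoting $\varepsilon_i=\varepsilon_\alpha(\pi_i)$, $\varphi_i=\varphi_\alpha(\pi_i)$ and changing variables $s\mapsto s-T$ on the second piece,
\[
 \int_0^{T+S} e^{-\alpha((\pi_1\ast\pi_2)(s))}\,ds \;=\; e^{\varepsilon_1}+e^{-\alpha(\pi_1(T))}e^{\varepsilon_2}\;=\;e^{\varepsilon_1}\bigl(1+e^{\varepsilon_2-\varphi_1}\bigr),
\]
so that $\varepsilon_\alpha(\pi_1\ast\pi_2)=\varepsilon_1+\log(1+e^{\varepsilon_2-\varphi_1})$, which is precisely $\varepsilon_\alpha(\pi_1\otimes\pi_2)$ in the $q=1$ tensor product.

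For (iii), I would compare $e^c_\alpha(\pi_1\ast\pi_2)$ piecewise with the concatenation $\theta(e^{c_1}_\alpha\pi_1\otimes e^{c_2}_\alpha\pi_2)$, with $c_1,c_2$ as prescribed by the tensor-product action. The two algebraic identities that do all the work are
\[
 e^{c_1}=\frac{e^c+e^{\varepsilon_2-\varphi_1}}{1+e^{\varepsilon_2-\varphi_1}}=1+\frac{e^c-1}{e^{\varepsilon}}\,e^{\varepsilon_1},\qquad e^{c_1}(e^{c_2}-1)=\frac{(e^c-1)e^{-\alpha(\pi_1(T))}e^{\varepsilon_2}}{e^{\varepsilon}},
\]
where $\varepsilon=\varepsilon_\alpha(\pi_1\ast\pi_2)$, the second of which follows by clearing denominators and using $\varphi_1-\varepsilon_1=\alpha(\pi_1(T))$. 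Granting these, on $[0,T]$ one has $\int_0^t e^{-\alpha(\pi_1\ast\pi_2)}=\int_0^t e^{-\alpha(\pi_1)}$ and the first identity shows that the logarithmic correction term in $e^c_\alpha(\pi_1\ast\pi_2)(t)$ coincides with that of $e^{c_1}_\alpha\pi_1(t)$; in particular the value at $t=T$ equals $\pi_1(T)+c_1\alpha^\vee$, which also secures the continuity of the matching at the seam.

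On $[T,T+S]$, writing $t=T+u$ and splitting $\int_0^t e^{-\alpha(\pi_1\ast\pi_2)}=e^{\varepsilon_1}+e^{-\alpha(\pi_1(T))}\int_0^u e^{-\alpha(\pi_2(v))}dv$, the correction becomes
\[
 \log\!\left(1+\tfrac{e^c-1}{e^{\varepsilon}}e^{\varepsilon_1}+\tfrac{e^c-1}{e^{\varepsilon}}e^{-\alpha(\pi_1(T))}\!\int_0^u e^{-\alpha(\pi_2(v))}dv\right)
 \;=\;\log\!\left(e^{c_1}+e^{c_1}\tfrac{e^{c_2}-1}{e^{\varepsilon_2}}\!\int_0^u e^{-\alpha(\pi_2(v))}dv\right),
\]
by the two boxed identities, which is exactly $c_1+$ the correction defining $e^{c_2}_\alpha\pi_2(u)$. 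Combined with the base value $\pi_1(T)+c_1\alpha^\vee+\pi_2(u)$, this is $\theta(e^{c_1}_\alpha\pi_1\otimes e^{c_2}_\alpha\pi_2)(T+u)$. No real obstacle is expected; the only delicate point is the bookkeeping around $t=T$, where one must check both continuity and the compatibility of the shift $c_1\alpha^\vee$ inherited from $e^{c_1}_\alpha\pi_1(T)$ with the concatenation. Everything else is algebra based on the two identities above.
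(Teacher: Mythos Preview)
Your proof is correct and follows essentially the same route as the paper: verify (i) and (ii) by splitting the integral at $T$, then for (iii) compare $e^c_\alpha(\pi_1\ast\pi_2)$ with $(e^{c_1}_\alpha\pi_1)\ast(e^{c_2}_\alpha\pi_2)$ separately on $[0,T]$ and $[T,T+S]$. Your presentation is in fact slightly tidier than the paper's: by isolating the two identities $e^{c_1}-1=\tfrac{e^c-1}{e^{\varepsilon}}e^{\varepsilon_1}$ and $e^{c_1}(e^{c_2}-1)=\tfrac{(e^c-1)e^{-\alpha(\pi_1(T))}e^{\varepsilon_2}}{e^{\varepsilon}}$ upfront you make the matching on each piece a one-line substitution, whereas the paper derives the equivalent relations inline and carries several intermediate manipulations through the second half.
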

\begin{proof}
Given those properties, $\theta$ clearly transports the crystal structure. The fact that it is invertible with a morphism as inverse map is obvious. Let us show these relations:
\begin{itemize}
	\item[(i)] \begin{align*}
	             & \gamma\left( \pi_1 \ast \pi_2 \right)\\
	           = & \pi_1 \ast \pi_2\left( T + S\right)\\
	           = & \pi_1(T) + \pi_2(S)\\
	           = & \gamma\left( \pi_1 \otimes \pi_2 \right)
	           \end{align*}
	\item[(ii)] \begin{align*}
	             & \varepsilon_\alpha\left( \pi_1 \ast \pi_2 \right)\\
	           = & \log\left( \int_0^{T+S} e^{-\alpha\left( \pi_1 \ast \pi_2(s) \right) } ds \right)\\
	           = & \log\left( \int_0^{T} e^{-\alpha\left( \pi_1 (s) \right) } ds + e^{ -\alpha\left( \pi_1(T)\right) } \int_0^{S} e^{-\alpha\left( \pi_2(s) \right) } ds \right)\\
	           = & \varepsilon_\alpha\left( \pi_1 \right) + \log\left( 1 + e^{\varepsilon_\alpha\left(\pi_2\right) - \alpha\left( \gamma(\pi_1) \right) - \varepsilon_\alpha\left(\pi_2\right)} \right)\\
	           = & \varepsilon_\alpha\left( \pi_1 \right) + \log\left( 1 + e^{\varepsilon_\alpha\left(\pi_2\right) - \varphi_\alpha\left(\pi_2\right)} \right)\\
	           = & \varepsilon_\alpha\left( \pi_1 \otimes \pi_2 \right)	           
	            \end{align*}
	\item[(iii)] One first needs to remember that
	             $$  e^c_\alpha\left( \pi_1 \otimes \pi_2 \right) = e^{c_1}_\alpha \cdot \pi_1 \otimes e^{c_2}_\alpha \cdot \pi_2 $$
	             with
	             $$ e^{c_1} = \frac{ e^{c+\varphi_\alpha\left( \pi_1 \right) } + e^{ \varepsilon_\alpha\left(\pi_2\right) } }
	                               { e^{  \varphi_\alpha\left( \pi_1 \right) } + e^{ \varepsilon_\alpha\left(\pi_2\right) } }  $$
	             $$ e^{c_2} = \frac{ e^{  \varphi_\alpha\left( \pi_1 \right) } + e^{ \varepsilon_\alpha\left(\pi_2\right) } }
	                               { e^{  \varphi_\alpha\left( \pi_1 \right) } + e^{-c+ \varepsilon_\alpha\left(\pi_2\right) } }  $$
	            So that we will compute both parts of the concatenated path $\theta\left( e^c_\alpha\left( \pi_1 \otimes \pi_2 \right) \right)$ separately:
	            - The first half is, using $0 \leq t \leq T$:
	            $$ \theta\left( e^c_\alpha\left( \pi_1 \otimes \pi_2 \right) \right)(t) = e^{c_1}_\alpha \cdot \pi_1(t) = \pi_1(t) + \alpha^\vee \log\left( 1 + \frac{e^{c_1}-1}{e^{\varepsilon_\alpha\left(\pi_1\right)}} \int_0^t e^{-\alpha\left( \pi_1(s)\right)}ds \right)$$
	            But as:
	            \begin{align*}
	              & \frac{e^{c_1}-1}{e^{\varepsilon_\alpha\left(\pi_1\right)}}\\
	            = & \frac{\frac{ e^{c+\varphi_\alpha\left( \pi_1 \right) } + e^{ \varepsilon_\alpha\left( \pi_2 \right) } }{ e^{  \varphi_\alpha\left( \pi_1 \right) } + e^{ \varepsilon_\alpha\left( \pi_2 \right) }  }-1}{ e^{\varepsilon_\alpha\left(\pi_1\right)} }\\
	            = & \frac{ e^{\varphi_\alpha\left( \pi_1 \right) } \left( e^c - 1 \right) }
	                     { e^{\varepsilon_\alpha\left(\pi_1\right)}\left( e^{  \varphi_\alpha\left( \pi_1 \right) } + e^{ \varepsilon_\alpha\left( \pi_2 \right) } \right) } \\
	            = & \frac{ e^c - 1  }
	                     { e^{\varepsilon_\alpha\left(\pi_1\right)} + e^{ \varepsilon_\alpha\left( \pi_2 \right) - \alpha\left( \gamma\left(\pi_1\right) \right) } } \\
	            = & \frac{ e^c - 1  }{ e^{\varepsilon\left( \pi_1 \ast \pi_2 \right)} }
	            \end{align*}
	            We get:
	            $$ \forall 0 \leq t \leq T, \theta\left( e^c_\alpha\left( \pi_1 \otimes \pi_2 \right) \right)(t) =  e^c_\alpha \cdot \left( \pi_1 \ast \pi_2 \right)(t)$$
	            - Moving on to the second half:
              \begin{align*}
                & \theta\left( e^c_\alpha\left( \pi_1 \otimes \pi_2 \right) \right)(T + t)\\
              = & e^{c_1}_\alpha \cdot \pi_1(T) + e^{c_2}_\alpha \cdot \pi_2(t)\\
              = & \pi_1(T) + c_1 \alpha^\vee + \pi_2(t) + \log\left( 1 + \frac{e^{c_2}-1}{e^{\varepsilon_\alpha\left(\pi_2\right)}} \int_0^t e^{-\alpha\left( \pi_2(s)\right)}ds \right) \alpha^\vee \\
              = & \pi_1 \ast \pi_2 (T+t) + c_1 \alpha^\vee + \log\left( 1 + \frac{e^{c_2}-1}{e^{\varepsilon_\alpha\left(\pi_2\right) }} \int_0^t e^{-\alpha\left( \pi_2(s)\right)}ds \right)\alpha^\vee
              \end{align*}
              Then, as:
              \begin{align*}
	              & \frac{e^{c_2}-1}{e^{\varepsilon_\alpha\left(\pi_2\right)}}\\
	            = & \frac{\frac{ e^{\varphi_\alpha\left( \pi_1 \right) } + e^{ \varepsilon_\alpha\left( \pi_2 \right) } }{ e^{  \varphi_\alpha\left( \pi_1 \right) } + e^{ -c + \varepsilon_\alpha\left( \pi_2 \right) }  }-1}{ e^{\varepsilon_\alpha\left(\pi_2\right)} }\\
	            = & \frac{ 1 - e^{-c}  }
	                     { e^{\varphi_\alpha\left(\pi_1\right)} + e^{ -c +\varepsilon_\alpha\left( \pi_2 \right)}} \\
	            \end{align*}
	            We get:
	            \begin{align*}
                & \theta\left( e^c_\alpha\left( \pi_1 \otimes \pi_2 \right) \right)(T + t)\\
              = & \pi_1 \ast \pi_2 (T+t) + c_1 \alpha^\vee + \log\left( 1 + \frac{ e^{c}-1 }{ e^{ c + \varphi_\alpha\left(\pi_1\right) } + e^{\varepsilon_\alpha\left(\pi_2\right) }} \int_0^t e^{-\alpha\left( \pi_2(s)\right)}ds \right) \alpha^\vee \\
              = & \pi_1 \ast \pi_2 (T+t) + c_1 \alpha^\vee + \\
                & \quad \log\left( 1 + \frac{ e^{c}-1 }{ e^{ c + \varphi_\alpha\left(\pi_1\right) } + e^{\varepsilon_\alpha\left(\pi_2\right) }} e^{\alpha\left( \gamma(\pi_1) \right) } \left( \int_0^{T+t} e^{-\alpha\left( \pi_1 \ast \pi_2(s)\right)}ds - e^{\varepsilon_\alpha\left( \pi_1 \right) }\right) \right) \alpha^\vee \\
              \end{align*}
              But 
              \begin{align*}
                & 1- \frac{ e^{c}-1 }{ e^{ c + \varphi_\alpha\left(\pi_1\right) } +  e^{ \varepsilon_\alpha\left( \pi_2 \right) }} e^{\alpha\left( \gamma(\pi_1) \right) + \varepsilon_\alpha\left( \pi_1 \right) }\\
              = & 1- \frac{ e^{c}-1 }{ e^{ c + \varphi_\alpha\left(\pi_1\right) } +  e^{ \varepsilon_\alpha\left( \pi_2 \right) }} e^{ \varphi_\alpha\left( \pi_1 \right) }\\
              = & \frac{ e^{ \varphi_\alpha\left( \pi_1 \right) } + e^{ \varepsilon_\alpha\left( \pi_2 \right) } }{ e^{ c + \varphi_\alpha\left(\pi_1\right) } +  e^{ \varepsilon_\alpha\left( \pi_2 \right) }}\\
              = & e^{-c_1}
              \end{align*}
              So that:
              \begin{align*}
                & \theta\left( e^c_\alpha\left( \pi_1 \otimes \pi_2 \right) \right)(T + t)\\
              = & \pi_1 \ast \pi_2 (T+t) + c_1 \alpha^\vee + \log\left( e^{-c_1} + \frac{ e^{c}-1 }{ e^{ c + \varphi_\alpha\left(\pi_1\right) } + e^{\varepsilon_\alpha\left(\pi_2\right) }} e^{\alpha\left( \gamma(\pi_1) \right) } \int_0^{T+t} e^{-\alpha\left( \pi_1 \ast \pi_2(s)\right)}ds \right) \alpha^\vee \\
              = & \pi_1 \ast \pi_2 (T+t) + \log\left( 1 + e^{c_1} \frac{ e^{c}-1 }{ e^{ c + \varphi_\alpha\left(\pi_1\right) } + e^{\varepsilon_\alpha\left(\pi_2\right) }} e^{\alpha\left( \gamma(\pi_1) \right) } \int_0^{T+t} e^{-\alpha\left( \pi_1 \ast \pi_2(s)\right)}ds \right) \alpha^\vee \\
              = & \pi_1 \ast \pi_2 (T+t) + \log\left( 1 + \frac{ \frac{ e^{c}-1 }{ e^{ c + \varphi_\alpha\left(\pi_1\right) } + e^{\varepsilon_\alpha\left(\pi_2\right) }} e^{\alpha\left( \gamma(\pi_1) \right) } }{ 1 - \frac{ e^{c}-1 }{ e^{ c + \varphi_\alpha\left(\pi_1\right) } + e^{\varepsilon_\alpha\left(\pi_2\right) }} e^{\alpha\left( \gamma(\pi_1) \right) + \varepsilon_\alpha\left( \pi_1 \right) }} \int_0^{T+t} e^{-\alpha\left( \pi_1 \ast \pi_2(s)\right)}ds \right) \alpha^\vee \\
              = & \pi_1 \ast \pi_2 (T+t) + \log\left( 1 + \frac{ \left( e^{c}-1 \right) e^{\alpha\left( \gamma(\pi_1) \right) } }{  e^{ c + \varphi_\alpha\left(\pi_1\right) } + e^{\varepsilon_\alpha\left(\pi_2\right) } - \left( e^{c}-1 \right) e^{\varphi_\alpha\left( \pi_1 \right) }} \int_0^{T+t} e^{-\alpha\left( \pi_1 \ast \pi_2(s)\right)}ds \right) \alpha^\vee \\
              = & \pi_1 \ast \pi_2 (T+t) + \log\left( 1 + \frac{ e^{c}-1 }{ e^{ \varphi_\alpha\left(\pi_1\right) } + e^{-\alpha\left( \gamma(\pi_1) \right)+\varepsilon_\alpha\left(\pi_2\right) } } \int_0^{T+t} e^{-\alpha\left( \pi_1 \ast \pi_2(s)\right)}ds \right) \alpha^\vee \\
              = & \pi_1 \ast \pi_2 (T+t) + \log\left( 1 + \frac{ e^{c}-1 }{e^{\varepsilon_\alpha\left(\pi_1 \ast \pi_2(s) \right)}} \int_0^{T+t} e^{-\alpha\left( \pi_1 \ast \pi_2(s)\right)}ds \right) \alpha^\vee \\
              = & e^c_\alpha\left( \pi_1 \ast \pi_2 \right)( T + t )
              \end{align*}
\end{itemize}
\end{proof}

\subsection{Projection on the group picture}
A little lemma shows how the Littelmann operators are linked to the transform $T_g$:
\begin{lemma}
 For $g = x_\alpha\left( \frac{e^c-1}{e^{\varepsilon_\alpha(\pi)}} \right)$, one has:
$$\forall \pi \in C_0\left( [0, T], \afrak \right), e^c_\alpha \pi = T_g \pi$$
\end{lemma}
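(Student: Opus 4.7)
The plan is to recognize that this lemma is essentially a direct reading of item (v) of the path transform properties \ref{lbl:path_transform_properties}, combined with the explicit definition of the crystal action $e^c_\alpha$ on paths.

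First I would recall property (v): for $\xi > 0$ and $g = x_\alpha(\xi)$,
\[
(T_g \pi)_t = \pi(t) + \log\left( 1 + \xi \int_0^t e^{-\alpha(\pi(s))} ds \right) \alpha^\vee .
\]
Substituting the specific value $\xi = \frac{e^c-1}{e^{\varepsilon_\alpha(\pi)}}$ gives precisely the right-hand side of the definition of $e^c_\alpha \cdot \pi(t)$, which proves the identity. This handles the case $c \geq 0$ (equivalently $\xi \geq 0$) immediately.

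For $c < 0$, property (v) does not strictly apply as stated, since the proof of (v) was given for $\xi > 0$. Here I would use the extension of the path transform discussed just before the statement: the map $T_{x_\alpha(\xi)}$ extends from $\xi > 0$ to $\xi \in \left(-\frac{1}{\int_0^T e^{-\alpha(\pi)}},\, +\infty\right)$, the allowed range being determined by positivity of $\Delta^{\omega_\alpha}(x_\alpha(\xi) N_t(\pi)) = 1 + \xi \int_0^t e^{-\alpha(\pi(s))} ds$ on $[0,T]$ (computed in the lemma just above). For our choice $\xi = \frac{e^c-1}{e^{\varepsilon_\alpha(\pi)}}$, the bound reads $e^c-1 > -1$, i.e.\ $c \in \R$, so the extension is valid for every real $c$. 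On this extended domain the same explicit formula for $T_g \pi$ continues to hold by analytic continuation of the rational/exponential expression defining $[g B_t(\pi)]_0$ in any finite-dimensional representation, and the identification with $e^c_\alpha \cdot \pi$ is then immediate.

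There is no real obstacle: the statement is packaging the fact that the one-parameter family of path transforms $\{T_{x_\alpha(\xi)}\}$ is exactly the crystal action $\{e^c_\alpha\}$ after the reparametrization $\xi = (e^c - 1)/e^{\varepsilon_\alpha(\pi)}$. The only point deserving a line of justification is the extension to $c < 0$, which is controlled by the positivity of the generalized minor $\Delta^{\omega_\alpha}$ along the flow.
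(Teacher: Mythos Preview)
Your proposal is correct and follows exactly the approach implicit in the paper: the lemma is stated there without explicit proof because it is an immediate reading of property~(v) together with the extension of $T_{x_\alpha(\xi)}$ to $\xi \in \left(-\frac{1}{\int_0^T e^{-\alpha(\pi)}},\, +\infty\right)$ established just before the statement. Your observation that the bound $\xi > -1/\int_0^T e^{-\alpha(\pi)}$ is equivalent to $c \in \R$ is precisely why the reparametrization $\xi = (e^c-1)/e^{\varepsilon_\alpha(\pi)}$ is the natural one.
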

Such an expression for the group element $x_\alpha\left( \frac{e^c-1}{e^{\varepsilon_\alpha(\pi)}} \right)$ is no coincidence, as it looks very similar to the left action on the crystal $\Bc$.\\
Let $L$ be a geometric Littelmann crystal. We define a projection map:
$$ \begin{array}{cccc}
    p: & L   & \rightarrow & \Bc = \left( B \cap B^+ w_0 B^+ \right)_{\geq 0}\\
       & \pi & \mapsto     & B_T\left(\pi\right)
   \end{array}
$$
We claim that $p$ is surjective provided that $L$ is large enough. A nice Brownian proof could be developped using the following idea: when $\pi$ is a Brownian motion, the random variable $B_T\left(\pi\right)$ has a density with support equal to $\Bc$. We will not pursue such a lead, as we will prove a more precise statement, dealing with the parametrization of connected components in the next section. For now, let us show that $p$ transports structures:

\begin{thm}
$p$ is a morphism of abstract crystals, as the following properties hold:
\begin{itemize}
 \item[(i)]   $\gamma\left( \pi \right) = \gamma\left( p\left( \pi \right) \right)$
 \item[(ii)]  $p \circ \iota = \iota \circ p$ where $\iota$ stands for duality on the left-hand side and for positive inverse on the right-hand side.
 \item[(iii)] $\varepsilon_\alpha = \varepsilon_\alpha \circ p$ or equivalently $\varphi_\alpha = \varphi_\alpha \circ p$
 \item[(iv)] $e^c_\alpha \cdot p \left( \pi \right) = p\left( e^c_\alpha \cdot \pi \right)$
\end{itemize}
\end{thm}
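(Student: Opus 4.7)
The plan is to verify the four properties in turn, using the $NA$-decomposition $p(\pi)=B_T(\pi)=N_T(\pi)\,e^{\pi(T)}$, the driving ODEs (\ref{lbl:process_A_ode})--(\ref{lbl:process_N_ode}) for the $A$ and $N$ factors, and the lemma just recalled that identifies the path action $e^c_\alpha$ with the group transform $T_g$ for $g=x_\alpha\bigl(\tfrac{e^c-1}{e^{\varepsilon_\alpha(\pi)}}\bigr)$. Property (i) is immediate from the decomposition: $[p(\pi)]_0=e^{\pi(T)}$ gives $\gamma(p(\pi))=\pi(T)=\gamma(\pi)$.

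For (iii), I would apply the character $e^{\chi_\alpha^-}$ to the relation (\ref{lbl:process_N_ode}). Because $\chi_\alpha^-$ is additive on $N$ and its differential on $\nfrak$ annihilates $[\nfrak,\nfrak]$ while extracting the $f_\alpha$-coefficient, the scalar $u_t:=e^{\chi_\alpha^-(N_t(\pi))}$ satisfies $du_t=u_t\,e^{-\alpha(\pi(t))}\,dt$ with $u_0=1$. Solving yields $e^{\chi_\alpha^-(N_T(\pi))}=\int_0^T e^{-\alpha(\pi(s))}\,ds=e^{\varepsilon_\alpha(\pi)}$, whence $\varepsilon_\alpha(p(\pi))=\varepsilon_\alpha(\pi)$. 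The matching equality for $\varphi_\alpha$ will then follow, once (ii) is in hand, through $\varphi_\alpha=\varepsilon_\alpha\circ\iota$ on both sides.

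Property (iv) then falls out of the lemma above: with $g$ as stated, $e^c_\alpha\pi=T_g\pi$, and the defining relation of $T_g$ gives
\[ p(e^c_\alpha\pi)=B_T(T_g\pi)=[g\,B_T(\pi)]_{-0}=[g\,p(\pi)]_{-0}, \]
using $[g]_-=1$ because $g\in U$. By (iii), the same group element $g$ is what appears in the definition of the action on $\Bc$, namely $e^c_\alpha\cdot p(\pi)=[g\,p(\pi)]_{-0}$, so the two sides coincide.

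Property (ii) is the chief technical obstacle. The approach is to expand $N_T(\pi^\iota)$ via the iterated integral formula (\ref{lbl:process_N_explicit}) and perform the time reversal $s_l:=T-t_{k+1-l}$ on the simplex. This reindexing reverses the word ${\bf i}$, so that the Chevalley product $f_{i_1}\cdots f_{i_k}$ becomes $f_{i_k}\cdots f_{i_1}=(f_{i_1}\cdots f_{i_k})^\iota$; simultaneously the shift $\pi^\iota(t)=\pi(T-t)-\pi(T)$ produces an overall factor $\exp\bigl(\sum_j\alpha_{i_j}(\pi(T))\bigr)$, which is exactly the conjugation of the reversed $f$-product by $e^{-\pi(T)}$ through the adjoint action of $H$ on the root spaces. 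Summing term by term yields
\[ N_T(\pi^\iota)=e^{-\pi(T)}\,N_T(\pi)^\iota\,e^{\pi(T)}, \]
and combined with $A_T(\pi^\iota)=e^{-\pi(T)}$ and the antimorphism property of $\iota$ (which stabilises $N$ and inverts $A$) this gives $p(\pi^\iota)=(N_T(\pi)e^{\pi(T)})^\iota=p(\pi)^\iota$. The main subtlety is the careful bookkeeping of the simplex orientation and the $\mathrm{Ad}$-conjugation in each term; once it is organised, the remaining half of (iii) (the $\varphi_\alpha$ statement) follows formally.
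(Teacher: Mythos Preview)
Your strategy matches the paper's on all four points: (i) reads off the $A$-factor, (ii) time-reverses the iterated-integral expansion of $N_T$ and recognises the outcome as an $\mathrm{Ad}$-conjugate of $N_T(\pi)^\iota$, (iii) differentiates the character value in $T$, and (iv) invokes the lemma $e^c_\alpha\pi=T_g\pi$ together with $B_T(T_g\pi)=[gB_T(\pi)]_{-0}$ and the equality $\varepsilon_\alpha(\pi)=\varepsilon_\alpha(p(\pi))$ just established.

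There is, however, a self-inconsistency in your execution of (iii): the ODE $du_t=u_t\,e^{-\alpha(\pi(t))}\,dt$ with $u_0=1$ has solution $u_T=\exp\bigl(\int_0^T e^{-\alpha(\pi)}\bigr)$, not $\int_0^T e^{-\alpha(\pi)}$ as you claim. The repair is to drop the outer exponential and differentiate $\chi_\alpha^-(N_t(\pi))$ itself: since $\chi_\alpha^-$ is an additive homomorphism $N\to\R$, its differential picks out the $f_\alpha$-coefficient of the left-invariant increment in (\ref{lbl:process_N_ode}), so $\tfrac{d}{dt}\chi_\alpha^-(N_t(\pi))=e^{-\alpha(\pi(t))}$ with initial value $0$, whence $\chi_\alpha^-(N_T(\pi))=\int_0^T e^{-\alpha(\pi(s))}\,ds=e^{\varepsilon_\alpha(\pi)}$. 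This is exactly what is needed, since on the group side the intended reading of the structural map is $e^{\varepsilon_\alpha(x)}=\chi_\alpha^-([x]_-)$; the paper's own displayed identity in this step carries the same stray exponential, but its method (match initial value and derivative in $T$) is precisely yours.
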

\begin{proof}
\begin{itemize}
 \item[(i)]   \begin{align*}
                & \gamma\left( p\left( \pi \right) \right)\\
              = & \gamma\left( B_T\left(\pi\right) \right)\\
              = & \pi(T)\\
              = & \gamma\left( \pi \right)
              \end{align*}
 \item[(ii)]  \begin{align*}
                & p \circ \iota \left( \pi \right)\\
              = & p\left( \pi^\iota \right)\\
              = & \left(\sum_{k \geq 1} \sum_{ i_1, \dots, i_k } \int_{ T \geq t_k \geq \dots \geq t_1 \geq 0} e^{ -\alpha_{i_1}(\pi^\iota(t_1)) \dots -\alpha_{i_k}(\pi^\iota(t_k)) } f_{i_1} f_{i_2} \dots f_{i_k} dt_1 \dots dt_k\right)e^{\pi^\iota(T)} + e^{ \pi^\iota(T) }\\
              \end{align*}
              Since:
              \begin{align*}
                & e^{ -\alpha_{i_1}(\pi^\iota(t_1)) \dots -\alpha_{i_k}(\pi^\iota(t_k)) } f_{i_1} f_{i_2} \dots f_{i_k}\\
              = & e^{ -\alpha_{i_1}(\pi(T-t_1)) \dots -\alpha_{i_k}(\pi(T-t_k)) + \alpha_{i_1}(\pi(T)) \dots + \alpha_{i_k}(\pi(T)) } f_{i_1} f_{i_2} \dots f_{i_k}\\
              = & e^{ -\alpha_{i_1}(\pi(T-t_1)) \dots -\alpha_{i_k}(\pi(T-t_k)) } e^{ -\pi(T) } f_{i_1} f_{i_2} \dots f_{i_k} e^{ \pi(T) }\\
              \end{align*}
              We get:
              \begin{align*}
                & p \circ \iota \left( \pi \right)\\
              = & \left( \sum_{k \geq 1} \sum_{ i_1, \dots, i_k } \int_{ T \geq t_k \geq \dots \geq t_1 \geq 0} e^{ -\alpha_{i_1}(\pi(T-t_1)) \dots -\alpha_{i_k}(\pi(T-t_k)) } e^{-\pi(T)} f_{i_1} f_{i_2} \dots f_{i_k} e^{ \pi(T) } dt_1 \dots dt_k\right) \\
                & \ \ \ e^{-\pi(T)} + e^{-\pi(T)} \\
              = &  e^{-\pi(T)}\left( id + \sum_{k \geq 1} \sum_{ i_1, \dots, i_k } \int_{ T \geq t_k \geq \dots \geq t_1 \geq 0} e^{ -\alpha_{i_1}(\pi(T-t_1)) \dots -\alpha_{i_k}(\pi(T-t_k)) }f_{i_1} f_{i_2} \dots f_{i_k} dt_1 \dots dt_k\right) \\
              = & \left( \left(\sum_{k \geq 1} \sum_{ i_1, \dots, i_k } \int_{ T \geq t_k \geq \dots \geq t_1 \geq 0} e^{ -\alpha_{i_1}(\pi(T-t_1)) \dots -\alpha_{i_k}(\pi(T-t_k)) } f_{i_k} \dots f_{i_2} f_{i_1} dt_1 \dots dt_k + id\right) e^{ \pi(T) } \right)^{ \iota}\\
              = & \left( \left(\sum_{k \geq 1} \sum_{ i_1, \dots, i_k } \int_{ T \geq t_1 \geq \dots \geq t_k \geq 0} e^{ -\alpha_{i_1}(\pi(t_1)) \dots -\alpha_{i_k}(\pi(t_k)) } f_{i_k} \dots f_{i_2} f_{i_1} dt_1 \dots dt_k + id\right) e^{ \pi(T) } \right)^{ \iota}\\
              = & \left( \left(\sum_{k \geq 1} \sum_{ i_1, \dots, i_k } \int_{ T \geq t_k \geq \dots \geq t_1 \geq 0} e^{ -\alpha_{i_1}(\pi(t_1)) \dots -\alpha_{i_k}(\pi(t_k)) } f_{i_1} f_{i_2} \dots f_{i_k} dt_1 \dots dt_k + id\right) e^{ \pi(T) } \right)^{ \iota}\\
              = & \left( B_T\left(\pi\right) \right)^\iota\\
              = & \iota \circ p \left( \pi \right)
              \end{align*}
 \item[(iii)] The two propositions are equivalent as in both the path model and Berenstein and Kazhdan's model, $\varepsilon_\alpha = \varphi_\alpha \circ \iota$.
              \begin{align*}
                & \varepsilon_\alpha \circ p (\pi)\\
              = & \chi_\alpha\left( B_T\left(\pi\right) \right)\\
              = & \chi_\alpha\left( N_T\left(\pi\right) \right)
             \end{align*}
             All that remains to be proven is $e^{ \chi_\alpha\left( N_T\left(\pi\right) \right) }= \int_0^T e^{-\alpha\left(\pi(s)\right) } ds $. Both expressions coincide for $T=0$, and have the same derivatives with respect to $T$.
 \item[(iv)] \begin{align*}
               & e^c_\alpha \cdot p \left( \pi \right)\\
             = & e^c_\alpha \cdot B_T\left(\pi\right)\\
             = & x_\alpha\left( \frac{e^c-1}{e^{\varepsilon_\alpha\left( B_T\left(\pi\right) \right)}} \right) \cdot B_T\left(\pi\right) \cdot x_\alpha\left( \frac{e^{-c}-1}{e^{\varphi_\alpha\left( B_T\left(\pi\right) \right)}} \right)\\
             = & B_T\left( T_g \pi \right)\\
               & \quad \textrm{ where } g = x_\alpha\left( \frac{e^c-1}{e^{\varepsilon_\alpha\left( \pi \right)}} \right)\\
             = & B_T\left( e^c_\alpha \cdot \pi \right)\\
             = & p\left( e^c_\alpha \cdot \pi \right)
             \end{align*}
\end{itemize} 
\end{proof}

\subsection{Verma relations}
Thanks to the previous subsection, we know that the geometric crystals given by the path model, in a certain sense, sit above the group picture $\Bc$. The Verma relations are also valid at the path level: Given a geometric Littelmann crystal $L$, for any ${\bf i} \in I^k$, $k \in \N$ consider the map $e_{\bf i}^.$ as in subsection \ref{subsection:additional_structure}.

The analogue of proposition \ref{proposition:group_verma} holds:
\begin{proposition}
 In the Littelmann geometric path model, $e_{\bf i}$ depends only on:
 $$ w = s_{i_1} \dots s_{i_k} \in W$$
\end{proposition}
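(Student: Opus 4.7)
The plan is to lift the Verma relations already established for $\Bc$ (proposition \ref{proposition:group_verma}) up to the path model via the projection $p$, by observing that every Littelmann operator is in fact a path transform $T_g$ with $g \in U$, and then using a uniqueness argument based on total positivity to promote an identity of $[gx]_{-0}$'s into an identity of group elements $g$.

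The first step is to encode each Littelmann operator as a unipotent path transform. Using the lemma preceding the projection subsection, $e^c_\alpha \cdot \pi = T_{g_\alpha^c(\pi)}\pi$, where $g_\alpha^c(\pi) = x_\alpha\bigl(\tfrac{e^c-1}{e^{\varepsilon_\alpha(\pi)}}\bigr) \in U$. Since $g_\alpha^c(\pi) \in U$, property $(i)$ in \ref{lbl:path_transform_properties} specializes to $T_{g_2}\circ T_{g_1} = T_{g_2 g_1}$ when $g_1 \in U$, and iterating over a word ${\bf i} \in I^k$ yields
$$ e_{\bf i}^t \pi = T_{G_{\bf i}(t,\pi)}\pi, \qquad G_{\bf i}(t,\pi) = g_k g_{k-1} \cdots g_1 \in U,$$
with the $g_j$'s defined inductively. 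Because $\varepsilon_\alpha = \varepsilon_\alpha\circ p$ on the path model, each scalar $\xi_j$ entering $g_j$ depends only on the successive projections $p(e_{\bf i_{<j}}^{t_{<j}}\pi)$, hence only on $B_T(\pi)$: thus $G_{\bf i}(t,\pi) = G_{\bf i}(t, B_T(\pi))$.

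The second step is to identify $G_{\bf i}$ with the \emph{left unipotent factor} of the action on $\Bc$. From $e^c_\alpha \cdot x = [g_\alpha^c(x)\cdot x]_{-0}$ and the elementary identity $[g\cdot yu]_{-0} = [gy]_{-0}$ valid for $u \in U$, an induction identical to the one above shows that for every $x \in \Bc$,
$$ e_{\bf i}^t\cdot x = [G_{\bf i}(t,x)\cdot x]_{-0},$$
with the same group element $G_{\bf i}(t,x) \in U$ as on the path side (the $\xi_j$'s are built from the same $\varepsilon$'s). Applying proposition \ref{proposition:group_verma} with $x = B_T(\pi)$ then gives $[G_{\bf i}(t,x)\cdot x]_{-0} = [G_{\bf i'}(t,x)\cdot x]_{-0}$ whenever $s_{i_1}\cdots s_{i_k} = s_{i_1'}\cdots s_{i_{k'}'}$.

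The final, and most delicate, step is the following uniqueness lemma: if $g,g' \in U$ and $x \in \Bc$ satisfy $[gx]_{-0} = [g'x]_{-0}$, then $g = g'$. Indeed, the equality forces $g^{-1}g' x \in x U$, so $g^{-1}g' \in U \cap xUx^{-1}$. Writing $x = [x]_-[x]_0$, and using that $[x]_0 \in H$ normalizes $U$, this reduces to $g^{-1}g' \in U \cap [x]_- U [x]_-^{-1}$. Now $[x]_- \in N^{w_0}_{>0}$, in particular $[x]_-$ lies in the open Bruhat cell $B^+ w_0 B^+$, so $[x]_- = u_1 \bar{w}_0 b$ with $u_1 \in U$ and $b \in B^+$. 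Since $b$ normalizes $U$ and $\bar{w}_0 U \bar{w}_0^{-1} = N$, one gets $[x]_-U[x]_-^{-1} = u_1 N u_1^{-1}$, whence
$$ U \cap [x]_-U[x]_-^{-1} = u_1\bigl(u_1^{-1}Uu_1 \cap N\bigr)u_1^{-1} = u_1(U\cap N)u_1^{-1} = \{e\}.$$
Therefore $g = g'$, so $G_{\bf i}(t,B_T(\pi)) = G_{\bf i'}(t,B_T(\pi))$, and finally $e_{\bf i}^t\pi = T_{G_{\bf i}}\pi = T_{G_{\bf i'}}\pi = e_{\bf i'}^t\pi$. The only real obstacle is this last uniqueness argument, which is precisely where the total positivity built into $\Bc$ (through the open-cell position of $[x]_-$) is essential: without it, the purely group-theoretic Verma identities on $\Bc$ could not be promoted to identities between path transformations.
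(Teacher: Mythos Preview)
Your proof is correct and follows the same overall strategy as the paper: express each $e^c_\alpha$ on paths as $T_g$ for some $g\in U$, compose to get $e_{\bf i}^t\pi=T_{G_{\bf i}}\pi$, project via $p$ to invoke the group-picture Verma relations, and then recover $G_{\bf i}=G_{\bf i'}$. The only real difference lies in the last step. The paper recovers $u$ explicitly by the formula
\[
[\bar w_0^{-1}B_T(\eta)^\iota]_+ \;=\; [\bar w_0^{-1}B_T(\pi)^\iota]_+ \, u^\iota,
\]
which isolates $u^\iota$ and hence $u$; this is essentially the twisted Lusztig parameter map $\varrho^L$ that appears in the next section. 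You instead prove an abstract uniqueness lemma: for $x\in\Bc$, the condition $[gx]_{-0}=[g'x]_{-0}$ with $g,g'\in U$ forces $g=g'$, because $U\cap xUx^{-1}=\{e\}$ once $[x]_-$ lies in the open Bruhat cell $U\bar w_0 B^+$. Both arguments hinge on the same fact---that $B_T(\pi)\in\Bc$ sits in $B^+w_0B^+$---and both are short; the paper's version has the advantage of foreshadowing the parametrization machinery, while yours is more self-contained and makes the role of the big cell fully transparent.
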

\begin{proof}
For $\pi \in C_0\left( [0, T], \afrak \right)$, $t \in \afrak$ and ${\bf i}, {\bf i'}$ words defining the same Weyl group element consider:
$$ \eta  = e^t_{\bf i } \cdot \pi$$
$$ \eta' = e^t_{\bf i'} \cdot \pi$$
Now let us prove that $\eta = \eta'$. Because the Littelmann path operators can be expressed thanks to the operator $T_.$, there are two elements $u, u' \in U$ such that:
$$ \eta = T_u \pi, \eta' = T_{u'} \pi$$
Furthermore, after applying the crystal morphism $p = B_T(.)$:
$$ B_T(\eta) = [u B_T(\pi)]_{-0}, B_T(\eta') = [u' B_T(\pi)]_{-0}$$
But since the Verma relations hold for the group picture (proposition \ref{proposition:group_verma}):
$$ B_T(\eta) = B_T(\eta')$$
Now, using the fact that $[u B_T(\pi)]_{-0} = u B_T(\pi) [u B_T(\pi)]_{+}^{-1}$ write:
\begin{align*}
g & = [\bar{w}_0^{-1} B_T(\eta)^\iota]_+\\
  & = [\bar{w}_0^{-1} \left( [u B_T(\pi)]_+^{-1} \right)^\iota B_T(\pi)^\iota u^\iota]_+\\
  & = [\bar{w}_0^{-1} B_T(\pi)^\iota ]_+ u^\iota
\end{align*}
Symetrically, $g=[\bar{w}_0^{-1} B_T(\pi)^\iota ]_+ u^\iota=[\bar{w}_0^{-1} B_T(\pi)^\iota ]_+ (u')^\iota$. Hence $u=u'$ and $\eta=\eta'$.
\end{proof}

Notice that one can also prove the Verma relations for the path model by direct computation on paths, though it is more complicated. Instead of group operations, one has to do multiple integrations by parts. Here we only give a partial sketch in the simply-laced case, using a classical procedure.

\paragraph{ $\mathbf{A_1}$ case:} In the case that $\alpha\left( \beta^\vee \right) = \beta\left( \alpha^\vee \right) = 0$, the actions $e_\alpha^.$ and $e_\beta^.$ commute, which proves the required Verma relation for type $A_1$:
$$ e^{c_1}_\alpha \cdot e^{c_2}_\beta = e^{c_2}_\beta \cdot e^{c_1}_\alpha$$

\paragraph{ $\mathbf{A_2}$ case:} By writing $t = c_1 \omega_1 + c_2 \omega_2$, the Verma relationship becomes
$$ e^{c_1 }_\alpha \cdot e^{c_1 + c_2 }_\beta \cdot e^{c_2 }_\alpha 
 = e^{c_2 }_\beta \cdot e^{c_1 + c_2 }_\alpha \cdot e^{c_1 }_\beta $$

A tedious computation gives the following lemma, that we give without proof:
\begin{lemma}
\label{label:lemma_verma_A2}
If $\alpha\left( \beta^\vee \right) = \beta\left( \alpha^\vee \right) = -1$, then
\begin{align*}
        & e^{c_1 }_\alpha \cdot e^{c_1 + c_2 }_\beta \cdot e^{c_2 }_\alpha \cdot \pi (t) = \pi(t)\\
        & + \alpha^\vee \log\left( 1 + \left( e^{c_1 + c_2} - 1 \right)
\frac
{\int_0^t e^{-\alpha(\pi(s))} \left( 1 + \left( e^{c_1} - 1 \right) \frac{\int_0^s e^{ -\beta(\pi(u)) }du }{\int_0^T e^{ -\beta(\pi(u)) }du } \right)}
{\int_0^T e^{-\alpha(\pi(s))} \left( 1 + \left( e^{c_1} - 1 \right) \frac{\int_0^s e^{ -\beta(\pi(u)) }du }{\int_0^T e^{ -\beta(\pi(u)) }du } \right)} \right) \\
        & + \beta^\vee  \log\left( 1 + \left( e^{c_1 + c_2} - 1 \right)
\frac
{\int_0^t e^{-\beta(\pi(s))} \left( 1 + \left( e^{c_2} - 1 \right) \frac{\int_0^s e^{ -\alpha(\pi(u)) }du }{\int_0^T e^{ -\alpha(\pi(u)) }du } \right)}
{\int_0^T e^{-\beta(\pi(s))} \left( 1 + \left( e^{c_2} - 1 \right) \frac{\int_0^s e^{ -\alpha(\pi(u)) }du }{\int_0^T e^{ -\alpha(\pi(u)) }du } \right)} \right)
\end{align*}
\end{lemma}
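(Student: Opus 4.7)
The key observation is that the previous proposition (deduced from \ref{proposition:group_verma} via the projection $p$) already guarantees the equality
\[
e^{c_1}_\alpha \cdot e^{c_1+c_2}_\beta \cdot e^{c_2}_\alpha \cdot \pi = e^{c_2}_\beta \cdot e^{c_1+c_2}_\alpha \cdot e^{c_1}_\beta \cdot \pi,
\]
so I do not need to verify the braid relation itself; only the \emph{explicit formula} remains. The plan is to decompose the increment $(e^{c_1}_\alpha \cdot e^{c_1+c_2}_\beta \cdot e^{c_2}_\alpha \cdot \pi)(t) - \pi(t)$ into its components along $\alpha^\vee$ and $\beta^\vee$ (these are linearly independent in type $A_2$), and read off each component from one of the two equivalent expressions above, choosing whichever is more convenient.

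For the $\beta^\vee$-component, use the left-hand side. The first action $e^{c_2}_\alpha$ contributes only to $\alpha^\vee$, and the last action $e^{c_1}_\alpha$ likewise contributes only to $\alpha^\vee$; hence the entire $\beta^\vee$-part is created by the middle step $e^{c_1+c_2}_\beta$ applied to $\pi_1 := e^{c_2}_\alpha\cdot\pi$. Writing $\pi_1(s) = \pi(s) + \log F_1(s)\,\alpha^\vee$ with $F_1(s) = 1 + (e^{c_2}-1)\tfrac{\int_0^s e^{-\alpha(\pi)}}{\int_0^T e^{-\alpha(\pi)}}$, and using $\beta(\alpha^\vee)=-1$, one gets $e^{-\beta(\pi_1(s))} = e^{-\beta(\pi(s))}F_1(s)$; substituting into the definition of $e^{c_1+c_2}_\beta$ yields exactly the $\beta^\vee$-term of the claimed formula. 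For the $\alpha^\vee$-component, use the right-hand side by symmetry: the $\alpha^\vee$-part is produced only by the middle step $e^{c_1+c_2}_\alpha$ applied to $\pi_1' := e^{c_1}_\beta\cdot\pi$, and the same computation with the roles of $\alpha,\beta$ (and of $c_1,c_2$) interchanged produces the stated $\alpha^\vee$-term.

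The main subtlety, and the only place where the proof could have gotten painful, is avoiding a direct third step $e^{c_1}_\alpha$ on $\pi_2 := e^{c_1+c_2}_\beta\cdot\pi_1$. Done head-on, that computation requires simplifying $\int_0^t e^{-\alpha(\pi_2)}ds = \int_0^t e^{-\alpha(\pi)}F_1^{-2}F_2\,ds$ via integration by parts (using $F_1' \propto e^{-\alpha(\pi)}$ and $F_2' \propto e^{-\beta(\pi)}F_1$) to bring it into a form matching the lemma's $\alpha^\vee$-numerator. By invoking the already-established Verma relation and computing the $\alpha^\vee$ and $\beta^\vee$ pieces from \emph{different} braid expressions, this integration-by-parts step is bypassed entirely, and the verification reduces to the two parallel substitutions described above.
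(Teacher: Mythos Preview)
The paper does not actually prove this lemma; it is stated ``without proof'' as the outcome of a ``tedious computation'', and the surrounding paragraph indicates that the intended computation proceeds by applying the three operators in order and simplifying via ``multiple integrations by parts''. That is precisely the head-on route you describe (and avoid) in your last paragraph: compute $\pi_1=e^{c_2}_\alpha\cdot\pi$, then $\pi_2=e^{c_1+c_2}_\beta\cdot\pi_1$, then $e^{c_1}_\alpha\cdot\pi_2$, where the last step requires integrating $e^{-\alpha(\pi)}F_1^{-2}F_2$ by parts.

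Your argument is correct and genuinely different: you split the increment into its $\alpha^\vee$- and $\beta^\vee$-components and read off each one from whichever of the two braid-equivalent expressions makes it trivial (the $\beta^\vee$-part from $e^{c_1}_\alpha e^{c_1+c_2}_\beta e^{c_2}_\alpha$, the $\alpha^\vee$-part from $e^{c_2}_\beta e^{c_1+c_2}_\alpha e^{c_1}_\beta$). The computations you sketch are right: with $\beta(\alpha^\vee)=-1$ one gets $e^{-\beta(\pi_1)}=e^{-\beta(\pi)}F_1$, and the middle operator produces exactly the stated $\beta^\vee$-term; symmetrically for the $\alpha^\vee$-term. This bypasses the integration by parts entirely. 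The trade-off is contextual: in the paper, this lemma appears in a passage whose stated purpose is to give an \emph{alternative} proof of the path-model Verma relations by direct computation. Your proof invokes the Verma relation (already established via the projection $p$) as input, so it is logically valid but would make that particular alternative argument circular. As a standalone proof of the explicit formula, however, your approach is cleaner than the one the paper alludes to.
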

By inspecting the formula, one realizes that is symmetric in $\alpha$ and $c_1$ on the one hand, and $\beta$ and $c_2$ on the other hand. As such, by swapping those variables in the left hand-side term, one gets the Verma relation for type $A_2$:
$$ e^{c_1 }_\alpha \cdot e^{c_1 + c_2 }_\beta \cdot e^{c_2 }_\alpha 
 = e^{c_2 }_\beta \cdot e^{c_1 + c_2 }_\alpha \cdot e^{c_1 }_\beta $$

\paragraph{ $\mathbf{ADE}$ case:} Root systems from the ADE classifications have Dynkin diagrams with single edges as $\alpha\left( \beta^\vee \right) = -1$ in all cases. Hence, the only Verma relations needed are of type $A_1$ and $A_2$ and $e_w$ is unambiguously defined for any element $w$ in a Weyl group of $ADE$ type.

\section{Parametrizing geometric path crystals}
In this section, we will show how connected components of a geometric Littelmann path crystal are parametrized by the totally positive group elements. In the same fashion as in the group picture (section \ref{section:geom_lifting}), we will associate to every path its Lusztig parameter, an element in $U^{w_0}_{>0}$ or equivalently a Kashiwara parameter in $C^{w_0}_{>0}$. For such an endeavour, we will need to consider paths whose endpoint or starting point is not defined anymore, as we will move to the 'edges' of the crystal. Such paths will be referred to as extended paths. The highest weight path will be given by a path transform $\Tc_{w_0}$.

More precisely, we fix a time horizon $T>0$, then consider a path $\pi \in C_0\left( [0, T], \afrak \right)$ and the crystal $\langle \pi \rangle$ it generates. We will show how Lusztig parameters and Kashiwara (or string) parameters can be retrieved from a path. The expressions found for string coordinates are geometric liftings of the formulas used in the classical Littelmann path model. Basically, we construct maps for every reduced word ${\bf i} \in R(w_0)$, $m = \ell(w_0)$:
$$\varrho_{\bf i}^L : \langle \pi \rangle \rightarrow \left(\R_{>0} \right)^m$$
$$\varrho_{\bf i}^K : \langle \pi \rangle \rightarrow \left(\R_{>0} \right)^m$$
The maps $\varrho_{\bf i}^L$ (resp. $\varrho_{\bf i}^K$) give the Lusztig (resp. Kashiwara) parameters for a path, in the sense that the diagram \ref{fig:parametrizations_diagram} is commutative (corollaries \ref{corollary:commutative_diagram_kashiwara} and \ref{corollary:commutative_diagram_lusztig}). This completes the group picture from figure \ref{fig:geom_parametrizations}. Such an important commutative diagram is also reproduced in appendix \ref{appendix:parametrizations_reminder}.

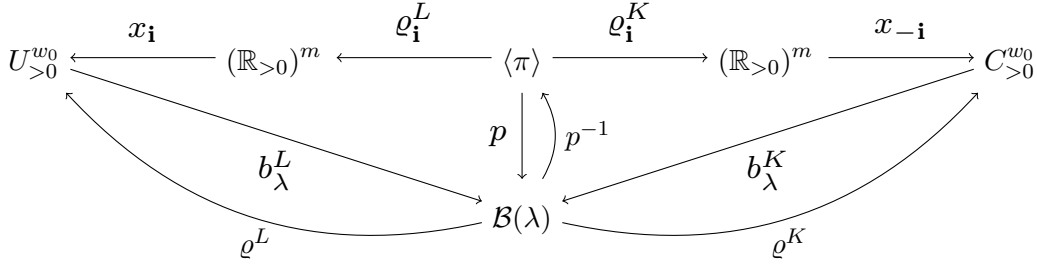
\begin{figure}[htp!]
\centering
\begin{tikzpicture}[baseline=(current bounding box.center)]
\matrix(m)[matrix of math nodes, row sep=3em, column sep=5em, text height=3ex, text depth=1ex, scale=1.2]
{ 
U_{>0}^{w_0} & \left( \R_{>0} \right)^m & \langle \pi \rangle    & \left( \R_{>0} \right)^m & C_{>0}^{w_0}\\
             &                          & \Bc(\lambda)\\
};
\path[->, font=\scriptsize] (m-1-3) edge node[above, scale=1.5]{$\varrho_{\bf i}^K$} (m-1-4);
\path[->, font=\scriptsize] (m-1-4) edge node[above, scale=1.5]{$x_{-\bf i}$} (m-1-5);
\path[->, font=\scriptsize] (m-1-3) edge node[above, scale=1.5]{$\varrho_{\bf i}^L$} (m-1-2);
\path[->, font=\scriptsize] (m-1-2) edge node[above, scale=1.5]{$x_{ \bf i}$} (m-1-1);

\path[->, font=\scriptsize] (m-1-3) edge node[left,  scale=1.5]{$p$} (m-2-3);
\draw[->] (m-2-3) to [bend right=30] node[right]{$p^{-1}$} (m-1-3);

\path[->, font=\scriptsize] (m-1-1) edge node[below, scale=1.5]{$b^L_\lambda$} (m-2-3);
\draw[->] (m-2-3) to [bend left=30] node[below]{$\varrho^L$} (m-1-1);
\path[->, font=\scriptsize] (m-1-5) edge node[below, scale=1.5]{$b^K_\lambda$} (m-2-3);
\draw[->] (m-2-3) to [bend right=30] node[below]{$\varrho^K$} (m-1-5);
\end{tikzpicture} 
\caption{Parametrizations for a connected crystal $\langle \pi \rangle$, with $\pi \in C_0([0, T], \afrak)$ and $\lambda = \Tc_{w_0} \pi(T)$}
\label{fig:parametrizations_diagram}
\end{figure}

A variant of Littelmann's independence theorem is then proved, with the projection map $p$ being an automorphism of crystals once restricted to connected components (theorem \ref{thm:geometric_rsk_correspondence}). We interpret it as a geometric version of the Robinson-Schensted-Knuth correspondence, for which we also give a dynamical version (theorem \ref{thm:dynamic_rsk_correspondence}).

Finally, for the purpose of greater generality, we will also consider a possibly infinite time horizon, an essential ingredient for chapter \ref{chapter:random_crystals}. In such a case, paths will need to have a drift inside the Weyl chamber (subsection \ref{subsection:infinite_T}).

\subsection{Extended paths and related path transforms}
A novelty in the path model approach to geometric crystals is the appearance of extended paths at the 'edges' of geometric crystals. This allows a simple compactification that does not involve the geometry of Bruhat cells. A visual sketch is given in figure \ref{figure:ExtremalPaths}.

\begin{figure}[ht!]
\centering
\caption{Sketch of extremal paths corresponding to a geometric Littelmann path $\pi$ with Lusztig parameter $g \in U^{w_0}_{>0}$ }
\label{figure:ExtremalPaths}

\begin{tikzpicture}[auto, scale=0.7]
\node at ( -0.3, 0 ) {$0$};
\node at ( 9.3, -0.3 ) {$T$};
\path [draw, very thick, color = black, ->] (0,0) -- (10 , 0 );
\path [draw, very thick, color = black, ->] ( 0, -7 ) -- ( 0, 7 );

\path [draw, dashed, color = black] (9,-7) -- (9 , 7 );

\definecolor{red  }{rgb}{ 0.5,0.0,0.0 }
\definecolor{blue }{rgb}{ 0.0,0.0,0.5 }
\definecolor{black}{rgb}{ 0.0,0.0,0.0 }

\draw (0,0) .. controls (3,2) and (6,2)  .. (9,3) node (pi) [right]{$\pi$};

\draw[dashed, color=blue] (0.3,-6) -- (1.05,-4);
\draw[color=blue] (1.05, -4) .. controls (1.8,-2) and (6, 5) .. (9, 6) node (high) [right]{$\eta^{high} = \Tc_{w_0} \pi$};

\draw[color=red] (0,0) .. controls (3,1) and (6.3,-2) .. (7.5, -4);
\draw[dashed, color=red] (7.5, -4) -- (8.7, -6) node (low) [right]{$\eta^{low} = e^{-\infty}_{w_0} \pi$};

\Edge[lw=0.1cm,style={post, bend right,color=black,}, label=$T_g$](low)(pi)
\Edge[lw=0.1cm,style={post, bend right,color=black,}, label=$\Tc_{w_0}$](pi)(high)
\Edge[lw=0.3cm,style={post, bend right,color=black,}, label=$e^{-\infty}_{w_0}$](pi)(low)

\end{tikzpicture}

\end{figure}
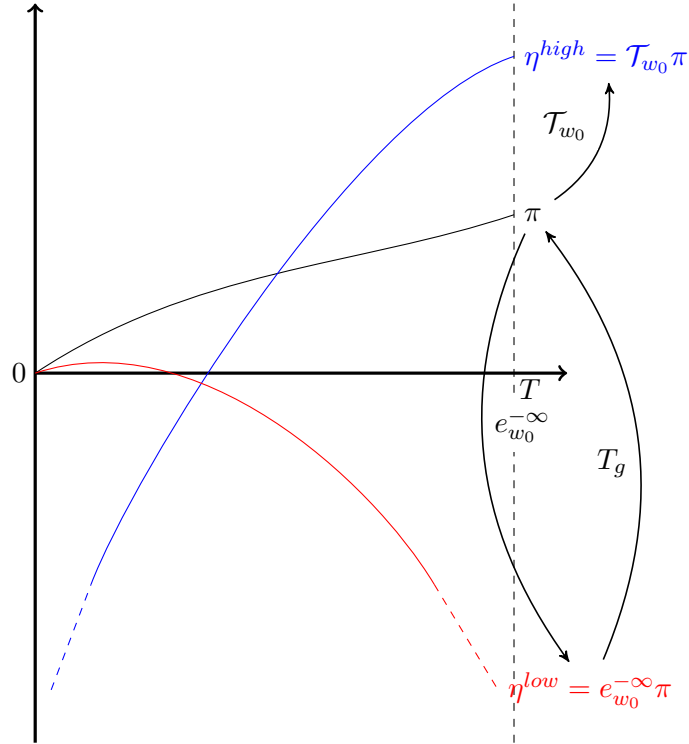

\subsubsection{High path transforms}
A first example giving extended paths was introduced in \cite{bib:BBO}: for every simple root $\alpha$, define the following transform of a continuous path $\pi \in C(\R_+^*, \afrak )$, such that $e^{-\alpha(\pi)}$ is integrable at the neighborhood of $0$:
$$ \forall t > 0, \Tc_\alpha(\pi)(t) := \pi(t) + \log\left(\int_0^t e^{-\alpha(\pi(s))} ds\right) \alpha^{\vee} $$
Notice that $\Tc_\alpha(\pi)$ is not defined at zero. A 'tropicalization' gives the Pitman operators $\Pc_{\alpha^\vee} = \lim_{\varepsilon \rightarrow 0} \varepsilon \Tc_{\alpha} \varepsilon^{-1}$:
$$ \forall t>0, \Pc_{\alpha^\vee}(\pi)(t) = \pi(t) - \inf_{0\leq s \leq t} \alpha\left(\pi(s)\right) \alpha^{\vee}$$
It was also proven that the $\left(\Tc_\alpha\right)_{\alpha \in \Delta}$ satisfy the braid relationships. We give a simpler proof shortly.
\begin{thm}[\cite{bib:BBO}]
\label{thm:braid_relations_Tc}
If $\bar{w} = \bar{s}_{i_1} \dots \bar{s}_{i_k}$ is a representative in $G$ of $w\in W$ written in a reduced fashion and $\pi$ is a continuous path, then:
$$ e^{\Tc_w(\pi)_t} := [\bar{w}^{-1} B_t(\pi)]_0$$ 
is well defined for $t>0$ and
$$ \Tc_w =  \Tc_{\alpha_{i_k}} \circ \dots \circ \Tc_{\alpha_{i_2}} \circ \Tc_{\alpha_{i_1}}$$
Moreover, the operators $(\Tc_\alpha)_{\alpha \in \Delta}$ satisfy the braid relationships.
\end{thm}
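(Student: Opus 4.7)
The plan is to verify the three assertions in sequence: well-definedness, the factorization formula (by induction on $\ell(w)$), and the braid relations (as an immediate corollary).

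For well-definedness, I would use that $B_t(\pi) \in N^{w_0}_{>0} A \subset G_{\geq 0}$ for $t>0$ by theorem \ref{thm:flow_B_total_positivity}. The Gauss decomposition of $\bar{w}^{-1} B_t(\pi)$ exists precisely when all principal generalized minors $\Delta^{\omega_\alpha}(\bar{w}^{-1} B_t(\pi)) = \Delta_{w\omega_\alpha, \omega_\alpha}(B_t(\pi))$ are nonzero. By the total positivity criterion (theorem \ref{thm:total_positivity_criterion}) applied to $[B_t(\pi)]_- \in N^{w_0}_{>0}$, all these minors are in fact strictly positive, so $[\bar{w}^{-1} B_t(\pi)]_0$ is well-defined in $H$ with logarithm valued in $\afrak$.

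For the factorization, I would proceed by induction on $k = \ell(w)$. The base case $k = 1$ reduces via the embedding $\phi_\alpha: SL_2 \hookrightarrow G$ to a direct rank-one computation: in $SL_2$, with $\pi$ scalar-valued,
$$ \bar{s}_\alpha^{-1} B_t(\pi) = \begin{pmatrix} 0 & 1 \\ -1 & 0 \end{pmatrix} \begin{pmatrix} e^{\pi_t} & 0 \\ e^{\pi_t}\int_0^t e^{-2\pi_s}ds & e^{-\pi_t} \end{pmatrix} = \begin{pmatrix} e^{\pi_t}\int_0^t e^{-2\pi_s}ds & e^{-\pi_t} \\ -e^{\pi_t} & 0 \end{pmatrix} $$
whose Gauss decomposition immediately yields $\log[\bar{s}_\alpha^{-1} B_t(\pi)]_0 = \pi(t) + \log(\int_0^t e^{-\alpha(\pi(s))}ds)\,\alpha^\vee = \Tc_\alpha(\pi)(t)$. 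For the inductive step with $w = v s_{i_k}$, write $\bar{w}^{-1} = \bar{s}_{i_k}^{-1} \bar{v}^{-1}$ and use the elementary identity $[gh]_0 = [g[h]_{-0}]_0$ (valid whenever both sides exist, since $[h]_+$ has no effect on the $0$-component) to reduce to computing $[\bar{s}_{i_k}^{-1}\,[\bar{v}^{-1} B_t(\pi)]_{-0}]_0$. By the Stratonovich equation stated just before the theorem, $[\bar{v}^{-1} B_t(\pi)]_{-0}$ satisfies the same right-invariant evolution as $B_t(\cdot)$, but driven by the path $T_{\bar{v}^{-1}}\pi$, which by the induction hypothesis coincides with $\Tc_v\pi$. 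A uniqueness argument then identifies this process with a left translate (by a time-independent element of $N$) of $B_t(\Tc_v\pi)$, and a further invariance check on the $[\cdot]_0$ component yields exactly $e^{\Tc_{\alpha_{i_k}}(\Tc_v\pi)(t)}$, completing the induction.

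The braid relations follow as a corollary: the Kac-Peterson theorem (lemma 2.3 of the preliminaries) ensures that the representatives $\bar{s}_i$ satisfy the braid relations in $G$, so $\bar{w}$, and therefore the left-hand side $\Tc_w = \log[\bar{w}^{-1} B_\bullet(\pi)]_0$, depends only on $w$ and not on the chosen reduced word. Equating the factorizations obtained from any two reduced expressions of $w$ gives exactly the braid relations for $(\Tc_\alpha)_{\alpha \in \Delta}$.

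The main obstacle is the identification in the inductive step: the path $\Tc_v\pi$ is only defined on $(0,T]$ and may diverge at $0$, so one cannot directly match $[\bar{v}^{-1} B_t(\pi)]_{-0}$ with $B_t(\Tc_v\pi)$ via an initial condition at $t=0$. The resolution is to argue on compact subintervals $[\varepsilon, T]$: both processes satisfy the same right-invariant ODE driven by $\Tc_v\pi$, hence differ by a left multiplication by a constant-in-time element of $N$ (fixed by the initial condition at $t=\varepsilon$), and this factor is harmless when taking the $[\cdot]_0$ component.
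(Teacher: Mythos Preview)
Your overall architecture is sound and the well-definedness and braid-relation arguments are correct. The inductive step, however, contains a genuine gap. You correctly reduce to computing $[\bar{s}_{i_k}^{-1} X_t]_0$ with $X_t=[\bar v^{-1}B_t(\pi)]_{-0}$, and it is true that on $[\varepsilon,T]$ the process $X_t$ differs from your reference solution by left multiplication by $n_\varepsilon=[\bar v^{-1}B_\varepsilon(\pi)]_-\in N$. But left multiplication by $N$ is \emph{only} harmless for $[\cdot]_0$; it is \emph{not} harmless for $[\bar s_{i_k}^{-1}\cdot]_0$, since $\bar s_{i_k}^{-1}n_\varepsilon\bar s_{i_k}\notin N$ whenever $\chi_{\alpha_{i_k}}^-(n_\varepsilon)\neq0$. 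Concretely, one computes
\[
\log[\bar s_{i_k}^{-1}X_t]_0=\Tc_v\pi(t)+\log\bigl(\chi_{\alpha_{i_k}}^-(\tilde N_t)\bigr)\,\alpha_{i_k}^\vee,
\qquad
\chi_{\alpha_{i_k}}^-(\tilde N_t)=\frac{\Delta_{w\omega_{i_k},\omega_{i_k}}(B_t(\pi))}{\Delta_{v\omega_{i_k},\omega_{i_k}}(B_t(\pi))},
\]
and $\chi_{\alpha_{i_k}}^-(\tilde N_t)-\chi_{\alpha_{i_k}}^-(\tilde N_\varepsilon)=\int_\varepsilon^t e^{-\alpha_{i_k}(\Tc_v\pi)}$. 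So the missing ingredient is precisely that $\chi_{\alpha_{i_k}}^-(\tilde N_\varepsilon)\to0$ as $\varepsilon\to0$. This \emph{can} be shown (the ratio of minors above vanishes to order $\mathrm{ht}(v\alpha_{i_k})\geq1$ at $t=0$, using $v\alpha_{i_k}\in\Phi^+$), but it is a substantive step that your proposal does not supply.

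The paper avoids this difficulty by a different route: rather than induct on $\ell(w)$, it approximates $\bar w^{-1}$ by finite products $\prod_j \xi_j^{h_{i_j}}x_{i_j}(1/\xi_j)\in AU_{\geq0}^w$, for which the composition property $T_{g_1g_2}=T_{g_1}\circ T_{g_2}$ holds cleanly (since each factor lies in $D=NAU_{\geq0}$) and the individual transforms $T_{\xi^{h_\alpha}x_\alpha(1/\xi)}$ converge to $\Tc_\alpha$ as $\xi\to0$. One then rewrites each factor as $y_{i_j}(1/\xi_j)\bar{\bar s}_{i_j}(\xi_j)$ and lets the $\xi_j\to0$ in order, commuting the $y_{i_j}$ factors past the accumulated Weyl representative (they land in $N$ precisely because $s_{i_k}\cdots s_{i_{j+1}}\alpha_{i_j}\in\Phi^+$) so they disappear under $[\cdot]_0$. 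This limiting argument never confronts the $t\to0$ singularity of $\Tc_v\pi$; your inductive approach does, and that is where the extra work lies.
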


The path operator $\Tc_{w_0}$ arises then in a very natural way as the highest weight path transform for the geometric Littelmann model. Indeed, considering a connected geometric crystal, because crystal actions are free, there is no such thing as a dominant path that could be preferred, unlike the $q=0$ case considered in \cite{bib:BBO2}, or the original setting considered by Littelmann. Hence the idea of finding an invariant under crystal actions that will play that role. In the group picture, we have already introduced a notion of highest weight in definition \ref{def:highest_lowest_weight} which fullfills that purpose as invariant (lemma \ref{lemma:hw_is_invariant}). Now that we have at our disposal the projection map $p$, it is natural to transport the definition of highest weight from the group picture. 

\begin{definition}
If $\pi \in C( [0, T], \afrak )$ the associated highest weight is given by:
$$ hw(\pi) := hw( B_T(\pi) )$$
\end{definition}

And evidently:
\begin{proposition}
If $\pi \in C( [0, T], \afrak )$ then:
$$  hw(\pi) = \Tc_{w_0}(\pi)(T)$$
\end{proposition}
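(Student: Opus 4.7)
The plan is a direct unfolding of definitions combined with one invocation of theorem \ref{thm:braid_relations_Tc}. I do not expect any real obstacle here since the heavy lifting has been done in the construction of $\Tc_{w_0}$ and in the verification that the $B_t(\pi)$ trajectory lives in a totally positive cell.

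First, I would justify that both expressions make sense. Theorem \ref{thm:flow_B_total_positivity} guarantees that for $T>0$, $B_T(\pi) \in N_{>0}^{w_0} A \subset \Bc$, so the Gauss decomposition needed in definition \ref{def:highest_lowest_weight} is available. Unpacking the two layers of definition, the left-hand side becomes
$$ hw(\pi) \;:=\; hw\bigl(B_T(\pi)\bigr) \;=\; \log\bigl[\bar{w}_0^{-1} B_T(\pi)\bigr]_0. $$

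Next, I would apply theorem \ref{thm:braid_relations_Tc}, which states precisely that for every $w \in W$ and every $t>0$, one has $e^{\Tc_w(\pi)(t)} = [\bar{w}^{-1} B_t(\pi)]_0$. Specializing to $w=w_0$ and $t=T$ yields
$$ e^{\Tc_{w_0}(\pi)(T)} \;=\; \bigl[\bar{w}_0^{-1} B_T(\pi)\bigr]_0, $$
and taking logarithms identifies this with $hw(\pi)$ as computed above.

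The main (and essentially only) step is recognizing that the definition of $hw$ on paths has been set up precisely to make it the $t=T$ value of the path transform $\Tc_{w_0}$ introduced in theorem \ref{thm:braid_relations_Tc}; in other words, this proposition records the compatibility already built into the definitions. No further braid manipulation, total positivity argument, or differential equation analysis is needed beyond what has been established earlier in the chapter.
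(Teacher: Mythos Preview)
Your proposal is correct and matches the paper's approach exactly: the paper introduces this proposition with the word ``evidently'' and gives no further proof, treating it as an immediate unfolding of the definition $hw(\pi) := hw(B_T(\pi)) = \log[\bar{w}_0^{-1} B_T(\pi)]_0$ together with the formula $e^{\Tc_{w_0}(\pi)(t)} = [\bar{w}_0^{-1} B_t(\pi)]_0$ from theorem \ref{thm:braid_relations_Tc}. Your write-up simply makes explicit what the paper leaves implicit.
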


A key difference with the 'crystallized' case obtained in \cite{bib:BBO, bib:BBO2} is that the path $\Tc_{w_0} \pi$ does not belong to the crystal generated by $\pi$. For instance, it is not defined at zero. It is however an invariant. This difference would explain why the naturality of $\Tc_{w_0}$ has been so elusive, so far.\\

Now, going back to discussing the braid relations, we make a simple remark:
\begin{align*}
\forall t>0, T_{\xi^{-\alpha^\vee} x_\alpha\left( \xi \right) }( \pi )(t)
& = \pi(t) + \log\left( \frac{1}{\xi} + \int_0^t e^{-\alpha(\pi(s))} ds\right) \alpha^{\vee}\\
& \stackrel{ \xi \rightarrow +\infty }{\longrightarrow}  \Tc_\alpha(\pi)(t)
\end{align*} 
This will allow us to give a simpler proof of the braid relations for $(\Tc_\alpha)_{\alpha \in \Delta}$ using the path transform properties \ref{lbl:path_transform_properties} as suggested at the end of \cite{bib:BBO2} (section 6.6).

\begin{proof}[Proof of theorem \ref{thm:braid_relations_Tc}]
We will show that the operators $(\Tc_\alpha)_{\alpha \in \Delta}$ satisfy the braid relationships as a consequence of the fact that the representatives $(\bar{s}_\alpha)_{\alpha \in \Delta}$ also satisfy them.

Let $\pi$ a continuous path in $\afrak$ and fix $g \in H_{>0} U^w_{>0}$ such that:
$$ g = \xi_k^{h_{i_k}} x_{i_k} \left( \frac{1}{\xi_k} \right) \dots \xi_1^{h_{i_1}} x_{i_1}\left( \frac{1}{\xi_1} \right) $$
for parameters $\xi_i > 0$.\\
We will make use of the approximants:
\begin{align} 
\label{lbl:weyl_group_approximants}
\bar{s}_i(t) & := \phi_i\left(  \left( \begin{array}{cc} t & -1 \\ 1 & 0 \end{array}\right) \right) = y_i\left( -\frac{1}{t} \right) t^{h_i} x_i\left( \frac{1}{t} \right)\\
\bar{\bar{s}}_i(t) & := \phi_i\left(  \left( \begin{array}{cc} t & 1 \\ -1 & 0 \end{array}\right) \right) = y_i\left( \frac{1}{t} \right) t^{h_i} x_i\left( -\frac{1}{t} \right)
\end{align}
which converges respectively to $\bar{s}_i$ and $\bar{\bar{s}}_i = \bar{s}_{i_1}^{-1}$ as the parameter goes to zero. Let us start by writing:
\begin{align*}
\forall t>0, e^{T_g\left( \pi \right)(t)}
& = \left[g B_t(\pi)\right]_0\\
& = \left[\xi_k^{h_{i_k}} x_{i_k}\left( \frac{1}{\xi_k} \right)
    \dots \xi_1^{h_{i_1}} x_{i_1}\left( \frac{1}{\xi_1} \right)
          B_t(\pi)\right]_0\\
& = \left[ y_{i_k}\left( \frac{1}{\xi_k} \right) \bar{\bar{s}}_{i_k}( \xi_k )
     \dots y_{i_1}\left( \frac{1}{\xi_1} \right) \bar{\bar{s}}_{i_1}( \xi_1 )
           B_t(\pi) \right]_0\\ 
\end{align*}
Here we need to make $\xi_j$ successively go to zero in the decreasing order $j=k, \dots, 1$. Let us prove by induction that at the step $k \geq j > 1$, we get the quantity:
\begin{align}
 \label{eqn:quantity_j}
 \left[ \overline{ \overline{s_{i_k} \dots s_{i_{j+1}} }}
                y_{i_j}\left( \frac{1}{\xi_j} \right) \bar{\bar{s}}_{i_j}( \xi_j ) 
          \dots y_{i_1}\left( \frac{1}{\xi_1} \right) \bar{\bar{s}}_{i_1}( \xi_1 )
           B_t(\pi) \right]_0
\end{align}
Since $B_t(\pi)$ is totally positive inside $B$ (theorem \ref{thm:flow_B_total_positivity}), then for all $j=k, \dots, 1$, it is also the case for 
$$ g_j = \xi_j^{h_{i_j}} x_{i_j}\left( \frac{1}{\xi_j} \right)
    \dots \xi_1^{h_{i_1}} x_{i_1}\left( \frac{1}{\xi_1} \right) B_t(\pi)$$
Therefore, the minors $\Delta^{\omega_\alpha}\left( \overline{s_{i_{j+1}} \dots s_{i_k}}^{-1} g_j\right)$ are non zero, a topologically open property that stays valid for the $\xi_j$ in a neighborhood of zero. Hence, taking those limits and considering those Gauss decompositions is allowed.

First, at step $j=k$, we can get rid of $y_{i_k} \left( \frac{1}{\xi_k} \right) \in N$:
\begin{align*}
e^{T_g\left( \pi \right)(t)}
& = \left[ y_{i_k}\left( \frac{1}{\xi_k} \right) \bar{\bar{s}}_{i_k}( \xi_k )
     \dots y_{i_1}\left( \frac{1}{\xi_1} \right) \bar{\bar{s}}_{i_1}( \xi_1 )
           B_t(\pi) \right]_0\\ 
& \stackrel{ \xi_k \rightarrow 0 }{\longrightarrow}
    \left[ \bar{\bar{s}}_{i_k}
     \dots y_{i_1}\left( \frac{1}{\xi_1} \right) \bar{\bar{s}}_{i_1}( \xi_1 )
           B_t(\pi) \right]_0
\end{align*}
Now, assume that equation (\ref{eqn:quantity_j}) is proven for step $j$:
$$ \left[ \overline{ \overline{s_{i_k} \dots s_{i_{j+1}} }}
                y_{i_j}\left( \frac{1}{\xi_j} \right) \bar{\bar{s}}_{i_j}( \xi_j ) 
          \dots y_{i_1}\left( \frac{1}{\xi_1} \right) \bar{\bar{s}}_{i_1}( \xi_1 )
           B_t(\pi) \right]_0$$
Here, write:
$$ \overline{ \overline{s_{i_k} \dots s_{i_{j+1}} }} y_{i_j}\left( \frac{1}{\xi_j} \right)
 = \exp\left( \frac{1}{\xi_j} Ad(\overline{ \overline{s_{i_k} \dots s_{i_{j+1}} }}) f_{i_j} \right) 
   \overline{ \overline{s_{i_k} \dots s_{i_{j+1}} }}$$
And since $Ad(\overline{ \overline{s_{i_k} \dots s_{i_{j+1}} }}) f_{i_j} \in \mathfrak{g}_{ -s_{i_k} \dots s_{i_{j+1}} \alpha_{i_j} } \subset \nfrak$, one has:
\begin{align*}
  & \left[ \overline{ \overline{s_{i_k} \dots s_{i_{j+1}} }}
                  y_{i_j}\left( \frac{1}{\xi_j} \right) \bar{\bar{s}}_{i_j}( \xi_j ) 
            \dots y_{i_1}\left( \frac{1}{\xi_1} \right) \bar{\bar{s}}_{i_1}( \xi_1 )
            B_t(\pi) \right]_0\\
= & \left[ \exp\left( \frac{1}{\xi_j} Ad(\overline{ \overline{s_{i_k} \dots s_{i_{j+1}} }}) f_{i_j} \right) 
           \overline{ \overline{s_{i_k} \dots s_{i_{j+1}} }} \bar{\bar{s}}_{i_j}( \xi_j ) 
            \dots y_{i_1}\left( \frac{1}{\xi_1} \right) \bar{\bar{s}}_{i_1}( \xi_1 )
            B_t(\pi) \right]_0\\
= & \left[ \overline{ \overline{s_{i_k} \dots s_{i_{j+1}} }} \bar{\bar{s}}_{i_j}( \xi_j ) 
            \dots y_{i_1}\left( \frac{1}{\xi_1} \right) \bar{\bar{s}}_{i_1}( \xi_1 )
            B_t(\pi) \right]_0\\
& \stackrel{ \xi_j \rightarrow 0}{\longrightarrow}
    \left[ \overline{ \overline{s_{i_k} \dots s_{i_{j+1}} s_{i_j} }}
            \dots y_{i_1}\left( \frac{1}{\xi_1} \right) \bar{\bar{s}}_{i_1}( \xi_1 )
            B_t(\pi) \right]_0
\end{align*}
The previous limit gives step $j-1$.\\

At the end, we get:
$$\left[\overline{ \overline{s_{i_k} \dots s_{i_1} }} B_t(\pi) \right]_0 = \left[\bar{w}^{-1} B_t(\pi) \right]_0 = \exp\left( \Tc_w \pi(t) \right)$$ 
On the other hand, because the group elements belong to the appropriate sets, we can use the composition property among properties \ref{lbl:path_transform_properties}:
\begin{align*}
\forall t>0, T_g\left( \pi \right)(t)
& = T_{ \xi_1^{h_{i_k}} x_{i_k} \left( \frac{1}{\xi_k} \right) } \circ \dots \circ T_{ \xi_1^{h_{i_1}} x_{i_1} \left( \frac{1}{\xi_1} \right) }\left( \pi \right)(t)\\
& \longrightarrow  \Tc_{k} \circ \dots \circ \Tc_{1}(\pi)(t)
\end{align*}
The previous limit makes sense if and only if for every $j=k, \dots, 1$, $\exp\left(-\alpha_{i_j}( \Tc_{s_{i_1} \dots s_{i_{j-1}}}\pi) \right)$ is integrable. Later, we have a much precise description of this integrability property, but for now, we already know thanks to the previous computation that the Gauss decompositions exist at every level. Therefore, the highest path transforms $(\Tc_\alpha)_{\alpha \in \Delta}$ must have been applied to paths with the appropriate integrability property. Identifying both limits, the braid relationships are proven:
$$\forall t>0,  \Tc_{k} \circ \dots \circ \Tc_{1}(\pi)(t) = \Tc_w \pi(t)$$
\end{proof}

\subsubsection{Low path transforms}
Define $e^{-\infty}_\alpha: C_0\left( [0; T), \afrak \right) \rightarrow C_0\left( [0; T), \afrak \right)$ as
$$ \forall 0 \leq t < T, e^{-\infty}_\alpha \cdot \pi(t) := \pi(t) + \log\left( 1 - \frac{ \int_0^t e^{-\alpha(\pi)} }{ \int_0^T e^{-\alpha(\pi)} } \right) \alpha^\vee$$
Notice that $T$ is excluded and that this path transform makes sense even if $\int_0^T e^{\alpha(\pi)} = \infty$. The notation obviously comes from the fact that $e^{-\infty}_\alpha = \lim_{c \rightarrow -\infty} e^c_\alpha$, hence the name of 'low' path transforms.\\

Clearly, $e^{-\infty}_\alpha$ is a projection in the sense that $e^c_\alpha \cdot e^{-\infty}_\alpha = e^{-\infty}_\alpha \cdot e^c_\alpha = e^{-\infty}_\alpha$ and it stabilizes paths $\pi$ such that $\int_0^T e^{-\alpha(\pi)} = +\infty$. In fact, we can associate such transforms to each element of the Weyl group:
\begin{definition}
Given a reduced expression $w=s_{i_1} \dots s_{i_l}$ for $w \in W$ and $\ell(w) = l$, $e^{-\infty}_w$ is defined unambiguously as 
$$ e^{-\infty}_w = e^{-\infty}_{\alpha_{i_l}} \dots e^{-\infty}_{\alpha_{i_1}} $$
\end{definition}
\begin{proof}
A first proof uses remark \ref{lemma:high_low_duality} and the braid relations for $\left( \Tc_w \right)_{w \in W}$. A second proof consists of using the Verma relations in order to check the claim in the case of a braid move: $w = s_i s_j s_i \dots = s_j s_i s_j \dots$. Indeed let $\left( \beta_1, \beta_2, \beta_3, \dots \right)$ and  $\left( \beta_1', \beta_2', \beta_3', \dots \right)$ the two positive roots enumerations associated to each reduced word.
$$\forall t \in \afrak, e_w^t = e^{ \beta_1(t) }_{\alpha_i} \cdot e^{ \beta_2(t) }_{\alpha_j} \dots = e^{ \beta_1'(t) }_{\alpha_j} \cdot e^{ \beta_2'(t) }_{\alpha_i} \dots $$
Then take $t = - M \mu$ with $\mu$ in the open Weyl chamber, $M$ a real number, and have $M$ go to $+\infty$.
\end{proof}

The name of 'high' path transforms for $\left( \Tc_w \right)_{w \in W}$ is justified by the fact that they are dual to 'low' path transforms:
\begin{lemma}
\label{lemma:high_low_duality}
$$ e^{-\infty}_{\alpha} \circ \iota = \iota \circ \Tc_{\alpha}  $$
And for $w \in W$:
$$ e^{-\infty}_{w} \circ \iota = \iota \circ \Tc_{w} $$ 
\end{lemma}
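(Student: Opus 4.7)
The plan is to first establish the identity $e^{-\infty}_\alpha \circ \iota = \iota \circ \Tc_\alpha$ for a single simple root $\alpha \in \Delta$ by direct computation, and then bootstrap to general $w \in W$ using the braid-compatible factorizations of both sides.

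For the base case, I would fix a path $\pi \in C_0([0,T], \afrak)$ and compute $\iota \circ \Tc_\alpha(\pi)$. By definition of $\iota$, for $0 \leq t < T$ one has
\[
  \iota\circ\Tc_\alpha(\pi)(t) = \Tc_\alpha(\pi)(T-t) - \Tc_\alpha(\pi)(T)
  = \pi^\iota(t) + \log\!\left( \frac{\int_0^{T-t} e^{-\alpha(\pi(s))}\,ds}{\int_0^{T} e^{-\alpha(\pi(s))}\,ds} \right) \alpha^\vee .
\]
On the other hand, for $e^{-\infty}_\alpha\cdot\pi^\iota$ I would use $\alpha(\pi^\iota(s)) = \alpha(\pi(T-s)) - \alpha(\pi(T))$, which yields $e^{-\alpha(\pi^\iota(s))} = e^{\alpha(\pi(T))} e^{-\alpha(\pi(T-s))}$. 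The multiplicative factor $e^{\alpha(\pi(T))}$ cancels in the ratio $\int_0^t e^{-\alpha(\pi^\iota)}/\int_0^T e^{-\alpha(\pi^\iota)}$, and the substitution $u = T-s$ turns the numerator into $\int_{T-t}^T e^{-\alpha(\pi(u))}\,du$. Writing $1 - \frac{\int_{T-t}^T}{\int_0^T} = \frac{\int_0^{T-t}}{\int_0^T}$ then identifies the two expressions, proving the simple-root case.

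For the inductive step, I would use the factorizations
\[
  \Tc_w = \Tc_{\alpha_{i_l}} \circ \cdots \circ \Tc_{\alpha_{i_1}}, \qquad
  e^{-\infty}_w = e^{-\infty}_{\alpha_{i_l}} \circ \cdots \circ e^{-\infty}_{\alpha_{i_1}}
\]
(both given, the first by Theorem~\ref{thm:braid_relations_Tc}, the second by definition), for any reduced expression $w = s_{i_1}\cdots s_{i_l}$. Then commuting $\iota$ past each $\Tc_{\alpha_{i_k}}$ one at a time via the already-established simple-root identity gives
\[
  \iota \circ \Tc_w
  = e^{-\infty}_{\alpha_{i_l}} \circ \cdots \circ e^{-\infty}_{\alpha_{i_1}} \circ \iota
  = e^{-\infty}_w \circ \iota ,
\]
and the order of the factors is preserved since each commutation swaps $\iota$ with one operator to its right without reordering the others.

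The main (mild) obstacle is purely bookkeeping in the base case: making sure the domains of definition match up (note that $\Tc_\alpha\pi$ is defined on $(0,T]$ while $e^{-\infty}_\alpha\cdot\pi$ is defined on $[0,T)$, so that $\iota$ maps the former class into the latter), and keeping track of the $e^{\alpha(\pi(T))}$ factor coming from the shift in the definition of $\pi^\iota$. Once the $t=0$ versus $t=T$ endpoint issue is clarified and the substitution is carried out, the rest of the argument is formal.
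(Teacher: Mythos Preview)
Your proof is correct and follows exactly the approach the paper has in mind: the paper's own proof simply reads ``The first identity is a quick computation. The second one is a consequence.'' Your direct computation for the simple-root case and the subsequent commutation of $\iota$ through the factorized products $\Tc_w = \Tc_{\alpha_{i_l}}\circ\cdots\circ\Tc_{\alpha_{i_1}}$ and $e^{-\infty}_w = e^{-\infty}_{\alpha_{i_l}}\circ\cdots\circ e^{-\infty}_{\alpha_{i_1}}$ are precisely what is meant.
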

\begin{proof}
The first identify is a quick computation. The second one is a consequence.
\end{proof}

\begin{rmk}
Notice that it does not make sense to apply the duality map $\iota$ after $e^{-\infty}_w$ for $w \in W$, since it produces a path lacking an endpoint. Though, an extended duality holds, as we will see.

Moreover, the transforms $\left( \Tc_w \right)_{w \in W}$ are not projections.
\end{rmk}

Of course $e^{-\infty}_{w_0}$ is special projection as:
$$\forall \alpha \in \Delta, \forall c \in \mathbb{R}, e^{-\infty}_{w_0} \cdot e^{c}_{\alpha} = e^{c}_{\alpha} \cdot e^{-\infty}_{w_0} = e^{-\infty}_{w_0}$$
The following proposition shows that it is constant on the crystal's components:
\begin{proposition}
\label{proposition:partial_connectedness_criterion}
 If $\pi_1$ and $\pi_2$ are connected then $e^{-\infty}_{w_0} \cdot \pi_1 = e^{-\infty}_{w_0} \cdot \pi_2$
\end{proposition}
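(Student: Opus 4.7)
The plan is to reduce everything to the elementary absorption identity $e^{-\infty}_\alpha \circ e^c_\alpha = e^{-\infty}_\alpha$ for a simple root $\alpha$ and $c \in \R$. Since $\pi_1$ and $\pi_2$ are connected, by the very definition of connectedness there exist $l \in \N$, indices $\alpha_{i_1}, \dots, \alpha_{i_l} \in \Delta$ and reals $c_1, \dots, c_l$ such that $\pi_2 = e^{c_1}_{\alpha_{i_1}} \cdots e^{c_l}_{\alpha_{i_l}} \cdot \pi_1$. Hence by a trivial induction on $l$, it suffices to prove that for every $\alpha \in \Delta$ and every $c \in \R$,
\[ e^{-\infty}_{w_0} \cdot e^c_\alpha = e^{-\infty}_{w_0}. \]

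To establish this identity I would exploit the fact that $w_0$ is the longest element of the Weyl group: for every simple reflection $s_\alpha$ one has $\ell(s_\alpha w_0) = \ell(w_0) - 1$, hence there exists $w'' \in W$ with $\ell(w'') = \ell(w_0) - 1$ and $w_0 = s_\alpha w''$. Picking a reduced expression for $w''$ and prepending $s_\alpha$ yields a reduced expression for $w_0$ whose \emph{first} letter is $s_\alpha$. Using this particular reduced word in the definition of $e^{-\infty}_{w_0}$ (which is unambiguous thanks to the definition preceding the statement), the operator $e^{-\infty}_\alpha$ is applied first, so we can factor
\[ e^{-\infty}_{w_0} = e^{-\infty}_{w''} \circ e^{-\infty}_\alpha. \]

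The proof then concludes by invoking the absorption property $e^{-\infty}_\alpha \circ e^c_\alpha = e^{-\infty}_\alpha$, which is immediate from the definition: $e^{-\infty}_\alpha$ was introduced precisely as the limit $\lim_{c' \to -\infty} e^{c'}_\alpha$, and combined with the group action property $e^{c'}_\alpha \circ e^c_\alpha = e^{c'+c}_\alpha$ one finds that $e^{-\infty}_\alpha$ erases any $e^c_\alpha$ sitting to its right. Therefore $e^{-\infty}_{w_0} \circ e^c_\alpha = e^{-\infty}_{w''} \circ e^{-\infty}_\alpha \circ e^c_\alpha = e^{-\infty}_{w''} \circ e^{-\infty}_\alpha = e^{-\infty}_{w_0}$, as desired.

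The only delicate point — hardly an obstacle, but the place where one must be careful — is the bookkeeping of the order in which the factors $e^{-\infty}_{\alpha_{i_k}}$ are composed in the definition of $e^{-\infty}_{w_0}$: the product is taken in the reverse order of the reduced word, so what really matters is that $\alpha$ can be realized as the \emph{first} letter of a reduced expression for $w_0$, not the last. This is guaranteed by the longest-element property recalled above, so no genuine difficulty arises and the argument fits in a few lines.
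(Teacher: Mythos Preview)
Your proof is correct and follows essentially the same approach as the paper: write $\pi_2$ as a finite string of crystal operators applied to $\pi_1$ and then use the absorption identity $e^{-\infty}_{w_0} \circ e^c_\alpha = e^{-\infty}_{w_0}$. The paper simply asserts this absorption identity as obvious (it is stated immediately before the proposition), whereas you give an explicit justification via a reduced expression of $w_0$ beginning with $s_\alpha$ and the elementary fact $e^{-\infty}_\alpha \circ e^c_\alpha = e^{-\infty}_\alpha$; your bookkeeping on the ordering of the factors in $e^{-\infty}_{w_0}$ is correct.
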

\begin{proof}
It is quite obvious. Being connected means that there are real numbers $\left(c_1, \dots, c_l \right)$ and indices 
$\left( i_1, \dots, i_l \right)$ such that:
$$\pi_2 = e^{c_1}_{\alpha_{i_1}} \dots e^{c_l}_{\alpha_{i_l}} \pi_1$$
Then:
$$e^{-\infty}_{w_0} \cdot \pi_2 = e^{-\infty}_{w_0} \cdot e^{c_1}_{\alpha_{i_1}} \dots e^{c_l}_{\alpha_{i_l}} \pi_1 = e^{-\infty}_{w_0} \cdot \pi_1$$
\end{proof}

In theorem \ref{thm:connectedness_criterion}, we will see that the converse is true giving a connectedness criterion.

\subsubsection{A certain property of the Weyl co-vector}
While moving to the edges of geometric crystals, we will obtain paths that go to infinity in possibly finite time. The direction taken to go to infinity will be of utmost importance, involving the Weyl co-vector:
$$ \rho^\vee := \sum_{\alpha \in \Delta} \omega_\alpha^\vee = \frac{1}{2} \sum_{ \beta \in \Phi^+ } \beta^\vee$$

We will need a little property linking $\rho^\vee$ and the weak Bruhat order.
\begin{lemma}
  \label{lemma:rho_property}
  Let $w = s_{i_1} s_{i_2} \dots s_{i_k} \in W$ with $\ell(w) = k$. It defines a positive roots enumeration 
  $\left( \beta_1, \beta_2, \dots, \beta_k \right)$. Then:
  \begin{itemize}
   \item $$\rho^\vee - w \rho^\vee = \beta_1^\vee + \beta_2^\vee + \dots + \beta_k^\vee$$ 
   \item $\ell(s_\alpha w) = \ell(w) + 1$ if and only if $-\alpha\left( \rho^\vee - w \rho^\vee \right) \geq 0$
  \end{itemize}
\end{lemma}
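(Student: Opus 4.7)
The plan is to prove the two bullets successively, the first by induction on $\ell(w)$ and the second by a direct computation using the fact that $\alpha(\rho^\vee)=1$ for every simple root $\alpha$, combined with the standard characterization of length via inversions.

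For the first bullet, I would induct on $k = \ell(w)$. The base case $k=1$ is simply $s_{i_1}\rho^\vee = \rho^\vee - \alpha_{i_1}(\rho^\vee)\alpha_{i_1}^\vee = \rho^\vee - \alpha_{i_1}^\vee$, where I use the duality $\alpha_i(\omega_j^\vee) = \delta_{ij}$ to conclude that $\alpha_i(\rho^\vee) = 1$; then $\rho^\vee - s_{i_1}\rho^\vee = \alpha_{i_1}^\vee = \beta_1^\vee$. For the inductive step, writing $w = w' s_{i_k}$ with $w' = s_{i_1}\dots s_{i_{k-1}}$, the same computation yields $s_{i_k}\rho^\vee = \rho^\vee - \alpha_{i_k}^\vee$, whence
$$w\rho^\vee \;=\; w'\rho^\vee \;-\; w'\alpha_{i_k}^\vee \;=\; w'\rho^\vee - \beta_k^\vee,$$
since by Lemma \ref{lemma:positive_roots_enumeration} the last coroot in the enumeration is exactly $\beta_k^\vee = w'\alpha_{i_k}^\vee$. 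Adding this to the inductive hypothesis $\rho^\vee - w'\rho^\vee = \beta_1^\vee + \dots + \beta_{k-1}^\vee$ closes the induction.

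For the second bullet, I would first rewrite the inequality in terms of $w^{-1}\alpha$. Using once again $\alpha(\rho^\vee) = 1$,
$$-\alpha(\rho^\vee - w\rho^\vee) \;=\; \alpha(w\rho^\vee) - 1 \;=\; (w^{-1}\alpha)(\rho^\vee) - 1,$$
so the condition $-\alpha(\rho^\vee - w\rho^\vee) \geq 0$ is equivalent to $(w^{-1}\alpha)(\rho^\vee) \geq 1$. The key observation is that for any root $\beta = \sum_i n_i \alpha_i$, the value $\beta(\rho^\vee) = \sum_i n_i$ is the height of $\beta$: it is $\geq 1$ exactly when $\beta \in \Phi^+$ and $\leq -1$ exactly when $\beta \in \Phi^-$. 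Therefore the inequality is equivalent to $w^{-1}\alpha \in \Phi^+$.

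It remains only to invoke the classical characterization of the weak Bruhat order (Theorem \ref{thm:weyl_group_properties} type result, e.g.\ \cite{bib:Humphreys90}): for a simple root $\alpha$, $\ell(s_\alpha w) = \ell(w)+1$ iff $w^{-1}\alpha \in \Phi^+$ (equivalently, $\alpha \notin Inv(w^{-1})$). Combining this with the previous equivalence completes the proof. There is essentially no real obstacle here; the only mildly delicate point is to keep straight whether the length condition involves $w$ or $w^{-1}$, which is why I would explicitly go through the rewriting $\alpha(w\rho^\vee) = (w^{-1}\alpha)(\rho^\vee)$ rather than argue abstractly.
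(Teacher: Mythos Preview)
Your proof is correct and follows essentially the same route as the paper. For the first bullet the paper simply invokes Lemma~\ref{lbl:kumar} with $\lambda=\rho^\vee$ (whose proof is the very induction you wrote out), and for the second bullet the paper reaches the same inequality $\alpha(w\rho^\vee)>0$ via the Bourbaki chamber/wall characterization rather than your inversion-set formulation, then closes with the same integrality observation you encoded as the height being $\geq 1$.
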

\begin{proof}
 The first statement comes as an application of formula ($\ref{lbl:kumar}$). Concerning the second, following Bourbaki ( \cite{bib:Bourbaki}, Ch. V, \S 3, Th. 1, (ii)), $\ell\left(s_\alpha w\right) = 1 + \ell\left(w\right)$ is equivalent to saying that $C$ and $w\left( C \right)$ are on the same side of the wall associated to $\alpha$. As the Weyl co-vector is inside $C$, it tantamounts to $\alpha\left( w \rho^\vee \right) > 0$. In the end:
$$ -\alpha\left( \rho^\vee - w \rho^\vee \right) > -1 $$
The proof is finished once we notice that the left-hand side is an integer.
\end{proof}
\begin{rmk}
If $w$ is taken as $w_0$ the longest element, we recover the identity we used to define the Weyl co-vector.
\end{rmk}

\subsubsection{Extended path types}
The paths transforms $\left( \Tc_w \right)_{w \in W}$ ( resp. $\left( e^{-\infty}_w \right)_{w \in W}$) give paths that lack a starting (resp. an ending) point. In order to examine the possible asymptotics, let us first start by a simple lemma:
\begin{lemma}
 \label{lemma:start_asymptotics}
 If $\pi \in C\left( [0, T], \afrak \right)$ then for all $w \in W$:
$$ \Tc_w \pi(t) = w^{-1} \pi(0) + \log(t)\left( \rho^\vee - w^{-1} \rho^\vee \right) + c_w + o(1)$$
where $c_w$ is a constant depending only on $w$ and $o(1)$ goes to zero as $t \rightarrow 0$.
\end{lemma}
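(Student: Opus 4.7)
I would prove this by induction on $\ell(w)$, peeling off one simple reflection at a time via the braid relations of theorem \ref{thm:braid_relations_Tc}.

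Base case $w=e$: $\Tc_e \pi = \pi$, so by continuity $\pi(t)=\pi(0)+o(1)$, and the formula holds with $c_e=0$ since $\rho^\vee - \rho^\vee = 0$.

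Induction step: suppose the asymptotic is known for some $w'$ of length $k$, and let $w = w' s_\alpha$ with $\ell(w) = k+1$. Since the corresponding reduced expression for $w$ extends one for $w'$ by a simple reflection on the right, theorem \ref{thm:braid_relations_Tc} gives $\Tc_w = \Tc_\alpha \circ \Tc_{w'}$. Set $\eta := \Tc_{w'}\pi$. The induction hypothesis yields
$$\alpha(\eta(s)) = A + B \log(s) + o(1) \quad \text{as } s \to 0,$$
with $A := \alpha((w')^{-1}\pi(0)) + \alpha(c_{w'})$ and $B := \alpha(\rho^\vee - (w')^{-1}\rho^\vee) = 1 - \alpha((w')^{-1}\rho^\vee)$, where I used $\alpha(\rho^\vee)=1$. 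Applying lemma \ref{lemma:rho_property} to the inverse $s_\alpha (w')^{-1}$, whose length equals $\ell(w) = k+1$, one obtains $-\alpha(\rho^\vee - (w')^{-1}\rho^\vee)\geq 0$, i.e.\ $1 - B \geq 1 > 0$. Hence $e^{-\alpha(\eta(s))} = e^{-A} s^{-B}(1+o(1))$ is integrable near $0$, and a direct Laplace-type estimate gives
$$\int_0^t e^{-\alpha(\eta(s))}\,ds = \frac{e^{-A}}{1-B}\,t^{1-B}\,(1+o(1)).$$

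Taking logarithms and substituting in $\Tc_\alpha\eta(t) = \eta(t) + \log\bigl(\int_0^t e^{-\alpha(\eta)}\bigr)\alpha^\vee$, the $\log(t)$ coefficient becomes
$$\bigl(\rho^\vee - (w')^{-1}\rho^\vee\bigr) + (1-B)\alpha^\vee = \rho^\vee - \bigl((w')^{-1}\rho^\vee - \alpha((w')^{-1}\rho^\vee)\alpha^\vee\bigr) = \rho^\vee - s_\alpha (w')^{-1}\rho^\vee = \rho^\vee - w^{-1}\rho^\vee,$$
exactly as required. The constant part collects to
$$(w')^{-1}\pi(0) - \alpha((w')^{-1}\pi(0))\alpha^\vee + s_\alpha c_{w'} - \log\bigl(\alpha((w')^{-1}\rho^\vee)\bigr)\alpha^\vee = w^{-1}\pi(0) + c_w,$$
with $c_w := s_\alpha c_{w'} - \log\bigl(\alpha((w')^{-1}\rho^\vee)\bigr)\alpha^\vee$. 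Since $\Tc_w$ does not depend on the chosen reduced expression, $c_w$ is an invariant of $w$ alone.

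The main technical hurdle will be justifying the asymptotic for the integral uniformly in the $o(1)$ error: one must check that the $o(1)$ correction inside $e^{-\alpha(\eta(s))}$ remains uniformly bounded on $[0,t]$ so that a dominated-convergence-type argument applies, and then that the pointwise expansion $\alpha(\eta(s)) = A + B\log s + o(1)$ integrates cleanly against $s^{-B}\,ds$. All the algebraic miracles (the collapse of $\log(t)$-coefficients and of constants into the announced form) are driven by the single identity $s_\alpha x = x - \alpha(x)\alpha^\vee$, while the crucial positivity $1-B \geq 1$ which legitimizes the logarithm is precisely the content of the length criterion in lemma \ref{lemma:rho_property}.
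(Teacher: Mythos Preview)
Your proof is correct and follows essentially the same route as the paper: induction on $\ell(w)$, writing $w = w' s_\alpha$ so that $\Tc_w = \Tc_\alpha \circ \Tc_{w'}$, invoking lemma \ref{lemma:rho_property} (applied to $(w')^{-1}$) to ensure integrability of $e^{-\alpha(\eta)}$ near zero, and then the same algebraic collapse via $s_\alpha x = x - \alpha(x)\alpha^\vee$ to recover both the $\log(t)$ coefficient $\rho^\vee - w^{-1}\rho^\vee$ and the recursion $c_w = s_\alpha c_{w'} - \log((w'\alpha)(\rho^\vee))\alpha^\vee$. The paper handles the technical point you flag (``integrating the $o(1)$ error'') with the one-line remark that integrating equivalents is allowed here, which is justified since the $o(1)$ correction is bounded and the reference power $s^{-B}$ is integrable.
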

\begin{proof}
By induction on $\ell(w)$. If $\ell(w)=0$, then $w=e$ and the result is obvious ($c_e=0$).\\
If $w = u s_\alpha$ with $u \in W$ and $\ell(w) = \ell(u) + 1$, then:
$$\Tc_w = \Tc_\alpha \circ \Tc_{u}$$
Using the induction hypothesis, for $s>0$:
$$e^{-\alpha(\Tc_u\pi(s))} = e^{-\alpha\left( u^{-1} \pi(0) + c_u\right) + o(1)} s^{ -\alpha( \rho^\vee - u \rho^\vee) }$$
Lemma \ref{lemma:rho_property} applied to $w^{-1}$ tells us that $-\alpha( \rho^\vee - u^{-1} \rho^\vee) \geq 0$, and $e^{-\alpha(\Tc_u\pi(s))}$ is integrable at the neighborhood of zero. Since integrating equivalents is allowed, and using that $\alpha\left( \rho^\vee \right) = 1$:
\begin{align*}
  & \int_0^t e^{-\alpha(\Tc_u \pi)}\\
= & e^{-\alpha\left( u^{-1} \pi(0) + c_u\right) + o(1)} \int_0^t s^{ -\alpha( \rho^\vee - u^{-1} \rho^\vee) } ds\\
= & e^{-\alpha\left( u^{-1} \pi(0) + c_u\right) + o(1)} \frac{t^{1-\alpha( \rho^\vee - u^{-1} \rho^\vee) }}{1-\alpha( \rho^\vee - u^{-1} \rho^\vee)}\\
= & e^{-\alpha\left( u^{-1} \pi(0) + c_u\right) + o(1)} \frac{t^{ (u\alpha)( \rho^\vee) }}{(u\alpha)( \rho^\vee)}
\end{align*}
Then:
\begin{align*}
\Tc_w \pi(t) & = \Tc_{u}\pi(t) + \alpha^\vee \log \int_0^t e^{-\alpha(\Tc_u \pi)}\\
& = u^{-1} \pi(0) + \log(t)\left( \rho^\vee - u^{-1}\rho^\vee \right) + c_u + \\
& \ \ -\alpha\left( u^{-1} \pi(0) + c_u\right)\alpha^\vee  + \log \frac{t^{ (u\alpha)( \rho^\vee) }}{(u\alpha)( \rho^\vee)}\alpha^\vee  + o(1)\\
& = w^{-1} \pi(0) + \log(t) \left( \rho^\vee - u^{-1} \rho^\vee + (u\alpha)(\rho^\vee) \alpha^\vee \right) + s_\alpha c_u - \alpha^\vee \log\left( (u\alpha)( \rho^\vee)\right) + o(1)\\
& = w^{-1} \pi(0) + \log(t) \left( \rho^\vee - w^{-1} \rho^\vee \right) + s_\alpha c_u - \alpha^\vee \log\left( (u\alpha)( \rho^\vee)\right) + o(1)
\end{align*}
Set $c_w = s_\alpha c_u - \alpha^\vee \log\left( (u\alpha)( \rho^\vee)\right)$.\\
Finally, in order to prove that $c_w$ depends only on $w$ and not the reduced expression used, we invoke the fact that the asymptotic development is unique and $\Tc_w$ depends only on $w$.
\end{proof}

For completeness, we give an explicit expression for the constants $c_w, w \in W$, without proof, as we will not use them:
\begin{lemma}
$$ \forall w \in W, c_w = w^{-1} \sum_{\beta \in Inv(w)} \log\left( \beta(\rho^\vee) \right) \beta^\vee$$
where $Inv(w)$ is the set of inversions of $W$.
\end{lemma}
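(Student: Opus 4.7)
The plan is to prove the formula by induction on $\ell(w)$, directly exploiting the recursion for $c_w$ that was established inside the proof of lemma \ref{lemma:start_asymptotics}. Recall that if $\ell(w) = \ell(u) + 1$ with $w = u s_\alpha$, then
$$ c_w = s_\alpha c_u - \alpha^\vee \log\bigl( (u\alpha)(\rho^\vee) \bigr), $$
and the base case $c_e = 0$ is immediate since $Inv(e) = \emptyset$.

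The key combinatorial input is the description of how the inversion set grows under right multiplication by a simple reflection: writing a reduced expression $u = s_{i_1} \dots s_{i_{k-1}}$, one has $w = s_{i_1} \dots s_{i_{k-1}} s_\alpha$ reduced, and the associated positive root enumeration gives $Inv(w) = Inv(u) \sqcup \{ u \alpha \}$. Assuming the formula for $u$, I would write
$$ s_\alpha c_u = s_\alpha u^{-1} \sum_{\beta \in Inv(u)} \log(\beta(\rho^\vee)) \beta^\vee = w^{-1} \sum_{\beta \in Inv(u)} \log(\beta(\rho^\vee)) \beta^\vee. $$
It then remains to match $-\alpha^\vee \log((u\alpha)(\rho^\vee))$ with the missing term $\beta = u\alpha$ of the target sum. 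Since $(u\alpha)^\vee = u\alpha^\vee$ and $w^{-1} = s_\alpha u^{-1}$, one computes $w^{-1}(u\alpha)^\vee = s_\alpha \alpha^\vee = -\alpha^\vee$, so that
$$ w^{-1}\log\bigl((u\alpha)(\rho^\vee)\bigr) (u\alpha)^\vee = -\alpha^\vee \log\bigl((u\alpha)(\rho^\vee)\bigr), $$
which is exactly the extra summand produced by the recursion. This closes the induction.

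There is no real obstacle here; the argument is a bookkeeping exercise, and the only subtlety is knowing that $c_w$ depends intrinsically on $w$ (not on the chosen reduced expression), which was already noted in the proof of lemma \ref{lemma:start_asymptotics} using uniqueness of the asymptotic development. The right-hand side is manifestly intrinsic, since $Inv(w)$ is, so consistency of the induction across different reduced expressions is automatic. In short, once one observes the simple identity $s_\alpha \alpha^\vee = -\alpha^\vee$ and the growth rule $Inv(w) = Inv(u) \sqcup \{u\alpha\}$, the claimed closed form drops out of the recursion in one line.
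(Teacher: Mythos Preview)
Your proof is correct. The paper explicitly states this lemma ``without proof, as we will not use them,'' so there is no original argument to compare against. Your induction on $\ell(w)$ using the recursion $c_w = s_\alpha c_u - \alpha^\vee \log((u\alpha)(\rho^\vee))$ from the proof of lemma~\ref{lemma:start_asymptotics}, together with $Inv(w) = Inv(u) \sqcup \{u\alpha\}$ and $w^{-1}(u\alpha)^\vee = s_\alpha \alpha^\vee = -\alpha^\vee$, is exactly the natural verification and goes through without issue.
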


Lemma \ref{lemma:start_asymptotics} suggests to allow paths with undefined starting point, and to distinguish between them depending on their asymptotic behaviour at $t=0$, hence a definition:

\begin{definition}
\label{def:high_path_types}
For $T > 0$, we say that a path $\pi \in C\left( (0, T], \afrak \right)$ is a high path of type $w \in W$ when the following asymptotic development holds at $0$:
$$ \pi(t) = \log(t)\left( \rho^\vee - w^{-1} \rho^\vee \right) + c_w + o(1)$$
The set of all high paths of type $w \in W$ in $C\left( (0, T], \afrak \right)$ is denoted $C^{high}_w\left( (0, T], \afrak \right)$.
\end{definition}
\begin{rmk}
As $c_e = 0$, $C^{high}_e\left( (0, T], \afrak \right) = C_0\left( [0, T], \afrak \right)$.
\end{rmk}

By duality, having in mind remark \ref{lemma:high_low_duality}:
\begin{definition}
\label{def:low_path_types}
For $T > 0$, we say that a path $\pi \in C_0\left( [0; T), \afrak \right)$ is a low path of type $w \in W$ when the following asymptotic development holds at $T$:
$$ \exists C_\pi, \pi(t) = C_\pi + \log(T-t)\left( \rho^\vee - w^{-1} \rho^\vee \right) + c_w + o(1)$$
The set of all low paths of type $w \in W$ in $C\left( [0, T), \afrak \right)$ is denoted $C^{low}_w\left( [0, T), \afrak \right)$.
\end{definition}
\begin{rmk}
A low path $\pi$ of type $e$ has a continuous extension at $t=T$ by letting $\pi(T)=C_\pi$, hence:
$$C^{low}_e = C_0\left( [0, T], \afrak \right)$$
\end{rmk}

Both high and low paths will be referred to as extended paths. Clearly, the extended paths of type $w_0$ deserve a special name. As such, high (resp. low) paths of type $w_0$ will be referred to as highest (resp. lowest) paths.

\subsection{String parameters for paths}
\label{subsection:string_params}

\subsubsection{Definition}
Let ${\bf i} \in R(w_0)$. For any path $\pi \in C_0\left( [0, T], \afrak \right)$, define $\varrho_{\bf i}^K(\pi)$ as the sequence of numbers ${ \bf c} = \left( c_1, c_2, \dots, c_m \right)$ recursively as:
$$ c_k = \frac{1}{\int_0^T e^{-\alpha_{i_k}\left( \Tc_{s_{i_1} \dots s_{i_{k-1}}} \pi \right)}}$$
\begin{thm}
\label{thm:string_params_paths}
The map $\varrho_{\bf i}^K$ is well-defined on $C_0\left( [0, T], \afrak \right)$ and takes values in $\R_{>0}^m$. Moreover, for $\pi \in C_0\left( [0, T], \afrak \right)$, the $m$-tuple $\varrho_{\bf i}^K(\pi)$ allows to recover $\pi$ from the highest weight path $\Tc_{w_0} \pi$.
\end{thm}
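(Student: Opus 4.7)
The plan is to prove both claims inductively along the reduced word ${\bf i}$, using Lemma~\ref{lemma:start_asymptotics} and Lemma~\ref{lemma:rho_property} to control what happens near $t=0$. For well-definedness I argue by induction on $k$ that $\Tc_{s_{i_1}\dots s_{i_{k-1}}}\pi$ is a high path of type $w := s_{i_1}\dots s_{i_{k-1}}$ in the sense of Definition~\ref{def:high_path_types}. Lemma~\ref{lemma:start_asymptotics} then gives the asymptotic
$$\Tc_w\pi(t) = w^{-1}\pi(0) + \log(t)(\rho^\vee - w^{-1}\rho^\vee) + c_w + o(1),$$
so that $-\alpha_{i_k}(\Tc_w\pi(t)) = -\alpha_{i_k}(\rho^\vee - w^{-1}\rho^\vee)\log t + O(1)$ near $0$. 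Since $\mathbf{i}$ is reduced for $w_0$, we have $\ell(w s_{i_k}) = \ell(w)+1$, equivalently $\ell(s_{\alpha_{i_k}} w^{-1}) = \ell(w^{-1})+1$, so Lemma~\ref{lemma:rho_property} yields $-\alpha_{i_k}(\rho^\vee - w^{-1}\rho^\vee) \geq 0$. Hence $e^{-\alpha_{i_k}(\Tc_w\pi(\cdot))}$ behaves like $t^{C}$ with $C \geq 0$ near $0$, is integrable on $[0,T]$, and $c_k \in (0,+\infty)$.

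For the second claim, the crux is that each elementary transform $\Tc_\alpha$ is invertible modulo a single positive scalar. If $\eta = \Tc_\alpha(\pi)$ and $f(s) := \int_0^s e^{-\alpha(\pi(u))}du$, then $\eta - \pi = \log(f)\alpha^\vee$, so applying $-\alpha$ and using $\alpha(\alpha^\vee)=2$ gives $e^{-\alpha(\eta)} = f'/f^2 = -(1/f)'$. Integrating over $[s,T]$ yields
$$\frac{1}{f(s)} \;=\; \frac{1}{f(T)} + \int_s^T e^{-\alpha(\eta(u))}\,du,$$
so that knowing $\eta$ together with the scalar $c := 1/f(T) > 0$ reconstructs $f$ uniquely, and hence $\pi = \eta - \log(f)\alpha^\vee$. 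Iterating this inversion from $k=m$ down to $k=1$, and setting $\pi^{(k)} := \Tc_{s_{i_1}\dots s_{i_k}}\pi$ so that $\pi^{(m)} = \Tc_{w_0}\pi$ and $\pi^{(k)} = \Tc_{\alpha_{i_k}}(\pi^{(k-1)})$, we recover $\pi^{(k-1)}$ from $\pi^{(k)}$ and $c_k$ at each step, reaching $\pi^{(0)} = \pi$.

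The main technical point I expect to require care is the bookkeeping of asymptotics at the origin during this backward iteration: for the reconstructed $f$ to satisfy $f(0)=0$ (so that the recovered $\pi^{(k-1)}$ has the correct starting behaviour, and ultimately $\pi^{(0)}(0)=0$), one needs $\int_s^T e^{-\alpha_{i_k}(\pi^{(k)}(u))}\,du \to +\infty$ as $s \to 0$. This is the dual of the integrability used for well-definedness: $\pi^{(k)}$ being high of type $s_{i_1}\dots s_{i_k}$ forces $-\alpha_{i_k}\!\left(\rho^\vee - (s_{i_1}\dots s_{i_k})^{-1}\rho^\vee\right) \leq -1$ by Lemma~\ref{lemma:rho_property}, since $\ell(s_{\alpha_{i_k}}(s_{i_1}\dots s_{i_k})^{-1}) = k-1 < k$. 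Unwinding the asymptotics using this exponent (together with $\alpha_{i_k}(\rho^\vee)=1$ and $\alpha_{i_k}(\alpha_{i_k}^\vee)=2$) shows that the recovered $\pi^{(k-1)}$ is itself a high path of type $s_{i_1}\dots s_{i_{k-1}}$, which closes the induction and completes the reconstruction of $\pi = \pi^{(0)}$.
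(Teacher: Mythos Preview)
Your proof is correct and follows essentially the same approach as the paper. The paper packages your forward step (integrability of $e^{-\alpha_{i_k}(\Tc_w\pi)}$ and the fact that $\Tc_\alpha$ raises the high-path type along the weak Bruhat order) and your backward step (the inversion formula $\pi(t)=\eta(t)+\log\bigl(c+\int_t^T e^{-\alpha(\eta)}\bigr)\alpha^\vee$ together with the check that the recovered path has the correct type) into a single Proposition~\ref{proposition:extracting_string}, and then invokes it iteratively; you have effectively reproved both halves of that proposition inline, with the same use of Lemma~\ref{lemma:start_asymptotics} and Lemma~\ref{lemma:rho_property}.
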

The proof is given soon after a few discussions.

\subsubsection{Extracting string parameters}
Building up on lemma \ref{lemma:start_asymptotics}, we will see when it is possible to apply $\left( \Tc_\alpha \right)_{\alpha \in \Delta}$ depending on a path's type. The importance of the weak Bruhat order is quite remarkable. Also, the use of the geometric Pitman operator $\Tc_\alpha$ for $\alpha \in \Delta$ corresponds to the loss of exactly one real number. 
\begin{proposition}
\label{proposition:extracting_string}
Let $w \in W$ and $\alpha \in \Delta$ such that $\ell(w s_\alpha) = \ell(w) + 1$.\\
(1) If $\pi$ is a high path of type $w$ then $c = \frac{1}{\int_0^T e^{-\alpha(\pi)}} > 0$ and $\eta = \Tc_\alpha \pi$ has type $w s_\alpha$.\\
(2) Reciprocally, given $\eta$ , a high path with type $w s_\alpha$, and a positive $c>0$, there is a unique high path $\pi$ of type $w$ such that $c = \frac{1}{\int_0^T e^{-\alpha(\pi)}} > 0$ and $\eta = \Tc_\alpha \pi$. It is given by:
$$ \forall 0 < t \leq T, \pi(t) = \eta(t) + \log\left( c + \int_t^T e^{-\alpha(\eta)} \right) \alpha^\vee$$
\end{proposition}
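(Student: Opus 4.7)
The plan is to exploit the explicit asymptotic development in Definition \ref{def:high_path_types} together with the combinatorial content of Lemma \ref{lemma:rho_property}. The key quantitative input is the identity $(w\alpha)(\rho^\vee) = \alpha(w^{-1}\rho^\vee) = 1 - \alpha(\rho^\vee - w^{-1}\rho^\vee)$: under the hypothesis $\ell(ws_\alpha) = \ell(w)+1$, Lemma \ref{lemma:rho_property} (applied to $w^{-1}$, or equivalently to the inversion-set characterization of length) gives that $w\alpha \in \Phi^+$, hence $(w\alpha)(\rho^\vee) > 0$, which is the decisive positivity one keeps meeting below.

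For part (1), I would first insert the asymptotic $\pi(t) = \log(t)(\rho^\vee - w^{-1}\rho^\vee) + c_w + o(1)$ into $e^{-\alpha(\pi(t))}$, obtaining $e^{-\alpha(\pi(t))} \sim e^{-\alpha(c_w)} t^{-1+(w\alpha)(\rho^\vee)}$ near $0$. Since $(w\alpha)(\rho^\vee)>0$, this is integrable at $0$, giving $c = 1/\int_0^T e^{-\alpha(\pi)} > 0$. Then integrating term by term,
\begin{equation*}
\int_0^t e^{-\alpha(\pi)} \;=\; e^{-\alpha(c_w)}\,\frac{t^{(w\alpha)(\rho^\vee)}}{(w\alpha)(\rho^\vee)}\,(1+o(1)),
\end{equation*}
so plugging into $\Tc_\alpha\pi(t) = \pi(t) + \log(\int_0^t e^{-\alpha(\pi)})\alpha^\vee$ and using $s_\alpha w^{-1}\rho^\vee = w^{-1}\rho^\vee - \alpha(w^{-1}\rho^\vee)\alpha^\vee = (ws_\alpha)^{-1}\rho^\vee$, the coefficient of $\log(t)$ collapses to $\rho^\vee - (ws_\alpha)^{-1}\rho^\vee$ and the constant term to $s_\alpha c_w - \alpha^\vee\log((w\alpha)(\rho^\vee)) = c_{ws_\alpha}$, exactly the recursion already established inside the proof of Lemma \ref{lemma:start_asymptotics}. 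This identifies $\Tc_\alpha\pi$ as a high path of type $ws_\alpha$.

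For part (2), define $\pi$ by the given formula and set $F(t) = c + \int_t^T e^{-\alpha(\eta)}$. The hypothesis $(w\alpha)(\rho^\vee) > 0$ now makes $\int_0^T e^{-\alpha(\eta)} = +\infty$ (same exponent computation, but now with $\eta$ of type $ws_\alpha$), so $F(t) \to +\infty$ as $t \to 0^+$ with $F(t) \sim e^{-\alpha(c_{ws_\alpha})} t^{-(w\alpha)(\rho^\vee)}/(w\alpha)(\rho^\vee)$. Substituting this into $\pi(t) = \eta(t) + \log F(t)\,\alpha^\vee$ and using the identity $c_w = s_\alpha c_{ws_\alpha} - \alpha^\vee\log((w\alpha)(\rho^\vee))$ (a direct rewriting of the same recursion) yields exactly the type-$w$ asymptotic. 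Next, $\alpha(\pi) = \alpha(\eta) + 2\log F$ gives the telescoping $e^{-\alpha(\pi)} = -F'/F^2 = (1/F)'$, hence $\int_0^t e^{-\alpha(\pi)} = 1/F(t)$ and in particular $\int_0^T e^{-\alpha(\pi)} = 1/F(T) = 1/c$; substituting back into $\Tc_\alpha\pi$ recovers $\eta$.

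For uniqueness, any candidate $\pi$ satisfying $\Tc_\alpha\pi = \eta$ and the normalization must solve, with $G(t) := \int_0^t e^{-\alpha(\pi)}$, the ODE $G'(t) = e^{-\alpha(\eta(t))} G(t)^2$ with terminal condition $G(T) = 1/c$; rewriting as $(-1/G)' = e^{-\alpha(\eta)}$ and integrating from $t$ to $T$ forces $1/G(t) = F(t)$, so $\pi = \eta - \log G\,\alpha^\vee$ is fully determined, matching the formula. The main technical point I expect to have to handle carefully is the uniformity of the $o(1)$ remainders when differentiating/integrating the asymptotic developments: one must verify that $\int_0^t$ and $\int_t^T$ of the $o(1)\cdot t^{\text{exponent}}$ terms really stay of order $o(1)$ after the logarithm is applied, which is routine but the only place the argument is not purely algebraic.
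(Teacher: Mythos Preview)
Your proposal is correct and follows essentially the same approach as the paper: both rely on Lemma~\ref{lemma:rho_property} (applied to $w^{-1}$) to get $(w\alpha)(\rho^\vee)>0$, use the asymptotic computation from the proof of Lemma~\ref{lemma:start_asymptotics} for part~(1), and exploit the telescoping identity $e^{-\alpha(\eta)} = -\frac{d}{dt}\frac{1}{\int_0^t e^{-\alpha(\pi)}}$ for part~(2). The only organizational difference is that the paper first \emph{derives} the formula for $\pi$ from the relation $\eta=\Tc_\alpha\pi$ (giving uniqueness directly) and then checks the type-$w$ asymptotic, whereas you first verify existence from the given formula and then argue uniqueness via the ODE for $G$; the underlying computations are identical.
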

\begin{proof}
(1) Using lemma \ref{lemma:rho_property}, if $\pi$ is a high path of type $w$ and $\ell(w s_\alpha) = \ell(w) + 1$, then $e^{-\alpha(\pi)}$ is integrable at the neighborhood of zero, hence $c>0$.

Thanks to the proof of \ref{lemma:start_asymptotics}, we have seen that $\eta = \Tc_\alpha \pi$ will have the right asymptotics at $t=0$, so that it will be of type $w s_\alpha$.\\
(2) By composing the equality $\eta = \Tc_\alpha \pi$ with $e^{-\alpha(.)}$:
\begin{align*}
  & e^{-\alpha(\eta(t))}\\
= & \frac{ e^{-\alpha(\pi(t))} }{\left(\int_0^t e^{-\alpha(\pi)}\right)^2}\\
= & - \frac{d}{dt} \frac{1}{\int_0^t e^{-\alpha(\pi)}}
\end{align*}
Then after integration between $t>0$ and $T$:
$$ \forall t>0, \int_t^T e^{-\alpha(\eta)} = \frac{1}{\int_0^t e^{-\alpha(\pi)}} - c$$
Reinjecting this relation in the definition of $\eta$:
$$ \forall t>0, \pi(t) = \eta(t) + \alpha^\vee \log\left( c + \int_t^T e^{-\alpha(\eta)} \right)$$
Finally, all that is left is to check that $\pi$ has the type $w$. The asymptotic development at $0$ follows from a computation similar to the proof of lemma \ref{lemma:start_asymptotics}. Indeed, since $\eta$ is of type $w s_\alpha$, we have the following asymptotics for $e^{-\alpha(\eta)}$ at zero:
$$ e^{-\alpha(\eta(t))} = e^{-\alpha(c_{w s_\alpha}) + o(1)}t^{-\alpha(\rho^\vee - s_\alpha w^{-1}\rho^\vee) }$$
As $-\alpha(\rho^\vee - s_\alpha w^{-1}\rho^\vee) \leq -1$ (lemma \ref{lemma:rho_property}), integrating $e^{-\alpha(\eta)}$ gives a divergent integral at zero. Therefore, discarding the other end of the integral, we have the following equivalent for $t \rightarrow 0$:
\begin{align*}
  & \int_t^T e^{-\alpha(\eta)}\\
= & e^{-\alpha(c_{w s_\alpha}) + o(1)} \int_t^{+\infty} s^{-\alpha(\rho^\vee - s_\alpha w^{-1}\rho^\vee) } ds\\
= & e^{-\alpha(c_{w s_\alpha}) + o(1)}
    \frac{\left[ s^{1-\alpha(\rho^\vee - s_\alpha w^{-1}\rho^\vee) } \right]_t^{+\infty}}
         { 1-\alpha(\rho^\vee - s_\alpha w^{-1}\rho^\vee) }\\
= & \frac{ e^{-\alpha(c_{w s_\alpha}) + o(1)} }
         { (w \alpha)\left( \rho^\vee \right) t^{(w \alpha)\left( \rho^\vee \right)} }\\
\end{align*}
Because $\ell(w s_\alpha) = \ell(w) +1$, $w \alpha$ is a positive root and $w \alpha(\rho^\vee)>0$. Thus, we can write:
\begin{align*}
  & \pi(t)\\
= & \log(t)\left( \rho^\vee - s_\alpha w^{-1} \rho^\vee \right) + c_{w s_\alpha} + \log\left( c +  \frac{ e^{-\alpha(c_{w s_\alpha}) + o(1)} }{ (w \alpha)\left( \rho^\vee \right) t^{(w \alpha)\left( \rho^\vee \right)} } \right) \alpha^\vee + o(1)\\
= & \log(t)\left( \rho^\vee - s_\alpha w^{-1} \rho^\vee - (w \alpha)(\rho^\vee) \alpha^\vee \right) + s_\alpha c_{w s_\alpha} - \log\left( (w \alpha)( \rho^\vee ) \right)\alpha^\vee + o(1)\\
= & \log(t)\left( \rho^\vee - w^{-1} \rho^\vee \right) + s_\alpha c_{w s_\alpha} - \log\left( (w \alpha)( \rho^\vee ) \right)\alpha^\vee + o(1)\\
\end{align*}
Noticing that $s_\alpha c_{w s_\alpha} - \log\left( (w \alpha)( \rho^\vee ) \right)\alpha^\vee = c_w$ concludes the proof.
\end{proof}

\begin{proof}[Proof of theorem \ref{thm:string_params_paths}]
Start with $\pi \in C_0\left( [0, T], \afrak \right)$. It is a high path of type $e$. When composing the geometric Pitman operators $\left( \Tc_\alpha \right)_{\alpha}$ while respecting the weak Bruhat order, we obtain paths whose types are climbing the Hasse diagram, until we reach $\Tc_{w_0}\pi$. At each step, exactly one positive real number is lost using proposition \ref{proposition:extracting_string}. 
\end{proof}

This drawing sums up the situation in the case of $A_2$:

\begin{figure}[htp!]
\begin{center}
\begin{tikzpicture}
\useasboundingbox (0,0) rectangle (7.0cm,10.0cm);
\definecolor{black}{rgb}{0.0,0.0,0.0}
\definecolor{white}{rgb}{1.0,1.0,1.0}
\Vertex[style={minimum size=1, shape=circle}, x=2.5, y=10, L=$w_0$]{v0}
\Vertex[style={minimum size=1, shape=circle}, x=4,   y=7, L=$s_1 s_2$]{v1}
\Vertex[style={minimum size=1, shape=circle}, x=1,   y=7, L=$s_2 s_1$]{v2}
\Vertex[style={minimum size=1, shape=circle}, x=1,   y=4, L=$s_1$]{v3}
\Vertex[style={minimum size=1, shape=circle}, x=4,   y=4, L=$s_2$]{v4}
\Vertex[style={minimum size=1, shape=circle}, x=2.5, y=1, L=$e$]{v5}
\Edge[lw=0.1cm,style={post, color=black,},](v5)(v3)
\Edge[lw=0.1cm,style={post, color=black,},](v5)(v4)
\Edge[lw=0.1cm,style={post, color=black,},](v3)(v2)
\Edge[lw=0.1cm,style={post, color=black,},](v4)(v1)
\Edge[lw=0.1cm,style={post, color=black,},](v2)(v0)
\Edge[lw=0.1cm,style={post, color=black,},](v1)(v0)
\Edge[lw=0.1cm,style={post, bend left,color=black,}, label=$\Tc_{\alpha_1}$](v5)(v3)
\Edge[lw=0.1cm,style={post, bend left,color=black,}, label=$\Tc_{\alpha_2}$](v3)(v2)
\Edge[lw=0.1cm,style={post, bend left,color=black,}, label=$\Tc_{\alpha_1}$](v2)(v0)
\end{tikzpicture}
\end{center}
\label{lbl:string_hasse_diagram}
\caption{Extracting string parameters and climbing Hasse diagram of type $A_2$} 
\end{figure}
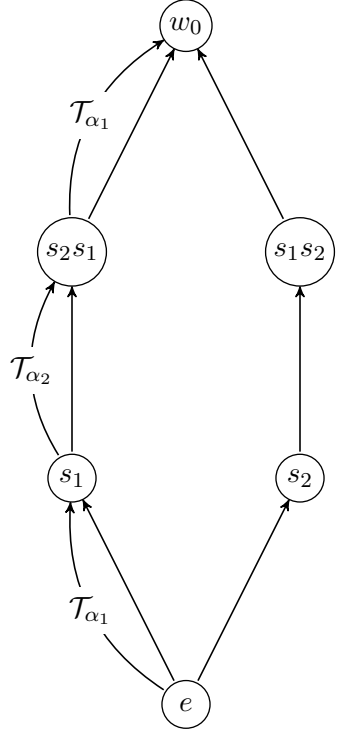

\subsubsection{Inversion lemma}
The inversion lemma is a bijective correspondence between $N_T(\pi)$ and the string parameters $\varrho_{\bf i}^K(\pi)$. Its proof is inspired from theorem 6.5 in \cite{bib:BBO2}.

\begin{thm}
\label{thm:string_inversion_lemma}
For ${\bf i} \in R(w_0)$ and $\pi \in C_0\left( [0, T], \afrak \right)$:
$$ x_{\bf -i} \circ \varrho_{\bf i}^K\left( \pi \right) = [\bar{w_0}^{-1} N_T(\pi)]_{0+}^T$$
or equivalently:
$$ N_T(\pi) = [ \left( \bar{w}_0 \left( x_{\bf -i} \circ \varrho_{\bf i}^K\left( \pi \right) \right)^T \right)^{-1} ]_-^{-1}$$
\end{thm}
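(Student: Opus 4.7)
The plan is to reduce the claim to an identity about successive Gauss decompositions of $\bar{w}_k^{-1} N_T(\pi)$, where $w_k := s_{i_1} \cdots s_{i_k}$, and prove it by forward induction on $k \in \{0, 1, \dots, m\}$. The base case $k=0$ is trivial since $N_T(\pi) \in N$ makes $[N_T(\pi)]_{0+}^T$ equal to the identity. The target case $k = m$ recovers the claim.

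The essential setup is the following bridge between the path and group sides. By Theorem \ref{thm:braid_relations_Tc}, for every $0 \leq k \leq m$ one has $e^{\Tc_{w_k} \pi(t)} = [\bar{w}_k^{-1} B_t(\pi)]_0$. Using the recursion $\Tc_{w_k}\pi = \Tc_{\alpha_{i_k}} \circ \Tc_{w_{k-1}}\pi$ and differentiating the explicit expression for $\Tc_{\alpha_{i_k}}$ with respect to $t$, one checks that
\[
e^{-\alpha_{i_k}(\Tc_{w_k}\pi(t))} = \frac{e^{-\alpha_{i_k}(\Tc_{w_{k-1}}\pi(t))}}{\left(\int_0^t e^{-\alpha_{i_k}(\Tc_{w_{k-1}}\pi)}\right)^2},
\]
so that $\int_0^T e^{-\alpha_{i_k}(\Tc_{w_{k-1}}\pi)} = c_k^{-1}$ by definition of the string parameters. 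Consequently the $c_k$'s are encoded as ratios of $0$-parts in a nested family of Gauss decompositions of iterates of $B_T(\pi)$.

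The inductive step proceeds as follows. Writing $\bar{w}_k^{-1} = \bar{\bar{s}}_{i_k}\, \bar{w}_{k-1}^{-1}$ and using the Gauss identity $[g_1 g_2]_{0+} = \bigl[[g_1]_{0+}\, g_2\bigr]_{0+}$ (equation \ref{lbl:gauss1}), one expresses $[\bar{w}_k^{-1} N_T(\pi)]_{0+}^T$ in terms of $[\bar{w}_{k-1}^{-1} N_T(\pi)]_{0+}^T$ multiplied by a factor originating from the extra reflection $\bar{\bar{s}}_{i_k}$. Using the $SL_2$-level decomposition $\bar{\bar{s}}_{i_k} = x_{i_k}(1) y_{i_k}(-1) x_{i_k}(1)$, together with the commutation relation \ref{lbl:comm3} between $x_{i_k}$ and $y_{i_k}$ and the fact (from the setup above) that the relevant $0$-part ratio equals $c_k^{h_{i_k}}$, this extra factor simplifies to the transpose of $x_{-i_k}(c_k) = y_{i_k}(c_k)\, c_k^{-h_{i_k}}$. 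Torus-commutation relations \ref{lbl:comm1}--\ref{lbl:comm2} then push this factor past the inductively assembled product, yielding the updated identity at level $k$. After $m$ steps, transposing produces exactly $x_{-\bf i}(c_1, \dots, c_m)$, which is the claim.

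The main obstacle is the bookkeeping in the induction step: as $\bar{\bar{s}}_{i_k}$ moves past the already-assembled $y_{i_{k-1}}, \dots, y_{i_1}$ factors in the transposed product, it produces a cascade of torus and unipotent corrections that must be matched consistently against the Gauss decomposition of $\bar{w}_k^{-1} N_T(\pi)$. Keeping these corrections compatible with the $0$-part values $e^{\Tc_{w_j}\pi(T)}$ at each level is the delicate part, but each individual simplification reduces to a rank-one computation inside a copy of $SL_2$ embedded via $\phi_{i_k}$, so the whole argument remains effectively one-dimensional at each inductive step. The equivalent dual form $N_T(\pi) = [(\bar{w}_0 (x_{-\bf i}\circ\varrho_{\bf i}^K(\pi))^T)^{-1}]_-^{-1}$ then follows by inverting the Gauss decomposition, since $[\bar{w}_0^{-1} N_T(\pi)]_{-} \cdot [\bar{w}_0^{-1} N_T(\pi)]_{0+}$ reconstructs $\bar{w}_0^{-1} N_T(\pi)$ up to the elementary transpose identity $[g]_{0+}^T = [g^T]_{-0}$.
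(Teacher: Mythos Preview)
Your inductive framing is equivalent to the paper's telescoping decomposition, and the core identity you need at each step is
\[
[\bar w_k^{-1}N_T(\pi)]_{0+}
\;=\;
\bigl[\bar{\bar s}_{i_k}\,[\bar w_{k-1}^{-1}N_T(\pi)]_{-}\bigr]_{0+}
\cdot
[\bar w_{k-1}^{-1}N_T(\pi)]_{0+},
\]
obtained simply by writing $\bar w_{k-1}^{-1}N_T(\pi)=n_- b_+$ with $n_-\in N$, $b_+\in B^+$ and using $[g\,b_+]_{0+}=[g]_{0+}\,b_+$. Note that your appeal to (\ref{lbl:gauss1}) with $g_1=\bar{\bar s}_{i_k}$ is not licit, since $\bar{\bar s}_{i_k}\notin NHU$; the correct identity is the simpler one just stated. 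More importantly, your ``main obstacle'' paragraph describes a difficulty that does not arise: $\bar{\bar s}_{i_k}$ never has to move past the already-assembled factors $y_{-i_{k-1}}(c_{k-1})\cdots y_{-i_1}(c_1)=b_+$. It acts only on $n_-$, and the new factor $[\bar{\bar s}_{i_k}\,n_-]_{0+}$ appears directly on the left of $b_+$; after transposing, $x_{-i_k}(c_k)$ lands on the right of $x_{-i_1}(c_1)\cdots x_{-i_{k-1}}(c_{k-1})$ with no commutation needed.

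The paper identifies this new factor more cleanly than by $SL_2$ commutations: since $n_-\in N$, the element $[\bar{\bar s}_{i_k}\,n_-]_{0+}=[\bar{\bar s}_{i_k}\,n_-]_-^{-1}\bar{\bar s}_{i_k}\,n_-$ lies in the one-dimensional reduced cell $N\bar s_{i_k}^{-1}N\cap B^+=\{y_{-i_k}(c):c\in\C^*\}$ (Theorem~4.5 of \cite{bib:BZ01}), so it \emph{must} equal $y_{-i_k}(c_k)$ for some $c_k$ read off from its torus part. The paper then computes that torus part as
\[
c_k^{-\alpha_{i_k}^\vee}
= \bigl[\bar s_{i_k}^{-1}[\bar w_{k-1}^{-1}B_T(\pi)]_-\bigr]_0
= e^{\Tc_{w_k}\pi(T)-\Tc_{w_{k-1}}\pi(T)},
\]
recovering the string parameter. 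Your direct route via $\bar{\bar s}_{i_k}=x_{i_k}(1)y_{i_k}(-1)x_{i_k}(1)$ and relation (\ref{lbl:comm3}) can be made to work, but only after first factoring $n_-=n'\,y_{i_k}(t)$ with $n'$ in the subgroup $N^{i_k}$ generated by $y_\beta$, $\beta\in\Phi^+\setminus\{\alpha_{i_k}\}$, and using that $\bar{\bar s}_{i_k}$ normalizes $N^{i_k}$; this is precisely the content hidden behind the Bruhat-cell statement, and is what keeps the computation genuinely rank-one.
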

\begin{proof}
In fact, this works with $w \in W$ and ${\bf i} = \left(i_1, \dots, i_j\right) \in R(w)$. Write:
\begin{align*}
  & [\bar{w}^{-1} N_T(\pi)]_{0+}\\
= & [\bar{w}^{-1} N_T(\pi)]_-^{-1} \bar{w}^{-1} N_T(\pi)\\
= & [\bar{w}^{-1} N_T(\pi)]_-^{-1} \bar{s}^{-1}_{i_j} \dots \bar{s}^{-1}_{i_1} N_T(\pi)\\
= &     [\bar{w}^{-1} N_T(\pi)]_-^{-1} \bar{s}^{-1}_{i_j} [\overline{ s_{i_1} \dots s_{i_{j-1}} }^{-1} N_T(\pi)]_-\\
  & \ \ [\overline{ s_{i_1} \dots s_{i_{j-1}} }^{-1} N_T(\pi)]_-^{-1} \bar{s}^{-1}_{i_{j-1}} [\overline{ s_{i_1} \dots s_{i_{j-2}} }^{-1} N_T(\pi)]_-\\
  & \ \ [\overline{ s_{i_1} \dots s_{i_{j-2}} }^{-1} N_T(\pi)]_-^{-1} \bar{s}^{-1}_{i_{j-2}} \dots \\
  & \ \ \dots \bar{s}^{-1}_{i_2} [\bar{s}^{-1}_{i_1} N_T(\pi)]_-\\
  & \ \ [\bar{s}^{-1}_{i_1} N_T(\pi)]_-^{-1} \bar{s}^{-1}_{i_1} N_T(\pi)\\
= &     [\bar{s}^{-1}_{i_j}     [\overline{ s_{i_1} \dots s_{i_{j-1}} }^{-1} N_T(\pi)]_- ]_{0+}\\
  & \ \ [\bar{s}^{-1}_{i_{j-1}} [\overline{ s_{i_1} \dots s_{i_{j-2}} }^{-1} N_T(\pi)]_- ]_{0+}\\
  & \ \ \dots \\
  & \ \ [\bar{s}^{-1}_{i_1}     N_T(\pi)]_{0+}
\end{align*}
Notice that each element $x_k = [\bar{s}^{-1}_{i_{k}} [\bar{s}^{-1}_{i_{k-1}} \dots \bar{s}^{-1}_{i_1} N_T(\pi)]_- ]_{0+}$, $ 1 \leq k \leq j$, in the previous product, belongs to the reduced Bruhat cell $N \bar{s}^{-1}_{i_k} N \cap B^+$. Using theorem 4.5 in \cite{bib:BZ01} $x_k = y_{-i_k}(c_k)$ where:
\begin{align*}
c_k^{-\alpha_{i_k}^\vee} = & [x_k]_0 \\
= & [\bar{s}^{-1}_{i_{k}} [\overline{ s_{i_1} \dots s_{i_{k-1}} }^{-1}N_T(\pi)]_- ]_{0}\\
= & [\bar{s}^{-1}_{i_{k}} [\overline{ s_{i_1} \dots s_{i_{k-1}} }^{-1} B_T(\pi)]_- ]_{0}\\
= & [\overline{ s_{i_1} \dots s_{i_{k}} }^{-1} B_T(\pi) [\overline{ s_{i_1} \dots s_{i_{k-1}} }^{-1} B_T(\pi)]_{0+}^{-1} ]_{0}\\
= & e^{ \Tc_{s_{i_1} \dots s_{i_k}} \pi - \Tc_{s_{i_1} \dots s_{i_{k-1}}} \pi }\\
\end{align*}
Hence we obtain exactly the string parameters
$$ c_k = \frac{1}{\int_0^T e^{-\alpha_{i_k}( \Tc_{s_{i_1} \dots s_{i_{k-1}}} \pi )}}$$
And
$$[\bar{w}^{-1} N_T(\pi)]_{0+} = y_{-i_j}(c_j) \dots y_{-i_1}(c_1)$$
Taking the transpose concludes the proof.\\
Finally, in order to see that the second expression can be deduced from the first, write 
\begin{align*}
  & N_T(\pi)\\
= & [ \left( \bar{w}_0 [\bar{w_0}^{-1} N_T(\pi)]_-^{-1} \bar{w}_0^{-1} N_T(\pi) \right)^{-1} ]_-^{-1}\\
= & [ \left( \bar{w}_0 [\bar{w_0}^{-1} N_T(\pi)]_{0+} \right)^{-1} ]_-^{-1}\\
= & [ \left( \bar{w}_0 \left( x_{\bf -i} \circ \varrho_{\bf i}^K\left( \pi \right) \right)^T \right)^{-1} ]_-^{-1}
\end{align*}
\end{proof}

As a corollary, we get the commutativity of the right side in the diagram \ref{fig:parametrizations_diagram}:
\begin{corollary}
\label{corollary:commutative_diagram_kashiwara}
$$ \forall {\bf i} \in R(w_0), \forall \pi \in C_0\left( [0,T], \afrak \right), x_{\bf -i} \circ \varrho_{\bf i}^K\left( \pi \right) = \varrho^K \circ p (\pi)$$
\end{corollary}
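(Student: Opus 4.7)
The plan is to chain together three facts that are already in hand: the inversion lemma just proved, the explicit Gauss decomposition of $B_T(\pi)$, and the definition of $\varrho^K$ on the group picture. Since $p(\pi) = B_T(\pi)$, all three pieces fit directly on top of each other, so I do not expect any genuine obstacle; the argument is essentially a rewriting.

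First, I would fix $\mathbf{i} \in R(w_0)$ and $\pi \in C_0([0,T], \afrak)$ and apply the inversion lemma (theorem \ref{thm:string_inversion_lemma}), which gives
\[
x_{-\mathbf{i}} \circ \varrho^K_{\mathbf{i}}(\pi) \;=\; [\bar{w}_0^{-1} N_T(\pi)]_{0+}^T.
\]
Next, recall that $B_T(\pi) = N_T(\pi) A_T(\pi)$ is exactly the Gauss decomposition, since $N_T(\pi) \in N$ and $A_T(\pi) = e^{\pi(T)} \in A$ by equations (\ref{lbl:process_A_explicit}) and (\ref{lbl:process_N_explicit}). In particular, $[B_T(\pi)]_- = N_T(\pi)$.

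Finally, the definition of $\varrho^K$ (definition \ref{def:crystal_parameter}) applied to the element $p(\pi) = B_T(\pi) \in \Bc$ yields
\[
\varrho^K(p(\pi)) \;=\; [\bar{w}_0^{-1} [B_T(\pi)]_-]_{0+}^T \;=\; [\bar{w}_0^{-1} N_T(\pi)]_{0+}^T,
\]
which matches the right-hand side produced by the inversion lemma. This gives the desired equality and completes the argument. The only thing worth double-checking is that $p(\pi)$ indeed lies in $\Bc$ so that $\varrho^K$ is well-defined on it, but this follows from theorem \ref{thm:flow_B_total_positivity}, which asserts that $B_T(\pi) \in N_{>0}^{w_0} A \subset \Bc$.
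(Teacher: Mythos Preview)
Your proof is correct and follows essentially the same approach as the paper's own proof, which simply cites theorem \ref{thm:string_inversion_lemma} and definition \ref{def:crystal_parameter} together with the identity $[p(\pi)]_- = N_T(\pi)$. You have spelled out the steps in more detail and added the check that $p(\pi) \in \Bc$, but the substance is identical.
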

\begin{proof}
Theorem \ref{thm:string_inversion_lemma} and definition \ref{def:crystal_parameter} because:
$$ [p(\pi)]_- = N_T(\pi)$$
\end{proof}

\subsection{Lusztig parameters for paths}
\label{subsection:lusztig_param}
In the same fashion, we define Lusztig parameters for a path and show how to extract them.

\subsubsection{Definition}
Let ${\bf i} \in R(w_0)$. For any path $\pi \in C_0\left( [0, T], \afrak \right)$, define $\varrho_{\bf i}^L(\pi)$ as the sequence of numbers ${ \bf t } = \left( t_1, t_2, \dots, t_m \right)$ recursively as:
$$ t_k = \frac{1}{\int_0^T e^{-\alpha_{i_k}\left( e_{s_{i_1} \dots s_{i_{k-1}}}^{-\infty} \pi \right)}}$$
\begin{thm}
\label{thm:lusztig_params_paths}
The map $\varrho_{\bf i}^L$ is well-defined on $C_0\left( [0, T], \afrak \right)$ and takes values in $\R_{>0}^m$. Moreover, for $\pi \in C_0\left( [0, T], \afrak \right)$, the $m$-tuple $\varrho_{\bf i}^L(\pi)$ allows to recover $\pi$ from the lowest weight path $e_{w_0}^{-\infty} \pi$.
\end{thm}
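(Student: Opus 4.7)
The plan is to mirror the proof of Theorem \ref{thm:string_params_paths}, systematically replacing the high path transforms $\Tc_\alpha$ by their dual low transforms $e^{-\infty}_\alpha$ and exploiting the duality relation $e^{-\infty}_w = \iota \circ \Tc_w \circ \iota$ of Lemma \ref{lemma:high_low_duality}. In principle one could reduce everything to the statement for $\pi^\iota$ under Theorem \ref{thm:string_params_paths} via a change of variable $s \leftrightarrow T-s$ in the defining integrals, but it is conceptually cleaner to carry out the analogous inductive argument directly on $\pi$.

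The first step I would establish is a dual of Proposition \ref{proposition:extracting_string}: for $w \in W$ and a simple root $\alpha$ with $\ell(w s_\alpha) = \ell(w)+1$, if $\pi$ is a low path of type $w$ in the sense of Definition \ref{def:low_path_types}, then $t = 1/\int_0^T e^{-\alpha(\pi)} > 0$, $\eta = e^{-\infty}_\alpha \pi$ is a low path of type $w s_\alpha$, and conversely every low path $\eta$ of type $w s_\alpha$ together with a parameter $t>0$ comes from a unique such $\pi$, given by the explicit inversion
$$\pi(s) = \eta(s) + \log\!\left( \tfrac{1}{t} - \int_0^s e^{-\alpha(\eta)} \right) \alpha^\vee$$
valid on $[0,T)$. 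The fastest proof is to conjugate Proposition \ref{proposition:extracting_string} by $\iota$: time reversal swaps high and low asymptotic expansions, exchanges $\Tc_\alpha$ with $e^{-\infty}_\alpha$, and transforms the defining integral of the string parameter into that of the Lusztig parameter.

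With this local step in hand, the theorem follows by induction on $k$ along the reduced word ${\bf i} = (i_1, \dots, i_m)$. Starting from $\pi$, which is a low path of type $e$, each successive application of $e^{-\infty}_{\alpha_{i_k}}$ is legitimate because $\ell(s_{i_1} \dots s_{i_{k-1}} s_{i_k}) = \ell(s_{i_1} \dots s_{i_{k-1}}) + 1$ along a reduced expression; each step produces a low path of the next type in the Hasse diagram and extracts the strictly positive parameter $t_k$. After $m$ steps, we land on $e^{-\infty}_{w_0}\pi$, a low path of type $w_0$. Reconstruction of $\pi$ from the pair $(e^{-\infty}_{w_0}\pi, \varrho_{\bf i}^L(\pi))$ is then achieved by reversing this procedure, applying the inversion formula above $m$ times in the opposite order to recover $e^{-\infty}_{s_{i_1} \dots s_{i_{k-1}}}\pi$ from $e^{-\infty}_{s_{i_1} \dots s_{i_k}}\pi$ and $t_k$.

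The main obstacle is the bookkeeping of asymptotic types through each step: one has to check that integrability at the relevant endpoint (now $t=T$ rather than $t=0$) holds precisely when the weak Bruhat order is respected, and that the coefficient of the logarithmic singularity in $e^{-\infty}_{\alpha_{i_k}}\pi$ matches the prescription of Definition \ref{def:low_path_types} for the new type $s_{i_1} \dots s_{i_k}$. As in the string case, this hinges on the property of the Weyl co-vector $\rho^\vee$ encoded in Lemma \ref{lemma:rho_property}; once the dual of Proposition \ref{proposition:extracting_string} is in place, the remaining argument is entirely parallel to the proof of Theorem \ref{thm:string_params_paths}.
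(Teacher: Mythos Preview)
Your approach is essentially the same as the paper's: it proves the dual of Proposition \ref{proposition:extracting_string} via the extended duality between $\Tc_\alpha$ and $e^{-\infty}_\alpha$, then iterates along the reduced word, climbing down the Hasse diagram of low path types. The structure and the role of Lemma \ref{lemma:rho_property} are exactly as you describe.

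However, your explicit inversion formula is wrong: your expression gives $\pi(0) = \eta(0) + \log(1/t)\,\alpha^\vee \neq 0$, whereas low paths start at the origin. The correct inversion, obtained in the paper either by direct computation or by dualizing Proposition \ref{proposition:extracting_string}, is
\[
\pi(s) = \eta(s) + \log\!\Bigl(1 + t \int_0^s e^{-\alpha(\eta)}\Bigr)\alpha^\vee = T_{x_\alpha(t)}\eta(s).
\]
This is not merely a cosmetic fix: recognizing the single-step inversion as the path transform $T_{x_\alpha(t)}$ is what makes the full reconstruction compose, via the semigroup property of $T_g$, into $\pi = T_z\,(e^{-\infty}_{w_0}\pi)$ with $z = x_{i_1}(t_1)\cdots x_{i_m}(t_m) \in U^{w_0}_{>0}$. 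That is the content of the subsequent Theorem \ref{thm:from_lowest_path_2_path} and the reason the Lusztig parameter genuinely lives in the totally positive variety. Your iterated inversion would work in principle once corrected, but you would miss this structural identification.
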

Let us now explain how to prove this theorem carefully using duality.

\subsubsection{Extracting Lusztig parameters}
Let $w \in W$. It is quite obvious that if $\pi' \in C^{high}_w\left( (0, T], \afrak\right)$ then $\iota(\pi')$ makes sense and belongs to $C^{low}_w\left( [0, T), \afrak\right)$. The symmetric situation is not quite true, since one cannot apply the duality map $\iota$ to low paths. But we still have:
\begin{lemma}[Extended duality lemma]
\label{lemma:extended_duality}
Let $w \in W$. If $\pi \in C^{low}_w\left( [0, T), \afrak\right)$ is a low path of type $w$, there is a unique $\pi' \in C^{high}_w\left( (0, T], \afrak\right)$ such that:
$$ \pi = \left( \pi' \right)^\iota$$
\end{lemma}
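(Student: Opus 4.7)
The plan is to explicitly construct $\pi'$ by reversing time and adjusting by an additive constant, then check that this construction yields the only high path of type $w$ with the required property.

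First I would observe that, since $\pi \in C_0\left([0,T),\afrak\right)$ is a low path of type $w$, the asymptotic development in Definition \ref{def:low_path_types} provides a unique constant $C_\pi \in \afrak$ such that
$$\pi(t) = C_\pi + \log(T-t)\left(\rho^\vee - w^{-1}\rho^\vee\right) + c_w + o(1), \quad t \to T.$$
The natural candidate is then to set
$$\pi'(s) := \pi(T-s) - C_\pi, \quad s \in (0,T),$$
and extend it at $s=T$ by $\pi'(T) := -C_\pi$, which is consistent since $\pi(0) = 0$.

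Next I would verify that $\pi' \in C^{high}_w\left((0,T],\afrak\right)$: continuity on $(0,T]$ is immediate from the continuity of $\pi$ on $[0,T)$, and the behaviour near $s=0$ follows by substituting $t = T-s$ in the asymptotic above, giving
$$\pi'(s) = \log(s)\left(\rho^\vee - w^{-1}\rho^\vee\right) + c_w + o(1), \quad s \to 0,$$
which is exactly the defining asymptotic of a high path of type $w$ (Definition \ref{def:high_path_types}). The identity $(\pi')^\iota = \pi$ is then a direct computation:
$$(\pi')^\iota(t) = \pi'(T-t) - \pi'(T) = \left(\pi(t) - C_\pi\right) - (-C_\pi) = \pi(t).$$

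Finally, I would address uniqueness. Suppose $\pi'' \in C^{high}_w\left((0,T],\afrak\right)$ also satisfies $(\pi'')^\iota = \pi$. Then for $s \in (0,T]$,
$$\pi''(s) = \pi(T-s) + \pi''(T),$$
so $\pi''$ differs from $\pi'$ by the additive constant $\pi''(T) + C_\pi$. Comparing the asymptotic of $\pi''$ at $s \to 0$ with that forced by being high of type $w$, the constant term must equal $c_w$, which uniquely fixes $\pi''(T) = -C_\pi$. Hence $\pi'' = \pi'$. No step here is an obstacle: the content of the lemma is really that the constant $C_\pi$ appearing in the asymptotic of a low path is precisely what is needed to normalise the time-reversed path so that its asymptotic at $0$ matches the $c_w$ convention of high paths.
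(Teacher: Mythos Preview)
Your proof is correct and follows exactly the same construction as the paper: define $\pi'(s) = \pi(T-s) - C_\pi$ and verify the requirements. The paper's proof is a one-line sketch of precisely this argument, while you have carefully filled in the verification of the asymptotic and the uniqueness.
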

\begin{proof}
Simply take for $0 \leq t < T$, $\pi'(t) = \pi(T-t) - C_\pi$, where $C_\pi$ is the constant appearing in the definition of low paths. It is immediate that it satisfies all the requirements.
\end{proof}
\begin{rmk}
Mapping $\pi$ to $\pi'$ can be thought of as an extension of the duality map to low paths.
\end{rmk}
\begin{rmk}
Putting together the previous lemma and lemma \ref{lemma:high_low_duality}, one sees that the path transforms $\left( e^{-\infty}_w \right)_{w \in W}$ and $\left( \Tc_w \right)_{w \in W}$ are in extended duality.
\end{rmk}

Thus, we can easily prove analogous statements to the previous case:
\begin{proposition}
\label{proposition:extracting_lusztig}
Let $w \in W$ and $\alpha \in \Delta$ such that $\ell(w s_\alpha) = \ell(w) + 1$.\\
(1) If $\pi$ is a low path of type $w$ then $c = \frac{1}{\int_0^T e^{-\alpha(\pi)}} > 0$ and $\eta = e_\alpha^{-\infty} \pi$ has type $w s_\alpha$.\\
(2) Reciprocally, given $\eta$, a low path with type $w s_\alpha$, and a positive $c>0$, there is a unique low path $\pi$ of type $w$ such that $c = \frac{1}{\int_0^T e^{-\alpha(\pi)}} > 0$ and $\eta = e_\alpha^{-\infty} \pi$. It is given by:
$$ \forall 0 \leq t < T, \pi(t) = T_{x_\alpha(c)} \eta(t) = \eta(t) + \log\left( 1 + c \int_0^t e^{-\alpha(\eta)} \right)\alpha^\vee$$
\end{proposition}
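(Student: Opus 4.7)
The plan is to derive this proposition directly from its high-path analogue (Proposition \ref{proposition:extracting_string}) via the duality between high and low paths. The two ingredients are the extended duality lemma \ref{lemma:extended_duality}, which identifies $C^{low}_w$ with $C^{high}_w$ via the bijection $\iota\colon \pi'\mapsto (t\mapsto \pi'(T-t)-\pi'(T))$, and the intertwining identity $e^{-\infty}_\alpha \circ \iota = \iota \circ \Tc_\alpha$ from lemma \ref{lemma:high_low_duality} (extended in the obvious way to all high paths). With these in hand, the proposition becomes a direct translation of Proposition \ref{proposition:extracting_string}.

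For part (1), given $\pi \in C^{low}_w([0,T),\afrak)$, let $\pi' \in C^{high}_w((0,T],\afrak)$ be its unique preimage, so $\pi(t)=\pi'(T-t)-\pi'(T)$. The change of variables $s\mapsto T-s$ yields
$$\int_0^T e^{-\alpha(\pi(s))}\,ds \;=\; e^{\alpha(\pi'(T))}\int_0^T e^{-\alpha(\pi'(s))}\,ds,$$
which is finite since Proposition \ref{proposition:extracting_string}(1) applied to $\pi'$ gives the integrability on the right; hence $c>0$. The intertwining identity then yields $e^{-\infty}_\alpha \pi = \iota(\Tc_\alpha \pi')$, and since $\Tc_\alpha \pi'$ is a high path of type $ws_\alpha$, its $\iota$-image is a low path of type $ws_\alpha$ by lemma \ref{lemma:extended_duality}.

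For part (2), given $\eta \in C^{low}_{ws_\alpha}$ and $c>0$, take its high-path counterpart $\eta' \in C^{high}_{ws_\alpha}$, set $c' := e^{-\alpha(\eta'(T))}/c > 0$, and invoke Proposition \ref{proposition:extracting_string}(2) to obtain the unique $\pi' \in C^{high}_w$ with $\Tc_\alpha\pi'=\eta'$ and $\int_0^T e^{-\alpha(\pi')} = 1/c'$. The explicit formula given there reads $\pi'(t) = \eta'(t) + \log(c' + \int_t^T e^{-\alpha(\eta')})\alpha^\vee$, so in particular $\pi'(T) = \eta'(T) + \log(c')\alpha^\vee$, whence $\alpha(\pi'(T)) = \alpha(\eta'(T)) + 2\log c'$. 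Setting $\pi := \iota(\pi')$, the relation from part (1) becomes $c \cdot c' = e^{-\alpha(\eta'(T))}$, confirming that $c = 1/\int_0^T e^{-\alpha(\pi)}$; uniqueness of $\pi$ propagates from uniqueness of $\pi'$. To recover the explicit formula, write $\pi(t) = \pi'(T-t)-\pi'(T)$ and use $\int_{T-t}^T e^{-\alpha(\eta')} = e^{-\alpha(\eta'(T))}\int_0^t e^{-\alpha(\eta)}$ to simplify the logarithm, ending with $\pi(t) = \eta(t) + \log(1 + c\int_0^t e^{-\alpha(\eta)})\alpha^\vee$, which is exactly $T_{x_\alpha(c)}\eta(t)$ by property (v) of the path transform.

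The main technical nuisance is justifying that lemma \ref{lemma:high_low_duality}, originally stated for standard paths, still holds after the extension of $\iota$ to paths of arbitrary type provided by lemma \ref{lemma:extended_duality}. This is immediate from a pointwise check of the defining formulas for $e^{-\infty}_\alpha$ and $\Tc_\alpha$, the only subtle point being convergence of the integrals involved, which is guaranteed by the asymptotic bounds on the paths coming from lemma \ref{lemma:rho_property} together with the assumption $\ell(ws_\alpha) = \ell(w) + 1$. Beyond this, the proof is pure bookkeeping of constants through the duality.
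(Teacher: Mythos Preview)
Your proof is correct and follows the same duality strategy as the paper: both parts are obtained by transporting Proposition \ref{proposition:extracting_string} through the extended duality of Lemma \ref{lemma:extended_duality} and the intertwining $e^{-\infty}_\alpha \circ \iota = \iota \circ \Tc_\alpha$. The only difference is in part (2): you push the explicit formula for $\pi'$ from Proposition \ref{proposition:extracting_string}(2) through the change of variables (with the careful choice $c' = e^{-\alpha(\eta'(T))}/c$), whereas the paper, after invoking duality for existence and uniqueness, instead derives the formula directly by inverting the defining relation $\eta = e^{-\infty}_\alpha \pi$ via the identity $(1 + c\int_0^t e^{-\alpha(\eta)})(1 - c\int_0^t e^{-\alpha(\pi)}) = 1$. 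Your route is a bit more bookkeeping-heavy but entirely systematic; the paper's shortcut avoids tracking the constant $c'$ but requires redoing the inversion computation already present in Lemma \ref{lemma:inversion_lemma}.
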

\begin{proof}
(1) Using lemma \ref{lemma:extended_duality}, we get $\pi'$ a high path of type $w$. Using proposition \ref{proposition:extracting_string}, we get that:
$$c = \frac{1}{\int_0^T e^{-\alpha(\pi)}} = \frac{e^{-\alpha(\pi'(T))}}{\int_0^T e^{-\alpha(\pi')}} > 0$$
And:
\begin{align*}
\eta & = e_\alpha^{-\infty} \pi\\
     & = e_\alpha^{-\infty} \circ \iota(\pi')\\
     & = \iota \circ \Tc_\alpha(\pi')
\end{align*}
which is a low path of type $w s_\alpha$ since $\Tc_\alpha(\pi') \in C_{w s_\alpha}^{high}\left( (0, T], \afrak \right)$.\\
(2) Again, using the extended duality lemma, there exists $\eta' \in C_{w s_\alpha}^{high}\left( (0, T], \afrak \right)$ such that $\eta = \iota(\eta')$. The result is proven by using (2) in proposition \ref{proposition:extracting_string}. In order to recover $\pi$ from $\eta$, rather than rearranging the formula from proposition \ref{proposition:extracting_string}, let us direcly solve:
$$ \eta(t) = \pi(t) + \alpha^\vee \log\left( 1 - c \int_0^t e^{-\alpha( \pi )} \right) $$
By evaluating the $e^{-\alpha(.)}$ on each side of the previous equality:
$$ e^{-\alpha\left(\eta(t)\right)} = \frac{ e^{-\alpha\left(\pi(t)\right)} }{ \left( 1 - c \int_0^t e^{-\alpha( \pi)}\right)^2 }$$
This expression can be integrated and rearranged as:
$$ \left( 1 + c \int_0^t e^{ -\alpha(\eta) } \right) \left( 1 - c \int_0^t e^{-\alpha(\pi)} \right) = 1$$
As such, since $\eta$ has type $w s_\alpha$, we have that $\int_0^T e^{-\alpha(\eta)} = +\infty$. Hence:
$$ c = \frac{1}{\int_0^T e^{-\alpha(\pi)} }$$
Also by replacing $\log\left( 1 - c \int_0^t e^{-\alpha(\pi)} \right)$ by $-\log\left( 1 + c \int_0^t e^{ -\alpha(\eta) } \right)$, we get the result:
$$ \pi(t) = \eta(t) + \log\left(1 + c \int_0^t e^{-\alpha(\eta)} \right) \alpha^\vee = T_{x_\alpha\left( c \right)} \eta (t)$$
\end{proof}

\begin{proof}[Proof of theorem \ref{thm:lusztig_params_paths}]
Apply iteratively proposition \ref{proposition:extracting_lusztig}. Successive projections give low paths whose types goes down the Hasse diagram. At every composition, exactly one positive real parameter is lost.  
\end{proof}

This drawing illustrates the situation in the case of $A_2$:

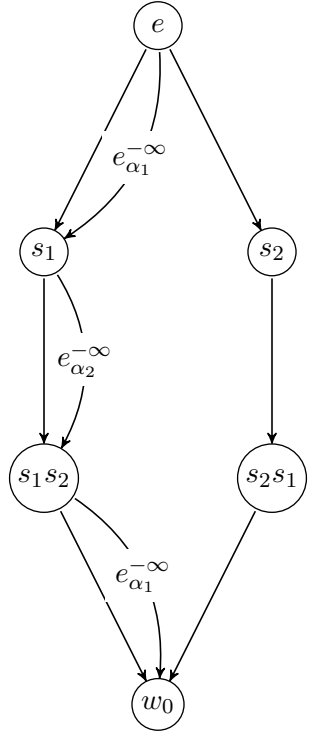
\begin{figure}[htp!]
\label{lbl:lusztig_hasse_diagram}
\caption{Extracting Lusztig parameters and going down the Hasse diagram of type $A_2$} 
\begin{center}
\begin{tikzpicture}
\useasboundingbox (0,0) rectangle (7.0cm,10.0cm);
\definecolor{black}{rgb}{0.0,0.0,0.0}
\definecolor{white}{rgb}{1.0,1.0,1.0}
\Vertex[style={minimum size=1, shape=circle}, x=2.5, y=10, L=$e$]{v0}
\Vertex[style={minimum size=1, shape=circle}, x=4,   y=7, L=$s_2$]{v1}
\Vertex[style={minimum size=1, shape=circle}, x=1,   y=7, L=$s_1$]{v2}
\Vertex[style={minimum size=1, shape=circle}, x=1,   y=4, L=$s_1 s_2$]{v3}
\Vertex[style={minimum size=1, shape=circle}, x=4,   y=4, L=$s_2 s_1$]{v4}
\Vertex[style={minimum size=1, shape=circle}, x=2.5, y=1, L=$w_0$]{v5}
\Edge[lw=0.1cm,style={post, color=black,},](v3)(v5)
\Edge[lw=0.1cm,style={post, color=black,},](v4)(v5)
\Edge[lw=0.1cm,style={post, color=black,},](v2)(v3)
\Edge[lw=0.1cm,style={post, color=black,},](v1)(v4)
\Edge[lw=0.1cm,style={post, color=black,},](v0)(v2)
\Edge[lw=0.1cm,style={post, color=black,},](v0)(v1)
\Edge[lw=0.1cm,style={post, bend left,color=black,}, label=$e^{-\infty}_{\alpha_1}$](v3)(v5)
\Edge[lw=0.1cm,style={post, bend left,color=black,}, label=$e^{-\infty}_{\alpha_2}$](v2)(v3)
\Edge[lw=0.1cm,style={post, bend left,color=black,}, label=$e^{-\infty}_{\alpha_1}$](v0)(v2)
\end{tikzpicture}
\end{center}
\end{figure}
 
This time, a path transform we already encountered appears:
\begin{thm}
\label{thm:from_lowest_path_2_path}
Given a lowest path $\eta$, a reduced word ${\bf i} = \left( i_1, \dots, i_m\right) \in R\left(w_0\right)$ and strictly positive parameters $\left(t_1, \dots, t_m\right) \in \R_{>0}^m$, there is a unique path $\pi \in C_0\left( [0, T], \afrak \right)$ such that:
\begin{itemize}
 \item $e^{-\infty}_{w_0} \pi = \eta$
 \item $\eta_j = e^{-\infty}_{s_{i_1} \dots s_{i_j}} \pi = e^{-\infty}_{\alpha_{i_j}} \cdot \eta_{j-1}$
 \item $t_j = \frac{1}{\int_0^T e^{-\alpha_{i_j}( \eta_{j-1} ) } }$
\end{itemize}
  It is given by:
$$ \pi = T_z \eta$$
where
$$ z = x_{i_1}\left( t_1 \right) \dots x_{i_m}\left( t_m \right) \in U^{w_0}_{>0}$$
\end{thm}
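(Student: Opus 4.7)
The plan is to reconstruct $\pi$ from $\eta$ by climbing back up the Hasse diagram of the weak Bruhat order, reversing the descent procedure used to extract Lusztig parameters. At each step, I will invoke part (2) of Proposition \ref{proposition:extracting_lusztig}, which provides the unique inverse of $e^{-\infty}_{\alpha}$ together with its explicit form as a path transform $T_{x_\alpha(\cdot)}$.

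More precisely, I will proceed by descending induction on $j \in \{m, m-1, \dots, 0\}$. Initialize $\eta_m := \eta$, which is a low path of type $w_0 = s_{i_1} \dots s_{i_m}$ by hypothesis. Assume $\eta_j$ has been constructed as a low path of type $s_{i_1} \dots s_{i_j}$ with the prescribed integral value $t_k$ for every $k>j$. Since $\ell(s_{i_1} \dots s_{i_{j-1}} s_{i_j}) = \ell(s_{i_1} \dots s_{i_{j-1}}) + 1$, Proposition \ref{proposition:extracting_lusztig} (2) applied with $w = s_{i_1} \dots s_{i_{j-1}}$, $\alpha = \alpha_{i_j}$, $\eta = \eta_j$ and $c = t_j$ produces a unique low path $\eta_{j-1}$ of type $s_{i_1} \dots s_{i_{j-1}}$ such that $e^{-\infty}_{\alpha_{i_j}} \eta_{j-1} = \eta_j$ and $t_j = 1/\int_0^T e^{-\alpha_{i_j}(\eta_{j-1})}$. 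Moreover, the explicit formula in that proposition reads
$$ \eta_{j-1} = T_{x_{i_j}(t_j)} \eta_j. $$
After $m$ steps, $\eta_0$ is a low path of type $e$, i.e.\ an element of $C_0([0,T], \afrak)$, which is our candidate $\pi$.

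Now I need to identify $\pi = \eta_0$ with $T_z \eta$. Iterating the formula above gives
$$ \pi = T_{x_{i_1}(t_1)} \circ T_{x_{i_2}(t_2)} \circ \cdots \circ T_{x_{i_m}(t_m)}\, \eta. $$
Each $x_{i_k}(t_k)$ lies in $U_{\geq 0}$, so by Properties~\ref{lbl:path_transform_properties} (i), the composition collapses to $T_z$ with $z = x_{i_1}(t_1) \dots x_{i_m}(t_m)$; and by Theorem \ref{lbl:geom_lusztig_parameters} this $z$ belongs to $U^{w_0}_{>0}$. Conversely, any path $\pi$ satisfying the three conditions in the statement must, by the uniqueness part of Proposition \ref{proposition:extracting_lusztig}, coincide with $\eta_{j-1}$ at every level of the induction, hence equal $T_z \eta$, giving uniqueness.

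The only delicate point is the matching of types and integrability at each step: one must check that $\eta_{j-1}$ is genuinely a low path of type $s_{i_1}\dots s_{i_{j-1}}$ in the sense of Definition \ref{def:low_path_types}, with the right asymptotic at $t=T$. This is exactly what Proposition \ref{proposition:extracting_lusztig} (2) guarantees, once one knows $\ell(s_{i_1}\dots s_{i_{j-1}} s_{i_j}) = \ell(s_{i_1}\dots s_{i_{j-1}}) + 1$, which holds because ${\bf i} \in R(w_0)$ is reduced. Thus the induction is well-grounded and produces the required $\pi$.
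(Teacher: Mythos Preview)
Your proof is correct and follows essentially the same approach as the paper: use Proposition~\ref{proposition:extracting_lusztig}(2) at each level to write $\eta_{j-1} = T_{x_{i_j}(t_j)}\eta_j$, then collapse the composition via the path-transform property~\ref{lbl:path_transform_properties}(i). Your write-up is in fact more detailed than the paper's, which dispatches the argument in two lines.
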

\begin{proof}
At each level, $ \eta_{j-1} = T_{x_{\alpha_{i_j}}\left( \xi_j \right)} \eta_j$. Then, using the composition property among properties \ref{lbl:path_transform_properties}:
$$ \pi = T_{x_{\alpha_{i_1}}\left( \xi_1 \right)} \circ T_{x_{\alpha_{i_2}}\left( \xi_2 \right)} \circ \dots \circ T_{x_{\alpha_{i_j}}\left( \xi_j \right)}\left( \eta \right)
       = T_z\left( \eta \right)$$
\end{proof}
\begin{rmk}
A similar statement holds for paths of type $w$, using group elements in $U^{w}_{>0}$. And this $z \in U^{w_0}_{>0}$ is the Lusztig parameter, as we will see shortly.
\end{rmk}

\begin{corollary}
\label{corollary:connected_component_bijection}
A connected component generated by a path $\pi_0$ can be parametrized by the totally positive part $U^{w_0}_{>0}$ 
thanks to the bijection:
 $$\begin{array}{ccc}
    U^{w_0}_{>0} & \rightarrow & \langle \pi_0 \rangle\\
          z      & \mapsto     & T_z\left( e^{-\infty}_{w_0} \cdot \pi_0 \right) 
   \end{array}$$
\end{corollary}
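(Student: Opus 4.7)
The plan is to identify the proposed map $z \mapsto T_z\bigl(e^{-\infty}_{w_0} \cdot \pi_0\bigr)$ with the inverse of the Lusztig-parameter extraction of Subsection \ref{subsection:lusztig_param}, restricted to the connected component $\langle \pi_0 \rangle$. Set $\eta_0 := e^{-\infty}_{w_0} \cdot \pi_0$, the lowest path of the component, and fix a reduced word ${\bf i} = (i_1, \ldots, i_m) \in R(w_0)$. If $z = x_{i_1}(t_1) \cdots x_{i_m}(t_m) \in U^{w_0}_{>0}$, Theorem \ref{thm:from_lowest_path_2_path} guarantees that $T_z \eta_0$ is the unique path with lowest path $\eta_0$ and Lusztig parameters $(t_1, \ldots, t_m)$ along ${\bf i}$, so distinct $z$'s produce distinct paths and the map is injective.

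For the inclusion $\langle \pi_0 \rangle \subseteq \{ T_z \eta_0 : z \in U^{w_0}_{>0} \}$, I would invoke Proposition \ref{proposition:partial_connectedness_criterion}: any $\pi \in \langle \pi_0 \rangle$ satisfies $e^{-\infty}_{w_0} \cdot \pi = \eta_0$, so combining Theorem \ref{thm:lusztig_params_paths} with Theorem \ref{thm:from_lowest_path_2_path} recovers $\pi$ in the form $T_{z(\pi)} \eta_0$, with $z(\pi) \in U^{w_0}_{>0}$ assembled from the Lusztig parameters of $\pi$.

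The hard part will be the reverse inclusion: showing that every $T_z \eta_0$ with $z \in U^{w_0}_{>0}$ actually lies in $\langle \pi_0 \rangle$. The lever is the identity $e^c_\alpha \cdot \pi = T_{x_\alpha(\xi)}(\pi)$ for $\xi = (e^c - 1)/e^{\varepsilon_\alpha(\pi)}$, together with item~(i) of Properties \ref{lbl:path_transform_properties}, which (since $[z']_- = e$ for $z' \in U$) yields $T_{x_\alpha(\xi)} \circ T_{z'} = T_{x_\alpha(\xi) z'}$. Hence, on paths written as $T_{z'} \eta_0$, the crystal action $e^c_\alpha$ realizes left multiplication $z' \mapsto x_\alpha(\xi) z'$, and $\xi$ ranges over an open interval containing $(0, +\infty)$ as $c$ varies over $\R$.

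Starting from $z_0 := \varrho^L_{\bf i}(\pi_0)$, I would argue inductively along ${\bf i}$: the action $e^c_{\alpha_{i_1}}$ resets $t_1$ to any prescribed positive value while fixing the other parameters, and subsequent actions $e^c_{\alpha_{i_j}}$ prepend factors $x_{\alpha_{i_j}}(\xi)$ which, after being re-expressed in the reduced form of ${\bf i}$ via the rational subtraction-free change-of-coordinates maps of (\ref{eqn:geom_change_of_param_lusztig}), modify the remaining parameters. The main obstacle is verifying that this iteration, combined with the Verma relations of Proposition \ref{proposition:group_verma} which permit reordering the simple-root actions, realizes a surjection onto $\R_{>0}^m$; the rank-one calculation $t' = e^c t$ shown in the $SL_2$ example gives the base case, and the higher-rank case should follow by induction on the length $m$.
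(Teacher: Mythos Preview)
Your proposal goes beyond the paper's own two-line proof, which only records that $\eta_0=e^{-\infty}_{w_0}\pi_0$ is constant on the component and that each $\pi\in\langle\pi_0\rangle$ equals $T_u\eta_0$ for a unique $u\in U^{w_0}_{>0}$. This yields injectivity and the inclusion $\langle\pi_0\rangle\subseteq\{T_z\eta_0:z\in U^{w_0}_{>0}\}$, but not the reverse inclusion; you are right to flag the latter as the hard part, and it is genuinely needed (the paper later invokes this corollary to prove (ii)$\Rightarrow$(i) in Theorem~\ref{thm:connectedness_criterion}, which is exactly that reverse inclusion).

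The obstacle you name, however, is real and your inductive sketch does not resolve it. Prepending $x_{\alpha_{i_j}}(\xi)$ for $j>1$ and re-expressing in the ${\bf i}$-chart mixes all the coordinates nontrivially (already in type $A_2$), so there is no evident inductive invariant; and ``induction on $m$'' is not well-posed since $m=\ell(w_0)$ is fixed by the group. A clean replacement is a controllability argument: you have shown that the crystal actions on $\{T_z\eta_0\}$ are, via the Lusztig parameter, the flows of the left-invariant vector fields $z\mapsto e_\alpha z$ on $U^{w_0}_{>0}$ (for times in an open interval about $0$). Since $\{e_\alpha:\alpha\in\Delta\}$ Lie-generates $\ufrak$, these vector fields bracket-generate the tangent bundle, so by Chow--Rashevsky each crystal orbit is open in the connected manifold $U^{w_0}_{>0}$, forcing a single orbit.
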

\begin{proof}
Recall that $\eta = e^{-\infty}_{w_0} \pi$ does not depend on $\pi \in \langle \pi_0 \rangle$, but only on the connected component. And every path in $\pi \in \langle \pi_0 \rangle$ is uniquely determined by an $u \in U^{w_0}_{>0}$ such that $\pi = T_u \eta$ thanks to the previous theorem.
\end{proof}

\subsubsection{Inversion lemma}
Again, we have a bijective correspondence between $N_T(\pi)$ and the Lusztig parameters $\varrho_{\bf i}^L(\pi) \in \R_{>0}^{\ell(w_0)}$:

\begin{thm}
\label{thm:inversion_lemma_lusztig}
For $\eta \in C^{low}_{w_0}\left( [0, T), \afrak \right)$, let $\pi = T_g \eta$ be is a crystal element with Lusztig parameters encoded by $g = x_{i_1}\left( t_1 \right) \dots x_{i_m}\left( t_m \right) = x_{\bf i} \circ \varrho_{\bf i}^L(\pi) \in U^{w_0}_{>0}$. Then:
$$ N_T\left( \pi \right) = [ g \bar{w}_0 ]_-$$
or equivalently
$$ g = [\bar{w}_0^{-1} N_T\left( \pi \right)^\iota ]_+^\iota = [\bar{w}_0^{-1} B_T\left( \pi \right)^\iota ]_+^\iota$$
\end{thm}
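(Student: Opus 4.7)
The plan is to establish the equivalent identity $N_T(\pi) = [g \bar{w}_0]_-$; the other two expressions for $g$ then follow by a short algebraic manipulation. Iterating proposition \ref{proposition:extracting_lusztig} along the reduced word $\mathbf{i}$ and invoking theorem \ref{thm:from_lowest_path_2_path} shows that $\pi$ arises from its lowest path $\eta = e^{-\infty}_{w_0} \pi$ via $\pi = T_g \eta$, where $g = x_{i_1}(t_1) \cdots x_{i_m}(t_m) \in U^{w_0}_{>0}$. Since $g \in U$ has trivial Gauss $-$-part, the defining relation of $T_g$ yields, for every $s \in (0,T)$, the identity $B_s(\pi) = [g\,B_s(\eta)]_{-0}$, and hence $N_s(\pi) = [g\,N_s(\eta)]_-$ (the torus factor $e^{\eta(s)}$ of $B_s(\eta) = N_s(\eta)\,e^{\eta(s)}$ gets absorbed into the $0$-part of the Gauss decomposition). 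Because $s \mapsto N_s(\pi)$ is continuous on $[0,T]$, the theorem reduces to establishing the limit
$$\lim_{s \to T^-} [g\,N_s(\eta)]_- = [g\,\bar{w}_0]_-.$$

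To prove this limit, the plan is to check equality in each fundamental representation $V(\omega_k)$ by comparing generalized minors. A direct computation gives
$$\Delta_{w\omega_k,\omega_k}\bigl([g\,N_s(\eta)]_-\bigr) = \frac{\langle g\,N_s(\eta)\,v_{\omega_k},\,\bar{w}\,v_{\omega_k}\rangle}{\langle g\,N_s(\eta)\,v_{\omega_k},\,v_{\omega_k}\rangle},$$
and analogously for $[g\bar{w}_0]_-$, so the limit reduces to the projective convergence $N_s(\eta)\,v_{\omega_k} \to \bar{w}_0\,v_{\omega_k}$ in $\mathbb{P}(V(\omega_k))$. Expanding $N_s(\eta)$ via (\ref{lbl:process_N_explicit}) and using the near-$T$ asymptotic $\eta(t) = C_\eta + 2\rho^\vee \log(T-t) + c_{w_0} + o(1)$ (the dual of lemma \ref{lemma:start_asymptotics}, which follows from the definition of a low path of type $w_0$), each term $f_{i_1} \cdots f_{i_k}\,v_{\omega_k}$ lies in the weight space $V(\omega_k)_{\omega_k - \sum_j \alpha_{i_j}}$; only the sequences $(i_1,\dots,i_k)$ that drive this weight all the way down to the lowest weight $w_0\omega_k$ yield the dominant divergence as $s \to T$, while every other weight component becomes negligible in the projective sense.

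The main obstacle is this careful asymptotic analysis of multiple iterated integrals. Once the limit above is secured, the second expression for $g$ follows by applying the antimorphism $\iota$ to the Gauss decomposition $g\bar{w}_0 = N_T(\pi) \cdot [g\bar{w}_0]_0 \cdot [g\bar{w}_0]_+$ and using the identity $\bar{w}_0^\iota = \bar{w}_0$ (which holds because $w_0$ is an involution in $W$ and the Kac--Peterson representatives are well-defined via braid relations, cf.\ theorem \ref{thm:kac_peterson}-style): this yields $\bar{w}_0\,g^\iota = [g\bar{w}_0]_+^\iota \cdot [g\bar{w}_0]_0^{-1} \cdot N_T(\pi)^\iota$, from which a further manipulation using the Ad-action of $\bar{w}_0^{-1}$ on $U$ (which sends $U$ into $N$, by proposition \ref{proposition:w_0_action_ad}) identifies $g^\iota$ as the $U$-part of the Gauss decomposition of $\bar{w}_0^{-1} N_T(\pi)^\iota$, i.e.\ $g = [\bar{w}_0^{-1} N_T(\pi)^\iota]_+^\iota$. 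The third expression then follows from $B_T(\pi) = N_T(\pi)\,e^{\pi(T)}$ together with the identity $[a\,x]_+ = [x]_+$ for $a \in H$, itself a direct consequence of $H$ normalizing $N$ in the Gauss decomposition.
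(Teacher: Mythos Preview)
Your strategy is genuinely different from the paper's. You reduce to the limit $\lim_{s\to T^-}[g\,N_s(\eta)]_-=[g\,\bar w_0]_-$ and propose to identify it via projective convergence $N_s(\eta)\,v_{\omega_k}\to \bar w_0\,v_{\omega_k}$ in each $V(\omega_k)$. The paper instead works with $[\bar w_0^{-1}B_T(\pi^\iota)]_+$ and telescopes it as a product of $m$ factors, the $j$-th of which lies in the rank-one cell $U\cap B\,s_{i_j}\,B$ and is therefore a single $x_{\alpha_{i_j}}(t_j)$; computing the torus part of each factor directly produces the Lusztig parameter $t_j=1/\int_0^T\exp(-\alpha_{i_j}(e^{-\infty}_{s_{i_1}\cdots s_{i_{j-1}}}\pi))$. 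This algebraic decomposition mirrors the proof of the string inversion lemma, avoids all asymptotic analysis, and (as the paper notes) extends cleanly to the infinite-horizon case needed later.

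The gap in your proposal is precisely the step you flag as ``the main obstacle'': you assert that only the sequences reaching the lowest weight $w_0\omega_k$ give the dominant divergence, but you do not carry out the analysis. The intuition is correct---each factor $e^{-\alpha(\eta(t))}$ behaves like $c_\alpha(T-t)^{-2}$ near $T$ (since $\alpha(\rho^\vee-w_0^{-1}\rho^\vee)=2$), so a $k$-fold iterated integral grows like $(T-s)^{-k}$, and the longest chains (those reaching $w_0\omega_k$) win; positivity of the coefficients $\langle f_{i_1}\cdots f_{i_k}v_{\omega_k},\bar w_0 v_{\omega_k}\rangle$ (from lemma 7.4 in \cite{bib:BZ01}) prevents cancellation. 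But making this rigorous requires controlling the $o(1)$ error uniformly across the nested integrations, and you have not done so. There is also a minor imprecision: the extremal-weight vectors $\{\bar w\,v_{\omega_k}\}_{w\in W}$ need not span $V(\omega_k)$ outside type $A$, so ``comparing generalized minors'' $\Delta_{w\omega_k,\omega_k}$ is not the same as proving convergence of the full vector $[g\,N_s(\eta)]_-\,v_{\omega_k}$; you need the latter, since injectivity of $n\mapsto (n\,v_{\omega_k})_k$ on $N$ is what pins down the limit. Your derivation of the equivalent form $g=[\bar w_0^{-1}N_T(\pi)^\iota]_+^\iota$ from $N_T(\pi)=[g\bar w_0]_-$ is correct.
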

\begin{proof}
This can be deduced from theorem \ref{thm:string_inversion_lemma}. However, we choose to give a separate proof that is easily adapted to the case of $T=\infty$. First, in order to see that both identities are equivalent, since $\bar{w}_0^{-1} \left([g \bar{w}_0]_{0+}^{-1}\right)^\iota \bar{w}_0 \in B$, write:
\begin{align*}
g = & [g^\iota]_+^\iota\\
= & [\bar{w}_0^{-1} \left([g \bar{w}_0]_{0+}^{-1}\right)^\iota \bar{w}_0 g^\iota]_+^\iota\\
= & [\bar{w}_0^{-1} \left( g \bar{w}_0 [g \bar{w}_0]_{0+}^{-1}\right)^\iota]_+^\iota
\end{align*}
Therefore:
$$N_T\left( \pi \right) = [ g \bar{w}_0 ]_- = g \bar{w}_0 [g \bar{w}_0]_{0+}^{-1}$$
if and only if:
$$ g = [\bar{w}_0^{-1} N_T\left( \pi \right)^\iota]_+^\iota $$
One can also add the torus part and write:
$$ g = [\bar{w}_0^{-1} N_T\left( \pi \right)^\iota ]_+^\iota = [\bar{w}_0^{-1} B_T\left( \pi \right)^\iota ]_+^\iota$$

Now, let us prove the above statement using a similar decomposition to the one used in the proof of theorem \ref{thm:string_inversion_lemma}:
\begin{align*}
      & \left[\bar{w}_0^{-1} B_T\left( \pi^\iota \right) \right]_+\\
    = & \left[\bar{w}_0^{-1} B_T\left( \pi^\iota \right) \right]_{-0}^{-1} \bar{w}_0^{-1} B_T\left( \pi^\iota \right) \\
    = & \left[\bar{w}_0^{-1} B_T\left( \pi^\iota \right) \right]_{-0}^{-1} \bar{s}_{i_m}^{-1}
         \left[\overline{s_{i_1} \dots s_{i_{m-1}}}^{-1} B_T\left( \pi^\iota \right) \right]_{-0}\\
& \cdot \left[\overline{s_{i_1} \dots s_{i_{m-1}}}^{-1} B_T\left( \pi^\iota \right) \right]_{-0}^{-1} \bar{s}_{i_{m-1}}^{-1}
         \left[\overline{s_{i_1} \dots s_{i_{m-2}}}^{-1} B_T\left( \pi^\iota \right) \right]_{-0}\\
& \cdot \left[\overline{s_{i_1} \dots s_{i_{m-2}}}^{-1} B_T\left( \pi^\iota \right) \right]_{-0}^{-1} \bar{s}_{i_{m-2}}^{-1}
         \left[\overline{s_{i_1} \dots s_{i_{m-3}}}^{-1} B_T\left( \pi^\iota \right) \right]_{-0}\\
& \dots \dots \dots \dots\\
& \cdot \left[\overline{s_{i_1}}^{-1} B_T\left( \pi^\iota \right) \right]_{-0}^{-1} \bar{s}_{i_1}^{-1}
         B_T\left( \pi^\iota \right)\\
    = & \left[ \bar{s}_{i_m}^{-1}
         \left[\overline{s_{i_1} \dots s_{i_{m-1}}}^{-1} B_T\left( \pi^\iota \right) \right]_{-0} \right]_+\\
& \cdot \left[ \bar{s}_{i_{m-1}}^{-1}
         \left[\overline{s_{i_1} \dots s_{i_{m-2}}}^{-1} B_T\left( \pi^\iota \right) \right]_{-0} \right]_+\\
& \cdot \left[ \bar{s}_{i_{m-2}}^{-1}
         \left[\overline{s_{i_1} \dots s_{i_{m-3}}}^{-1} B_T\left( \pi^\iota \right) \right]_{-0} \right]_+\\
& \dots \dots \dots \dots\\
& \cdot \left[ \bar{s}_{i_1}^{-1}
         B_T\left( \pi^\iota \right) \right]_+
\end{align*}
In the previous equation, we have a product of $m$ terms, each of the form $x_j = [ \bar{s}_{i_j}^{-1} b ]_{-0}^{-1} \bar{s}_{i_j}^{-1} b = [ \bar{s}_{i_j}^{-1} b ]_+$, $1 \leq k \leq m$ with $b = \left[ \overline{s_{i_1} \dots s_{i_{j-1}}}^{-1}B_T\left( \pi^\iota\right) \right]_{-0}$. As it belongs to the reduced double Bruhat cell $U \cap B s_{i_j} B$, we can use theorem theorem 4.5 in \cite{bib:BZ01} and write $x_j = x_{\alpha_{i_j}}\left( t_j \right)$. The quantity $t_j$ will be computed at the end.
Hence:
$$ [\bar{w}_0^{-1} B_T\left( \pi^\iota \right) ]_+ = x_{i_m}\left( t_m \right) \dots x_{i_2}\left( t_2 \right) x_{i_1}\left( t_1 \right)$$
or equivalently:
$$ [\bar{w}_0^{-1} B_T\left( \pi \right)^\iota ]_+^\iota = x_{i_1}\left( t_1 \right) x_{i_2}\left( t_2 \right) \dots x_{i_m}\left( t_m \right)$$

Now, all that is left is to prove that the Lusztig parameters for $\pi$ are nothing but the quantities $t_j, j=1, \dots, m$. We have:
\begin{align*}
x_{\alpha_{i_j}}\left( t_j \right)
= & \left[ \bar{s}_{i_j}^{-1} \left[\overline{s_{i_1} \dots s_{i_{j-1}}}^{-1} B_T\left( \pi^\iota \right) \right]_{-0} \right]_+\\
= & \left[\overline{s_{i_1} \dots s_{i_{j}}}^{-1} B_T\left( \pi^\iota \right) \right]_{-0}^{-1}
    \bar{s}_{i_j}^{-1} \left[\overline{s_{i_1} \dots s_{i_{j-1}}}^{-1} B_T\left( \pi^\iota \right) \right]_{-0}\\
= & e^{-\Tc_{s_{i_1} \dots s_{i_j}} \circ \iota(\pi)(T)}
    \left[\overline{s_{i_1} \dots s_{i_{j}}}^{-1} B_T\left( \pi^\iota \right) \right]_{-}^{-1}
    \bar{s}_{i_j}^{-1} \left[\overline{s_{i_1} \dots s_{i_{j-1}}}^{-1} B_T\left( \pi^\iota \right) \right]_{-}
    e^{\Tc_{s_{i_1} \dots s_{i_{j-1}}} \circ \iota(\pi)(T)}\\
= & e^{-\Tc_{s_{i_1} \dots s_{i_j}} \circ \iota(\pi)(T)} y_j e^{\Tc_{s_{i_1} \dots s_{i_{j-1}}} \circ \iota(\pi)(T)}\\
\end{align*}
where $y_j = y_{-\alpha_{i_j}}(c_j) \in B^+ \cap N \bar{s}_{i_j}^{-1} N$. Necessarily:
\begin{align*}
c_j^{-\alpha_{i_j}^\vee} & = [y_j]_0\\
& = e^{\Tc_{s_{i_1} \dots s_{i_j}} \circ \iota(\pi)(T) - \Tc_{s_{i_1} \dots s_{i_{j-1}}} \circ \iota(\pi)(T)}\\
& = \exp\left( \log\int_0^T e^{ -\alpha_{i_j}(\Tc_{s_{i_1} \dots s_{i_{j-1}}} \circ \iota(\pi))} \alpha_{i_j}^\vee\right)
\end{align*}
Therefore:
$$c_j = \frac{1}{\int_0^T e^{ -\alpha_{i_j}(\Tc_{s_{i_1} \dots s_{i_{j-1}}} \circ \iota(\pi))}}$$
And:
\begin{align*}
t_j & = c_j \exp\left( -\alpha_{i_j}\left( \Tc_{s_{i_1} \dots s_{i_{j-1}}} \circ \iota(\pi)(T) \right) \right)\\
& = \frac{1}{\int_0^T e^{ -\alpha_{i_j}(\iota \circ \Tc_{s_{i_1} \dots s_{i_{j-1}}} \circ \iota(\pi))}}\\
& = \frac{1}{\int_0^T e^{ -\alpha_{i_j}( e^{-\infty}_{s_{i_1} \dots s_{i_{j-1}}}(\pi) )}}
\end{align*}
\end{proof}

Again, as a corollary, we have the commutativity of the left side in the diagram \ref{fig:parametrizations_diagram}:
\begin{corollary}
\label{corollary:commutative_diagram_lusztig}
$$ \forall {\bf i} \in R(w_0), \forall \pi \in C_0\left( [0,T], \afrak \right), x_{\bf i} \circ \varrho_{\bf i}^L\left( \pi \right) = \varrho^L \circ p (\pi)$$
\end{corollary}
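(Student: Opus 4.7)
The plan is to directly combine Theorem \ref{thm:inversion_lemma_lusztig} with the definition of $\varrho^L$ on the group $\Bc$ given in Definition \ref{def:crystal_parameter}. The statement is purely algebraic once the inversion lemma is in hand, so no new path-theoretic input is required.

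First I would recall that by definition of the projection map $p$ we have $p(\pi) = B_T(\pi)$, and that according to Definition \ref{def:crystal_parameter} the group-level Lusztig parameter reads
$$ \varrho^L(y) = [\bar{w}_0^{-1} y^\iota]_+^\iota \quad \text{for } y \in \Bc.$$
Applied to $y = p(\pi) = B_T(\pi)$, this gives
$$ \varrho^L \circ p(\pi) = [\bar{w}_0^{-1} B_T(\pi)^\iota]_+^\iota .$$

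Next I would invoke the second form of Theorem \ref{thm:inversion_lemma_lusztig}, which asserts exactly that if $g := x_{\bf i}\circ \varrho^L_{\bf i}(\pi) \in U^{w_0}_{>0}$ encodes the Lusztig parameters of $\pi$ in the reduced word ${\bf i}$, then
$$ g = [\bar{w}_0^{-1} B_T(\pi)^\iota]_+^\iota .$$
Comparing the two displayed identities, we get $\varrho^L \circ p(\pi) = g = x_{\bf i}\circ \varrho^L_{\bf i}(\pi)$, which is exactly the claimed commutativity.

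There is no real obstacle here — everything has been done in Theorem \ref{thm:inversion_lemma_lusztig}. The only subtlety is to check that $B_T(\pi)$ lies in the domain where $\varrho^L$ was defined, i.e. in $\Bc$; but this is guaranteed by Theorem \ref{thm:flow_B_total_positivity}, which ensures $B_T(\pi) \in N^{w_0}_{>0} A \subset \Bc$. Thus the corollary is an immediate consequence of the inversion lemma, paralleling the proof of Corollary \ref{corollary:commutative_diagram_kashiwara}.
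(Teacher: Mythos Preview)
Your proof is correct and follows exactly the same approach as the paper, which simply cites Theorem \ref{thm:inversion_lemma_lusztig} and Definition \ref{def:crystal_parameter}. You have spelled out the identification $\varrho^L\circ p(\pi)=[\bar{w}_0^{-1}B_T(\pi)^\iota]_+^\iota$ and matched it with the second formula of the inversion lemma, which is precisely the intended argument; your added remark about $B_T(\pi)\in\Bc$ via Theorem \ref{thm:flow_B_total_positivity} is a welcome clarification.
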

\begin{proof}
 Theorem \ref{thm:inversion_lemma_lusztig} and definition \ref{def:crystal_parameter}.
\end{proof}

\subsection{Crystal actions in coordinates}

The actions $\left( e^._\alpha \right)_{\alpha \in \Delta}$ have a very simple expression in the appropriate charts for a connected crystal. Clearly, this property is a geometric lifting of the equations from  (\ref{eqn:kashiwara_operator_f_1}) to (\ref{eqn:kashiwara_operator_e_2}) for Kashiwara operators.
 
\begin{proposition}
\label{proposition:actions_in_coordinates}
Consider a path $\pi \in C_0\left( [0, T], \afrak \right)$, ${\bf i} \in R(w_0)$ and $\alpha = \alpha_{i_1}$. If:
$$ \varrho^L_{\bf i}(\pi) = \left( t_1, \dots, t_m \right)$$
$$ \varrho^K_{\bf i}(\pi) = \left( c_1, \dots, c_m \right)$$
Then for every $\xi \in \R$:
$$ \varrho^L_{\bf i}\left( e^\xi_\alpha \cdot \pi \right) = \left( e^\xi t_1, \dots, t_m \right)$$
$$ \varrho^K_{\bf i}\left( e^\xi_\alpha \cdot \pi \right) = \left( e^\xi c_1, \dots, c_m \right)$$
\end{proposition}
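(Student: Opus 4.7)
The plan is to prove both statements by reducing to the same two observations: the first coordinate is controlled by the $\varepsilon_\alpha$ axiom of the crystal, while higher coordinates are preserved because $e^\xi_\alpha$ is invisible to the relevant path transforms once we strip the leading step.

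First I would handle the string parameters. By definition, $c_1 = 1/\int_0^T e^{-\alpha(\pi)} = e^{-\varepsilon_\alpha(\pi)}$, and since the geometric Littelmann structure satisfies (C3), $\varepsilon_\alpha(e^\xi_\alpha \cdot \pi) = \varepsilon_\alpha(\pi) - \xi$, so immediately $c_1 \mapsto e^\xi c_1$. For the remaining coordinates the crucial computation is the identity $\Tc_\alpha(e^\xi_\alpha \cdot \pi) = \Tc_\alpha \pi$. I would prove this directly: setting $f(t) = \int_0^t e^{-\alpha(\pi)}$ and $F = f(T)$, the explicit formula for $e^\xi_\alpha$ and a one-line change of variables gives $\int_0^t e^{-\alpha(e^\xi_\alpha \cdot \pi)} = f(t)/g(t)$ where $g(t) = 1 + (e^\xi-1) f(t)/F$; plugging this into the definition of $\Tc_\alpha$ makes the $\log g(t)$ terms cancel, yielding $\Tc_\alpha(e^\xi_\alpha \cdot \pi)(t) = \pi(t) + \alpha^\vee \log f(t) = \Tc_\alpha \pi(t)$. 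Using the braid relation decomposition $\Tc_{s_{i_1}\cdots s_{i_{k-1}}} = \Tc_{\alpha_{i_{k-1}}} \circ \cdots \circ \Tc_{\alpha_{i_2}} \circ \Tc_{\alpha_{i_1}}$ from Theorem \ref{thm:braid_relations_Tc}, and noting that $\alpha_{i_1} = \alpha$, one concludes $\Tc_{s_{i_1}\cdots s_{i_{k-1}}}(e^\xi_\alpha \cdot \pi) = \Tc_{s_{i_1}\cdots s_{i_{k-1}}}(\pi)$ for $k \geq 2$, so $c_k$ is unchanged.

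The Lusztig case proceeds in the exact same pattern, but swapping the role of $\Tc_w$ for $e^{-\infty}_w$. Again $t_1 = e^{-\varepsilon_\alpha(\pi)}$ transforms into $e^\xi t_1$ by (C3). For $k \geq 2$ the key fact is now the projection identity $e^{-\infty}_\alpha \cdot e^\xi_\alpha = e^{-\infty}_\alpha$, noted just before Proposition \ref{proposition:partial_connectedness_criterion}. Combined with the well-definedness of $e^{-\infty}_w = e^{-\infty}_{\alpha_{i_{k-1}}} \cdots e^{-\infty}_{\alpha_{i_1}}$ and the fact that $\alpha_{i_1} = \alpha$ sits at the innermost position, this gives $e^{-\infty}_{s_{i_1}\cdots s_{i_{k-1}}}(e^\xi_\alpha \cdot \pi) = e^{-\infty}_{s_{i_1}\cdots s_{i_{k-1}}}(\pi)$, which by definition of $t_k$ preserves this coordinate.

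None of the steps is really an obstacle: the only non-formal piece is the computation $\Tc_\alpha \circ e^\xi_\alpha = \Tc_\alpha$, which is a short elementary integration by substitution. The argument also admits a slicker, coordinate-free reformulation through the projection $p$ of the previous section: applying the inversion lemmas (Theorems \ref{thm:string_inversion_lemma} and \ref{thm:inversion_lemma_lusztig}) one can alternatively read the statement directly off the corresponding transformations of the Lusztig and Kashiwara parameters of $p(\pi) \in \Bc$ under the group-level action $e^\xi_\alpha$, which is how the formulas match the classical Kashiwara rules (\ref{eqn:kashiwara_operator_f_1})--(\ref{eqn:kashiwara_operator_e_2}).
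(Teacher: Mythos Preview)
Your proof is correct and follows essentially the same approach as the paper: isolate the two identities $\Tc_\alpha \circ e^\xi_\alpha = \Tc_\alpha$ and $e^{-\infty}_\alpha \cdot e^\xi_\alpha = e^{-\infty}_\alpha$ (the paper records both as a lemma just before the proposition), then use (C3) for the first coordinate and the composition formula for $\Tc_w$ and $e^{-\infty}_w$ to freeze the remaining ones. Your explicit computation of $\Tc_\alpha \circ e^\xi_\alpha = \Tc_\alpha$ is actually more detailed than the paper, which only says it follows ``either by direct computation or by using the extended duality''.
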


\begin{rmk}
Here, we do not require twisting the Kashiwara operators like in \cite{bib:BZ01} section 5.2.
\end{rmk}

We will need the following important property of the highest and lowest path transforms.
\begin{lemma}
For $\pi \in C_0\left( [0, T], \afrak \right)$, $\alpha \in \Delta$ and $\xi \in \R$::
$$ e^{-\infty}_\alpha \cdot \left( e^\xi_\alpha \cdot \pi \right) = e^{-\infty}_\alpha \cdot \pi$$
$$ \Tc_\alpha \cdot \left( e^\xi_\alpha \cdot \pi \right) = \Tc_\alpha \cdot \pi$$
\end{lemma}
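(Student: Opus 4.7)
The plan is to derive both identities from results already established in the text, rather than by direct computation on the explicit formulas. For the first identity, my starting point is the observation that $e^{-\infty}_\alpha$ is precisely the pointwise limit $\lim_{c\to-\infty} e^c_\alpha$ on $[0,T)$: indeed, in the explicit formula
\[
e^c_\alpha\cdot\pi(t)=\pi(t)+\log\!\left(1+\tfrac{e^c-1}{e^{\varepsilon_\alpha(\pi)}}\int_0^t e^{-\alpha(\pi(s))}\,ds\right)\alpha^\vee,
\]
the coefficient $(e^c-1)/e^{\varepsilon_\alpha(\pi)}$ tends to $-1/e^{\varepsilon_\alpha(\pi)}$ as $c\to-\infty$, reproducing exactly the defining formula for $e^{-\infty}_\alpha\cdot\pi(t)$ recalled just before the statement. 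Having noted this, I apply crystal axiom (C4) from Definition \ref{def:crystal}, which gives $e^c_\alpha\cdot(e^\xi_\alpha\cdot\pi)=e^{c+\xi}_\alpha\cdot\pi$ for every $c\in\R$, and then let $c\to-\infty$ on both sides to obtain
\[
e^{-\infty}_\alpha\cdot(e^\xi_\alpha\cdot\pi)=\lim_{c\to-\infty}e^{c+\xi}_\alpha\cdot\pi=e^{-\infty}_\alpha\cdot\pi,
\]
as identities in $C_0([0,T),\afrak)$.

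For the second identity, my plan is to reduce it to the first via the duality machinery. Lemma \ref{lemma:high_low_duality} asserts $e^{-\infty}_\alpha\circ\iota=\iota\circ\Tc_\alpha$, which I rewrite as $\Tc_\alpha=\iota\circ e^{-\infty}_\alpha\circ\iota$ after interpreting $\iota$ on low paths through Lemma \ref{lemma:extended_duality}. On the other hand, property (vi) of geometric Littelmann crystals gives $\iota\circ e^\xi_\alpha\circ\iota=e^{-\xi}_\alpha$, so $\iota(e^\xi_\alpha\cdot\pi)=e^{-\xi}_\alpha\cdot\iota(\pi)$. Combining and invoking the first identity applied to $\iota(\pi)$ with parameter $-\xi$,
\[
\Tc_\alpha(e^\xi_\alpha\cdot\pi)
=\iota\bigl(e^{-\infty}_\alpha(e^{-\xi}_\alpha\cdot\iota(\pi))\bigr)
=\iota\bigl(e^{-\infty}_\alpha(\iota(\pi))\bigr)
=\Tc_\alpha(\pi).
\]

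The only thing I need to check carefully is that all intermediate objects live in the right path spaces so that the duality and limit operations are legitimate. Since $\pi\in C_0([0,T],\afrak)$, so is $e^\xi_\alpha\cdot\pi$, which places it in the domain of every statement used; after $e^{-\infty}_\alpha$ is applied one lands in $C^{low}_{s_\alpha}([0,T),\afrak)$, on which the extended duality of Lemma \ref{lemma:extended_duality} gives $\iota$ a well-defined meaning. I do not anticipate a real obstacle here: the proof is essentially a matching of definitions, with the only substantive input being the already-verified axiom (C4) and the already-proved duality lemma.
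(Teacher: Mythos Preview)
Your proof is correct and follows essentially the same approach as the paper. The paper's own proof is terse—``The first identity is obvious and has already been referred to. The second one can be proved either by direct computation or by using the extended duality between $e^{-\infty}_\alpha$ and $\Tc_\alpha$''—and your argument simply spells out the ``obvious'' step via axiom (C4) and the limit $c\to-\infty$, and then carries out the extended-duality route for the second identity exactly as the paper suggests.
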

\begin{proof}
The first identity is obvious and has already been referred to. The second one can be proved either by direct computation or by using the extended duality between $e^{-\infty}_\alpha$ and $\Tc_\alpha$.
\end{proof}

\begin{proof}[Proof of proposition \ref{proposition:actions_in_coordinates}]
Write:
$$ \varrho^L_{\bf i}\left( e^\xi_\alpha \cdot \pi \right) = \left( t_1', \dots, t_m' \right)$$
$$ \varrho^K_{\bf i}\left( e^\xi_\alpha \cdot \pi \right) = \left( c_1', \dots, c_m' \right)$$
The previous lemma along with the definitions for Lusztig and Kashiwara parameters (see subsections \ref{subsection:lusztig_param} and \ref{subsection:string_params}) tell us that:
$$ \forall j \geq 2, t_j = t_j', c_j = c_j'$$
Moreover:
$$ t_1 = c_1 = \frac{1}{\int_0^T e^{-\alpha(\pi)} }$$
and:
\begin{align*}
  & t_1' = c_1'\\
= & \frac{1}{\int_0^T e^{-\alpha(e^\xi \cdot \pi)} }\\
= & \frac{1}{e^{\varepsilon_\alpha(e^\xi \cdot \pi) }}\\
= & \frac{1}{e^{\varepsilon_\alpha(\pi) - \xi}}\\
= & e^\xi t_1 = e^\xi c_1
\end{align*}
\end{proof}

\subsection{Connectedness criterion}
Because of the previous investigations, it is easy to give a simple criterion that forces two paths to belong to the same connected component.

\begin{thm}
\label{thm:connectedness_criterion}
Consider two paths $\pi$ and $\pi'$ in $C_0\left( [0,T], \afrak \right)$. The following propositions are equivalent:
\begin{itemize}
 \item[(i)] $\pi$ and $\pi'$ are connected.
 \item[(ii )] $$ \left( e^{-\infty}_{w_0}(\pi )_t, 0 \leq t < T \right)
               = \left( e^{-\infty}_{w_0}(\pi')_t, 0 \leq t < T \right) $$
 \item[(iii)] $$ \left( \Tc_{w_0}(\pi )_t, 0 < t \leq T \right)
               = \left( \Tc_{w_0}(\pi')_t, 0 < t \leq T \right) $$
\end{itemize}
\end{thm}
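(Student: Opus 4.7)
The plan is to establish (i) $\Leftrightarrow$ (ii) by direct means and then obtain (ii) $\Leftrightarrow$ (iii) for free via the extended duality between the low and high path transforms. The implication (i) $\Rightarrow$ (ii) is already furnished by Proposition \ref{proposition:partial_connectedness_criterion}, since $e^{-\infty}_{w_0}$ is a projection absorbing every crystal action. For the reverse (ii) $\Rightarrow$ (i), I would invoke Corollary \ref{corollary:connected_component_bijection}: if $\eta := e^{-\infty}_{w_0}(\pi) = e^{-\infty}_{w_0}(\pi')$, then both connected components $\langle \pi \rangle$ and $\langle \pi' \rangle$ coincide with the image $\{T_z \eta : z \in U^{w_0}_{>0}\}$ of the parametrizing bijection, so $\pi'$ lies in the connected component of $\pi$.

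The equivalence (ii) $\Leftrightarrow$ (iii) will follow from Lemma \ref{lemma:high_low_duality}, which gives $\iota \circ \Tc_{w_0} = e^{-\infty}_{w_0} \circ \iota$. Applying $\iota$ rephrases (iii) as $e^{-\infty}_{w_0}(\iota \pi) = e^{-\infty}_{w_0}(\iota \pi')$ on $[0,T)$, which is exactly statement (ii) for the dual paths $\iota\pi, \iota\pi'$. By the equivalence (i) $\Leftrightarrow$ (ii) just established, this is in turn equivalent to $\iota\pi$ and $\iota\pi'$ being connected. Finally, the duality identity $\iota \circ e^c_\alpha = e^{-c}_\alpha \circ \iota$ (property (vi) of the geometric Littelmann crystal) ensures that if $\pi' = e^{c_1}_{\alpha_{i_1}} \cdots e^{c_k}_{\alpha_{i_k}} \pi$ then $\iota \pi' = e^{-c_1}_{\alpha_{i_1}} \cdots e^{-c_k}_{\alpha_{i_k}} \iota\pi$, so $\iota$ preserves connectedness. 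Hence (i) holds for $\pi, \pi'$ if and only if it holds for $\iota \pi, \iota \pi'$, closing the loop.

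The only technical point requiring attention is the matching of domains: (ii) compares paths on $[0,T)$ while (iii) compares paths on $(0,T]$, and one must verify that Lemma \ref{lemma:extended_duality} correctly exchanges these half-open intervals when transporting between low and high types of $w_0$. The obstacle one would want to avoid is proving (i) $\Rightarrow$ (iii) directly, by showing $\Tc_{w_0}(e^c_\alpha \pi) = \Tc_{w_0}(\pi)$ through a Gauss-decomposition calculation exploiting $\bar{w}_0^{-1} U \bar{w}_0 \subset N$ (so that left multiplication by $x_\alpha(\xi) \in U$ leaves the torus component $[\bar{w}_0^{-1} B_t(\pi)]_0$ unchanged). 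The duality route sidesteps this computation entirely and is considerably more transparent.
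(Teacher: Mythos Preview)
Your proof is correct and follows essentially the same route as the paper: (i) $\Leftrightarrow$ (ii) via Proposition \ref{proposition:partial_connectedness_criterion} and Corollary \ref{corollary:connected_component_bijection}, and then the link to (iii) by applying the duality $\iota$ together with Lemma \ref{lemma:high_low_duality} and the fact that $\iota$ preserves connectedness. The paper organizes the second half as (i) $\Leftrightarrow$ (iii) directly, while you phrase it as (ii) $\Leftrightarrow$ (iii), but the chain of implications and the ingredients used are identical.
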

\begin{proof}
Proposition \ref{proposition:partial_connectedness_criterion} says that (i) implies (ii), while corollary \ref{corollary:connected_component_bijection} gives the converse.

In order to prove the equivalence between (i) and (iii), notice that $\pi$ and $\pi'$ are connected if and only if the same holds for their duals. Therefore, we have an equivalence between (i) and:
$$ \left( e^{-\infty}_{w_0} \circ \iota (\pi )_t, 0 \leq t < T \right)
               = \left( e^{-\infty}_{w_0} \circ \iota (\pi')_t, 0 \leq t < T \right) $$
Applying lemma \ref{lemma:high_low_duality}, we have the result.
\end{proof}

\subsection{Geometric RSK correspondence and Littelmann's independence theorem}

\begin{thm}[Static version]
\label{thm:geometric_rsk_correspondence}
Let $\pi_0 \in C_0\left([0,T], \afrak\right)$ and $\langle \pi_0 \rangle$ be the connected crystal it generates. Set $\lambda = \Tc_{w_0} \pi_{0}\left( T \right) $. Then the projection is an automorphism of crystals:
$$ \begin{array}{cccc}
    p: & \langle \pi_0 \rangle & \rightarrow & \Bc(\lambda)\\
       &         \pi           &   \mapsto   & B_T\left( \pi \right)
   \end{array}
 $$
\end{thm}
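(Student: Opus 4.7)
The plan is to combine three ingredients already established: (i) the crystal morphism property of $p$, (ii) the parametrizations of a connected path crystal by Lusztig data in $U^{w_0}_{>0}$, and (iii) the parametrization of $\Bc(\lambda)$ by the same data via $\varrho^L$.

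First I would check that $p$ lands in $\Bc(\lambda)$. Since $p$ is a morphism of crystals (and in particular intertwines the $e^c_\alpha$ actions), the subcrystal $\langle \pi_0\rangle$ maps into a single connected piece of $\Bc$. By definition $hw\bigl(p(\pi)\bigr)=hw(\pi)=\Tc_{w_0}\pi(T)$, and by lemma \ref{lemma:hw_is_invariant} the highest weight is invariant under the crystal actions in $\Bc$, while on the path side $\Tc_{w_0}\pi$ is preserved by the $e^c_\alpha$ (used already in theorem \ref{thm:connectedness_criterion}). Hence $hw\circ p$ is constant on $\langle\pi_0\rangle$, equal to $\lambda=\Tc_{w_0}\pi_0(T)$, so $p\bigl(\langle\pi_0\rangle\bigr)\subseteq\Bc(\lambda)$.

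Next I would establish bijectivity via Lusztig coordinates. Fix any reduced word ${\bf i}\in R(w_0)$. On the path side, corollary \ref{corollary:connected_component_bijection} provides a bijection $U^{w_0}_{>0}\to\langle\pi_0\rangle$, $z\mapsto T_z(e^{-\infty}_{w_0}\pi_0)$, whose inverse is precisely $x_{\bf i}\circ\varrho^L_{\bf i}$ by theorem \ref{thm:from_lowest_path_2_path} and theorem \ref{thm:inversion_lemma_lusztig}. On the group side, proposition \ref{proposition:crystal_param_maps} gives a bijection $b^L_\lambda:U^{w_0}_{>0}\to\Bc(\lambda)$ with inverse $\varrho^L$. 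Corollary \ref{corollary:commutative_diagram_lusztig} is exactly the identity $x_{\bf i}\circ\varrho^L_{\bf i}=\varrho^L\circ p$ on $\langle\pi_0\rangle$, so the diagram
\[
\begin{array}{ccc}
\langle\pi_0\rangle & \stackrel{p}{\longrightarrow} & \Bc(\lambda)\\
{\scriptstyle x_{\bf i}\circ\varrho^L_{\bf i}}\searrow & & \swarrow{\scriptstyle \varrho^L}\\
& U^{w_0}_{>0} &
\end{array}
\]
commutes. As the two slanted maps are bijections, so is $p$.

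Finally, since $p$ is a bijective morphism of crystals in the sense of definition \ref{def:crystal}, the set-theoretic inverse $p^{-1}$ automatically satisfies the axioms of a crystal morphism (the weight, $\varepsilon_\alpha$, $\varphi_\alpha$ and the actions $e^c_\alpha$ are all transported through $p$ by equations (i)--(iv) of the projection morphism theorem, and each of these identities is symmetric in $\pi$ and $p(\pi)$ once $p$ is invertible). Therefore $p$ is an isomorphism of crystals between $\langle\pi_0\rangle$ and $\Bc(\lambda)$. I expect no serious obstacle here: all the real work was done in the inversion lemmas of sections \ref{subsection:string_params} and \ref{subsection:lusztig_param}, and this theorem is their clean packaging. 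The only delicate point to double-check is that the bijection $U^{w_0}_{>0}\to\langle\pi_0\rangle$ from corollary \ref{corollary:connected_component_bijection} really lands in $\langle\pi_0\rangle$ (and not just in the ambient space of paths of suitable type), which is why one uses proposition \ref{proposition:actions_in_coordinates} to see that multiplying a Lusztig coordinate by $e^\xi$ corresponds to applying $e^\xi_{\alpha_{i_1}}$, so every element of $U^{w_0}_{>0}$ is reached by crystal operators from $\pi_0$.
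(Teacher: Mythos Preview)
Your proof is correct and follows essentially the same approach as the paper's own (very terse) proof, which simply says ``The arriving set is indeed the appropriate one. The inverse map is constructed using the corollary \ref{corollary:connected_component_bijection}.'' You have unpacked exactly this: the target is $\Bc(\lambda)$ by invariance of $hw$, and bijectivity follows from the commutative diagram linking the $U^{w_0}_{>0}$-parametrizations on both sides via corollary \ref{corollary:commutative_diagram_lusztig}. Your final remark about surjectivity of the map $U^{w_0}_{>0}\to\langle\pi_0\rangle$ is a fair caution (the proof of corollary \ref{corollary:connected_component_bijection} is indeed light on that direction), and your suggested fix via proposition \ref{proposition:actions_in_coordinates} is the natural one.
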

\begin{proof}
The arriving set is indeed the appropriate one. The inverse map is constructed using the corollary \ref{corollary:connected_component_bijection}. 
\end{proof}

\begin{thm}[Dynamical version]
\label{thm:dynamic_rsk_correspondence}
For each $T>0$, define the set:
$$ \Pc_T := \left\{ (x, \eta) \in \Bc \times C^{high}_{w_0}\left( [0,T], \afrak \right) \ | \ hw(x) = \eta(T) \right\}$$
Then, we have a bijection:
$$ \begin{array}{cccc}
  RSK: & C_0\left( [0, T], \afrak \right) & \longrightarrow & \Pc_T\\
       &         \pi                      &   \mapsto       & \left( B_T\left( \pi \right), (\Tc_{w_0} \pi_t; 0 < t \leq T ) \right)
   \end{array}
 $$
\end{thm}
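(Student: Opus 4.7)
The plan is to exhibit the inverse map explicitly and then verify bijectivity on both sides, relying crucially on the static version (theorem \ref{thm:geometric_rsk_correspondence}), the connectedness criterion (theorem \ref{thm:connectedness_criterion}), and the iterative extraction of Kashiwara parameters via proposition \ref{proposition:extracting_string}.

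First I would address well-definedness: for $\pi \in C_0([0,T], \afrak)$, iterating part (1) of proposition \ref{proposition:extracting_string} along any ${\bf i} \in R(w_0)$ shows that $\Tc_{w_0} \pi$ is a high path of type $w_0$ on $(0,T]$. Moreover $hw(B_T(\pi)) = \log[\bar{w}_0^{-1}B_T(\pi)]_0 = \Tc_{w_0}\pi(T)$ by theorem \ref{thm:braid_relations_Tc}, so the image indeed lies in $\Pc_T$. Injectivity is then immediate: if $RSK(\pi) = RSK(\pi')$, then the two highest weight paths agree on $(0,T]$, hence by theorem \ref{thm:connectedness_criterion} $\pi$ and $\pi'$ lie in the same connected crystal $\langle\pi\rangle$; by theorem \ref{thm:geometric_rsk_correspondence} the projection $p$ restricts to a bijection $\langle\pi\rangle \to \Bc(\lambda)$ where $\lambda = \Tc_{w_0}\pi(T)$, and $p(\pi)=B_T(\pi)=B_T(\pi')=p(\pi')$ forces $\pi = \pi'$.

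For surjectivity, given $(x,\eta) \in \Pc_T$ with $\lambda = \eta(T) = hw(x)$, I would fix a reduced word ${\bf i} = (i_1,\ldots,i_m) \in R(w_0)$ and extract the Kashiwara coordinates of $x$: write $\varrho^K(x) = x_{-{\bf i}}(c_1,\ldots,c_m)$ with all $c_k>0$. Then set $\eta_m := \eta$ and define recursively, for $k = m, m-1, \ldots, 1$,
\[
 \eta_{k-1}(t) := \eta_k(t) + \log\!\Bigl(c_k + \int_t^T e^{-\alpha_{i_k}(\eta_k(s))}\,ds\Bigr)\alpha_{i_k}^\vee.
\]
Part (2) of proposition \ref{proposition:extracting_string} guarantees that at each step $\eta_{k-1}$ is a high path of type $s_{i_1}\cdots s_{i_{k-1}}$ and that $\Tc_{\alpha_{i_k}}\eta_{k-1} = \eta_k$ with $c_k = (\int_0^T e^{-\alpha_{i_k}(\eta_{k-1})})^{-1}$. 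After $m$ steps, $\eta_0$ is a high path of type $e$, i.e., $\eta_0 \in C_0([0,T], \afrak)$. Define $\pi := \eta_0$.

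It remains to check $RSK(\pi) = (x,\eta)$. The identity $\Tc_{w_0}\pi = \eta$ follows by induction: $\Tc_{s_{i_1}\cdots s_{i_k}}\pi = \eta_k$ for every $k$, hence $\Tc_{w_0}\pi = \eta_m = \eta$. By construction, the string parameters of $\pi$ in chart ${\bf i}$ are exactly $(c_1, \ldots, c_m)$, so by corollary \ref{corollary:commutative_diagram_kashiwara}, $\varrho^K(B_T(\pi)) = x_{-{\bf i}}(c_1,\ldots,c_m) = \varrho^K(x)$. Moreover $hw(B_T(\pi)) = \Tc_{w_0}\pi(T) = \eta(T) = \lambda = hw(x)$, so $B_T(\pi)$ and $x$ both lie in $\Bc(\lambda)$ with identical Kashiwara parameters; since $\varrho^K$ is a bijection on $\Bc(\lambda)$ (proposition \ref{proposition:crystal_param_maps}), $B_T(\pi) = x$. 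The main technical obstacle is ensuring that the iterative descent really produces a path of type $e$ with the claimed string data, but this is exactly what proposition \ref{proposition:extracting_string}(2) delivers at each rung of the weak Bruhat order.
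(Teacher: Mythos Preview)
Your proof is correct and follows essentially the same strategy as the paper's two-sentence argument, which simply appeals to theorem~\ref{thm:string_params_paths} (the path $\pi$ is equivalent to the pair consisting of its string parameters and the highest weight path $\Tc_{w_0}\pi$) together with corollary~\ref{corollary:commutative_diagram_kashiwara} (the string parameters and $\lambda$ are encoded in $B_T(\pi)\in\Bc(\lambda)$). You have unpacked this into explicit well-definedness, injectivity, and surjectivity steps---your surjectivity construction via iterated applications of proposition~\ref{proposition:extracting_string}(2) is precisely the recovery procedure underlying theorem~\ref{thm:string_params_paths}, and your injectivity argument via theorem~\ref{thm:connectedness_criterion} and theorem~\ref{thm:geometric_rsk_correspondence} is a mild repackaging of the same content.
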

\begin{proof}
By theorem \ref{thm:string_params_paths}, the knowledge of a path $\pi \in C_0\left( [0, T], \afrak \right)$ is exactly equivalent to that of knowing the string parameters and the highest weight path. And thanks to \ref{corollary:commutative_diagram_lusztig}, the string parameters along with the highest weight $\lambda = \Tc_{w_0} \pi(T)$ are encoded by $B_T(\pi) \in \Bc(\lambda)$.
\end{proof}

With this presentation, the analogy with the classical RSK correspondence is quite clear. The path $\pi$ plays the role of a word. Elements in $\Bc$, the crystal elements, play the role of semi-standard tableaux. Highest paths of type $w_0$ play the role of shape dynamic. Finally the condition:
$$ hw(x) = \Tc_{w_0}\pi(T)$$
is the equivalent of saying that the 'P' tableau and the 'Q' tableau have the same shape.

\begin{rmk}
The previous theorem means that in the path model, there is a relatively large amount of automorphic crystals. In the group picture, however the only crystal automorphism of $\Bc\left( \lambda \right)$ is the identity. In that sense, the group picture is ``minimal''.
\end{rmk}

As a corollary, we have the geometric analogue of Littelmann's independence theorem
\begin{thm}[Geometric Littelmann independence theorem]
\label{thm:geometric_littelmann_independence}
For any connected geometric path crystal $L \subset C_0\left( [0, T], \afrak \right)$, the crystal structure only depends on $\lambda = \Tc_{w_0} \pi \left( T \right)$ for any $\pi \in L$.
\end{thm}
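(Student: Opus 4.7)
The plan is to deduce this as a direct corollary of the geometric RSK correspondence (Theorem \ref{thm:geometric_rsk_correspondence}) together with the connectedness criterion (Theorem \ref{thm:connectedness_criterion}).

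First I would verify that the quantity $\lambda = \Tc_{w_0}\pi(T)$ is actually well-defined, i.e.\ it does not depend on the choice of $\pi \in L$. This is precisely the content of the equivalence (i)$\Leftrightarrow$(iii) in Theorem \ref{thm:connectedness_criterion}: for $\pi, \pi' \in L$ connected, one has
\[ \bigl(\Tc_{w_0}(\pi)_t;\, 0 < t \leq T\bigr) = \bigl(\Tc_{w_0}(\pi')_t;\, 0 < t \leq T\bigr), \]
so in particular the endpoints at $t=T$ agree. Hence $\lambda$ is an invariant of the connected component $L$.

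Next, pick any base path $\pi_0 \in L$, so that $L = \langle \pi_0 \rangle$ since $L$ is connected. Theorem \ref{thm:geometric_rsk_correspondence} then asserts that the projection
\[ p : \langle \pi_0 \rangle \longrightarrow \Bc(\lambda), \qquad \pi \mapsto B_T(\pi), \]
is a crystal isomorphism onto the group crystal $\Bc(\lambda)$. Consequently, if $L'$ is another connected path crystal with the same $\lambda$ and base point $\pi_0'$, the map $p'$ defined analogously is also a crystal isomorphism $L' \to \Bc(\lambda)$. The composition $(p')^{-1} \circ p$ is then a crystal isomorphism $L \to L'$ intertwining the weight maps, the $\varepsilon_\alpha, \varphi_\alpha$, and the actions $e_\alpha^c$. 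This shows that, up to isomorphism, the crystal structure on $L$ depends only on $\lambda$.

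There is essentially no obstacle here, as all the substantive work has already been done in establishing that $p$ is a crystal morphism (through the earlier computation that $p$ intertwines all structural maps) and that it restricts to an isomorphism on each connected component (via the parametrizations of subsection \ref{subsection:lusztig_param} and Corollary \ref{corollary:connected_component_bijection}). The only subtlety worth pointing out is that one must invoke the well-definedness of $\lambda$ before invoking the RSK correspondence, since the statement quantifies over ``any $\pi \in L$''; the connectedness criterion makes this step immediate.
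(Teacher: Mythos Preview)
Your proof is correct and follows exactly the approach the paper intends: the theorem is stated there as an immediate corollary of Theorem \ref{thm:geometric_rsk_correspondence}, and you have simply spelled out the details. Your additional care in invoking Theorem \ref{thm:connectedness_criterion} to check that $\lambda$ is independent of the choice of $\pi \in L$ is a welcome clarification that the paper leaves implicit.
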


\subsection{The case of infinite time horizon}
\label{subsection:infinite_T}

Most of the previous results carry on the case where $T=\infty$. We will explain how to proceed in order to construct Lusztig parameters for a path $\pi \in C_0(\R_+, \afrak)$.

Clearly, low path transforms can be applied to paths in $C_0\left( \R_+, \afrak \right)$. And:
\begin{lemma}
Let $\pi \in C_0\left( \R_+, \afrak \right)$ such that $\pi(t) \underset{t \rightarrow \infty}{\sim} \mu t$ with $\alpha\left(\mu\right)>0$. Then:
$$\int_0^\infty e^{-\alpha(\pi)} < \infty$$
And
$$ e^{-\infty}_\alpha \pi(t) \underset{t \rightarrow \infty}{\sim} s_\alpha(\mu) t$$
\end{lemma}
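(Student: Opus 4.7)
The plan has two parts, corresponding to the two claims.

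For the integrability claim, I would argue as follows. The assumption $\pi(t) \sim \mu t$ with $\alpha(\mu)>0$ means that, fixing $\epsilon \in (0, \alpha(\mu))$, there exists $T_0 > 0$ such that $\alpha(\pi(t)) \geq (\alpha(\mu) - \epsilon) t$ for all $t \geq T_0$. Then $e^{-\alpha(\pi(t))} \leq e^{-(\alpha(\mu) - \epsilon) t}$ on $[T_0, \infty)$, which is integrable; on $[0, T_0]$ the function $e^{-\alpha(\pi)}$ is continuous hence bounded. So $I := \int_0^\infty e^{-\alpha(\pi)} < \infty$, and the definition
$$ e^{-\infty}_\alpha \pi(t) = \pi(t) + \log\!\left( \frac{\int_t^\infty e^{-\alpha(\pi)}}{I} \right) \alpha^\vee $$
makes sense for all $t \geq 0$.

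For the asymptotic claim, the key input is the Laplace-type estimate
$$ \frac{1}{t} \log \int_t^\infty e^{-\alpha(\pi(s))} \, ds \xrightarrow[t \to \infty]{} -\alpha(\mu). $$
I would establish this by squeezing: for every $\epsilon \in (0, \alpha(\mu))$ and $t$ large enough,
$$ \frac{e^{-(\alpha(\mu)+\epsilon)t}}{\alpha(\mu)+\epsilon} \;\leq\; \int_t^\infty e^{-\alpha(\pi)} \;\leq\; \frac{e^{-(\alpha(\mu)-\epsilon)t}}{\alpha(\mu)-\epsilon}, $$
taking logs, dividing by $t$ and letting $\epsilon \downarrow 0$. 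Combining with $\pi(t) = \mu t + o(t)$ gives
$$ e^{-\infty}_\alpha \pi(t) = \mu t + o(t) + \big(-\alpha(\mu) t + o(t)\big)\alpha^\vee = s_\alpha(\mu) t + o(t), $$
since $s_\alpha(\mu) = \mu - \alpha(\mu)\alpha^\vee$. This yields the claimed equivalence.

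The main obstacle, if any, is purely notational: making sure the interpretation of $\pi(t) \sim \mu t$ in the Cartan subalgebra is precisely $\pi(t) = \mu t + o(t)$ (equivalently $\pi(t)/t \to \mu$), so that applying the real-valued linear form $\alpha$ yields $\alpha(\pi(t)) = \alpha(\mu) t + o(t)$. Once this is fixed, both the integrability and the Laplace estimate reduce to elementary scalar arguments, and the conclusion is immediate.
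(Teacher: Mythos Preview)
Your proof is correct and follows essentially the same approach as the paper: the paper states the key Laplace estimate $\log \int_t^\infty e^{-\alpha(\pi)} \sim -\alpha(\mu)t$ without proof and then concludes via $s_\alpha(\mu) = \mu - \alpha(\mu)\alpha^\vee$, while you supply the elementary squeezing argument that justifies this estimate. Your version is simply a more detailed write-up of the same idea.
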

\begin{proof}
The first assertion is clear. For the second, note that:
$$ \log \int_t^\infty e^{-\alpha\left( \pi(s) \right)}ds \underset{t \rightarrow \infty}{\sim} -\alpha\left(\mu\right) t $$
Therefore:
\begin{align*}
e^{-\infty}_\alpha \pi(t) & = \pi(t) + \log\left( 1 - \frac{ \int_0^t e^{\alpha(\pi)} }{ \int_0^\infty e^{\alpha(\pi)} } \right) \alpha^\vee\\
& = \pi(t) + \log\left(\int_t^\infty e^{\alpha(\pi)}\right) \alpha^\vee 
           - \log\left(\int_0^\infty e^{\alpha(\pi)} \right) \alpha^\vee\\
& \underset{t \rightarrow \infty}{\sim} s_\alpha(\mu) t
\end{align*}
\end{proof}

Hence the idea, that in this case, path types should depend on asymtotical behavior. 
\begin{definition}[Low path types in infinite horizon]
\label{def:low_path_types_infinite}
We say that a path $\pi \in C_0\left( \R_+, \afrak \right)$ is a low path of type $w \in W$ when it has a drift in $wC$:
$$ \exists \mu \in wC, \pi(t) \underset{t \rightarrow \infty}{\sim} \mu t$$
The set of all low paths of type $w \in W$ in $C\left( \R_+, \afrak \right)$ is denoted $C^{low}_w\left( [0, T), \afrak \right)$.
\end{definition}

Lusztig parameters also have a straightforward definition. Let ${\bf i} \in R(w_0)$. For any path $\pi \in C_0\left( \R_+, \afrak \right)$ with drift in the Weyl chamber, define $\varrho_{\bf i}^L(\pi)$ as the sequence of numbers ${ \bf t } = \left( t_1, t_2, \dots, t_m \right)$ recursively as:
$$ t_k = \frac{1}{\int_0^\infty \exp\left(-\alpha_{i_k}\left( e_{s_{i_1} \dots s_{i_{k-1}}}^{-\infty} \pi \right) \right)}$$
Thanks to the previous lemma, all $t_j$ are $>0$. Finally the inversion lemma \ref{thm:inversion_lemma_lusztig} is valid with infinite horizon. It is proven by simply taking $T$ to infinity.

\subsection{Minimality of group picture}
\label{subsection:minimality}
Thanks to Littelmann's independence theorem, we have seen that there are a lot of different but isomorphic path crystals. And all of them project (thanks to the map $p$) to a certain $\Bc(\lambda)$, what we called the 'group picture'. Now one can ask the question of how minimal this group picture is. A reasonable answer can be the fact that there are very few crystal morphisms on the group picture $\Bc(\lambda)$. 

\begin{thm}
Let $f: \Bc(\lambda) \rightarrow \Bc(\mu)$ be a map such that:
 $$ \left\{
    \begin{array}{ccc}
    \forall \alpha \in \Delta, \forall c \in \R, & f \circ e^c_\alpha = e^c_\alpha \circ f\\
    \forall \alpha \in \Delta,                   & \varepsilon_\alpha \circ f = \varepsilon_\alpha\\
   \end{array} \right.$$
If $\lambda = \mu$, then $f=id$.
\end{thm}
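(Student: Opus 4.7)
I will prove $f(x) = x$ for every $x \in \Bc(\lambda)$ by reading off enough $f$-invariant statistics to reconstruct $x$.

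Fix a reduced expression ${\bf i} = (i_1, \ldots, i_m) \in R(w_0)$ and define the iteration
\begin{equation*}
x^{(0)} := x, \qquad d_k := \varepsilon_{\alpha_{i_k}}(x^{(k-1)}), \qquad x^{(k)} := e^{d_k}_{\alpha_{i_k}} \cdot x^{(k-1)}.
\end{equation*}
The analogous iteration starting from $f(x)$ produces sequences $(d_k')$ and $(y^{(k)})$. A direct induction on $k$ now gives $d_k = d_k'$ and $y^{(k)} = f(x^{(k)})$ for every $k$: the base case uses $\varepsilon_{\alpha_{i_1}} \circ f = \varepsilon_{\alpha_{i_1}}$ for the scalar and the commutativity of $f$ with $e^{d_1}_{\alpha_{i_1}}$ for the iterate, and the inductive step is identical after replacing $x$ by $x^{(k-1)}$ and invoking the previous stage of the induction. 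Since
\begin{equation*}
x = e^{-d_1}_{\alpha_{i_1}} \cdots e^{-d_m}_{\alpha_{i_m}} \cdot x^{(m)},
\end{equation*}
it remains to establish that $f(x^{(m)}) = x^{(m)}$.

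To do so, I argue that $x^{(m)}$ is a canonical element of $\Bc(\lambda)$ depending only on $\lambda$, so that both iterations (starting from $x$ and from $f(x) \in \Bc(\lambda)$) terminate at the same point. In Kashiwara's classical setting this holds because $v_\lambda$ is the unique highest weight element of $B(\lambda)$ and maximal applications of the Kashiwara operators along a reduced expression of $w_0$ always terminate there. Here, the geometric analogue is obtained by exploiting the rescaling description of Proposition \ref{proposition:actions_in_coordinates}, transported from the path model to $\Bc(\lambda)$ through the isomorphism $p$ of Theorem \ref{thm:geometric_rsk_correspondence}: in the ${\bf i}$-Lusztig chart the action $e^c_{\alpha_{i_1}}$ rescales only the first coordinate, and the same holds in any other reduced expression starting with a chosen simple root (which one can always find for any simple root). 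The Verma relations of Subsection \ref{subsection:additional_structure} and the induced Weyl group action $w \cdot x = e^{-\gamma(x)}_w \cdot x$ allow one to combine these single-direction normalizations into a uniform canonical form.

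The main obstacle is this last step: organizing simultaneous normalization of every Lusztig direction requires iterating across several reduced expressions and controlling the nontrivial positive rational change-of-parameter maps of Theorem \ref{lbl:geom_lusztig_parameters} between them. Once this geometric analogue of the classical ``reach the highest-weight element'' step is in place, the equality $f(x^{(m)}) = x^{(m)}$ follows, and unwinding the recursion gives $f(x) = x$; since $x$ was arbitrary, $f = \mathrm{id}$.
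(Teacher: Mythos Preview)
Your induction showing $d_k=d_k'$ and $y^{(k)}=f(x^{(k)})$ is correct, but the claim that $x^{(m)}$ is a canonical element of $\Bc(\lambda)$ is false, and this is not merely a technical difficulty --- it cannot be repaired. In the geometric crystal the actions $e^c_\alpha$ are \emph{free}; there is no highest-weight element, and setting $\varepsilon_\alpha=0$ is an arbitrary normalization, not a terminal one. Concretely, take $G$ of type $A_2$ with ${\bf i}=(1,2,1)$ and start from $x$ with Lusztig coordinates $(t_1,t_2,t_3)$. Carrying out your three steps (using $\varepsilon_{\alpha_{i_1}}=-\log(\text{first Lusztig coordinate})$ and the $A_2$ braid move $(t_1,t_2,t_3)\mapsto(\tfrac{t_2t_3}{t_1+t_3},\,t_1+t_3,\,\tfrac{t_1t_2}{t_1+t_3})$ between charts) one finds that $x^{(3)}$ has Lusztig coordinates
\[
\Bigl(1,\ \tfrac{1+t_2+t_3}{1+t_3},\ \tfrac{(1+t_3)^2}{1+t_2+t_3}\Bigr)
\]
in the $(1,2,1)$ chart, which visibly depends on $(t_2,t_3)$. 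So your iteration reduces the statement $f(x)=x$ to the equivalent statement $f(x^{(m)})=x^{(m)}$ and makes no progress.

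The paper's proof uses the same commutation $\varepsilon_\alpha\bigl(e^{c_1}_{\alpha_{i_1}}\cdots e^{c_{k-1}}_{\alpha_{i_{k-1}}}f(x)\bigr)=\varepsilon_\alpha\bigl(e^{c_1}_{\alpha_{i_1}}\cdots e^{c_{k-1}}_{\alpha_{i_{k-1}}}x\bigr)$ that underlies your induction, but for \emph{arbitrary} $c_j\in\R$ rather than the specific values $c_j=d_j$. It then sends every $c_j\to-\infty$: the projections $e^{-\infty}_{\alpha}$ are exactly what define the Lusztig parameters of a path, so in the limit one reads off directly that $x$ and $f(x)$ have the same Lusztig parameters in $\Bc(\lambda)$, hence coincide. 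The key idea you are missing is this passage to $-\infty$; it replaces the nonexistent ``highest-weight element'' by the lowest extended path, which \emph{is} canonical on connected components.
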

\begin{proof}
Let $x \in \Bc(\lambda)$ and $y = f(x) \in \Bc(\mu)$. We start by lifting the problem to the path model. This means that we consider $\pi$ and $\pi'$ in $C_0\left( [0,T], \afrak \right)$ such that:
$$ x = p(\pi) = B_T(\pi), y = p(\pi') = B_T(\pi')$$
and
$$ \Tc_{w_0} \pi(T) = \lambda, \Tc_{w_0} \pi(T) = \mu$$
Therefore, we see $\Bc(\lambda) \approx \langle \pi \rangle$ and $\Bc(\mu) \approx \langle \pi' \rangle$ as path crystals. Let us prove that for all $k \in \N$, ${\bf i} \in I^k$ and $\left( c_1, \dots, c_k \right) \in \R^k$:
$$ \frac{1}{\int_0^T \exp\left( -\alpha_{i_k}\left( e^{c_{i_1}}_{\alpha_{i_1}} \cdots e^{c_{i_{k-1}}}_{\alpha_{i_{k-1}}} \pi' \right) \right) }
 = \frac{1}{\int_0^T \exp\left( -\alpha_{i_k}\left( e^{c_{i_1}}_{\alpha_{i_1}} \cdots e^{c_{i_{k-1}}}_{\alpha_{i_{k-1}}} \pi \right) \right) }
$$
Indeed, using the same notations in the path model and in the group picture, this is equivalent to:
\begin{align*}
  & \varepsilon_{\alpha_{i_k}}\left( e^{c_{i_1}}_{\alpha_{i_1}} \cdots e^{c_{i_{k-1}}}_{\alpha_{i_{k-1}}} \pi' \right)\\
= & \varepsilon_{\alpha_{i_k}}\left( e^{c_{i_1}}_{\alpha_{i_1}} \cdots e^{c_{i_{k-1}}}_{\alpha_{i_{k-1}}} y \right)\\
= & \varepsilon_{\alpha_{i_k}}\left( e^{c_{i_1}}_{\alpha_{i_1}} \cdots e^{c_{i_{k-1}}}_{\alpha_{i_{k-1}}} f(x) \right)\\
= & \varepsilon_{\alpha_{i_k}}\circ f \left( e^{c_{i_1}}_{\alpha_{i_1}} \cdots e^{c_{i_{k-1}}}_{\alpha_{i_{k-1}}} x \right)\\
= & \varepsilon_{\alpha_{i_k}}\left( e^{c_{i_1}}_{\alpha_{i_1}} \cdots e^{c_{i_{k-1}}}_{\alpha_{i_{k-1}}} \pi \right)
\end{align*}
Taking all the $c_j \rightarrow -\infty$ and ${\bf i}$ a positive root enumeration, one finds that the Lusztig parameters of $\pi$ and $\pi'$ coincide (see subsection \ref{subsection:lusztig_param}). Then the same goes for $x$ and $y$. And if $\lambda = \mu$, we have $x=y$.

Note that the condition $\lambda = \mu$ can be deduced from elsewhere if for instance $\gamma(x) = \gamma(y)$. Hence the following remark.
\end{proof}

\begin{rmk}
\label{rmk:minimality}
$\lambda = \mu$ is for instance implied by:
$$ \gamma \circ f = \gamma $$
\end{rmk}

\section{Involutions and crystals }
\label{section:involutions}

\subsection{Kashiwara involution}
\label{subsection:involution_iota}
The Kashiwara involution was defined at the level of the enveloping algebra as the unique anti-automorphism satisfying
$$e_\alpha^\iota = e_\alpha, \quad f_\alpha^\iota = f_\alpha \quad h_\alpha^\iota = -h_\alpha $$
It can naturally be lifted to a group anti-automorphism, and we have seen on the path model that it is the group picture counterpart of duality: $\pi \mapsto \pi\left( T-t \right) - \pi\left( T \right)$.\\

\subsection{Sch\"utzenberger involution}
\label{subsection:involution_S}
This map was originally introduced by Sch\"utzenberger as an involution on semi-standard tableaux of a given shape. As tableaux with $n$ letters of a given shape $\lambda$ can be identified with the highest weight crystal $\Bfrak\left( \lambda \right)$ of type $A_n$, it can be seen as an involution on highest weight crystals.\\

\subsubsection{Definition in the group picture}

\begin{definition}[Sch\"utzenberger involution on $G$]
$$ \forall x \in G, S\left( x \right) = \overline{w}_0^{-1} ( x^{-1} )^{iT} \overline{w}_0 = \overline{w}_0 ( x^{-1} )^{iT} \overline{w}_0^{-1} $$ 
\end{definition}

\begin{rmk}
 This is defined ``coordinate free'' on the entire group.
\end{rmk}
\begin{rmk}
 Both definitions agree because $\overline{w}_0^2$ belongs to $\mathcal{Z}(G)$, the center of $G$ as a consequence of the following lemma.
\end{rmk}
\begin{lemma}
 For each $w$, define $t(w) = \overline{w} \overline{w^{-1}}$. We have:
$$ t(w) = e^{i\pi \left( \rho^\vee - w \rho^\vee \right)}$$
In particular:
$$ t(w_0) = \overline{w}_0^2 = e^{i 2 \pi \rho^\vee} $$
\end{lemma}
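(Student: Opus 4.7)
My plan is to proceed by induction on $\ell(w)$, exploiting the braid relations satisfied by the representatives $\bar{s}_i$ together with the standard fact that $\operatorname{Ad}(\bar{u})$ acts on $\hfrak$ as the Weyl group element $u$.

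First I would handle the base case $\ell(w)=1$, i.e.\ $w = s_i$. The definition of $\bar{s}_i$ via the embedding $\phi_i : SL_2 \to G$ gives directly
\[
\bar{s}_i^{\,2} \;=\; \phi_i\!\left(\begin{pmatrix} -1 & 0 \\ 0 & -1\end{pmatrix}\right) \;=\; \phi_i\!\left(\exp(i\pi h)\right) \;=\; e^{i\pi \alpha_i^\vee}.
\]
Since $s_i \rho^\vee = \rho^\vee - \alpha_i(\rho^\vee)\alpha_i^\vee = \rho^\vee - \alpha_i^\vee$, this reads $t(s_i) = e^{i\pi(\rho^\vee - s_i\rho^\vee)}$, matching the claim. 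The case $\ell(w)=0$ is trivial.

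For the induction step, I would pick a reduced expression $w = u s_i$ with $\ell(w) = \ell(u)+1$. Then, by the theorem on braid relations for the $\bar{s}_i$ (stated earlier), $\bar{w} = \bar{u}\,\bar{s}_i$ and $\overline{w^{-1}} = \overline{s_i u^{-1}} = \bar{s}_i\,\overline{u^{-1}}$. Therefore
\[
t(w) \;=\; \bar{u}\,\bar{s}_i^{\,2}\,\overline{u^{-1}} \;=\; \bar{u}\, e^{i\pi\alpha_i^\vee}\,\overline{u^{-1}}
\;=\; e^{i\pi\,\operatorname{Ad}(\bar{u})\alpha_i^\vee}\, t(u).
\]
Using $\operatorname{Ad}(\bar{u})|_{\hfrak} = u$ (a standard fact, and the whole reason representatives of the Weyl group act on $\hfrak$), and the induction hypothesis $t(u) = e^{i\pi(\rho^\vee - u\rho^\vee)}$, this becomes
\[
t(w) \;=\; e^{i\pi u\alpha_i^\vee}\,e^{i\pi(\rho^\vee - u\rho^\vee)}
\;=\; \exp\bigl(i\pi(\rho^\vee - u\rho^\vee + u\alpha_i^\vee)\bigr),
\]
the two exponents commuting as both lie in $\hfrak$. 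Finally, $u\alpha_i^\vee = u(\rho^\vee - s_i\rho^\vee) = u\rho^\vee - w\rho^\vee$, so the exponent collapses to $\rho^\vee - w\rho^\vee$, completing the induction.

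For the ``in particular'' statement, observe that $w_0^{-1} = w_0$ (as $w_0$ is an involution in $W$), so $t(w_0) = \bar{w}_0\,\overline{w_0^{-1}} = \bar{w}_0^{\,2}$. Moreover $w_0$ sends positive roots to negative roots, hence negates $\rho^\vee = \tfrac12\sum_{\beta \in \Phi^+}\beta^\vee$, giving $\rho^\vee - w_0\rho^\vee = 2\rho^\vee$ and $t(w_0) = e^{i 2\pi \rho^\vee}$. The main step to watch is the interchange $\bar{u}e^{i\pi\alpha_i^\vee}\overline{u^{-1}} = e^{i\pi u\alpha_i^\vee}\,t(u)$, where one must be careful to keep the factor $\bar{u}\,\overline{u^{-1}} = t(u)$ rather than $\bar{u}\bar{u}^{-1} = 1$ (these differ precisely by a torus element, which is the whole content of the lemma); no other step poses any real difficulty.
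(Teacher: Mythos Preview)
Your proof is correct and follows essentially the same approach as the paper: both arguments reduce $t(w)$ via $\bar{s}_i^{\,2} = e^{i\pi\alpha_i^\vee}$ and use that conjugation by $\bar{u}$ acts as $u$ on $\hfrak$, the paper doing the telescoping all at once while you package it as induction on $\ell(w)$. The paper's final step invokes the identity $\sum_{k=1}^l \beta_k^\vee = \rho^\vee - w\rho^\vee$ (the coroot version of Lemma~\ref{lemma:rho_property}), which is exactly what your computation $u\alpha_i^\vee = u\rho^\vee - w\rho^\vee$ unwinds inductively.
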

\begin{proof}
Using the notations from preliminaries:
\begin{align*}
t(w) & = \bar{s}_{i_1} \dots \bar{s}_{i_l} \bar{s}_{i_l} \dots \bar{s}_{i_1}\\
& = \phi_{i_l}\left( e^{i \pi h} \right)^{s_{i_1} \dots s_{i_{l-1}}}
    \phi_{i_{l-1}}\left( e^{i \pi h} \right)^{s_{i_1} \dots s_{i_{l-2}}}
    \phi_{i_2}\left( e^{i \pi h} \right)^{s_{i_1}}
    \phi_{i_1}\left( e^{i \pi h} \right)\\
& = \exp\left( i\pi \sum_{k=1}^l s_{i_1} \dots s_{i_{k-1}} h_{i_k} \right)\\
& = \exp\left( i\pi \sum_{k=1}^l \beta_k^\vee \right)\\
& = \exp\left( i\pi \left( \rho^\vee - w \rho^\vee\right) \right)\\
\end{align*} 
\end{proof}

\begin{properties}
\phantomsection
\label{properties:involution_S}
\begin{itemize}
 \item $S$ is an involutive anti-automorphism on the group.
 \item $$ \forall k \in \N, \left(i_1, \dots, i_k\right) \in I^k, S\left( x_{i_1}\left( t_1 \right) \dots x_{i_k}\left( t_k \right) \right) = x_{i_k^*}\left( t_k \right) \dots x_{i_1^*}\left( t_1 \right)$$
 \item $$ S\left( \bar{w_0} \right) = \bar{w}_0$$
 \item $$ \forall x \in B^+ w_0 B^+, hw\left( S\left( x \right) \right) = hw\left( x \right)$$
\end{itemize}
\end{properties}
\begin{proof}
It is easy to see that $S$ is an anti-automorphism as the composition of three anti-automorphisms (inverse, transpose and Kashiwara involution $\iota$) and an automorphism (conjugation by $\bar{w_0}$), the first property. The second property is known to Berenstein and Zelevinsky (relation 6.4 in \cite{bib:BZ01}) and is a consequence of $Ad(\bar{w_0})e_i = -f_{i^*}$ (proposition \ref{proposition:w_0_action_ad}). The rest is easy to check by direct computation. 
\end{proof}

Since $S$ stabilizes the geometric crystal $\Bc$ and preserves the highest weight, it is indeed an involution on highest weight crystals $\Bc\left( \lambda \right)$.

\subsubsection{Definition in the path model}
\begin{definition}[Sch\"utzenberger involution on paths]
$$ S\left( \pi \right) = - w_0 \pi^\iota $$ 
\end{definition}
As usual, this definition in fact agrees with the group picture after projection.
\begin{thm}
\label{thm:involution_S_automorphism}
 $$ p \circ S = S \circ p$$
 where on the right-hand side, $S$ stands for the Sch\"uzenberger involution on the group, and the left-hand side it is considered  in the path model.
\end{thm}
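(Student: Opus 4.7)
The plan is to peel off the Kashiwara involution using the already-established compatibility $p\circ\iota = \iota\circ p$, reducing the claim to a functional identity for the flow $B_\cdot$ alone, and then to verify that identity by comparing two left-invariant ODEs on $G$ driven by the same path.

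First, since the antimorphisms $\iota$ and $T$ on $G$ commute, $S(g) = \bar w_0\,((g^{\iota})^T)^{-1}\,\bar w_0^{-1}$. Set $\eta := \pi^\iota\in C_0([0,T],\afrak)$ and use $B_T(\pi^\iota) = B_T(\pi)^{\iota}$ (part (ii) in the proof that $p$ is a morphism). The identity $p(S(\pi)) = S(p(\pi))$ becomes
$$B_T(-w_0\eta) \;=\; \bar w_0\,\bigl(B_T(\eta)^T\bigr)^{-1}\,\bar w_0^{-1}\qquad \text{for every }\eta\in C_0([0,T],\afrak),$$
and this is actually an identity one can try to prove for every $t\in[0,T]$.

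Second, introduce the two curves $M_t := \bar w_0^{-1}\,B_t(-w_0\eta)\,\bar w_0$ and $P_t := (B_t(\eta)^T)^{-1}$. Applying equation (\ref{lbl:process_B_ode}) to the driving path $-w_0\eta$ and conjugating by $\bar w_0^{-1}$ on the right, I get
$$dM_t \;=\; M_t\cdot \mathrm{Ad}(\bar w_0^{-1})\!\left(\sum_{\alpha\in\Delta} f_\alpha\,dt \,-\, w_0\,d\eta_t\right).$$
By Proposition~\ref{proposition:w_0_action_ad}, summed over all simple roots (the involution $*$ on $\Delta$ may permute the indices but leaves the sum invariant), $\sum_\alpha \mathrm{Ad}(\bar w_0^{-1}) f_\alpha = -\sum_\alpha e_\alpha$; and $\mathrm{Ad}(\bar w_0^{-1})$ restricted to $\afrak$ is the involution $w_0$, so $\mathrm{Ad}(\bar w_0^{-1})(-w_0\,d\eta_t) = -d\eta_t$. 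Thus
$$dM_t \;=\; -M_t\left(\sum_{\alpha\in\Delta} e_\alpha\,dt + d\eta_t\right), \qquad M_0 = e^{-\eta_0}.$$
On the other hand, differentiating $B_t(\eta)\,B_t(\eta)^{-1} = I$ gives $dB_t(\eta)^{-1} = -\bigl(\sum_\alpha f_\alpha dt + d\eta_t\bigr) B_t(\eta)^{-1}$, and transposing (using $f_\alpha^T = e_\alpha$ and $h^T = h$ for $h\in\afrak$) yields
$$dP_t \;=\; -P_t\left(\sum_{\alpha\in\Delta} e_\alpha\,dt + d\eta_t\right),\qquad P_0 = e^{-\eta_0}.$$

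Third, both $M_t$ and $P_t$ solve, in any faithful finite-dimensional representation of $G$, the same linear ODE with the same initial condition, so $M_t\equiv P_t$ on $[0,T]$. Specialising at $t=T$ and substituting $\eta = \pi^\iota$ back through the first step recovers $p\circ S = S\circ p$.

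The only subtle point is the bookkeeping around $\mathrm{Ad}(\bar w_0^{\pm 1})$ acting on the Chevalley generators and on $\afrak$; once this is in place, the miracle of the proof is that $\sum_\alpha f_\alpha\,dt$ and $\sum_\alpha e_\alpha\,dt$ are exchanged by $\mathrm{Ad}(\bar w_0^{-1})$ and by the transpose respectively, which is exactly what lets the two SDEs coincide.
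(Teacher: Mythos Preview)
Your proof is correct and follows essentially the same strategy as the paper: verify the identity by showing that both sides solve the same left-invariant ODE with the same initial condition. The paper packages this slightly more compactly by observing that $S\circ\iota$ is a group \emph{automorphism} (composition of two anti-automorphisms), applying it directly to the defining ODE for $B_t(\pi)$, and reading off $d(S\circ\iota)(f_\alpha) = f_{\alpha^*}$ and $d(S\circ\iota)(h) = -w_0 h$ to conclude $S\circ\iota(B_t(\pi)) = B_t(-w_0\pi)$; you instead peel off $\iota$ first via $p\circ\iota=\iota\circ p$ and then unpack the remaining conjugation, transpose, and inverse by hand. The algebraic content is the same --- in particular, your crucial step $\sum_\alpha \mathrm{Ad}(\bar w_0^{-1})f_\alpha = -\sum_\alpha e_\alpha$ is exactly the summed form of the paper's $d(S\circ\iota)(f_\alpha)=f_{\alpha^*}$ after accounting for the extra transpose and inverse you carry.
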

\begin{proof}
 Consider a smooth path $\pi$. Since $S\circ \iota$ is an automorphim, the left invariant equation solved by $S \circ \iota\left( B_t\left(\pi \right) \right)$ is:
\begin{align*}
d S \circ \iota\left( B_t\left(\pi \right) \right) & = S \circ \iota\left( B_t\left(\pi \right) \right)
                                                       d(S \circ \iota)\left( \sum f_\alpha + d\pi_t \right) \\
& = S \circ \iota\left( B_t\left(\pi \right) \right) \left( \sum_{\alpha \in \Delta} f_{\alpha^*} -w_0 d\pi_t \right)\\
& = S \circ \iota\left( B_t\left(\pi \right) \right) \left( \sum_{\alpha \in \Delta} f_{\alpha  } -w_0 d\pi_t \right)\\
\end{align*}
Then:
$$ S\circ \iota\left( B_t\left(\pi\right) \right) = B_t\left( -w_0 \pi \right)$$
Replacing $\pi$ by $\pi^\iota$ gives the result for all smooth paths:
$$ S\left( B_t\left(\pi\right) \right) = B_t\left( -w_0 \pi^\iota \right)$$
The smoothness assumption can then be discarded.
\end{proof}

\begin{rmk}
In the path model, $S$ does not preserve connected components, but thanks to the previous theorem, it stabilizes highest weight crystals in the group picture. As such it preserves isomorphism classes of path crystals using Littelmann's independence theorem.
\end{rmk}

As pointed out in \cite{bib:BBO2} p. 1552 lemma 4.19 the following can be taken as a definition for the Sc\"utzenberger involution for $A_n$ crystals. We prove the analogous statement in the geometric setting:
\begin{thm}
The Sch\"utzenberger involution is the unique map $S$ on geometric crystals (resp. path crystals up to crystal isomorphism) such that:
\begin{itemize}
 \item $\gamma \circ S\left( x \right) = w_0 \gamma\left( x \right)$
 \item $\varepsilon_\alpha \circ S\left( x \right) = \varphi_{\alpha^*}\left( x \right) $
       or equivalently
       $\varphi_\alpha \circ S\left( x \right) = \varepsilon_{\alpha^*}\left( x \right) $
 \item $\forall c \in \mathbb{R}, e^c_\alpha \cdot S\left(x\right) = S\left( e^{-c}_{\alpha^*} \cdot x \right)$
\end{itemize}
\end{thm}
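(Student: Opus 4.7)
The plan is to first verify that the Schützenberger involution defined in this section (via $S(x) = \bar{w}_0 (x^{-1})^{\iota T} \bar{w}_0^{-1}$) satisfies the three listed properties, and then to derive uniqueness from the minimality statement of subsection~\ref{subsection:minimality}. The path crystal version will reduce to the group picture version via the intertwining $p \circ S = S \circ p$ of theorem~\ref{thm:involution_S_automorphism} and Littelmann's independence theorem~\ref{thm:geometric_littelmann_independence}.

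For the verification, I would work in the group picture $\Bc$. The weight identity $\gamma \circ S = w_0 \gamma$ follows directly from the defining formula for $S$: the inverse, transpose and $\iota$ invert the torus factor three times in total, and the remaining conjugation by $\bar{w}_0^{\pm 1}$ acts on $\hfrak$ as $w_0$, hence $[S(x)]_0 = e^{w_0 \gamma(x)}$. The identities $\varepsilon_\alpha \circ S = \varphi_{\alpha^*}$ and $\varphi_\alpha \circ S = \varepsilon_{\alpha^*}$ follow from properties~\ref{properties:involution_S}: since $S$ is a group anti-automorphism exchanging the one-parameter subgroups $x_\alpha(\cdot)$ and $x_{\alpha^*}(\cdot)$, and since $\varepsilon_\alpha = \chi_\alpha^- \circ \iota^{-1}$-type expressions are exchanged with $\varphi_{\alpha^*}$ under the involution $iT$, this is a direct calculation. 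The compatibility with crystal actions is then immediate: starting from
$$ e^c_\alpha \cdot x = x_\alpha\!\Bigl(\tfrac{e^c-1}{e^{\varepsilon_\alpha(x)}}\Bigr)\, x\, x_\alpha\!\Bigl(\tfrac{e^{-c}-1}{e^{\varphi_\alpha(x)}}\Bigr) $$
and applying $S$ as an anti-morphism, one gets
$$ S(e^c_\alpha \cdot x) = x_{\alpha^*}\!\Bigl(\tfrac{e^{-c}-1}{e^{\varphi_\alpha(x)}}\Bigr)\, S(x)\, x_{\alpha^*}\!\Bigl(\tfrac{e^c-1}{e^{\varepsilon_\alpha(x)}}\Bigr), $$
which after substituting $\varphi_\alpha(x) = \varepsilon_{\alpha^*}(S(x))$ and $\varepsilon_\alpha(x) = \varphi_{\alpha^*}(S(x))$ becomes exactly $e^{-c}_{\alpha^*} \cdot S(x)$.

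For uniqueness, let $S'$ be another map satisfying the three conditions and set $f := S \circ S'$, where $S$ is the standard Schützenberger involution just verified. Since $S$ preserves each $\Bc(\lambda)$ (it fixes the highest weight), so does $f$. A direct check shows that $f$ satisfies the hypotheses of the minimality statement of subsection~\ref{subsection:minimality}: it commutes with every $e^c_\alpha$ (compose the two relations $e^c_\alpha \cdot S(y) = S(e^{-c}_{\alpha^*}\cdot y)$ and $e^{-c}_{\alpha^*}\cdot S'(x) = S'(e^c_\alpha \cdot x)$), it satisfies $\varepsilon_\alpha \circ f = \varphi_{\alpha^*} \circ S' = \varepsilon_\alpha$, and it satisfies $\gamma \circ f = w_0 \gamma \circ S' = w_0 w_0 \gamma = \gamma$. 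The minimality theorem together with remark~\ref{rmk:minimality} then force $f = \mathrm{id}$ on each $\Bc(\lambda)$, hence $S' = S^{-1} = S$ using involutivity from properties~\ref{properties:involution_S}.

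For the path crystal version, theorem~\ref{thm:involution_S_automorphism} gives $p \circ S = S \circ p$, and theorem~\ref{thm:geometric_littelmann_independence} says that the isomorphism class of a connected path crystal depends only on its highest weight; uniqueness up to crystal isomorphism on the path side therefore transports through $p$ to uniqueness on $\Bc(\lambda)$. The only delicate point I anticipate is the bookkeeping in the minimality argument: one must be careful that $f$ indeed maps $\Bc(\lambda)$ into a single $\Bc(\mu)$ before invoking the minimality theorem, and that $\gamma$-preservation (coming from the $w_0^2 = \mathrm{id}$ cancellation) forces $\mu = \lambda$ so that remark~\ref{rmk:minimality} applies verbatim. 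Everything else is routine manipulation in the group.
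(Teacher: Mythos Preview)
Your proposal is correct and follows exactly the same strategy as the paper: verify the three properties by direct computation in the group (the paper merely says ``Computations can be carried out very easily both in the group or on the path model''), and for uniqueness form the composite $f = S \circ S'$ and appeal to the minimality result of subsection~\ref{subsection:minimality}. Your write-up is considerably more detailed than the paper's two-line proof, and the explicit check that $f$ commutes with the $e^c_\alpha$, preserves $\varepsilon_\alpha$ and $\gamma$, and hence lands in a single $\Bc(\mu)$ with $\mu=\lambda$ via remark~\ref{rmk:minimality}, is exactly what the paper leaves implicit.
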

\begin{proof}
Computations can be carried out very easily both in the group or on the path model.

For uniqueness, if $S$ and $S'$ satisfy those properties, then $S S'$ is an automorphism of (path) crystals. In the group picture, there is no crystal automorphism aside from the identity (see subsection \ref{subsection:minimality}), hence the uniqueness up to isomorphism.
\end{proof}

\chapter{Canonical measure on crystals and superpotential}

For classical discrete crystals, the measure of interest is the counting measure. For instance, the number of elements gives the dimension of the associated module. More precisely, for $\lambda \in P^+$ is a dominant weight, consider $\Bfrak(\lambda)$ the Kashiwara crystal with highest weight $\lambda$. The canonical measure is simply:
\begin{align}
\label{eqn:canonical_measure_discrete}
& \sum_{b \in \Bfrak(\lambda)} \delta_{b} 
\end{align}
where $\delta_b$ stands for the Dirac measure at the element $b$. The character $ch(V_\lambda)$ is the Laplace transform of the induced measure on weights:
\begin{align}
\label{eqn:character_discrete}
\forall \mu \in \afrak, ch\left(V_\lambda\right)(\mu) & = \sum_{b \in \Bfrak(\lambda)} e^{\langle \mu, wt(b)\rangle}
\end{align}
This allows us to define a canonical probability measure on $\Bfrak(\lambda)$ with spectral parameter $\mu$ by:
\begin{align}
\label{eqn:canonical_probability_measure_discrete}
& \frac{1}{ch\left(V_\lambda\right)(\mu)}\sum_{b \in \Bfrak(\lambda)} e^{\langle \mu, wt(b)\rangle} \delta_{b} 
\end{align}

Then, as discussed in \cite{bib:BBO2}, the object of interest in the case of continuous crystals is the Lebesgue measure on the polytope that parametrizes continuous crystals. In the geometric setting, it is natural to wonder what can play the role of canonical measure.

We start in sections \ref{section:toric_reference_measure} and \ref{section:superpotential} by presenting some surprising ingredients involved: the seemingly innocent toric reference measure $\omega$ and the superpotential map $f_B$.

Then, the canonical measure on $\Bc(\lambda)$ is defined. The reason why it is natural uses extensively the tools of probability and will be the subject of the next chapter. For now, we will rather focus on drawing consequences. Its image through the weight map is the geometric analogue of the Duistermaat-Heckman measure. Whittaker functions are defined and play the role of characters in this geometric representation-theoretic setting.

Finally a geometric Littlewood-Richardson rule is investigated. We prove in theorem \ref{thm:geom_littlewoodrichardson} that the tensor product of crystals each endowed with the canonical measure, is nothing but a convolution measure thanks to the group structure. Moreover, it disintegrates with respect to the family of canonical measures indexed by their highest weight. As such, the decomposition of a tensor product into its connected components carries on to canonical measures. Finally, we write a product formula for Whittaker functions that is interpreted as a linearization formula for characters with positive coefficients.
\section{The toric reference measure}
\label{section:toric_reference_measure}

Let us start with a simple object:
\begin{definition}[The measure $\omega_{toric}$]
 Define the measure $\omega_{toric}$ on $\R_{>0}^m$ by:
 $$ \omega_{toric} = \prod_{j=1}^m \frac{dt_j}{t_j}$$
\end{definition}
\begin{rmk}
 Notice that $\omega_{toric}$ is nothing but the flat measure in logarithmic coordinates, or the Haar measure on the multiplicative torus $\R_{>0}^m$.
\end{rmk}

$\omega_{toric}$ has the remarkable property that it is invariant under changes of parametrization for both the Lusztig variety and Kashiwara variety.

\begin{thm}[\cite{bib:GLO1} lemma 3.1 and \cite{bib:Rietsch07} theorem 7.2]
 \label{thm:omega_invariance}
 For all reduced words ${\bf i}$ and ${\bf i'}$ in $R(w_0)$, the image measure of $\omega_{toric}$ through the map $x_{ \bf i'}^{-1} \circ x_{ \bf i}$ (resp. $x_{ -\bf i'}^{-1} \circ x_{ -\bf i}$) is itself. Meaning that if the following change of variables hold:
$$ \left( t_1', t_2', \dots, t_m' \right) = x_{ \bf i'}^{-1} \circ x_{ \bf i}\left(t_1, \dots, t_m \right)$$
$$ \left( c_1', c_2', \dots, c_m' \right) = x_{-\bf i'}^{-1} \circ x_{-\bf i}\left(c_1, \dots, c_m \right)$$
Then:
$$ \prod_{j=1}^m \frac{dt_j'}{t_j'} = \prod_{j=1}^m \frac{dt_j}{t_j}$$
$$ \prod_{j=1}^m \frac{dc_j'}{c_j'} = \prod_{j=1}^m \frac{dc_j}{c_j}$$
\end{thm}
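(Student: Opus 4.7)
The plan is to reduce the problem to verifying invariance under each elementary braid move, and then to handle the string case by relating it to the Lusztig case via a triangular monomial change of coordinates.

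First, I would invoke Tits' lemma (theorem \ref{thm:tits_lemma}): any two reduced expressions ${\bf i}, {\bf i'} \in R(w_0)$ differ by a finite sequence of braid moves. Consequently, the transition map $x_{\bf i'}^{-1} \circ x_{\bf i}$ factors as a composition of elementary transitions corresponding to rank-two subsystems ($A_1 \times A_1$, $A_2$, $BC_2$, or $G_2$). It therefore suffices to check that each such elementary transition preserves $\omega_{toric}$, which amounts to checking that it has Jacobian determinant of absolute value $1$ in logarithmic coordinates $\tau_j = \log t_j$.

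For the commuting case $A_1 \times A_1$, the transition is the identity and there is nothing to prove. For $A_2$, the standard braid relation $x_i(t_1) x_j(t_2) x_i(t_3) = x_j(t_3') x_i(t_2') x_j(t_1')$ gives $t_1' = t_2 t_3/(t_1+t_3)$, $t_2' = t_1 + t_3$, $t_3' = t_1 t_2/(t_1 + t_3)$, and a direct computation using $\partial \log(t_1+t_3)/\partial \log t_k = t_k/(t_1+t_3)$ shows that the Jacobian in log coordinates is $\pm 1$ (setting $a = t_1/(t_1+t_3)$, $b=t_3/(t_1+t_3)$ one finds the determinant reduces to $(a+b)^2 = 1$). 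The cases $BC_2$ and $G_2$ are treated analogously: one writes down the 4-term and 6-term braid relations explicitly (available in \cite{bib:BZ01} or \cite{bib:FZ99}), then verifies that the resulting rational substraction-free maps have log-Jacobian $\pm 1$.

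For the Kashiwara (string) parametrizations, rather than redoing the computation from scratch, the cleanest route is to exploit lemma \ref{lemma:change_of_coordinates_UC}, which for any fixed reduced word ${\bf i}$ expresses $c_k = t_k \prod_{l<k} t_l^{\beta_k(\beta_l^\vee)}$. In logarithmic coordinates this is a unitriangular linear map $(\log t_1, \dots, \log t_m) \mapsto (\log c_1, \dots, \log c_m)$ with $1$'s on the diagonal, hence of determinant $1$; in particular the monomial map $M_{\bf i}\colon (t_j) \mapsto (c_j)$ preserves $\omega_{toric}$. Then I would factor
\[
 x_{-\bf i'}^{-1} \circ x_{-\bf i} = M_{\bf i'} \circ \bigl( x_{\bf i'}^{-1} \circ x_{\bf i} \bigr) \circ M_{\bf i}^{-1},
\]
and since each of the three factors preserves $\omega_{toric}$, so does their composition.

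The main obstacle is the $G_2$ verification, where the six-term braid relation produces a rather involved rational substraction-free map and the corresponding log-Jacobian is a $6 \times 6$ determinant. One can either carry out this computation by hand with a careful bookkeeping of the partial derivatives of the shared denominators, or appeal to the general cluster-algebra framework (where $\omega_{toric}$ is known to be invariant under all cluster mutations, which encompass all braid moves). The other cases are short enough to be included explicitly.
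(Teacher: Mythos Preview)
Your approach is essentially the paper's direct argument: reduce to rank two via Tits' lemma, handle the rank-two braid moves by explicit Jacobian computation (the paper cites \cite{bib:GLO1} and \cite{bib:Rietsch07} rather than redoing them), and deduce the string case from the Lusztig case via the unitriangular monomial relation of lemma~\ref{lemma:change_of_coordinates_UC}. One small caveat: your displayed factorization $x_{-\bf i'}^{-1} \circ x_{-\bf i} = M_{\bf i'} \circ (x_{\bf i'}^{-1} \circ x_{\bf i}) \circ M_{\bf i}^{-1}$ is not quite right as written, because lemma~\ref{lemma:change_of_coordinates_UC} relates the $c$-coordinates for ${\bf i}$ to the $t$-coordinates appearing in $x_{i_m}(t_m)\cdots x_{i_1}(t_1)$, i.e.\ to the ${\bf i}^{op}$-Lusztig coordinates of the intrinsic element $[v^T]_+$; the corrected factorization goes through $x_{{\bf i'}^{op}}^{-1} \circ x_{{\bf i}^{op}}$ together with a reversal, which is harmless for the conclusion. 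The paper sidesteps this by simply observing that the unit-Jacobian monomial link makes the two invariance statements equivalent, without writing an explicit factorization.

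The paper also supplies a second, computation-free argument that you do not have: by theorem~\ref{thm:canonical_measure} the canonical probability measure on $\Bc(\lambda)$ is the conditional law of an intrinsic random variable, and in any ${\bf i}$-Lusztig chart its density is $\psi_0(\lambda)^{-1} e^{-f_B}\prod dt_j/t_j$. Since the law and $f_B$ are coordinate-free, the reference measure $\prod dt_j/t_j$ must itself be invariant under change of reduced word, bypassing the rank-two Jacobian checks entirely.
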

\begin{proof}
It is essentially the content of lemma 3.1 in \cite{bib:GLO1} for classical types and theorem 7.2  in \cite{bib:Rietsch07}, both proved by direct computation. A proof without any computation comes as a side product of theorem  \ref{thm:canonical_measure} and will be given in section \ref{section:some_proofs}. 
\end{proof}

This allows us to define a measure on $\Bc(\lambda)$ that has virtually the same expression regardless of the chosen parametrization. It is nothing more than the image measure of $\omega_{toric}$ under any of the usual parametrizations of $\Bc(\lambda)$.
\index{$\omega_{\Bc(\lambda)}$: Toric reference measure on highest weight crystal $\Bc(\lambda)$ }
\begin{thm}
 \label{thm:omega_invariance_on_crystal}
 There is a unique measure on $\Bc(\lambda)$ denoted by $\omega_{\Bc(\lambda)}$ such that, for all measurable functions $\varphi: \Bc(\lambda) \rightarrow \R_+$ and reduced word ${\bf i} \in R(w_0)$:
\begin{align*}
\int_{\Bc(\lambda)} \varphi(x) \omega_{\Bc(\lambda)}( dx )
& = \int_{\R_{>0}^m} \varphi \circ b_\lambda^{L} \circ x_{ \bf i}\left( t_1, \dots, t_m \right) \prod_{j=1}^m \frac{dt_j}{t_j}\\
& = \int_{\R_{>0}^m} \varphi \circ b_\lambda^{K} \circ x_{-\bf i}\left( c_1, \dots, c_m \right) \prod_{j=1}^m \frac{dc_j}{c_j}\\
& = \int_{\R_{>0}^m} \varphi \circ b_\lambda^{T} \circ x_{ \bf i}\left( t_1, \dots, t_m \right) \prod_{j=1}^m \frac{dt_j}{t_j}
\end{align*}
Moreover, $\omega$ is invariant with respect to crystal actions $e^._\alpha, \alpha \in \Delta$, meaning that:
$$ \forall c \in \R, \forall \alpha \in \Delta, \int_{\Bc(\lambda)} \varphi(x) \omega_{\Bc(\lambda)}( dx )
= \int_{\Bc(\lambda)} \varphi(e^c_\alpha \cdot x) \omega_{\Bc(\lambda)}( dx ) $$
\end{thm}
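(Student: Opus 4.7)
The proof naturally splits into three steps: well-definedness of each candidate, consistency of the three characterizations, and invariance under the crystal actions.

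\textbf{Step 1 (Well-definedness).} By Theorem \ref{thm:omega_invariance}, the pushforwards $(x_{\bf i})_*\omega_{toric}$ on $U^{w_0}_{>0}$ and $(x_{-\bf i})_*\omega_{toric}$ on $C^{w_0}_{>0}$ do not depend on the choice of reduced word ${\bf i} \in R(w_0)$. Composing with the bijections $b_\lambda^L$, $b_\lambda^K$, $b_\lambda^T$ of Proposition \ref{proposition:crystal_param_maps} yields three candidate measures $\omega^L, \omega^K, \omega^T$ on $\Bc(\lambda)$, each independent of ${\bf i}$.

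\textbf{Step 2 (Consistency of parametrizations).} I establish $\omega^L = \omega^K$ and $\omega^L = \omega^T$. For the first equality, the identity $b_\lambda^L = b_\lambda^K \circ \eta^{e,w_0}$ (Proposition \ref{proposition:crystal_param_maps} and its remark) reduces the claim to showing that the rational subtraction-free map $\psi_{\bf i} := x_{-{\bf i}}^{-1} \circ \eta^{e,w_0} \circ x_{\bf i}: \R_{>0}^m \to \R_{>0}^m$ preserves $\omega_{toric}$. By Step 1 this only needs to be checked for one convenient ${\bf i}$. A direct computation built from the rearrangement identity of Lemma \ref{lemma:change_of_coordinates_UC} shows $\psi_{\bf i}$ is monomial; the induced log-linear map is lower triangular with unit diagonal entries, hence of Jacobian determinant $\pm 1$. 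The equality $\omega^L = \omega^T$ follows analogously: the identification of Lusztig with twisted-Lusztig parameters is obtained by combining the reversal of reduced words with the involutive anti-automorphism $\iota$ (cf. the proof of Theorem \ref{thm:charts_positivity}), which again becomes a monomial change of variables.

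\textbf{Step 3 (Crystal-action invariance).} Fix $\alpha \in \Delta$ and choose ${\bf i} \in R(w_0)$ beginning with $i_1$ such that $\alpha = \alpha_{i_1}$. Transporting Proposition \ref{proposition:actions_in_coordinates} from the path crystal to $\Bc(\lambda)$ via the crystal isomorphism $p$ of Theorem \ref{thm:geometric_rsk_correspondence} and the commutativity of Corollary \ref{corollary:commutative_diagram_kashiwara} gives
$$ \varrho^K(e^c_\alpha \cdot x) = x_{-\bf i}(e^c c_1, c_2, \ldots, c_m) $$
whenever $\varrho^K(x) = x_{-\bf i}(c_1, \ldots, c_m)$. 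In logarithmic coordinates this is translation by $c$ in the first variable and so trivially preserves $\prod_j dc_j / c_j$. As this holds for every $\alpha \in \Delta$, $\omega_{\Bc(\lambda)}$ is invariant under all crystal actions.

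The main obstacle is Step 2, where one must track the explicit monomial character of $\eta^{e,w_0}$ (and of the Lusztig/twisted-Lusztig identification) in coordinates. Once monomiality is in hand, the $\pm 1$ Jacobian determinant follows from the triangular structure of the associated log-linear map, which is what makes $\omega_{toric}$ survive the change of parametrization.
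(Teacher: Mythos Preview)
Your Steps 1 and 3 are fine and essentially match the paper. The gap is in Step 2: the claim that $\psi_{\bf i}=x_{-\bf i}^{-1}\circ\eta^{e,w_0}\circ x_{\bf i}$ is monomial is false in general, and Lemma \ref{lemma:change_of_coordinates_UC} does not give it. That lemma furnishes a monomial relation between $x_{-\bf i}(c)$ and a \emph{torus translate} of $x_{\bf i^{op}}(t)$ via the transpose; it says nothing about $\eta^{e,w_0}$, which involves a Gauss decomposition of $\bar w_0^{-1}u^T$ and hence the twist map. A concrete obstruction appears already in type $C_2$: from the explicit formula for $f_B$ one reads $\chi_{\alpha_2}(z)=z_2+z_4$ in T-coordinates as $e^{-\alpha_2(\lambda)}(\tfrac{1}{t_2}+\tfrac{2t_1}{t_2 t_3}+\tfrac{t_1^2}{t_2 t_3^2}+\tfrac{t_1^2}{t_3^2 t_4})$, a sum of four distinct monomials. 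If $z_2,z_4$ were monomials in $t$, their sum would have at most two. So the L$\to$T map is not monomial, and composing with the (genuinely monomial) K$\to$T map shows $\eta^{e,w_0}$ is not monomial either. The same reasoning rules out any ``convenient ${\bf i}$''.

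The paper's argument avoids this: it establishes $\omega^K=\omega^T$ via the honest monomial identity $u=e^{-\lambda}v^T[v^T]_0^{-1}e^\lambda$ (this is where Lemma \ref{lemma:change_of_coordinates_UC} is actually used, linking ${\bf i^{op}}$-Kashiwara coordinates to ${\bf i}$-twisted-Lusztig coordinates). For $\omega^L=\omega^T$ it uses the Sch\"utzenberger involution $S$, which interchanges $\varrho^L$ and $\varrho^T$ on $\Bc(\lambda)$ and acts on $U^{w_0}_{>0}$ by coordinate reversal with a change of word; the paper then feeds in a probabilistic fact (invariance of the canonical law under $S$) rather than a purely algebraic Jacobian computation. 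If you want a direct algebraic fix, replace your L$=$K step by K$=$T via the monomial identity above, and your L$=$T step by the Sch\"utzenberger involution argument; no monomiality of $\eta^{e,w_0}$ is needed or available.
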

\begin{proof}
 See section \ref{section:some_proofs}.
\end{proof}
\begin{notation}
When the choice of highest weight crystal is clear from context, we will simply write $\omega$ instead of $\omega_{\Bc(\lambda)}$
\end{notation}

Although we will not use this result in the sequel, $\omega_{toric}$ can be easily seen as linked to the Haar measure on $U_\R$, the real group generated by $\{ x_\alpha(t), \alpha \in \Delta, t \in \R \}$:
\begin{proposition}[Essentially \cite{bib:GLO1} Proposition 3.1 for classical types]
Let ${\bf i} \in R\left( w_0 \right)$ and parametrize group elements $u \in U_\R$ thanks to their Lusztig parametrization:
$$u = x_{ \bf i}\left( t_1, \dots, t_m\right) $$
Using the Weyl vector $\rho = \omega_1 + \dots + \omega_n$, define the measure $du$ as:
$$ du = \prod_j\left( t_j^{ \rho\left( \beta_j^\vee \right) } \frac{dt_j}{t_j} \right)$$
The measure $du$ does not depend on a choice of reduced word ${\bf i}$ and is the Haar measure with normalizing condition:
$$ \int_{U_{>0}} e^{-\chi(u) } du = \prod_{j=1}^m \Gamma\left( \rho(\beta_j^\vee ) \right)$$
where $\chi$ is the principal character on $U$ such that:
$$\forall t \in \C, \forall \alpha \in \Delta, \chi(e^{t e_\alpha}) = t$$
\end{proposition}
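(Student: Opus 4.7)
My plan is to establish, in order: (i) independence of $du$ from the choice of reduced word, (ii) identification of $du$ with the Haar measure on $U_\R$, and (iii) the stated normalization formula. For (i), I would invoke the Tits lemma (Theorem \ref{thm:tits_lemma}) to reduce the invariance of $du$ under an arbitrary change of reduced word to invariance under braid moves. For a $2$-move between orthogonal simple reflections, the two Lusztig parameters simply swap with the same exponents, so nothing is to check. For the $3$-, $4$- and $6$-moves, the explicit rational subtraction-free change of variables $x_{{\bf i}'}^{-1} \circ x_{\bf i}$ is recorded in \cite{bib:BZ01}, and one verifies directly that the monomial $\prod_j t_j^{\rho(\beta_j^\vee)}$ is preserved. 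As a sanity check, in type $A_2$ with the classical formula $(t_1', t_2', t_3') = \bigl(t_2 t_3/(t_1+t_3),\, t_1+t_3,\, t_1 t_2/(t_1+t_3)\bigr)$, the exponent vector is $(1,2,1)$ on both sides and indeed $t_1'(t_2')^2 t_3' = t_1 t_2^2 t_3$. Combined with the invariance of the toric reference measure $\omega_{toric}$ stated in Theorem \ref{thm:omega_invariance}, this gives (i). I expect the verification in the $G_2$ case (the $6$-move) to be the main computational obstacle.

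For (ii), I use the fact that $U_\R$ is unimodular (being nilpotent), so it suffices to check left-invariance of $du$ under every simple generator $x_\alpha(s)$, $s \in \R$. Thanks to (i), for any $\alpha \in \Delta$ I can choose a reduced word ${\bf i} \in R(w_0)$ starting with $s_\alpha$; this is possible because $w_0$ sends every positive root to a negative one, so $\ell(s_\alpha w_0) = \ell(w_0)-1$. In these coordinates, left multiplication by $x_\alpha(s)$ sends $(t_1,\ldots,t_m) \mapsto (t_1+s,\,t_2,\ldots,t_m)$, and since $\beta_1 = \alpha$ with $\rho(\alpha^\vee) = 1$, the $j=1$ factor of $du$ reduces to $t_1^{\rho(\alpha^\vee)-1}\, dt_1 = dt_1$, manifestly translation invariant, while the remaining factors do not involve $t_1$. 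Hence $du$ is left-invariant under every simple generator, therefore under all of $U_\R$, and is the Haar measure.

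For (iii), the principal character $\chi$ is additive on $U$ (it vanishes on $[U,U]$, which corresponds to non-simple root spaces) and satisfies $\chi(x_{\alpha_i}(t)) = t$ on each simple subgroup. Hence in Lusztig coordinates $\chi(u) = \chi\bigl(x_{i_1}(t_1)\cdots x_{i_m}(t_m)\bigr) = \sum_{j=1}^m t_j$, and the integral factorizes as
\[
\int_{U_{>0}} e^{-\chi(u)}\, du \;=\; \prod_{j=1}^m \int_0^\infty e^{-t_j}\, t_j^{\rho(\beta_j^\vee)-1}\, dt_j \;=\; \prod_{j=1}^m \Gamma\bigl(\rho(\beta_j^\vee)\bigr).
\]
This is also an a posteriori confirmation of (i): the right-hand side is manifestly invariant under permutations of the $\beta_j$, which is precisely the effect of changing the reduced word on the multiset of positive roots.
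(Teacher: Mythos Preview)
Your proof is correct, and parts (ii) and (iii) match the paper's argument essentially verbatim. The difference lies in part (i): where you reduce to braid moves and verify case by case that the monomial $\prod_j t_j^{\rho(\beta_j^\vee)}$ is preserved, the paper avoids all computation by observing that this monomial is intrinsic. Indeed, from the weight-map computation (Theorem~\ref{thm:geom_weight_map}) one has $[u\,\bar{w}_0]_0 = \prod_j t_j^{\beta_j^\vee}$ as a torus element, so $\prod_j t_j^{\rho(\beta_j^\vee)} = [u\,\bar{w}_0]_0^{\rho}$ depends only on $u$ and not on the chosen reduced word. Combined with the invariance of $\omega_{toric}$ (Theorem~\ref{thm:omega_invariance}), this gives independence of $du$ immediately, without ever looking at the $G_2$ change of variables. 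Your approach has the merit of being self-contained and of making the mechanism visible in low rank, but the paper's argument is what you should aim for: identify the coordinate-dependent expression as an intrinsic group-theoretic quantity, and the invariance becomes tautological.
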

\begin{proof}
The normalizing condition is clear. For proving that the definition of $du$ does not depend on the choice of ${\bf i}$, use the fact that (\ref{def:geom_weight_map}):
$$[u w_0]_0 = \prod_j t_j^{\beta_j^\vee}$$
Hence, we can write the measure in the following form:
$$du = [u w_0]_0^{\rho} \prod_{j=1}^m \frac{dt_j}{t_j}$$

Now that we know that the measure $du$ does not depend on the choice of reduced word ${\bf i}$, one can choose $\alpha_{i_1}$ as any root $\alpha$. Therefore $u = x_{ \bf i}\left( t_1, \dots, t_m\right)$ is a product of the form $u = x_\alpha\left( t_1 \right) \dots$, and $du = dt_1 \dots$. It is indeed invariant under the one parameter subgroup $x_\alpha\left( \R \right)$ for all $\alpha$, hence the left invariance. It is also right invariant because $U_\R$ is unimodular.
\end{proof}

\section{The superpotential}
\label{section:superpotential}

We will use the elementary additive unipotent characters $\chi_\alpha: U \rightarrow \C$ given by:
$$ \forall t \in \C, \forall \alpha, \beta \in \Delta, \chi_\alpha( e^{t e_\beta} ) = t \delta_{\alpha, \beta}$$
The principal character $\chi: U \rightarrow \C$ is defined as:
$$ \chi = \sum_{\alpha \in \Delta} \chi_\alpha$$

Recall that $\Bc$ is the set of totally positive, lower triangular elements in $G$ (see definition \ref{def:geom_crystal}).
\index{$f_B$: Superpotential map}
\begin{definition}
\label{definition:f_B}
Define the superpotential $f_B$ on $\Bc$ as the map:
$$
\begin{array}{cccc}
f_B: &          \Bc           & \rightarrow & \R_{>0} \\
     &   x = z \bar{w_0} t u  & \mapsto     & \chi(z) + \chi(u)
\end{array}
$$
Recall that, in the decomposition $x = z \bar{w_0} t u$, we have $z \in U^{w_0}_{>0}$, $u \in U^{w_0}_{>0}$ and 
$t \in A$.
\end{definition}

\begin{example}
\label{lbl:example_superpotential_rank1}
In rank one case, by writing for $x \in \Bc(\lambda)$:
$$ x = \begin{pmatrix} t & 0 \\ 1 & t^{-1} \end{pmatrix} e^{\lambda \alpha^\vee}
     = \begin{pmatrix} 1 & t  \\ 0 & 1 \end{pmatrix}
       \begin{pmatrix} 0 & -1 \\ 1 & 0 \end{pmatrix}
       \begin{pmatrix} e^{\lambda} & 0 \\ 0 & e^{-\lambda} \end{pmatrix}
       \begin{pmatrix} 1 & \frac{e^{-2\lambda}}{t} \\ 0 & 1 \end{pmatrix}
       $$
We find the 'superpotential' (cf \cite{bib:Rietsch11}):
$$f_B\left(x\right) = t + \frac{e^{-2 \lambda}}{t}$$
\end{example}

The following properties are easy to prove:
\begin{properties}
\label{properties:superpotential_properties}
For $x \in \Bc$:
$$ f_B\left( x \right) = f_B\left( x^\iota \right)
                       = f_B \circ S \left( x \right)$$
$$ \forall w \in W, f_B\left( w \cdot x \right) = f_B\left( x \right)$$
$$ f_B\left( e^c_\alpha \cdot x \right) = f_B(x) + \frac{e^{c}-1}{e^{\varepsilon_\alpha(x)}} + \frac{e^{-c}-1}{e^{\varphi_\alpha(x)}}$$
\end{properties}
\begin{proof}
The first relations are immediate. The invariance under the Weyl group action on the crystal can be checked easily on the reflections $s_\alpha, \alpha \in \Delta$. For the last one, use theorem \ref{thm:geom_crystal_is_crystal} and the superpotential's definition.
\end{proof}

\begin{notation}
Let $x$ be an element in $\Bc(\lambda)$ with $u = \varrho^T(x)$. Define the twist map as in \cite{bib:BZ97} by:
$$ \eta_{w_0}( u ) = [\bar{w}_0^{-1} u^T]_+$$
Thanks to the definition \ref{def:crystal_parameter}:
$$f_B(x) = \chi\left( e^{-\lambda} \eta_{w_0}(u) e^{\lambda} \right) + \chi(u)$$
The main obstruction for $f_B$ to have a simple expression is this twist.
\end{notation}

Using the notations and formulas in proposition \ref{proposition:crystal_param_maps}, we have the following semi-explicit expressions:
\begin{proposition}[Semi-explicit expressions in coordinates]
\label{proposition:semi_explicit_expressions}
Let $x \in \Bc(\lambda)$ and:
$$ v = \varrho^K(x) = x_{-\bf i}\left( c_1, \dots, c_m \right)$$
$$ u = \varrho^T(x) = x_{ \bf i}\left( t_1, \dots, t_m \right)$$
Then:
\begin{align*}
f_B(x) = & \chi \circ S \circ \iota \left( e^{-\lambda} [\bar{w}_0^{-1} u^T ]_+ e^{\lambda}\right) + \chi(u)
       =   \chi\left( e^{-\lambda} \eta_{w_0}(u) e^{\lambda}\right) + \sum_{j=1}^m t_j\\
       = & \chi\left( \eta^{w_0, e}(v) \right) + \chi\left(e^{-\lambda} v^T [v^T]_0^{-1} e^\lambda \right)
       =   \chi\left( \eta^{w_0, e}(v) \right) + \sum_{k=1}^m e^{-\alpha_{i_k}(\lambda)} c_k^{-1} \prod_{j=k+1}^m c_j^{-\alpha_{i_k}(\alpha_{i_j}^\vee)}
\end{align*}
\end{proposition}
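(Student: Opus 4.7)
The plan is to unpack the four identities one after the other by plugging the explicit formulas from Proposition~\ref{proposition:crystal_param_maps} into the definition $f_B(x) = \chi(z) + \chi(u)$. For the first equality I would read off $z = S \circ \iota\bigl(e^{-\lambda} [\bar{w}_0^{-1} u^T]_+ e^{\lambda}\bigr)$ from the formula for $b_\lambda^T(u)$ in Proposition~\ref{proposition:crystal_param_maps} and substitute; the inner bracket $[\bar{w}_0^{-1} u^T]_+$ is precisely the twist $\eta_{w_0}(u)$ in the notation of this section.

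The second equality then rests on the identity $\chi \circ S \circ \iota = \chi$ on $U$, which I would prove as follows. The map $\iota$ is a group anti-automorphism fixing every $x_i(t)$ with $i$ simple, and by Properties~\ref{properties:involution_S} the map $S$ is a group anti-automorphism acting on a product of simple unipotent generators as $S\bigl(x_{i_1}(t_1)\cdots x_{i_q}(t_q)\bigr) = x_{i_q^*}(t_q)\cdots x_{i_1^*}(t_1)$. Their composition therefore sends every such product $x_{i_1}(t_1)\cdots x_{i_m}(t_m)$ to $x_{i_1^*}(t_1)\cdots x_{i_m^*}(t_m)$, which has the same principal character: $\chi$ is additive on $U$ with $\chi(x_j(t)) = t$ for every simple index $j$, and $*$ preserves the set of simple indices. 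The remaining summand $\chi(u) = \sum_j t_j$ is immediate from $u = x_{\bf i}(t_1,\dots,t_m)$.

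For the third equality I would use the alternative form $b_\lambda^K(v) = \eta^{w_0, e}(v)\, \bar{w}_0\, v^T [v^T]_0^{-1} e^{\lambda}$ in Proposition~\ref{proposition:crystal_param_maps}. Matching with the decomposition $x = z \bar{w}_0 e^{\lambda} u$ and using that $[v^T]_0 \in H$ commutes with $e^{\lambda}$, I read off $z = \eta^{w_0, e}(v)$ and $u = e^{-\lambda} v^T [v^T]_0^{-1} e^{\lambda}$, which on substitution into $f_B(x) = \chi(z) + \chi(u)$ give the third expression.

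The main obstacle is the fourth identity, converting $\chi\bigl(e^{-\lambda} v^T [v^T]_0^{-1} e^{\lambda}\bigr)$ into the explicit monomial sum in the $c_k$. My plan is to apply Lemma~\ref{lemma:change_of_coordinates_UC} to obtain $v^T = [v^T]_0 \cdot x_{i_m}(\tau_m)\cdots x_{i_1}(\tau_1)$ with $[v^T]_0 = \prod_k c_k^{-\alpha_{i_k}^\vee}$ and $\tau_k = c_k \prod_{l<k} c_l^{\alpha_{i_k}(\alpha_{i_l}^\vee)}$, and then push $[v^T]_0^{-1}$ and $e^{-\lambda}$ through each $x_{i_k}(\tau_k)$. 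The $k$-th factor becomes $x_{i_k}\bigl(e^{-\alpha_{i_k}(\lambda)} \prod_j c_j^{-\alpha_{i_k}(\alpha_{i_j}^\vee)} \cdot \tau_k\bigr)$, and the delicate bookkeeping is that the $l<k$ part of $\tau_k$ exactly cancels the $l<k$ factors in the conjugation monomial, while the $l=k$ factors combine (using $\alpha_{i_k}(\alpha_{i_k}^\vee)=2$) to give $c_k^{-1}$, leaving the coefficient $e^{-\alpha_{i_k}(\lambda)}\, c_k^{-1} \prod_{j>k} c_j^{-\alpha_{i_k}(\alpha_{i_j}^\vee)}$. Summing over $k$ and adding $\chi(\eta^{w_0, e}(v))$ concludes.
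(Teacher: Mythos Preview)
Your proposal is correct and follows exactly the route the paper intends: the proposition is stated immediately after the sentence ``Using the notations and formulas in proposition \ref{proposition:crystal_param_maps}, we have the following semi-explicit expressions'', with no separate proof, so the paper is simply inviting the reader to plug the $b_\lambda^T$ and $b_\lambda^K$ formulas into $f_B(x)=\chi(z)+\chi(u)$ and simplify. Your handling of each step --- in particular the identity $\chi\circ S\circ\iota=\chi$ (which the paper later invokes verbatim in the proofs of Lemma~\ref{lemma:superpotential_path_model} and Proposition~\ref{proposition:link_with_canonical}) and the bookkeeping with Lemma~\ref{lemma:change_of_coordinates_UC} for the fourth expression --- matches the implicit computation.
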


This seemingly simple and innocent map has recently appeared in two quite different circumstances.
\paragraph{Geometric crystals:} 
Berenstein and Kazhdan use the map $f_B$ to 'cut' the discrete free crystals $\Bfrak^{free}(\lambda)$ obtained by tropicalizing $\Bc(\lambda)$ and its structural maps. Then they obtain normal Kashiwara crystals by setting (\cite{bib:BK06}):
$$ \Bfrak\left( \lambda \right) = \left\{b \in \Bfrak^{free}(\lambda) \ | \ [f_B]_{trop}(b) \geq 0 \right\}$$
The surprise is in the fact that a simple function like $f_B$ encodes exactly the string cones. As stated in the introduction of (\cite{bib:BK04}), exhibiting such a function, along with its properties, proves a corollary of the Local Langlands conjectures.

\paragraph{Mirror symmetry:} 
Rietsch used the same function in her mirror symmetric construction of the quantum Toda lattice for general type in \cite{bib:Rietsch11} (definitions in section 6). It is also related to Givental's construction, using however different coordinates (\cite{bib:Givental}).


Mirror symmetry is a phenomenon first noticed in string theory: Morally speaking, mirror symmetry says that a certain string theory called the A-model with underlying Calabi-Yau manifold $X$ is equivalent to another string theory, the B-model, on an entirely different Calabi-Yau manifold $Y$. $X$ and $Y$ are called mirrors to each other. The A-model has Gromov-Witten invariants as coupling constants which capture a symplectic structure on $X$, while the B-model captures a complex structure on $Y$.

Such a phenomenon has been generalized to other cases than Calabi-Yau. In the context of mirror symmetry for the flag manifolds $G/B$, the mirror variety is encoded by a pair $(X, e^{-W} )$ where $X$ is a complex variety and $W$ is the Landau-Ginzburg 'superpotential'. For $\P^1(\C) \approx G/B$, with $G=SL_2$, one finds $X = \C^*$ and:
$$ W(t) = t + \frac{e^{-2\lambda}}{t}$$
This is exactly $f_B(x)$ if $x \in \Bc(\lambda)$ has Lusztig parameter $t$ (see example \ref{lbl:example_superpotential_rank1} ).

We have no explanation as why mirror symmetry appears while investigating geometric crystals and we will not try to push into that direction. However, in the context of combinatorial representation theory, the strength of our approach is that the Landau-Ginzburg potential $e^{-f_B(x)} \omega(dx)$ will appear naturally as a canonical measure on geometric crystals. The measure is canonical in the sense that, once normalized into a probability measure, it is precisely the distribution of a random crystal element conditionnally to its highest weight. A random crystal element is given by a Brownian motion in the context of our geometric Littelmann path model.

\subsection{Algebraic structure of \texorpdfstring{$f_B$}{the superpotential}}

The map $f_B$ has a nice and deep algebraic structure. A first flavour that is sufficient for our needs is:

\begin{thm}
\label{thm:superpotential_structure}
If $x \in \Bc(\lambda)$ with twisted Lusztig parameter
$$u = \rho^T(x) = x_{ \bf i}\left( t_1, \dots, t_m \right)$$
then $f_B(x)$ is a Laurent polynomial in the variables $\left( t_1, \dots, t_m \right)$ with positive coefficients.
\end{thm}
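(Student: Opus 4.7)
The starting point is the semi-explicit expression from Proposition \ref{proposition:semi_explicit_expressions}, which when written in the twisted Lusztig parameters $u = \varrho^T(x) = x_{\bf i}(t_1,\ldots,t_m)$ reads
$$f_B(x) \;=\; \sum_{j=1}^m t_j \;+\; \chi\!\left(e^{-\lambda}\, \eta_{w_0}(u)\, e^{\lambda}\right),$$
where $\eta_{w_0}(u) = [\bar w_0^{-1}\, u^T]_+$. The first sum is manifestly a polynomial with positive coefficients in $(t_1,\ldots,t_m)$, so the whole work consists in analyzing the contribution of the twist map $\eta_{w_0}$.

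First I would write $\eta_{w_0}(u) \in U^{w_0}_{>0}$ (positivity is guaranteed by Theorem~\ref{thm:charts_positivity} and Corollary 5.6 of \cite{bib:BZ01}) in Lusztig form with respect to a conveniently chosen reduced word ${\bf i}' \in R(w_0)$:
$$\eta_{w_0}(u) \;=\; x_{{\bf i}'}(s_1,\ldots,s_m),\qquad \text{so that}\qquad \chi\!\left(e^{-\lambda} \eta_{w_0}(u)\, e^{\lambda}\right) \;=\; \sum_{k=1}^m e^{-\alpha_{i'_k}(\lambda)}\, s_k,$$
using that conjugation by $e^{-\lambda}$ rescales each Chevalley one-parameter subgroup by a torus character, and that $\chi$ is the additive character sending each factor $x_{i}(t)$ to $t$. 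The theorem then reduces to the claim that each $s_k$ is a Laurent monomial in $(t_1,\ldots,t_m)$ with positive coefficient.

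To establish this monomial property I would proceed by induction on $\ell(w_0)$, peeling off one simple reflection from $\bar w_0^{-1} u^T$ at a time. The model is the rank-one computation
$$\bar{\bar s}_i \cdot y_i(t) \;=\; x_i(1/t)\cdot t^{-h_{\alpha_i}}\cdot \bar{\bar s}_i,$$
obtained from the $SL_2$-identity (\ref{lbl:comm3}): it produces a single factor $x_i(1/t)$ whose parameter is the Laurent monomial $1/t$, together with a torus element which, when commuted past later factors via (\ref{lbl:comm1}) and (\ref{lbl:comm2}), only multiplies their parameters by further Laurent monomials of torus characters. Iterating this through a reduced expression of $\bar w_0^{-1}$ acting on $u^T = y_{i_m}(t_m)\cdots y_{i_1}(t_1)$, each extracted parameter $s_k$ is expressed as a Laurent monomial in the $t_j$'s (with positive coefficient). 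Alternatively, one may invoke directly the Chamber Ansatz of Berenstein--Fomin--Zelevinsky \cite{bib:BZ97, bib:BZ01}: the $s_k$ are given by Laurent monomial formulas in the generalized minors $\Delta_{u\omega_{\alpha},v\omega_{\alpha}}(u)$, and Lemma~\ref{lemma:change_of_coordinates_UC} together with a judicious choice of ${\bf i}'$ (for instance ${\bf i}^{\text{op}}$) makes these minors themselves reduce to Laurent monomials in $(t_1,\ldots,t_m)$.

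Combining the two sums then expresses $f_B(x)$ as a finite sum of positive Laurent monomials in $(t_1,\ldots,t_m)$ (with coefficients that are Laurent monomials in the torus characters $e^{\alpha(\lambda)}$), which is exactly the claim. The rank-one case verified in Example \ref{lbl:example_superpotential_rank1} confirms the pattern: $\eta_{w_0}(x(t)) = x(1/t)$ yields $f_B = t + e^{-2\lambda}/t$. The main obstacle is therefore Step~3, i.e.\ the bookkeeping required to track that the parameters $s_k$ of $\eta_{w_0}(u)$ remain Laurent monomial (not merely subtraction-free rational) throughout the inductive unwinding of the Gauss decomposition; this requires choosing the reduced word ${\bf i}'$ compatibly with ${\bf i}$ so that the torus factors produced at each step accumulate in a controlled way.
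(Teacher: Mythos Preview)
Your central claim---that each Lusztig parameter $s_k$ of $\eta_{w_0}(u)=x_{{\bf i}'}(s_1,\ldots,s_m)$ is a Laurent \emph{monomial} in $(t_1,\ldots,t_m)$---is false. The $A_2$ computation in the paper (equation \eqref{eqn:example_twist_A2}) already shows this: with ${\bf i}={\bf i}'=(1,2,1)$ one finds $s_3=\frac{1}{t_1+t_3}$ and $s_1=\frac{t_3}{t_1(t_1+t_3)}$, neither of which is a Laurent monomial. What \emph{is} true is that the sum $\chi_\alpha(\eta_{w_0}(u))=\sum_{i'_k=\alpha}s_k$ is a Laurent polynomial with positive coefficients (e.g.\ $s_1+s_3=1/t_1$), but this cancellation of denominators is precisely the nontrivial content of the theorem and cannot be obtained by your inductive $SL_2$ peeling argument. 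That argument only produces monomials at the first step; once a torus factor is commuted past a $y_j$ with $j\neq i$ and a later $\bar{\bar s}_j$ acts, the Gauss decomposition of $\bar{\bar s}_j y_j(\cdot)y_i(\cdot)$ genuinely mixes parameters additively, and no choice of ${\bf i}'$ avoids this.

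The paper's proof bypasses the parameters $s_k$ entirely. It first rewrites each $\chi_\alpha(\eta_{w_0}(u))$ as a ratio of generalized minors, $\Delta_{s_\alpha\omega_\alpha,w_0\omega_\alpha}(u)/\Delta_{\omega_\alpha,w_0\omega_\alpha}(u)$. Then it invokes two results from \cite{bib:BZ01}: theorem~5.8 gives that both minors are polynomials in $(t_1,\ldots,t_m)$ with nonnegative coefficients, and corollary~9.5 gives that the denominator consists of a \emph{single} monomial. The ratio is therefore a Laurent polynomial with positive coefficients. The key idea you are missing is this monomiality of the denominator minor; without it (or an equivalent substitute) the argument cannot close.
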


Before giving a proof, let us observe that $f_B$ has an expression in term of generalized minors:
\begin{lemma}[ Variant of corollary 1.25 in \cite{bib:BK06} ]
 For $x \in \Bc(\lambda)$, with twisted Lusztig parameter $u = \varrho^T(x)$, we have:
$$ f_B\left( x \right) = \sum_{\alpha \in \Delta} e^{-\alpha(\lambda)} \frac{ \Delta_{s_\alpha \omega_\alpha, w_0 \omega_\alpha}( u )}{\Delta_{\omega_\alpha, w_0 \omega_\alpha}( u )} + \Delta_{\omega_\alpha, s_\alpha \omega_\alpha}(u)$$ 
\end{lemma}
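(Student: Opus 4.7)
The strategy is to split $f_B(x) = \chi(z) + \chi(u)$ according to Definition \ref{definition:f_B} and handle the two additive characters separately. Throughout, the basic dictionary is the standard identity $\chi_\beta(v) = \Delta_{\omega_\beta, s_\beta \omega_\beta}(v)$, valid for all $v \in U$; it follows from a direct computation in $V(\omega_\beta)$ using $\bar{s}_\beta v_{\omega_\beta} = f_\beta v_{\omega_\beta}$ and $e_\beta f_\beta v_{\omega_\beta} = v_{\omega_\beta}$. Applied to $u = \varrho^T(x) \in U^{w_0}_{>0}$, this already yields the term $\sum_\alpha \Delta_{\omega_\alpha, s_\alpha \omega_\alpha}(u)$ of the right-hand side.

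The core of the proof is to express $\chi(z)$ in terms of $u$ and $\lambda$. Proposition \ref{proposition:crystal_param_maps} gives the twist formula $z = S \circ \iota(e^{-\lambda} \eta_{w_0}(u) e^\lambda)$ with $\eta_{w_0}(u) = [\bar{w}_0^{-1} u^T]_+$. Being the composition of two group anti-automorphisms, $S \circ \iota$ is a group automorphism whose action on the one-parameter subgroups reads $x_\alpha(t) \mapsto x_{\alpha^*}(t)$ (see Section \ref{lbl:preliminaries_involutions}); therefore $\chi_\alpha \circ (S \circ \iota) = \chi_{\alpha^*}$. Combined with the torus conjugation rule $e^{-\lambda} x_\beta(t) e^\lambda = x_\beta(e^{-\beta(\lambda)} t)$ from (\ref{lbl:comm1}), this gives $\chi_\alpha(z) = e^{-\alpha^*(\lambda)} \chi_{\alpha^*}(\eta_{w_0}(u))$; summing over $\alpha$ and reindexing $\alpha \leftrightarrow \alpha^*$, the problem reduces to $\chi(z) = \sum_\alpha e^{-\alpha(\lambda)} \chi_\alpha(\eta_{w_0}(u))$, and what is left is to rewrite each $\chi_\alpha(\eta_{w_0}(u))$ as a ratio of minors of $u$.

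For this last step the key general identity, valid for any $g$ in the dense Gauss cell, is
\begin{equation*}
\Delta_{\omega_\alpha, s_\alpha \omega_\alpha}(g) \;=\; \Delta^{\omega_\alpha}(g)\, \chi_\alpha\bigl([g]_+\bigr) .
\end{equation*}
It is proved by plugging in $g = [g]_-[g]_0[g]_+$, using weight-space orthogonality in $V(\omega_\alpha)$ under the invariant bilinear form, and noting that only the simple-root term $x_\alpha$ of $[g]_+$ can produce the $v_{\omega_\alpha}$-component of $[g]_+ \bar{s}_\alpha v_{\omega_\alpha}$. Specialising to $g = \bar{w}_0^{-1} u^T$: since $\bar{w}_0^{-1}$ permutes the weights of $V(\omega_\alpha)$ by $w_0$, the pairing with $v_{\omega_\alpha}$ forces the extraction of the $w_0\omega_\alpha$-weight component of $u^T v_{\omega_\alpha}$ (resp.\ of $u^T \bar{s}_\alpha v_{\omega_\alpha}$). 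Transposing the invariant form, these coefficients turn out to be exactly $\Delta_{\omega_\alpha, w_0 \omega_\alpha}(u)$ and $\Delta_{s_\alpha \omega_\alpha, w_0 \omega_\alpha}(u)$; assembling the two pieces yields the announced formula.

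The main obstacle is purely notational: one must keep careful track of the normalization of the lowest weight vector $\bar{w}_0 v_{\omega_\alpha}$ (harmless, because $\bar{w}_0^{-1}$ sends it back to $v_{\omega_\alpha}$) and of the star involution $\alpha \mapsto \alpha^*$ induced by $S\circ\iota$. Once these are handled, the whole argument reduces to weight bookkeeping inside the fundamental representations $V(\omega_\alpha)$ and requires no type-specific input.
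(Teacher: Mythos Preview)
Your proof is correct and follows essentially the same route as the paper's. Both arguments split $f_B(x)=\chi(z)+\chi(u)$, use $\chi_\alpha(v)=\Delta_{\omega_\alpha,s_\alpha\omega_\alpha}(v)$ for the second term, pass from $z$ to $\eta_{w_0}(u)=[\bar w_0^{-1}u^T]_+$ via $S\circ\iota$ and torus conjugation, and then compute $\chi_\alpha(\eta_{w_0}(u))$ as the ratio $\Delta_{s_\alpha\omega_\alpha,w_0\omega_\alpha}(u)/\Delta_{\omega_\alpha,w_0\omega_\alpha}(u)$. The only presentational difference is that you isolate the general identity $\Delta_{\omega_\alpha,s_\alpha\omega_\alpha}(g)=\Delta^{\omega_\alpha}(g)\,\chi_\alpha([g]_+)$ before specializing to $g=\bar w_0^{-1}u^T$, whereas the paper performs the Gauss-decomposition manipulation directly on that specific $g$ and then transposes; the underlying weight-space computation is identical.
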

\begin{proof}
 The character $\chi_\alpha$ can also be expressed as a minor (\cite{bib:BK06} relation 1.8):
$$ \forall u \in U^{w_0}_{>0}, \chi_\alpha(u) = \Delta_{\omega_\alpha, s_\alpha \omega_\alpha}(u)$$
Therefore, if (proposition \ref{proposition:crystal_param_maps}):
$$x = b_\lambda^T(u) = S \circ \iota\left( e^{-\lambda} [ \bar{w}_0^{-1} u^T ]_+ e^{\lambda}\right) \bar{w}_0 e^\lambda u$$
Using, $\chi \circ S \circ \iota = \chi$, we have:
\begin{align*}
f_B(x) & =  \chi \circ S \circ \iota\left( e^{-\lambda} [ \bar{w}_0^{-1} u^T ]_+ e^{\lambda}\right) + \chi(u)\\
& = \sum_{\alpha \in \Delta} e^{-\alpha(\lambda)} \chi_\alpha( [ \bar{w}_0^{-1} u^T ]_+ ) + \chi_\alpha(u)\\
& = \sum_{\alpha \in \Delta} e^{-\alpha(\lambda)} \Delta_{\omega_\alpha, s_\alpha \omega_\alpha}( [ \bar{w}_0^{-1} u^T ]_+ ) + \Delta_{\omega_\alpha, s_\alpha \omega_\alpha}(u)
\end{align*}
Moreover:
\begin{align*}
\Delta_{\omega_\alpha, s_\alpha \omega_\alpha}( [ \bar{w}_0^{-1} u^T ]_+ ) & = \Delta^{\omega_\alpha}( [ \bar{w}_0^{-1} u^T ]_+ \bar{s}_\alpha )\\
& = \Delta^{\omega_\alpha}( [ \bar{w}_0^{-1} u^T ]_0^{-1} [ \bar{w}_0^{-1} u^T ]_{0+} \bar{s}_\alpha )\\
& = \frac{ \Delta^{\omega_\alpha}( \bar{w}_0^{-1} u^T \bar{s}_\alpha )}
         { \Delta^{\omega_\alpha}( \bar{w}_0^{-1} u^T )}\\
& = \frac{ \Delta^{\omega_\alpha}( \bar{s}_\alpha^{-1} u \bar{w}_0 )}
         { \Delta^{\omega_\alpha}( u \bar{w}_0 )}
\end{align*}
Hence the result.
\end{proof}

\begin{proof}[Proof of theorem \ref{thm:superpotential_structure}]
Thanks to the previous lemma, all we need to know is that
$$ \Delta_{\omega_\alpha, s_\alpha \omega_\alpha}(u) $$
$$\frac{ \Delta_{s_\alpha \omega_\alpha, w_0 \omega_\alpha}( u )}{\Delta_{\omega_\alpha, w_0 \omega_\alpha}( u )}$$
is a Laurent polynomial with positive coefficients in the variables $t_j$. The first one is easy to deal with as:
$$ \Delta_{\omega_\alpha, s_\alpha \omega_\alpha}(u) = \chi(u) = \sum_{\alpha_{i_j} = \alpha} t_j$$
For the second one, using \cite{bib:BZ01} theorem 5.8, each of the minors $\Delta_{s_\alpha \omega_\alpha, w_0 \omega_\alpha}( u )$ and  $\Delta_{\omega_\alpha, w_0 \omega_\alpha}( u )$ are linear combinations of monomials with positive coefficients. The latter has only one monomial term by applying corollary 9.5 in \cite{bib:BZ01} (with $u=e$, $w=w_0$ and $\gamma = w_\alpha$).
\end{proof}

\subsubsection{Examples in rank 2}
Let $x$ be an element in $\Bc(\lambda)$ with $u = \varrho^T(x)$. For each classical Cartan-Killing type, we specify a reduced expression ${\bf i}$ for $w_0$ that gives rise to a parametrization of $u \in U^{w_0}_{>0}$:
$$ u = x_{i_1}\left( t_1 \right) \dots x_{i_m}\left( t_m \right)$$
We give explicit expressions in term of the $t_j$ variables for $f_B(x)$, while computing as an intermediary step the twist:
$$ \eta_{w_0}( u ) = [\bar{w}_0^{-1} u^T]_+$$

\begin{itemize}
 \item $A_2$: $w_0 = s_1 s_2 s_1$ $u = x_1(t_1) x_2(t_2) x_1(t_3)$
       \begin{align}
       \label{eqn:example_twist_A2}
       \eta_{w_0}\left( u \right) & = x_1\left( \frac{1}{t_1\left(1 + \frac{t_1}{t_3}\right)} \right)
                                      x_2\left( \frac{1 + \frac{t_1}{t_3} }{t_2} \right)
                                      x_1\left( \frac{1}{t_1 + t_3} \right) 
       \end{align}
       $$ f_B(x) = t_1 + t_2 + t_3 + e^{-\alpha_1(\lambda)} \frac{1}{t_1} + e^{-\alpha_2(\lambda)}\left( \frac{1}{t_2} + \frac{t_1}{t_2 t_3} \right)$$
 \item $B_2$: $w_0 = s_1 s_2 s_1 s_2$ $u = x_1(t_1) x_2(t_2) x_1(t_3) x_2(t_4)$
       \begin{align}
       \label{eqn:example_twist_B2}
       \eta_{w_0}\left( u \right) & = x_1\left( \frac{1}{t_1\left( 1 + \frac{t_1}{t_3}\left(1 + \frac{t_2}{t_4} \right)^2\right)} \right)
                                      x_2\left( \frac{\left(1 + \frac{t_1}{t_3}\left(1 + \frac{t_2}{t_4} \right)^2\right)}{t_2\left(1 + \frac{t_2}{t_4}\right)} \right) \\
                                  & \quad \cdot x_1\left( \frac{ \left(1 + \frac{t_2}{t_4} \right)^2}{t_1 \left(1 + \frac{t_2}{t_4} \right)^2 + t_3} \right)
                                      x_2\left( \frac{1}{t_2 + t_4} \right)
       \end{align}
       $$ f_B(x) = t_1 + t_2 + t_3 + t_4 + e^{-\alpha_1(\lambda)} \frac{1}{t_1} + e^{-\alpha_2(\lambda)}\left( \frac{1}{t_2} + \frac{t_1 t_2}{t_3 t_4^2} + \frac{t_1}{t_3 t_4}\right)$$
 \item $C_2$: $w_0 = s_1 s_2 s_1 s_2$ $u = x_1(t_1) x_2(t_2) x_1(t_3) x_2(t_4)$
       \begin{align}
       \label{eqn:example_twist_C2}
       \eta_{w_0}\left( u \right) & = x_1\left( \frac{1}{t_1\left( 1 + \frac{t_1}{t_3}\left(1 + \frac{t_2}{t_4} \right)\right)} \right)
                                      x_2\left( \frac{\left(1 + \frac{t_1}{t_3}\left(1 + \frac{t_2}{t_4} \right)\right)^2}{t_2\left(1 + \frac{t_2}{t_4}\right)} \right) \\
                                  & \quad \cdot x_1\left( \frac{ \left(1 + \frac{t_2}{t_4} \right)}{t_1 \left(1 + \frac{t_2}{t_4} \right) + t_3} \right)
                                      x_2\left( \frac{1}{t_2 + t_4} \right)
       \end{align}
       $$ f_B(x) = t_1 + t_2 + t_3 + t_4 + e^{-\alpha_1(\lambda)} \frac{1}{t_1} + e^{-\alpha_2(\lambda)}\left( \frac{1}{t_2} + \frac{2 t_1 }{t_2 t_3} + \frac{t_1^2}{t_2 t_3^2} + \frac{t_1^2}{t_3^2 t_4}\right)$$
\end{itemize}

\subsubsection{Link to cluster algebras}
In the examples of the previous subsection, we witness the so-called Laurent phenomenon: When computing the characters $\chi_\alpha \circ \eta_{w_0} (u)$ that are a priori just rational expression in the variables $(t_1, \dots, t_m)$, many simplifications occur and we end up with a Laurent polynomial with positive coefficients.

The Laurent phenomenon is a characteristic of cluster algebras. A cluster algebra is a commutative algebra with a specific set of chosen generators called clusters. Here we are concerned with the coordinate algebra of the double Bruhat cell $G^{w_0, e} := B^+ w_0 B^+ \cap B$, $\C[G^{w_0,e}]$, which has the structure of a cluster algebra (theorem 2.10 \cite{bib:BFZ05}). The clusters $\Delta\left({ \bf i }\right)$ are made of generalized minors and indexed by the possible reduced words ${ \bf i } \in R(w_0)$. In the notations of section \ref{section:total_positivity_criteria}, the minors in $\Delta({\bf i})$ are the principal minors along with those from the family $F({\bf i})$:
$$ \Delta({\bf i}) := F({\bf i}) \bigsqcup \left\{ \Delta^{\omega_\alpha}, \alpha \in \Delta \right\}$$

It is well known that the $t_j$ are linked to minors in $x$ from a specific 'cluster' $\Delta({\bf i_0})$ via an invertible monomial transformation (\cite{bib:BZ97}, \cite{bib:BZ01}). The theory of cluster algebras indicates that every minor is a Laurent polynomial in the variables of a previously fixed cluster. The fact that those Laurent polynomials have positive coefficients is still a quite open conjecture.  The link with cluster algebras is quite clear at this point: the minors appearing in the superpotential are variables in a cluster than can be obtained via seed mutation of the cluster $\Delta\left({ \bf i_0 }\right)$. Laurent phenomenon and coefficient's positivity is expected.

Therefore, theorem \ref{thm:superpotential_structure} is not a suprise given that $f_B$ can be expressed in term of generalized minors. We were able to prove it without any reference to the general theory of cluster algebras because the minors involved in our situation were not very complicated. A complete understanding of the underlying cluster algebra would provide more explicit versions of theorem \ref{thm:superpotential_structure}.

\subsubsection{Computations using the geometric path model}
The superpotential $f_B$ has the following expression in term of the geometric path model:
\begin{lemma}
\label{lemma:superpotential_path_model}
Let $x \in \Bc(\lambda)$ and $\pi \in C_0\left( [0, T], \afrak \right)$. If $x$ has twisted Lusztig parameter $u = \varrho^T(x) = x_{\bf i}\left( t_1, \dots, t_m\right)$, while $\pi$ has usual Lusztig parameters $\varrho^L_{\bf i}(\pi) = (t_1, \dots, t_m)$, we have:
$$ f_B(x) = \sum_{j=1}^m t_j + \sum_{\alpha \in \Delta} e^{-\alpha(\lambda)} \int_0^T e^{-\alpha\left(\pi(s)\right)}ds$$
\end{lemma}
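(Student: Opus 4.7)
The plan is to reduce the identity to the generalized-minor expression for $f_B$ established just above the statement, and then convert each minor ratio into the path-integral $\int_0^T e^{-\alpha(\pi(s))}\,ds$ via the inversion lemma for Lusztig parameters.

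First I would treat the easy term. Since $\chi$ is an \emph{additive} character of $U$ and $\chi(x_{\alpha_{i_j}}(t_j))=t_j$, the factorization $u=x_{i_1}(t_1)\cdots x_{i_m}(t_m)$ gives immediately
\[
\chi(u)\;=\;\sum_{j=1}^m t_j,
\]
which produces the first sum in the lemma. Next, the variant of \cite{bib:BK06}, Cor.~1.25 proved earlier in the chapter expresses the superpotential as
\[
f_B(x)\;=\;\chi(u)\;+\;\sum_{\alpha\in\Delta} e^{-\alpha(\lambda)}\,
\frac{\Delta_{s_\alpha\omega_\alpha,\,w_0\omega_\alpha}(u)}{\Delta_{\omega_\alpha,\,w_0\omega_\alpha}(u)},
\]
so the only thing left to show is the remarkable identity
\[
\frac{\Delta_{s_\alpha\omega_\alpha,\,w_0\omega_\alpha}(u)}{\Delta_{\omega_\alpha,\,w_0\omega_\alpha}(u)}
\;=\;\int_0^T e^{-\alpha(\pi(s))}\,ds
\qquad(\alpha\in\Delta).
\]

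The key observation is that the hypothesis $\varrho^T(x)=\varrho^L_{\bf i}(\pi)=x_{\bf i}(t_1,\dots,t_m)$ means that the \emph{same group element} $u\in U^{w_0}_{>0}$ plays two different roles: as twisted Lusztig parameter of $x$ (which is the viewpoint used in the minor formula for $f_B$) and as Lusztig parameter of $p(\pi)=B_T(\pi)$ (via corollary \ref{corollary:commutative_diagram_lusztig}). Thus I can freely transport minors in $u$ through the Gauss decomposition identity $N_T(\pi)=[u\bar{w}_0]_-$ furnished by the inversion lemma \ref{thm:inversion_lemma_lusztig}. Concretely, writing $N_T(\pi)=u\bar{w}_0[u\bar{w}_0]_+^{-1}[u\bar{w}_0]_0^{-1}$ and using that $[u\bar{w}_0]_+^{-1}v_{\omega_\alpha}=v_{\omega_\alpha}$ while $H$ acts by the scalar $[u\bar{w}_0]_0^{-\omega_\alpha}$, I get
\[
\Delta_{s_\alpha\omega_\alpha,\,\omega_\alpha}(N_T(\pi))
\;=\;[u\bar{w}_0]_0^{-\omega_\alpha}\,\Delta_{s_\alpha\omega_\alpha,\,w_0\omega_\alpha}(u)
\;=\;\frac{\Delta_{s_\alpha\omega_\alpha,\,w_0\omega_\alpha}(u)}{\Delta_{\omega_\alpha,\,w_0\omega_\alpha}(u)},
\]
since $\Delta_{\omega_\alpha,w_0\omega_\alpha}(u)=\Delta^{\omega_\alpha}(u\bar{w}_0)=[u\bar{w}_0]_0^{\omega_\alpha}$.

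Finally I identify the remaining minor with the path integral. The lemma preceding proposition \ref{proposition:extracting_string}, giving $\Delta^{\omega_\alpha}(x_\alpha(\xi)N_T(\pi))=1+\xi\int_0^T e^{-\alpha(\pi(s))}\,ds$, can be differentiated in $\xi$ at $0$; using $\bar{s}_\alpha v_{\omega_\alpha}=f_\alpha v_{\omega_\alpha}$ in the fundamental representation $V(\omega_\alpha)$ (a direct $\mathfrak{sl}_2$-triplet computation), this yields
\[
\int_0^T e^{-\alpha(\pi(s))}\,ds
\;=\;\langle N_T(\pi)\,v_{\omega_\alpha},\,f_\alpha v_{\omega_\alpha}\rangle
\;=\;\Delta_{s_\alpha\omega_\alpha,\,\omega_\alpha}(N_T(\pi)),
\]
which combined with the previous display closes the argument. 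The only subtle point, and the part that requires the most care, is the dual interpretation of $u$: making sure that the minors $\Delta_{\bullet,w_0\omega_\alpha}(u)$ computed from the twisted-Lusztig viewpoint of $x$ genuinely coincide with the minors $\Delta_{s_\alpha\omega_\alpha,\omega_\alpha}(N_T(\pi))$ extracted from the path via the inversion lemma; once one recognizes that both sides depend only on the common group element $u$, everything falls into place.
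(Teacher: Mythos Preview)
Your argument is correct and rests on the same two pillars as the paper's proof: the inversion lemma $N_T(\pi)=[u\bar w_0]_-$ (Theorem~\ref{thm:inversion_lemma_lusztig}) and an expression of the form $f_B(x)=\chi(u)+\sum_\alpha e^{-\alpha(\lambda)}(\cdots)$. The difference is that you use the generalized-minor formula for the $(\cdots)$ term and then spend two computations converting minors of $u$ into minors of $N_T(\pi)$ and identifying the latter with the integral, whereas the paper bypasses minors entirely.

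The paper starts instead from the semi-explicit expression of Proposition~\ref{proposition:semi_explicit_expressions}, $f_B(x)=\chi(u)+\sum_\alpha e^{-\alpha(\lambda)}\,\chi_\alpha\big([\bar w_0^{-1}u^T]_+\big)$, and then observes two elementary facts. First, $\int_0^T e^{-\alpha(\pi)}=\chi_\alpha^-(N_T(\pi))$ is immediate from the series~\eqref{lbl:process_N_explicit}: it is the $k=1$ term, and every higher term lies in $[N,N]$, hence is killed by the additive character. Second, a transpose (using $\bar w_0^T=\bar w_0^{-1}$) gives $\chi_\alpha^-\big([u\bar w_0]_-\big)=\chi_\alpha\big([\bar w_0^{-1}u^T]_+\big)$, and the proof is done in three lines. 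Your Gauss-decomposition manipulation and the differentiation of $\Delta^{\omega_\alpha}(x_\alpha(\xi)N_T(\pi))$ are valid, but they are effectively reproving the identity $\Delta_{s_\alpha\omega_\alpha,\omega_\alpha}(n)=\chi_\alpha^-(n)$ for $n\in N$; recognizing that would collapse your last two displays into the paper's shortcut.
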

\begin{proof}
By theorem \ref{thm:inversion_lemma_lusztig}, $N_T(\pi) = [u \bar{w}_0]_-$. Hence:
\begin{align*}
\int_0^T e^{-\alpha(\pi)} & = \chi_\alpha^-\left( N_T(\pi) \right)\\
& = \chi_\alpha^-\left( [u \bar{w}_0]_- \right)\\
& = \chi_\alpha\left( [\bar{w}_0^{-1} u^T]_+ \right)
\end{align*}
Recalling that:
$$ f_B(x) = \chi(u) + \sum_{\alpha} e^{-\alpha(\lambda)} \chi_\alpha\left( [\bar{w}_0^{-1} u^T]_+ \right)$$
finishes the proof.
\end{proof}

The geometric path model allows the computation of minors using integration by parts, while keeping the positivity property obvious. We illustrate this claim by an explicit computation in the $A_2$ type. Choose ${\bf i} = (1,2,1)$ and consider as in the previous lemma a path $\pi \in C_0\left( [0,T], \afrak \right)$ such that:
$$ \varrho^L_{\bf i}(\pi) = (t_1, t_2, t_3)$$
Then:
$$ \int_0^T e^{-\alpha_1(\pi)} = \frac{1}{t_1}$$
$$ \int_0^T e^{-\alpha_2(\pi)} = \frac{1}{t_2} + \frac{t_1}{t_2 t_3}$$
\begin{proof}
Recall from subsection \ref{subsection:string_params} that if we write for $j=1,2,3$:
$$ \eta_j = e^{-\infty}_{s_{i_1} \dots s_{i_j}} \cdot \pi$$ 
We have, with the convention $\eta_0 = \pi$:
$$ t_j = \frac{1}{\int_0^T e^{-\alpha_{i_j}(\eta_{j-1}) } }$$
$$ \eta_{j-1} = T_{x_{\alpha_{i_j}(t_j)}} \eta_j$$

The first identity comes from the definition of Lusztig parameters for a path. The second one needs a little more work.
\begin{align*}
  & \int_0^T e^{-\alpha_2(\pi)}\\
= & \int_0^T ds e^{-\alpha_2\left( \eta_1(s) \right)}\left( 1 + t_1 \int_0^s e^{-\alpha_1(\eta_1) } \right)^{-\alpha_2(\alpha_1^\vee)}\\
= & \int_0^T e^{-\alpha_2(\eta_1)} + t_1 \int_0^T ds e^{-\alpha_2\left( \eta_1(s) \right)} \int_0^s e^{-\alpha_1(\eta_1) }\\
= & \frac{1}{t_2} + t_1 \int_0^T ds e^{-\alpha_2\left( \eta_1(s) \right)} \int_0^s e^{-\alpha_1(\eta_1) }
\end{align*}
Moreover, using an integration by parts and the fact that $\int_0^T e^{-\alpha_2(\eta_2)} = \infty$:
\begin{align*}
  & \int_0^T ds e^{-\alpha_2\left( \eta_1(s) \right)} \int_0^s e^{-\alpha_1(\eta_1) }\\
= & \frac{1}{t_2} \int_0^T -\frac{d}{ds}\left( \frac{1}{1+t_2 \int_0^s e^{-\alpha_2(\eta_2)}} \right) ds \int_0^s e^{-\alpha_1(\eta_1) }\\
= & \frac{1}{t_2} \left[ \frac{-\int_0^s e^{-\alpha_1(\eta_1) }}{1+t_2 \int_0^s e^{-\alpha_2(\eta_2)}} \right]_0^T + \frac{1}{t_2} \int_0^T ds \frac{e^{-\alpha_1(\eta_1(s)) }}{1+t_2 \int_0^s e^{-\alpha_2(\eta_2)}}\\
= & \frac{1}{t_2} \int_0^T ds \frac{e^{-\alpha_1(\eta_1(s)) }}{1+t_2 \int_0^s e^{-\alpha_2(\eta_2)}}\\
= & \frac{1}{t_2} \int_0^T e^{-\alpha_1(\eta_2) }\\
= & \frac{1}{t_2 t_3}
\end{align*}
\end{proof}

\subsection{Existence and uniqueness of minimum on \texorpdfstring{$\Bc(\lambda)$}{a highest weight crystal}}

Because of the following theorem, $f_B$ deserves the name of potential as it behaves like a potential well on $\Bc(\lambda) \approx \R_{>0}^m$: level sets are compact.

\begin{thm}[\cite{bib:Rietsch11} proposition 11.3]
\label{thm:superpotential_well}
For $M>0$, consider the set:
$$ K_M(\lambda) = \left\{ x \in \Bc(\lambda) | f_B(x) \leq M \right\}$$
If $M$ is large enough, $K_M(\lambda)$ is a non-empty compact set.
\end{thm}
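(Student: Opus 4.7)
The statement has two parts: non-emptiness for $M$ large enough, and compactness of $K_M(\lambda)$ for all such $M$. Non-emptiness is immediate: fix any base point $x_0 \in \Bc(\lambda)$ and take $M \geq f_B(x_0)$. For compactness, continuity of $f_B$ yields closedness of $K_M(\lambda)$ in $\Bc(\lambda)$, so the whole proof reduces to proving that $f_B$ is \emph{proper} on $\Bc(\lambda)$, i.e.\ that its sublevel sets are bounded.

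The plan is to work in the twisted Lusztig chart: fix ${\bf i} \in R(w_0)$ and use the diffeomorphism $b_\lambda^T \circ x_{\bf i} : \R_{>0}^m \to \Bc(\lambda)$. By Theorem \ref{thm:superpotential_structure}, $f_B$ then becomes a Laurent polynomial with strictly positive coefficients,
$$ f_B(t_1,\dots,t_m) = \sum_{\nu \in A} c_\nu \, t_1^{\nu_1}\cdots t_m^{\nu_m}, \qquad c_\nu > 0, $$
indexed by a finite set $A \subset \Z^m$ of exponents. Properness of such a polynomial on $\R_{>0}^m$ is equivalent to the purely combinatorial condition that $0$ lies in the interior of the Newton polytope $\mathrm{Conv}(A)$: indeed, along the ray $t_k = e^{s\xi_k}$, $f_B$ behaves like $\max_\nu c_\nu e^{s\langle \nu,\xi\rangle}$ as $s \to +\infty$, and this explodes for every $\xi \neq 0$ iff $\max_\nu \langle \nu,\xi\rangle > 0$ for every nonzero $\xi$, which is exactly $0 \in \mathrm{Int}(\mathrm{Conv}(A))$.

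I would then identify two families of monomials sitting in $A$ which together force $0$ into the interior. From $\chi(u) = \sum_j t_j$ we immediately read off the $m$ positive unit vectors $e_1,\dots,e_m \in A$. The harder family comes from the second summand $\sum_\alpha e^{-\alpha(\lambda)} \chi_\alpha([\bar w_0^{-1} u^T]_+)$; to access it cleanly I would switch to Kashiwara coordinates via Proposition \ref{proposition:semi_explicit_expressions}, which yields
$$ \sum_{k=1}^m e^{-\alpha_{i_k}(\lambda)}\, c_k^{-1} \prod_{j>k} c_j^{-\alpha_{i_k}(\alpha_{i_j}^\vee)}, $$
displaying $m$ explicit monomials in the $c$-variables, each containing a strictly negative power of exactly one $c_k$. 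Pulling back to the $t$-variables through the subtraction-free monomial change of coordinates of Lemma \ref{lemma:change_of_coordinates_UC} (which is of full rank, as its exponent matrix is triangular in the coroot basis), each such $c$-monomial maps to a Laurent $t$-monomial with a strictly negative component along some direction, and together they span the negative orthant up to the positive monomials already in $A$.

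The main obstacle in this plan is the last sentence: verifying in general type that the $m$ Kashiwara-side monomials, transformed to the $t$-chart, really do span a cone whose convex hull with the $e_j$'s contains $0$ in its interior. This amounts to a full-rank, positivity-flavoured statement about the matrix $\bigl(\beta_k(\beta_\ell^\vee)\bigr)$ appearing in Lemma \ref{lemma:change_of_coordinates_UC}, together with the triangular structure of the Kashiwara twist $\eta^{e,w_0}$. The rank-two computations for $A_2$, $B_2$, $C_2$ displayed after Theorem \ref{thm:superpotential_structure} show this structural fact transparently in those cases; in general it follows, case-independently, from the positivity of the coordinate change and the invariance $f_B = f_B \circ \iota = f_B \circ S$ of Properties \ref{properties:superpotential_properties}, which allows a symmetric role to be played by the Lusztig and twisted Lusztig parametrizations.
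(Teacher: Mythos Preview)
Your reduction to the Newton polytope condition is correct and natural: for a positive Laurent polynomial on $\R_{>0}^m$, properness is equivalent to $0 \in \mathrm{Int}(\mathrm{Conv}(A))$. The genuine gap is the last step. You acknowledge yourself that verifying this interior condition in general type is ``the main obstacle,'' and your proposed fix --- appealing to the invariances $f_B = f_B\circ\iota = f_B\circ S$ and the positivity of the monomial coordinate change --- does not actually do the job. Those invariances relate the Newton polytope to itself under certain linear maps, but symmetry alone does not force $0$ into the interior; you still need to exhibit, for every nonpositive direction $\xi$, an exponent $\nu\in A$ with $\langle\nu,\xi\rangle>0$. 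Your sketch of pulling back the $m$ Kashiwara-side monomials through the monomial map of Lemma~\ref{lemma:change_of_coordinates_UC} is a reasonable start, but the claim that these transformed monomials ``together span the negative orthant'' is exactly the missing content, and it is not obvious from the triangular structure alone.

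The paper takes a different and, in a sense, opposite route. It does \emph{not} attempt to verify the Newton polytope condition directly. Instead it argues geometrically: the upper bound $t_j\le M$ is immediate from $\sum_j t_j\le f_B(x)$; for the lower bound it uses the path model (Lemma~\ref{lemma:superpotential_path_model}) to write $f_B(x)=\sum_j t_j + \sum_\alpha e^{-\alpha(\lambda)}\int_0^T e^{-\alpha(\pi)}$, and then observes that sending any $t_j\to 0$ forces $\pi$ to degenerate to an extended path of nontrivial type (Definition~\ref{def:low_path_types}), whose divergent endpoint behaviour makes some $\int_0^T e^{-\alpha(\pi)}$ infinite. (Rietsch's original Bruhat-cell argument is also sketched.) The Newton polytope condition you are aiming for is then \emph{deduced from} compactness in the very next result, Theorem~\ref{thm:superpotential_estimate}: its proof explicitly invokes Theorem~\ref{thm:superpotential_well} to rule out the half-space obstruction. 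So the paper's logic runs compactness $\Rightarrow$ Newton polytope, whereas you are trying to run it the other way; that is legitimate in principle, but you would need an independent combinatorial argument for the interior condition, and your current sketch does not supply one.
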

\begin{proof}
First parametrize $x \in K_M(\lambda)$ by $t = (t_1, \dots, t_m) \in \R_{>0}^m$ such that:
$$ x_{\bf i}(t_1, \dots, t_m) = u = \varrho^T(x)$$
Then:
\begin{align*}
f_B(x) & = \chi\left( e^{-\lambda} \eta_{w_0}(u) e^{\lambda} \right) + \chi(u)\\
       & = \chi\left( e^{-\lambda} \eta_{w_0}(u) e^{\lambda} \right) + \sum_{j=1}^m t_j
\end{align*}
Clearly, the condition $x \in K_M(\lambda)$ implies $t_j \leq M$ for $j=1, \dots, m$. All we need is to prove that the components of $t$ are bounded away from zero.

Here we can produce two arguments, the first one is the geometric argument produced by Rietsch. We give a quick sketch. Extend $\eta_{w_0}$ to the totally positive part of the flag manifold in the following way:
$$
\begin{array}{cccc}
\eta_{w_0}: & \left(G/B\right)_{\geq 0} & \rightarrow & \left( B \backslash G\right)_{\geq 0} \\
            &  u B                      & \mapsto     & B w_0 u^T
\end{array}
$$
It is straightforward that the extended $\eta_{w_0}$ maps $B^+ B \cap B w B$ to $B w_0 B \cap B w_0 w^{-1} B^+$. The idea is that if some of the $t_j$ go to zero, our group element $u B$ leaves the cell
$$U^{w_0}_{>0} B \subset B^+ B \cap B w_0 B$$
and exits to a cell of type $U^{w}_{>0} B$, $\ell\left( w \right) < \ell\left( w_0 \right)$. If:
$$ B u' = B x_{ \bf i}\left(t_1', \dots, t_m'\right) = \eta_{w_0}(u)$$
then, as some of the $t_j$ go to zero, $B u'$ heads to $B w_0 B \cap B w_0 w^{-1} B^+$. However, in order to reach it, some of the parameters $t_j'$ need to go infinity.

The second argument is based on the geometric path model, and is simpler given previous results. Using \ref{lemma:superpotential_path_model}, if $\pi \in C_0([0, T], \afrak)$ is a path with Lusztig parameters $t_1, \dots, t_m$, then:
$$ f_B(x) = \sum_{j=1}^m t_j + \sum_{\alpha \in \Delta} e^{-\alpha(\lambda)} \int_0^T e^{-\alpha\left(\pi(s)\right)}ds$$
As some of the $t_j$ go to zero, the path $\pi$ converges to $\eta \in C_0([0, T[, \afrak)$, an extended path of type $\ell(w) > \ell(e)$ (see definition \ref{def:low_path_types}). Hence, because of the divergent behavior at the endpoint, there is an $\alpha$ such that:
$$ \int_0^T e^{-\alpha(\pi)} = \infty$$
This cannot happen on $K_M(\lambda)$, and the $t_j$ are indeed bounded away from zero.
\end{proof}

\begin{corollary}
$f_B$ reaches a minimum inside of $\Bc(\lambda)$
\end{corollary}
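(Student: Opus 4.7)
The plan is to deduce this corollary as an immediate consequence of theorem \ref{thm:superpotential_well}, combined with the continuity of $f_B$ and a standard compactness argument. First, I would note that $f_B$ is continuous on $\Bc(\lambda)$: indeed, in any parametrization $u = \varrho^T(x) = x_{\bf i}(t_1,\dots,t_m)$, theorem \ref{thm:superpotential_structure} expresses $f_B(x)$ as a Laurent polynomial in the $t_j$ with positive coefficients, which is certainly continuous on $\R_{>0}^m \cong \Bc(\lambda)$.

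Next, I would pick $M_0 > 0$ large enough so that $K_{M_0}(\lambda) = \{x \in \Bc(\lambda) \mid f_B(x) \leq M_0\}$ is non-empty and compact, which is precisely the content of theorem \ref{thm:superpotential_well}. A continuous function on a non-empty compact set attains its infimum, so there exists $x_0 \in K_{M_0}(\lambda)$ with
$$ f_B(x_0) = \inf_{x \in K_{M_0}(\lambda)} f_B(x). $$

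Finally, I would argue that $x_0$ is in fact a global minimizer on the whole $\Bc(\lambda)$. For any $x \in \Bc(\lambda) \setminus K_{M_0}(\lambda)$ we have $f_B(x) > M_0 \geq f_B(x_0)$ (the last inequality holding because $x_0 \in K_{M_0}(\lambda)$), whereas for $x \in K_{M_0}(\lambda)$ the inequality $f_B(x) \geq f_B(x_0)$ is already established. Hence $f_B$ attains its minimum on $\Bc(\lambda)$ at $x_0$. There is no real obstacle here; the whole content is packed into theorem \ref{thm:superpotential_well}, and this corollary is essentially a restatement that a proper continuous function on a locally compact space attains its infimum.
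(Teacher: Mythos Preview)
Your argument is correct and is exactly the standard compactness argument the paper has in mind; the paper states this corollary without proof, treating it as an immediate consequence of theorem \ref{thm:superpotential_well}, which is precisely what you have spelled out.
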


The following answers the uniqueness question raised by Rietsch in \cite{bib:Rietsch11}:
\begin{thm}
 \label{thm:superpotential_minimum}
 The superpotential $f_B$ reaches its minimum on $\Bc(\lambda)$ at a unique non degenerate point $m_\lambda$. Moreover, it is fixed by the Sch\"utzenberger involution and has zero weight:
$$ \gamma( m_\lambda ) = 0$$
\end{thm}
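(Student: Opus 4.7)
The plan is to combine the existence already secured by Theorem \ref{thm:superpotential_well} (compact sublevel sets) with a strict convexity argument in logarithmic Lusztig coordinates, which will yield uniqueness and non-degeneracy in one stroke. The $S$-fixedness and the vanishing of the weight will then follow from uniqueness combined with the symmetries of $f_B$ already recorded in Properties \ref{properties:superpotential_properties}.

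Concretely, I would fix a reduced word $\mathbf{i} \in R(w_0)$ and use the chart
$$ (t_1, \dots, t_m) \in \R_{>0}^m \;\longmapsto\; b_\lambda^T \circ x_{\mathbf{i}}(t_1, \dots, t_m) \in \Bc(\lambda), $$
i.e. the twisted Lusztig parametrisation, then switch to logarithmic coordinates $t_j = e^{s_j}$, identifying $\Bc(\lambda)$ with $\R^m$. By Theorem \ref{thm:superpotential_structure}, $f_B$ is a Laurent polynomial with positive coefficients in the $t_j$, so in the $s$-coordinates it reads
$$ f_B = \sum_{\mu \in \mathcal{E}} c_\mu\, e^{\langle \mu,\, s\rangle}, \qquad c_\mu > 0, $$
for some finite set $\mathcal{E} \subset \Z^m$ of exponent vectors. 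Each summand contributes $c_\mu\, \mu \mu^T e^{\langle \mu, s\rangle}$, a positive semi-definite rank-one matrix, to the Hessian. The decisive point is that the linear part $\chi(u) = t_1 + \cdots + t_m$ (coming from Definition \ref{definition:f_B}) produces exactly the standard basis vectors $\mu = e_j$ as exponents, so these terms alone contribute the strictly positive definite diagonal $\operatorname{diag}(e^{s_1}, \dots, e^{s_m})$ to the Hessian. Consequently $\nabla^2 f_B$ is strictly positive definite at every point of $\R^m$, so $f_B$ is strictly convex; together with the properness of Theorem \ref{thm:superpotential_well} this gives existence, uniqueness, and non-degeneracy of $m_\lambda$.

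For the symmetries, Properties \ref{properties:superpotential_properties} give $f_B \circ S = f_B$ and $f_B(w \cdot x) = f_B(x)$ for every $w \in W$, so $S$ and the Weyl group action on $\Bc(\lambda)$ permute the minima of $f_B$. By uniqueness $S(m_\lambda) = m_\lambda$ and $w \cdot m_\lambda = m_\lambda$ for all $w \in W$. Since the weight map is $W$-equivariant for the crystal $W$-action, we get $w\, \gamma(m_\lambda) = \gamma(m_\lambda)$ for all $w$, and $0$ being the unique $W$-fixed point of $\afrak$ forces $\gamma(m_\lambda) = 0$.

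The only serious ingredient is Theorem \ref{thm:superpotential_structure}, which is what makes the log-coordinate representation a sum of positive exponentials; the main conceptual step is simply recognising that the unavoidable linear terms $t_1 + \cdots + t_m$ already pin down a full-rank contribution to the Hessian, so no delicate analysis of the remaining twisted minor terms is needed. Were the positivity/Laurent property of Theorem \ref{thm:superpotential_structure} not already available, supplying it (via the cluster-algebra structure of $\C[G^{w_0, e}]$ or the minor expansions of \cite{bib:BZ01}) would be the real obstacle.
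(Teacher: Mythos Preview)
Your proof is correct and matches the paper's argument essentially line for line: the paper also passes to logarithmic coordinates $\xi_j = \log t_j$, writes $f_B$ as $\sum_i c_i e^{\langle a_i, \xi\rangle}$ with $c_i>0$ via Theorem \ref{thm:superpotential_structure}, observes that the canonical basis vectors appear among the $a_i$ thanks to the $\sum_j t_j$ term, and concludes strict convexity and hence a unique non-degenerate minimum; it then deduces $S(m_\lambda)=m_\lambda$ from uniqueness and obtains $\gamma(m_\lambda)=0$ either from $\gamma\circ S = w_0\,\gamma$ (giving $w_0\gamma(m_\lambda)=\gamma(m_\lambda)$) or, as an alternative, from a first-order computation along the crystal actions $e^c_\alpha$. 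Your use of the full $W$-invariance of $f_B$ and $W$-equivariance of $\gamma$ to get $\gamma(m_\lambda)=0$ is a harmless variant of the same idea.
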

\begin{proof}
We use logarithmic coordinates $\xi_j = \log\left( t_j \right)$. In such coordinates, denoting by ${ \bf \xi } \in \R^m$ the coordinate vector and $\langle ,\rangle$ the usual Euclidian scalar product, $f_B(x)$ has the form (theorem \ref{thm:superpotential_structure}):
$$ f_B(x) = \sum_{i \in I} c_i e^{ \langle a_i, { \bf \xi} \rangle } $$
where $c_i$ are positive coefficients and $a_i \in \Z^m$ encode exponents. Among the $a_i, i \in I$, there is the Euclidian canonical basis $(e_j)_{1 \leq j \leq m} $ because of the term:
$$ \sum_{j=1}^m t_j = \sum_{j=1}^m t^{ \langle e_j, {\bf \xi} \rangle }$$
Now, it is easy to see that $f_B({\bf \xi})$ is strictly convex as for all $v \in \R^m$:
\begin{align*}
  & \sum_{k,l=1}^m v_k v_l \frac{\partial^2 f}{\partial \xi_k \partial \xi_l}\\
= & \sum_{k,l=1}^m \sum_{i \in I} c_i v_k a_{i,k} v_l a_{i,l} e^{ \langle a_i, \xi\rangle }\\
= & \sum_{i \in I} c_i \langle  v, a_i \rangle^2 e^{ \langle a_i, \xi\rangle }\geq 0
\end{align*}
The hessian matrix is also everywhere non-degenerate: the previous inequality is strict as soon as $v$ is non zero because among the $a_i$, there is the canonical Euclidian basis. Uniqueness for $m_\lambda$ follows.

The Sch\"utzenberger involution $S$ stabilizes $\Bc(\lambda)$ and $f_B \circ S = f_B$. Because of the minimum's uniqueness, one must have $S(m_\lambda) = m_\lambda$, hence $\gamma( m_\lambda ) = w_0 \gamma( m_\lambda )$ which implies that the weight is zero.

Another way of seeing that $\gamma(m_\lambda)=0$ consists in computing the first order condition for a point $x \in \Bc(\lambda)$ being an extremal point:
\begin{align*}
  & f_B\left( e^c_\alpha \cdot x\right) -  f_B\left( x\right)\\
= & \frac{ e^{c} - 1}{ e^{\varepsilon_\alpha(x)} } + \frac{ e^{-c} - 1}{ e^{\varphi_\alpha(x)} } \\
= & c e^{\varepsilon_\alpha(x)} \left( 1 - e^{\varphi_\alpha(x) -\varepsilon_\alpha(x)} \right) + o(c)\\
= & c e^{\varepsilon_\alpha(x)} \left( 1 - e^{\alpha\left( \gamma(x) \right)} \right) + o(c)
\end{align*}
If $x$ critical:
$$ \forall \alpha, \alpha\left( \gamma(x) \right) = 0$$
Hence $\gamma(x) = 0$.\\
\end{proof}

The exact computation of this minimum would be interesting for example in computing the precise behaviour of Whittaker function $\psi_\mu(\lambda)$, that we will introduce later, as $\lambda$ goes to '$-\infty$' and the semiclassical limit for the quantum Toda equation.

\subsection{An estimate}
The following estimate is crucial in order to prove the integrability of $e^{-f_B(x)} \omega(dx)$ on $\Bc(\lambda)$.

\begin{thm}
\label{thm:superpotential_estimate}
There are rational exponents $n_j>0$ depending only on the group such that for all $x \in \Bc(\lambda)$:
$$ f_B\left( x \right)  \geq \sum_{j=1}^m t_j + \frac{ e^{-\max_\alpha \alpha(\lambda)} }{ \prod_{j=1}^m t_j^{n_j} }  $$
where $x$ is parametrized as $\varrho^T(x) = x_{ \bf i }\left( t_1, \dots, t_m \right)$. 
\end{thm}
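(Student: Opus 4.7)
The plan is to lower-bound the $\lambda$-dependent part of $f_B$ by a single Laurent monomial via weighted AM--GM. First I would rewrite
$$ f_B(x) \;=\; \sum_{j=1}^{m} t_j \;+\; \sum_{\alpha \in \Delta} e^{-\alpha(\lambda)}\,\chi_\alpha\bigl(\eta_{w_0}(u)\bigr), $$
which already produces the $\sum_j t_j$ summand of the target inequality. Factoring out the smallest exponential gives $\sum_\alpha e^{-\alpha(\lambda)}\chi_\alpha \geq e^{-\max_\alpha \alpha(\lambda)}\,\chi\bigl(\eta_{w_0}(u)\bigr)$, and the problem reduces to establishing $\chi(\eta_{w_0}(u)) \geq \prod_j t_j^{-n_j}$ with positive rational exponents $n_j$ depending only on the group.

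By Theorem \ref{thm:superpotential_structure} (applied to each $\chi_\alpha$), $\chi(\eta_{w_0}(u)) = \sum_i c_i \prod_j t_j^{v_{i,j}}$ is a Laurent polynomial with positive integer coefficients $c_i \geq 1$ and exponent vectors $v_i \in \Z^m$. Given convex weights $(\theta_i)$ summing to $1$, weighted AM--GM gives
$$ \chi(\eta_{w_0}(u)) \;=\; \sum_i \theta_i \cdot \frac{c_i}{\theta_i}\, \prod_j t_j^{v_{i,j}} \;\geq\; \prod_i \Bigl(\frac{c_i}{\theta_i}\Bigr)^{\theta_i} \prod_j t_j^{\sum_i \theta_i v_{i,j}}. $$
Provided one can choose weights with $\sum_i \theta_i v_i = -n$ for some $n \in \R_{>0}^m$, this is exactly the monomial bound $\prod_j t_j^{-n_j}$, and the prefactor $\prod_i (c_i/\theta_i)^{\theta_i}$ is automatically at least $1$ thanks to $c_i \geq 1$ and $\theta_i \in (0,1]$.

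Existence of such weights is equivalent, by Gordan's alternative, to the statement that for every nonzero $w \in \R_{\geq 0}^m$ there exists an index $i$ with $v_i \cdot w < 0$; in logarithmic coordinates this says that $\chi(\eta_{w_0}(u))$ is coercive along every ray into the negative orthant. That coercivity is precisely what is used in the proof of Theorem \ref{thm:superpotential_well}: via Lemma \ref{lemma:superpotential_path_model}, $\chi(\eta_{w_0}(u)) = \sum_\alpha \int_0^T e^{-\alpha(\pi)}\,ds$ for the path $\pi$ with Lusztig parameters $(t_1,\dots,t_m)$, and sending any nonempty subset of the $t_j$'s to zero forces $\pi$ to degenerate to an extended path whose type is strictly greater than $e$ in the weak Bruhat order, which makes at least one integral $\int_0^T e^{-\alpha(\pi)}$ diverge.

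The main obstacle will be sharpening the qualitative coercivity of Theorem \ref{thm:superpotential_well} into the uniform statement required by Gordan, namely divergence along every direction $w \in \R_{\geq 0}^m\setminus\{0\}$ and not only along the coordinate axes. I expect this to follow from a direction-dependent refinement of the argument in that proof: for each $w$, identify the root $\alpha(w)$ producing the dominant divergence of $\int_0^T e^{-\alpha(\pi)}$ when the parameters scale like $t_j \sim e^{-M w_j}$, and close with a compactness argument on the simplex of unit directions to obtain uniformity, which yields $n \in \R_{>0}^m$ with strictly positive rational entries.
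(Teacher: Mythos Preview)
Your proposal is correct and follows essentially the same route as the paper. Both reduce to the same convex-analysis statement: the cone (resp.\ convex hull) generated by the exponent vectors of the Laurent polynomial $L(t)=\chi(\eta_{w_0}(u))$ meets the strictly positive (resp.\ negative) orthant. Both derive this from a separating-hyperplane argument combined with Theorem~\ref{thm:superpotential_well}. Where you apply weighted AM--GM, the paper instead finds an integer point $v$ in that cone and extracts the single monomial $t^{-v}$ from the expansion of $L(t)^M$; these are two packagings of the same convexity.

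Your worry in the last paragraph is unnecessary. The \emph{statement} of Theorem~\ref{thm:superpotential_well} (compactness of the sublevel sets $\{f_B\le M\}$) already gives divergence along every direction $w\in\R_{\ge 0}^m\setminus\{0\}$, not just along coordinate axes: sending $t_j=e^{-Mw_j}$ keeps all $t_j\le 1$, so $\sum_j t_j$ is bounded; if $L(t)$ stayed bounded the sequence would remain in a compact set, yet it exits $\R_{>0}^m$. No direction-by-direction refinement or compactness on the simplex is required---the paper simply writes ``This contradicts Theorem~\ref{thm:superpotential_well}.'' Rationality of the $n_j$ then follows by density, exactly as you indicate.
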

\begin{proof}
If:
$$ u = \varrho^T(x) = x_{ \bf i }\left( t_1, \dots, t_m \right)$$
Then using definition \ref{def:crystal_parameter} and proposition \ref{proposition:semi_explicit_expressions}:
\begin{align*}
  & f_B(x)\\
= & \chi\left( u\right) + \sum_{\alpha} e^{-\alpha(\lambda)} \chi_\alpha\left( [ \bar{w}_0^{-1} u^T ]_+ \right)\\
\geq & \sum_{j=1}^m t_j + e^{-\max_\alpha \alpha(\lambda)} \chi\left( [ \bar{w}_0^{-1} u^T ]_+ \right)
\end{align*}
Moreover, in terms of the variable $t = \left( t_1, \dots, t_m\right)$,
$$L(t) = \chi\left( [ \bar{w}_0^{-1} u^T ]_+ \right)$$
is a Laurent polynomial with positive integer coefficients (theorem \ref{thm:superpotential_structure}). We write it as:
$$ L(t) = \sum_{i \in I} c_i \frac{1}{t^{a_i}}$$
Here $I$ is an index set, $a_i = \left( a_i^1, \dots, a_i^m \right) \in \Z^m$ for $i \in I$ are exponent vectors and $c_i \in \N^*, i \in I$ are the Laurent polynomial's coefficients. We use the notation $t^a := \prod_{j=1}^m t_j^{a^j}$ for $a \in \Z^m$.

In order to prove the theorem, we will focus on the lattice cone
$$\Cc(\N) := \left\{ \sum_{i \in I} \lambda_i a_i, \lambda_i \in \N \right\}$$
and prove that:
\begin{align}
\label{eqn:lattice_cone_nonempty}
\Cc(\N) \cap \left(\N^*\right)^m & \neq \emptyset  
\end{align}
Once this result obtained, pick an $m$-tuple $v \in \Cc(\N) \cap \left(\N^*\right)^m$ and call $M = \sum v_j \in \N^*$ the sum of its components. Then, for $t \in \left(\R_+^*\right)^m$:
\begin{align*}
     & L(t)^M\\
=    & \sum_{ i_1, i_2, \dots, i_M } c_{i_1} \dots c_{i_M}  \frac{1}{t^{a_{i_1} \dots a_{i_M}}}\\
\geq & \sum_{ i_1, i_2, \dots, i_M } \frac{1}{t^{a_{i_1} + \dots + a_{i_M}}}\\
=    & \sum_{ a \in \Cc(\N), \sum_{j=1}^m a^j = M} \frac{1}{t^a}\\
\geq & \frac{1}{t^v}
\end{align*}
Letting $n_j = \frac{v_j}{M} \in \Q_+^*$ will finish the proof.

Now, let us go back to proving identity (\ref{eqn:lattice_cone_nonempty}). For the purpose of using a density argument, define the convex cones:
$$\Cc(\Q_+) := \left\{ \sum_{i \in I} \lambda_i a_i, \lambda_i \in \Q_+ \right\}$$
$$\Cc(\R_+) := \left\{ \sum_{i \in I} \lambda_i a_i, \lambda_i \in \R_+ \right\}$$
The convex cone $\Cc(\R_+)-\R_+^m = \left\{ a-b, a \in \Cc(\R_+), b \in \R_+^m \right\}$ cannot entirely lie in a linear half-space. If it was the case, denote by $H$ such a half-space defined by a normal direction $x \in \R^m$:
$$ H := \left\{ y \in \R^m | \langle y , x \rangle \geq 0 \right\}$$
We have:
$$ \forall i \in I, \langle a_i, x \rangle \geq 0$$
And since $-\R_+^m \subset H$, we necessarily have $x_j \leq 0$ for $j=1, \dots, m$. Now, for $\lambda \in \R$:
\begin{align*}
  & L\left( e^{\lambda x_1}, \dots, e^{\lambda x_m} \right)\\
= & \sum_{i \in I } c_i e^{ -\lambda \langle a_i, x \rangle}
\end{align*}
Because $x$ is non zero with $x_j \leq 0$, taking $\lambda \rightarrow \infty$ forces at least one of the components of $t = \left( e^{\lambda x_1}, \dots, e^{\lambda x_m} \right)$ to zero, while $L(t)$ stays bounded. This contradicts theorem \ref{thm:superpotential_well}. We have then proved indeed that the convex cone $\Cc(\R_+)-\R_+^m$ cannot entirely lie in a linear half-space. Moreover, it is well known that the only convex cone in $\R^m$ that is not included in a half-space is $\R^m$, forcing $\Cc(\R_+)-\R_+^m = \R^m$. Therefore, $\Cc(\Q_+)-\Q_+^m$ is dense in $\R^m$ and $\Cc(\Q_+) \cap \left(\Q_+^*\right)^m$ is not empty. This implies the identity (\ref{eqn:lattice_cone_nonempty}).
\end{proof}

\section{Canonical measure}
\label{section:canonical_measure}

\index{$e^{ - f_B(x) } \omega(dx)$: Canonical measure on a geometric crystal}
\begin{definition}[Canonical measure on geometric crystals]
Define the canonical measure on $\Bc(\lambda)$ as the measure:
$$ e^{ - f_B(x) } \omega(dx)$$
\end{definition}

Recall we have defined the reference measure $\omega(dx)$ for $x \in \Bc(\lambda)$ as being given in either of the coordinates:
$$\varrho^L(x) = x_{\bf  i }\left( t_1,  \dots, t_m \right)$$
$$\varrho^K(x) = x_{\bf -i }\left( c_1 , \dots, c_m \right)$$
$$\varrho^T(x) = x_{\bf  i }\left( t_1', \dots, t_m'\right)$$
$$\omega(dx) = \prod_{j=1}^m \frac{dt_j}{t_j} = \prod_{j=1}^m \frac{dc_j}{c_j} = \prod_{j=1}^m \frac{dt_j'}{t_j'} $$

Since both $f_B$ and $\omega$ are invariant with respect to the $W$ action on the geometric crystal $\Bc(\lambda)$, the same holds for the canonical measure.

Originally, the Duistermaat-Heckman measure was used to refer to the asymptotic weight multipliticities for a very large finite dimensional representation of a semisimple group (\cite{bib:Heckman82}, \cite{bib:GS90} section 33). It is also the image measure of the uniform measure on a continuous crystal under the weight map (\cite{bib:BBO2} section 5.3 ). One can use the Littelmann path model for very long paths to recover easily the Duistermaat-Heckman measure as asymptotic weight multiplicities (\cite{bib:BBO} remark 5.8). Thus, now that we have identified a natural measure on geometric crystals, we will take virtually the same definition.
\index{$DH^\lambda$: Duistermaat-Heckman measure on $\afrak$}
\begin{definition}
For $\lambda \in \afrak$, define the geometric Duistermaat-Heckman measure $DH^\lambda$ on $\mathfrak{a}$ as the image of the canonical measure under the weight map $\gamma$.
\end{definition}
We will see that this measure intertwines the Laplacian on $\afrak$ and the quantum Toda Hamiltonian, or equivalently Brownian motion and the Whittaker process.

The Fourier-Laplace transform of the Duistermaat-Heckman measure plays the role of character, analogously to equation \ref{eqn:character_discrete}. In the geometric setting, it is a representation-theoretic definition of Whittaker functions. In a way, this is a geometric lifting of the famous Harish-Chandra Itzykzon Zuber formula.

\index{$\psi_\mu$: Whittaker functions}
\begin{definition}[Whittaker functions]
Whittaker functions are defined as the Laplace transform of the geometric Duistermaat-Heckman measure. For $\lambda \in \afrak$ and $\mu \in \hfrak$, it is given by:
\label{def:whittaker_functions}
\begin{align*}
\psi_\mu(\lambda) & = \int_\afrak e^{\langle \mu, k\rangle} DH^\lambda(dk)\\
& = \int_{\Bc(\lambda)} e^{ \langle \mu, \gamma(x) \rangle - f_B(x) } \omega(dx)
\end{align*}
\end{definition}
We will see in the next section that the integral is finite and that the Whittaker functions are well behaved. This semi-explicit integral formula given for Whittaker functions hints directly to the work of $\cite{bib:GLO1, bib:GLO2, bib:Givental}$. It is not so easy to link their formulae to ours, because of the multiple choices of coordinates. Notice however that our approach makes the choice of totally positive matrices a natural integration cycle.

In the next subsection, we will see that such functions are well-behaved and link them to Jacquet's original definition. Unless otherwise stated, the $\mu$ parameter will be taken in $\afrak$, making $\psi_\mu$ into a positive function.

\index{$C_\mu(\lambda)$: Canonical random variable on $\Bc(\lambda)$ with spectral parameter $\mu$}
\begin{definition}[Canonical probability measure with spectral parameter $\mu \in \afrak$]
\label{def:canonical_probability_measure}
For a spectral parameter $\mu \in \afrak$, define $C_\mu(\lambda)$ as a $\Bc(\lambda)$-valued random variable whose distribution satisfies for every bounded measurable function on $\Bc(\lambda)$:
\begin{align}
\label{eqn:canonical_measure_density}
\E\left( \varphi(C_\mu(\lambda)) \right) & = \frac{1}{\psi_\mu(\lambda)}  \int_{\Bc(\lambda)} \varphi(x) e^{ \langle \mu, \gamma(x) \rangle - f_B(x) } \omega(dx)
\end{align}
We will refer to its law as the canonical probability measure on $\Bc(\lambda)$ with spectral parameter $\mu$. And $C_\mu(\lambda)$ will be referred to as a canonical random variable on $\Bc(\lambda)$ with spectral parameter $\mu$.
\end{definition}
It is the geometric analogue of the probability measure defined in \ref{eqn:canonical_probability_measure_discrete}.

\begin{properties}
\label{properties:canonical_probability_measure}
 For $\lambda \in \afrak$ and $\mu \in \afrak$:
 \begin{itemize}
  \item[(i)]   $$ S\left( C_\mu(\lambda) \right)     \stackrel{\Lc}{=} C_{w_0 \mu}(\lambda) $$
  \item[(ii)]  $$ \iota\left( C_\mu(\lambda) \right) \stackrel{\Lc}{=} C_{-\mu}(-w_0 \lambda) $$
  \item[(iii)] $W$-invariance:
               $$\forall w \in W, C_{w\mu}(\lambda)  \stackrel{\Lc}{=} C_\mu(\lambda)$$
 \end{itemize}
\end{properties}
\begin{proof}
 In section \ref{section:some_proofs}.
\end{proof}

\section{Whittaker functions as geometric characters}
\label{section:whittaker_functions}
We defined Whittaker functions $\psi_\mu$ as the Laplace transform of measure induced on weights by the canonical measure. This definition is different from Jacquet's original definition as an integral on the unipotent group (\cite{bib:Jacquet67}). Whittaker functions are of special interest in number theory for instance. They appear in the Fourier expansion of Maass forms (see Goldfeld \cite{bib:Goldfeld}, chapter 5). A good knownledge of their properties is therefore essential. Our approach has the advantage to define well-behaved functions using integrals that converge rapidly for all $\mu$. Moreover, the integrands are positive. Finally, a lot of structure is exhibited thanks to the underlying geometric crystals: Whittaker functions play the role of characters in the theory.\\

Define $b: \hfrak \rightarrow \C$ as the meromorphic function
$$ b(\mu) := \prod_{\beta \in \Phi^+} \Gamma\left( \langle \beta^\vee, \mu \rangle \right)$$
It allows to define a natural normalization in our setting.

\begin{thm}
 \label{thm:whittaker_functions_properties}
 The Whittaker function 
$$ \psi_\mu(\lambda) = \int_{\Bc(\lambda)} e^{ \langle \mu, \gamma(x) \rangle - f_B(x) } \omega(dx) $$
satisfies the following:
 \begin{itemize}
  \item[(i)]   $\psi_\mu(\lambda)$ is an entire function in $\mu \in \hfrak = \afrak \otimes \C \approx \C^n$.
  \item[(ii)]  $\psi_\mu$ is invariant in $\mu$ under the Weyl group's action.
  \item[(iii)] For $\mu \in C$, the Weyl chamber, we have a probabilistic representation of the Whittaker function using $W^{(\mu)}$ a Brownian motion on $\afrak$ with drift $\mu$:
               $$ \psi_\mu\left( \lambda \right) = b(\mu) e^{ \langle \mu, \lambda \rangle }\E_\lambda\left( \exp\left( - \sum_{\alpha \in \Delta} \half \langle \alpha, \alpha \rangle \int_0^\infty ds \ e^{-\alpha(W_s^{(\mu)})} \right) \right)$$
               and is the unique solution to the quantum Toda eigenequation:
               $$ \half \Delta \psi_\mu(x) - \sum_{\alpha \in \Delta } \half \langle \alpha, \alpha \rangle e^{-\alpha(x) } \psi_\mu(x) = \half \langle \mu, \mu \rangle \psi_\mu(x)$$
               such that $\psi_\mu(x) e^{-\langle \mu, x \rangle }$ is bounded with growth condition $\psi_\mu(x) e^{-\langle \mu, x \rangle } \stackrel{ x \rightarrow \infty, x \in C }{\longrightarrow} b(\mu)$
 \end{itemize}
\end{thm}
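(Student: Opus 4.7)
The three claims split cleanly: (i) is an integrability/holomorphy statement, (ii) follows from symmetries of the integrand, and (iii) reduces to a Feynman--Kac computation combined with a PDE uniqueness argument.

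For (i), I would parametrize $\Bc(\lambda)$ by twisted Lusztig coordinates $\varrho^T(x) = x_{\bf i}(t_1,\dots,t_m)$. Then $\gamma(x)$ is an affine function of $(\log t_1,\dots,\log t_m)$ by Theorem \ref{thm:geom_weight_map}, so $|e^{\langle \mu,\gamma(x)\rangle}|$ is bounded by a monomial $\prod_j t_j^{a_j(\mu)}$ whose exponents are linear in $\mu$, hence locally bounded. The estimate of Theorem \ref{thm:superpotential_estimate} yields
$$ e^{-f_B(x)} \leq \exp\Bigl(-\sum_{j=1}^m t_j - e^{-\max_\alpha \alpha(\lambda)}\prod_{j=1}^m t_j^{-n_j}\Bigr), $$
the first term killing growth as $t_j \to \infty$ and the second killing blow-up as $t_j \to 0$. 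Together with the harmless reference measure $\omega = \prod_j dt_j/t_j$, this yields a dominant that is integrable and locally uniform in $\mu$, so by Morera $\psi_\mu(\lambda)$ is entire on $\hfrak \approx \C^n$.

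For (ii), three ingredients combine: the measure $\omega$ is invariant under each crystal operator $e^c_\alpha$ by Theorem \ref{thm:omega_invariance_on_crystal}, hence under the $W$-action built from them; the superpotential is $W$-invariant by Properties \ref{properties:superpotential_properties}; and the weight map is $W$-equivariant, $\gamma(w\cdot x) = w \gamma(x)$. Performing the change of variable $y = w\cdot x$ in the defining integral transforms $\langle\mu,\gamma(x)\rangle$ into $\langle w^{-1}\mu,\gamma(y)\rangle$ while leaving $f_B$ and $\omega$ untouched. Hence $\psi_\mu = \psi_{w^{-1}\mu}$ for every $w \in W$.

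For (iii), denote by $\phi_\mu(\lambda)$ the right-hand side of the announced formula. A direct Feynman--Kac computation with the generator $\tfrac{1}{2}\Delta + \mu\cdot\nabla$ of $W^{(\mu)}$ shows that $h_\mu(x) := \E_x\bigl[\exp(-\int_0^\infty V(W^{(\mu)}_s)\,ds)\bigr]$, with $V(x) = \sum_{\alpha \in \Delta} \tfrac{\langle\alpha,\alpha\rangle}{2} e^{-\alpha(x)}$, solves $(\tfrac{1}{2}\Delta + \mu\cdot\nabla - V) h_\mu = 0$; the gauge transform $\phi_\mu = b(\mu) e^{\langle\mu,\cdot\rangle} h_\mu$ therefore satisfies the quantum Toda eigenequation with eigenvalue $\tfrac{1}{2}\langle\mu,\mu\rangle$, and dominated convergence gives $\phi_\mu(x) e^{-\langle\mu,x\rangle} \to b(\mu)$ as $x \to \infty$ inside $C$ since $V(W^{(\mu)}_s) \to 0$ along almost every drifted trajectory when $\mu \in C$.

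It remains to identify $\psi_\mu$ with $\phi_\mu$. The cleanest route, which is the one developed in the probabilistic chapter, uses the geometric RSK correspondence (Theorem \ref{thm:dynamic_rsk_correspondence}): taking $\pi$ to be a Brownian motion with drift $\mu$, the pushforward of the Wiener measure by RSK disintegrates over horizon $T$ into the canonical probability measure of Definition \ref{def:canonical_probability_measure} on each fibre $\{\Tc_{w_0}\pi(T) = \lambda\}$ times a marginal law on the highest-weight path. Letting $T \to \infty$, the normalizing constant $\psi_\mu(\lambda)$ is, up to the Girsanov/Gaussian prefactor $b(\mu) e^{\langle\mu,\lambda\rangle}$, precisely the Brownian expectation defining $\phi_\mu(\lambda)$; uniqueness for the Toda eigenequation with the prescribed growth then closes the loop. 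The main obstacle is this last disintegration step—it is the probabilistic heart of the thesis and genuinely requires the path model machinery—whereas Feynman--Kac and PDE uniqueness in $C$ are routine by comparison.
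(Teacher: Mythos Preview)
Your arguments for (i) and (ii) are correct and coincide with the paper's: the superpotential estimate (Theorem \ref{thm:superpotential_estimate}) together with the linear dependence of $\gamma$ on $\log t_j$ gives uniform integrability on compacts in $\mu$, and the $W$-invariance follows from the invariance of $\omega$ and $f_B$ plus equivariance of $\gamma$. The Feynman--Kac half of (iii), showing that the probabilistic expression $\phi_\mu$ solves the Toda eigenequation with the stated growth (Proposition \ref{proposition:whittaker_characterization_boc}), is also fine.

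The gap is in your identification $\psi_\mu = \phi_\mu$. The route you sketch---finite-horizon RSK, disintegration of Wiener measure into the canonical measure on fibres, then $T \to \infty$---is not what the paper does, and as stated it is circular: the canonical probability measure of Definition \ref{def:canonical_probability_measure} already has the integral $\psi_\mu(\lambda)$ as its normalizing constant, so invoking that disintegration to ``recover'' $\phi_\mu$ presupposes the identity you want. You also never explain what limiting object appears as $T\to\infty$ or why it should produce the infinite-horizon Brownian expectation.

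The paper's actual identification (Proposition \ref{proposition:link_with_canonical}) is more elementary and rests on an ingredient you omit entirely: the explicit law of $N_\infty^\theta(W^{(\mu)})$ for $\mu \in C$ (Theorem \ref{thm:N_infty_law}, the group-theoretic Dufresne identity), which says that its Lusztig parameters are \emph{independent gammas} $\gamma_{\langle\beta_j^\vee,\mu\rangle}$. Writing $\phi_\mu(\lambda) = b(\mu)e^{\langle\mu,\lambda\rangle}\E[e^{-\chi^-(e^\lambda D_\mu e^{-\lambda})}]$ and expanding against these gamma densities $\prod_j t_j^{\langle\beta_j^\vee,\mu\rangle}e^{-t_j}\,\tfrac{dt_j}{t_j}$, one checks algebraically that $\sum_j t_j + \chi^-(e^\lambda[g\bar w_0]_- e^{-\lambda}) = f_B(b_\lambda^T(g))$ and $e^{\langle\mu,\lambda\rangle}\prod_j t_j^{\langle\beta_j^\vee,\mu\rangle} = e^{\langle w_0\mu,\gamma(b_\lambda^T(g))\rangle}$, so the expectation is exactly the defining integral for $\psi_{w_0\mu}(\lambda)$; part (ii) then gives $\psi_{w_0\mu}=\psi_\mu$. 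No RSK and no finite-$T$ limit enter.
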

\begin{proof}
\begin{itemize}
 \item[(i)] In coordinates, thanks to the estimate in theorem \ref{thm:superpotential_estimate} and the weight map expression in \ref{thm:geom_weight_map}, we see that
$$ \phi(\mu, x) := \exp\left( \langle \mu, \gamma(x) \rangle - f_B(x) \right) \omega(dx)$$
is holomorphic in $\mu \in \hfrak$ and integrable in the $x$ parameter uniformly for $\mu$ in a compact set. The same holds for partial derivatives w.r.t to $\mu$. Thus, integration in the $x$ parameter will give a holomorphic function whose domain is all of $\hfrak$. Hence, $\psi_\mu(\lambda)$ is entire in the $\mu$ parameter.
 \item[(ii)] Invariance under the Weyl group action is a consequence of the invariance for $e^{-f_B(x)} \omega(dx)$, and equivariance for the weight map.
 \item[(iii)] In proposition \ref{proposition:link_with_canonical}, we prove that the probabilistic representation coincides indeed with the previous definition. For the characterization as the unique solution of the above PDE, see proposition \ref{proposition:whittaker_characterization_boc}.
\end{itemize}

\end{proof}

\begin{example}
Once we choose a coordinate chart, plenty of explicit formulas are available. For instance, we can parametrize the elements $x \in \Bc(\lambda)$ thanks to $\left( c_1, \dots, c_m \right) \in \R^m$ such that:
$$ \varrho^K(x) = x_{\bf -i}\left( e^{-c_1}, \dots, e^{-c_m} \right)$$
for a certain ${\bf i} \in R(w_0)$. Then the formula in definition \ref{def:whittaker_functions} becomes:
\begin{itemize}
 \item $A_1$ case:
       $$ \psi_\mu(\lambda) = \int_{\R} \exp\left( \mu(\lambda-c)-e^{-c} - e^{c-2 \lambda} \right) dc$$
       This is a well-known formula for the Bessel function of the second kind also known as the MacDonald function.
 \item $A_2$ case, ${\bf i} = (1,2,1)$:
       \begin{align*}
       \psi_\mu(\lambda) & = \int_{\R^3} dc_1 dc_2 dc_3 e^{ \langle \mu, \lambda-c_1 \alpha_1^\vee - c_2 \alpha_2^\vee - c_3 \alpha_1^\vee \rangle} \exp( - e^{-c_1} - e^{-c_3} - e^{-(c_2-c_3)} \\
       & \ \ \         - e^{c_1-\alpha_1(\lambda-c_2 \alpha_2^\vee-c_3\alpha_1^\vee)}
                       - e^{c_2-\alpha_2(\lambda-c_3\alpha_1^\vee)}
                       - e^{c_3-\alpha_1(\lambda)} ) 
       \end{align*}
\end{itemize}
\end{example}

\subsection{Jacquet's Whittaker function}
Now, we link our definition of Whittaker functions to Jacquet's original definition in \cite{bib:Jacquet67}. In his thesis, Jacquet introduced Whittaker functions in the more general case of algebraic semi-simple groups over a locally compact field. We will mainly follow the presentation of Hashizume (\cite{bib:Hashizume82}) who deals with Whittaker functions on real Lie groups. In fact, the Whittaker function we considered is the Whittaker function on $G_0$ the split real subgroup of the complex Lie group $G$. The following definitions are valid only in the scope of this subsection.

A real form $\gfrak_0$ of $\gfrak$ is a real Lie algebra whose complexification is $\gfrak$:
$$ \gfrak = \gfrak_0 \otimes \C$$
The real form $\gfrak_0$ is said to be split or normal if for any Cartan decomposition:
$$ \gfrak_0 = \kfrak_0 + \pfrak_0$$
a Cartan subalgebra of $\gfrak_0$ can be taken in $\pfrak_0$. Any complex semi-simple Lie algebra $\gfrak$ has a split real form that is unique up to isomorphism (Ch IX theorem 5.10 in \cite{bib:Helgason78}).

Let $G_0$ be the connected real subgroup of $G$ whose Lie algebra $\gfrak_0$ is the split real form of $\gfrak$. The Cartan subalgebra in $\gfrak_0$ can be taken as the subset of $\hfrak$ where the roots take real values. Hence, it is nothing but $\afrak$.

The list of possible groups we are concerned with is:
\begin{itemize}
 \item Type $A_n$: $G_0 = SL_n(\R) \subset G = SL_n(\C)$
 \item Type $B_n$: $G_0 = SO(n+1, n) \subset G = SO_{2n+1}(\C)$
 \item Type $C_n$: $G_0 = Sp_n(\R) \subset G = Sp_n(\C)$
 \item Type $D_n$: $G_0 = SO(n, n) \subset G = SO_{2n}(\C)$
 \item Split real forms of the complex exceptionnal types.
\end{itemize}

Let $G_0 = U_0 A K_0$ be an Iwasawa decomposition of $G_0$. Here $U_0$ is the upper unipotent subgroup, $A$ the Cartan subgroup of $G_0$ and $K_0$ a maximal compact subgroup. The Iwasawa decomposition for a group element $g \in G_0$ is written:
$$ g = u(g) h(g) k(g), u(g) \in U_0, h(g) \in A, k(g) \in K_0$$
Let $\psi$ be a non-degenerate (multiplicative) unitary character on $U_0$, meaning that:
$$ \forall t \in \R, \forall \alpha \in \Delta, \psi\left( e^{t e_\alpha} \right) = e^{ i t \eta_\alpha }$$
where $\eta_\alpha \in \R^*$.

The Haar measure on $U_0$ will be simply denoted by $du$ and is normalized in the following way: consider the Lebesgue measure on $\ufrak_0$ and define $du$ as the image measure through the exponential map $\exp: \ufrak_0 \longrightarrow U_0$ which is a diffeomorphism. It is a (left and right) Haar measure (theorem 1.2.10 in \cite{bib:CG04}). The Lebesgue measure on $\ufrak_0$ is defined using as underlying Euclidian scalar product for $(x, y) \in \ufrak_0 \times \ufrak_0$, $K(x, y^T)$ where $K$ is the Killing form.

Restricting the definition Hashizume to our case (\cite{bib:Hashizume82} equation 6.4):
\index{$W^{Jacquet}\left(g: \nu, \psi \right)$: Jacquet's Whittaker function}
\begin{definition}[Jacquet's Whittaker function]
For $g \in G_0$, $\nu \in \afrak^* \otimes \C$ and $\psi$ a unitary non-degenerate character on $U_0$, define Jacquet's Whittaker function as:
$$ W^{Jacquet}\left(g: \nu, \psi \right) := \int_{U_0} h\left( \bar{w}_0^{-1} u g\right)^{\nu + \rho} \psi(u)^{-1} du$$
\end{definition}

The Harish-Chandra $c$ function is defined as :
$$ \forall \nu \in \afrak^*, c(\nu) = \int_{U_0} h\left( \bar{w}_0 u\right)^{\nu + \rho} du$$
An explicit expression for the $c$ function is in \cite{bib:Hashizume82} equation (6.12).

Both of integrals are convergent for $\nu$ belonging to the domain:
$$ D := \left\{ \nu \in \afrak^* \otimes \C \ | \ \Re\left( \langle \alpha, \nu \rangle \right) > 0 \right\}$$
Here $\Re$ denotes the real part of a complex number.

Using the Iwasawa decomposition of the element $g = u(g) h(g) k(g)$, one obtains that $W^{Jacquet}$ is entirely determined by a function on depending on $h(g)$ only:
\index{$\Psi_{\nu, \psi}$: Class one Whittaker function}
$$W^{Jacquet}\left(g: \nu, \psi \right) = \psi\left( u(g) \right) h(g)^{\rho} \Psi_{\nu, \psi}\left( \log h(g) \right)$$
The function $\Psi_{\nu, \psi}$ is called the class one Whittaker function. It has a convenient characterization of  Baudoin and O'Connell (\cite{bib:BOC09} proposition 4.1). We identify $\afrak$ and $\afrak^*$ using the Killing form.
\begin{thm}
For $\nu \in C$, the Weyl chamber, the class one Whittaker function solves the quantum Toda eigenfunction equation:
$$ \half \Delta \Psi_{\nu, \psi} - \sum_{\alpha \in \Delta} e^{2 \alpha(x)} \Psi_{\nu, \psi}
   = \half \langle \nu, \nu \rangle \Psi_{\nu, \psi}$$
with $e^{-\langle w_0 \nu, x \rangle} \Psi_{\nu, \psi}(x) $ being bounded and:
$$ \lim_{x \in -C, x \rightarrow \infty } e^{-\langle w_0 \nu, x \rangle} \Psi_{\nu, \psi}(x)  = c(\nu)$$
\end{thm}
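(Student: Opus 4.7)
My plan is to reduce the characterization to three independent checks on the Jacquet integral: the eigenequation, the asymptotic normalization, and the uniqueness. Throughout I would work directly with the integral representation
\[ W^{Jacquet}(g:\nu,\psi) = \int_{U_0} h(\bar{w}_0^{-1} u g)^{\nu+\rho} \psi(u)^{-1} du, \]
which converges absolutely on the domain $D = \{\nu : \Re\langle \alpha,\nu\rangle > 0 \text{ for all } \alpha\in\Delta\}$ containing $C$.

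First I would establish the PDE. The function $g \mapsto h(g)^{\nu+\rho}$ is a spherical principal-series vector and hence an eigenfunction of the Casimir element on $G_0/K_0$ with eigenvalue $\half(\langle\nu,\nu\rangle - \langle\rho,\rho\rangle)$. Because the Casimir is a bi-invariant differential operator, I can commute it past left translation by $\bar{w}_0^{-1} u$ and differentiate under the integral sign; this is legitimate on $D$ by uniform integrability of the integrand. Then I would feed the Iwasawa decomposition $g = u(g)h(g)k(g)$ and the quasi-invariances $W^{Jacquet}(ug:\nu,\psi) = \psi(u)\,W^{Jacquet}(g:\nu,\psi)$ and $K_0$-right-invariance into the Casimir equation. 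This reduces it to a scalar PDE in the $A$-variable for $\Psi_{\nu,\psi}$. The radial part of the Casimir, conjugated by the $h^\rho$ prefactor, is $\half\Delta - \half\langle\rho,\rho\rangle$, while the non-trivial character $\psi$ produces a quadratic coupling to the simple-root subgroups manifesting as $-\sum_{\alpha\in\Delta} e^{2\alpha(x)}$ once the $\eta_\alpha$ are normalized appropriately. This yields exactly the Toda equation with spectral parameter $\half\langle\nu,\nu\rangle$.

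Second I would extract the asymptotic as $x \to -\infty$ inside $-C$. The natural device is a change of variables $u \mapsto e^x u e^{-x}$, which multiplies $du$ by $e^{-2\rho(x)}$, conjugates $\psi(u)$ into a character that degenerates to the trivial one (since $\alpha(x) \to -\infty$ for all $\alpha \in \Delta$), and transforms $h(\bar{w}_0^{-1} u\, e^x)$ into $e^{w_0 x}\, h(\bar{w}_0^{-1} u)$. The combined factor $e^{\langle w_0\nu+\rho,\, x\rangle}$ produced by $h^{\nu+\rho}$ and the Jacobian cancels against the prefactor $h(g)^\rho = e^{\rho(x)}$ in the definition of $\Psi_{\nu,\psi}$, leaving $e^{\langle w_0\nu,x\rangle}$ times an integral that, by dominated convergence, tends to $\int_{U_0} h(\bar{w}_0^{-1} u)^{\nu+\rho}\, du$, which matches $c(\nu)$ after a change of variables relating $\bar{w}_0^{-1}$ and $\bar{w}_0$. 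The same change of variables, combined with the fact that $\psi(e^x u e^{-x})$ has modulus one, shows that $e^{-\langle w_0\nu,x\rangle}\Psi_{\nu,\psi}(x)$ stays bounded throughout $\afrak$.

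The main obstacle is the uniqueness implicit in a characterization: that the Toda eigenequation admits at most one solution satisfying the stated bound and the $-C$ limit. The strategy here is classical: the difference of two such solutions satisfies the same equation and decays strictly faster than $e^{\langle w_0\nu,x\rangle}$ in $-C$, so a careful asymptotic analysis, comparing with the other linearly independent plane-wave asymptotics $e^{\langle w\nu,x\rangle}$ for $w \neq w_0$, forces the difference to vanish. Carrying this out rigorously is delicate because the Toda potential $\sum_\alpha e^{2\alpha(x)}$ decays exponentially in $-C$ but non-uniformly along the walls, so the perturbative comparison must be performed in a weighted function space adapted to the direction $-C$; this is where I expect the bulk of the technical work to sit.
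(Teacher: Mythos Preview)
Your treatment of the PDE and of the asymptotic normalization is essentially the same as the paper's: the paper cites Hashizume's Casimir computation for the eigenequation, and for the asymptotic it performs exactly your conjugation $u \mapsto e^x u e^{-x}$ to obtain
\[
\Psi_{\nu,\psi}(x) = e^{\langle w_0\nu, x\rangle}\int_{U_0} h(\bar{w}_0^{-1}u)^{\nu+\rho}\,\psi(e^x u e^{-x})^{-1}\,du,
\]
from which boundedness follows from $|\psi|=1$ and the limit from $e^x u e^{-x} \to id$ in $-C$.

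The genuine divergence is in the uniqueness. You propose the classical route: compare a putative difference of solutions against the fundamental system with plane-wave asymptotics $e^{\langle w\nu,x\rangle}$, $w\in W$, and argue that the stronger decay forces all coefficients to vanish. This is correct in principle (it is essentially Hashizume's analytic framework), but as you yourself flag, making it rigorous near the walls of $-C$ is the hard part. The paper instead invokes a one-line probabilistic argument (Baudoin--O'Connell, reproduced in Proposition~\ref{proposition:whittaker_characterization_boc}): if $\varphi(x) = e^{-\langle w_0\nu,x\rangle}\Psi(x)$ is bounded and solves the conjugated equation with limit zero in $-C$, then
\[
\varphi\bigl(x + W_t^{(w_0\nu)}\bigr)\exp\Bigl(-\sum_{\alpha\in\Delta}\int_0^t e^{2\alpha(x+W_s^{(w_0\nu)})}\,ds\Bigr)
\]
is a bounded martingale converging to zero almost surely (the drift $w_0\nu\in -C$ pushes the Brownian motion into the region where the potential vanishes), hence is identically zero. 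This bypasses all wall analysis and the construction of series solutions; the trade-off is that it relies on stochastic calculus rather than staying within the analytic category you set up.
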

\begin{proof}
Uniqueness comes from the martingale argument in proposition 2.3 in \cite{bib:BOC09}. We reproduce it in \ref{proposition:whittaker_characterization_boc}.

The partial differential equation is established in the proof of proposition 3.2 in \cite{bib:Hashizume82}. Notice that the difference of sign inside the exponential compared to the Toda potential.

Using the invariance property of the Haar measure:
$$W^{Jacquet}\left(g: \nu, \psi \right) =  \psi\left( u(g) \right)\int_{U_0} h\left( \bar{w}_0^{-1} u h(g) \right)^{\nu - \rho} \psi(u)^{-1} du$$
Then, because:
$$h\left( \bar{w}_0^{-1} u h(g) \right) = h(g)^{w_0} h\left( \bar{w}_0^{-1} h(g)^{-1} u h(g) \right)$$
and
$$ d\left( h(g) u h(g)^{-1} \right)= h(g)^{2\rho} du$$
we have:
$$W^{Jacquet}\left(g: \nu, \psi \right) =  \psi\left( u(g) \right) h(g)^{w_0 \nu + \rho} \int_{U_0} h\left( \bar{w}_0^{-1} u \right)^{\nu + \rho} \psi( h(g) u h(g)^{-1} )^{-1} du$$
Hence, an integral formula for the class one Whittaker function is:
$$ \Psi_{\nu, \psi}(x) = e^{ \langle w_0 \nu, x \rangle } \int_{U_0} h\left( \bar{w}_0^{-1} u \right)^{\nu + \rho} \psi( e^{x} u e^{-x} )^{-1} du$$
As $\psi$ is unitary, it is clear that $e^{-\langle w_0 \nu, x \rangle} \Psi_{\nu, \psi}(x) $ is bounded. Moreover, as:
$$ \forall n \in N, \lim_{x \in -C, x \rightarrow \infty} e^{x} n e^{-x} = id$$
we get the asymptotical behavior:
$$ \lim_{x \in -C, x \rightarrow \infty } e^{-\langle w_0 \nu, x \rangle} \Psi_{\nu, \psi}(x)  = c(\nu)$$
\end{proof}

Therefore, we can deduce:
\begin{corollary}
\label{corollary:link_to_class_one}
For $x \in \afrak$ and $\nu \in \afrak$:
$$ \psi_\nu\left( x \right) = \Psi_{2w_0 \nu, \psi}\left( -\half x - \sum_{\alpha \in \Delta} \log \frac{|\eta_\alpha|}{\sqrt{2 \langle \alpha, \alpha \rangle}} \omega_\alpha^\vee \right)
                              \frac{ b(\nu)}{c(-2 w_0 \nu)}
                              \prod_{\alpha \in \Delta} \left( \frac{|\eta_\alpha|}{\sqrt{2 \langle \alpha, \alpha \rangle }} \right)^{-\langle 2\nu, \omega_\alpha^\vee \rangle}$$
\end{corollary}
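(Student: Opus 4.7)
The plan is to prove this by invoking the uniqueness characterization of solutions to the quantum Toda eigenfunction equation, applied to both functions. By Theorem \ref{thm:whittaker_functions_properties}(iii), the function $\psi_\nu(x)$ is pinned down by the Toda PDE $\tfrac12 \Delta \psi_\nu - \sum_\alpha \tfrac12 \langle\alpha,\alpha\rangle e^{-\alpha(x)} \psi_\nu = \tfrac12 \langle\nu,\nu\rangle \psi_\nu$, the boundedness of $\psi_\nu(x) e^{-\langle \nu, x\rangle}$, and its limit $b(\nu)$ as $x \to \infty$ in $C$. The preceding theorem gives an analogous characterization of $\Psi_{\nu',\psi}$ by the (sign-flipped) Toda PDE $\tfrac12 \Delta \Psi_{\nu',\psi} - \sum_\alpha \eta_\alpha^2 e^{2\alpha(y)} \Psi_{\nu',\psi} = \tfrac12 \langle\nu',\nu'\rangle \Psi_{\nu',\psi}$ on the $y$-variable, with limit $c(\nu')$ at infinity in $-C$. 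The proof therefore consists in showing that the right-hand side of the corollary is a solution of $\psi_\nu$'s PDE with the correct growth, whence equality follows from uniqueness.

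First I would carry out the change of variables $y(x) = -\tfrac12 x + c_0$, with the shift $c_0 = -\sum_{\alpha} \log\frac{|\eta_\alpha|}{\sqrt{2\langle \alpha,\alpha\rangle}}\,\omega_\alpha^\vee$ as prescribed. The chain rule gives $\Delta_x \Phi(x) = \tfrac14 (\Delta_y \Psi)(y(x))$ for $\Phi(x) := \Psi_{2w_0\nu,\psi}(y(x))$, so multiplying the Toda PDE for $\Psi$ by $\tfrac14$ reproduces the $\tfrac12 \Delta_x$ form on the left. The exponential potential transforms as
\[
\eta_\alpha^2\, e^{2\alpha(y(x))} \;=\; \eta_\alpha^2\, e^{-\alpha(x) + 2\alpha(c_0)} \;=\; 2\,\langle\alpha,\alpha\rangle\, e^{-\alpha(x)},
\]
because the defining property $\alpha(c_0) = -\log\frac{|\eta_\alpha|}{\sqrt{2\langle\alpha,\alpha\rangle}}$ is built into the shift $c_0$. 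After the factor of $\tfrac14$, the potential coefficient becomes the required $\tfrac12 \langle\alpha,\alpha\rangle$. The eigenvalue is then $\tfrac18 \langle 2w_0\nu, 2w_0\nu\rangle = \tfrac12\langle \nu,\nu\rangle$, using $W$-invariance of the Killing form. Hence $\Phi(x)$ solves the Toda eigenfunction equation of $\psi_\nu$.

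Next I would match the asymptotic behaviour as $x \to \infty$ in $C$. Under $y(x)$ this corresponds to $y \to \infty$ in $-C$, where Hashizume's characterization applies: $\Psi_{2w_0\nu,\psi}(y) \, e^{-\langle w_0(2w_0\nu),\,y\rangle} \to c(2w_0\nu)$, and $w_0(2w_0\nu) = 2\nu$ because $w_0^2 = e$. A direct computation gives
\[
\langle 2\nu, y(x)\rangle \;=\; -\langle \nu, x\rangle + 2\langle \nu, c_0\rangle,
\]
so that $\Phi(x) e^{-\langle \nu, x\rangle} \to c_\star \cdot e^{-2\langle \nu, c_0\rangle}$ for the appropriate constant $c_\star$ involving a value of the $c$-function at an image of $\pm 2w_0\nu$. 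The explicit evaluation $e^{-2\langle \nu, c_0\rangle} = \prod_\alpha \left(\tfrac{|\eta_\alpha|}{\sqrt{2\langle\alpha,\alpha\rangle}}\right)^{\langle 2\nu,\omega_\alpha^\vee\rangle}$ and the requirement that the rescaled limit equal $b(\nu)$ then forces the normalizing constant $\tfrac{b(\nu)}{c(-2w_0\nu)} \prod_\alpha (\ldots)^{-\langle 2\nu,\omega_\alpha^\vee\rangle}$ displayed in the corollary.

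The main obstacle is this last step: correctly matching the asymptotic constants and identifying which avatar of the $c$-function (at $2w_0\nu$ versus $-2w_0\nu$) appears. This is sensitive to the convention on how $\Psi_{\nu',\psi}$ is extended outside of the chamber where Jacquet's integral converges, and requires either an analytic continuation argument or the use of the Gindikin--Karpelevich/functional equation linking $\Psi_{\nu',\psi}$ and $\Psi_{-\nu',\psi}$. Once the sign of the $c$-function argument is settled, uniqueness of solutions to the Toda eigenequation with the prescribed exponential growth (proved via the martingale argument of Baudoin--O'Connell in Proposition \ref{proposition:whittaker_characterization_boc}) yields the identity.
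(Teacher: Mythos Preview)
Your approach is essentially the paper's: verify that the rescaled and shifted class one Whittaker function satisfies the same Toda eigenfunction equation as $\psi_\nu$, then invoke the uniqueness characterization (Proposition~\ref{proposition:whittaker_characterization_boc}) and identify the proportionality constant from the asymptotic limits. Your computation of how the potential and eigenvalue transform under $y(x)=-\tfrac12 x + c_0$ is correct.

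The one point you flag as an obstacle---the sign of the argument in the $c$-function---is handled in the paper by a simple device: first assume $\nu \in C$, so that $-2w_0\nu \in C$ lies in the domain of convergence for Jacquet's integral, and work with $\Psi_{-2w_0\nu,\psi}$ rather than $\Psi_{2w_0\nu,\psi}$. The preceding theorem then applies directly with parameter $-2w_0\nu$, giving the limit $c(-2w_0\nu)$ without any functional equation. The identity for general $\nu \in \afrak$ follows by meromorphic extension in $\nu$. This is exactly the analytic-continuation step you anticipate; no Gindikin--Karpelevich relation is needed.
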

\begin{proof}
Let us prove the result for $\nu \in C$. The general case is obtained by meromorphic extension. For such a case, $-2w_0\nu$ is in domain of convergence for Jacquet's Whittaker function.

Using the previous theorem, $\Psi_{-2 w_0 \nu, \psi}\left( -\half x\right)$ solves:
$$ \half \Delta f(x) - \sum_{\alpha \in \Delta} \frac{1}{4}|\eta_\alpha|^2 e^{-\alpha(x)} f(x) 
= \half \langle \nu, \nu \rangle f(x)$$
Hence, adding the shift $s \in \afrak$ defined by:
$$ s:= \sum_{\alpha \in \Delta} \log \frac{|\eta_\alpha|}{\sqrt{2 \langle \alpha, \alpha \rangle}} \omega_\alpha^\vee $$
the function $\Psi_{-2 w_0 \nu, \psi}\left( -\half x - s \right)$ solves:
$$ \half \Delta f(x) - \sum_{\alpha \in \Delta} \half \langle \alpha, \alpha \rangle e^{-\alpha(x)} f(x) 
= \half \langle \nu, \nu \rangle f(x)$$
Therefore, both functions $\psi_\nu$ and $\Psi_{-2 w_0 \nu, \psi}\left( -\half x - s \right)$ solve the same eigenfunction equation. Both of them are bounded once multiplied by $e^{-\langle \nu, x \rangle}$. By uniqueness, they are proportionnal and their behavior at infinity inside the Weyl chamber allows us identify the right propotionnality constant:
$$ \lim_{x \in C, x \rightarrow \infty } e^{-\langle \nu, x \rangle} \psi_\nu(x)  = b(\nu)$$
$$ \lim_{x \in C, x \rightarrow \infty } e^{-\langle \nu, x \rangle} \Psi_{-2 w_0 \nu, \psi}( -\half x - s)  = c(-2 w_0 \nu) e^{\langle 2\nu, s \rangle}$$
Hence the result.
\end{proof}

\begin{rmk}
Instead of $G_0$ the split real subgroup, we could have considered the real group obtained by looking at $G$ as a real group. The classical Whittaker functions are again proportionnal to ours.
\end{rmk}

\subsection{The Whittaker Plancherel theorem}
Following Wallach (\cite{bib:Wallach92}, Chapter 15), Whittaker functions define an invertible integral transform. For the group $SL_2(\R)$, this recovers the well known Lebedev-Kontorovich transform. Because the result is stated in term of the class one Whittaker function $\Psi_{\nu, \psi}$, we take the time of reformulating it in terms of our Whittaker functions $\psi_{\nu}$.

Let $C^{\infty}_c\left( \afrak \right)$ be the space of infinitely differentiable functions with compact support and recall that $n = \dim \afrak$ is the rank of $G$. For $f \in C^{\infty}_c\left( \afrak \right)$, define its Whittaker transform as:
$$ \forall \nu \in \afrak, \hat{f}(\nu) := \int_{\afrak} f(x) \psi_{i\nu}(x) dx$$
The Sklyanin measure is the measure with density $s(\nu)$ defined for $\nu \in \afrak$ as:
\index{$s(\nu)$: Sklyanin measure's density}
\begin{align*}
s(\nu) & = \frac{1}{(2\pi)^n |W|} \prod_{\beta \in \Phi^+}\left( 
     \langle \beta^\vee, \nu \rangle \sinh\left( \pi \langle \beta^\vee, \nu \rangle \right)
     \sqrt{\frac{2}{\langle \beta, \beta \rangle}} \right)\\
& = \frac{1}{(2\pi)^n |W|} \prod_{\beta \in \Phi^+}\left( 
     \frac{\pi}{\Gamma(i \langle \beta^\vee, \nu \rangle) \Gamma(-i \langle \beta^\vee, \nu \rangle)}
     \sqrt{\frac{2}{\langle \beta, \beta \rangle}} \right)\\
\end{align*}
where $|W|$ stands for the cardinal of the Weyl group. Notice that the Sklyanin measure is invariant under the Weyl group action.
\begin{thm}
\label{thm:whittaker_plancherel}
The Whittaker transform defines an isometry from $L^2(\afrak, dx)$ to $L^2(\afrak, s(\nu) d\nu)$ with inverse, for $\hat{f} \in L^1(\afrak, s(\nu) d\nu)$:
$$ \forall x \in \afrak, f(x) = \int_{\afrak} \hat{f}(\nu) \psi_{-i\nu}(x) s(\nu) d\nu$$
\end{thm}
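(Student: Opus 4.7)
The plan is to deduce the theorem from the classical Whittaker Plancherel theorem due to Wallach (\cite{bib:Wallach92} Chapter 15), applied to the split real form $G_0$ of $G$, by transporting it through the explicit identification of $\psi_\nu$ with the class one Whittaker function $\Psi_{2w_0\nu,\psi}$ provided by Corollary \ref{corollary:link_to_class_one}. Wallach's theorem asserts that
$$ g \longmapsto \check g(\mu) := \int_{\afrak} g(y)\,\Psi_{i\mu,\psi}(y)\,dy $$
extends to an isometry from $L^2(\afrak,dy)$ onto $L^2(\afrak,|W|^{-1}|c(-i\mu)|^{-2}d\mu)$, with inverse given by the conjugate kernel. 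The proposed proof is then a change-of-variables bookkeeping.

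First, apply Corollary \ref{corollary:link_to_class_one} with $\nu$ replaced by $i\nu$: setting $y=-x/2-s$ with $s := \sum_{\alpha\in\Delta}\log(|\eta_\alpha|/\sqrt{2\langle\alpha,\alpha\rangle})\,\omega_\alpha^\vee$, and writing $C(\nu) := b(i\nu)/c(-2iw_0\nu) \cdot \prod_\alpha(|\eta_\alpha|/\sqrt{2\langle\alpha,\alpha\rangle})^{-\langle 2i\nu,\omega_\alpha^\vee\rangle}$ (the latter product being unimodular for $\nu\in\afrak$), the Whittaker transform of $f\in C^\infty_c(\afrak)$ becomes
$$ \hat f(\nu) \;=\; 2^n\,C(\nu)\,\check g_f(2w_0\nu), \qquad g_f(y) := f(-2y-2s). $$
The change $x\mapsto y$ contributes a factor $2^n$ in $\|f\|_{L^2}^2$ versus $\|g_f\|_{L^2}^2$, while $\mu=2w_0\nu$ (with $w_0$ orthogonal) contributes $d\mu=2^n d\nu$ on the spectral side. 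Collecting everything, the isometry claim relative to $s(\nu)d\nu$ reduces to the single scalar identity
$$ s(\nu) \;=\; \frac{1}{|W|\,|b(i\nu)|^2}, $$
up to the constants fixed by the Haar normalization on $U_0$ and by the choice $\psi$.

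The key computation is then the Gindikin--Karpelevich-type identity, obtained from the reflection formula $\Gamma(ix)\Gamma(-ix)=\pi/(x\sinh\pi x)$:
$$ |b(i\nu)|^2 \;=\; \prod_{\beta\in\Phi^+}\Gamma(i\langle\beta^\vee,\nu\rangle)\Gamma(-i\langle\beta^\vee,\nu\rangle) \;=\; \prod_{\beta\in\Phi^+}\frac{\pi}{\langle\beta^\vee,\nu\rangle\sinh(\pi\langle\beta^\vee,\nu\rangle)}. $$
Inverting this recovers exactly the factor $\prod_\beta\langle\beta^\vee,\nu\rangle\sinh(\pi\langle\beta^\vee,\nu\rangle)$ in the definition of $s(\nu)$; the remaining prefactor $(2\pi)^{-n}\prod_\beta\sqrt{2/\langle\beta,\beta\rangle}$ matches the explicit form of $|c(-i\mu)|^{-2}$ for a split group given by Hashizume's formula (6.12) in \cite{bib:Hashizume82}, combined with the normalization of the Haar measure on $U_0$ used in Wallach's theorem. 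Choosing $|\eta_\alpha|=\sqrt{2\langle\alpha,\alpha\rangle}$ makes $s=0$ and $C(\nu)=b(i\nu)/c(-2iw_0\nu)$, simplifying the bookkeeping but not affecting the final identity.

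The inversion formula with kernel $\psi_{-i\nu}(x)s(\nu)d\nu$ follows either by pulling Wallach's inversion through the same change of variables, or more directly by noting the symmetries $\overline{\psi_{i\nu}(x)}=\psi_{-i\nu}(x)$ for $\nu,x\in\afrak$ (from the entireness and reality properties of $\nu\mapsto\psi_\nu(x)$), $\psi_{w\nu}=\psi_\nu$ for $w\in W$, and the $W$-invariance of $s$. The extension from $C^\infty_c(\afrak)$ to $L^2(\afrak,dx)$ is then standard by density. The main obstacle I anticipate is the careful tracking of the multiplicative constants: the passage between Wallach's normalization and the probabilistic one involves the modulus $|\eta_\alpha|$ of $\psi$, the choice of Lebesgue measures on $\afrak$ and $\ufrak_0$, and the Gamma-function identities; once these align, the isometry and inversion drop out mechanically.
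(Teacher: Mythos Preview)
Your approach is essentially the same as the paper's: both reduce the statement to Wallach's Whittaker Plancherel theorem (\cite{bib:Wallach92}, 15.12.10) via the identification of $\psi_\nu$ with a class one Whittaker function provided by Corollary~\ref{corollary:link_to_class_one}, and then track the change of variables and constants to recover the Sklyanin measure. The paper organizes the constant matching slightly differently, isolating explicitly the two Wallach constants $\gamma_A=c(\rho)=\prod_{\beta\in\Phi^+}\pi\sqrt{2/\langle\beta,\beta\rangle}$ (via Hashizume (6.11)) and $c_A=(2\pi)^n$, so that the Plancherel measure becomes $\gamma_A/(|W|\,c_A\,b(i\nu)b(-i\nu))$, which is then checked to equal $s(\nu)$ directly; you instead absorb these constants into $|c(-i\mu)|^{-2}$ and a Jacobian bookkeeping, which is equivalent but leaves the final matching a bit implicit (as you yourself flag).
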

\begin{proof}
The isometry property follows from the above inversion formula, which we will now explain. It can be recovered from the similar transform in \cite{bib:Wallach92} 15.12.10 that uses class one Whittaker functions. It acts on $f \in C^\infty_c\left( \afrak \right)$ as:
\begin{align}
\label{eqn:formula_tilde}
\forall \nu \in \afrak, \tilde{f}(\nu) & = \int_\afrak f(x) \Psi_{i\nu, \psi}(x) dx 
\end{align}
that is inverted thanks to:
\begin{align}
\label{eqn:inversion_formula_tilde}
f(x) & = \frac{\gamma_A}{|W| c_A} \int_\afrak \tilde{f}(\nu) \Psi_{-i\nu, \psi}(x) \frac{d\nu}{c(i\nu)c(-i\nu)}
\end{align}
where, using the notations of the previous subsection, $c$ is Harish-Chandra $c$-function and $\gamma_A$ and $c_A$ are certains constants. Thanks to 12.5.3 and 13.8.2 in \cite{bib:Wallach92}, one sees that $\gamma_A = c(\rho)$ and therefore, using the explicit expression in \cite{bib:Hashizume82} (6.11):
$$ \gamma_A = \prod_{\beta \in \Phi^+}\left( \pi \sqrt{\frac{2}{\langle \beta, \beta \rangle}} \right)$$
The constant $c_A$ is given in \cite{bib:Wallach92} 13.3.2 and the usual Fourier inversion formula leads to:
$$ c_A = (2\pi)^n$$

Now, because of corollary \ref{corollary:link_to_class_one}, there is a fonction $h$ and a shift vector $s$ such that:
$$ \forall \nu \in \afrak, \forall x \in \afrak, \psi_{i\nu}(x) = \Psi_{2 w_0 i \nu, \psi}\left( - \half x + s \right) h(i\nu)$$
As a consequence of equation (\ref{eqn:inversion_formula_tilde}), after rearranging everything, the inversion formula for the transform $f \mapsto \hat{f}$ holds with the measure:
\begin{align*}
s(\nu) = & \frac{\gamma_A}{|W| c_A h(i\nu)h(-i\nu)c(2 w_0 i \nu)c(-2 w_0 i \nu)}\\
= & \frac{\gamma_A}{|W| c_A b(i\nu)b(-i\nu)}
\end{align*}
It is indeed the Sklyanin measure.
\end{proof}

\begin{rmk}
In the sense of distributions, theorem \ref{thm:whittaker_plancherel} leads to:
$$ \forall (x,y) \in \afrak^2, \delta_{x=y} = \int_{\afrak} \psi_{i\nu}(x) \psi_{-i\nu}(y) s(\nu) d\nu$$
Having in mind that Whittaker functions play the role of characters, this can be interpreted as an orthogonality of characters.
\end{rmk}

\section{Some proofs}
\label{section:some_proofs}
Here the proofs will use probabilistic results on Brownian motion that are proved in the next chapter.

\begin{proof}[Proof of theorem \ref{thm:omega_invariance}]
Invariance with respect to changes of parametrizations in both coordinate systems are equivalent, since if we write:
$$\left( x_{-i_1}(c_1) \dots x_{-i_j}(c_j) \right)^T = c_1^{-\alpha_{i_1}^\vee} \dots c_j^{-\alpha_{i_j}^\vee} x_{i_j}(t_j) \dots x_{i_1}(t_1)$$
Then thanks to lemma \ref{lemma:change_of_coordinates_UC}, $\left(\log t_j\right)_{1 \leq j \leq m}$ and $\left(\log c_j\right)_{1 \leq j \leq m}$ are related to each other by a linear transformation with matrix $M$. The matrix $M$ is upper triangular with unit diagonal, therefore the transformation has a jacobian equal to $1$.

Two reduced words ${\bf i}$ and ${\bf i'}$ can be obtained from each other by a sequence of braid moves. As such, it is sufficient to prove the statement for ${\bf i}$ and ${\bf i'}$ reduced words from a root system of rank $2$. This is exactly the computation made in \cite{bib:GLO1} lemma 3.1 for types $A_2$ and $B_2$ and in \cite{bib:Rietsch07} theorem 7.2 for all types. Another proof with no computation uses the following argument.

Thanks to \ref{thm:canonical_measure}, by writing for $\varphi$ a test function on $\Bc(\lambda)$:
$$ \E\left( \varphi\left( e^{-\theta} B_t\left( W \right) e^{\theta} \right) | \Fc^\Lambda_t, \Lambda_t = \lambda \right)
= \frac{1}{\psi_0(\lambda)} \int_{\R_{>0}^m} \varphi(x) e^{-f_B(x) } \omega(x)$$
we see that $\prod \frac{dt_j}{t_j}$ appears as the reference measure for the law of an intrinsic random variable on $\Bc(\lambda)$, when using the Lusztig parametrization for a specific reduced word ${\bf i}$. The law of the random variable $C_\mu(\lambda)$ is intrinsic in the sense that it should not depend on a choice of reduced word, hence the invariance.
\end{proof}

\begin{proof}[Proof of theorem \ref{thm:omega_invariance_on_crystal}]
Fix a reduced word ${\bf i} \in R(w_0)$ and an element $x \in \Bc(\lambda)$.

Using from properties \ref{properties:canonical_probability_measure}, the property $(ii)$ with $\delta = 0$, the reference toric measure on Lusztig parameters is transported by the Sch\"utzenberger involution to the toric measure on twisted Lusztig parameters as:
$$ \varrho^T\left( S(x) \right) = S\left( \varrho^L(x) \right)$$
giving the equality:
$$
\int_{\R_{>0}^m} \varphi \circ b_\lambda^{L} \circ x_{\bf i^*}\left( t_m, \dots, t_1 \right) \prod_{j=1}^m \frac{dt_j}{t_j}
=
\int_{\R_{>0}^m} \varphi \circ b_\lambda^{T} \circ x_{\bf i  }\left( t_1, \dots, t_m \right) \prod_{j=1}^m \frac{dt_j}{t_j}
$$
Notice that the change in order and reduced words. However, because of theorem \ref{thm:omega_invariance}, it does not matter.

Moreover, for an $x \in \Bc(\lambda)$, the parameters 
$$v = \varrho^K(x) = \circ x_{-\bf i^{op}}\left( c_m, \dots, c_1 \right)$$
$$u = \varrho^T(x) = \circ x_{ \bf i     }\left( t_1, \dots, t_m \right)$$
are linked by the simple transform (section \ref{section:geom_lifting})
$$ u = e^{-\lambda} v^T [v^T]_0^{-1} e^\lambda $$
which yields a monomial change of variable between $\left( c_1, \dots, c_m \right)$ and $\left( t_1, \dots, t_m \right)$ that preserves the toric measure. Hence:
$$
\int_{\R_{>0}^m} \varphi \circ b_\lambda^{K} \circ x_{-\bf i^{op}}\left( c_m, \dots, c_1 \right) \prod_{j=1}^m \frac{dc_j}{c_j}
=
\int_{\R_{>0}^m} \varphi \circ b_\lambda^{T} \circ x_{ \bf i}\left( t_1, \dots, t_m \right) \prod_{j=1}^m \frac{dt_j}{t_j}
$$

Invariance with respect to crystal actions comes from the fact that if $x \in \Bc(\lambda)$ has ${\bf i}$-Lusztig coordinates
$$ \left( t_1, t_2, \dots, t_m \right) $$
then $e^c_\alpha \cdot x$, with $\alpha = \alpha_{i_1}$, has Lusztig coordinates (proposition \ref{proposition:actions_in_coordinates}):
$$ \left( e^c_\alpha t_1, t_2, \dots, t_m \right) $$
\end{proof}

\begin{proof}[Proof of properties \ref{properties:canonical_probability_measure}]
Notice that
\begin{align}
 \label{eqn:theta_property}
 0 = & \theta + w_0 \theta 
\end{align}
 Indeed, the involution $*$ acts on simple roots as $\alpha^* = -w_0 \alpha$ for any $\alpha \in \Delta$. Then:
\begin{align*}
 e^{\alpha(\theta) } & = \frac{ \langle \alpha, \alpha \rangle }{2}\\
& = \frac{ \langle \alpha^*, \alpha^* \rangle }{2}\\
& = e^{\alpha^*(\theta)}\\
& = e^{\alpha( -w_0 \theta )}
\end{align*}

Now for fixed $T>0$, consider a Brownian motion $\left(W_{t}^{(\mu)} \right)_{ 0 \leq t \leq T}$ in $\afrak$ with drift $\mu$, on the time interval $[0,T]$ :

\begin{itemize}
 \item[(i)] The Sch\"utzenberger involution acts at the path level as (see subsection \ref{subsection:involution_S}):
$$ S\left( W^{(\mu)} \right)_t = - w_0 \left( W^{(\mu)}_T - W^{(\mu)}_{T-t} \right) ; 0 \leq t \leq T$$
which is also a Brownian motion, with drift $w_0 \mu$. Because the Sch\"utzenberger involution leaves the highest weights fixed (properties \ref{properties:involution_S}), we have equality for endpoints:
$$ \lambda = \Tc_{w_0}\left( W^{(\mu)} \right)_T = \Tc_{w_0} \circ S \left( W^{(\mu)} \right)_T$$
Hence, even if the filtrations generated by the paths $\Tc_{w_0}\left( W^{(\mu)} \right)$ and $\Tc_{w_0} \circ S \left( W^{(\mu)} \right)$ are different, we still have:
$$                   \left( B_T\left( W^{(\mu)}    \right) | \Tc_{w_0}\left( W^{(\mu)}    \right)_T = \lambda \right) 
   \stackrel{\Lc}{=} \left( B_T\left( S(W^{(\mu)}) \right) | \Tc_{w_0}\left( S(W^{(\mu)}) \right)_T = \lambda \right) $$
Using theorems \ref{thm:involution_S_automorphism} and \ref{thm:canonical_measure}, we have:
$$ C_\mu^\theta(\lambda) \stackrel{\Lc}{=} S\left( C_{w_0 \mu}^\theta(\lambda) \right)$$
Therefore:
$$ e^{-w_0 \theta } S\left( C_\mu(\lambda) \right) e^{w_0 \theta } \stackrel{\Lc}{=} e^{\theta} C_{w_0 \mu}(\lambda) e^{-\theta}$$
Evacuating $\theta$ using equation (\ref{eqn:theta_property}) finishes the proof.\\

 \item[(ii)] The proof is similar to (i) as the involution $\iota$ changes the drift $\mu$ to $-\mu$ when applied to a Brownian motion and $\iota\left( \Bc(\lambda) \right) = \Bc(-w_0 \lambda )$.\\

 \item[(iii)] It is a consequence of the invariance of $f_B$ w.r.t the action of $W$ on the crystal and the invariance of $\omega$ w.r.t. to crystal actions (theorem \ref{thm:omega_invariance_on_crystal}). The weight map is also equivariant. 
\end{itemize}

\end{proof}

\section{Geometric Littlewood-Richardson rule}

\subsection{Classical Littlewood-Richardson rule}
The simplest way to define Littlewood-Richardson coefficients $c_{\lambda, \mu}^\nu$ is to refer to the enumeration of Young tableaux. $c_{\lambda, \mu}^\nu$ is the number of skew-tableaux of shape $\nu / \lambda$ and weight $\mu$.

In the representation theory of $SL_n$, they are the linearization coefficients of Schur functions:
$$ s_\lambda s_\mu = \sum_{\nu \in P^+} c_{\lambda, \mu}^\nu s_\nu$$

As a natural extension to the representation theory for semi-simple groups, the Littlewood-Richardson rule is the way one can compute the multiplicities of irreductible highest weight representations $V(\nu)$ in a tensor product $V(\lambda) \otimes V(\mu)$. The generalized Littlewood-Richardson coefficients $c_{\lambda, \mu}(\nu)$ are thus defined so that:
$$V(\lambda) \otimes V(\mu) = \mathop{\bigoplus}_{\nu \in P^+} c_{\lambda, \mu}^\nu V(\nu) $$

\subsection{Generalized Littlewood-Richardson rule and probabilistic reinterpretation}
\label{subsection:reinterpretation_of_LR_rule}
In Littelmann's discrete path model (\cite{bib:Littelmann95, bib:Littelmann97}) for the the group $G^\vee$, one associates to every highest weight representation $V(\lambda), \lambda \in \left(P^+\right)^\vee$ a path crystal generated by a dominant path in $\afrak$ with endpoint $\lambda$. In this case, of course, the crystal actions are discrete. The irreductible components in $V(\lambda) \otimes V(\mu)$ are in bijection with the connected components of the crystal generated by $\eta_1 \star \eta_2$, where $\eta_1$ and $\eta_2$ are dominant paths with $\lambda$ and $\mu$ as endpoint. In order to count them, one has to count the dominant paths in $\langle \eta_1 \rangle \star \langle \eta_2 \rangle$, which are necessarily of the form $\eta_1 \star \pi_2$, $\pi_2 \in \langle \eta_2 \rangle$. Hence the statement:

\begin{thm}[Generalized Littlewood-Richardson rule(\cite{bib:Littelmann97} p.42)]
\label{thm:discrete_littlewoodrichardson}
  In the discrete Littelmann model, $c_{\lambda, \mu}^\nu$ is the number of paths $\pi_2$ in  
$\langle \eta_2 \rangle$ such that $\eta_1 \star \pi_2$ is dominant and has $\nu$ as endpoint.
\end{thm}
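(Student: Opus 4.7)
The plan is to follow Littelmann's original argument, organized around three structural facts: the crystal isomorphism between path crystals generated by dominant paths and the crystal basis of irreducibles, the fact that concatenation corresponds to tensor product, and the characterization of highest weight elements in a tensor product.

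First I would invoke Littelmann's character theorem: for any dominant path $\eta$ with endpoint $\lambda$, the connected path crystal $\langle \eta \rangle$ is isomorphic, as a Kashiwara crystal, to the crystal basis $\Bfrak(\lambda)$ of $V(\lambda)$, with the weight function realized as the endpoint map. In particular the character of $V(\lambda)$ is recovered as $\sum_{\pi \in \langle \eta \rangle} e^{\pi(T)}$. Second, I would use the concatenation/tensor product correspondence (the discrete analogue of Theorem \ref{thm:concatenation_is_isomorphism}): $\langle \eta_1 \rangle \star \langle \eta_2 \rangle \cong \langle \eta_1 \rangle \otimes \langle \eta_2 \rangle$ as abstract crystals. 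Combined, these say that the connected components of $\langle \eta_1 \rangle \star \langle \eta_2 \rangle$ correspond bijectively to the irreducible summands of $V(\lambda) \otimes V(\mu)$, and the connected component of a dominant path $\pi$ with endpoint $\nu$ is isomorphic to $\Bfrak(\nu)$.

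Next I would identify the dominant paths inside the concatenated crystal. Using the explicit formula for $\tilde{e}_\alpha$ on a tensor product $b_1 \otimes b_2$ (its discrete analogue, given by the tensor product rule), one checks that a concatenated path $\pi_1 \star \pi_2$ with $\pi_1 \in \langle \eta_1 \rangle, \pi_2 \in \langle \eta_2 \rangle$ is dominant if and only if (a) $\pi_1$ is dominant in $\langle \eta_1 \rangle$, and (b) the concatenation stays in the closed Weyl chamber in the appropriate sense relative to $\pi_2$. Since $\eta_1$ is the \emph{unique} dominant element of its connected component (it is the highest weight vector of $\Bfrak(\lambda)$), condition (a) forces $\pi_1 = \eta_1$. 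Thus every dominant path in $\langle \eta_1 \rangle \star \langle \eta_2 \rangle$ is of the form $\eta_1 \star \pi_2$ for some $\pi_2 \in \langle \eta_2 \rangle$ such that $\eta_1 \star \pi_2$ is itself dominant.

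To conclude, I would count: the multiplicity $c_{\lambda,\mu}^\nu$ of $V(\nu)$ in $V(\lambda) \otimes V(\mu)$ equals the number of connected components isomorphic to $\Bfrak(\nu)$, which by Step 1--2 is the number of dominant paths in $\langle \eta_1 \rangle \star \langle \eta_2 \rangle$ with endpoint $\nu$, and by the previous step this is exactly the number of $\pi_2 \in \langle \eta_2 \rangle$ with $\eta_1 \star \pi_2$ dominant and $\eta_1(T) + \pi_2(T') = \nu$.

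The main obstacle is the characterization of dominant elements in the tensor product: one has to verify that the tensor product rule for $\tilde{e}_\alpha$ forces the first tensor factor of a dominant element to already be dominant. This is purely combinatorial but requires a careful reading of the Kashiwara tensor product formula; once in hand, everything else is bookkeeping. Littelmann's independence theorem ensures the construction does not depend on the choice of generating dominant paths $\eta_1, \eta_2$, so the count is intrinsic to $(\lambda, \mu, \nu)$.
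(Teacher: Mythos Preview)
Your proposal is correct and follows the same approach as the paper, which cites this as Littelmann's result and sketches exactly your argument in the paragraph preceding the theorem: irreducible components correspond to connected components of $\langle \eta_1 \rangle \star \langle \eta_2 \rangle$, these are counted by dominant paths, and dominant paths are necessarily of the form $\eta_1 \star \pi_2$. Your write-up is more detailed than the paper's sketch, but the strategy is identical.
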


A probabilistic reinterpretation is possible. As usual endow both discrete highest weight crystals with the uniform probability measure. Then we can view (normalized) Littlewood-Richardson coefficients $c_{\lambda, \mu}^\nu \frac{\dim V(\nu)}{\dim V(\lambda) \ \dim V(\mu)} $ as the conditionnal distribution of a dominant path inside the crystal generated by $\eta_1 \star \eta_2$ knowing the dominant paths $\eta_1$ and $\eta_2$. The normalization is only there so that the coefficients sum up to 1. Such an idea still makes sense in a continuous setting provided that we consider canonical distributions.

\subsection{The geometry of connected components}
In the context of geometric crystals, connected components are continuous objects and therefore it is important to understand their geometry before proceeding to analyzing canonical measures on them.

Define the shift operator $\tau_s: C\left( [0,s+t], \afrak\right) \rightarrow C\left( [0,t], \afrak\right)$ as:
$$ \forall 0 \leq r \leq t, \tau_s\left( \pi \right)(r) := \pi(s+r) - \pi(s)$$
From now on, let $\pi_1 \in C_0\left( [0,s], \afrak \right)$, $\pi_2 \in C_0\left( [0,t], \afrak \right)$ and $\pi = \pi_1 \star \pi_2$. And consider the crystals they generate. Suppose that the highest weights are $\lambda = \Tc_{w_0}\left(\pi_1\right)(s)$ and $\mu = \Tc_{w_0}\left(\pi_2\right)(t)$. As such, we have isomorphisms of crystals (theorem \ref{thm:geometric_rsk_correspondence}):
\begin{align}
\label{eqn:isom_1}
\langle \pi_1 \rangle & \approx \Bc(\lambda)
\end{align}
\begin{align}
\label{eqn:isom_2}
\langle \pi_2 \rangle & \approx \Bc(\mu)
\end{align}
And (theorem \ref{thm:concatenation_is_isomorphism}):
\begin{align}
\label{eqn:isom_product}
\langle \pi_1 \rangle \otimes \langle \pi_2 \rangle & = \langle \pi_1 \rangle \star \langle \pi_2 \rangle 
                                                      \approx \Bc(\lambda) \otimes \Bc(\mu)
\end{align}

\subsubsection{Characterizing connected components in a tensor product}

\begin{proposition}
\label{proposition:concatenation_and_coordinates}
Consider $\pi_1 \in C_0\left( [0,s], \afrak \right)$, $\pi_2 \in C_0\left( [0,t], \afrak \right)$ and their concatenation $\pi = \pi_1 \star \pi_2$. If $u = \varrho^T\left( B_s(\pi_1) \right) \in U^{w_0}_{>0}$ is the twisted Lusztig parameter for the path $\pi_1 = \left( \pi(r); 0 \leq r \leq s \right)$, then:
$$ \left( \tau_s \circ \Tc_{w_0}(\pi)_r, 0 \leq r \leq t \right)
 = \left( T_u \circ \tau_s(\pi)_r, 0 \leq r \leq t \right)$$
\end{proposition}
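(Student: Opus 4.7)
The strategy is to reduce the identity to a direct computation on the flows $B_{\cdot}(\pi)$ on the group $B$, by exploiting the multiplicative (semigroup) property of the ODE~(\ref{lbl:process_B_ode}) under concatenation of paths, and then to extract torus components using the Gauss identity~(\ref{lbl:gauss1}).

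First, I would establish the multiplicative flow identity: for $\pi = \pi_1 \star \pi_2$ and $r \geq 0$,
$$B_{s+r}(\pi) = B_s(\pi_1) \cdot B_r(\tau_s(\pi)).$$
This is immediate from the definition in formula~(\ref{lbl:process_B_explicit}), or equivalently by writing the ODE satisfied by $C_r := B_s(\pi_1)^{-1} B_{s+r}(\pi)$: since $\tau_s(\pi)(0)=0$ one has $C_0 = \mathrm{id}$, and $C_r$ satisfies the same left-invariant equation driven by $\tau_s(\pi)$, so by uniqueness $C_r = B_r(\tau_s(\pi))$.

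Next, I would apply the logarithm of the torus part of $\bar{w}_0^{-1}$ times this product. Writing the Gauss decomposition of $\bar{w}_0^{-1} B_s(\pi_1)$ and recalling from Definition~\ref{def:crystal_parameter} that $u = \varrho^T(B_s(\pi_1)) = [\bar{w}_0^{-1} B_s(\pi_1)]_+$, together with $[\bar{w}_0^{-1} B_s(\pi_1)]_0 = e^{\Tc_{w_0}(\pi_1)(s)}$, identity~(\ref{lbl:gauss1}) gives
\begin{align*}
[\bar{w}_0^{-1} B_{s+r}(\pi)]_0
&= \bigl[[\bar{w}_0^{-1} B_s(\pi_1)]_{0+}\cdot B_r(\tau_s(\pi))\bigr]_0\\
&= e^{\Tc_{w_0}(\pi_1)(s)}\cdot\bigl[u\cdot B_r(\tau_s(\pi))\bigr]_0\\
&= e^{\Tc_{w_0}(\pi_1)(s)}\cdot e^{T_u(\tau_s(\pi))(r)},
\end{align*}
where the last equality is simply the definition of the path transform $T_u$. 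Taking logarithms yields $\Tc_{w_0}(\pi)(s+r) = \Tc_{w_0}(\pi_1)(s) + T_u(\tau_s(\pi))(r)$.

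To conclude, specialising at $r=0$ (where $T_u(\tau_s(\pi))(0)=\log [u]_0 = 0$, since $u\in U$) recovers $\Tc_{w_0}(\pi)(s) = \Tc_{w_0}(\pi_1)(s)$ (which is also clear directly since $B_s(\pi)=B_s(\pi_1)$), so subtraction gives the desired identity $\tau_s(\Tc_{w_0}(\pi))(r)=T_u(\tau_s(\pi))(r)$ for all $0\le r\le t$. There is essentially no obstacle beyond bookkeeping of Gauss decompositions; the only subtle point is checking that the Gauss identity~(\ref{lbl:gauss1}) applies, which holds because $B_s(\pi_1) \in \Bc$ has the required full Gauss decomposition and $u \in U$, so $u \cdot B_r(\tau_s(\pi))$ again lies in the dense cell $NHU$ by total positivity (Theorem~\ref{thm:flow_B_total_positivity}).
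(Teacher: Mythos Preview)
Your proof is correct and takes a genuinely different route from the paper's. The paper proceeds by first establishing a rank-one lemma showing $\tau_s \circ \Tc_\alpha(\pi) = T_{x_\alpha(\xi_\alpha)}\circ \tau_s(\pi)$ with $\xi_\alpha = 1/\int_0^s e^{-\alpha(\pi^\iota)}$, then iterates this along a reduced word $(i_1,\dots,i_m)$ for $w_0$, obtaining a group element $u = x_{\alpha_{i_m}}(t_m)\cdots x_{\alpha_{i_1}}(t_1)$ whose parameters $t_j$ are identified with the Lusztig parameters of $B_s(\pi^\iota)$; finally the identity $\iota\circ \varrho^L(B_s(\pi^\iota)) = \varrho^T(B_s(\pi))$ is used to recognise $u$ as the twisted Lusztig parameter.

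Your argument bypasses this induction entirely by exploiting the semigroup property $B_{s+r}(\pi)=B_s(\pi_1)B_r(\tau_s\pi)$ and a single application of the Gauss identity~(\ref{lbl:gauss1}); the twisted Lusztig parameter $u=[\bar{w}_0^{-1}B_s(\pi_1)]_+$ then appears immediately from its definition, with no need to track individual $t_j$'s or invoke duality. This is both shorter and more conceptual: it shows the identity is really a direct consequence of how $\Tc_{w_0}$ is defined through the flow, rather than something requiring decomposition into elementary transforms. The paper's approach, on the other hand, has the advantage of giving an explicit recursive description of the parameters $t_j$ along the way, and works for any $w\in W$ (not just $w_0$) without additional input; it also makes manifest the link between the parameters and the dual path $\pi^\iota$, which is used elsewhere in the thesis.
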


\begin{corollary}
\label{corollary:characterization_of_connected_comp}
Consider $\eta_1 \in C_0\left( [0,s], \afrak \right)$, $\eta_2 \in C_0\left( [0,t], \afrak \right)$ and $\eta = \eta_1 \star \eta_2$. Let $\pi_1$, $\pi_1'$ in $\langle \eta_1 \rangle$ and $\pi_2$, $\pi_2'$ in $\langle \eta_2 \rangle$. The paths $\pi = \pi_1 \star \pi_2$ and $\pi' = \pi_1' \star \pi_2'$ belong to the same connected component in $\langle \eta_1 \rangle \star \langle \eta_2 \rangle$if and only if the following equality holds in $U^{w_0}_{>0}$:
$$ \varrho^T\left( B_s(\pi_1 ) \right) \varrho^L\left( B_t(\pi_2 ) \right)
 = \varrho^T\left( B_s(\pi_1') \right) \varrho^L\left( B_t(\pi_2') \right)$$
\end{corollary}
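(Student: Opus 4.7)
The plan is to use the connectedness criterion of Theorem \ref{thm:connectedness_criterion} and translate the equality of highest-weight paths into an algebraic identity in $U^{w_0}_{>0}$. By that theorem, $\pi$ and $\pi'$ belong to the same connected component of $\langle \eta_1 \star \eta_2 \rangle$ if and only if $\Tc_{w_0}(\pi)_r = \Tc_{w_0}(\pi')_r$ for every $r \in (0, s+t]$. I would first split this condition at time $s$. On $(0, s]$ the highest-weight path of the concatenation depends only on $\pi_1$, and since $\pi_1, \pi_1' \in \langle \eta_1 \rangle$ are already in the same connected component of $\langle \eta_1 \rangle$, Theorem \ref{thm:connectedness_criterion} gives $\Tc_{w_0}(\pi_1) = \Tc_{w_0}(\pi_1')$ on $(0, s]$ automatically; in particular they share the common highest weight $\lambda := \Tc_{w_0}(\pi_1)(s)$. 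Similarly $\mu := \Tc_{w_0}(\pi_2)(t) = \Tc_{w_0}(\pi_2')(t)$. So the nontrivial half of the condition lives on $[s, s+t]$.

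On that interval I would apply Proposition \ref{proposition:concatenation_and_coordinates}. Writing $u_1 := \varrho^T(B_s(\pi_1))$ and $u_1' := \varrho^T(B_s(\pi_1'))$, that proposition yields
\[ \tau_s \Tc_{w_0}(\pi) = T_{u_1}(\pi_2), \qquad \tau_s \Tc_{w_0}(\pi') = T_{u_1'}(\pi_2'). \]
Subtracting the common value $\lambda$ at $r = s$, the connectedness of $\pi$ and $\pi'$ becomes the path identity $T_{u_1}(\pi_2) = T_{u_1'}(\pi_2')$ on $[0, t]$.

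Next I would parametrize $\langle \eta_2 \rangle$ using its lowest-type projection $\eta^{low} := e^{-\infty}_{w_0}(\eta_2)$. By Corollary \ref{corollary:connected_component_bijection} and Theorem \ref{thm:from_lowest_path_2_path}, every element of $\langle \eta_2 \rangle$ has the form $T_z(\eta^{low})$ for a unique $z \in U^{w_0}_{>0}$, and that $z$ is precisely the Lusztig parameter of its image in $\Bc(\mu)$. Hence with $z_2 := \varrho^L(B_t(\pi_2))$ and $z_2' := \varrho^L(B_t(\pi_2'))$ one has $\pi_2 = T_{z_2}(\eta^{low})$ and $\pi_2' = T_{z_2'}(\eta^{low})$. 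Since $z_2, z_2' \in U$ have trivial $N$-part in their Gauss decomposition, the composition rule $(i)$ in Properties \ref{lbl:path_transform_properties} collapses to $T_{u_1} \circ T_{z_2} = T_{u_1 z_2}$, and similarly for the primed version. The connectedness criterion therefore reads
\[ T_{u_1 z_2}(\eta^{low}) = T_{u_1' z_2'}(\eta^{low}). \]

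To conclude, I would observe that $U^{w_0}_{>0}$ is a sub-semigroup of $U$ (a direct consequence of Lusztig's factorization in Theorem \ref{lbl:geom_lusztig_parameters}, or of the minors criterion from Section \ref{section:total_positivity_criteria}, since minors of products are positive sums of products of minors). Consequently $u_1 z_2$ and $u_1' z_2'$ both lie in $U^{w_0}_{>0}$, and the bijection of Corollary \ref{corollary:connected_component_bijection} applied to $\langle \eta_2 \rangle$ forces $u_1 z_2 = u_1' z_2'$. The converse direction is immediate since this identity trivially produces equal highest-weight paths on $[s, s+t]$. The main subtle point will be this last step: one must ensure that the analytic path identity can be converted into an identity of group elements via the Lusztig-parameter dictionary, which in turn relies on the semi-group property of $U^{w_0}_{>0}$ so that $u_1 z_2$ is again a legitimate Lusztig parameter.
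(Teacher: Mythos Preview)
Your proof is correct and follows essentially the same approach as the paper's own argument: invoke Theorem~\ref{thm:connectedness_criterion}, observe that equality of highest-weight paths on $(0,s]$ is automatic from $\pi_1,\pi_1' \in \langle \eta_1\rangle$, apply Proposition~\ref{proposition:concatenation_and_coordinates} on $[s,s+t]$, and then rewrite $\pi_2 = T_{z}(\kappa)$ with $\kappa = e^{-\infty}_{w_0}\eta_2$ via Theorem~\ref{thm:from_lowest_path_2_path} to reduce to $T_{uz}(\kappa)=T_{u'z'}(\kappa)$. You are in fact slightly more explicit than the paper at the last step, where you justify the implication $T_{uz}(\kappa)=T_{u'z'}(\kappa)\Rightarrow uz=u'z'$ by noting the semigroup property of $U^{w_0}_{>0}$ and the bijectivity in Corollary~\ref{corollary:connected_component_bijection}; the paper simply writes ``Hence the result.''
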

\begin{proof}
Set:
$$ u  = \varrho^T\left( B_s(\pi_1 ) \right), z  = \varrho^L\left( B_t(\pi_2 ) \right) $$
$$ u' = \varrho^T\left( B_s(\pi_1') \right), z' = \varrho^L\left( B_t(\pi_2') \right)$$
By theorem \ref{thm:connectedness_criterion}, $\pi$ and $\pi'$ belong to the same component if and only if the corresponding highest weight paths are equal:
$$ \left( \Tc_{w_0}(\pi )_u, 0 < u \leq s + t \right)
 = \left( \Tc_{w_0}(\pi')_u, 0 < u \leq s + t \right) $$
Equality already holds on the interval $[0, s]$, because we took $\pi_1$ and $\pi_1'$ in the same connected component. Therefore, by applying proposition \ref{proposition:concatenation_and_coordinates}, an equivalent condition is:
$$ \left( T_{u }(\pi_2 )_r, 0 \leq r \leq t \right)
 = \left( T_{u'}(\pi_2')_r, 0 \leq r \leq t \right)$$
Or by theorem \ref{thm:from_lowest_path_2_path}, writing $\kappa = e^{-\infty}_{w_0} \pi_2 = e^{-\infty}_{w_0} \pi_2'$:
$$ \left( T_{u z }(\kappa)_r, 0 \leq r \leq t \right)
 = \left( T_{u'z'}(\kappa)_r, 0 \leq r \leq t \right) $$
Hence the result. 
\end{proof}

Before the proof of proposition \ref{proposition:concatenation_and_coordinates}, we give a little lemma:
\begin{lemma}
$$ \tau_s \circ \Tc_{\alpha}\left( \pi \right) = T_{x_\alpha(\xi_\alpha)} \circ \tau_s \left( \pi \right)$$
where
$$\xi_\alpha = \frac{1}{\int_0^s e^{-\alpha\left( \pi^\iota \right)}}$$
\end{lemma}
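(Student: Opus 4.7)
The proof is essentially a direct computation, and the only nontrivial point is recognizing why the normalization constant $\xi_\alpha$ is best written in terms of the dual path.

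The plan is as follows. First, I would expand both sides using the definitions already at hand. For the left-hand side, the definition of $\Tc_\alpha$ gives
\begin{align*}
\tau_s \circ \Tc_\alpha(\pi)(r)
&= \Tc_\alpha(\pi)(s+r) - \Tc_\alpha(\pi)(s)\\
&= \bigl[\pi(s+r) - \pi(s)\bigr] + \log\!\left(\frac{\int_0^{s+r} e^{-\alpha(\pi)}}{\int_0^{s} e^{-\alpha(\pi)}}\right)\alpha^\vee \\
&= \tau_s(\pi)(r) + \log\!\left(1 + \frac{\int_s^{s+r} e^{-\alpha(\pi)}}{\int_0^{s} e^{-\alpha(\pi)}}\right)\alpha^\vee.
\end{align*}
A change of variable $u = s + v$ in the numerator integral produces
$$
\int_s^{s+r} e^{-\alpha(\pi(u))}\,du = e^{-\alpha(\pi(s))} \int_0^r e^{-\alpha(\tau_s(\pi)(v))}\,dv,
$$
so the left-hand side rewrites as $\tau_s(\pi)(r) + \log\!\bigl(1 + \xi\int_0^r e^{-\alpha(\tau_s(\pi))}\bigr)\alpha^\vee$ with $\xi = e^{-\alpha(\pi(s))}/\int_0^s e^{-\alpha(\pi)}$.

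Second, the right-hand side is available in closed form: by item (v) of the path-transform properties \ref{lbl:path_transform_properties}, for any $\xi > 0$,
$$
T_{x_\alpha(\xi)}\circ \tau_s(\pi)(r) = \tau_s(\pi)(r) + \log\!\left(1 + \xi \int_0^r e^{-\alpha(\tau_s(\pi))}\right)\alpha^\vee.
$$
Comparing with the expression obtained for the left-hand side, the equality holds precisely for $\xi = e^{-\alpha(\pi(s))}/\int_0^s e^{-\alpha(\pi)}$.

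Finally, it only remains to reconcile this value with the claimed $\xi_\alpha = 1/\int_0^s e^{-\alpha(\pi^\iota)}$. Applying the duality $\pi^\iota(u) = \pi(s-u) - \pi(s)$ on $[0,s]$ and the change of variable $v = s - u$,
$$
\int_0^s e^{-\alpha(\pi^\iota(u))}\,du = e^{\alpha(\pi(s))}\int_0^s e^{-\alpha(\pi(v))}\,dv,
$$
whence $1/\int_0^s e^{-\alpha(\pi^\iota)} = e^{-\alpha(\pi(s))}/\int_0^s e^{-\alpha(\pi)} = \xi$. This completes the verification. There is no real obstacle here; the only mildly subtle point is choosing to express the constant via $\pi^\iota$, which is the form that will plug smoothly into the proof of proposition \ref{proposition:concatenation_and_coordinates} where the lemma is iterated along a reduced word for $w_0$.
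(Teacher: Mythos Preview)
Your proof is correct and follows essentially the same direct computation as the paper. The only cosmetic difference is that the paper multiplies numerator and denominator simultaneously by $e^{\alpha(\pi(s))}$ to reach the $\xi_\alpha$ form in one step, whereas you first isolate $\xi = e^{-\alpha(\pi(s))}/\int_0^s e^{-\alpha(\pi)}$ and then separately verify it equals $1/\int_0^s e^{-\alpha(\pi^\iota)}$; the content is identical.
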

\begin{proof}
For a time $0 \leq r \leq t$
\begin{align*}
\tau_s \circ \Tc_\alpha \left( \pi \right)(r) & = \pi(s+r) - \pi(s) + \log\left(\frac{ \int_0^{s+r} e^{-\alpha(\pi)} }{\int_0^s e^{-\alpha(\pi)}} \right) \alpha^\vee \\
& = \tau_s(\pi)(r) + \log \left( 1+ \frac{ \int_0^{r} e^{-\alpha(\pi(s+u))}du}{\int_0^s e^{-\alpha(\pi)}} \right)\alpha^\vee\\
& = \tau_s(\pi)(r) + \log \left( 1+ \frac{ \int_0^{r} e^{-\alpha(\pi(s+u)-\pi(s))}du}{\int_0^s e^{-\alpha(\pi - \pi(s))}} \right)\alpha^\vee\\
& = \tau_s(\pi)(r) + \log \left( 1+ \xi_\alpha \int_0^{r} e^{-\alpha(\tau_s \pi)} \right) \alpha^\vee 
\end{align*}
\end{proof}

\begin{proof}[Proof of proposition \ref{proposition:concatenation_and_coordinates}]
Applying repeatedly the previous lemma, we have by induction over the length $k$ of $w = s_{i_1} \dots s_{i_k} \in W$ that:
$$ \left( \tau_s \circ \Tc_w(\pi)_r, 0 \leq r \leq t \right)
 = \left( T_u \circ \tau_s(\pi)_r  , 0 \leq r \leq t \right)$$
where
$$ u = x_{\alpha_{i_k}}(t_k) \dots x_{\alpha_{i_1}}(t_1)$$
and for all $1 \leq j \leq k$:
\begin{align*}
t_j & = \frac{1}{\int_0^s e^{-\alpha_{i_j}\left( \iota \circ \Tc_{i_1 \dots i_{j-1}}(\pi) \right)}}\\
& = \frac{1}{\int_0^s e^{-\alpha_{i_j}\left( e^{-\infty}_{i_1 \dots i_{j-1}} \circ \iota(\pi) \right)}}
\end{align*}

Therefore, for $w = w_0$, the $t_j, 1 \leq j \leq m$ are the Lusztig parameters for the crystal element $B_s(\pi^\iota)$. The product giving $u$ uses the reversed order, and the following indentity concludes the proof, making $u$ the twisted Lusztig parameter associated to $B_s(\pi)$:
$$u = \iota \circ \varrho^L\left( B_s(\pi^\iota) \right)
    = \varrho^T\left( B_s(\pi) \right) $$
\end{proof}

\subsubsection{The tensor product as a crystal bundle}
Denote by $\Ccc$ the set of all connected components in $\Bc(\lambda) \otimes \Bc(\mu)$. Equivalently speaking, it is the quotient space for the equivalence relation $\sim$ of 'being connected':
\index{$\Ccc$: Set of connected components in a tensor product $\Bc(\lambda) \otimes \Bc(\mu)$}
$$ \Ccc := \Bc(\lambda) \otimes \Bc(\mu)/\sim$$
In general, a quotient space is a topological space at best, without natural smooth structure. Here, define:
\index{$\Pi$: Canonical surjection from a tensor product to the set of its connected components}
$$
\begin{array}{cccc}
\Pi : & \Bc(\lambda) \otimes \Bc(\mu) & \longrightarrow & U^{w_0}_{>0}\\
      & b_1 \otimes b_2               &   \mapsto       & \varrho^T(b_1) \varrho^L(b_2)
\end{array}$$
The product is simply the group product of totally positive elements which is totally positive. Clearly, thanks to corollary \ref{corollary:characterization_of_connected_comp}, after using the isomorphisms in equations (\ref{eqn:isom_1}), (\ref{eqn:isom_2}), (\ref{eqn:isom_product}), $b_1 \otimes b_2$ and $b_1' \otimes b_2'$ in $\Bc(\lambda) \otimes \Bc(\mu)$ belong to the same connected component if and only if $\Pi( b_1 \otimes b_2) = \Pi(b_1' \otimes b_2')$.

Therefore $\Pi$ is identified with the canonical surjection $\Bc(\lambda) \otimes \Bc(\mu) \rightarrow \Ccc$ that maps an element to its connected component. Notice that, $\Pi$ is smooth and, in terms of topological properties, it is a much nicer map than the universal quotient map.

The canonical surjection $\Pi$ defines a fiber bundle with base space $\Ccc \approx U^{w_0}_{>0}$. And for each $\Cc \in \Ccc$, the fiber $\Pi^{-1}(\{\Cc\})$ is a crystal isomorphic to $\Bc\left( hw(\Cc) \right)$. Hence, we can speak of crystal bundle. A ``trivialization'' is given by the map:
\begin{align}
\label{eqn:trivialization}
&
\begin{array}{cccc}
\phi: & \Bc(\lambda) \otimes \Bc(\mu) & \longrightarrow & \left\{ (\Cc, b) \in \Ccc \times \Bc | hw(\Cc) = hw(b) \right\}\\
      & b_1 \otimes b_2               &   \mapsto       & \left( \Pi( b_1 \otimes b_2 ), b_1 b_2\right)
\end{array}
\end{align}

\begin{rmk}
It is indeed almost a trivialization, since all highest weight crystals are diffeomorphic to $\R_{>0}^m$, $m = \ell(w_0)$, using for instance the Lusztig parametrization. Therefore, the target set for $\phi$ can be seen as $\Ccc \times \R_{>0}^m$. Hence, geometrically, $\left( \Bc(\lambda) \otimes \Bc(\mu), \Pi \right)$ is a trivial fiber bundle. However, this fact misses the point since the relevant information is in the crystal structure of the fibers.
\end{rmk}

\subsection{Geometric Littlewood-Richardson rule}
In the proofs of the geometric Littlewood-Richardson rule, we will use probabilistic results from the next chapter. Let $\pi = \left( W^{(\delta)}_r; 0 \leq r \leq s+t \right)$ a Brownian path on $[0, s+t]$ with drift $\delta$ in the Cartan subalgebra $\afrak$. We consider separately the crystals generated by the Brownian path $\pi_1 = \left( W^{(\delta)}_r; 0 \leq r \leq s \right)$ on $[0, s]$ and by its shift $\pi_2 = \left( \tau_s W^{(\delta)}_r; 0 \leq r \leq t \right)$ which is a path on $[0,t]$.

Following on the probabilistic interpretation in subsection \ref{subsection:reinterpretation_of_LR_rule}, the notion that embodies the idea of knowing the highest weight paths, $\Tc_{w_0}\left( W^{(\delta)} \right)_{0 < r \leq s}$ and $\Tc_{w_0}\left( \tau_s W^{(\delta)} \right)_{0 < r \leq t}$, in each of the crystals $\langle \pi_1 \rangle$ and $\langle \pi_2  \rangle$, is the $\sigma$-algebra:
$$ \Gc_{s, t} = \sigma\left\{ \Tc_{w_0}\left( W^{(\delta)} \right)_r, 0 < r \leq s ; \Tc_{w_0}\left( \tau_s W^{(\delta)} \right)_r , 0 < r \leq t \right\} $$
We will examine the measure induced by Brownian motion on the tensor product $\langle \pi_1 \rangle \otimes \langle \pi_2 \rangle$, knowing both highest weight paths. It is the perspective taken in \cite{bib:BBO2} for continuous path crystals and the same idea proves to be very fruitful in the geometric case too. Because of the semigroup structure, there is also a nice interpretation of the measure induced on fibers in term of measure convolution. Moreover, the central charge defined by Berenstein and Kazhdan in \cite{bib:BK06} naturally appears in the measure induced on the set of connected components, just like the superpotential showed up in the measure induced on geometric crystals.

For convenience, define also the filtration associated to the full highest weight path:
$$ \Fc_{s+t} = \sigma\left\{ \Tc_{w_0}\left( \pi \right)_r, 0 < r \leq s+t \right\} $$
And the $\sigma$-algebra generated by the connected component of $\pi = \pi_1 \star \pi_2 $ in $\langle \pi_1 \rangle \otimes \langle \pi_2 \rangle$ is:
$$ \sigma\left( \Pi\left( B_s(W^{(\delta)}) \otimes B_t(\tau_s W^{(\delta)}) \right) \right)$$

\subsubsection{Filtrations properties}
As a corollary of proposition \ref{proposition:concatenation_and_coordinates}, we find that the highest weight path in $\langle \pi_2 \rangle$ and the connected component of $\pi$ in $\langle \pi_1 \rangle \otimes \langle \pi_2 \rangle$ are functionals of the full highest weight path $\left( \Tc_{w_0}(W^{(\delta)})_r; 0 < r \leq s+t \right)$, which in terms of filtrations is expressed as:
\begin{proposition}
\label{proposition:filtration_properties}
$$ \Gc_{s,t} \subset \Fc_{t+s} $$
and
$$ \sigma\left( \Pi\left( B_s(W^{(\delta)}) \otimes B_t(\tau_s W^{(\delta)}) \right) \right) \subset \Fc_{t+s}$$
\end{proposition}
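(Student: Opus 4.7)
Both inclusions will be derived from two ingredients already established in the preceding theory: the connectedness criterion (Theorem \ref{thm:connectedness_criterion}), which says that the full highest weight path $\Tc_{w_0}(\pi)|_{(0,s+t]}$ exactly labels the connected component of $\pi$ in $C_0([0,s+t],\afrak)$; and the tensor-product compatibility of concatenation (Theorem \ref{thm:concatenation_is_isomorphism}, item (iii)), which says that each crystal action $e^c_\alpha$ applied to $\pi_1\star\pi_2$ equals the concatenation of the induced actions on $\pi_1$ and on $\pi_2$ separately. Together these force any invariant of $\pi_2$ alone, or of the tensor-product connected component of $\pi_1\star\pi_2$, that is stable under crystal actions, to factor through $\Tc_{w_0}(\pi)$ and hence to be $\Fc_{s+t}$-measurable.

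For $\Gc_{s,t}\subset\Fc_{s+t}$, the contribution of $\pi_1$ is immediate: by causality of each $\Tc_\alpha$, one has $\Tc_{w_0}(\pi_1)_r = \Tc_{w_0}(\pi)_r$ for $0<r\le s$. For $\pi_2 = \tau_s\pi$, the plan is to show that $\Tc_{w_0}(\tau_s\pi)$ is constant on the fibres of $\pi\mapsto\Tc_{w_0}(\pi)|_{(0,s+t]}$. If $\pi'\in C_0([0,s+t],\afrak)$ has $\Tc_{w_0}(\pi')=\Tc_{w_0}(\pi)$ on $(0,s+t]$, then by Theorem \ref{thm:connectedness_criterion} the two paths are connected, so a finite chain of Littelmann operators carries $\pi$ to $\pi'$. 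Iterating Theorem \ref{thm:concatenation_is_isomorphism}(iii), this very chain induces a chain of crystal actions in the smaller path crystal $C_0([0,t],\afrak)$ carrying $\tau_s\pi$ to $\tau_s\pi'$; a second application of Theorem \ref{thm:connectedness_criterion} in that crystal yields $\Tc_{w_0}(\tau_s\pi)=\Tc_{w_0}(\tau_s\pi')$. Borel measurability of the resulting factorization follows from Doob--Dynkin, all spaces being Polish and all maps Borel.

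The second inclusion proceeds on the same template. By Corollary \ref{corollary:characterization_of_connected_comp}, $\Pi(B_s(\pi_1)\otimes B_t(\pi_2))$ is precisely the label of the tensor-product connected component of $\pi_1\star\pi_2$ in $\langle\pi_1\rangle\otimes\langle\pi_2\rangle$. Theorem \ref{thm:concatenation_is_isomorphism}(iii) shows that the ambient crystal actions on concatenated paths preserve the concatenation decomposition, so this tensor-product connected component coincides with the intersection of the ambient $C_0([0,s+t],\afrak)$-component of $\pi_1\star\pi_2$ with $\langle\pi_1\rangle\star\langle\pi_2\rangle$. Since that ambient component is determined by $\Tc_{w_0}(\pi)|_{(0,s+t]}$ via Theorem \ref{thm:connectedness_criterion}, $\Pi(B_s(\pi_1)\otimes B_t(\pi_2))$ is $\Fc_{s+t}$-measurable. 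The main technical point in both halves is the passage from pointwise fibre-constancy to a genuine Borel factorization; this is routine once all the underlying transforms are identified as Borel between Polish spaces, and if an explicit route is desired one may invoke the RSK bijection of Theorem \ref{thm:dynamic_rsk_correspondence} to write the target as a Borel function of $(B_{s+t}(\pi),\Tc_{w_0}(\pi)|_{(0,s+t]})$ whose dependence on the first coordinate drops out by the invariance argument above.
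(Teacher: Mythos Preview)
Your argument is correct, but it takes a genuinely different route from the paper. The paper proceeds \emph{constructively} via Proposition \ref{proposition:concatenation_and_coordinates}: applying $\Tc_{w_0}$ to the identity $\tau_s\circ\Tc_{w_0}(\pi)=T_u\circ\tau_s(\pi)$ (with $u=\varrho^T(B_s(\pi_1))$) and using that $\Tc_{w_0}\circ T_u=\Tc_{w_0}$ for $u\in U^{w_0}_{>0}$ (since $\bar w_0^{-1}u\bar w_0\in N$), one obtains the explicit formula
\[
\Tc_{w_0}(\tau_s\pi)=\Tc_{w_0}\circ\tau_s\circ\Tc_{w_0}(\pi),
\]
which manifestly writes $\Tc_{w_0}(\tau_s\pi)$ as a Borel function of $(\Tc_{w_0}(\pi)_r)_{0<r\le s+t}$. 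For the second inclusion the paper again uses Proposition \ref{proposition:concatenation_and_coordinates} together with Theorem \ref{thm:from_lowest_path_2_path} to show that $\Pi(B_s(\pi_1)\otimes B_t(\pi_2))=uz$ is precisely the Lusztig parameter of the path $\tau_s\circ\Tc_{w_0}(\pi)$, again an explicit function of the full highest-weight path.

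Your approach replaces these formulae by a fibre-constancy argument: you show that $\Tc_{w_0}(\tau_s\pi)$ and $\Pi(B_s(\pi_1)\otimes B_t(\pi_2))$ are invariant under the crystal actions on $\pi$ (via Theorem \ref{thm:concatenation_is_isomorphism}(iii) and Corollary \ref{corollary:characterization_of_connected_comp}), hence constant on fibres of $\Tc_{w_0}$ by Theorem \ref{thm:connectedness_criterion}, and then invoke Doob--Dynkin. This is conceptually illuminating---it isolates exactly the crystal-theoretic content---but it trades an explicit formula for a measurable-selection step, and the Polish-space bookkeeping for $C^{high}_{w_0}((0,s+t],\afrak)$ is not entirely trivial (though your suggestion to route through the RSK bijection of Theorem \ref{thm:dynamic_rsk_correspondence} does close it). The paper's proof is shorter and gives more, since the explicit identity $\Tc_{w_0}\circ\tau_s=\Tc_{w_0}\circ\tau_s\circ\Tc_{w_0}$ is used again downstream.
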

\begin{proof}
By applying the highest weight path transform to the relation given in proposition \ref{proposition:concatenation_and_coordinates}, we obtain:
 $$\left( \Tc_{w_0} \circ \tau_s \circ \Tc_{w_0} \right)(\pi)_r ; 0 \leq r \leq t = \left( \Tc_{w_0} \circ \tau_s \right)\left( \pi \right)_r ; 0 \leq r \leq t$$ 
which proves the first inclusion.

The second comes from the fact that $\Pi\left( B_s(W^{(\delta)}) \otimes B_t(\tau_s W^{(\delta)}) \right)$ is the Lusztig parameter of the path $\tau_s \circ \Tc_{w_0}(\pi)$. Indeed, as in the proof of corollary \ref{corollary:characterization_of_connected_comp}, set:
$$ u  = \varrho^T\left( B_s(W^{(\delta)} ) \right), z  = \varrho^L\left( B_t(\tau_s W^{(\delta)}) \right) $$
Then:
$$ \tau_s \circ \Tc_{w_0}(W^{(\delta)}) = T_u \circ \tau_s W^{(\delta)} = T_{uz}\left( e^{-\infty}_{w_0} \tau_s W^{(\delta)} \right)$$
Thanks to theorem \ref{thm:from_lowest_path_2_path}, $uz = \Pi\left( B_s(W^{(\delta)}) \otimes B_t(\tau_s W^{(\delta)}) \right)$ is definitely the Lusztig parameter of $\tau_s \circ \Tc_{w_0}(\pi)$.
\end{proof}

\subsubsection{Central charge}
The central charge on $\Bc(\lambda) \otimes \Bc(\mu)$ is defined (\cite{bib:BK06}) as:
$$
\begin{array}{cccc}
\Delta: & \Bc(\lambda) \otimes \Bc(\mu) & \longrightarrow & \R \\
        & b_1 \otimes b_2               &   \mapsto       & f_B(b_1) + f_B(b_2) - f_B(b_1 b_2)
\end{array}$$

As explained in \cite{bib:BK06}, one can easily prove it is invariant under the tensor product's crystal actions. Therefore, it can be lifted as a map on connected components. In coordinates, it is a rational substraction free expression. Therefore, it can  be tropicalized, giving a non-negative integer for very Kashiwara crystal element. Berenstein and Kazhdan used it to define a $q$-deformation of Littlewood-Richardson coefficients. We will prove that this function appears naturally in the geometric Littlewood-Richardson rule.

\subsubsection{Disintegration of canonical measures on a tensor product}

Now endow each of the highest weight crystals $\Bc(\lambda)$ and $\Bc(\mu)$ with canonical measures as in theorem \ref{thm:canonical_measure}. When forming the tensor product, we examine the induced measure and how it behaves on the fiber bundle. The following theorem tells us that the canonical measure on a tensor product separates nicely, respecting its structure as a crystal bundle.

First, notice that the natural measure on fibers is just a convolution measure.
\begin{thm}
On the Cartan subalgebra, let $W^{(\delta)}$ be a Brownian motion with drift $\delta$. Consider the 
$\lambda = \Tc_{w_0}\left( W^{(\delta)} \right)_s$ and $\mu = \Tc_{w_0} \circ \tau_s \left( W^{(\delta)} \right)_t$. 

The distribution induced on fibers within the tensor product on $\langle W_{0 \leq u \leq s}\rangle \otimes \langle \left(\tau_s W\right)_{0 \leq u \leq t}\rangle$ is a convolution measure that depends only on $\lambda$ and $\mu$:
$$B_{s+t}^\theta\left( W^{(\delta)} \right) | \Gc_{s, t} \stackrel{\Lc}{=} C_\delta\left( \lambda \right) C_\delta\left( \mu \right)$$
where $C_\delta\left( \lambda \right)$ and $C_\delta\left( \mu \right)$ are two independent canonical random variables with spectral parameter $\delta$.
\end{thm}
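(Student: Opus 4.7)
The plan is to combine three essentially independent ingredients: a semigroup identity for the flow $B_\cdot$, the independence of Brownian increments, and the characterization of the canonical measure as the conditional law of $B_T(W^{(\delta)})$ given its highest weight (which is the content of theorem \ref{thm:canonical_measure}, already invoked elsewhere in the chapter).

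First I would establish the multiplicative decomposition
\[
B_{s+t}\bigl(W^{(\delta)}\bigr) = B_s\bigl(W^{(\delta)}\bigr)\, B_t\bigl(\tau_s W^{(\delta)}\bigr).
\]
This follows directly from the left-invariant SDE (\ref{lbl:process_B_ode}): setting $\tilde B_r := B_s(W^{(\delta)})^{-1} B_{s+r}(W^{(\delta)})$ one checks $\tilde B_0 = id$ and $d\tilde B_r = \tilde B_r(\sum f_\alpha dr + d(\tau_s W^{(\delta)})_r)$, and since $(\tau_s W^{(\delta)})_0 = 0$, by uniqueness $\tilde B_r = B_r(\tau_s W^{(\delta)})$. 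This decomposition reduces the problem to understanding the joint conditional law of the two factors.

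Next I would exploit that $W^{(\delta)}_{[0,s]}$ and $\tau_s W^{(\delta)}_{[0,t]}$ are independent Brownian paths with drift $\delta$ (the former on $[0,s]$, the latter on $[0,t]$). Consequently the pair $(B_s(W^{(\delta)}), B_t(\tau_s W^{(\delta)}))$ is a pair of independent crystal-valued random variables, and the $\sigma$-algebra $\Gc_{s,t}$ splits as a tensor product of the two highest-weight $\sigma$-algebras generated respectively by $\Tc_{w_0}(W^{(\delta)})_{|(0,s]}$ and $\Tc_{w_0}(\tau_s W^{(\delta)})_{|(0,t]}$. Therefore conditioning on $\Gc_{s,t}$ amounts to conditioning each factor separately on its own highest weight path.

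The final step is to apply theorem \ref{thm:canonical_measure} to each factor. Conditionally on $\Tc_{w_0}(W^{(\delta)})_{|(0,s]}$ (and hence on $\lambda$), $B_s(W^{(\delta)})$ is distributed as $C_\delta(\lambda)$ on $\Bc(\lambda)$; conditionally on $\Tc_{w_0}(\tau_s W^{(\delta)})_{|(0,t]}$ (and hence on $\mu$), $B_t(\tau_s W^{(\delta)})$ is distributed as $C_\delta(\mu)$ on $\Bc(\mu)$. Combining with the independence of the two factors yields
\[
B_{s+t}(W^{(\delta)}) \mid \Gc_{s,t} \;\stackrel{\Lc}{=}\; C_\delta(\lambda)\, C_\delta(\mu)
\]
with independent factors, which is the desired convolution identity.

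The main subtlety, rather than an obstacle, is being careful that the canonical-measure theorem is applied only to the highest weight generated by each individual segment (not the ``global'' $\Tc_{w_0}(W^{(\delta)})$ on $(0,s+t]$, which carries strictly more information than $\Gc_{s,t}$ by proposition \ref{proposition:filtration_properties}). Once one insists on conditioning on the two segment-highest-weight paths separately and invokes independence of increments, the argument is essentially formal.
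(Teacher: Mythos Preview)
Your proof is correct and follows the same approach as the paper: factor $B_{s+t}^\theta(W^{(\delta)})$ via the semigroup property, use independence of Brownian increments to split the conditioning, and apply theorem \ref{thm:canonical_measure} to each factor. The paper's proof is considerably terser (three lines), and in fact your factorization $B_{s+t}(W^{(\delta)}) = B_s(W^{(\delta)})\, B_t(\tau_s W^{(\delta)})$ is the correct order consistent with the left-invariant SDE, whereas the paper writes the factors in the opposite order (and also pairs $\lambda$ and $\mu$ with the wrong segments); your version is the cleaner one.
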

\begin{proof}
$$ B_{s+t}^\theta\left( W^{(\delta)} \right) = B_t\left( \tau_s W^{(\delta)} \right) B_s\left( W^{(\delta)} \right)$$
Then using theorem \ref{thm:canonical_measure}:
$$B_t^\theta\left( \tau_s W^{(\delta)} \right) | \Gc_{s, t} \stackrel{\Lc}{=} C_\delta\left( \lambda \right)$$
$$B_s^\theta\left( W^{(\delta)} \right) | \Gc_{s, t} \stackrel{\Lc}{=} C_\delta\left( \mu \right)$$
\end{proof}

Recall that the connected components of $\Bc(\lambda) \otimes \Bc(\mu)$ is denoted by:
$$ \Ccc = \Bc(\lambda) \otimes \Bc(\mu)/ \sim$$
We also use the ``trivialization'' map $\phi$ defined by equation \ref{eqn:trivialization}.
\begin{thm}[Geometric Littlewood-Richardson rule]
\label{thm:geom_littlewoodrichardson}
Consider the measure $\omega_{\Bc(\lambda)}(dx) \omega_{\Bc(\mu)}(dy)$ on $\Bc(\lambda) \otimes \Bc(\mu)$. Its image through the map $\phi$ is of the form:
$$ m(d\Cc) \omega_{\Bc\left(hw(\Cc)\right)}(db) $$
where $m$ is a Radon measure on $\Ccc$.

Moreover, the measure induced by canonical measures on a tensor product disintegrates as follows. For any positive measurable function $f$ on $\Ccc \times \Bc$, we have:
\begin{align*}
  & \int_{\Bc(\lambda) \times \Bc(\mu)} f \circ \phi( x \otimes y ) e^{-f_B(x)-f_B(y)} \omega_{\Bc(\lambda)}(dx) \omega_{\Bc(\mu)}(dy)\\
= & \int_{\Ccc} m(d\Cc) e^{-\Delta(\Cc)} \int_{\Bc(hw(\Cc))} f\left( \Cc, b \right) e^{-f_B(b) } \omega_{\Bc(hw(\Cc))}(db) 
\end{align*}
\end{thm}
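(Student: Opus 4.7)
The strategy is to use the characterization of the canonical measure via Brownian motion from Theorem~\ref{thm:canonical_measure}, applied at two different time scales. Let $W$ be a standard Brownian motion on $[0, s+t]$ in $\afrak$ and set $x := B_s(W)$, $y := B_t(\tau_s W)$, so that $x \in \Bc(\lambda)$ and $y \in \Bc(\mu)$ with highest weights $\lambda = \Tc_{w_0}(W)_s$, $\mu = \Tc_{w_0}(\tau_s W)_t$, $\nu = \Tc_{w_0}(W)_{s+t}$. The semigroup property of the $B$-flow ODE yields $xy = B_{s+t}(W) \in \Bc(\nu)$, and Proposition~\ref{proposition:filtration_properties} ensures that $\Cc := \Pi(x \otimes y) \in \Ccc$ is $\Fc_{s+t}$-measurable with $hw(\Cc) = \nu$.

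For a test function $f$ on $\Ccc \times \Bc$, I would compute $\E[f(\Cc, xy) \mid \lambda, \mu]$ in two ways. Conditioning further on $\Gc_{s,t}$ and using the conditional independence of $x$ and $y$ together with Theorem~\ref{thm:canonical_measure} applied to $W$ on $[0,s]$ and to $\tau_s W$ on $[0,t]$ gives
\begin{equation*}
\E[f(\Cc, xy) \mid \lambda, \mu] = \frac{1}{\psi_0(\lambda)\,\psi_0(\mu)} \int_{\Bc(\lambda) \times \Bc(\mu)} f(\phi(x', y'))\, e^{-f_B(x') - f_B(y')}\, \omega(dx')\,\omega(dy').
\end{equation*}
Alternatively, conditioning on the finer $\Fc_{s+t}$ makes $\Cc$ deterministic and turns $xy$ into a canonical sample from $\Bc(\nu)$, so that
\begin{equation*}
\E[f(\Cc, xy) \mid \lambda, \mu] = \int Q_{\lambda, \mu}(d\Cc)\, \frac{1}{\psi_0(hw(\Cc))} \int_{\Bc(hw(\Cc))} f(\Cc, b)\, e^{-f_B(b)}\, \omega(db),
\end{equation*}
where $Q_{\lambda, \mu}$ denotes the conditional law of $\Cc$ given $(\lambda, \mu)$.

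Equating the two expressions, multiplying through by $\psi_0(\lambda)\psi_0(\mu)$, and using the identity $f_B(x) + f_B(y) - f_B(xy) = \Delta(\phi(x \otimes y))$ straight from the definition of the central charge, together with the invariance of $\Delta$ under tensor product crystal actions (so that it descends to a function of $\Cc$ alone), directly produces the disintegration formula of the second claim with $m(d\Cc) := \psi_0(\lambda)\psi_0(\mu)\, e^{\Delta(\Cc)}\, Q_{\lambda,\mu}(d\Cc)/\psi_0(hw(\Cc))$. Since $f_B(x) + f_B(y) - f_B(xy)$ is $\Cc$-measurable, the two statements of the theorem are tautologically equivalent via $f \mapsto f\, e^{f_B(b)}$, so the first claim $\phi_*(\omega_{\Bc(\lambda)} \otimes \omega_{\Bc(\mu)}) = m(d\Cc)\,\omega_{\Bc(hw(\Cc))}(db)$ follows by inspection, with the new $m$ absorbing the $e^{\Delta(\Cc)}$ factor.

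The main obstacle is that the identity obtained from the Brownian realization holds only almost surely in $(\lambda, \mu)$ under the Brownian law, whereas one needs the statement for every fixed pair $(\lambda, \mu) \in C \times C$. The remedy is to introduce an auxiliary drift $\delta \in C$ so that the joint law of $(\lambda, \mu)$ has full support, check that both sides depend continuously on $(\lambda, \mu)$, and extend by density (then let $\delta \to 0$ if needed). Secondary technical points include the standard truncation and monotone convergence argument justifying the substitution $f \mapsto f e^{f_B}$ (since $f_B$ is unbounded on $\Bc(hw(\Cc))$), and verifying that Theorem~\ref{thm:canonical_measure} applies cleanly at the degenerate spectral parameter used here.
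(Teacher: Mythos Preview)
Your proof is correct and follows essentially the same route as the paper: both realize $(x,y)$ via Brownian motion on $[0,s+t]$, condition first on $\Gc_{s,t}$ (giving independent canonical samples in $\Bc(\lambda)\times\Bc(\mu)$) and then on the finer $\Fc_{s+t}$ (making $\Cc$ measurable and turning $xy$ into a canonical sample in $\Bc(hw(\Cc))$), and then unwind the densities together with $\Delta(\Cc)=f_B(x)+f_B(y)-f_B(xy)$ to define $m(d\Cc)$. The only cosmetic differences are that the paper works with a general spectral parameter $\delta$ throughout (which you correctly identify as the fix for the a.e.\ issue) and isolates the conditional-expectation identity as a separate ``probabilistic version'' lemma before translating it into the measure-theoretic statement.
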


Before giving a proof, let us a give a probabilistic version from which theorem \ref{thm:geom_littlewoodrichardson} will be easily deduced.
\begin{thm}[Probabilistic version]
\label{thm:geom_littlewoodrichardson_prob_version}
Let $\lambda$, $\mu$ and $\delta$ be elements in $\afrak$. 
If $C_\delta(\lambda)$ and $C_\delta(\mu)$ are two independent canonical random variables with spectral parameter $\delta$, then, for every positive measurable function $f$ on $\Ccc \otimes \Bc$, we have:
\begin{align*}
  & \E\left( f \circ \phi\left( C_\delta(\lambda) \otimes C_\delta(\mu) \right) \right)
= & \int_{\Ccc} \P\left( \Pi(C_\delta(\lambda) \otimes C_\delta(\mu)) \in d\Cc \right)
                \E\left( f\left( \Cc, C_\delta\left( hw(\Cc) \right) \right) \right)
\end{align*}
\end{thm}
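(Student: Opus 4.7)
The plan is to realize both canonical random variables inside a single Brownian construction and to exploit the semigroup identity satisfied by the flow $B$. Pick any $s,t>0$, let $W^{(\delta)}$ be a Brownian motion on $[0,s+t]$ with drift $\delta$, and set $\lambda:=\Tc_{w_0}(W^{(\delta)})_s$ and $\mu:=\Tc_{w_0}(\tau_s W^{(\delta)})_t$. By the canonical measure theorem anticipated from the next chapter (in the form of the convolution-measure statement immediately preceding the current theorem), conditionally on $\Gc_{s,t}$ the pair $\bigl(B_s(W^{(\delta)}),\,B_t(\tau_s W^{(\delta)})\bigr)$ is an independent product with marginals $C_\delta(\lambda)$ and $C_\delta(\mu)$. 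Proving the identity conditionally on $\Gc_{s,t}$ for this coupling therefore establishes it for arbitrary deterministic $\lambda,\mu$.

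The next ingredient is the semigroup identity $B_{s+t}(W^{(\delta)})=B_t(\tau_s W^{(\delta)})\,B_s(W^{(\delta)})$, which identifies the second coordinate of $\phi$ with $B_{s+t}(W^{(\delta)})$. Writing $\Pi_{s,t}$ for $\Pi$ applied to the tensor $B_s(W^{(\delta)})\otimes B_t(\tau_s W^{(\delta)})$, proposition \ref{proposition:filtration_properties} gives $\Pi_{s,t}\in\Fc_{s+t}$, and tracking highest weights through $\phi$ gives $hw(\Pi_{s,t})=hw(B_{s+t}(W^{(\delta)}))=\Tc_{w_0}(W^{(\delta)})_{s+t}$.

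Applying the canonical measure theorem a second time, now on the whole interval $[0,s+t]$, conditionally on $\Fc_{s+t}$ the element $B_{s+t}(W^{(\delta)})$ has law $C_\delta\bigl(hw(\Pi_{s,t})\bigr)$. Since this conditional kernel depends on $\Fc_{s+t}$ only through the $\sigma(\Pi_{s,t})$-measurable quantity $hw(\Pi_{s,t})$, the tower property lets us relax the conditioning from $\Fc_{s+t}$ down to $\sigma(\Gc_{s,t},\Pi_{s,t})$ with no averaging, so that conditionally on $\sigma(\Gc_{s,t},\Pi_{s,t})$ the product $B_{s+t}(W^{(\delta)})$ is still distributed as $C_\delta(hw(\Pi_{s,t}))$. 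A double-conditioning of $\E\bigl[f(\Pi_{s,t},B_{s+t}(W^{(\delta)}))\mid\Gc_{s,t}\bigr]$, integrating first over $B_{s+t}(W^{(\delta)})$ with the canonical law on the fiber and then over $\Pi_{s,t}$, produces the claimed disintegration.

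The main subtlety is precisely this relaxation of the conditioning from $\Fc_{s+t}$ to $\sigma(\Gc_{s,t},\Pi_{s,t})$: the content is that the kernel $\nu\mapsto C_\delta(\nu)$ defined by (\ref{eqn:canonical_measure_density}) is a jointly measurable function of $\nu=hw(\Pi_{s,t})$, so that conditioning on any intermediate $\sigma$-algebra retaining $hw(\Pi_{s,t})$ yields the same canonical distribution on the fiber. Once this point is settled, everything else is bookkeeping with the trivialization $\phi$ and the tower property.
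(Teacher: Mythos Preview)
Your proof is correct and follows essentially the same route as the paper: realize the two canonical variables via the Brownian path on $[0,s+t]$, use the semigroup identity to identify the second coordinate of $\phi$ with $B_{s+t}^\theta(W^{(\delta)})$, invoke Proposition~\ref{proposition:filtration_properties} for the $\Fc_{s+t}$-measurability of $\Pi_{s,t}$, and then tower with the canonical-measure theorem on the full interval. The paper splits $f=f_1 f_2$ and conditions directly on $\Fc_{s+t}$ rather than on your intermediate $\sigma(\Gc_{s,t},\Pi_{s,t})$, but this is the same tower argument; one cosmetic point is that you should carry the $\theta$-shift ($B^\theta$ rather than $B$) to match Theorem~\ref{thm:canonical_measure} exactly.
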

\begin{proof}
We can suppose that for all $(\Cc, b) \in \Ccc \times \Bc$, $f(\Cc, b) = f_1(\Cc) f_2(b)$. Using theorem \ref{thm:canonical_measure}, we have:
$$ \E\left( f \circ \phi\left( C_\delta(\lambda) \otimes C_\delta(\mu) \right) \right)
 = \E\left( f \circ \phi\left( B^\theta_s(W^{(\delta)}) \otimes B^\theta_t(\tau_s W^{(\delta)}) \right) | \Gc_{s,t} \right)$$
Remember that thanks to proposition \ref{proposition:filtration_properties}, $\Gc_{s,t} \subset \Fc_{s+t}$. Then using the law of iterated expectations, we have:
\begin{align*}
  & \E\left( f \circ \phi\left( C_\delta(\lambda) \otimes C_\delta(\mu) \right) \right)\\
= & \E\left( \E\left( f \circ \phi\left( B^\theta_s(W^{(\delta)}) \otimes B^\theta_t(\tau_s W^{(\delta)}) \right) | \Fc_{s+t} \right) | \Gc_{s,t} \right)\\
= & \E\left( \E\left( f_1 \circ \Pi\left( B^\theta_s(W^{(\delta)}) \otimes B^\theta_t(\tau_s W^{(\delta)}) \right)
                      f_2 \left( B^\theta_{s+t}(W^{(\delta)}) \right) | \Fc_{s+t} \right) | \Gc_{s,t} \right)
\end{align*}
Again because of proposition \ref{proposition:filtration_properties}, the conjugation by $\theta$ playing no role:
\begin{align*}
  & \E\left( f \circ \phi\left( C_\delta(\lambda) \otimes C_\delta(\mu) \right) \right)\\
= & \E\left( f_1 \circ \Pi\left( B^\theta_s(W^{(\delta)}) \otimes B^\theta_t(\tau_s W^{(\delta)}) \right)
            \E\left( f_2 \left( B^\theta_{s+t}(W^{(\delta)}) \right) | \Fc_{s+t} \right) | \Gc_{s,t} \right)
\end{align*}
By writing $\Lambda_{t+s} = hw\left( B^\theta_{s+t}(W^{(\delta)}) \right)$, theorem \ref{thm:canonical_measure} tells us that the law of $B^\theta_{s+t}(W^{(\delta)})$ conditionally to $\Fc_{s+t}$ depends only on $\Lambda_{t+s}$. Moreover the highest weight $\Lambda_{t+s}$ is given by:
$$\Lambda_{t+s} = hw \circ \Pi\left( B^\theta_s(W^{(\delta)}) \otimes B^\theta_t(\tau_s W^{(\delta)}) \right)$$
Therefore:
\begin{align*}
  & \E\left( f \circ \phi\left( C_\delta(\lambda) \otimes C_\delta(\mu) \right) \right)\\
= & \E\left( f_1 \circ \Pi\left( B^\theta_s(W^{(\delta)}) \otimes B^\theta_t(\tau_s W^{(\delta)}) \right)
            \E\left( f_2 \left( C_\delta\left( \Lambda_{t+s} \right) \right) \right) | \Gc_{s,t} \right)\\
= & \int_{\Ccc} \P\left( \Pi(C_\delta(\lambda) \otimes C_\delta(\mu)) \in d\Cc \right) f_1(\Cc)
                \E\left( f_2 \left( C_\delta\left( hw(\Cc) \right) \right) \right)\\
= & \int_{\Ccc} \P\left( \Pi(C_\delta(\lambda) \otimes C_\delta(\mu)) \in d\Cc \right)
                \E\left( f\left( \Cc, C_\delta\left( hw(\Cc) \right) \right) \right)
\end{align*}
\end{proof}

\begin{proof}[Proof of theorem \ref{thm:geom_littlewoodrichardson}]
Recall that the distribution of the random variable $C_\delta(\lambda)$ is given by (definition \ref{def:canonical_probability_measure}):
$$ \P\left( C_\delta(\lambda) \in dx \right) = \frac{1}{\psi_\delta(\lambda)} e^{ \langle \delta, \gamma(x) \rangle - f_B(x) } \omega_{\Bc(\lambda)}(dx)$$
Then, the identity in theorem \ref{thm:geom_littlewoodrichardson_prob_version} becomes, for $f$ bounded measurable function on $\Ccc \otimes \Bc$:
\begin{align*}
  & \int_{\Bc(\lambda) \times \Bc(\mu)} f \circ \phi( x \otimes y ) e^{-f_B(x)-f_B(y)} \omega_{\Bc(\lambda)}(dx) \omega_{\Bc(\mu)}(dy)\\
= & \int_{\Ccc} \P\left( \Pi(C_\delta(\lambda) \otimes C_\delta(\mu)) \in d\Cc \right)
            \frac{\psi_\delta(\lambda) \psi_\delta(\mu)}{\psi_\delta\left(hw(\Cc) \right) }
            \int_{\Bc(hw(\Cc))} f\left( \Cc, b \right) e^{-f_B(b) } \omega_{\Bc(hw(\Cc))}(db)
\end{align*}
Now define a Radon measure on $\Ccc$ by:
$$ m(d\Cc) = \P\left( \Pi(C_\delta(\lambda) \otimes C_\delta(\mu)) \in d\Cc \right)
             \frac{\psi_\delta(\lambda) \psi_\delta(\mu)}{\psi_\delta\left(hw(\Cc) \right) } e^{\Delta(\Cc)}$$
Therefore:
\begin{align*}
  & \int_{\Bc(\lambda) \times \Bc(\mu)} f \circ \phi( x \otimes y ) e^{-f_B(x)-f_B(y)} \omega_{\Bc(\lambda)}(dx) \omega_{\Bc(\mu)}(dy)\\
= & \int_{\phi\left( \Bc(\lambda) \otimes \Bc(\mu) \right)} f\left( \Cc, b \right) e^{-f_B(b) - \Delta(\Cc)} m(d\Cc) \omega_{\Bc(hw(\Cc))}(db) 
\end{align*}
And since $f_B(x) + f_B(y) = \Delta\left( \Pi(x \otimes y) \right) + f_B(xy)$, the change of variable formula using the map $\phi$ tells us that $m(d\Cc) \omega_{\Bc(hw(\Cc))}(db)$ is indeed the image measure of $\omega_{\Bc(\lambda)}(dx) \omega_{\Bc(\mu)}(dy)$ through $\phi$.
\end{proof}

\begin{rmk}
 An interesting identity appearing in the previous proof is the law induced on connected components:
$$ \P\left( \Pi(C_\delta(\lambda) \otimes C_\delta(\mu)) \in d\Cc \right) 
 = \frac{\psi_\delta\left(hw(\Cc) \right) }{\psi_\delta(\lambda) \psi_\delta(\mu)} e^{-\Delta(\Cc)} m(d\Cc)$$
\end{rmk}

\subsection{Product formula for Whittaker functions}
Denote by $c_{\lambda,\mu}(d\nu)$ the image measure of $m(d\Cc) e^{-\Delta(\Cc) }$ through the map $hw: \Cc \rightarrow \afrak$. 
\begin{thm}
\label{thm:whittaker_product_formula}
For every $\delta \in \mathfrak{a}$:
\begin{align*}
  & \psi_{\delta}\left( \lambda \right) \psi_{\delta}\left( \mu \right)\\
= & \int_{\Ccc} m(d \Cc) e^{-\Delta(\Cc)} \psi_\delta\left( hw(\Cc) \right)\\
= & \int_{\afrak} c_{\lambda,\mu}(d\nu) \psi_\delta\left( \nu \right)
\end{align*}
\end{thm}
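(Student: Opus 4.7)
The plan is to derive the product formula as a direct specialization of the geometric Littlewood--Richardson rule (theorem \ref{thm:geom_littlewoodrichardson}), by choosing the test function on $\Ccc \times \Bc$ to be precisely the integrand defining the Whittaker function on the fiber. Concretely, I will apply the disintegration identity with
\[
f(\Cc, b) := e^{\langle \delta, \gamma(b) \rangle},
\]
which depends only on the second variable. The right-hand side of the disintegration then immediately collapses to
\[
\int_{\Ccc} m(d\Cc)\, e^{-\Delta(\Cc)} \int_{\Bc(hw(\Cc))} e^{\langle \delta, \gamma(b) \rangle - f_B(b)}\, \omega_{\Bc(hw(\Cc))}(db) = \int_{\Ccc} m(d\Cc)\, e^{-\Delta(\Cc)}\, \psi_\delta(hw(\Cc)),
\]
by definition \ref{def:whittaker_functions} of $\psi_\delta$ on the highest weight crystal $\Bc(hw(\Cc))$.

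The key point is therefore to show that the left-hand side of the disintegration equals $\psi_\delta(\lambda) \psi_\delta(\mu)$. For this I need the multiplicativity of the weight map under the group product: if $x \in \Bc(\lambda)$ and $y \in \Bc(\mu)$, then
\[
\gamma(xy) = \gamma(x) + \gamma(y),
\]
which follows from the fact that $x, y \in B = NA$ and that conjugation of $N$ by $A$ stabilizes $N$, so the $A$-part of $xy$ is simply the product of the $A$-parts. Since the second component of $\phi(x \otimes y)$ is exactly $xy$, we get $f \circ \phi(x \otimes y) = e^{\langle \delta, \gamma(x) \rangle + \langle \delta, \gamma(y) \rangle}$, and the left-hand side of the disintegration factors into a product of two integrals, each giving a Whittaker function by definition.

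Finally, the second equality in the statement is immediate: $c_{\lambda,\mu}(d\nu)$ is defined as the image measure of $m(d\Cc) e^{-\Delta(\Cc)}$ under $hw$, so the change-of-variable formula applied to the integrand $\psi_\delta(hw(\Cc))$ (which factors through $hw$) yields the second expression. There is no real obstacle here; the entire argument is a one-line specialization of theorem \ref{thm:geom_littlewoodrichardson}, with the only genuinely used input being the additivity $\gamma(xy) = \gamma(x) + \gamma(y)$ on $B$. The conceptual content is that the superpotential's failure of additivity (captured by the central charge $\Delta$) is exactly what remains once the ``character-like'' exponential $e^{\langle \delta, \gamma(\cdot)\rangle}$ is integrated over the fibers of the crystal bundle $\Pi$.
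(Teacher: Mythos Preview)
Your proposal is correct and follows exactly the paper's approach: the paper's proof is the single line ``In theorem \ref{thm:geom_littlewoodrichardson}, take $f(\Cc, b) = e^{\langle \delta, \gamma(b)\rangle}$,'' and you have faithfully unpacked precisely this, including the additivity $\gamma(xy)=\gamma(x)+\gamma(y)$ on $B$ that makes the left-hand side factor as $\psi_\delta(\lambda)\psi_\delta(\mu)$.
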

\begin{proof}
In theorem \ref{thm:geom_littlewoodrichardson}, take $f\left( \Cc, b \right) = e^{ \langle \delta, \gamma(b)\rangle }$, where $\gamma$ is the weight function on geometric crystal.
\end{proof}

\begin{rmk}
This identity is interpreted as the geometric counterpart of the linearization formula for characters evaluated at $\delta$:
$$ ch\left(V(\lambda)\right)(\delta) ch\left(V(\mu)\right)(\delta) 
 = \sum_{\nu \in P^+} c_{\lambda, \mu}^\nu ch\left(V(\nu)\right)(\delta)$$
\end{rmk}

\chapter{Random crystals and hypoelliptic Brownian motion on solvable group}

\label{chapter:random_crystals}
In the context of geometric crystals, we aim at identifying the previously described canonical measure in a natural way. A fruitful idea is to consider the measure induced on crystals by uniform paths in a path model. In the discrete Littelmann path model, taking the uniform probability measure on finite paths induces the measure \ref{eqn:canonical_measure_discrete} on the generated crystal (see RSK correspondence for the Littelmann path model). In the continuous setting, the uniform measure on finite paths is replaced by the Wiener measure: a 'uniformly chosen path' is nothing but Brownian motion. It is the essence of \cite{bib:BBO2}. And this idea proves to be very fruitful also in the geometric setting.\\

In this chapter, we consider a Brownian motion $W$ in $\afrak$ and study the path crystal it generates $\Bc_t = \langle \left(W_{s} \right)_{ 0 \leq s \leq t}\rangle$ as a random object. Recall that for a path $\pi \in \afrak$, $\langle \pi \rangle$ denotes the crystal generated by $\pi$. By analogy with the Young tableaux growth one obtains in the classical RSK correspondence (section \ref{section:classical_rsk}, one can think of $\Bc_t$ as a dynamical object growing with time. There are mainly two aspects in this description:
\begin{itemize}
 \item One can examine how the 'shape' of $\Bc_t$ evolves with time, or more precisely the dynamics of its highest weight $\Lambda_t = hw\left(\Bc_t\right)$ which encodes its isomorphism class as a geometric crystal. $\Lambda_t$ is a Markov process whose infinitesimal generator is given by a Doob transform of the quantum Toda Hamiltonian using one of its eigenfunctions, a Whittaker function.\\
 \item We will examine the law of $\langle \left(W_{s} \right)_{ 0 \leq s \leq t}\rangle$ conditionally to its highest weight being $\lambda$. This probability measure on $\Bc(\lambda)$ will be the canonical measure on a geometric crystal appropriately normalized into a probability measure.\\
\end{itemize}

We will start by stating precisely those two results in section \ref{section:random_crystals_results}.
\section{Main results}
\label{section:random_crystals_results}

The usual framework is the probability triplet $( \Omega, \Ac, \P)$ with $\Omega$ the sample space, $\Ac$ the set of events and $\P$ our working probability measure. Equality in law between random variables or processes will be denoted by $\stackrel{\mathcal{L}}{=}$.\\

\paragraph{Processes}: 
From now on, the abbreviation 'BM' will be short for 'Brownian motion'. For every process $X$ (or even a deterministic path) taking its values in a Euclidian space, we will use the notation $X^{(\mu)}_t := X_t + \mu t$. Also, we use $X^{x_0} := X + x_0$. Unless otherwise stated, the absence of superscript will indicate that the process is starting at zero.\\

The filtration generated by $X$ is denoted by:
$$ \Fc_t^X := \sigma\left( X_s ; s \leq t \right)$$

\paragraph{Laws}: 
We denote by $\gamma_{\mu}$ for $\mu>0$, a gamma random variable with parameter $\mu$:
$$ \P( \gamma_\mu \in dt ) = \frac{1}{\Gamma(\mu)} t^{\mu} e^{-t} \frac{dt}{t}$$
And ${ \bf e}_\mu$ stands for an exponential random variable with parameter $\mu$:
$$ \P( {\bf e}_\mu \in dt ) = \mu e^{-\mu t} dt$$

Having in mind the geometric RSK correspondence (theorem \ref{thm:dynamic_rsk_correspondence}), meaning the bijective map:
$$ \left( W_s; s \leq t \right) \mapsto \left( B_t(W^{(\mu)}), \left( \Tc_{w_0}( W )_s ; s \leq t \right) \right)$$
it is natural to look for a description of the highest weight process $\left( \Tc_{w_0}\left( W \right)_s ; s \leq t \right)$ and the distribution of the random crystal element $B_t(W^{(\mu)})$ conditionnally to the highest weight being fixed. Both aspects in this description have known analogues in the classical case of Young tableaux (see O'Connell \cite{bib:OC03}): the dynamic of the $Q$ tableau, or equivalently the shape, is Markovian and the distribution of the $P$ tableau conditionnally to the shape being fixed is the uniform measure on semi-standard tableaux. Our two main theorems are the geometric analogues.\\

Therefore, we will study the hypoelliptic Brownian motion $B_t(W^{(\mu)})$ (see equation \ref{lbl:process_B_explicit}) on the solvable group $B$, that is driven by the Euclidian Brownian motion $W^{(\mu)}$ with drift $\mu$ on $\afrak$. Only a little shift appears, morally because time is not flowing in the same way along all simple roots. Let us introduce the shift vector:
\index{$\theta$: Shift vector}
$$\theta = \sum_{ \alpha \in \Delta } \log\left( \frac{\langle \alpha, \alpha\rangle}{2} \right) \omega_\alpha^\vee$$
\begin{notation}
 \label{notation:theta}
 For the purpose of simpler notations, we write for any continuous path $\pi$ with values in $\afrak$:
\index{$B_t^\theta\left( \pi \right)$: Shift of $B_t\left( \pi \right)$ for a path $\pi$}
$$ B_t^\theta\left( \pi \right) = e^{-\theta} B_t\left( \pi \right) e^{\theta} $$
\end{notation}
\begin{rmk}
\label{rmk:theta_ADE}
In the simply-laced ($ADE$) cases, $\theta = 0$ because all roots can be chosen and are chosen to be of the same squared norm $2$. 
\end{rmk}
In the previous chapters, we chose to define the group picture of a path $\pi \in C_0\left( [0, T], \afrak \right)$ as $B_T(\pi)$ rather than $B_T^\theta(\pi)$ because, as long as we were not concerned with probability distributions, this shift would have made things needlessly more complicated. As such, now, we will be interested in the process $\left( B_t^\theta\left( W^{(\mu)} \right) ; t \geq 0\right)$. For every finite horizon $t>0$, it gives a random crystal element in $\Bc$ and dynamically, it can be seen as a random growing crystal.

\subsection{Markov property for highest weight}

As announced, an important aspect in the description of random growing crystals is that the highest weight is a Markov process.
\begin{thm}
\label{thm:highest_weight_is_markov}
Let $W^{(\mu)}$ be a Brownian motion with drift $\mu$ in the Cartan subalgebra $\afrak \approx \R^n$, then
$$\Lambda_t = hw\left( B_t^\theta\left( W^{(\mu)} \right) \right)  = \theta - w_0 \theta + \Tc_{w_0} W^{(\mu)}_t$$
is a diffusion process with infinitesimal generator
$$ \Lc = \psi_\mu^{-1}\left( \frac{1}{2} \Delta - \sum_{\alpha \in \Delta} \frac{1}{2} \langle \alpha, \alpha \rangle e^{-\alpha\left( x \right)}  - \frac{ \langle \mu, \mu \rangle }{2} \right) \psi_\mu = \frac{1}{2} \Delta + \nabla \log\left(\psi_\mu\right) \cdot \nabla $$
And, for fixed $t>0$, the law of $\Lambda_t$ is given by:
$$ dx \psi_\mu(x) e^{-\frac{t \langle \mu, \mu \rangle}{2}}
      \int_\afrak e^{-\frac{t \langle \nu, \nu \rangle}{2}} \psi_{-i\nu}(x) s(\nu) d\nu$$
where $s$ is the Sklyanin measure.
\end{thm}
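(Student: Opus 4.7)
The plan is to identify $\Lambda_t$ as a Doob transform, using the probabilistic representation $\psi_\mu(\lambda) = b(\mu) e^{\langle \mu, \lambda\rangle} \E_\lambda(\exp(-\sum_\alpha \tfrac{1}{2}\langle\alpha,\alpha\rangle \int_0^\infty e^{-\alpha(W^{(\mu)})}))$ from theorem \ref{thm:whittaker_functions_properties}(iii) as the bridge between the pathwise definition of $\Lambda_t$ and the Toda operator. Note that the stated generator $\Lc = \tfrac{1}{2}\Delta + \nabla \log \psi_\mu \cdot \nabla$ is exactly the $\psi_\mu$-Doob transform of $H-\tfrac{1}{2}\langle\mu,\mu\rangle$ where $H = \tfrac{1}{2}\Delta - \sum_\alpha \tfrac{1}{2}\langle\alpha,\alpha\rangle e^{-\alpha(x)}$ is the quantum Toda Hamiltonian; the two presentations of $\Lc$ in the statement are equivalent on account of the Toda eigenequation for $\psi_\mu$.

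First, I would establish Markovianity via a Rogers--Pitman intertwining argument. The hypoelliptic process $X_t := B_t^\theta(W^{(\mu)})$ is a $B$-valued diffusion with explicit generator coming from the Stratonovich SDE \eqref{lbl:process_B_ode}, and the map $\phi = hw : \Bc \to \afrak$ is $U\times U$-invariant (properties \ref{properties:hw}). The candidate Markov kernel $K(\lambda, dx)$ intertwining $X$ with $\Lambda$ is the canonical probability measure $C_\mu(\lambda)$ on $\Bc(\lambda)$ introduced in definition \ref{def:canonical_probability_measure}; this is precisely the content of the companion theorem \ref{thm:canonical_measure}, whose proof is the subject of the remainder of this chapter. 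Concretely, one has to check that for smooth $f$ on $\afrak$,
\begin{equation*}
(\mathcal{A}_X (f \circ hw))(x) = (\Lc f)(hw(x)) \quad \text{for } x \in \Bc,
\end{equation*}
where $\mathcal{A}_X$ is the generator of $X$, and, dually, that $K$ intertwines the two semigroups in the usual sense.

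Second, the generator identification comes from applying Itô's formula to $\Tc_{w_0} W^{(\mu)}_t$. Writing $\Tc_{w_0} = \Tc_{\alpha_{i_m}} \circ \cdots \circ \Tc_{\alpha_{i_1}}$ along a reduced expression, each transform $\Tc_\alpha \pi(t) = \pi(t) + \log\!\left(\int_0^t e^{-\alpha(\pi)}\right)\alpha^\vee$ contributes, after Stratonovich-to-Itô correction, to a semi-martingale decomposition of $\Lambda_t$ whose martingale part has quadratic variation $\tfrac{1}{2}\Delta$ (since the map at each step is an isometry up to drift corrections concentrated along the coroots) and whose drift involves, after unwinding the logarithmic derivatives and collecting exponentials via the Toda eigenequation, precisely $\nabla \log \psi_\mu(\Lambda_t)$. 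The shift $\theta - w_0\theta$ in the definition of $\Lambda_t$ is designed to absorb the factors $\tfrac{1}{2}\langle\alpha,\alpha\rangle$ appearing in the Toda potential; in the simply-laced case ($\theta=0$) this step is cleanest. The Doob-transform form $\Lc = \psi_\mu^{-1}(H - \tfrac{1}{2}\langle\mu,\mu\rangle)\psi_\mu$ is then a direct consequence of the Toda eigenequation from theorem \ref{thm:whittaker_functions_properties}(iii).

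Third, for the explicit one-dimensional density, I would spectrally expand the Toda semigroup $e^{t(H-\tfrac{1}{2}\langle\mu,\mu\rangle)}$ using the Whittaker Plancherel theorem \ref{thm:whittaker_plancherel}: the kernel $p_t(x,y)$ of $e^{tH}$ against Lebesgue measure admits the decomposition $p_t(x,y) = \int_\afrak e^{-\tfrac{t}{2}\langle\nu,\nu\rangle}\psi_{i\nu}(x)\psi_{-i\nu}(y) s(\nu)\, d\nu$. The law of the Doob transform started from $-\infty$ in the Weyl chamber (with normalization $\psi_\mu(x) \sim b(\mu)e^{\langle\mu,x\rangle}$ on that region, also from theorem \ref{thm:whittaker_functions_properties}) is $\psi_\mu(y) p_t(x,y)\, dy / \psi_\mu(x)$, and taking the limit $x \to -\infty$ inside the Weyl chamber yields the stated formula $\psi_\mu(x) e^{-\tfrac{t}{2}\langle\mu,\mu\rangle}\int_\afrak e^{-\tfrac{t}{2}\langle\nu,\nu\rangle}\psi_{-i\nu}(x) s(\nu)\,d\nu\, dx$.

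The main obstacle is the intertwining step: verifying that the canonical measures $C_\mu(\lambda)$ really do intertwine the $B$-valued hypoelliptic semigroup with the candidate semigroup on $\afrak$. This is precisely where the superpotential $f_B$, the toric measure $\omega$, and the Whittaker function $\psi_\mu$ conspire, via the defining relation $\psi_\mu(\lambda) = \int_{\Bc(\lambda)} e^{\langle\mu,\gamma(x)\rangle - f_B(x)}\omega(dx)$, to convert a generator computation on the totally positive variety $\Bc$ into the Toda operator on $\afrak$. I expect this to be achieved by computing $\mathcal{A}_X$ applied to $e^{\langle\mu,\gamma(x)\rangle - f_B(x)}$ (up to normalization) and checking that it equals $\tfrac{1}{2}\langle\mu,\mu\rangle$ times the same density, using the explicit transformation properties of $f_B$ under the crystal actions $e^c_\alpha$ (properties \ref{properties:superpotential_properties}) and the $W$-invariance of both $\omega$ and $f_B$.
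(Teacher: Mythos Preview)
Your high-level skeleton (Rogers--Pitman intertwining with kernel $C_\mu(\lambda)$, then spectral expansion via Whittaker--Plancherel for the marginal) matches the paper's architecture, and your third step on the density is essentially what the paper does in subsection \ref{subsection:entrance_point}. But two of your proposed mechanisms are either wrong or bypass the actual content of the proof.

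First, the generator identity you write, $(\mathcal{A}_X(f\circ hw))(x) = (\Lc f)(hw(x))$ for all $x\in\Bc$, is Dynkin's criterion, not Rogers--Pitman. If it held, $hw(B_t^\theta(W^{(\mu)}))$ would be Markov for \emph{every} starting point of the hypoelliptic Brownian motion, which is false: the Markov property holds only for the specific entrance laws $\Kc_\mu(\lambda,\cdot)$ (or the degenerate $-\infty$ entrance). The paper is explicit about this distinction in its review of Markov functions, and what must be verified is the filtering identity $\Kc_\mu \circ P_t = Q_t \circ \Kc_\mu$, not the pointwise generator identity.

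Second, and more seriously, your ``Second'' paragraph proposes obtaining the generator by direct It\^o calculus on $\Tc_{w_0}W^{(\mu)} = \Tc_{\alpha_{i_m}}\circ\cdots\circ\Tc_{\alpha_{i_1}}W^{(\mu)}$. The drift of this process at time $t$ is, a priori, a functional of the entire path $(W^{(\mu)}_s)_{s\le t}$ through iterated exponential integrals; showing that it collapses to $\nabla\log\psi_\mu(\Lambda_t)$ \emph{is} the Markov miracle, not a routine computation one can ``unwind.'' The paper does not attempt this. Instead it proceeds by conditioning: the conditional representation theorem (built inductively from the Matsumoto--Yor rank-one result) identifies $x_0 + T_{e^\theta g e^{-\theta}}(W^{(w_0\mu)})$ as a Brownian motion with drift $\mu$ conditioned on $N_\infty^\theta = \Theta(g)$, which yields the explicit law of $N_\infty^\theta$; then a Girsanov argument with the character $e^{-\chi^-}$ on $N$ produces the Whittaker process with the correct generator for $\mu\in C$. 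The intertwining is then proved as a Markovian filtering statement (again via the explicit conditioning structure, Bayes' formula, and the decomposition of $N_\infty^\theta$), extended to all $\mu$ by a change-of-measure/analytic-continuation argument, and finally the starting point is sent to $-\infty$ by a Laplace-method measure-concentration result. Your proposal omits this entire conditioning machinery, which is where the substance of the proof lies; the infinitesimal check you sketch in your final paragraph (applying $\mathcal{A}_X$ to the canonical density) might be an alternative route, but you would still have to confront the same issue, and the paper does not take it.
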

\begin{proof}
In section \ref{section:whittaker_process}, we prove weaker versions and strenghten them in section \ref{section:intertwined_markov_kernels}.
\end{proof}

\begin{rmk}
A pedantic way of stating this theorem would be ``the isomorphism class of crystals generated by Brownian motion is Markovian''. 
\end{rmk}

\begin{rmk}
In the same fashion the classical Robinson-Schensted correspondence gives growing Young tableaux and a Markov property on the shape (or equivalently the $Q$ tableau) with dynamics given by Schur functions (see section \ref{section:classical_rsk}). Here, the 'shape' $\Lambda_t$ is Markov, with dynamics given by Whittaker functions, the geometric analogue of characters.
\end{rmk}

This theorem reduces to the Matsumoto-Yor theorem (\cite{bib:MY00-1, bib:MY00-2}) in the case of $SL_2$ and the theorem by O'Connell (\cite{bib:OConnell}) in the case of $GL_n$ (type $A_n$). Note that since O'Connell's contruction has an application to a semi-discrete polymer model, one expects other Lie types to be related to different geometries.\\

In chapter \ref{chapter:degenerations}, we will see that this theorem is a geometric lifting of theorem 5.6 in \cite{bib:BBO}, which represents the highest weight process in the continuous Littelmann path model as a Brownian motion in the Weyl chamber.

\subsection{The canonical probability measure as a conditionnal distribution}

Recall that we defined a canonical probability measure on $\Bc(\lambda)$ with spectral parameter $\mu \in \afrak$. It is the law of a random variable $C_\mu(\lambda)$ with density (definition \ref{def:canonical_probability_measure}):
$$ \P\left( C_\mu(\lambda) \in dx \right) = \frac{1}{\psi_\mu(\lambda)}  e^{ \langle \mu, \gamma(x) \rangle - f_B(x) } \omega(dx)$$
As the following theorem shows, this is indeed what appears when considering a random crystal element in $\Bc$ conditioned to have its highest weight being $\lambda$. This theorem is in fact its ``{\it raison d'\^etre}''.
\begin{thm}
\label{thm:canonical_measure}
Let $W^{(\mu)}$ be a BM with drift $\mu$ in $\afrak$ and fix $t>0$. The distribution of $B_t^\theta(W^{(\mu)})$ conditionally to the highest weight being $\lambda$ and the $\sigma$-algebra $\Fc_t^\Lambda$ depends only on $\lambda$ and given by:
\begin{align}
\label{eqn:canonical_measure_def}
\left( B_t^\theta\left( W^{(\mu)} \right) | \Fc^\Lambda_t, \Lambda_t = \lambda \right)  & \stackrel{\Lc}{=} C_\mu(\lambda)
\end{align}
\end{thm}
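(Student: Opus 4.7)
The proof plan is to use the geometric Robinson--Schensted--Knuth bijection (Theorem 3.6.13) to decompose the Brownian path, then identify the pushforward of Wiener measure in explicit coordinates. Concretely, under the correspondence
\[
W^{(\mu)} \longleftrightarrow \left( B_T^\theta(W^{(\mu)}), \bigl(\Tc_{w_0}W^{(\mu)}_s\bigr)_{0<s\leq T}\right),
\]
the theorem reduces to the statement that, after pushforward by Wiener measure, the two factors are coupled only through $\Lambda_T = \lambda$, with the conditional marginal on $\Bc(\lambda)$ being proportional to $e^{\langle \mu,\gamma(x)\rangle - f_B(x)}\omega(dx)$.

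First I would parametrize $\Bc(\lambda)$ via Lusztig coordinates: fix ${\bf i}\in R(w_0)$ and use Theorem 3.5.11 together with the inversion lemma (Theorem 3.6.7) to write the Brownian path as a deterministic functional of (i) the highest-weight path $\eta = \Tc_{w_0}W^{(\mu)}$ (equivalently, via Lemma 3.4.19, its dual $e^{-\infty}_{w_0}W^{(\mu)}$) and (ii) the positive reals $t_k = 1/\!\int_0^T e^{-\alpha_{i_k}(e^{-\infty}_{s_{i_1}\cdots s_{i_{k-1}}}W^{(\mu)})}ds$, $1\leq k\leq m$. This puts the bijection in the concrete form $W^{(\mu)} \leftrightarrow (\eta,(t_1,\dots,t_m))$.

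Second, I would compute the pushforward measure. The Girsanov density of $W^{(\mu)}$ with respect to driftless Wiener measure is $\exp(\langle\mu,W_T\rangle - T|\mu|^2/2)$; since $W_T = \gamma(B_T^\theta(W^{(\mu)}))$ up to the deterministic $\theta$-shift, this factor contributes $e^{\langle\mu,\gamma(x)\rangle}$ on the $\Bc(\lambda)$ factor and leaves the path factor intact. The Jacobian of the coordinate change produces the toric reference measure $\prod dt_j/t_j = \omega(dx)$ (invariance being guaranteed by Theorem 4.1.2), while the Laplace-type integral interactions between $\eta$ and the $t_k$ assemble into the superpotential through the explicit identity
\[
f_B(x) \;=\; \sum_{j=1}^m t_j + \sum_{\alpha\in\Delta}e^{-\alpha(\lambda)}\int_0^T e^{-\alpha(\pi(s))}ds
\]
of Lemma 4.2.10. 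This is where the $\theta$-shift does its job: the $\langle \alpha,\alpha\rangle/2$ factors arising from the drift transformation of $B_t(W^{(\mu)})$ are absorbed into a conjugation by $e^\theta$, so that the interaction terms reorganize cleanly into $\chi(z)+\chi(u)$. Integrating out the $t_k$'s at fixed $\eta$ gives the normalizing constant, which must equal $\psi_\mu(\lambda)$ by Definition 4.4.1.

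The main obstacle is making the Jacobian computation rigorous in infinite dimensions, since the $t_k$'s are deeply nested path functionals. To avoid a direct infinite-dimensional change of variables, I would instead test the claimed identity weakly: for every bounded measurable $\varphi:\Bc\to\R$ and every bounded measurable $F$ on highest-weight paths, prove
\[
\E\bigl[\varphi(B_T^\theta(W^{(\mu)}))F(\Tc_{w_0}W^{(\mu)})\bigr] = \E\!\left[F(\Tc_{w_0}W^{(\mu)})\,\frac{\int_{\Bc(\Lambda_T)}\varphi(x)e^{\langle\mu,\gamma(x)\rangle - f_B(x)}\omega(dx)}{\psi_\mu(\Lambda_T)}\right].
\]
By a monotone class argument it suffices to take $F$ a function of $\Lambda_T$ alone; one can then recognise both sides as Laplace--type transforms on $\afrak$ and conclude via the Whittaker Plancherel theorem (Theorem 4.4.2), after which the general case follows by invoking (and in fact co-proving) the Markov property of $\Lambda$ stated in Theorem 5.1.1. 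The interplay between Theorems 5.1.1 and 5.1.2 is the subtle point: the two statements are naturally proved in tandem, the canonical-measure identity furnishing the invariant kernel on fibres that intertwines the hypoelliptic dynamics on $B$ with the Doob-transformed Toda dynamics on $\afrak$.
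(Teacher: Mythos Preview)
Your high-level picture is right, and you correctly identify that the canonical measure is the intertwining kernel linking the hypoelliptic Brownian motion on $B$ to the Whittaker process on $\afrak$. But the proof as written has a genuine gap, and the route you sketch is not the one the paper takes.

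The gap is in the weak-testing step. Establishing the identity for $F$ a function of $\Lambda_T$ alone gives you the conditional law of $B_T^\theta(W^{(\mu)})$ given $\sigma(\Lambda_T)$, not given the full filtration $\Fc_T^\Lambda$. A monotone class argument does not bridge that gap: the class of $F$'s of the form $g(\Lambda_T)$ is not a generating $\pi$-system for $\Fc_T^\Lambda$. What you need is precisely the Markovian filtering statement (condition (i) in the Rogers--Pitman criterion), and saying it is ``co-proved'' with the Markov property does not explain how the intertwining relation $\Kc_\mu\circ P_t=Q_t\circ\Kc_\mu$ is obtained in the first place. Invoking Whittaker--Plancherel here is premature: in the paper it is used only at the very end, to identify the entrance law of $\Lambda_t$ once the Markov property and semigroup are already in hand.

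The paper's route is substantially different from a direct RSK pushforward. It never computes an infinite-dimensional Jacobian. Instead it (a) studies the infinite-horizon behaviour: for $\mu\in C$, $N_\infty^\theta(W^{(\mu)})$ exists and its law is computed explicitly by iterating the one-dimensional Matsumoto--Yor conditioning (the ``conditional representation theorem''), which is the key missing ingredient in your plan; (b) uses this exit law on $N$ as a Poisson boundary to realise Whittaker functions and to build the Whittaker process starting from a finite $x_0$ via a Girsanov-type change of measure; (c) proves the Markovian filtering directly for this finite-$x_0$ process by exploiting the multiplicative decomposition $N_\infty^\theta(X^{x_0})=N_t^\theta(X^{x_0})e^{X_t^{x_0}}N_\infty^\theta(X^{x_0}_{t+\cdot}-X^{x_0}_t)e^{-X_t^{x_0}}$ and the character property of $e^{-\chi^-}$; (d) extends from $\mu\in -C$ to all $\mu$ by a Cameron--Martin argument; and finally (e) sends $x_0\to -\infty$ along $-M\rho^\vee$ using a Laplace-method measure-concentration result on the superpotential (unique minimum $m_\zeta$), so that $C_\mu(x_0)\to id$ and the filtering identity degenerates to the statement of the theorem. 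The approach you propose might be salvageable, but it would require an independent proof of the intertwining that does not pass through the $N_\infty$ analysis, and you have not supplied one.
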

\begin{proof}
 See section \ref{section:intertwined_markov_kernels}.
\end{proof}

\subsection{Outline and strategy of proof}
We will start by definitions regarding what we mean by 'Brownian motion' and the spaces considered, before focusing on a hypoelliptic Brownian motion on the real solvable group and its harmonic functions. Our approach finds its source in the classical potential theory and its probabilistic counterparts, which revolves around a central idea: Harmonic functions on a domain are given by integrating functions on the boundary against the 'exiting law' of the underlying random walk, here the hypoelliptic Brownian motion.

Whittaker functions can be seen as harmonic functions for this process having a certain invariance property. This will be made precise in section \ref{section:poisson_boundary}. We are able to explicitly compute this boundary distribution in section \ref{section:conditionning_approach}, using inductively one dimensional results that were known to Matsumoto and Yor. Such results are reviewed beforehand in section \ref{section:review_matsumoto_yor}.
At this level, we will have to restrict our framework to $\mu \in C$, the Weyl chamber. For such $\mu$, Whittaker functions $\psi_\mu$ are defined simply as the integral of a character on $N$ over the exit law. The previously given formula (iii) in theorem \ref{thm:whittaker_functions_properties} is thus obtained.

Through conditionning, Whittaker functions define a different exit law that is absolutely continuous w.r.t the original one. The hypoelliptic BM conditioned to exit according to this law will give rise to the Whittaker process, a Markov process. For arbitrary finite starting points, it is an approximation of the highest weight process. Finally, we will have to deal with two technicalities in order to provide complete proofs of the two main theorems \ref{thm:highest_weight_is_markov} and \ref{thm:canonical_measure}.
\begin{itemize}
 \item Obtain the highest weight process and thus theorem \ref{thm:highest_weight_is_markov} by taking a starting point that is '$-\infty$' (section \ref{subsection:entrance_point}).\\
 \item Extend the framework to arbitrary $\mu$ (section \ref{section:intertwined_markov_kernels}). We will use an intertwining argument that will lead to a proof the theorem \ref{thm:canonical_measure}.
\end{itemize}

\section{The hypoelliptic Brownian motion on the solvable group and its Poisson boundary}
\label{section:poisson_boundary}

\subsection{Brownian motions}

Let us start with simple definitions.

Since $\afrak$ is an Euclidian space thanks to the Killing form $\langle ., . \rangle$, there is a natural notion of Brownian motion on $\afrak$. Plainly, fix an orthonormal basis on  $(X_1, \dots, X_n)$ on $\afrak$. Then a Brownian motion $W$ on $\afrak$ is written as:
$$ \forall t \geq 0, W_t := \sum_{r=1}^n \beta_t^r X_r$$
where $\left( \beta_t^{r} := \langle W_t, X_r \rangle ; t \geq 0 \right)_{1 \leq r \leq n} $ is a Brownian motion on $\R^n$.

For $\mu \in \afrak$, $W^{(\mu)}$, the Brownian motion with drift $\mu$ is given for $t \geq 0$ by:
$$ W^{(\mu)}_t = W_t + t \mu$$

Let $N_\R$ be the real part of the unipotent group $N$. It is the group generated by $y_\alpha(t), t \in \R, \alpha \in \Delta$. Then, the real part of the Borel group $B$, is given by $B_\R = N_\R A$. By using the explicit expression in equation (\ref{lbl:process_B_explicit}), we obtain after conjugation by $\theta$:
\begin{align}
\label{eqn:process_B_theta_explicit}
  & B_t^\theta(W^{(\mu)})\\
= & \left( \sum_{k \geq 0} \sum_{ i_1, \dots, i_k } \int_{ t \geq t_k \geq \dots \geq t_1 \geq 0} e^{ -\alpha_{i_1}(W^{(\mu)}_{t_1}) \dots -\alpha_{i_k}(W^{(\mu)}_{t_k}) } 
\frac{|| \alpha_{i_1} ||^2}{2} \dots \frac{|| \alpha_{i_k} ||^2}{2} f_{i_1} \dots f_{i_k} dt_1 \dots dt_k \right) e^{W^{(\mu)}_t}
\end{align} 

Considering the Stratonovitch integral, denoted by the symbol $\circ$, one has the same differentiation rules as in the usual case. Using equation (\ref{lbl:process_B_ode}), $\left( B_t^\theta\left( W^{(\mu)} \right) ; t \geq 0 \right)$ is a left-invariant process on $B_\R$ with independent increments, satisfying the left-invariant SDE on $B_\R$:
\begin{align}
\label{lbl:process_B_sde_stratonovich}
\left\{ \begin{array}{ll}
dB^\theta_t(W^{(\mu)}) = B_t^\theta(W^{(\mu)}) \circ \left( \sum_{\alpha \in \Delta} \half \langle \alpha, \alpha \rangle f_\alpha dt + dW^{(\mu)}_t\right) \\
 B^\theta_0(W^{(\mu)}) = id
\end{array} \right.
\end{align}

Let $\Cc^\infty(B_\R)$ be the space of continuously differentiable functions on the real solvable group $B_\R$. Recall that every $X \in \bfrak_\R$, can be seen as left-invariant derivation acting on $C^\infty\left( B_\R \right)$ as:
$$ \forall f \in \Cc^{\infty}(B_\R), \forall b \in B_\R, X f(b) = \frac{d}{dt}\left( f(g e^{t X}) \right)_{|t=0}$$
The Laplace operator on $A$ is defined using the orthonormal basis $\left(X_1, \dots, X_n\right)$ by:
$$ \Delta_\afrak :=  \sum_{i=1}^n X_i^2 $$

\begin{definition}
\label{def:hypoelliptic_operator}
\index{$\Dc^{(\mu)}$: Infinitesimal generator of a hypoelliptic Brownian motion on $B$}
For $\mu \in \afrak$, define $\Dc^{(\mu)}$ to be the left-invariant differential operator on the real solvable group $B_\R$ given by:
$$ \Dc^{(\mu)} := \half \Delta_\afrak + \mu + \half \sum_{\alpha \in \Delta} \langle \alpha, \alpha \rangle f_\alpha$$
\end{definition}

Such an operator is linked to the Casimir element in Kostant's Whittaker model. More details are given in the appendix \ref{appendix:whittaker_model}.

\begin{proposition}
\label{proposition:inf_generator}
The operator $\Dc^{(\mu)}$ is the infinitesimal generator of the hypoelliptic Brownian motion $\left( B_t := B_t^\theta(W^{(\mu)}), t\geq 0 \right)$ driven by $W^{(\mu)}$, a Euclidian Brownian motion on $\afrak$ with drift $\mu$.
\end{proposition}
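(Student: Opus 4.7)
The plan is to read off the generator directly from the Stratonovich SDE (\ref{lbl:process_B_sde_stratonovich}) by a standard application of It\^o calculus on the Lie group $B_\R$. First I would decompose $dW^{(\mu)}_t = \mu\, dt + dW_t$ and, fixing an orthonormal basis $(X_1,\dots,X_n)$ of $\afrak$ for the Killing form, write $W_t = \sum_r \beta_t^r X_r$ where $(\beta^r)$ are independent one-dimensional Brownian motions. The SDE then becomes
$$dB_t = B_t \circ V_0\, dt + \sum_{r=1}^n B_t \circ X_r\, d\beta_t^r, \qquad V_0 := \mu + \tfrac{1}{2}\sum_{\alpha \in \Delta} \langle \alpha,\alpha\rangle f_\alpha \in \bfrak_\R.$$

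Next I would convert to It\^o form using the standard rule for left-invariant SDEs on a Lie group: each Stratonovich term $B_t \circ X_r\, d\beta_t^r$ equals $B_t X_r\, d\beta_t^r + \tfrac{1}{2} B_t X_r^2\, dt$, the $\tfrac{1}{2} X_r^2$ correction arising from the quadratic covariation of $\beta^r$ with the $\beta^r$-component of $B_t$ computed in any faithful matrix representation, where $X_r^2$ is interpreted inside $\Uc(\bfrak_\R)$ and acts on $C^\infty(B_\R)$ as a left-invariant second-order differential operator. Applying It\^o's formula to a test function $f \in C_c^\infty(B_\R)$ and using $(Xf)(b) = \tfrac{d}{ds}|_{s=0} f(b e^{sX})$ then yields
$$f(B_t) - f(B_0) = \sum_r \int_0^t (X_r f)(B_s)\, d\beta_s^r + \int_0^t \Bigl( V_0 + \tfrac{1}{2} \sum_r X_r^2 \Bigr) f(B_s)\, ds.$$
The bounded-variation integrand equals $\Dc^{(\mu)} f$: indeed $\sum_r X_r^2 = \Delta_\afrak$ by the very definition of the Laplacian (and is independent of the orthonormal basis chosen), while $\sum_r \langle \mu, X_r\rangle X_r$ is exactly the left-invariant derivation $\mu$ appearing in Definition \ref{def:hypoelliptic_operator}. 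Taking expectations and differentiating at $t=0$ then identifies $\Dc^{(\mu)}$ as the infinitesimal generator on $C_c^\infty(B_\R)$.

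The only points that require some care rather than genuinely hard work are the following. First, the interchange $\tfrac{d}{dt}|_{t=0} \E_b[f(B_t)] = \E_b[\Dc^{(\mu)} f(B_t)]|_{t=0}$ must be justified; this follows from the explicit multiple-integral representation (\ref{eqn:process_B_theta_explicit}), which makes it transparent that $B_t$ has moments of all orders in every finite-dimensional representation and that the law of $B_t$ is smooth in $t$, so dominated convergence applies uniformly for small $t$ once $f$ is compactly supported. Second, one has to be a little careful when performing the Stratonovich-to-It\^o conversion at the level of a faithful matrix representation to check that the $\tfrac{1}{2} V_r^2$ correction really corresponds, after coming back to $B_\R$, to the left-invariant operator associated to $V_r^2 \in \Uc(\bfrak_\R)$; but this is a general fact about left-invariant SDEs on Lie groups and requires no new input from the structure theory used in the thesis. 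I do not foresee a genuine obstacle in carrying out any of these steps.
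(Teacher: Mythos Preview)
Your proposal is correct and follows essentially the same approach as the paper: both derive the generator from the Stratonovich SDE (\ref{lbl:process_B_sde_stratonovich}) by converting to It\^o form and applying It\^o's formula, arriving at $\Dc^{(\mu)}f = (V_0 + \tfrac{1}{2}\sum_r X_r^2)f$. The only difference is presentational: the paper carries out the Stratonovich-to-It\^o conversion and the It\^o formula explicitly in matrix coordinates via equations (\ref{eqn:link_lie_diff1})--(\ref{eqn:link_lie_diff2}), whereas you invoke the general formalism for left-invariant SDEs on Lie groups (which the paper itself cites as the quick route, referencing Ikeda--Watanabe).
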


This proposition follows easily from chapter 5, theorem 1.2 in Ikeda and Watanabe \cite{bib:IkedaWatanabe89}, which is a standard reference for stochastic analysis on manifolds. However, for the convenience of the reader, we explain how to proceed using only linear algebra, differential calculus and Euclidian Brownian motion.

\begin{rmk}
Let
$$ X_0 = \mu + \half \sum_{\alpha \in \Delta} \langle \alpha, \alpha \rangle f_\alpha$$
This allows us to write the infinitesimal generator in the usual form:
$$ \Dc^{(\mu)} = \half \sum_{i=1}^n X_i^2 + X_0$$
We call $B_t := B_t^\theta\left( W^{(\mu)} \right) ; t \geq 0$ the hypoelliptic Brownian motion on $B_\R$ driven by $W^{(\mu)}$, because its infinitesimal generator satisfies the (parabolic) H\"ormander condition: When taking the vector fields $\left( X_1, \dots, X_n \right)$ and their iterated Lie brackets with the family $\left\{ X_0, X_1, \dots, X_n \right\}$, one generates indeed all of $\bfrak_\R = T_e B_\R$.

Although we will not make use of this fact, it is reassuring to know that it has a smooth transition kernel.
\end{rmk}

Because the group $B_\R$ is a matrix group, we can consider that $B_\R \subset M_p(\R)$, for a certain $p \in \N$. Its Lie algebra $\bfrak_\R$ is also a subset of $M_p(\R)$. The left-invariant derivations can be expressed in coordinates using the usual differential calculus. Indeed, if we denote the first and second order differentials of $f \in \Cc^\infty(B_\R)$ at $b \in B_\R$ by:
$$ Df(b): M_p(\R) \longrightarrow \R$$
and
$$ D^2f(b): M_p(\R) \times M_p(\R) \longrightarrow \R$$
then, it is straightforward to check that for $f \in \Cc^{\infty}(B_\R)$, $b \in B_\R$ and $(X,Y) \in \bfrak_\R^2$:
\begin{align}
\label{eqn:link_lie_diff1}
X f(b) & = Df(b).(bX)
\end{align}

\begin{align}
\label{eqn:link_lie_diff2}
X Y f(b) & = Df(b).(bXY) + D^2 f(b).(bX).(bY)
\end{align}

Using the two conventions for stochastic integration: the Stratonovitch convention denoted by $\circ$ or the Ito convention denoted the usual way.

\begin{lemma}
The process $\left( B_t = B_t^\theta(W^{(\mu)}), t\geq 0 \right)$ solves the SDE written in matrix form:
\begin{align}
\label{lbl:process_B_sde}
dB_t & = B_t \circ \left( dW^{(\mu)}_t + \sum_{\alpha \in \Delta} \half \langle \alpha, \alpha \rangle f_\alpha dt \right) \\
     & = B_t \left( dW^{(\mu)}_t + \sum_{r=1}^n \half X_r^2 dt + \sum_{\alpha \in \Delta} \half \langle \alpha, \alpha \rangle f_\alpha dt \right)
\end{align}
with initial condition the identity element $id$.
\end{lemma}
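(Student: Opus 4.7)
The plan is to start from the defining Stratonovich SDE (\ref{lbl:process_B_ode}) for $B_t(W^{(\mu)})$, namely
$$ dB_t(W^{(\mu)}) = B_t(W^{(\mu)})\circ\Big(\sum_{\alpha\in\Delta} f_\alpha\,dt + dW^{(\mu)}_t\Big), \qquad B_0(W^{(\mu)}) = id,$$
and to conjugate the whole equation by $e^{\theta}$. Because $\theta\in\afrak$ is constant and commutes with every increment $dW^{(\mu)}_t\in\afrak$, one has $e^{-\theta}\,dW^{(\mu)}_t\, e^{\theta} = dW^{(\mu)}_t$. The only change comes from the Chevalley generators: the torus adjoint action gives $e^{-\theta} f_\alpha e^{\theta} = e^{\alpha(\theta)} f_\alpha$, and using $\alpha(\omega_\beta^\vee) = \delta_{\alpha,\beta}$ together with the definition $\theta = \sum_\alpha \log(\langle\alpha,\alpha\rangle/2)\,\omega_\alpha^\vee$, this simplifies to $\tfrac{1}{2}\langle\alpha,\alpha\rangle\, f_\alpha$. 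Inserting this back yields directly the first line
$$ dB_t = B_t\circ\Big(dW^{(\mu)}_t + \sum_{\alpha\in\Delta} \tfrac{1}{2}\langle\alpha,\alpha\rangle f_\alpha\,dt\Big).$$
The initial condition is immediate since $W_0 = 0$ gives $B_0 = e^{-\theta}\,id\,e^{\theta} = id$.

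The second line is the Stratonovich-to-It\^o conversion. Writing $W^{(\mu)}_t = \sum_{r=1}^n \beta_t^r X_r + \mu t$ in the chosen orthonormal basis, the finite-variation drift and the deterministic term $\mu\,dt$ contribute nothing to the correction, so the only cross variation to compute is $d\langle B, W^{(\mu)}\rangle_t$. The Stratonovich form just obtained shows $dB_t = B_t \sum_r X_r\, d\beta_t^r + \text{(finite variation)}$, hence
$$ d\langle B, W^{(\mu)}\rangle_t = B_t\sum_{r=1}^n X_r^2\,dt,$$
and the standard rule $B_t\circ dW^{(\mu)}_t = B_t\,dW^{(\mu)}_t + \tfrac{1}{2} B_t \sum_r X_r^2\,dt$ produces exactly the extra drift $\tfrac{1}{2} B_t\sum_r X_r^2\,dt$ appearing in the It\^o form. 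This is consistent with proposition \ref{proposition:inf_generator}: reading $\tfrac12\sum_r X_r^2$ as the Laplacian piece and $\sum_\alpha\tfrac12\langle\alpha,\alpha\rangle f_\alpha + \mu$ as the first-order piece of $\Dc^{(\mu)}$, a single application of It\^o's formula to a test function $f\in\Cc^\infty(B_\R)$ using (\ref{eqn:link_lie_diff1})-(\ref{eqn:link_lie_diff2}) immediately identifies the infinitesimal generator.

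The main obstacle is not analytical but rather a bookkeeping one: one must be comfortable interpreting the matrix-valued SDE inside the ambient $GL_p(\R)$ and then recognizing the result as an intrinsic left-invariant equation on $B_\R$. Everything else is a direct computation. The key identity driving the whole proof is the one-line calculation $e^{-\theta} f_\alpha e^{\theta} = \tfrac12\langle\alpha,\alpha\rangle f_\alpha$, which justifies the otherwise unmotivated shift vector $\theta$ and is also the reason (see remark \ref{rmk:theta_ADE}) that $\theta$ vanishes in the simply-laced case, where all roots have the same squared length $2$.
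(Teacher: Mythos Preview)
Your proof is correct and follows essentially the same approach as the paper. You are more explicit than the paper about the conjugation step (the paper simply says ``after conjugation by $\theta$'' without spelling out $e^{-\theta}f_\alpha e^\theta = \tfrac{1}{2}\langle\alpha,\alpha\rangle f_\alpha$), while for the Stratonovich-to-It\^o conversion the paper carries out the same bracket computation coordinate-by-coordinate in $M_p(\R)$ rather than in the compact matrix notation you use; both arguments compute the same cross variation $\sum_k d\langle (B)_{ik},(K)_{kj}\rangle_t = \sum_r (B_t X_r^2)_{ij}\,dt$.
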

\begin{proof}
If we assume the Stratonovitch differentiation convention, the chain rule is left unchanged and we can use equation \ref{lbl:process_B_ode} to obtain, after conjugation by $\theta$:
$$ dB_t = B_t \circ \left( dW^{(\mu)}_t + \sum_{\alpha \in \Delta} \half \langle \alpha, \alpha \rangle f_\alpha dt \right) $$
The SDE in Ito's convention needs to be established in coordinates using the fact that for two real semi-martingales $Y$ and $Z$:
$$ Y_t \circ Z_t = Y_t dZ_t + \half d\langle Y, Z \rangle_t$$
where $\langle Y, Z \rangle_t$ is their bracket. We fix two indices $1 \leq i,j \leq p$ and view the process $(B_t; t \geq 0)$ as taking its values in $M_p(\R)$. For shorter notation, introduce the $\bfrak_\R$-valued process given by:
$$ K_t = W^{(\mu)}_t + \sum_{\alpha \in \Delta} \half \langle \alpha, \alpha \rangle t f_\alpha$$
Therefore, the coefficient $(B_t)_{ij}$ satisfies:
\begin{align*}
  & d(B_t)_{ij}\\
= & \left( B_t \circ dK_t \right)_{ij}\\
= & \sum_{k=1}^p (B_t)_{ik} \circ (dK_t)_{kj}\\
= & \sum_{k=1}^p (B_t)_{ik} (dK_t)_{kj} + \half \sum_{k=1}^p d \langle (B)_{ik}, (K)_{kj} \rangle_t\\
= & (B_t dK_t)_{ij} + \half \sum_{k=1}^p d \langle (B)_{ik}, (K)_{kj} \rangle_t
\end{align*}
Now, recall that there is an Brownian motion $\left( \beta^1, \dots, \beta^n \right)$ such that:
$$\forall t\geq 0, W_t = \sum_{r=1}^n \beta_t^r X_r$$
Then, when considering the Doob-Meyer decomposition of the coefficient $(B_t)_{ik}$, the local martingale part is equal to:
$$(B_t dW_t )_{ik} = \sum_{r=1}^n (B_t X_r)_{ik} d\beta^r_t$$
The local martingale part in $(K_t)_{kj}$ is:
$$ (dW_t)_{kj} = \sum_{r=1}^n (X_r)_{kj} \beta^r_t$$
Because the bracket between two semi-martingales is the bracket of their local martingale parts, we have:
$$ d \langle (B)_{ik}, (K)_{kj} \rangle_t = \sum_{r=1}^n (B_t X_r)_{ik} (X_r)_{kj} dt$$
Therefore:
$$ d(B_t)_{ij} = (B_t dK_t)_{ij} + \half \sum_{i=1}^n (B_t X_r X_r)_{ij} dt $$
Hence the result.
\end{proof}

Now let us establish in the same fashion an Ito formula for the process $\left( B_t; t \geq 0 \right)$:
\begin{lemma}[Ito formula for $B_t$]
 For $f \in \Cc^{\infty}(B_\R)$:
$$ \forall t \geq 0, f(B_t) = f(id) + \int_0^t Df(B_s) dB_s + \half \int_0^t \sum_{i=1}^n D^2f(B_s).(B_s X_r).(B_s X_r) ds$$
\end{lemma}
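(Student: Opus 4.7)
The plan is to apply the standard multidimensional Itô formula in Euclidean space to the matrix-valued semimartingale $(B_t)_{t\geq 0}$, viewed as living in $M_p(\mathbb{R}) \approx \mathbb{R}^{p^2}$, where the Borel subgroup $B_\mathbb{R}$ is embedded. The function $f \in \mathcal{C}^\infty(B_\mathbb{R})$ can be extended (at least locally around each point of $B_\mathbb{R}$) to a smooth function on $M_p(\mathbb{R})$, and since the formula involves only the values of $Df$ and $D^2 f$ restricted to tangent directions of the form $bX$, $X \in \mathfrak{b}_\mathbb{R}$, the extension plays no role in the final expression.

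First, I would recall from the previous lemma that
$$ dB_t = B_t\, dW_t^{(\mu)} + \tfrac{1}{2} \sum_{r=1}^n B_t X_r^2\, dt + \sum_{\alpha \in \Delta} \tfrac{1}{2} \langle \alpha, \alpha\rangle B_t f_\alpha\, dt, $$
which is the Itô form of the defining SDE. The drift portion is an absolutely continuous finite-variation process, so the quadratic covariation of $B$ with itself comes entirely from the local martingale part $B_t\, dW_t = \sum_{r=1}^n (B_t X_r)\, d\beta_t^r$. Writing the bracket as a $M_p(\mathbb{R}) \otimes M_p(\mathbb{R})$-valued process, independence of the Brownian coordinates $\beta^r$ gives
$$ d\langle B, B\rangle_t = \sum_{r=1}^n (B_t X_r) \otimes (B_t X_r)\, dt. $$

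Next I would apply the standard Itô formula on $\mathbb{R}^{p^2}$ to $f(B_t)$:
$$ f(B_t) = f(\mathrm{id}) + \int_0^t Df(B_s)\, dB_s + \tfrac{1}{2}\int_0^t D^2 f(B_s)\, d\langle B, B\rangle_s. $$
Substituting the bracket computed above and unfolding $D^2 f(B_s)$ as a symmetric bilinear form evaluated on pairs $(B_s X_r, B_s X_r)$ yields
$$ \tfrac{1}{2}\int_0^t D^2 f(B_s)\, d\langle B, B\rangle_s = \tfrac{1}{2}\int_0^t \sum_{r=1}^n D^2 f(B_s).(B_s X_r).(B_s X_r)\, ds, $$
which is the desired second-order term.

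I do not expect a real obstacle here; the only point requiring attention is the translation between the intrinsic left-invariant differential operators on $B_\mathbb{R}$ and the ambient differential calculus on $M_p(\mathbb{R})$, which is already recorded by the identities $Xf(b) = Df(b).(bX)$ and $XYf(b) = Df(b).(bXY) + D^2f(b).(bX).(bY)$ given just above the lemma. These are exactly what is needed afterwards to rewrite the Itô formula in the invariant form that makes the generator $\mathcal{D}^{(\mu)} = \tfrac{1}{2}\sum_r X_r^2 + X_0$ appear, and thus to conclude Proposition \ref{proposition:inf_generator}; but at the level of the lemma itself, it suffices to combine the SDE satisfied by $B$ with the standard Euclidean Itô formula as outlined.
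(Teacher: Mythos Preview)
Your proposal is correct and follows essentially the same approach as the paper: embed $B_t$ in the ambient matrix space $M_p(\R)\approx\R^{p^2}$, identify the local martingale part of $dB_t$ as $\sum_r (B_t X_r)\,d\beta_t^r$, compute the bracket $d\langle B,B\rangle_t=\sum_r (B_t X_r)\otimes(B_t X_r)\,dt$, and read off the second-order term. The only cosmetic difference is that the paper starts from the Stratonovich chain rule $df(B_t)=Df(B_t)\circ dB_t$ and then converts to It\^o, whereas you apply the Euclidean It\^o formula directly; the underlying computation is identical.
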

\begin{proof}
Again, we view $b \in B_\R$ as a matrix $b = \left(b_{ij}\right)_{1 \leq i,j \leq p} \in M_p(\R)$. In coordinates, $f$ is a function of the $b_{ij}$ variables. Thanks to the chain rule, while using the Stratonovitch convention, we know that:
$$ df(B_t) = Df(B_t) \circ dB_t$$
Therefore:
\begin{align*}
  & df(B_t)\\
= & \sum_{i,j=1}^p \frac{\partial f}{\partial b_{ij}}(B_t) \circ d(B_t)_{ij}\\
= & \sum_{i,j=1}^p \frac{\partial f}{\partial b_{ij}}(B_t) d(B_t)_{ij} + \half \sum_{i,j,k,l=1}^p \frac{\partial^2 f}{\partial b_{ij}\partial b_{kl}} d\langle (B)_{ij}, (B)_{kl} \rangle_t
\end{align*}
Now, when considering the Doob-Meyer decomposition of the coefficient $(B_t)_{ij}$, the local martingale part is equal to:
$$(B_t dW_t )_{ij} = \sum_{r=1}^n (B_t X_r)_{ij} d\beta^r_t$$
Then:
$$ d\langle (B)_{ij}, (B)_{kl} \rangle_t = \sum_{r=1} (B_t X_r)_{ij} (B_t X_r)_{kl} dt$$
This yields the result.
\end{proof}

\begin{proof}[Proof of proposition \ref{proposition:inf_generator}]
Fix $b \in B_\R$. By the It\^o formula applied to $f \in \Cc^{\infty}(B_\R)$:
\begin{align*}
  & \lim_{s \rightarrow 0} \frac{ \E\left( f(B_{t+s}) -f(B_t) | \Fc_t^B, B_t = b \right)}{s}\\
= & Df(b).\left( b\mu + \half b \sum_{i=1}^n X_i^2 + \half b \sum_{\alpha \in \Delta} \langle \alpha, \alpha \rangle f_\alpha \right) + \half \sum_{r=1}^n D^2f(b).(b X_r).(b X_r)
\end{align*}
Moreover, because of equation \ref{eqn:link_lie_diff2}, we have:
$$ \Delta_\afrak f(b) = Df(b).\left( \sum_{r=1}^n X_r^2 \right) + \sum_{r=1}^n D^2f(b).(bX_r).(bX_r)$$
Hence, using equation \ref{eqn:link_lie_diff1}:
\begin{align*}
  & \lim_{s \rightarrow 0} \frac{ \E\left( f(B_{t+s}) -f(B_t) | \Fc_t^B, B_t = b \right)}{s}\\
= & \half \Delta_\afrak f(b) + \mu f(b) + \sum_{\alpha \in \Delta} \half \langle \alpha, \alpha \rangle f_\alpha f(b)
\end{align*}
This is the announced result.
\end{proof}

\subsection{Harmonic functions for an invariant process}

The classical approach to the Whittaker functions (cf. \cite{bib:Hashizume82} for instance) is to look at them as eigenfunctions of the quantum Toda Hamiltonian, a Schr\"odinger operator on $\afrak \approx \R^n$:
\index{$H$: Quantum Toda Hamiltonian}
\begin{align*}
 H & = \half \Delta - \half \sum_{\alpha} \langle \alpha, \alpha \rangle e^{-\alpha(x)}
\end{align*}
They satisfy:
$$ H \psi_\mu = \half \langle \mu, \mu \rangle \psi_\mu$$

A way to turn the problem into an invariant problem is to look at the Whittaker functions as harmonic functions for an invariant process. Let $\chi^-: N \rightarrow \C$ be the principal character on $N$ defined by:
$$ \forall t \in \C, \forall \alpha \in \Delta, \chi^-( e^{t f_\alpha} ) = t$$
Then:
\begin{lemma}
\label{lemma:equivalence_toda_harmonicity}
A function $\psi_\mu: \afrak \rightarrow \C$ on $\afrak$ solves:
$$ H \psi_\mu = \half \langle \mu, \mu \rangle \psi_\mu$$
if and only if the function $\Phi_\mu: B_\R \rightarrow \C$ defined by:
$$ \Phi_\mu(n e^x) := \exp\left( -\chi^-(n) \right) \psi_\mu(x) e^{-\langle \mu, x\rangle}$$
is harmonic for $\Dc^{(\mu)}$ (i.e $\Dc^{(\mu)} \Phi_\mu = 0$).
\end{lemma}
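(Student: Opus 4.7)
The plan is to compute $\Dc^{(\mu)} \Phi_\mu$ directly in the Iwasawa-type coordinates $b = n e^x \in N_\R A$, show that it factorises as $e^{-\chi^-(n)-\langle\mu,x\rangle}$ times an expression in $x$ only, and recognise that expression as $H\psi_\mu(x) - \half\langle \mu,\mu\rangle\psi_\mu(x)$. Since that factor is nowhere vanishing, the claimed equivalence will follow.

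First I would treat the abelian part $\half \Delta_\afrak + \mu$. Because $H = e^\afrak$ is commutative, right translation by $e^{th}$ for $h\in\afrak$ simply shifts $x$ to $x+th$ without touching $n$, so acting by an element $h\in\afrak$ (viewed as left-invariant field) on $\Phi_\mu$ just differentiates the $x$-factor. A one-line Leibniz computation using $\nabla e^{-\langle \mu,x\rangle} = -\mu\, e^{-\langle\mu,x\rangle}$ and $\Delta_\afrak e^{-\langle \mu,x\rangle} = \langle\mu,\mu\rangle e^{-\langle\mu,x\rangle}$ produces the cancellation
\begin{equation*}
\left(\tfrac12 \Delta_\afrak + \mu\right)\Phi_\mu(n e^x) = e^{-\chi^-(n)} e^{-\langle \mu,x\rangle}\left(\tfrac12 \Delta\psi_\mu(x) - \tfrac12 \langle \mu,\mu\rangle\psi_\mu(x)\right),
\end{equation*}
so the drift term $\mu$ absorbs the cross term coming from Leibniz, which is exactly the point of the gauge factor $e^{-\langle\mu,x\rangle}$.

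Next I would handle the Chevalley part $\tfrac12\sum_\alpha \langle\alpha,\alpha\rangle f_\alpha$. The key identity is
\begin{equation*}
n e^x \cdot e^{t f_\alpha} = n \, e^{t e^{-\alpha(x)} f_\alpha} \cdot e^x,
\end{equation*}
obtained from $\mathrm{Ad}(e^x) f_\alpha = e^{-\alpha(x)} f_\alpha$ since $f_\alpha\in \gfrak_{-\alpha}$. This moves the right translation across the torus factor into an element of $N_\R$. Now the additivity property $\chi^-(n_1 n_2) = \chi^-(n_1)+\chi^-(n_2)$ on the unipotent group, together with $\chi^-(e^{sf_\alpha})=s$, gives
\begin{equation*}
f_\alpha \Phi_\mu(n e^x) = \frac{d}{dt}\bigg|_{t=0} e^{-\chi^-(n) - t e^{-\alpha(x)}}\psi_\mu(x) e^{-\langle\mu,x\rangle} = -e^{-\alpha(x)}\,\Phi_\mu(n e^x),
\end{equation*}
so $\tfrac12 \sum_\alpha \langle\alpha,\alpha\rangle f_\alpha \Phi_\mu = -\bigl(\tfrac12\sum_\alpha \langle\alpha,\alpha\rangle e^{-\alpha(x)}\bigr)\Phi_\mu$.

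Adding the two contributions yields
\begin{equation*}
\Dc^{(\mu)} \Phi_\mu(n e^x) = e^{-\chi^-(n)} e^{-\langle\mu,x\rangle}\left( H\psi_\mu(x) - \tfrac12 \langle\mu,\mu\rangle \psi_\mu(x)\right),
\end{equation*}
and since the prefactor is nonzero everywhere on $N_\R \times \afrak$, harmonicity of $\Phi_\mu$ for $\Dc^{(\mu)}$ is equivalent to $\psi_\mu$ being an eigenfunction of $H$ with eigenvalue $\tfrac12 \langle \mu,\mu\rangle$. There is no serious obstacle here; the only care-point is the bookkeeping behind the two identities $\mathrm{Ad}(e^x) f_\alpha = e^{-\alpha(x)} f_\alpha$ and the additivity of the principal character $\chi^-$ under the product in $N$, both of which are standard. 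The conceptual content is that the conjugation factor $e^{-\chi^-(n)} e^{-\langle\mu,x\rangle}$ is exactly the gauge that turns the Schr\"odinger operator $H - \half\langle\mu,\mu\rangle$ on $\afrak$ into the (left-invariant, hypoelliptic) generator $\Dc^{(\mu)}$ on $B_\R$, so that eigenfunctions on $\afrak$ correspond to harmonic functions on $B_\R$ of the prescribed shape.
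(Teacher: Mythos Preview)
Your proof is correct and follows essentially the same route as the paper's: both compute $f_\alpha \Phi_\mu = -e^{-\alpha(x)}\Phi_\mu$ via the $\mathrm{Ad}(e^x)$-action on $f_\alpha$, then reduce the abelian part to the Toda eigenvalue equation. The only cosmetic difference is that the paper packages $\psi_\mu e^{-\langle\mu,\cdot\rangle}$ into an auxiliary function $\varphi_\mu$ before expanding, whereas you carry out the Leibniz computation on the product directly; the content is identical.
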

\begin{proof}
Notice that for $t \in \R$ and for $\alpha \in \Delta$:
\begin{align*}
\Phi_\mu(n e^x e^{t f_\alpha}) & = \Phi_\mu\left(n \exp(t e^{-\alpha(x)} f_\alpha) e^x \right)\\
                               & = \exp\left( - t e^{-\alpha(x)} \right) \Phi_\mu(n e^x)
\end{align*}
Hence:
\begin{align*}
f_\alpha \Phi_\mu(n e^x) & = \frac{d}{dt}\left( \Phi_\mu(n e^x e^{t f_\alpha}) \right)_{|t=0}\\
                         & = - e^{-\alpha(x)} \Phi_\mu(n e^x )
\end{align*}
Therefore, writing $\varphi_\mu = \psi_\mu e^{-\langle \mu, .\rangle}$, we have the following succession of equivalent statements:
\begin{align*}
                & \Dc^{(\mu)} \Phi_\mu = 0\\
\Leftrightarrow & \frac{1}{2} (\Delta \varphi_\mu) e^{-\chi^-} + \langle\mu, \nabla \varphi_\mu\rangle e^{-\chi^-} + \half \sum_{\alpha \in \Delta} \langle \alpha, \alpha \rangle f_\alpha \Phi_\mu = 0\\
\Leftrightarrow & \frac{1}{2} (\Delta \varphi_\mu) + \langle\mu, \nabla \varphi_\mu\rangle - \half \sum_{\alpha \in \Delta} \langle \alpha, \alpha \rangle e^{-\alpha(x)} \varphi_\mu = 0\\
\Leftrightarrow & H \psi_\mu = \half \langle \mu, \mu \rangle \psi_\mu
\end{align*}
\end{proof}

\subsection{Probabilistic integral representations on the boundary}
Because it is a harmonic function, we should be able to represent $\Phi_\mu$ as the integral of a function over the 'boundary' of $B$. Here however, the notion of boundary needed is not the topological one. Furstenberg developed such a notion and a very good account of the theory of boundaries on Lie groups is explained in \cite{bib:Bab02} in the case of random walks. The continuous case is, in a way, simpler.

For boundary, one has to consider a space with a $B_\R$-action and a natural invariant measure $\nu$. Restricting ourselves to the case where $\mu \in C$, we can see that the $N_\R$-part of the process $\left( B_t^\theta(W^{(\mu)}); t \geq 0 \right)$ (equation \ref{eqn:process_B_theta_explicit}) is:
\begin{align}
\label{eqn:process_N_theta_explicit}
  & N_t^\theta(W^{(\mu)})\\
= & \sum_{k \geq 0} \sum_{ i_1, \dots, i_k } \int_{ t \geq t_k \geq \dots \geq t_1 \geq 0} e^{ -\alpha_{i_1}(W^{(\mu)}_{t_1}) \dots -\alpha_{i_k}(W^{(\mu)}_{t_k}) } 
\frac{|| \alpha_{i_1} ||^2}{2} \dots \frac{|| \alpha_{i_k} ||^2}{2} f_{i_1} \dots f_{i_k} dt_1 \dots dt_k
\end{align} 
and converges in $N_\R$ when $t \rightarrow \infty$. Therefore, a natural choice for a boundary of $B_\R$ is simply $N_\R$ and the invariant measure $\nu$ is the law of $N_\infty^\theta(X^{(\mu)})$ when $\mu \in C$.

The Borel subgroup $B_\R$ acts on $N_\R$ as:
$$ \forall b = na \in B_\R, \forall n' \in N_\R, (na) \cdot n' = n a n' a^{-1}$$
Hence for any bounded function $\varphi$ on $N_\R$, we obtain a harmonic function for $\Dc^{(\mu)}$ simply by considering:
\begin{align}
\label{eqn:poisson_integral_phi}
n a & \mapsto \E\left( \varphi(n a N_\infty^\theta(W^{(\mu)}) a^{-1} ) \right) 
\end{align}
The previous subsection tells us to take $\varphi$ as a character of the unipotent subgroup $N_\R$, in order to obtain Whittaker functions. As we will see in theorem \ref{thm:N_infty_law}, $\nu$ has support in totally positive matrices, hence their importance.

In short, the key object that we need to understand is $\nu$, the law of $N_\infty^\theta(W^{(\mu)})$. This will be the subject of the two next sections, before resuming the study of random crystals.

\section{A review on a relationship proven by Matsumoto and Yor}
\label{section:review_matsumoto_yor}
This section contains a version of the Matsumoto and Yor relationship between Brownian motions with opposite drifts $\cite{bib:MY01}$, which itself is based on many previous works related to exponential functionals of BM.

\begin{thm}[ Matsumoto-Yor \cite{bib:MY01}, theorem 2.2 ]
\label{thm:my_bm_with_opposite_drifts_1}
Let $B^{(\mu)}$ be a Brownian motion on a Euclidian vector space $V \approx \R^n$ with drift $\mu$ and $\beta$ a linear form on $V$ such that $\beta(\mu)>0$. Denote by $s_\beta$ the hyperplane reflection with respect to $\ker \beta$ and by $\Q^y$ the measure of BM conditionally to its exponential functional being equal to $y>0$:
$$\Q^y := \P\left( \ | \int_0^\infty e^{-\beta( B_s^{(\mu)}) }ds = y \right)$$
and:
$$ \beta^\vee = \frac{2 \beta}{ \langle \beta, \beta \rangle }$$
Then:\\
$$ \hat{B}_t^{ (s_\beta \mu)} = B_t^{(\mu)} + \log\left(1-\frac{1}{y}\int_0^t e^{-\beta( B_s^{(\mu)}) }ds \right) \beta^{\vee} $$
is a $\Q^y$-BM with drift $s_\beta \mu = \mu - \beta(\mu) \beta^\vee$.
\end{thm}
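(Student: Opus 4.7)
The strategy is to reduce the multidimensional statement to the one-dimensional Matsumoto--Yor relation by using the orthogonal decomposition of $V$ along the direction of $\beta$. Two observations drive the reduction: both the exponential functional $\int_0^\infty e^{-\beta(B_s^{(\mu)})}ds$ and the path-transform correction $\log(\cdots)\beta^\vee$ depend only on the one-dimensional projection $\beta(B^{(\mu)})$ (because $\beta^\vee$ is colinear with $\beta$, hence orthogonal to $\ker\beta$); and the component of $B^{(\mu)}$ in $\ker\beta$ is an independent Brownian motion that is left untouched by the conditioning and by the transformation.

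The first step is to decompose $V=\R\beta\oplus\ker\beta$ orthogonally (with respect to the Euclidean inner product on $V$ under which $\langle\beta,\beta\rangle$ is defined) and to write
$$ B_t^{(\mu)}\;=\;\frac{X_t}{\langle\beta,\beta\rangle}\,\beta\;+\;B_t^{\perp},\qquad X_t:=\beta\bigl(B_t^{(\mu)}\bigr), $$
where $X$ is a real Brownian motion with drift $\beta(\mu)>0$ and infinitesimal variance $\langle\beta,\beta\rangle$, and $B^{\perp}$ is an independent Brownian motion in $\ker\beta$ with drift $\mu^{\perp}:=\mu-\tfrac{\beta(\mu)}{\langle\beta,\beta\rangle}\beta$. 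Applying the linear form $\beta$ to the defining equation of $\hat B^{(s_\beta\mu)}$ and using $\beta(\beta^\vee)=2$ gives
$$ \beta\bigl(\hat B_t^{(s_\beta\mu)}\bigr)\;=\;X_t+2\log\!\left(1-\tfrac{1}{y}\int_0^t e^{-X_s}\,ds\right)\;=:\;\hat X_t,$$
while the projection onto $\ker\beta$ is unchanged, yielding the clean split
$$ \hat B_t^{(s_\beta\mu)}\;=\;\frac{\hat X_t}{\langle\beta,\beta\rangle}\,\beta\;+\;B_t^{\perp}. $$

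The second step is to invoke the one-dimensional Matsumoto--Yor theorem (as in \cite{bib:MY00-1,bib:MY00-2}) applied to the real Brownian motion $X$ with drift $\beta(\mu)>0$: conditionally on the $\sigma(X)$-measurable event $\{\int_0^\infty e^{-X_s}ds=y\}$, the transformed process $\hat X$ is itself a Brownian motion with variance parameter $\langle\beta,\beta\rangle$ and opposite drift $-\beta(\mu)$. Since the conditioning event lies in $\sigma(X)$ and $B^{\perp}\perp\!\!\!\perp X$ under $\P$, the orthogonal component $B^{\perp}$ retains its original law under $\Q^y$ and remains independent of $\hat X$. Finally, one recombines: because $s_\beta$ fixes $\ker\beta$ one has $\mu^{\perp}=(s_\beta\mu)^{\perp}$ and $\beta(s_\beta\mu)=-\beta(\mu)$, so the two independent components $\tfrac{\hat X_t}{\langle\beta,\beta\rangle}\beta$ and $B_t^{\perp}$ carry exactly the drifts of the $\beta$- and $\ker\beta$-projections of a Brownian motion with drift $s_\beta\mu$, and their infinitesimal covariances reassemble to that of a standard Brownian motion on $V$.

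\paragraph{Main obstacle.} The genuine analytic content is the one-dimensional Matsumoto--Yor relation itself, which I would cite as a black box; its proof rests on a time-reversal argument combined with the explicit transition semigroup of the diffusion $X+2\log(1-\tfrac{1}{y}\int_0^\cdot e^{-X_s}ds)$. The only substantive point in the multidimensional reduction is the verification that conditioning on a $\sigma(X)$-measurable event preserves both the distribution of $B^{\perp}$ and its independence from $X$, which follows immediately from the mutual independence of orthogonal projections of a Euclidean Brownian motion.
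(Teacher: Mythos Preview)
Your reduction via the orthogonal decomposition $V=\R\beta\oplus\ker\beta$ is correct and complete: the conditioning event is $\sigma(X)$-measurable where $X=\beta(B^{(\mu)})$, the orthogonal component $B^\perp$ is independent of $X$ under $\P$ and hence retains its law and independence under $\Q^y$, and the path transform acts only in the $\beta$-direction. Recombining the two components with the observation $\beta(s_\beta\mu)=-\beta(\mu)$ and $(s_\beta\mu)^\perp=\mu^\perp$ gives the claim.

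The paper takes a genuinely different route. Rather than reducing to one dimension and citing the Matsumoto--Yor relation as a black box, it gives a self-contained proof directly in the multidimensional setting via initial enlargement of filtration: it first computes the law of $N_\infty=\int_0^\infty e^{-\beta(B^{(\mu)}_s)}ds$ from Dufresne's identity and Brownian scaling, then uses the decomposition $N_\infty = N_t + e^{-\beta(B_t^{(\mu)})}\tilde N_\infty$ (with $\tilde N_\infty$ an independent copy) to compute explicitly the Radon--Nikodym density $q(B_t^{(\mu)},N_t,y)=\frac{d\Q^y}{d\P}\big|_{\mathcal F_t^B}$, and finally applies Girsanov after computing the bracket $\langle\beta(B^{(\mu)}),\log q\rangle_t$. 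Your approach is cleaner and makes transparent that the multidimensional statement has no new content beyond the one-dimensional case; the paper's approach, while more computational, has the advantage of exhibiting the explicit likelihood $q$ and the filtration-enlargement machinery, both of which are reused later in the paper when the same technique is lifted to the solvable group $B$.
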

This theorem has a dual version that characterises the reciprocal transform of Brownian motion as a Brownian motion conditioned with respect to its exponential functional:
\begin{thm}[ Matsumoto-Yor \cite{bib:MY01}, theorem 2.1]
\label{thm:my_bm_with_opposite_drifts_2}
Let $W^{(s_\beta \mu)}$ be a BM on $V \approx \R^n$ with drift $s_\beta \mu$, $\beta(\mu)>0$ and:
$$ X_t = W_t^{(s_\beta \mu)} + \log\left(1+\frac{1}{y}\int_0^t e^{ -\beta( W_s^{(s_\beta \mu)} ) }ds\right) \beta^{\vee} $$
Then $X$ is a BM with drift $\mu$, $B^{(\mu)}$, conditionned to $\int_0^\infty e^{- \beta(B_s^{(\mu)})} ds = y$.\\
If moreover, we pick $y$ as random with $ y \stackrel{\mathcal{L}}{=} \frac{2}{\langle \beta, \beta\rangle \gamma_{ \langle \beta^\vee, \mu \rangle } }$ independent from $W$ then $X$ is a Brownian motion with drift $\mu$.
\end{thm}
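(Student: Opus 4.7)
The strategy is to recognize the transformation $W^{(s_\beta\mu)} \mapsto X$ as the pathwise inverse of the Matsumoto--Yor map $B^{(\mu)} \mapsto \hat{B}^{(s_\beta\mu)}$ of Theorem~\ref{thm:my_bm_with_opposite_drifts_1}, and then to deduce the conditional statement by transporting laws along this bijection. First I would verify pathwise invertibility. Setting $A_t := \int_0^t e^{-\beta(W^{(s_\beta\mu)}_s)}\,ds$ and using $\beta(\beta^\vee)=2$, a direct computation gives
\[ e^{-\beta(X_t)} = e^{-\beta(W^{(s_\beta\mu)}_t)} \bigl(1 + A_t/y\bigr)^{-2}, \]
whence
\[ \int_0^t e^{-\beta(X_s)}\,ds = \int_0^t \frac{dA_s}{(1+A_s/y)^2} = \frac{A_t}{1 + A_t/y}. \]
Substituting this into the transform of Theorem~\ref{thm:my_bm_with_opposite_drifts_1} yields
\[ X_t + \log\Bigl(1 - \tfrac{1}{y}\int_0^t e^{-\beta(X_s)}\,ds\Bigr)\beta^\vee = X_t - \log(1+A_t/y)\beta^\vee = W_t^{(s_\beta\mu)}, \]
so the two maps are mutual inverses.

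Next I would compute the total exponential functional of $X$. Since $\beta(s_\beta\mu) = -\beta(\mu) < 0$, $W^{(s_\beta\mu)}$ has strictly negative drift in the direction $\beta$, hence $A_\infty = +\infty$ almost surely, and so
\[ \int_0^\infty e^{-\beta(X_s)}\,ds = \lim_{t \to \infty} \frac{A_t}{1 + A_t/y} = y \quad \text{a.s.} \]
This identifies $X$ as a path carrying prescribed exponential functional $y$, and also shows that the defining formula of $X$ makes sense on all of $\mathbb{R}_+$.

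For the first assertion, fix $y > 0$. By Theorem~\ref{thm:my_bm_with_opposite_drifts_1}, under the conditional law $\Q^y$ the Matsumoto--Yor transform $\hat{B}^{(s_\beta\mu)}$ is a Brownian motion with drift $s_\beta\mu$. Since the transform is a measurable bijection between the $y$-level set $\{\int_0^\infty e^{-\beta(B_s^{(\mu)})}\,ds = y\}$ and the set of paths of the form $W^{(s_\beta\mu)}$, pushing Wiener measure (with drift $s_\beta\mu$) back through the inverse described above produces $\Q^y$. This says precisely that $X$ is distributed as $B^{(\mu)}$ conditioned on $\int_0^\infty e^{-\beta(B_s^{(\mu)})}\,ds = y$. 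For the second, unconditional, assertion, the key input is Dufresne's identity: when $\beta(\mu)>0$,
\[ \int_0^\infty e^{-\beta(B_s^{(\mu)})}\,ds \;\stackrel{\mathcal{L}}{=}\; \frac{2}{\langle\beta,\beta\rangle\,\gamma_{\langle\beta^\vee,\mu\rangle}}. \]
Disintegrating the law of $B^{(\mu)}$ over the value of its exponential functional and integrating against this distribution recovers $B^{(\mu)}$ itself, so choosing $y$ with this distribution independently of $W$ indeed makes $X$ a Brownian motion with drift $\mu$.

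The main delicate point will be the rigorous handling of the conditioning: the event $\{\int_0^\infty e^{-\beta(B^{(\mu)}_s)}\,ds = y\}$ has probability zero, so $\Q^y$ must be constructed as a regular conditional probability, and one has to verify the requisite measurability and, ideally, weak continuity in $y$ in order for the pathwise inverse map to transport laws as claimed. Everything else is essentially an explicit calculation or an appeal to Theorem~\ref{thm:my_bm_with_opposite_drifts_1} and Dufresne's identity.
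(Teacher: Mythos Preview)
Your proposal is correct and follows essentially the same approach as the paper: the paper isolates your pathwise inversion computation as a separate ``Inversion lemma'' (showing that the two transforms are mutual inverses and that $\int_0^\infty e^{-\beta(X_s)}\,ds = y$ when $\int_0^\infty e^{-\beta(W^{(s_\beta\mu)}_s)}\,ds = \infty$), then applies Theorem~\ref{thm:my_bm_with_opposite_drifts_1} and inverts exactly as you do, with the second assertion following from Dufresne's identity and disintegration. The delicate point you flag about regular conditional probabilities is handled earlier in the paper by noting that the law of $N_\infty$ has a smooth density.
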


Notice that compared to the original formulation, we used a multidimensional setting. The change of sign is simply replaced by a hyperplane reflection. For completeness, we provide in this section a proof and an overview of the tools involved, while not using any group theory. In the following section, we start by explaining the link to our problem, interpreting the previous theorem as the $SL_2$ case of theorem \ref{thm:conditional_representation}.\\

\subsection{Exponential functionals of BM}
First let us start by proving theorem \ref{thm:my_bm_with_opposite_drifts_1} using known results on exponential functionals of BM. An important ingredient is Dufresne's identity in law:
\begin{proposition}[ Dufresne \cite{bib:Dufresne90} ]
 \label{lbl:dufresne_identity}
 If $W^{(\mu)}$ is a one dimensional Brownian motion with drift $\mu>0$, then:
$$ \int_0^\infty e^{-2 W_s^{(\mu)} }ds \stackrel{\mathcal{L}}{=} \frac{1}{2\gamma_\mu} $$
\end{proposition}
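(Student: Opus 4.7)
The plan is to identify the exponential functional $A_\infty := \int_0^\infty e^{-2 W_s^{(\mu)}}\,ds$ with the hitting time of zero of a squared Bessel process via Lamperti's time change, and then invoke the known law of that hitting time.

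First I would set $X_t := e^{-W_t^{(\mu)}}$ and apply Ito's formula to get
$$d(X_t^2) = -2 X_t^2\, dW_t + (2-2\mu) X_t^2\, dt.$$
Introducing the additive functional $A_t = \int_0^t X_s^2\,ds$ and its inverse $\rho_u = \inf\{t : A_t > u\}$, the time-changed process $Y_u := X_{\rho_u}^2$ is, by the Dambis--Dubins--Schwarz argument applied to the martingale part, a solution of the SDE
$$dY_u = 2\sqrt{Y_u}\, d\hat{B}_u + (2-2\mu)\,du, \qquad Y_0 = 1,$$
for some standard Brownian motion $\hat B$. This is precisely the squared Bessel process of dimension $\delta = 2 - 2\mu$ started from $1$.

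Since $\mu>0$, we have $\delta<2$, and BESQ$(\delta)$ started from $1$ reaches zero in finite time almost surely. On the other hand, the law of large numbers for Brownian motion with drift gives $W^{(\mu)}_t/t \to \mu>0$, so $X_t^2 \to 0$ exponentially fast and $A_\infty < \infty$ almost surely. A continuity and monotonicity argument on the time change then identifies $A_\infty$ with the hitting time $T_0 := \inf\{u : Y_u = 0\}$.

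It then remains to verify that $T_0$, for BESQ$(\delta)$ started at $1$ with $\delta = 2 - 2\mu$, has the same law as $1/(2\gamma_\mu)$. I would compute $\E_1[\exp(-\lambda T_0)]$ either by a Feynman--Kac / ODE argument involving the infinitesimal generator $\tfrac{1}{2}(2y \partial_y^2 + \delta\, \partial_y)$ on $(0,\infty)$ with absorbing boundary at $0$, selecting the decaying solution at $+\infty$ and normalising to $1$ at $y=1$; the answer involves the modified Bessel function $K_\mu$. On the other hand, a direct change of variable $u = 1/(2t)$ in the gamma density yields
$$\E\!\left[\exp\!\left(-\tfrac{\lambda}{2\gamma_\mu}\right)\right] = \frac{2}{\Gamma(\mu)}\left(\tfrac{\lambda}{2}\right)^{\mu/2} K_\mu(\sqrt{2\lambda}),$$
which coincides with the previous expression, proving the identity by injectivity of the Laplace transform. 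The main obstacle is the explicit computation of the Laplace transform of $T_0$ for the subcritical squared Bessel process; depending on the reader's background, this can either be taken as a classical result (see Revuz--Yor, Ch. XI) or carried out through the ODE argument sketched above, the subtle point being the correct boundary behaviour at $0$ to single out the decreasing Bessel-function solution.
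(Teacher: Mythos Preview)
Your Lamperti time-change approach is correct and classical, but it is genuinely different from the route taken in the paper. A minor point worth flagging: for $\mu>1$ the squared Bessel dimension $\delta=2-2\mu$ is negative; this is harmless (the SDE $dY_u = 2\sqrt{Y_u}\,d\hat B_u + \delta\,du$ still has a unique strong solution up to absorption at~$0$, and the hitting-time law $T_0 \stackrel{\Lc}{=} 1/(2\gamma_{1-\delta/2})$ extends, cf.\ G\"oing-Jaeschke--Yor), but you should say so explicitly rather than silently assume $\delta\in[0,2)$.

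The paper's argument avoids Bessel processes and special functions altogether. It observes that, by time reversal of the Brownian path on $[0,t]$, the partial integral $\int_0^t e^{-2W_s^{(\mu)}}\,ds$ has the same law as $e^{-2W_t^{(\mu)}}\int_0^t e^{2W_s^{(\mu)}}\,ds =: e^{-Z_t}$; a direct It\^o computation shows that $Z$ is an autonomous diffusion with generator $\Lc = 2\partial_z^2 + (2\mu - e^z)\partial_z$. Since $e^{-Z_t}$ converges in law (to $A_\infty$), the limiting law must be the invariant measure of $Z$, and one simply checks by hand that the density of $\log(2\gamma_\mu)$ is killed by $\Lc^*$. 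This is a two-line calculus verification, whereas your route ultimately rests on the Laplace transform of $T_0$ for subcritical BESQ, which --- as you yourself note --- is either a black-box citation or a nontrivial ODE/Bessel-$K_\mu$ computation. Your approach has the merit of connecting the result to a well-understood object (first passage of BESQ), and generalises nicely to other functionals; the paper's approach is shorter and entirely self-contained.
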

\begin{proof}[Quick proof]
By time inversion, for any fixed $t>0$, the random variable $\int_0^t e^{-2 W_s^{(\mu)} }ds$ has the same law as $e^{-2 W_t^{(\mu)}} \int_0^t e^{ 2 W_s^{(\mu)} } ds$. Let $\left(Z_t, t>0\right)$ be given by:
$$ e^{-Z_t} := e^{-2 W_t^{(\mu)}} \int_0^t e^{ 2 W_s^{(\mu)} } ds$$
And, by Ito's lemma, $Z_t$ can be easily checked to be a diffusion process since it satisfies for $t>0$ the SDE:
$$ d Z_t = 2 dW^{(\mu)}_t - e^{Z_t}dt$$
Hence it has as infinitesimal generator:
$$ \Lc = 2 \partial_z^2 + (2 \mu - e^z) \partial_z $$
The sequence $Z_t$ converges in law to a unique invariant measure because $e^{-Z_t}$ has the same distribution as $\int_0^t e^{-2 W_s^{(\mu)} }ds$, which converges almost surely. This invariant measure will be the law of $\int_0^\infty e^{-2 W_s^{(\mu)} }ds$. Therefore, all we need to do is to prove that the distribution of $\log 2 \gamma_\mu$ is an invariant measure for $Z_t$. This is done easily by checking that the adjoint of $\Lc$ annihilates the density $p(z)$ of the law $\log 2\gamma_\mu$.\\
We have:
$$ p(z) = \frac{1}{\Gamma(\mu) 2^\mu} \exp\left( \mu z - \half e^z \right)$$
Applying $\Lc^*$, the adjoint of $\Lc$:
$$ \Lc^* = 2 \partial_z^2 - \partial_z ( 2 \mu - e^z) $$
We get:
\begin{align*}
 \Lc^* p(z) & = 2 \partial_z^2 p(z) - 2 \partial_z \left( (\mu - \half e^z) p(z) \right)\\
& = 2 \partial_z^2 p(z) - 2 \partial_z^2 p(z)\\
& = 0
\end{align*}
\end{proof}

Now, let $B_t^{(\mu)}$ be an $n$-dimensional BM with drift $\mu$, $\Fc^B_t$ its natural filtration, $\beta$ a linear form such that $\beta(\mu)>0$ and:
$$N_t := \int_0^t \exp(-\beta( B_s^{(\mu)}) )ds $$
$$N_{\infty} = \lim_{t \rightarrow \infty} N_t $$
\subsubsection*{Law of $N_\infty$:}

The law of $N_\infty$ comes as a simple application:
\begin{corollary}
\label{corollary:N_infty_law_1d}
$$ N_\infty :=  \int_0^\infty \exp\left(-\beta( B_s^{(\mu)}) \right)ds \stackrel{\mathcal{L}}{=} \frac{2}{||\beta||^2 \gamma_{\langle \beta^\vee, \mu \rangle } }$$
Therefore, the density is:
$$ \P\left( N_\infty \in dn \right) = \frac{1}{\Gamma(\langle \beta^\vee, \mu \rangle)} n^{-\langle \beta^\vee, \mu \rangle} \exp\left( -\frac{2}{||\beta||^2 n} \right) \left(\frac{2}{||\beta||^2 }\right)^{\langle \beta^\vee, \mu \rangle} \frac{dn}{n}$$
\end{corollary}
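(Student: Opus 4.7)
The plan is to reduce the multidimensional statement to the one-dimensional Dufresne identity of proposition~\ref{lbl:dufresne_identity} through a projection onto the direction of $\beta$ followed by a Brownian rescaling, then derive the density by an inverse-transform computation.

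First I would observe that the process $(\beta(B_s^{(\mu)}), s\geq 0)$ is itself a real-valued Gaussian process: it has continuous paths, independent increments, and
\[
 \beta(B_s^{(\mu)}) \;\sim\; \mathcal{N}\!\left(\, s\,\beta(\mu),\; s\,\|\beta\|^2\,\right),
\]
where $\|\beta\|^2=\langle\beta,\beta\rangle$ denotes the squared norm transported from $\afrak^*$ via the Killing form. Since the integrand in $N_\infty$ depends on $B^{(\mu)}$ only through $\beta(B^{(\mu)})$, the law of $N_\infty$ depends only on this one-dimensional projection.

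Next I would perform a time change to put this projection into the exact form required by Dufresne. Setting $u = \|\beta\|^2 s/4$, the scaling property of Brownian motion gives a standard one-dimensional Brownian motion $W$ such that
\[
 \bigl(\beta(B_s^{(\mu)})\bigr)_{s\geq 0} \;\stackrel{\mathcal{L}}{=}\; \bigl(2\,W_u^{(\nu)}\bigr)_{u\geq 0}, \qquad \nu := \langle \beta^\vee,\mu\rangle = \frac{2\beta(\mu)}{\|\beta\|^2}>0,
\]
since the quadratic variation of $\beta(B_s^{(\mu)})$ at time $s$ equals $\|\beta\|^2 s = 4u$, and the drift term becomes $s\,\beta(\mu) = (4u/\|\beta\|^2)\beta(\mu) = 2u\,\nu$. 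Substituting this into $N_\infty$ and changing variable $s \mapsto u$:
\[
 N_\infty \;=\; \int_0^\infty e^{-\beta(B_s^{(\mu)})}\,ds \;\stackrel{\mathcal{L}}{=}\; \frac{4}{\|\beta\|^2}\int_0^\infty e^{-2W_u^{(\nu)}}\,du.
\]

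Applying proposition~\ref{lbl:dufresne_identity} to the right-hand integral with parameter $\nu>0$ (which is guaranteed by the hypothesis $\beta(\mu)>0$) yields
\[
 N_\infty \;\stackrel{\mathcal{L}}{=}\; \frac{4}{\|\beta\|^2}\cdot\frac{1}{2\gamma_\nu} \;=\; \frac{2}{\|\beta\|^2\,\gamma_{\langle\beta^\vee,\mu\rangle}},
\]
which is the claimed identity in distribution. The density formula then follows from a routine change of variables $t = 2/(\|\beta\|^2 n)$ applied to the gamma density $\P(\gamma_\nu\in dt) = \Gamma(\nu)^{-1}t^{\nu-1}e^{-t}\,dt$, producing the inverse-gamma expression stated in the corollary. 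The only step requiring any care is tracking the factor $2$ in Dufresne's identity so that the constants $\|\beta\|^2$ and $\beta^\vee$ match up correctly; there is no real obstacle here.
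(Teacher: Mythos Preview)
Your proof is correct and follows essentially the same route as the paper: project onto the one-dimensional process $\beta(B^{(\mu)})$, rescale time (the paper chooses $c=\|\beta\|^2/4$ after first writing $\beta(B_t)=\|\beta\|W_t$, whereas you fold both steps into the single substitution $u=\|\beta\|^2 s/4$), apply Dufresne's identity, and then change variables for the density. The only difference is cosmetic packaging of the scaling step.
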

\begin{proof}
Define the real Brownian motion $W$ by $\beta(B_t) = ||\beta|| W_t$ for $t \geq 0$. Then:
\begin{align*}
N_{\infty}
& = \int_0^\infty e^{- \beta( B_t^{(\mu)} ) }dt\\
& = \int_0^\infty e^{- ||\beta|| W_t - \beta(\mu)t }dt\\
& \stackrel{\mathcal{L}}{=} \int_0^\infty e^{- \frac{||\beta||}{\sqrt{c}} W_{tc} - \frac{\beta(\mu)tc}{c} }dt \textrm{ using Brownian scaling for } c>0\\
& = \frac{1}{c} \int_0^\infty e^{- \frac{||\beta||}{\sqrt{c}} W_{u} - \frac{\beta(\mu)u}{c} }du \textrm{ using change of variable } u = tc\\
& = \frac{4}{||\beta||^2} \int_0^\infty e^{- 2 W_{u} - 4\frac{\beta(\mu)u}{||\beta||^2} }du \textrm{ by choosing } c = \frac{ ||\beta||^2}{4}\\
& = \frac{4}{||\beta||^2} \int_0^\infty e^{- 2 (W_{u} + \langle \beta^\vee, \mu \rangle u ) }du
\end{align*}
The result holds using Dufresne's identity in law. As for the density, for all $f\geq0$ bounded measurable function, and while writing $\nu = \langle \beta^\vee, \mu \rangle$, we have:
\begin{align*}
\E\left( f(N_\infty) \right) & = \int_0^\infty f( \frac{1}{ ||\beta||^2 t/2}) \frac{ e^{-t} t^{\nu} }{\Gamma(\nu)} \frac{dt}{t}\\
& = \frac{1}{\Gamma(\nu)} \int_0^\infty f(n) \exp\left( -\frac{2}{||\beta||^2 n} \right) \left(\frac{2}{||\beta||^2 n} \right)^{\nu} \frac{2}{||\beta||^2} \frac{dn}{n} \textrm{ by letting } n = \frac{2}{||\beta||^2 t}\\
& = \frac{1}{\Gamma(\nu)} \int_0^\infty f(n) \exp\left( -\frac{2}{||\beta||^2 n} \right) \left(\frac{2}{||\beta||^2 } \right)^{\nu} n^{-\nu} \frac{dn}{n}
\end{align*}
\end{proof}

\subsubsection*{Initial enlargement of the filtration $\Fc^B$ using the random variable $N_\infty$}
In order to compute the law of $N_\infty$ conditionnally on $\Fc^B_t$, the following decomposition is essential:
\begin{align}
\label{eqn:N_infty_decomposition_1d}
N_{\infty} = \int_0^t e^{- \beta( B_s^{(\mu)} ) }ds + e^{- \beta( B_t^{(\mu)} ) } \tilde{N}_\infty 
\end{align}
with $\tilde{N}_\infty$ is a copy of $N_\infty$ independent from $\Fc^B_t$. Indeed:
\begin{align*}
N_{\infty}
& = \int_0^\infty e^{- \beta( B_s^{(\mu)} ) }ds\\
& = \int_0^t e^{- \beta( B_s^{(\mu)} ) }ds + e^{- \beta( B_t^{(\mu)} ) } \int_t^\infty e^{- \beta( B_s^{(\mu)} - B_t^{(\mu)}) }ds\\
& = \int_0^t e^{- \beta( B_s^{(\mu)} ) }ds + e^{- \beta( B_t^{(\mu)} ) } \tilde{N}_\infty
\end{align*}

For readability purposes, and because it is not necessary to invoke general filtration enlargement theorems, we will give a complete proof using the usual tools. Indeed, as proved before the law of $N_\infty$ has a (smooth) density $\frac{d\P( N_\infty \leq y)}{dy}$ with respect to the Lebesgue measure, making possible the following computations.

Let $\Q^y = \P( . | N_\infty = y)$ be a regular version of the conditionnal probability, $f \geq 0$ bounded measurable function, and $A \in \Fc^B_t$. We have:
\begin{eqnarray*}
 & & \int dy f(y) \Q^y(A) \frac{ d\P( N_\infty \leq y)}{dy} \\
& = & \mathbb{E}( \Q^{N_\infty}(A) f(N_\infty) )\\
& = & \mathbb{E}( \textbf{1}_A f(N_\infty) )\\
& = & \mathbb{E}( \textbf{1}_A \mathbb{E}( f(N_\infty) | \Fc^B_t) )\\
& = & \mathbb{E}( \textbf{1}_A \int f(n) d\P(N_\infty \in dy | \Fc^B_t) )\\
& = & \int dy f(y) \mathbb{E}( \textbf{1}_A \frac{ d\P(N_\infty \leq y | \Fc^B_t)}{dy} ) \textrm{ (Fubini) } 
\end{eqnarray*}
Then:
$$ \Q^y(A) = \E\left( \textbf{1}_A \frac{ d\P(N_\infty \leq y | \Fc^B_t)}{dy}/\frac{ d\P(N_\infty \leq y)}{dy} \right) $$
We conclude that $\Q^y$ is absolutely continuous with respect to $\P$ and that the likelihood/Radon-Nikodym derivative on $\Fc^B_t$ is given by the $\P$-martingale:
\begin{align*}
q( B_t^{(\mu)}, N_t, n) := & \frac{d\Q^n}{d\P}_{|\Fc^B_t} \\
                         = & \frac{ \frac{ d\P(N_\infty \leq n | \Fc^B_t)}{dn} }{\frac{ d\P(N_\infty \leq n)}{dn} }
\end{align*}
Using the expression for the density of $N_\infty$ from corollary \ref{corollary:N_infty_law_1d}, we get:
\begin{align*}
  & q( B_t^{(\mu)}, N_t, n)\\
= & \frac{ \frac{ d\P(\tilde{N}_\infty \leq (n-N_t) e^{\beta( B_t^{(\mu)})} | \Fc^B_t)}{dn} }
         { \frac{ d\P(N_\infty \leq dn)}{dn} } \\
= & e^{\beta( B_t^{(\mu)}) } \frac{ \exp\left( - \frac{2}{||\beta||^2(n-N_t)e^{\beta( B_t^{(\mu)}) } } \right) \left( (n-N_t)e^{\beta( B_t^{(\mu)})} \right)^{-(1+\langle \beta^\vee, \mu \rangle)} }
                                  { \exp\left( - \frac{2}{||\beta||^2 n} \right) n^{-(1+\langle \beta^\vee, \mu \rangle)} }
\end{align*}
Hence:
$$ \log q( B_t^{(\mu)}, N_t, n) = A_t - \langle \beta^\vee, \mu \rangle \beta(B_t^{(\mu)}) - \frac{2 e^{-\beta( B_t^{(\mu)}) }}{||\beta||^2(n-N_t)}$$
where $A_t$ has a zero quadratic variation. Therefore, the semimartingale bracket between $\beta(B^{(\mu)})$ and $\log q$ is:
\begin{align*}
  & \langle \beta(B^{(\mu)}), \log q \rangle_t\\
= & \langle \beta(B^{(\mu)}), - \langle \beta^\vee, \mu \rangle \beta(B_.^{(\mu)}) - \frac{2 e^{-\beta( B_.^{(\mu)}) }}{||\beta||^2(n-N_.)} \rangle_t\\
= & - \langle \beta^\vee, \mu \rangle||\beta||^2 - \frac{2}{||\beta||^2} \langle \beta(B), \frac{e^{-\beta( B_.^{(\mu)}) }}{n-N_.} \rangle_t\\
= & - 2\beta(\mu) + 2 \int_0^t \frac{e^{-\beta( B_s^{(\mu)}) }}{n-N_s} ds\\
= & - 2\beta(\mu) + 2 \int_0^t \frac{dN_s}{n-N_s}\\
= & - 2\beta(\mu) - 2 \log\left( 1 - \frac{N_t}{n} \right)
\end{align*}
In the end, using Girsanov theorem (\cite{bib:RevuzYor} Chapter VIII, theorem 1.4):
$$ \hat{B}_t = B_t - \frac{\beta}{||\beta||^2} \langle \beta(B^{(\mu)}), \log q \rangle_t $$
is a $\Q^n$ Brownian motion, hence the proof of theorem \ref{thm:my_bm_with_opposite_drifts_1}.

\subsection{Inversion}
A natural question is can we recover $\hat{B}$ or $B$ from the other. The answer is yes and the argument is again due to Matsumoto and Yor \cite{bib:MY01}. This can be restated as a disintegration formula for the Wiener measure that will give, later, a probabilistic interpretation of the group path transforms.\\

The proof of theorem \ref{thm:my_bm_with_opposite_drifts_2} follows from \ref{thm:my_bm_with_opposite_drifts_1} and an inversion lemma.
\begin{lemma}[Inversion lemma]
\label{lemma:inversion_lemma}
 Let $x$ and $y$ be $V$ valued paths i.e functions on $\R_+$. Then
$$ (1) \ x(t) = y(t) + \log( 1 + \frac{1}{n} \int_0^t e^{-\beta(y)}) \beta^{\vee}$$
if and only if
$$ (2) \left\{ \begin{array}{ll}
\forall t>0, \int_0^t e^{-\beta(x)} < n \\
y(t) = x(t) + \log( 1 - \frac{1}{n} \int_0^t e^{-\beta(x)}) \beta^{\vee}                
               \end{array} \right.
$$
Moreover, in any case:
$$ (3) \ ( 1 + \frac{1}{n} \int_0^t e^{-\beta(y)})( 1 - \frac{1}{n} \int_0^t e^{-\beta(x)}) = 1$$
and if $\int_0^\infty e^{ -\beta(y) } = \infty$ then $\int_0^\infty e^{-\beta(x)}=n$
\end{lemma}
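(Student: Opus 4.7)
The plan is to prove the equivalence by first establishing $(1) \Rightarrow (2)$ together with the key identity $(3)$; the reverse implication will then follow by an identical computation. First I would take $(1)$ and apply $-\beta$ to both sides, using $\beta(\beta^\vee) = 2$, to obtain
\[ e^{-\beta(x(t))} = \frac{e^{-\beta(y(t))}}{\left(1 + \frac{1}{n}\int_0^t e^{-\beta(y)}\right)^2}. \]
The essential observation is that the right-hand side is, up to the constant $n$, exactly $-\frac{d}{dt}\!\left(\frac{1}{1+\frac{1}{n}\int_0^t e^{-\beta(y)}}\right)$. Integrating from $0$ to $t$ therefore produces a telescoping identity that rearranges directly into
\[ \left(1 + \frac{1}{n}\int_0^t e^{-\beta(y)}\right)\left(1 - \frac{1}{n}\int_0^t e^{-\beta(x)}\right) = 1, \]
which is precisely $(3)$. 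This forces the positivity condition $\int_0^t e^{-\beta(x)} < n$ for every $t>0$ appearing in $(2)$.

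Once $(3)$ is in hand, the second equation of $(2)$ drops out by substitution into $(1)$: taking logarithms in $(3)$ gives $\log(1 + \tfrac{1}{n}\int_0^t e^{-\beta(y)}) = -\log(1 - \tfrac{1}{n}\int_0^t e^{-\beta(x)})$, so $(1)$ reads $x(t) = y(t) - \log(1 - \tfrac{1}{n}\int_0^t e^{-\beta(x)})\beta^{\vee}$, which I rearrange into the required form of $(2)$. For the converse $(2) \Rightarrow (1)$, I would run the exact same computation starting from the second equation of $(2)$: applying $-\beta$ yields $e^{-\beta(y(t))} = e^{-\beta(x(t))}/(1-\tfrac{1}{n}\int_0^t e^{-\beta(x)})^2$, which integrates, using the bound $\int_0^t e^{-\beta(x)} < n$ to keep denominators away from zero, to the same identity $(3)$. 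Then $(1)$ follows by the same logarithmic substitution.

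For the final assertion, $(3)$ holds regardless of which of $(1)$ or $(2)$ is assumed, so if $\int_0^\infty e^{-\beta(y)} = \infty$, passing to the limit $t \to \infty$ in $(3)$ forces $1 - \tfrac{1}{n}\int_0^\infty e^{-\beta(x)} = 0$, i.e.\ $\int_0^\infty e^{-\beta(x)} = n$. There is no genuine obstacle here beyond recognising the telescoping derivative and the reciprocal structure of $(3)$; the only mild point to verify is that the arguments of the logarithms remain strictly positive throughout, which is exactly what the bound $\int_0^t e^{-\beta(x)} < n$ in $(2)$ ensures.
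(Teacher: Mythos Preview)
Your proof is correct and follows essentially the same approach as the paper: apply $-\beta$ to obtain the squared-denominator identity, recognise it as a total derivative, integrate to get $(3)$, and then use $(3)$ to pass between $(1)$ and $(2)$. The paper organises the logic slightly differently---it first observes that $(1)$ and $(2)$ are simultaneously equivalent to $(3)$, reducing everything to proving $(1)\Rightarrow(3)$ and $(2)\Rightarrow(3)$---but the computation is identical to yours.
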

\begin{proof}
 It is immediate to see that (1) and (2) are simultaniously true if and only if (3) is true. Then all we need to prove is $(1) \Rightarrow (3)$ and $(2) \Rightarrow (3)$
\begin{eqnarray*}
 (1) & \Rightarrow & e^{-\beta(x)} = \frac{ e^{-\beta(y)} }{ ( 1 + \frac{1}{n} \int_0^t e^{-\beta(y)})^2 }\\
& \Rightarrow & \frac{1}{n}\int_0^t e^{-\beta(x)} = [ \frac{-1}{ 1 + \frac{1}{n} \int_0^t e^{-\beta(y)} } ]_0^t\\
& \Rightarrow & \frac{1}{ 1 + \frac{1}{n} \int_0^t e^{-\beta(y)} } = 1-\frac{1}{n}\int_0^t e^{-\beta(x)}\\
& \Rightarrow & (3)
\end{eqnarray*}
and
\begin{eqnarray*}
 (2) & \Rightarrow & e^{-\beta(y)} = \frac{ e^{-\beta(x)} }{ ( 1 - \frac{1}{n} \int_0^t e^{-\beta(x)})^2 }\\
& \Rightarrow & \frac{1}{n}\int_0^t e^{-\beta(y)} = [ \frac{1}{ 1 - \frac{1}{n} \int_0^t e^{-\beta(x)} } ]_0^t\\
& \Rightarrow & \frac{1}{ 1 - \frac{1}{n} \int_0^t e^{-\beta(x)} } = 1+\frac{1}{n}\int_0^t e^{-\beta(y)}\\
& \Rightarrow & (3)
\end{eqnarray*}
Then (3) gives the convergence of $\int_0^t e^{-\beta(x)}$ to $n$, right-away.
\end{proof}
Now we are ready to prove theorem \ref{thm:my_bm_with_opposite_drifts_2}:
\begin{proof}[Proof of theorem \ref{thm:my_bm_with_opposite_drifts_2}:]
Consider a Brownian motion $B^{(\mu)}$ with drift $\mu$ conditionned to $\int_0^\infty e^{- \beta(B_s^{(\mu)})} ds = n$. By the previous filtration enlargement argument, there is $\hat{B}$ a BM in the enlarged filtration, such that:
$$ \hat{B}_t^{(s_\beta \mu)} = B^{(\mu)}_t  + \log\left(1-\frac{1}{n}\int_0^t e^{-\beta( B_s^{(\mu)} ) }ds \right) \beta^{\vee}$$
Using the inversion lemma:
$$ B^{(\mu)}_t = \hat{B}_t^{(s_\beta \mu)} + \log\left(1+\frac{1}{n}\int_0^t e^{ -\beta( \hat{B}_s^{(s_\beta \mu)} ) }ds\right) \beta^{\vee} $$
Then the following equalities in law between processes follow:\\
\begin{align*}
& \left( B^{(\mu)}_t; t \geq 0 | \int_0^\infty e^{- \beta(B_s^{(\mu)})} ds = n \right)\\
& = \ \hat{B}_t^{(s_\beta \mu)} + \log\left(1+\frac{1}{n}\int_0^t e^{-\beta( \hat{B}_s^{(s_\beta \mu)} ) }ds \right) \beta^{\vee} ; t \geq 0\\
& \stackrel{\mathcal{L}}{=} \ W_t^{(s_\beta \mu)} + \log\left(1+\frac{1}{n}\int_0^t e^{-\beta( W_s^{(s_\beta \mu)} ) }ds \right) \beta^{\vee} ; t \geq 0\\
& = \ X_t; t \geq 0
\end{align*}
This ends the proof of the first fact.

The second fact is just a consequence of knowing the law of $\int_0^\infty e^{- \beta(B_s^{(\mu)})} ds$ and usual disintegration formula given for $F$ continuous functional on the sample space by:
$$ \E\left( F( B^{(\mu)}_. ) \right) =  \E\left( \E\left( F( B^{(\mu)}_.  ) | \int_0^\infty e^{ -\beta( B_s^{(\mu)} ) }ds \right) \right)$$
\end{proof}

\section{Conditioned Brownian motion and invariant measure}
\label{section:conditionning_approach}

Now we will go back to our general group-theoretic setting and resume the study of the hypoelliptic Brownian motion on the solvable group $B$.

We prove a theorem that represents a certain transform of Brownian motion as having the same law as a Brownian motion $X^{(\mu)}$ conditioned to have $N_\infty^\theta(X^{(\mu)})$ fixed. As a consequence, we obtain an explicit expression for the invariant measure given by the law of $N_\infty^\theta(X^{(\mu)})$. Later, this will allow us to condition with respect to an apropriate 'exit' law, giving Whittaker functions. Finally, because this measure is defined in a coordinate chart indexed by a reduced word ${\bf i} \in R(w_0)$, it contains hidden identities we will discuss.

This approach owes a lot to Baudoin and O'Connell (\cite{bib:BOC09}) in spirit, but is quite different in essence. Indeed, that paper considered a conditionning of a Brownian motion $X^{(\mu)}$ with respect to simple integrals, whereas our representation theoretic approach makes it more natural to condition a Brownian path to have all its Lusztig parameters fixed. Thanks to theorem \ref{thm:inversion_lemma_lusztig}, we know that it is nothing more than conditionning with respect to $N_\infty^\theta(X^{(\mu)})$. The random variable $N_\infty^\theta(X^{(\mu)})$ not only contains the simple integrals $\int_0^\infty e^{-\alpha(X^{(\mu)})}$, but also interated ones. 

The subtlety at this level is that because Whittaker functions will be built out of an $N$-character applied to the random variable $N_\infty^\theta(X^{(\mu)})$, only the law of simple integrals will matter in the end.

In term of the group path transforms described in the first chapter, the result from Matsumoto and Yor ( theorem \ref{thm:my_bm_with_opposite_drifts_2} ) can be reformulated as:
\begin{thm}[$SL_2$ conditional representation]
\label{thm:sl2_conditional_representation}
If $g=x_\alpha(\xi)$, $\xi>0$, $W^{(s_{\alpha} \mu)}$ a BM on $\afrak$ with drift $s_{\alpha} \mu$ such that $\alpha(\mu)>0$ then $X := T_g(W^{(s_{\alpha} \mu)})$ is a BM with drift $\mu$ conditionned to $\int_0^\infty e^{ -\alpha( X_s ) }ds = \frac{1}{\xi}$.

If moreover we pick $\xi$ as random with $ \xi \stackrel{\mathcal{L}}{=} \frac{\langle \alpha, \alpha \rangle}{2} \gamma_{\langle \alpha^{\vee}, \mu \rangle }$ independent from $W$ then $X$ is a Brownian motion with drift $\mu$.
\end{thm}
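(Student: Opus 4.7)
The plan is to recognize that this statement is essentially a reformulation of Matsumoto--Yor's theorem \ref{thm:my_bm_with_opposite_drifts_2} in the group-theoretic language of the path transform $T_g$, where the linear form $\beta$ plays the role of the simple root $\alpha$ and the parameter $y$ corresponds to $1/\xi$.

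First, I would invoke property (v) of the path transforms \ref{lbl:path_transform_properties}, which gives the explicit expression
\begin{equation*}
T_{x_\alpha(\xi)}(W^{(s_\alpha\mu)})_t \;=\; W^{(s_\alpha\mu)}_t + \log\!\left( 1 + \xi\int_0^t e^{-\alpha(W_s^{(s_\alpha\mu)})}\,ds\right)\alpha^{\vee}.
\end{equation*}
Setting $y = 1/\xi$, this is exactly the formula for the process $X$ appearing in theorem \ref{thm:my_bm_with_opposite_drifts_2} with $\beta = \alpha$ (recall that $\alpha(\mu)>0$ is precisely the drift-positivity hypothesis needed, since $s_\alpha\mu = \mu - \alpha(\mu)\alpha^\vee$). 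Theorem \ref{thm:my_bm_with_opposite_drifts_2} then identifies $X = T_g(W^{(s_\alpha\mu)})$ in law with a Brownian motion of drift $\mu$ conditioned on $\int_0^\infty e^{-\alpha(X_s)}\,ds = y = 1/\xi$, which is the first claim.

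For the second claim, I would combine the first claim with the explicit law of the exponential functional. Corollary \ref{corollary:N_infty_law_1d} (the multidimensional Dufresne identity, restricted to the linear form $\alpha$) gives
\begin{equation*}
\int_0^\infty e^{-\alpha(X_s^{(\mu)})}\,ds \;\stackrel{\Lc}{=}\; \frac{2}{\langle\alpha,\alpha\rangle\,\gamma_{\langle\alpha^\vee,\mu\rangle}},
\end{equation*}
i.e., the reciprocal $\xi = 1/y$ has the distribution $\tfrac{\langle\alpha,\alpha\rangle}{2}\gamma_{\langle\alpha^\vee,\mu\rangle}$. Thus, if we randomize $\xi$ according to this law, independently of $W^{(s_\alpha\mu)}$, the disintegration formula
\begin{equation*}
\mathbb{E}\!\left(F(X^{(\mu)})\right) \;=\; \mathbb{E}\!\left(\mathbb{E}\!\left(F(X^{(\mu)}) \,\big|\, \textstyle\int_0^\infty e^{-\alpha(X_s^{(\mu)})}\,ds\right)\right)
\end{equation*}
reconstructs an unconditioned Brownian motion with drift $\mu$.

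There is essentially no obstacle beyond bookkeeping: the first part is a translation of notation ($T_g$ versus the explicit logarithmic integral), and the second part is the standard ``integrate out the conditioning variable against its own law'' argument. The only point worth double-checking is that $\alpha(\mu)>0$ correctly matches the hypothesis $\beta(\mu)>0$ of theorem \ref{thm:my_bm_with_opposite_drifts_2} once one identifies $\beta$ with $\alpha$, and that the normalization constants $\langle\alpha,\alpha\rangle/2$ in the gamma parameter are consistent with corollary \ref{corollary:N_infty_law_1d}.
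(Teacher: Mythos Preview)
Your proposal is correct and matches the paper's own treatment: the paper presents theorem \ref{thm:sl2_conditional_representation} explicitly as a reformulation of Matsumoto--Yor's theorem \ref{thm:my_bm_with_opposite_drifts_2} in the language of the path transform $T_g$, without a separate proof. Your identification via property (v) of \ref{lbl:path_transform_properties} (with $\beta=\alpha$, $y=1/\xi$) and the use of corollary \ref{corollary:N_infty_law_1d} for the law of $\xi$ is precisely the intended translation.
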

It is very surprising and impressive to say that they fully worked out the $SL_2$ case without starting without any group-theoretic considerations. We will now state what seems like the natural extension of theorem \ref{thm:sl2_conditional_representation}.

\subsection{Conditional representation theorem}

From now on, fix a reduced word ${\bf i} \in R(w_0)$ of length $m = \ell(w_0)$ and call $(\beta_1, \dots, \beta_m)$ the associated positive roots enumeration.

\begin{thm}[Conditionnal representation of $T_g(W)$]
\label{thm:conditional_representation}
Let $g \in U_{>0}^{w_0}$, $\mu \in C$, $W$ a standard BM on $\afrak$.\\
Then $\Lambda^{x_0} := x_0 + T_{e^{\theta} g e^{-\theta}} \left( W^{(w_0 \mu)} \right)$ is distributed as a BM $X^{x_0, (\mu) }$
\begin{itemize}
 \item with drift $\mu$
 \item with initial position $x_0$
 \item conditioned to $N_\infty^\theta(X^{0, (\mu)}) = \Theta(g)$ where 
$$\Theta: U_{>0}^{w_0} \longrightarrow N_{>0}^{w_0}$$
 is the bijective function $\Theta(g) = [ g \bar{w}_0 ]_-$
\end{itemize}
Moreover, if we pick $g = x_{i_1}( t_1 ) \dots x_{i_m}( t_m ), m = \ell(w_0)$ being random with independent Lusztig parameters such that $t_j \stackrel{ \mathcal{L} }{=} \gamma_{ \langle\beta_{j}^\vee, \mu\rangle } $, then $\Lambda^{x_0}$ is a standard BM with drift $\mu$ starting at $x_0$.
\end{thm}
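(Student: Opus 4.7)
The strategy is to reduce the theorem to an iterated application of the $SL_2$ result (Theorem~5.3), using the factorization of the path transform $T_g$ coming from the Lusztig parametrization of $g$. Concretely, by Property~3.18(i), for $u_1, u_2 \in U_{\geq 0}$ one has $T_{u_1 u_2} = T_{u_1} \circ T_{u_2}$, so if $g = x_{i_1}(t_1)\cdots x_{i_m}(t_m)$, then
\[
T_{e^{\theta} g e^{-\theta}} = T_{x_{i_1}(\tfrac{\|\alpha_{i_1}\|^{2}}{2}t_1)} \circ \cdots \circ T_{x_{i_m}(\tfrac{\|\alpha_{i_m}\|^{2}}{2}t_m)}.
\]
This decomposition converts the problem into $m$ successive one-dimensional Matsumoto--Yor moves. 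I first prove the second, unconditioned statement by induction on the length of the reduced word, then deduce the first statement as a disintegration identity.

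For the unconditioned assertion, I iterate from the innermost transform outwards. Set $Y^{(m+1)} := W^{(w_0\mu)}$ and $Y^{(k)} := T_{x_{i_k}(t_k)}(Y^{(k+1)})$. Assuming inductively that $Y^{(k+1)}$ is a BM with drift $\mu'_{k+1} := s_{i_{k+1}}\cdots s_{i_m} w_0 \mu$, the second half of Theorem~5.3 (applied with $\alpha = \alpha_{i_k}$ and $\mu = \mu'_k$, where $\mu'_{k+1} = s_{i_k}\mu'_k$) shows that if $t_k \sim \gamma_{\langle\alpha_{i_k}^\vee,\mu'_k\rangle}$ independently of $W$, then $Y^{(k)}$ is a BM with drift $\mu'_k$. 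A direct computation with the braid-style identity $w_0 s_{i_m}\cdots s_{i_{k+1}} = s_{i_1}\cdots s_{i_{k-1}}$ (consequence of $s_{i_1}\cdots s_{i_m} = w_0$) gives $\langle\alpha_{i_k}^\vee,\mu'_k\rangle = \langle \beta_k^\vee,\mu\rangle$, matching the distribution in the statement and certifying the positivity $\alpha_{i_k}(\mu'_k)>0$ needed for Matsumoto--Yor. The final drift is $\mu'_1 = s_{i_1}\cdots s_{i_m}w_0\mu = \mu$, proving the second part after the trivial translation by $x_0$.

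For the first (conditional) statement, I read the above as a joint-law identity: under the unconditioned law, the process $\Lambda^{0} = T_{e^{\theta} g e^{-\theta}}(W^{(w_0\mu)})$ and the parameters $(t_1,\dots,t_m)$ are related through $g$. But by the infinite-horizon inversion lemma (Theorem~4.32 and Theorem~3.46), the $t_j$ are exactly the Lusztig parameters of the path $\Lambda^{0}$, and $\Theta(g) = [g\bar w_0]_-$ is precisely $N_{\infty}(\Lambda^{0})$; conjugating by $e^{\theta}$ (which only rescales the Lusztig coordinates by $\|\alpha_{i_k}\|^2/2$) turns this into $N_{\infty}^{\theta}(X^{0,(\mu)}) = \Theta(g)$. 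Thus the conditional distribution of $\Lambda^{0}$ given $(t_1,\dots,t_m)$, computed under the joint law, is simultaneously (i) the deterministic functional $T_{e^{\theta} g e^{-\theta}}(W^{(w_0\mu)})$ evaluated at those parameters, and (ii) a BM with drift $\mu$ conditioned on its Lusztig parameters being $(t_1,\dots,t_m)$, equivalently on $N_{\infty}^{\theta}(X^{0,(\mu)})=\Theta(g)$. Adding the translation $x_0$ finishes the proof.

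\textbf{Main obstacle.} The combinatorial bookkeeping is the delicate part: verifying that the drifts $\mu'_k$ produced by the successive reflections stay in the half-space $\alpha_{i_k}(\mu'_k)>0$ required for Matsumoto--Yor, and that the gamma parameters $\langle\alpha_{i_k}^\vee,\mu'_k\rangle$ coincide with $\langle\beta_k^\vee,\mu\rangle$; both rest on manipulating the reduced expression of $w_0$. A secondary technical point is keeping the $\theta$-conjugation consistent between the $B^{\theta}$-convention for the hypoelliptic BM and the unshifted convention of the inversion lemma.
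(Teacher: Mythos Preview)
Your proposal is correct and follows essentially the same route as the paper: factor $T_{e^{\theta} g e^{-\theta}}$ via Property~3.18(i), apply the $SL_2$ Matsumoto--Yor theorem (Theorem~5.3) iteratively along the Lusztig word, and then invoke the infinite-horizon inversion lemma (Theorem~4.32, Subsection~4.5.8) to identify $N_\infty^\theta(X^{0,(\mu)})$ with $\Theta(g)$. The paper's proof is terser---it applies both halves of Theorem~5.3 simultaneously rather than proving the unconditioned statement first and then disintegrating---but the logical content is the same, and your explicit drift bookkeeping $\mu'_k = s_{i_k}\cdots s_{i_m} w_0\mu$ with $\langle\alpha_{i_k}^\vee,\mu'_k\rangle = \langle\beta_k^\vee,\mu\rangle$ fills in details the paper leaves implicit. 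One small slip: the identity you write as $w_0 s_{i_m}\cdots s_{i_{k+1}} = s_{i_1}\cdots s_{i_{k-1}}$ should read $s_{i_1}\cdots s_{i_k}$ (then $s_{i_k}\alpha_{i_k}^\vee = -\alpha_{i_k}^\vee$ supplies the missing sign), but your conclusion $\langle\alpha_{i_k}^\vee,\mu'_k\rangle = \langle\beta_k^\vee,\mu\rangle$ is correct.
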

\begin{proof}
We can of course take $x_0 = 0$. Let $X = T_{e^{\theta} g e^{-\theta}}( W^{(w_0 \mu)})$, and we get:
\begin{eqnarray*}
X & = & T_{e^{\theta} g e^{-\theta}} W^{(w_0 \mu)}\\
& = & T_{ x_{i_1}( \frac{\langle \alpha_{i_1}, \alpha_{i_1} \rangle}{2} t_1 ) \dots 
          x_{i_m}( \frac{\langle \alpha_{i_m}, \alpha_{i_m} \rangle}{2} t_m ) } W^{(w_0 \mu)}\\
& = & T_{ x_{i_1}( \frac{\langle \alpha_{i_1}, \alpha_{i_1} \rangle}{2} t_1 )} \circ \dots \circ 
      T_{ x_{i_m}( \frac{\langle \alpha_{i_m}, \alpha_{i_m} \rangle}{2} t_m ) } W^{(w_0 \mu)}
\end{eqnarray*}
We apply inductively theorem \ref{thm:sl2_conditional_representation} with Lusztig parameters taken to follow the right laws, in order to get successive BM.

The end of proof follows from the deterministic formula proven in theorem \ref{thm:inversion_lemma_lusztig}, which we know to be also valid for an infinite time horizon (subsection \ref{subsection:infinite_T}): 
$$ N_\infty\left( T_{e^{\theta} g e^{-\theta}}\left( W^{(w_0 \mu)} \right) \right) = \Theta\left( e^{\theta} g e^{-\theta} \right)$$
Hence:
$$ N_\infty^\theta\left( X^{0, (\mu)} \right) = \Theta\left( g \right)$$
\end{proof}

\begin{rmk}
 Recall that in the case of $SL_2$, $\Theta\left( \begin{pmatrix} 1 & \xi\\ 0 & 1 \end{pmatrix} \right) = \begin{pmatrix} 1 & 0\\ \frac{1}{\xi} & 1 \end{pmatrix}$. This gives exactly theorem \ref{thm:sl2_conditional_representation}.
\end{rmk}

\subsection{Law of \texorpdfstring{$N_\infty(X^{(\mu)})$}{the N part}}

Dufresne identity in law (proposition \ref{lbl:dufresne_identity}) states that if $X^{(\mu)}$ is a one dimenstional BM with drift $\mu>0$ then $\int_0^\infty e^{-2 X^{(\mu)}_t} dt$ has the same law as $\frac{1}{2 \gamma_\mu}$. One can notice that in $A_1$ case, if we see this BM as living in $\mathfrak{a}$, computing the law of $N_\infty^\theta(X^{(\mu)})$ tantamounts to proving Drufesne's identity, since:
$$N_\infty^\theta(X^{(\mu)}) = \left( \begin{array}{cc} 1 & 0\\ 2\int_0^\infty  e^{-2 X^{(\mu)}_s}ds & 1 \end{array} \right)$$

\begin{rmk}
Only in the $A_1$ case, we take $\theta \neq 0$ and opt-out of the choice made in remark \ref{rmk:theta_ADE}. Indeed, the classical choice for the only root is $\alpha = 2$. Hence the factor $\frac{ \langle \alpha, \alpha \rangle }{2} = 2$ in the previous identity.
\end{rmk}

If however, we consider a general semi-simple group, with $X^{(\mu)}$ be an n-dimensional BM in $\mathfrak{a}$ and $\mu \in C$, the open Weyl chamber, an explicit formula for $N_\infty^\theta(X^{(\mu)})$ is a generalization of Dufresne identity and allows explicit representations of harmonic functions for the operator $\Dc^{(\mu)}$ (definition \ref{def:hypoelliptic_operator}). Thanks to the conditional representation theorem \ref{thm:conditional_representation}, we get it in fact with little effort.

\begin{thm}[Law of $N_\infty^\theta(X^{(\mu)})$]
\label{thm:N_infty_law}
If $X^{(\mu)}$ is a BM with drift $\mu \in C$, then $N_t^\theta(X^{(\mu)})$ converges almost surely inside the open cell $N_{>0}^{w_0}$ and 
$N_\infty^\theta(X^{(\mu)}) = \Theta\left( x_{i_1}( t_1 ) \dots x_{i_m}( t_m ) \right)$ where the Lusztig parameters $t_j$ are independent random variables with:
$$ t_j \stackrel{\mathcal{L}}{=} \gamma_{ \langle\beta_{j}^{\vee}, \mu\rangle } $$
\end{thm}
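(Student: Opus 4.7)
The plan is to deduce this theorem as an almost immediate corollary of the randomization statement in the conditional representation theorem \ref{thm:conditional_representation}, combined with the deterministic inversion lemma \ref{thm:inversion_lemma_lusztig}.

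First I would handle the almost sure convergence separately. Using the explicit series expansion (\ref{eqn:process_N_theta_explicit}), each coordinate of $N_t^\theta(X^{(\mu)})$ is a finite linear combination of iterated integrals of the form
$$ I_{i_1,\dots,i_k}(t) = \int_{0 \leq s_1 \leq \cdots \leq s_k \leq t} e^{-\alpha_{i_1}(X^{(\mu)}_{s_1}) - \cdots - \alpha_{i_k}(X^{(\mu)}_{s_k})} \, ds_1 \cdots ds_k.$$
Since $\mu$ lies in the open Weyl chamber, for every simple root $\alpha$ the real process $\alpha(X^{(\mu)})$ is a Brownian motion with strictly positive drift, so $\int_0^\infty e^{-\alpha(X^{(\mu)}_s)} ds < \infty$ almost surely by the one-dimensional Dufresne estimate (corollary \ref{corollary:N_infty_law_1d}). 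A Fubini argument then shows that each $I_{i_1,\dots,i_k}(\infty)$ is finite a.s., so $N_t^\theta(X^{(\mu)})$ converges a.s.\ in $N_\R$. Total positivity of the limit follows from theorem \ref{thm:flow_B_total_positivity}, which guarantees $N_t^\theta(X^{(\mu)}) \in N^{w_0}_{>0}$ for every $t>0$; the identification of the Lusztig parameters below will confirm that the limit actually stays in the open cell.

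Next I would identify the law. Let $g = x_{i_1}(t_1) \cdots x_{i_m}(t_m)$ where the $t_j$ are independent, $t_j \stackrel{\Lc}{=} \gamma_{\langle \beta_j^\vee, \mu \rangle}$, and independent of an auxiliary Brownian motion $W$. By the second part of theorem \ref{thm:conditional_representation}, the process
$$ \Lambda := T_{e^\theta g e^{-\theta}}\bigl( W^{(w_0\mu)}\bigr) $$
is a standard Brownian motion with drift $\mu$, hence $\Lambda \stackrel{\Lc}{=} X^{(\mu)}$ as processes. On the other hand, the inversion lemma \ref{thm:inversion_lemma_lusztig}, extended to the infinite horizon as in subsection \ref{subsection:infinite_T}, is a pathwise identity: conditionally on $g$, one has
$$ N_\infty^\theta(\Lambda) = \Theta(g) \quad\text{a.s.} $$
Combining both facts yields the equality in law
$$ N_\infty^\theta\bigl(X^{(\mu)}\bigr) \stackrel{\Lc}{=} \Theta\bigl( x_{i_1}(t_1) \cdots x_{i_m}(t_m) \bigr), $$
which is the claim. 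In particular, since the $t_j>0$ a.s., the limit lies in the open cell $N^{w_0}_{>0}$, closing the gap left by the pure convergence argument.

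The main obstacle, conceptually, is making sure one understands why this argument is not circular: theorem \ref{thm:conditional_representation} is itself proved by iterated application of the one-dimensional Matsumoto--Yor statement (theorem \ref{thm:sl2_conditional_representation}) along the reduced word ${\bf i}$, and the randomization step is exactly the multi-dimensional Dufresne identity in disguise. So the genuine work has already been done in proving theorem \ref{thm:conditional_representation}, and the present statement is its natural dual: rather than reading it as ``$X^{(\mu)}$ conditioned on $N_\infty^\theta = \Theta(g)$ is $T_{e^\theta g e^{-\theta}}(W^{(w_0\mu)})$'', we read it as ``when $g$ has independent gamma coordinates with the prescribed shape parameters, the conditioning is precisely the disintegration of the unconditional law of $X^{(\mu)}$.'' A small additional check that is worth writing out explicitly is the independence of the $t_j$: it follows from the independence of the gamma variables introduced one by one in each application of theorem \ref{thm:sl2_conditional_representation}, together with the fact that the Lusztig coordinates form a diffeomorphism $\R_{>0}^m \to U^{w_0}_{>0}$, so no hidden dependence can be created by the change of parametrization.
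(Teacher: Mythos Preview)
Your proposal is correct and follows essentially the same route as the paper: convergence of the iterated integrals from $\mu \in C$, followed by the law identification as an immediate consequence of the randomization clause in theorem \ref{thm:conditional_representation}. The only cosmetic difference is in how membership in the open cell $N^{w_0}_{>0}$ is argued: the paper checks directly that the generalized minors $\Delta_{w\omega_\alpha,\omega_\alpha}(N_\infty^\theta(X^{(\mu)}))$ are strictly positive (reproducing the argument of theorem \ref{thm:flow_B_total_positivity} at $t=\infty$) and invokes the total positivity criterion \ref{thm:total_positivity_criterion}, whereas you deduce it a posteriori from the equality in law with $\Theta(g)$, since $g \in U^{w_0}_{>0}$ almost surely.
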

\begin{proof}
The condition $\mu \in C$ entails the convergence of iterated integrals in the explicit expression of $N_\infty(X)$. Moreover, in the same fashion as theorem \ref{thm:flow_B_total_positivity}, we have that:
$$ \forall w \in W, \forall \alpha \in \Delta, \Delta_{w \omega_\alpha, \omega_\alpha}\left( N_\infty(X^{(\mu)}) \right) > 0$$
Using the total positivity criterion given in \ref{thm:total_positivity_criterion}, we deduce that $N_\infty(X^{(\mu)}) \in N^{w_0}_{>0}$.

The law of $N_\infty(X^{(\mu)})$ comes directly from theorem \ref{thm:conditional_representation}. It is worth noting that the probability measure has a smooth density and charges the entire space $N^{w_0}_{>0}$ that is an open dense cell of $N_{\geq 0}$. Other cells are of smaller dimension and therefore of zero measure.
\end{proof}

Here, the law of $N_\infty(X^{(\mu)})$ can be seen as a meaningful generalization of Dufresne's identity which gives an inverse gamma shifted by a scalar factor $2$. In the group setting, we have then a natural notion of gamma law and inverse gamma law, where the map $\Theta$ plays the role of the inverse map.

\begin{definition}[Gamma law on $U$ and inverse gamma on $N$]
\label{def:group_gamma_law}
For $(\beta_1, \dots, \beta_m)$ being the positive roots enumeration associated to a reduced expression of $w_0 = s_{i_1} \dots s_{i_m}$, and $\mu \in C$ define $\Gamma_\mu$ to be the law of the positive (in the sense of total positivity) $U^{w_0}_{>0}$-valued random variable
$$ \Gamma_\mu \stackrel{\mathcal{L}}{=} x_{i_1}\left( \gamma_{ \langle\beta_{1}^{\vee}, \mu\rangle } \right) \dots x_{i_m}\left( \gamma_{ \langle\beta_{m}^{\vee}, \mu\rangle } \right) $$
Define the inverse gamma law on $N^{w_0}_{>0}$ as:
$$ D_\mu \stackrel{\mathcal{L}}{=} \Theta\left( \Gamma_\mu \right) $$
\end{definition}

Those laws are well defined, in the sense that the above expressions do not depend on the choice of a reduced expression. Indeed, theorem \ref{thm:N_infty_law} can be restated as:
\begin{align}
\label{eqn:N_infty_law_D_mu}
N_\infty^\theta(X^{(\mu)}) \stackrel{\mathcal{L}}{=} & \Theta\left( \Gamma_\mu \right)
			    \stackrel{\mathcal{L}}{=} D_\mu 
\end{align}
As such, it is obvious that $\Gamma_\mu$ and $D_\mu$ are unambiguously defined since the left-hand side does not depend on a choice of reduced expression for $w_0$. Furthermore, $D_\mu$ could be seen as a $N_{\geq 0}$-valued random variable, or even a $\left(G/B\right)_{\geq 0}$-valued random variable, since the lower dimensional cells have zero measure.

\begin{example}[$A_1$-type]
 For $G = SL_2$, $\afrak = \R$, $\alpha=2$, $\alpha^\vee=1$ and:
$$N_\infty^\theta(X^{(\mu)}) = \begin{pmatrix} 1 & 0\\ \int_0^\infty  2 e^{-2 X^{(\mu)}_s}ds & 1 \end{pmatrix} $$
We know that $\Theta\left( \begin{pmatrix} 1 & n \\ 0 & 1 \end{pmatrix} \right) = \begin{pmatrix} 1 & 0 \\ \frac{1}{n} & 1 \end{pmatrix}$, 
and as such:
$$ D_\mu = \begin{pmatrix} 1 & 0 \\ \frac{1}{\gamma_\mu} & 1 \end{pmatrix}$$
Exactly as announced, saying $D_\mu \stackrel{\mathcal{L}}{=} N_\infty^\theta(X^{(\mu)})$ recovers Dufresne identity in law.
\end{example}

\begin{example}[$A_2$-type]
 For $G = SL_3$, $\afrak = \left\{ x \in \R^3 \ | \ x_1 + x_2 + x_3 = 0 \right\}$. Consider a Brownian motion $X^{(\mu)}$ on $\afrak$ with drift $\mu$ in the Weyl chamber. The simple roots are $\alpha_1 = \begin{pmatrix} 1 \\ -1 \\ 0 \end{pmatrix} , \alpha_2 = \begin{pmatrix} 0 \\ 1 \\ -1 \end{pmatrix} $.
$$ N_\infty^\theta(X^{(\mu)}) = \begin{pmatrix}
 1                                                                                  & 0                                           & 0 \\
 \int_0^\infty e^{-\alpha_1(X_s^{(\mu)})}                                           & 1                                           & 0\\
 \int_0^\infty e^{-\alpha_1(X_s^{(\mu)}) }ds \int_0^s e^{-\alpha_2(X_u^{(\mu)}) }du & \int_0^\infty e^{-\alpha_2(X_s^{(\mu)}) }ds & 1 
 \end{pmatrix} $$
We choose the reduced word ${\bf i} = (1,2,1) \in R(w_0)$. If:
$$\left(t_1, t_2, t_3 \right) \stackrel{\Lc}{=} \left( 
\gamma_{\langle \alpha^\vee_1                , \mu \rangle},
\gamma_{\langle \alpha^\vee_1 + \alpha^\vee_2, \mu \rangle},
\gamma_{\langle \alpha^\vee_2                , \mu \rangle}
\right)$$
are independent gamma random variables with corresponding parameters, then equation (\ref{eqn:example_twist_A2}) and theorem \ref{thm:N_infty_law} tell us that:
\begin{align*}
N_\infty^\theta(X^{(\mu)}) & \stackrel{\Lc}{ = } y_1\left( \frac{1}{t_1 + t_3} \right) 
                                                 y_2\left( \frac{1 + \frac{t_1}{t_3} }{t_2} \right)
                                                 y_1\left( \frac{1}{t_1\left(1 + \frac{t_1}{t_3}\right)} \right)\\
& = \begin{pmatrix}
1                 & 0                             & 0 \\
\frac{1}{t_1}     & 1                             & 0\\
\frac{1}{t_1 t_2} & \frac{1+\frac{t_1}{t_3}}{t_2} & 1 
 \end{pmatrix} 
\end{align*}
\end{example}

\subsection{Beta-Gamma algebra identities}
The formula defining $\Gamma_\mu$ has more to it than it seems. Indeed, in order to have the law being the same for all reduced expressions of $w_0$, there has to be hidden non-trivial equalities in law. Those can be qualified as identities from the Beta-Gamma algebra as Dufresne, Letac, Yor and others call them (\cite{bib:Dufresne98} for instance). It is remarkable to think of them as a probabilistic manifestation of a group structure, and more precisely braid relationships.

Considering two reduced expressions of the same Weyl group element, one can obtain the other from successive braid moves:
$$ s_i s_j s_i \dots = s_j s_i s_j \dots $$
Using this fact, saying that $D_\mu$ is defined unambiguously is equivalent to saying that for any $\mu \in \mathfrak{a}$, such that $\alpha_i(\mu)>0$ and $\alpha_j(\mu)>0$, one has:
\begin{align*}
& x_i\left( \gamma_{\langle\alpha_i^\vee,\mu\rangle} \right) x_j\left( \gamma_{\langle s_i \alpha_j^\vee,\mu\rangle} \right) x_i\left( \gamma_{\langle s_i s_j \alpha_i^\vee,\mu\rangle} \right) \dots \\
& \stackrel{\mathcal{L}}{=} x_j\left( \gamma_{\langle\alpha_j^\vee,\mu\rangle} \right) x_i\left( \gamma_{\langle s_j \alpha_i^\vee,\mu\rangle} \right) x_j\left( \gamma_{\langle s_j s_i \alpha_j^\vee,\mu\rangle} \right) \dots 
\end{align*}
In the end, the rank 2 ($A_2$, $B_2$, $C_2$ and $G_2$) cases contain all the possible hidden identities. In the following $\gamma_.$ denote independent random variables. We write $\mu = a_1 \omega_i^\vee + a_2 \omega_j^\vee$, make use of the explicit formulas for the change of Lusztig parametrization in $U^{w_0}_{>0}$ and the root enumerations given in tables \ref{tab:roots_A2}, \ref{tab:roots_B2}, \ref{tab:roots_C2} and \ref{tab:roots_G2}. In the following list, we write $(p_1, p_2, \dots) = R_{\bf i, i'}\left( t_1, t_2, \dots \right)$ (cf equation \ref{eqn:geom_change_of_param_lusztig}) and give the corresponding equality in law between gamma variables.

\begin{itemize}

 \item Type $A_2$ (table \ref{tab:roots_A2}): $w_0 = s_1 s_2 s_1 = s_2 s_1 s_2$. ${\bf i} = (1,2,1)$ and ${\bf i'} = (2,1,2)$. Let $ p_1 = \frac{t_2 t_3}{t_1 + t_3}, p_ 2 = t_1 + t_3 , p_ 3 = \frac{t_1 t_2}{t_1 + t_3}$. Then:
$$(t_1, t_2, t_3) \stackrel{\mathcal{L}}{=}  \left( \gamma_{a_1}, \gamma_{a_1 + a_2}, \gamma_{a_2} \right)$$
\begin{center} if and only if \end{center}
$$(p_1, p_2, p_3) \stackrel{\mathcal{L}}{=}  \left( \gamma_{a_2}, \gamma_{a_1 + a_2}, \gamma_{a_1} \right) $$

 \item Type $B_2$ (table \ref{tab:roots_B2}): $w_0 = s_1 s_2 s_1 s_2 = s_2 s_1 s_2 s_1$. ${\bf i} = (1,2,1,2)$ and ${\bf i'} = (2,1,2,1)$. Let:
$$p_1 = \frac{t_2 t_3^2 t_4}{\pi_2}, p_2 = \frac{\pi_2}{\pi_1}, p_3 = \frac{\pi_1^2}{\pi_2}, p_4 = \frac{t_1 t_2 t_3}{\pi_1}$$
where $\pi_1 = t_1 t_2 + (t_1 + t_3)t_4, \pi_2 = t_1^2 t_2 + (t_1 + t_3)^2 t_4$. Then:
$$(t_1, t_2, t_3, t_4) \stackrel{\mathcal{L}}{=}  \left( \gamma_{a_1}, \gamma_{a_1 + a_2}, \gamma_{a_1 + 2 a_2}, \gamma_{a_2} \right)$$
\begin{center} if and only if \end{center}
$$(p_1, p_2, p_3, p_4) \stackrel{\mathcal{L}}{=}  \left( \gamma_{a_2}, \gamma_{a_1 + 2 a_2}, \gamma_{a_1 + a_2}, \gamma_{a_1} \right)$$

 \item Type $C_2$ (table \ref{tab:roots_C2}): $w_0 = s_1 s_2 s_1 s_2 = s_2 s_1 s_2 s_1$. ${\bf i} = (1,2,1,2)$ and ${\bf i'} = (2,1,2,1)$. Let:
$$p_1 = \frac{t_2 t_3 t_4}{\pi_1}, p_2 = \frac{\pi_1^2}{\pi_2}, p_3 = \frac{\pi_2}{\pi_1}, p_4 = \frac{t_1 t_2^3 t_3}{\pi_1}$$
where $\pi_1 = t_1 t_2 + (t_1 + t_3)t_4, \pi_2 = t_3 t_4^2 + (t_2 + t_4)^2 t_1$. Then:
$$(t_1, t_2, t_3, t_4) \stackrel{\mathcal{L}}{=}  \left( \gamma_{a_1}, \gamma_{2 a_1 + a_2}, \gamma_{a_1 + a_2}, \gamma_{a_2} \right)$$
\begin{center} if and only if \end{center}
$$(p_1, p_2, p_3, p_4) \stackrel{\mathcal{L}}{=}  \left( \gamma_{a_2}, \gamma_{a_1 + a_2}, \gamma_{2 a_1 + a_2}, \gamma_{a_1} \right)$$

 \item Type $G_2$ (table \ref{tab:roots_G2}): $w_0 = s_1 s_2 s_1 s_2 s_1 s_2 = s_2 s_1 s_2 s_1 s_2 s_1$. ${\bf i} = (1,2,1,2,1,2)$ and ${\bf i'} = (2,1,2,1,2,1)$. Let: 
$$ p_1 = \frac{t_2 t_3^3 t_4^2 t_5^3 t_6}{\pi_3}, p_2 = \frac{\pi_3}{\pi_2}, p_3 = \frac{\pi_2^3}{\pi_3 \pi_4} $$
$$ p_4 = \frac{\pi_3}{\pi_1 \pi_2}, p_5 = \frac{\pi^3_1}{\pi_4}, p_6 = \frac{t_1 t_2 t_3^2 t_4 t_5}{\pi_1}$$
where:
\begin{center}
 
\begin{align*}
 \pi_1 = & t_1 t_2 t_3^2 t_4 + t_1 t_2 (t_3 + t_5 )^2 t_6 + (t_1 + t_3 )t_4 t_5^2 t_6 
\end{align*}
\begin{align*}
 \pi_2 = & t_1^2 t_2^2 t_3^3 t_4 + t_1^2 t_2^2 (t_3 + t_5 )^3 t_6 + (t_1 + t_3 )^2 t_4^2 t_5^3 t_6 +\\
         & t_1 t_2 t_4 t_5^2 t_6 (3t_1 t_3 + 2t_2 + 2t_3 t_5 + 2t_1 t_5 )
\end{align*}
\begin{align*}
\pi_3 = & t_1^3 t_2^2 t_3^3 t_4 + t_1^3 t_2^2 (t_3 + t_5 )^3 t_6 + (t1 + t3 )^3 t_4^2 t_5^3 t_6 + \\
        & t_1^2 t_2 t_4 t_5^2 t_6 (3t_1 t_3 + 3t_2 + 3t_3 t_5 + 2t_1 t_5 )
\end{align*}
\begin{align*}
\pi_4 = & t_1^2 t_2^2 t_3^3 t_4 ( t_1 t_2 t_3^3 t_4 + 2t_1 t_2 (t_3 + t_5 )^3 t_6 + (3t_1 t_3 + 3t_3^2 + 3t_3 t_5 + 2t_1 t_5 )t_4 t_5^2 t_6) + \\
        & t_6^2 ( t_1 t_2 (t_3 + t_5 )^2 + (t_1 + t_3 )t_4 t_5^2)^3 
\end{align*}
\end{center}

Then:
$$(t_1, t_2, t_3, t_4, t_5, t_6) \stackrel{\mathcal{L}}{=}  \left( \gamma_{a_1}, \gamma_{3 a_1 + a_2}, \gamma_{2 a_1 + a_2}, \gamma_{ 3 a_1 + 2 a_2}, \gamma_{a_1 + a_2}, \gamma_{ a_2} \right)$$
\begin{center} if and only if \end{center}
$$(p_1, p_2, p_3, p_4, p_5, p_6) \stackrel{\mathcal{L}}{=}  \left( \gamma_{ a_2}, \gamma_{a_1 + a_2}, \gamma_{ 3 a_1 + 2 a_2}, \gamma_{2 a_1 + a_2}, \gamma_{3 a_1 + a_2}, \gamma_{a_1} \right) $$

\end{itemize}
Lukacs identity in law for gamma variables (\cite{bib:Luk55} or \cite{bib:Dufresne98}) is easy to retrieve from $A_2$ case. Indeed, by considering $(t_1, t_2, t_3) \stackrel{\mathcal{L}}{=}  \left( \gamma_{a_1}, \gamma_{a_1 + a_2}, \gamma_{a_2} \right)$ independent variables with the designated laws and $(p_1, p_2, p_3)$ algebraically defined as above, we know that $p_2$ and $p_3$ are independent. And since $p_2$ independent of $t_2$, we get that $p_2 = \gamma_{a_1} + \gamma_{a_2}$ is independent of $\frac{p_3}{t_2} = \frac{\gamma{a_1}}{\gamma_{a_1} + \gamma_{a_2}}$. \\
We can see that those identities in law are very rich, and one could try to retrieve other identities in law. This could be the object of future work. 

\subsubsection{Exponential identities}
One can recover a ``crystallized'' versions of these identities using Laplace method. They will involve exponential variables. The following easy lemma gives a hint on how gamma variables can degenerate to exponential variables.

\begin{lemma}
 As $\beta$ goes to zero, $-\beta \log \gamma_{\beta \mu}$ converges in law to ${\bf e}_{\mu}$, an exponential variable 
with parameter $\mu$.
\end{lemma}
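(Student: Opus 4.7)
The plan is to prove convergence in law by a direct computation of either the Laplace transform or the density of $Y_\beta := -\beta \log \gamma_{\beta \mu}$, and to identify the limiting object as the exponential law of parameter $\mu$. I would favor the Laplace transform route because it keeps everything one line long and makes the role of $\Gamma$ manifest.

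First, I would use the Mellin-type identity $\mathbb{E}(\gamma_\nu^s) = \Gamma(\nu+s)/\Gamma(\nu)$, valid for $\nu + s > 0$, which follows directly from the density $t^{\nu-1} e^{-t}/\Gamma(\nu)$. Applied at $s = \lambda \beta$ with $\nu = \beta \mu$, this yields, for any $\lambda > -\mu$ and $\beta>0$ small enough,
\begin{equation*}
\mathbb{E}\bigl(e^{-\lambda Y_\beta}\bigr) = \mathbb{E}\bigl(\gamma_{\beta\mu}^{\lambda\beta}\bigr) = \frac{\Gamma\bigl(\beta(\mu+\lambda)\bigr)}{\Gamma(\beta\mu)}.
\end{equation*}
Then the key step is the asymptotic $x \Gamma(x) = \Gamma(x+1) \to 1$ as $x \to 0^+$, which rewrites $\Gamma(\beta(\mu+\lambda)) \sim 1/[\beta(\mu+\lambda)]$ and $\Gamma(\beta \mu) \sim 1/(\beta\mu)$, so that the ratio above converges to $\mu/(\mu+\lambda)$. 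This is exactly the Laplace transform of ${\bf e}_\mu$, and L\'evy's continuity theorem (applied in its Laplace version on $\mathbb{R}_+$, together with tightness which is immediate from $Y_\beta \geq 0$ for all $\beta$ small enough) gives the announced convergence in law.

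As a sanity check, or as an alternative, I would do the same computation at the level of densities. With the change of variable $t = e^{-y/\beta}$ one finds
\begin{equation*}
f_{Y_\beta}(y) = \frac{e^{-\mu y}\, e^{-e^{-y/\beta}}}{\beta\, \Gamma(\beta \mu)},
\end{equation*}
and the factor $\beta\, \Gamma(\beta\mu) = \Gamma(\beta\mu+1)/\mu \to 1/\mu$, while $e^{-e^{-y/\beta}}$ converges pointwise to $\mathbf{1}_{y>0}$. Sche\-ff\'e's lemma then upgrades this pointwise convergence of densities to convergence in total variation, which is stronger than convergence in law.

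There is no real obstacle here: the argument is essentially $x\Gamma(x) \to 1$. The only minor point of care is to ensure $\lambda + \mu > 0$ when invoking the Mellin formula (so that $\mathbb{E}(\gamma_{\beta\mu}^{\lambda\beta}) < \infty$), which is automatic for all $\lambda \geq 0$; this range is already enough to characterize the law of a non-negative random variable via the Laplace transform, so the conclusion follows.
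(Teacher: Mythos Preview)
Your proposal is correct. Your second route (the density computation via the change of variable $t=e^{-y/\beta}$, followed by $\beta\Gamma(\beta\mu)\to 1/\mu$ and $e^{-e^{-y/\beta}}\to\mathbf{1}_{y>0}$) is exactly what the paper does: it writes $\E\bigl(f(-\beta\log\gamma_{\beta\mu})\bigr)$ as an integral, performs the same substitution, and passes to the limit under the integral sign.

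Your primary route via the Mellin identity $\E(\gamma_\nu^s)=\Gamma(\nu+s)/\Gamma(\nu)$ is a genuinely different and arguably slicker argument: it reduces the whole lemma to the single asymptotic $x\Gamma(x)\to 1$ applied twice, with no integral to manipulate. The only imprecision is the remark that ``$Y_\beta\geq 0$ for all $\beta$ small enough'': $Y_\beta$ can be negative (whenever $\gamma_{\beta\mu}>1$), though $\P(Y_\beta<0)=\P(\gamma_{\beta\mu}>1)\leq e^{-1}/\Gamma(\beta\mu)\to 0$, so this does not affect the conclusion. The Laplace-transform route is more compact; the density route (the paper's) gives the slightly stronger total-variation convergence via Scheff\'e.
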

\begin{proof}
 Considering $f$ a bounded measurable function, the following easily holds:
\begin{align*}
  & \E\left( f\left( -\beta \log \gamma_{\beta \mu} \right) \right)\\
= & \int_0^\infty \frac{dx}{\Gamma( \beta \mu)} f\left( -\beta \log x \right) e^{-x} x^{\beta \mu-1}\\
= & \int_\R \frac{dy}{\Gamma( \beta \mu)} f\left( -\beta y\right) e^{- e^{y} + \beta \mu y}\\
= & \int_\R \frac{dy}{ \beta \Gamma( \beta \mu)} f\left( y \right) e^{- e^{-\frac{y}{\beta} } - \mu y}\\ 
\stackrel{\beta \rightarrow 0}{\rightarrow} &  \int_0^\infty dy f\left( y \right) \mu e^{ - \mu y}\\
\end{align*}
\end{proof}

Therefore, we introduce the following component-wise crystallizing procedure for the rational substraction-free expressions $R_{\bf i, i'}$. The use of the logarithm function has to be understood as component-wise:
$$ \left[ R_{\bf i, i'} \right]_{trop} (u_1, u_2, \dots ) = \lim_{\beta \rightarrow 0 } - \beta \log R_{\bf i, i'}\left( e^\frac{-u_1}{\beta}, e^\frac{-u_2}{\beta}, \dots \right) $$
Thanks to the Beta-Gamma algebra identities for proven before, we know that before crystallization, for the parameter $\beta \mu$ and as soon as a braid relationship $s_i s_j \dots = s_j s_i \dots$ holds, we have:
\begin{align*}
& \left( \gamma_{\langle\alpha_i^\vee,\beta \mu\rangle} , \gamma_{\langle s_i \alpha_j^\vee,\beta \mu\rangle}, \dots \right) \\
& \stackrel{\mathcal{L}}{=} R_{i,i'} \left( \gamma_{\langle\alpha_j^\vee, \beta \mu\rangle}, \gamma_{\langle s_j \alpha_i^\vee,\beta \mu\rangle}, \dots \right)
\end{align*}
Then:
\begin{align*}
& \left( {\bf e}_{\langle\alpha_i^\vee,\mu\rangle} , {\bf e}_{\langle s_i \alpha_j^\vee,\mu\rangle}, \dots \right)\\
& \stackrel{\mathcal{L}}{=} \lim_{\beta \rightarrow 0} -\beta \log \left( \gamma_{\langle\alpha_i^\vee,\beta \mu\rangle} , \gamma_{\langle s_i \alpha_j^\vee,\beta \mu\rangle}, \dots \right) \\
& \stackrel{\mathcal{L}}{=} \lim_{\beta \rightarrow 0} -\beta \log R_{i,i'} \left( \gamma_{\langle\alpha_j^\vee, \beta \mu\rangle}, \gamma_{\langle s_j \alpha_i^\vee,\beta \mu\rangle}, \dots \right)\\
& =  \lim_{\beta \rightarrow 0} -\beta \log R_{i,i'} \left( e^\frac{- \beta \log\left( \gamma_{\langle\alpha_j^\vee, \beta \mu\rangle} \right) }{\beta}, e^\frac{- \beta \log\left( \gamma_{\langle s_j \alpha_i^\vee,\beta \mu\rangle} \right)}{\beta}, \dots \right)\\
& = \left[R_{i,i'}\right]_{trop} \left( {\bf e}_{\langle\alpha_j^\vee,\mu\rangle} , {\bf e}_{\langle s_j \alpha_i^\vee,\mu\rangle}, \dots \right)\\
\end{align*}

Again, as all the information is contained in the rank two case, there is a finite list of identities between exponential variables that sums up the results so far. Crystallizing rational expressions is easily computed with the rules:
$$ \overline{a + b} = \min( a, b) $$
$$ \overline{ a b } = a + b $$
$$ \overline{ a/b } = a - b $$

\begin{itemize}

 \item Type $A_2$ (table \ref{tab:roots_A2}): $w_0 = s_1 s_2 s_1 = s_2 s_1 s_2$. Let $ p_1 = t_2 + t_3 - \min(t_1, t_3), p_ 2 = \min( t_1, t_3) , p_ 3 = t_1 + t_2 - \min(t_1, t_3)$.Then:\\
$$(t_1, t_2, t_3) \stackrel{\mathcal{L}}{=}  \left( {\bf e}_{a_1}, {\bf e}_{a_1 + a_2}, {\bf e}_{a_2} \right)$$
\begin{center} if and only if \end{center}
$$(p_1, p_2, p_3) \stackrel{\mathcal{L}}{=}  \left( {\bf e}_{a_2}, {\bf e}_{a_1 + a_2}, {\bf e}_{a_1} \right) $$

 \item Type $B_2$ (table \ref{tab:roots_B2}): $w_0 = s_1 s_2 s_1 s_2 = s_2 s_1 s_2 s_1$. Let:
$$p_1 = t_2 + 2 t_3 + t_4 - \pi_2, p_2 = \pi_2 - \pi_1, p_3 = 2\pi_1 - \pi_2, p_4 = t_1 + t_2 + t_3 - \pi_1$$
where $\pi_1 = \min( t_1 + t_2, \min(t_1 , t_3) + t_4), \pi_2 = \min( 2 t_1 + t_2, 2\min(t_1 , t_3) + t_4 )$. Then:
$$(t_1, t_2, t_3, t_4) \stackrel{\mathcal{L}}{=}  \left( {\bf e}_{a_1}, {\bf e}_{a_1 + a_2}, {\bf e}_{a_1 + 2 a_2}, {\bf e}_{a_2} \right)$$
\begin{center} if and only if \end{center}
$$(p_1, p_2, p_3, p_4) \stackrel{\mathcal{L}}{=}  \left( {\bf e}_{a_2}, {\bf e}_{a_1 + 2 a_2}, {\bf e}_{a_1 + a_2}, {\bf e}_{a_1} \right) $$

 \item In the same fashion one can deduce the exponential identities for types $C_2$ and $G_2$.

\end{itemize}

At this point, it seems very important to mention that in \cite{bib:BBO2}, a path model for Coxeter groups was developped, and exponential laws play a key role as infinimums of a Brownian motion, with appropriate drift. One could define an exponential law on the Berenstein-Zelevinsky polytope. There, once again, hidden identities in law can be found, involving general Coxeter braid relations which goes beyond the crystallographic case we just considered.

\section{Whittaker process}
\label{section:whittaker_process}
Because of the conditional representation theorem, we know that:
 $$X^{x_0,(\mu)} := x_0 + T_{e^{\theta} g e^{-\theta}} \left( W^{(w_0 \mu)} \right)$$
is a BM starting at $x_0$ conditionned to $ N_\infty^\theta(X^{0,(\mu)}) = \Theta(g)$. The purpose of the next section is to take $g$ as random and independent of $W$ therefore conditionning $N_\infty^\theta(X^{0,(\mu)})$ to follow a certain specific law. The goal is to identify a remarkable law that will force $X^{x_0, (\mu)}$ into becoming a Markov process. These laws are deformations of the law $\Gamma_\mu$ introduced earlier (after theorem \ref{thm:N_infty_law}) and use the principal character $\chi^-$ on $N$.

Having in mind the construction in section \ref{section:poisson_boundary}, this can be seen as conditionning $B_t\left(X^{x_0,(\mu)}\right)$ to hit the Poisson boundary according to the distribution induced by a character of $N$.

In the rank one setting, it is the natural group-theoretic generalization of 'generalized inverse Gaussian' laws used by Matsumoto and Yor (\cite{bib:MY00-2}). The main ideas were in fact already in \cite{bib:BOC09} where the Whittaker process was built out of a Brownian motion conditionned with respect to its exponential functionals.

\subsection{Whittaker functions}
Recall that $b$ is the function:
$$ b(\mu) = \prod_{\beta \in \Phi^+} \Gamma\left( \langle \beta^\vee, \mu \rangle \right)$$
As starting point, we will consider a probabilistic definition as in theorem \ref{thm:whittaker_functions_properties} (iii), which makes sense only when $\mu \in C$:
$$ \forall x \in \afrak, \psi_\mu\left( x \right) := b(\mu) e^{ \langle \mu, x \rangle }\E\left( \exp\left( - \sum_{\alpha \in \Delta} \half \langle \alpha, \alpha \rangle \int_0^\infty ds \ e^{-\alpha(x + W_s^{(\mu)})} \right) \right)$$
where $W^{(\mu)}$ a Brownian motion on $\afrak$ with drift $\mu$. Later, in proposition \ref{proposition:link_with_canonical}, we will see that it coincides with the point of view adopted in definition \ref{def:whittaker_functions}. Therefore, this $\psi_\mu$ has an analytic extension to all $\mu \in \afrak$.

Baudoin and O'Connell give a nice characterization of this function:
\begin{proposition}[\cite{bib:BOC09} corollary 2.3]
\label{proposition:whittaker_characterization_boc}
The function $\psi_\mu$ is the unique solution to the quantum Toda eigenequation:
$$ \half \Delta \psi_\mu(x) - \sum_{\alpha \in \Delta } \half \langle \alpha, \alpha \rangle e^{-\alpha(x) } \psi_\mu(x) = \half \langle \mu, \mu \rangle \psi_\mu(x)$$
such that $\psi_\mu(x) e^{-\langle \mu, x \rangle }$ is bounded with growth condition $\psi_\mu(x) e^{-\langle \mu, x \rangle } \stackrel{ x \rightarrow \infty, x \in C }{\longrightarrow} b(\mu)$ 
\end{proposition}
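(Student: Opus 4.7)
The plan is a martingale/Feynman--Kac argument. Setting $V(x) := \sum_{\alpha \in \Delta} \tfrac{1}{2}\langle \alpha,\alpha\rangle e^{-\alpha(x)}$ and $\varphi_\mu(x) := \psi_\mu(x) e^{-\langle \mu, x\rangle}/b(\mu)$, the probabilistic definition reads $\varphi_\mu(x) = \E\left(\exp\left(-\int_0^\infty V(x + W_s^{(\mu)})\,ds\right)\right)$. First I would verify the PDE: the Markov property of $W^{(\mu)}$ at time $t$ yields
$$ \varphi_\mu(x) = \E\left(\exp\left(-\int_0^t V(x + W_s^{(\mu)})\,ds\right) \varphi_\mu(x + W_t^{(\mu)})\right), $$
so $M_t := \exp\left(-\int_0^t V(x + W_s^{(\mu)})\,ds\right)\varphi_\mu(x + W_t^{(\mu)})$ is a martingale. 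Applying It\^o's formula, the finite variation part must vanish, giving $\tfrac{1}{2}\Delta \varphi_\mu + \mu \cdot \nabla \varphi_\mu - V \varphi_\mu = 0$; multiplying back by $b(\mu) e^{\langle \mu, x\rangle}$ turns this into the quantum Toda eigenequation for $\psi_\mu$.

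For the boundary conditions, boundedness of $\varphi_\mu$ by $1$ is immediate from the non-negativity of $V$. For the limit $\varphi_\mu(x) \to 1$ as $x \to \infty$ inside $C$, I would note that each term $e^{-\alpha(x + W_s^{(\mu)})}$ tends to zero pathwise once $\alpha(x)$ is large, and $\int_0^\infty e^{-\alpha(W_s^{(\mu)})}\,ds < \infty$ almost surely since $\alpha(\mu) > 0$ (corollary \ref{corollary:N_infty_law_1d} applied componentwise); dominated convergence then gives the assertion.

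The crux is uniqueness. Given another solution $\psi'$ with the same bound and asymptotic, set $\varphi' := e^{-\langle \mu, \cdot\rangle} \psi'/b(\mu)$. Since $\varphi'$ satisfies $\tfrac{1}{2}\Delta \varphi' + \mu \cdot \nabla \varphi' - V\varphi' = 0$, It\^o's formula shows $M'_t := \exp\left(-\int_0^t V(x + W_s^{(\mu)})\,ds\right)\varphi'(x + W_t^{(\mu)})$ is a local martingale, and boundedness of $\varphi'$ together with $|M'_t| \leq \|\varphi'\|_\infty$ makes it a bounded martingale. Hence $\varphi'(x) = \E(M'_0) = \lim_{t\to\infty}\E(M'_t)$. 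Because $\mu \in C$, the law of large numbers for Brownian motion gives $x + W_t^{(\mu)} \to \infty$ inside $C$ almost surely, so $\varphi'(x + W_t^{(\mu)}) \to 1$; combined with almost sure convergence of $\int_0^t V(x + W_s^{(\mu)})\,ds$ to a finite limit and dominated convergence, this yields $\varphi'(x) = \varphi_\mu(x)$, proving uniqueness.

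The main obstacle is the last dominated convergence step: one needs both the almost sure limit of $\varphi'(x + W_t^{(\mu)})$ and a genuine uniform bound that survives the passage to the limit. Uniform boundedness of $\varphi'$ is assumed, so the real subtlety is justifying that the local martingale is a true martingale (hence the emphasis on boundedness, rather than attempting an optional stopping argument on an unbounded local martingale) and that the almost sure convergence of $\int_0^\infty V(x + W_s^{(\mu)})\,ds$ holds; this last point is exactly where the hypothesis $\mu \in C$ is used in an essential way.
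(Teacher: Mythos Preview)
Your proposal is correct and follows essentially the same Feynman--Kac/martingale approach as the paper: reduce to the drifted equation for $\varphi_\mu = \psi_\mu e^{-\langle \mu,\cdot\rangle}$, invoke Feynman--Kac for existence, and use the bounded martingale $\exp(-\int_0^t V)\,\varphi'(x+W_t^{(\mu)})$ together with $x+W_t^{(\mu)}\to\infty$ in $C$ for uniqueness. The only cosmetic difference is that the paper argues uniqueness by linearity (the difference $\phi$ of two solutions gives a bounded martingale tending to zero, hence $\phi\equiv 0$), whereas you compute the limit for an arbitrary solution directly; both are the same idea.
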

\begin{proof}
Write $\varphi_\mu = \psi_\mu e^{-\langle \mu, .\rangle}$, which needs to be the unique bounded solution to:
$$ \half \Delta \varphi_\mu(x) + \langle \mu, \nabla \varphi_\mu \rangle - \sum_{\alpha \in \Delta } \half \langle \alpha, \alpha \rangle e^{-\alpha(x) } \varphi_\mu(x) = 0$$
with growth condition $\varphi_\mu(x) \stackrel{ x \rightarrow \infty, x \in C }{\longrightarrow} b(\mu)$.

As a consequence of the Feynman-Kac formula:
$$ \varphi_\mu(x) = b(\mu) \E\left( \exp\left( - \sum_{\alpha \in \Delta} \half \langle \alpha, \alpha \rangle \int_0^\infty ds \ e^{-\alpha(x + W_s^{(\mu)})} \right) \right)$$
solves the partial differential equation. For uniqueness, we use a martingale argument. If $\phi$ is a bounded solution such that:
$$\phi(x) \stackrel{ x \rightarrow \infty, x \in C }{\longrightarrow} 0$$
Then:
$$ \phi(x + W_t^{(\mu)})\exp\left( - \sum_{\alpha \in \Delta} \half \langle \alpha, \alpha \rangle \int_0^\infty ds \ e^{-\alpha(x + W_s^{(\mu)})} \right)$$
is bounded martingale going to zero as $t \rightarrow \infty$. Therefore, it must vanish identically. Hence uniqueness, by linearity.
\end{proof}

Notice that the Whittaker function $\psi_\mu$ can be written thanks to theorem \ref{thm:N_infty_law} as:
\begin{align}
\label{eqn:whittaker_function_D_mu}
\psi_\mu(x) & = b(\mu) e^{\langle \mu, x \rangle}\E\left( \exp\left( -\chi^-(e^{x} D_\mu e^{-x} ) \right) \right)
\end{align}

Hence the idea of introducing a deformation of the laws of $\Gamma_\mu$ and $D_\mu$:
\begin{definition}[Generalized gamma and inverse gamma]
 For $\mu \in C$ and $\lambda \in \afrak$, define $D_\mu\left( \lambda \right)$ as the $N$-valued random variable defined by:
$$ \forall \varphi \geq 0, \E\left( \varphi( D_\mu(\lambda) ) \right) = 
   \frac{b(\mu) e^{\langle \mu, x \rangle} }{\psi_\mu(\lambda)} \E\left( \varphi(D_\mu) \exp\left( -\chi^-(e^{\lambda} D_\mu e^{-\lambda} ) \right) \right) $$
and the $U$-valued random variable $\Gamma_\mu(\lambda)$ as:
$$ D_\mu(\lambda) = \Theta\left( \Gamma_\mu(\lambda) \right)$$
\end{definition}
Even though the definition assumes $\mu \in C$, those probability measures are in fact well-defined for all $\mu \in \afrak$ as we will see in proposition \ref{proposition:link_with_canonical}.

\begin{example}[$A_1$-type]
 In the $A_1$ type, if $D_\mu(\lambda) = \begin{pmatrix} 1 & 0\\ GIG_{\mu}(\lambda) & 1 \end{pmatrix}$ then $GIG_{\mu}(\lambda)$ is the generalized inverse gaussian law used in \cite{bib:MY00-2}:
$$\P\left( GIG_\mu(\lambda) \in dt \right) \sim t^{-\mu} e^{-\frac{1}{t} - t e^{-2\lambda} } \frac{dt}{t}$$
\end{example}

\subsection{Related Markov processes}

Such a deformation is remarkable because it is a key ingredient in the generalization of proposition 3.3 in \cite{bib:OConnell} to all Lie groups.
\begin{thm}
\label{thm:whittaker_process_g}
Let $W$ be a Brownian motion, a drift $\mu$ and an independent random variable $g \stackrel{\Lc}{=} \Gamma_{\mu}(x_0)$.

Then the process $X^{x_0,(\mu)}_t = x_0 + T_{e^{\theta} g e^{-\theta}}\left( W^{(w_0 \mu)} \right)_t, t\geq0$ is Markovian with infinitesimal generator
$$\frac{1}{2} \Delta + \nabla \log \psi_\mu .\nabla $$
\end{thm}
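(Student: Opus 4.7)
The plan is to combine the conditional representation of Theorem \ref{thm:conditional_representation} with a Doob $h$-transform. With $g$ independent of $W$ and $g\sim\Gamma_\mu$, Theorem \ref{thm:conditional_representation} already says that $X^{x_0,(\mu)}$ is a standard Brownian motion with drift $\mu$ started at $x_0$; moreover the inversion identity of Theorem \ref{thm:inversion_lemma_lusztig}, extended to infinite horizon (Subsection \ref{subsection:infinite_T}), forces the almost sure equality $\Theta(g) = N_\infty^\theta(X^{x_0,(\mu)}-x_0)$. Replacing the law of $g$ by $\Gamma_\mu(x_0)$ tilts the joint law of $(W,g)$ by
\[
\rho(g) \ =\ \frac{b(\mu)\,e^{\langle\mu,x_0\rangle}}{\psi_\mu(x_0)}\,\exp\bigl(-\chi^-(e^{x_0}\Theta(g)e^{-x_0})\bigr),
\]
and by the above identification $\rho$ is in fact a functional of the path of $X^{x_0,(\mu)}$, so the marginal law of the process is reweighted by the same $\rho$.

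The crux is to recognize this tilt as the exponential of minus the integrated Toda potential. Since $\chi^-$ is an additive group character on $N$, computing $\chi^-_\alpha$ on the fundamental representation $V(\omega_\alpha)$ amounts to extracting the coefficient of $f_\alpha v_{\omega_\alpha}$ in $n\,v_{\omega_\alpha}$. Inserting the explicit expansion \eqref{eqn:process_N_theta_explicit} for $N_\infty^\theta$, every iterated integral of order $k\geq 2$ contributes to weight spaces strictly below $s_\alpha\omega_\alpha$ and therefore drops out, while the $k=1$ term gives $\tfrac{\langle\alpha,\alpha\rangle}{2}\int_0^\infty e^{-\alpha(Y_s)}ds$. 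Combined with $\chi^-_\alpha(e^{x_0}e^{tf_\alpha}e^{-x_0}) = e^{-\alpha(x_0)}t$ one obtains
\[
\chi^-\!\bigl(e^{x_0}N_\infty^\theta(X^{x_0,(\mu)}-x_0)e^{-x_0}\bigr) \ =\ \sum_{\alpha\in\Delta}\tfrac{\langle\alpha,\alpha\rangle}{2}\int_0^\infty e^{-\alpha(X^{x_0,(\mu)}_s)}\,ds \ =\ \int_0^\infty V(X^{x_0,(\mu)}_s)\,ds,
\]
with the Toda potential $V(x) = \sum_{\alpha\in\Delta}\tfrac{\langle\alpha,\alpha\rangle}{2}e^{-\alpha(x)}$. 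Setting $h(x) := \psi_\mu(x)\,e^{-\langle\mu,x\rangle}/b(\mu)$, the tilt becomes $\rho = h(x_0)^{-1}\exp(-\int_0^\infty V(X_s)ds)$, which is the path-level restatement of \eqref{eqn:whittaker_function_D_mu}.

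The conclusion is then standard Doob-transform theory. By Proposition \ref{proposition:whittaker_characterization_boc}, $h$ is positive and satisfies $\tfrac{1}{2}\Delta h + \langle\mu,\nabla h\rangle - Vh = 0$ with $h(x)\to 1$ as $x\to\infty$ inside the Weyl chamber; since $\mu\in C$, the drift-$\mu$ Brownian motion $X^{x_0,(\mu)}$ exits to infinity along $C$ almost surely, so $V(X_t)\to 0$ and $h(X_t)\to 1$, and $\rho$ is the a.s.\ limit of the positive mean-one martingale $M_t = h(X_t)h(x_0)^{-1}\exp(-\int_0^t V(X_s)ds)$, whose total mass is automatically $1$ because $\Gamma_\mu(x_0)$ is a probability measure. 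Reweighting by $M_\infty$ is precisely the Doob $h$-transform of the drift-$\mu$ Brownian motion killed at rate $V$; by the classical formula the new process is Markov with drift $\mu+\nabla\log h = \nabla\log\psi_\mu$ and generator $\tfrac{1}{2}\Delta+\nabla\log\psi_\mu\cdot\nabla$. The main obstacle is the character calculation of the middle paragraph: one must argue that the higher-order iterated integrals in the series for $N_\infty^\theta$ are annihilated upon projecting to the abelianization $N/[N,N]$, as this is the single structural input that converts the group-theoretic variable $D_\mu(x_0)$ into the Toda action.
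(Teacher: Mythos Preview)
Your proof is correct for $\mu\in C$ and is essentially the same argument as the paper's, with a different but equivalent packaging. Both change measure from $g\sim\Gamma_\mu$ to $g\sim\Gamma_\mu(x_0)$ and identify the resulting Radon--Nikodym martingale; the paper computes the finite-time likelihood via the decomposition \eqref{eqn:N_infty_decomposition} of $N_\infty^\theta$ together with the law of $D_\mu$ (Lemma~\ref{lemma:likelihood} and Proposition~\ref{proposition:choice_v}), obtaining $L_t=e^{-\chi^-(N_t^\theta(X^{x_0}))}\,\tfrac{\psi_\mu(X_t^{x_0})e^{-\langle\mu,X_t^{x_0}\rangle}}{\psi_\mu(x_0)e^{-\langle\mu,x_0\rangle}}$, and then applies Girsanov. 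Your route makes explicit the identity $\chi^-(N_t^\theta(X^{x_0}))=\int_0^t V(X_s^{x_0})\,ds$, which turns the paper's $L_t$ into the Feynman--Kac martingale $M_t=h(X_t)h(x_0)^{-1}e^{-\int_0^t V}$, and then reads off the generator via the Doob $h$-transform. These are the same martingale and the same Girsanov drift $\nabla\log h+\mu=\nabla\log\psi_\mu$; the paper just never spells out the Toda-potential identity for $\chi^-(N_t^\theta)$, while you do not need the group decomposition of $N_\infty^\theta$. Note that, like the paper, your argument as written only covers $\mu\in C$; the extension to general $\mu$ still requires the intertwining argument of Section~\ref{section:intertwined_markov_kernels}.
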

\begin{proof} 
The proof for $\mu \in C$ is given in \ref{proof:whittaker_process_g_mu_in_C}. The proof for all $\mu$ uses an intertwining argument in section \ref{section:intertwined_markov_kernels}.
\end{proof}

In order to prove theorem \ref{thm:highest_weight_is_markov}, we will need to take $x_0$ to '$-\infty$'. And that will be done is subsection \ref{subsection:entrance_point}. Nevertheless, we can already describe the end of the story to the impatient reader, at the expense of being a bit redundant later. There is a measure concentration result that tells us how the distribution of the random path transform $x_0 + T_{e^{\theta} \Gamma_{\mu}(x_0) e^{-\theta}}$ behaves as $x_0$ goes to infinity in the opposite Weyl chamber:

\begin{thm}
\label{thm:measure_concentration}
Consider the family of random path transforms $x_0 + T_{e^{\theta} \Gamma_{\mu}(x_0) e^{-\theta}}$ for $x_0 \in \afrak$. For $x_0 = -M \rho^\vee$, $M \rightarrow \infty$, we have the following convergence in probability for every continuous path $\pi \in C( \R^+, \afrak)$:
$$ \forall t >0, x_0 + T_{e^{\theta} \Gamma_{\mu}(x_0) e^{-\theta}}\pi(t) \stackrel{\P}{\longrightarrow} \theta - w_0 \theta + \Tc_{w_0}\pi(t)$$ 
\end{thm}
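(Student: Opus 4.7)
The plan is to reduce the statement, via an asymptotic formula for the path transform combined with a natural identification of the law of $\Gamma_\mu(x_0)$ in Lusztig coordinates with the canonical probability measure on $\Bc(x_0)$, to a concentration result at the unique minimum $m_\lambda$ of the superpotential $f_B$ on $\Bc(\lambda)$ as $\lambda = -M\rho^\vee$ with $M \to \infty$.

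Step 1 (path transform asymptotics). Fix a reduced expression ${\bf i} = (i_1, \ldots, i_m) \in R(w_0)$ with associated positive coroot enumeration $(\beta_j^\vee)$, and write $g := \Gamma_\mu(x_0) = x_{\bf i}(T_1, \ldots, T_m)$. By iterating the rank-one limit $T_{x_\alpha(\xi)}\eta \to \Tc_\alpha \eta + \log(\xi)\alpha^\vee$, the covariance identity $T_{x_\alpha(\xi)}(\eta + c) = T_{x_\alpha(\xi e^{-\alpha(c)})}\eta + c$ for $c \in \afrak$, and the braid identity $\Tc_{w_0} = \Tc_{\alpha_{i_1}} \circ \cdots \circ \Tc_{\alpha_{i_m}}$ (valid since $w_0 = w_0^{-1}$), one obtains the asymptotic
\[ T_{x_{\bf i}(t_1, \ldots, t_m)}\pi(t) \;\longrightarrow\; \Tc_{w_0}\pi(t) + \sum_{j=1}^m \log(t_j)\,\beta_j^\vee \]
uniformly on compact subsets of $t > 0$ as the $t_j$ diverge in a coordinated fashion. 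Combined with $T_{e^{-\theta}}\pi = \pi - \theta$, property (iii) of the path transform, and $\Tc_{w_0}(\pi - \theta) = \Tc_{w_0}\pi - w_0\theta$, one rewrites $x_0 + T_{e^\theta g e^{-\theta}}\pi = T_g(\pi - \theta) + x_0 + \theta$ and obtains
\[ x_0 + T_{e^\theta g e^{-\theta}}\pi(t) \;\longrightarrow\; \theta - w_0\theta + \Tc_{w_0}\pi(t) + E_M, \qquad E_M := x_0 + \sum_j \log(T_j)\,\beta_j^\vee, \]
so that the theorem reduces to proving $E_M \stackrel{\P}{\to} 0$.

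Step 2 (identification with the canonical measure). Unpacking the definition of $D_\mu(\lambda)$, the law of $(T_1, \ldots, T_m)$ under $\Gamma_\mu(x_0)$ has density w.r.t. the toric measure proportional to $\prod_j t_j^{\langle \beta_j^\vee, \mu\rangle} \exp\left(-\sum_j t_j - \chi^-(e^{x_0} D_\mu(t) e^{-x_0})\right)$. Using $\chi^-(e^{x_0} \cdot\, e^{-x_0}) = \sum_\alpha e^{-\alpha(x_0)}\chi_\alpha^-$, the inversion lemma \ref{thm:inversion_lemma_lusztig} to identify $\chi_\alpha^-(D_\mu(t))$ with the path integral $\int_0^\infty e^{-\alpha(\pi)}$ for the path $\pi$ with Lusztig parameters $(t_1, \ldots, t_m)$, and lemma \ref{lemma:superpotential_path_model}, the bracketed exponent becomes $-f_B(x)$ for the element $x \in \Bc(x_0)$ with twisted Lusztig parameters $(t_1, \ldots, t_m)$. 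The weight formula in theorem \ref{thm:geom_weight_map} yields $\sum_j \log(t_j)\beta_j^\vee = w_0\gamma(x) - x_0$, so the prefactor $\prod_j t_j^{\langle \beta_j^\vee, \mu\rangle}$ equals $e^{\langle w_0\mu,\,\gamma(x)\rangle}$ up to a factor depending only on $(x_0, \mu)$. Consequently, the image of $\Gamma_\mu(x_0)$ in $\Bc(x_0)$ under $b_{x_0}^T$ has the law of $C_{w_0\mu}(x_0) \stackrel{\Lc}{=} C_\mu(x_0)$ (Weyl invariance, properties \ref{properties:canonical_probability_measure}(iii)), and $E_M \stackrel{\Lc}{=} w_0\,\gamma(C_\mu(x_0))$. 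The task is reduced to showing $\gamma(C_\mu(x_0)) \stackrel{\P}{\to} 0$.

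Step 3 (concentration at the superpotential's minimum). By theorem \ref{thm:superpotential_minimum}, $f_B$ admits a unique non-degenerate minimum $m_\lambda$ on $\Bc(\lambda)$, satisfying $\gamma(m_\lambda) = 0$. I would establish the concentration $C_\mu(\lambda) \stackrel{\P}{\to} m_\lambda$ as $\lambda = -M\rho^\vee$ with $M \to \infty$ by a Laplace argument in the Lusztig chart $\xi_j = \log(t_j)$: with $e^{-\alpha(x_0)} = e^M$ for every simple $\alpha$, one has $f_B(x) = \sum_j e^{\xi_j} + e^M \chi^-(D_\mu(e^\xi))$, whose optimality condition forces $\xi_j^* = M/2 + O(1)$, consistent with $\sum_j \xi_j^* \beta_j^\vee = M\rho^\vee$ via the identity $\sum_j \beta_j^\vee = 2\rho^\vee$ (lemma \ref{lemma:rho_property} applied to $w_0$); both $\sum_j e^{\xi_j}$ and $e^M\chi^-(D_\mu(e^\xi))$ contribute a Hessian of order $e^{M/2}$, so the fluctuations of $\xi_j$ around $\xi_j^*$ are of order $e^{-M/4}$. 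Hence $\gamma(C_\mu(\lambda)) \to \gamma(m_\lambda) = 0$ in probability, and $E_M \stackrel{\P}{\to} 0$.

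The crux is Step 3: producing quantitative Laplace estimates uniform in $M$, which requires controlling the minimal eigenvalue of the Hessian of $f_B$ at $m_\lambda$ as $M \to \infty$, and using the compactness of level sets (theorem \ref{thm:superpotential_well}) together with the growth bound of theorem \ref{thm:superpotential_estimate} to discard tail contributions. The identification in Step 2, though conceptually transparent once the formulas are rearranged, is also delicate because of the interplay between the Lusztig and twisted Lusztig parametrizations and the Weyl symmetry $C_\mu \stackrel{\Lc}{=} C_{w_0\mu}$.
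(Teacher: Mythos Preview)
Your approach is essentially correct and rests on the same two pillars as the paper: the identification $\Gamma_\mu(x_0) \stackrel{\Lc}{=} \varrho^T(C_{w_0\mu}(x_0))$ (your Step 2 is exactly Proposition~\ref{proposition:link_with_canonical}) and a Laplace concentration at the superpotential minimum $m_\lambda$. The execution, however, differs in two places, and the paper's choices make the argument cleaner.

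First, your Step 1 unwinds $T_g$ via iterated rank-one asymptotics. The paper instead stays at the group level: since $e^{T_g\pi(t)} = [gB_t(\pi)]_0$ is left $N$-invariant, it suffices to show that $Ne^{x_0}e^\theta\Gamma_\mu(x_0)e^{-\theta} \stackrel{\P}{\to} Ne^{\theta-w_0\theta}\bar w_0^{-1}$ as $N$-orbits. This single group-level convergence replaces your iterated limit and the need to control the remainder $R$ separately from $E_M$. In fact your Step 1 tacitly uses that each $T_j \to \infty$ in probability (otherwise the rank-one asymptotic has no content), which is only established in Step 3 --- a dependency you should make explicit.

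Second, and more substantively, your Step 3 performs Laplace analysis directly on the moving crystal $\Bc(-M\rho^\vee)$ with heuristic Hessian control. The paper's Proposition~\ref{proposition:convergence_in_proba} makes this painless via the conjugation $y \mapsto e^{-M\rho^\vee} y\, e^{M\rho^\vee}$, which maps $\Bc(\zeta - 2M\rho^\vee)$ bijectively onto the \emph{fixed} crystal $\Bc(\zeta)$ while transforming $f_B(y)$ into $e^M f_B(x)$ and preserving both $\gamma$ and the toric measure $\omega$. One is then reduced to a textbook Laplace problem on a fixed domain with large parameter $e^M$ and unique nondegenerate minimum $m_\zeta$. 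This conjugation is precisely what makes your assertion $\xi_j^* = M/2 + O(1)$ rigorous (in twisted Lusztig coordinates, conjugation rescales each parameter by $e^{-M/2}$), and it removes any need to estimate Hessian eigenvalues as $M\to\infty$. Your route works, but this one change of variables would collapse most of the effort in Step 3.
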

\begin{proof}
See subsection \ref{section:measure_concentration}.
\end{proof}

\begin{corollary}
\label{thm:whittaker_process}[Whittaker process]
Let $W^{(\mu)}$ be a Brownian motion in $\afrak$ with drift $\mu$. Then the process:
$$\left( \theta - w_0 \theta + \Tc_{w_0}\left( W^{(\mu)} \right)_t; t > 0 \right)$$
is Markovian with infinitesimal generator
$$\frac{1}{2} \Delta + \nabla \log \psi_\mu .\nabla $$
\end{corollary}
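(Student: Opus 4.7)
The plan is to combine Theorem~\ref{thm:whittaker_process_g} (Whittaker process for a finite entrance point $x_0$) with the measure concentration Theorem~\ref{thm:measure_concentration} (limit as $x_0 \to -\infty$), then transfer the Markov property to the limit by a standard weak-limit argument, and finally re-label $\mu$ using Weyl-invariance of $\psi_\mu$.

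Concretely, I would fix $\mu$, a standard Brownian motion $W$ on $\afrak$, and, for each $x_0 \in \afrak$, an independent $g \sim \Gamma_\mu(x_0)$. By Theorem~\ref{thm:whittaker_process_g} the process
$$X^{x_0,(\mu)}_t = x_0 + T_{e^\theta g e^{-\theta}}\bigl(W^{(w_0\mu)}\bigr)_t,\qquad t\geq 0,$$
is Markovian with infinitesimal generator $\Lc = \tfrac12\Delta + \nabla\log\psi_\mu\cdot\nabla$, and in particular its transition semigroup $(P_t)$ is the same for every $x_0$. Taking $x_0 = -M\rho^\vee$ with $M\to\infty$, Theorem~\ref{thm:measure_concentration} applied pathwise to $\pi = W^{(w_0\mu)}$ (and then averaged over $W$, $g$ being independent of $W$) gives, for every fixed $t>0$,
$$X^{x_0,(\mu)}_t \;\xrightarrow{\;\P\;}\; Z_t := \theta - w_0\theta + \Tc_{w_0}\bigl(W^{(w_0\mu)}\bigr)_t.$$

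Next I would transfer the Markov property to the limit. For $0<s<t$ and bounded continuous test functions $f,g$, the Markov property for each $X^{x_0,(\mu)}$ reads
$$\E\bigl[f(X^{x_0,(\mu)}_t)\,g(X^{x_0,(\mu)}_s)\bigr] \;=\; \E\bigl[(P_{t-s}f)(X^{x_0,(\mu)}_s)\,g(X^{x_0,(\mu)}_s)\bigr].$$
Since $P_{t-s}f$ is bounded and continuous (the semigroup of $\Lc$ is regular by the standard theory of the Toda Hamiltonian and its Doob $h$-transform by $\psi_\mu$), the pointwise in-probability convergences of $X^{x_0,(\mu)}_s$ and $X^{x_0,(\mu)}_t$ to $Z_s$ and $Z_t$ allow bounded-convergence passage on both sides, yielding $\E[f(Z_t)g(Z_s)] = \E[(P_{t-s}f)(Z_s)g(Z_s)]$. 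This is the Markov property of $Z$ with semigroup $(P_t)$, hence with generator $\Lc$; the same argument extended to finitely many times gives the full Markov property on $(0,\infty)$.

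To reach the exact statement of the corollary, I would relabel $\mu \mapsto w_0\mu$: since $w_0^2 = e$ this transforms $W^{(w_0\mu)}$ into $W^{(\mu)}$, and by the Weyl-invariance $\psi_{w_0\mu}=\psi_\mu$ from Theorem~\ref{thm:whittaker_functions_properties}(ii), the generator $\tfrac12\Delta + \nabla\log\psi_{w_0\mu}\cdot\nabla$ is unchanged. The only real difficulty in this chain is the transfer of the Markov property to the limit, and the main technical subtlety is the behavior of $Z_t$ near $t=0$ (the path $\Tc_{w_0}(W^{(\mu)})$ is extended of type $w_0$ at the origin in the sense of Definition~\ref{def:high_path_types}, which is precisely why the statement requires $t>0$); everything else is a direct consequence of the two inputs.
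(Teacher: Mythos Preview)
Your proposal is correct and follows essentially the same approach as the paper: combine Theorem~\ref{thm:whittaker_process_g} with Theorem~\ref{thm:measure_concentration}, pass the Markov property through the limit in probability, and use the Weyl-invariance $\psi_{w_0\mu}=\psi_\mu$ to relabel the drift. The paper's own proof is the two-sentence version of exactly this argument; you have simply spelled out the limit step via test functions (and noted the Feller property needed for $P_{t-s}f$ to remain continuous), which the paper leaves implicit. One cosmetic remark: you use $g$ both for the random element $g\sim\Gamma_\mu(x_0)$ and as a test function, so rename one of them.
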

\begin{proof}
The Markov property is preserved after taking limits in probability. Moreover, changing $w_0 \mu$ for $\mu$ is allowed because Whittaker functions are invariant w.r.t to $W$.
\end{proof}

Now in order to prove theorem \ref{thm:whittaker_process_g} for $\mu \in C$, start by using theorem \ref{thm:conditional_representation}. It tells us that $X^{x_0, (\mu)} = x_0 + T_{e^{\theta} g e^{-\theta}} W^{(w_0 \mu)}$ with $g$ random is a Brownian motion having drift $\mu$ with $N_\infty^\theta(X^{0,(\mu)})$ conditioned to follow the law of $\Theta(g)$. If we take this law has having a density $v$ with respect to the original law, we need to be able to describe the process $X^{x_0, (\mu)}$ before going further. For now, let us do our computations for general $v$, even if we will end up taking $v$ proportional to a character of $N$. We simply ask for which $v$ the process $X^{x_0, (\mu)}$ ends up being Markovian. We will see that the announced choice is virtually the only interesting one.

\subsection{Conditionning \texorpdfstring{$N_\infty(X^{x_0, (\mu)})$}{the N part} to follow a certain law}
\label{subsection:condition_N_infty}
Let $\P$ be a probability under which $g \stackrel{\Lc}{=} \Gamma_\mu$. Under $\P$, $X^{x_0, (\mu)}$ is a Brownian motion in $\afrak$ with drift $\mu$. Denote its natural filtration by:
$$ \Fc_t := \sigma\left( X^{x_0, (\mu)}_s | 0 \leq s \leq t \right)$$
Let $v: N_{>0} \rightarrow \R_+^*$ be a smooth positive function such that:
\begin{align}
\label{eqn:v_normalization}
\E\left( v\left( N_\infty^\theta(X^{x_0, (\mu)}) \right) \right) & = 1 
\end{align}
The function $v$ is used to define a deformed probability measure $\P^v$. More precisely, define $\P^v$ thanks to its Radon-Nikodym derivative with respect to $\P$:
$$ \frac{d\P^v}{d\P} = v\left( N_\infty^\theta(X^{x_0, (\mu)}) \right)$$
The probability measures $\P$ and $\P^v$ are equivalent since by definition $\frac{d\P^v}{d\P} > 0$ almost surely. Moreover, $\P^v$ can be interpreted as a probability measure under which $N_\infty^\theta(X^{x_0, (\mu)})$ is conditioned to follow a certain law. This law has a density $v$ with respect to the original law of $N_\infty^\theta(X^{(x_0, \mu)}) \stackrel{\Lc}{=} e^{x_0} D_\mu e^{-x_0}$. We are interested in describing the process $X^{x_0,(\mu)}$ under $\P^v$. A first step is to explicit the likelihood process:
$$ L_t := \frac{d \P^v}{d \P}_{| \Fc_t }$$

In order to do so, a decomposition analogous to the one in equation (\ref{eqn:N_infty_decomposition_1d}) in the general group setting is needed. For $X = X^{0, (\mu)}$, let $B_t(X) = N_t(X) A_t(X)$ be the $NA$ decomposition of $B_.(X)$. Because of left-invariance, for all $t, s>0$:
\begin{eqnarray*}
& B_{t+s}( X) & = B_t( X ) B_s( X_{t+.} - X_{t}) \\
\Leftrightarrow & N_{t+s}(X) A_{t+s}(X) & = N_{t}(X) A_{t}(X) N_{s}( X_{t+.} - X_{t} ) A_{s}( X_{t+.} - X_{t} )\\
\Leftrightarrow & N_{t+s}(X)            & = N_{t}(X) e^{ X_t } N_{s}( X_{t+.} - X_{t} ) e^{ -X_t }
\end{eqnarray*}
Letting $s \rightarrow \infty$, it gives us a decomposition of $N_{\infty}(X)$ in terms of $\mathcal{F}_t^{X}$-measurable variables and an independent variable $N_{\infty}( X_{t+.} - X_{t} )$ with same law:
\begin{align}
\label{eqn:N_infty_decomposition_zero}
N_{\infty}(X) &= N_{t}(X) e^{ X_t } N_{\infty}( X_{t+.} - X_{t} ) e^{ -X_t }
\end{align}
The same decomposition holds easily for $N_\infty^\theta\left( X^{x_0, (\mu)} \right)$:
\begin{align}
\label{eqn:N_infty_decomposition}
N_{\infty}^\theta(X^{x_0, (\mu)}) & = N_t^\theta(X^{x_0, (\mu)}) e^{ X_t^{x_0, (\mu)} } N_{\infty}^\theta( X_{t+.}^{x_0, (\mu)} - X_{t}^{x_0, (\mu)} ) e^{ -X_t^{x_0, (\mu)} }
\end{align}

\begin{lemma}[Likelihood]
\label{lemma:likelihood}
$$ L_t := \frac{d \P^v}{d \P}_{| \Fc_t } = \Phi_v \left(N_t^\theta(X^{x_0, (\mu)}), X^{x_0, (\mu)}_t\right)$$
where:
$$\Phi_v(n, x) = \mathbb{E}\left( v\left( n e^{x} D_\mu e^{-x} \right) \right)$$ 
\end{lemma}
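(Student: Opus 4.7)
The plan is to compute the conditional expectation $L_t = \E(d\P^v/d\P \mid \Fc_t) = \E(v(N_\infty^\theta(X^{x_0,(\mu)})) \mid \Fc_t)$ directly, using the multiplicative decomposition already established for $N_\infty^\theta$.

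The key observation is equation (\ref{eqn:N_infty_decomposition}), which splits
\[
 N_\infty^\theta(X^{x_0,(\mu)}) \;=\; N_t^\theta(X^{x_0,(\mu)}) \, e^{X_t^{x_0,(\mu)}} \, N_\infty^\theta(X^{x_0,(\mu)}_{t+\cdot}-X_t^{x_0,(\mu)}) \, e^{-X_t^{x_0,(\mu)}}
\]
into an $\Fc_t$-measurable factor on the left and a tail factor on the right. By the independent increments of $X^{x_0,(\mu)}$ (which is a Brownian motion with drift $\mu$ under $\P$), the shifted path $(X^{x_0,(\mu)}_{t+s}-X^{x_0,(\mu)}_t)_{s\geq 0}$ is independent of $\Fc_t$ and distributed as $X^{0,(\mu)}$. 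Combined with equation (\ref{eqn:N_infty_law_D_mu}), this means $N_\infty^\theta(X^{x_0,(\mu)}_{t+\cdot}-X_t^{x_0,(\mu)}) \stackrel{\Lc}{=} D_\mu$ and is independent of $\Fc_t$.

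The conclusion is then immediate by the standard ``measurable times independent'' rule: writing $n = N_t^\theta(X^{x_0,(\mu)})$ and $x = X_t^{x_0,(\mu)}$ (both $\Fc_t$-measurable), and letting $\tilde D_\mu$ denote an independent copy of $D_\mu$,
\[
 L_t \;=\; \E\bigl(v(n\, e^{x}\, \tilde D_\mu\, e^{-x}) \,\big|\, \Fc_t\bigr) \;=\; \Phi_v\bigl(N_t^\theta(X^{x_0,(\mu)}),\, X_t^{x_0,(\mu)}\bigr),
\]
with $\Phi_v(n,x) = \E(v(n\, e^x D_\mu e^{-x}))$, as claimed. The normalization condition (\ref{eqn:v_normalization}) ensures $(L_t)_{t\geq 0}$ is a genuine $\P$-martingale with $L_0 = 1$, so no integrability issue arises.

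There is no real obstacle here; the statement is essentially a repackaging of the ``initial enlargement by $N_\infty$'' argument reviewed in section \ref{section:review_matsumoto_yor} (compare with the one-dimensional computation of $q(B_t^{(\mu)}, N_t, n)$), now carried out at the group level thanks to decomposition (\ref{eqn:N_infty_decomposition}). The only point that deserves care is checking that $N_\infty^\theta(X^{x_0,(\mu)}_{t+\cdot}-X_t^{x_0,(\mu)})$ is genuinely independent of $\Fc_t$ and has the announced law; both are immediate from the explicit iterated-integral formula (\ref{eqn:process_N_theta_explicit}) applied to the shifted Brownian motion, together with theorem \ref{thm:N_infty_law}.
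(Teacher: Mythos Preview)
Your proof is correct and follows essentially the same approach as the paper: both compute $L_t = \E(v(N_\infty^\theta(X^{x_0,(\mu)})) \mid \Fc_t)$ by substituting the decomposition (\ref{eqn:N_infty_decomposition}) and using that the tail factor $N_\infty^\theta(X^{x_0,(\mu)}_{t+\cdot}-X_t^{x_0,(\mu)})$ is independent of $\Fc_t$ with law $D_\mu$. Your additional remarks on normalization and the analogy with the one-dimensional case are accurate but not needed for the argument itself.
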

\begin{proof}
We drop the superscript $\mu$ for convenience. Using equation (\ref{eqn:N_infty_decomposition}) and the likelihood's definition:
\begin{align*}
L_t & := \frac{d \P^v}{d \P}_{| \Fc_t }\\
& = \P\left( \frac{d \P^v}{d \P} | \Fc_t \right)\\
& = \P\left( v\left( N_\infty^\theta(X^{x_0}) \right) | \Fc_t \right)\\
& = \P\left( v\left( N_{t}^\theta(X^{x_0}) e^{ X_t^{x_0} } N_{\infty}^\theta( X_{t+.}^{x_0} - X_{t}^{x_0} ) e^{ -X_t^{x_0} } \right) | \Fc_t \right)
\end{align*}
Because $N_{\infty}^\theta( X_{t+.}^{x_0} - X_{t}^{x_0} ) \stackrel{ \Lc }{=} D_\mu$ under $\P$ (equation \ref{eqn:N_infty_law_D_mu}) and is independent of $\Fc_t$, we have the required result:
$$ L_t = \E\left( v\left( n e^{ x } D_\mu  e^{-x} \right) | N_t^\theta(X^{x_0}) = n, X^{x_0}_t = x \right)$$
\end{proof}

\begin{corollary}
\label{corollary:girsanov}
$$ X^{x_0,(\mu)}_t - x_0 - \mu t - \int_0^t \nabla_x \log \Phi_v \left(N_s^\theta(X^{x_0,(\mu)}), X^{x_0,(\mu)}_s \right) ds$$
is a $\P^v$ Brownian motion.
\end{corollary}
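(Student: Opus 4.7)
The plan is to apply Girsanov's theorem, using as input the likelihood process $L_t = \Phi_v\bigl(N^\theta_t(X^{x_0,(\mu)}), X^{x_0,(\mu)}_t\bigr)$ identified in Lemma \ref{lemma:likelihood}. Under $\P$, the process $X^{x_0,(\mu)}$ is a Brownian motion with drift $\mu$ starting at $x_0$, and $\P^v$ is equivalent to $\P$ with $L$ as Radon--Nikodym derivative on the filtration $\Fc_t$. Girsanov's theorem then asserts that
\[
  M_t := X^{x_0,(\mu)}_t - x_0 - \mu t - \int_0^t \frac{d\langle X^{x_0,(\mu)}, L\rangle_s}{L_s}
\]
is a continuous $\P^v$-local martingale whose quadratic variation is unchanged from the $\P$ one, hence a standard $\P^v$-Brownian motion on $\afrak$ by L\'evy's characterization.

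The remaining work is to identify $\frac{d\langle X^{x_0,(\mu)}, L\rangle_s}{L_s}$ with $\nabla_x \log \Phi_v\bigl(N^\theta_s(X^{x_0,(\mu)}), X^{x_0,(\mu)}_s\bigr)\,ds$. I would apply It\^o's formula to $\log L_t$ viewed as $\log \Phi_v$ evaluated at the semimartingale $\bigl(N^\theta_t(X^{x_0,(\mu)}), X^{x_0,(\mu)}_t\bigr)$. The key observation, visible from equation (\ref{lbl:process_N_ode}), is that $N^\theta_t(X^{x_0,(\mu)})$ is a finite-variation process, driven only by $dt$ with coefficients that are continuous functionals of $X^{x_0,(\mu)}$. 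Consequently, only the $\nabla_x \log \Phi_v \cdot dX^{x_0,(\mu)}_t$ term of the It\^o expansion of $\log L$ contributes to the joint bracket $\langle X^{x_0,(\mu)}, \log L\rangle$. Working componentwise in an orthonormal basis of $\afrak$ gives
\[
  d\langle (X^{x_0,(\mu)})^i, \log L\rangle_s = \partial_{x_i}\log\Phi_v\bigl(N^\theta_s, X^{x_0,(\mu)}_s\bigr)\,ds,
\]
so the drift correction vector is exactly $\nabla_x \log \Phi_v\bigl(N^\theta_s, X^{x_0,(\mu)}_s\bigr)\,ds$, as claimed.

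The only mild subtlety is ensuring that $(L_t)_{t \geq 0}$ is a genuine uniformly integrable $\P$-martingale, so that Girsanov applies globally and not only up to a localizing sequence; this follows from the normalization (\ref{eqn:v_normalization}) together with the tower property $L_t = \E\bigl[d\P^v/d\P \mid \Fc_t\bigr]$, which automatically makes $L$ a closed positive $\P$-martingale. No substantive obstacle is expected: the statement is a direct Girsanov computation specialized to the form of the likelihood $L$, leveraging the finite-variation character of $N^\theta$ to reduce the bracket to a single gradient term.
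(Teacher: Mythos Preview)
Your proposal is correct and follows essentially the same route as the paper: apply Girsanov's theorem using the likelihood $L_t$ from Lemma \ref{lemma:likelihood}, and exploit the finite-variation (zero quadratic variation) nature of $N_t^\theta(X^{x_0,(\mu)})$ to reduce the bracket $d\langle \log L, X\rangle_s$ to the spatial gradient $\nabla_x \log \Phi_v\,ds$. Your version is slightly more explicit about L\'evy's characterization and the uniform integrability of $L$, but the argument is the same.
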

\begin{proof}
 By Girsanov's theorem (\cite{bib:RevuzYor} Chapter VIII, theorem 1.4 or (\cite{bib:KaratzasShreve91} theorem 5.1)):
$$X^{x_0,(\mu)}_t - x_0 - \mu t - \int_0^t d \langle \log L, X \rangle_s$$
is a $\P^v$ Brownian motion. Thanks to the previous lemma and to the fact that $N_t^\theta(X^{x_0})$ has zero quadratic variation, the bracket $d \langle \log L, X \rangle_s$ is indeed:
$$\nabla_x \log \Phi_v \left(N_s^\theta(X^{x_0,(\mu)}), X^{x_0,(\mu)}_s \right) ds$$
\end{proof}

\subsection{The remarkable deformations \texorpdfstring{$D_\mu(\lambda)$ and $\Gamma_\mu(\lambda)$}{ } }
\label{subsection:remarkable_deformation}
We are aiming at identifying a function $v$ that forces $X^{x_0,(\mu)}$ to be a Markov process. As such, we want a term $\nabla_x \log \Phi_v (n, x)$ that only depends on $x$ for a certain $v$. Equivalently $\Phi_v( n, x)$ has to break down into the product of two functions $g_v(n) f_v(x)$.

\begin{proposition}
\label{proposition:choice_v}
If:
$$    v(n) = \frac{\exp\left( -\chi^-\left( n \right) \right)}
{ \mathbb{E}\left( \exp\left( -\chi^-\left( e^{x_0}    D_\mu e^{-x_0}          \right) \right) \right)} $$
Then:
\begin{align}
   \label{eqn:proposition:choice_v_1}
   \frac{d \P^v}{d \P} & = \exp\left( -\chi^-\left( N_\infty^\theta(X^{x_0, (\mu)}) \right) \right)
                            \frac{ b(\mu) }
                                 { \psi_\mu(x_0) e^{- \langle \mu, x_0 \rangle }}
\end{align}
The function $\Phi_v$ defining the likelihood $L_t$ splits into:
\begin{align}
   \label{eqn:proposition:choice_v_2}
   \Phi_v( n, x) & = \exp\left( -\chi^-\left( n \right) \right)
                     \frac{ \psi_\mu(x  ) e^{- \langle \mu, x   \rangle }}
                          { \psi_\mu(x_0) e^{- \langle \mu, x_0 \rangle }}
\end{align}
And under $\P^v$:
\begin{align}
   \label{eqn:proposition:choice_v_3}
   N_\infty^\theta( X^{x_0, (\mu)} ) \stackrel{\Lc}{=} e^{x_0} D_\mu(x_0) e^{-x_0}
\end{align}
\end{proposition}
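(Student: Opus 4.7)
\textbf{Proof plan for Proposition \ref{proposition:choice_v}.} The entire proposition rests on two simple structural facts, which I will exploit in sequence. First, the principal character $\chi^-$ is an additive homomorphism on $N$, so that for any $n_1, n_2 \in N$ one has $\chi^-(n_1 n_2) = \chi^-(n_1) + \chi^-(n_2)$. Second, the probabilistic representation of $\psi_\mu$ recorded in equation~(\ref{eqn:whittaker_function_D_mu}) gives the normalizing identity
\[
\mathbb{E}\!\left( \exp\!\left( -\chi^-\!\left( e^{x} D_\mu e^{-x} \right) \right) \right) \;=\; \frac{\psi_\mu(x) e^{-\langle \mu, x\rangle}}{b(\mu)}, \qquad \forall x \in \afrak.
\]
This identity is the engine that converts expectations of the form $\exp(-\chi^-(\cdot))$ into values of the Whittaker function.

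\medskip

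For \textbf{(\ref{eqn:proposition:choice_v_1})}, I would simply substitute $x = x_0$ in the identity above into the denominator of $v(n)$. Since by definition $\frac{d\P^v}{d\P} = v(N_\infty^\theta(X^{x_0,(\mu)}))$, this yields equation~(\ref{eqn:proposition:choice_v_1}) directly, and in particular verifies the normalization~(\ref{eqn:v_normalization}) for this choice of $v$.

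\medskip

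For \textbf{(\ref{eqn:proposition:choice_v_2})}, I would insert $v$ into the formula $\Phi_v(n,x) = \mathbb{E}(v(n e^x D_\mu e^{-x}))$ from Lemma~\ref{lemma:likelihood}. The additivity of $\chi^-$ allows the factor $\exp(-\chi^-(n))$ to come out of the expectation:
\[
\Phi_v(n,x) \;=\; \exp(-\chi^-(n)) \cdot \frac{\mathbb{E}(\exp(-\chi^-(e^x D_\mu e^{-x})))}{\mathbb{E}(\exp(-\chi^-(e^{x_0} D_\mu e^{-x_0})))}.
\]
Applying the normalizing identity to both the numerator (at $x$) and the denominator (at $x_0$) produces exactly~(\ref{eqn:proposition:choice_v_2}). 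Note that this is precisely where the multiplicativity of the torus conjugation with respect to $\chi^-$ and the semigroup property of $N$ together force $\Phi_v$ to factorize as $g_v(n) f_v(x)$, which is the crux of the subsequent Markov property in Theorem~\ref{thm:whittaker_process_g}.

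\medskip

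For \textbf{(\ref{eqn:proposition:choice_v_3})}, I would compute the $\P^v$-law of $N_\infty^\theta(X^{x_0,(\mu)})$ against an arbitrary test function $\varphi$ by unrolling the change of measure. Using Theorem~\ref{thm:N_infty_law}, under $\P$ one has $N_\infty^\theta(X^{x_0,(\mu)}) \stackrel{\Lc}{=} e^{x_0} D_\mu e^{-x_0}$, hence
\[
\mathbb{E}^{\P^v}(\varphi(N_\infty^\theta(X^{x_0,(\mu)}))) \;=\; \frac{\mathbb{E}(\varphi(e^{x_0} D_\mu e^{-x_0}) \exp(-\chi^-(e^{x_0} D_\mu e^{-x_0})))}{\mathbb{E}(\exp(-\chi^-(e^{x_0} D_\mu e^{-x_0})))}.
\]
Using the normalizing identity on the denominator, this simplifies to $\frac{b(\mu) e^{\langle \mu, x_0\rangle}}{\psi_\mu(x_0)}$ times the numerator, which is exactly the Radon-Nikodym description of $e^{x_0} D_\mu(x_0) e^{-x_0}$ coming from the definition of $D_\mu(\lambda)$ (after the obvious change of variable inside the test function). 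There is no genuine obstacle here; the only subtlety is keeping track of the conjugation by $e^{x_0}$ when matching the definition of $D_\mu(x_0)$ to the computed law, which is purely bookkeeping.
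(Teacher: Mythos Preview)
Your proof is correct and follows essentially the same approach as the paper: identify the normalizing constant via~(\ref{eqn:whittaker_function_D_mu}), use additivity of $\chi^-$ to split $\Phi_v$, and compute the $\P^v$-law of $N_\infty^\theta(X^{x_0,(\mu)})$ against a test function by recognizing the definition of $D_\mu(x_0)$. The paper's proof is line-for-line the same, including the observation that under $\P$ one has $N_\infty^\theta(X^{x_0,(\mu)}) \stackrel{\Lc}{=} e^{x_0} D_\mu e^{-x_0}$.
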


Before diving into the proof, the following dicussion explains the reasons that lead to the choice of function $v$ made in proposition  \ref{proposition:choice_v}. Assume a general $v$. For starter, notice that for all $y \in N^{w_0}_{>0}$, $e^{x} y e^{-x} \longrightarrow id$ as $x \rightarrow \infty$ staying inside a sector of the open Weyl chamber. In order to see that, write $y = y_{\bf i}(t_1, \dots, t_m)$ hence:
$$ e^{x} y e^{-x} = y_{\bf i }\left( t_1 e^{-\alpha_{i_1}(x)}, \dots, t_m e^{-\alpha_{i_m}(x)} \right) \rightarrow id$$
For such asymptotical directions, $\Phi_v(x, n)$ actually converges to:
$$g_v(n)f_v(\infty) = \lim_{x \rightarrow \infty, x\in C} \mathbb{E}\left( v\left( n e^{x} D_\mu e^{-x} \right) \right) = v\left( n \right)$$
and then $\Phi_v$ has in fact to breaks down to $\Phi_v ( n, x) = v\left( n \right) \frac{f_v(x)}{f_v(\infty)}$.

Hence, the objective we want to achieve is:
$$ \exists v: N^{w_0}_{>0} \rightarrow \mathbb{R}, \Phi_v (x, n) = \mathbb{E}\left( v\left( n e^{x} D_\mu e^{-x}  \right) \right) \stackrel{?}{=} v\left( n \right) \frac{f_v(x)}{f_v(\infty)}$$
A sufficient (and probably necessary) condition to have this factorization is to make use of a multiplicative character of the lower nilpotent group $N$. Hence the idea to use the principal additive character $\chi^-$ defined by its action on the Chevalley generators:
$$ \forall t \in \C, \forall \alpha \in \Delta, \chi^-( e^{t f_\alpha} ) = t $$
And $v$ should be proportional to the multiplicative character:
$$ \exp\left( -\chi^-\left( n \right) \right) $$
Indeed, in order to avoid integrability issues, it is sufficient for $\exp\left(-\chi^-\right)$ to be bounded on $N^{w_0}_{>0}$, which is ascertained by the minus sign. There is no loss of generality in using the standard character. More general characters simply add a shift via conjugation by torus elements.

\begin{proof}[Proof of proposition \ref{proposition:choice_v}]
The quantity $\mathbb{E}\left( e^{-\chi^-}\left( e^{x_0} D_\mu e^{-x_0} \right) \right) = \frac{\psi_\mu(x_0) e^{-\langle \mu, x_0 \rangle} }{b(\mu)}$ is a normalization constant so that equation  (\ref{eqn:v_normalization}) is true.

With such a function $v$, equation (\ref{eqn:proposition:choice_v_1}) follows from:
\begin{align*}
  & \frac{d \P^v}{d \P}\\
= & v\left( N_\infty^\theta(X^{x_0, (\mu)}) \right)\\
= & \exp\left( -\chi^-\left( N_\infty^\theta(X^{x_0, (\mu)}) \right) \right)
    \frac{ b(\mu) }
    { \psi_\mu(x_0) e^{- \langle \mu, x_0 \rangle }}
\end{align*}
We also get the splitting we wanted for $\Phi_v$:
$$\Phi_v (x, n) 
= \exp\left( -\chi^-\left( n \right) \right) 
  \frac{ \E\left( \exp\left( -\chi^-\left( e^{x  } D_\mu e^{-x  } \right) \right) \right) }
       { \E\left( \exp\left( -\chi^-\left( e^{x_0} D_\mu e^{-x_0} \right) \right) \right) }$$
Then equation (\ref{eqn:whittaker_function_D_mu}) implies equation (\ref{eqn:proposition:choice_v_2}).

And finally, if $\varphi: N^{w_0}_{>0} \rightarrow \R_{>0}$ is a smooth test function. Equation (\ref{eqn:proposition:choice_v_3}) follows from:
\begin{align*}
  & \P^v\left( \varphi\left( N_\infty^\theta\left( X^{x_0, (\mu)} \right) \right) \right) \\
= & \P  \left( \varphi\left( N_\infty^\theta\left( X^{x_0, (\mu)} \right) \right) v\left( N_\infty^\theta\left( X^{x_0, (\mu)} \right) \right) \right) \\
= & \E  \left( \varphi\left( e^{x_0} D_\mu e^{-x_0} \right) v\left( e^{x_0} D_\mu e^{-x_0} \right) \right) \\
= & \frac{ \E  \left( \varphi\left( e^{x_0} D_\mu e^{-x_0} \right)
                      \exp\left( -\chi^-\left( e^{x_0} D_\mu e^{-x_0} \right) \right) \right)}
         { \E  \left( \exp\left( -\chi^-\left( e^{x_0} D_\mu e^{-x_0} \right) \right) \right)} \\
= & \E  \left( \varphi\left( e^{x_0} D_\mu(x_0) e^{-x_0} \right) \right)
\end{align*} 
\end{proof}

\begin{proof}[Proof of theorem \ref{thm:whittaker_process_g} for $\mu \in C$]
\label{proof:whittaker_process_g_mu_in_C}
If $g$ is taken to follow the law $\Gamma_\mu(x_0)$, we are in the situation where corollary \ref{corollary:girsanov} is applicable with choice for $v$ made in proposition \ref{proposition:choice_v}. The reference measure is already of the form $\P^v$, with $\P$ the probability under which $g \stackrel{\Lc}{=} \Gamma_\mu$. As such, there is a Brownian motion $B_t$ such that:
\begin{align*}
  & X^{x_0,(\mu)}_t\\
= & x_0 + \mu t + \int_0^t \nabla_x \log \left( \psi_\mu \left( X^{x_0,(\mu)}_s \right) e^{- \langle \mu, X^{x_0,(\mu)}_s \rangle } \right) ds + B_t\\
= & x_0 + \int_0^t \nabla_x \log \psi_\mu \left( X^{x_0,(\mu)}_s \right) ds + B_t
\end{align*}
\end{proof}

\subsection{Harmonicity: Quantum Toda equation}
\label{subsection:harmonicity_quantum_toda}
From now on, $v$ is chosen proportional to a character of $N$, like in the previous subsection. Writing:
$$ \forall n \in N, \forall x \in \afrak, \Phi_\mu(n e^x) = \Phi_v(n, x)$$
one defines a function of the same form as in lemma \ref{lemma:equivalence_toda_harmonicity}.

Clearly, $\Phi_\mu$ is harmonic for the operator $\Dc^{(\mu)}$ as:
$$ \Phi_v\left( N_t^\theta(X^{x_0,(\mu)}), X^{x_0,(\mu)}_t \right) = \frac{d\P^v}{d\P}_{| \Fc_t}$$
is a $\P$-martingale. Another way of seeing it, is the Poisson integral:
$$ \forall n \in N, \forall x \in \afrak, \Phi_\mu(n e^x) 
= \frac{ \E\left( \exp\left( -\chi^-\left( n e^{x  } N_\infty^\theta(W^{(\mu)}) e^{-x  } \right) \right) \right) }
       { \E\left( \exp\left( -\chi^-\left(   e^{x_0} N_\infty^\theta(W^{(\mu)}) e^{-x_0} \right) \right) \right) }$$
with $W^{(\mu)}$ Brownian motion in $\afrak$ with drift $\mu \in C$. Of course, this is directly hinting to equation  (\ref{eqn:poisson_integral_phi}) exactly as foretold.

Recall that $\Phi_\mu$ being harmonic is equivalent to saying that $\psi_\mu$ satisfies the quantum Toda eigenequation.

\subsection{Link between \texorpdfstring{$\Gamma_\mu\left( \lambda \right)$}{deformed Gamma} and canonical measure}
The following proposition shows that $\Gamma_\mu\left( \lambda \right)$ has the same law as the twisted Lusztig parameter corresponding to a canonical random variable on $\Bc(\lambda)$ (see definition \ref{def:canonical_probability_measure}). Moreover, we recover as a side product that $\psi_\mu$ is the one in definition \ref{def:whittaker_functions}, as announced before.

\begin{proposition}
\label{proposition:link_with_canonical}
For $\mu \in C$ and $\lambda \in \afrak$:
$$ \Gamma_\mu(\lambda) \stackrel{\Lc}{=} \varrho^{T}\left( C_{w_0 \mu}(\lambda) \right)$$
This extends the definition of $\Gamma_\mu(\lambda)$ to all $\mu \in \afrak$. Moreover, the Whittaker function has indeed the integral representation:
$$ \psi_\mu(\lambda) = \int_{\Bc(\lambda)} e^{ \langle \mu, \gamma(x) \rangle - f_B(x) } \omega(dx) $$
\end{proposition}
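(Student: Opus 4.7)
The plan is to establish the equality in law by computing both distributions as measures on $U^{w_0}_{>0}$ in the Lusztig parametrization and comparing them. Writing $x = b^T_\lambda(u)$ with $u = x_{\bf i}(t_1,\dots,t_m)$, three ingredients from earlier in the text are immediately available: the reference measure in these coordinates is $\omega(dx) = \prod_k dt_k/t_k$ by theorem \ref{thm:omega_invariance_on_crystal}; the weight reads $\gamma(x) = w_0\lambda + \sum_k \log(t_k)\,w_0\beta_k^\vee$ by theorem \ref{thm:geom_weight_map}; and the superpotential has the semi-explicit form $f_B(x) = \chi(e^{-\lambda}\eta_{w_0}(u)e^\lambda) + \sum_k t_k$ from proposition \ref{proposition:semi_explicit_expressions}.

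The pivotal algebraic identity I aim to prove is
$$\chi(e^{-\lambda}\eta_{w_0}(u)e^\lambda) \;=\; \chi^-(e^\lambda \Theta(u) e^{-\lambda}),$$
which bridges the $f_B$ side (canonical measure) to the $\chi^-$ side (law of $\Gamma_\mu(\lambda)$). It reduces to the purely algebraic claim $\Theta(u)^T = \eta_{w_0}(u)$. A short computation on each simple reflection gives $\bar s_i^T = \bar s_i^{-1}$, and multiplying along a reduced expression of $w_0$ yields $\bar w_0^T = \bar w_0^{-1}$, so $(u\bar w_0)^T = \bar w_0^{-1} u^T$. Combined with the Gauss decomposition identity $[g^T]_+ = [g]_-^T$, this gives $\Theta(u)^T = [u\bar w_0]_-^T = [\bar w_0^{-1} u^T]_+ = \eta_{w_0}(u)$, and the principal-character relation $\chi(v) = \chi^-(v^T)$ concludes.

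With this identity in hand, for a test function $\varphi$ on $U^{w_0}_{>0}$ the change of variable $x \leftrightarrow u$ gives
\begin{align*}
\E\bigl[\varphi(\varrho^T(C_{w_0\mu}(\lambda)))\bigr]
&= \frac{e^{\langle \mu,\lambda\rangle}}{\psi_{w_0\mu}(\lambda)} \int_{\R_{>0}^m} \varphi(u)\, e^{-\chi^-(e^\lambda\Theta(u)e^{-\lambda})} \prod_k t_k^{\langle \mu,\beta_k^\vee\rangle - 1} e^{-t_k}\, dt_k \\
&= \frac{b(\mu)\, e^{\langle \mu,\lambda\rangle}}{\psi_{w_0\mu}(\lambda)}\, \E\!\left[\varphi(\Gamma_\mu)\, e^{-\chi^-(e^\lambda D_\mu e^{-\lambda})}\right] \\
&= \frac{\psi_\mu(\lambda)}{\psi_{w_0\mu}(\lambda)}\, \E\bigl[\varphi(\Gamma_\mu(\lambda))\bigr],
\end{align*}
using $b(\mu) = \prod_k \Gamma(\langle \beta_k^\vee,\mu\rangle)$ to reassemble gamma densities and the definition of $\Gamma_\mu(\lambda)$ in the last step. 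Weyl invariance $\psi_{w_0\mu} = \psi_\mu$ from theorem \ref{thm:whittaker_functions_properties}(ii) forces the ratio to be $1$, so $\varrho^T(C_{w_0\mu}(\lambda)) \stackrel{\Lc}{=} \Gamma_\mu(\lambda)$ for $\mu \in C$.

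Finally, taking $\varphi \equiv 1$ in the same display reads off, for $\mu \in C$,
$$\psi_{w_0\mu}(\lambda) = \int_{\Bc(\lambda)} e^{\langle w_0\mu, \gamma(x)\rangle - f_B(x)}\, \omega(dx);$$
renaming $w_0\mu \mapsto \mu$ yields the announced integral representation on $\mu \in -C$, and analytic continuation (both sides being entire in $\mu$ by theorem \ref{thm:whittaker_functions_properties}(i)) extends it to all $\mu \in \hfrak$. The definition of $\Gamma_\mu(\lambda)$ is then extended to arbitrary $\mu \in \afrak$ by setting $\Gamma_\mu(\lambda) := \varrho^T(C_{w_0\mu}(\lambda))$. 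The only genuinely nontrivial step is the transpose identity $\Theta(u)^T = \eta_{w_0}(u)$; once it is secured, the rest is gamma-density bookkeeping combined with the Weyl invariance of $\psi_\mu$.
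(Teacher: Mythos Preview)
Your proof is correct and follows essentially the same route as the paper: express both laws as densities against $\prod_k dt_k/t_k$ in the twisted Lusztig chart, match the weight and superpotential contributions term by term, and read off the normalization constant by taking $\varphi\equiv 1$. Your isolation of the transpose identity $\Theta(u)^T=\eta_{w_0}(u)$ is in fact cleaner than the paper's treatment, which routes the same fact implicitly through $\chi\circ S\circ\iota=\chi$ and the explicit form of $b_\lambda^T$ from proposition~\ref{proposition:crystal_param_maps}; both are equivalent, but yours makes the mechanism transparent.

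One small ordering issue: when you write that Weyl invariance forces the ratio $\psi_\mu/\psi_{w_0\mu}$ to be $1$, note that the numerator is the \emph{probabilistic} $\psi_\mu$ (from the definition of $\Gamma_\mu(\lambda)$) while the denominator is the \emph{integral} $\psi_{w_0\mu}$ (from the definition of $C_{w_0\mu}(\lambda)$), and at this point in the thesis these are not yet known to coincide. The non-circular order is: first set $\varphi\equiv 1$ so that both sides equal $1$ and the ratio is forced to be $1$ automatically (no invariance needed), yielding $\psi_\mu^{\text{prob}}=\psi_{w_0\mu}^{\text{int}}$; then apply $W$-invariance of the integral definition to obtain $\psi_\mu^{\text{prob}}=\psi_\mu^{\text{int}}$. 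This is what the paper does, and your argument has all the pieces—just swap the order of the last two sentences.
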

\begin{proof}
Let ${\bf i} = (i_1, \dots, i_m)$ be a reduced word for $w_0$ and $(\beta_1^\vee, \dots, \beta_m^\vee)$ a positive roots enumeration associated to it.

Let $\varphi$ be a positive measurable function on $U$ that will serve the purpose of test function, $\mu$ in the open Weyl chamber and $\lambda \in \afrak$. By definition, $\Gamma_\mu\left( \lambda \right)$ is a deformation of:
$$\Gamma_\mu = x_{ \bf i }\left( \gamma_{\beta_1^\vee(\mu)}, \dots, \gamma_{\beta_m^\vee(\mu)} \right)$$
using the unipotent character $e^{-\chi^-}$ on $D_\mu$ where $D_\mu = \Theta\left( \Gamma_\mu \right)$.  Hence:
\begin{align*}
  & \mathbb{E}\left( \varphi\left(\Gamma_\mu\left( \lambda \right) \right) \right)\\
= & \frac{b(\mu) e^{\langle \mu, \lambda \rangle} }{\psi_\mu(\lambda)} \E\left( \varphi(\Gamma_\mu) \exp\left( -\chi^-(e^{\lambda} [\Gamma_\mu  \bar{w_0}]_{-} e^{-\lambda} ) \right) \right)\\
= & \frac{ e^{\langle \mu, \lambda \rangle} }{\psi_\mu(\lambda)} \int_{(\R_{>0})^m} \prod_{j=1}^m \left( \frac{dt_j}{t_j} t_j^{\langle \beta_j^\vee, \mu \rangle} e^{-t_j} \right) \varphi(g) e^{-\chi^-(e^{\lambda} [g  \bar{w_0}]_{-} e^{-\lambda}) }
\end{align*}
where:
$$ g = x_{i_1}(t_1) \dots x_{i_m}(t_m)$$

Now let us reorganize the terms in the previous integral. On the one hand, thanks to the choice made in proposition \ref{proposition:crystal_param_maps}, we notice that the term in the exponential is:
\begin{align*}
  & \sum_{j=1}^m t_j + \chi^-\left( e^{\lambda} [g  \bar{w_0}]_{-} e^{-\lambda} \right)\\
= & \chi(g) + \chi\left( e^{-\lambda} [\bar{w_0}^{-1} g^T]_{+} e^{\lambda}) \right)\\
= & \chi \circ S \circ \iota \left( e^{-\lambda} [\bar{w_0}^{-1} g^T]_{+} e^{\lambda}) \right) + \chi(g)\\
= & f_B\left( S \circ \iota \left( e^{-\lambda} [ \bar{w}_0^{-1} g^T ]_{+} e^{\lambda} \right) \bar{w}_0 e^\lambda g \right)\\
= & f_B\left( b_\lambda^{T}(g) \right)
\end{align*}
On the other hand, using the expression for the weight map on $b_\lambda^{T}(g)$ given in theorem \ref{thm:geom_weight_map}:
\begin{align*}
  & e^{\langle \mu, \lambda \rangle} \prod_{j=1}^m \left( t_j^{\langle \beta_j^\vee, \mu \rangle} \right)\\
= & \exp\left(\langle \mu, \lambda \rangle + \sum_{j=1}^m \langle \beta_j^\vee, \mu \rangle \log t_j \right)\\
= & \exp\left(\langle \mu, \lambda + \sum_{j=1}^m \log(t_j) \beta_j^\vee \rangle \right)\\
= & \exp\left(\langle w_0 \mu, w_0\left( \lambda - \sum_{j=1}^m \log(\frac{1}{t_j}) \beta_j^\vee \right) \rangle \right)\\
= & \exp\left(\langle w_0 \mu,  \gamma( b_\lambda^{T}(g) ) \rangle \right)
\end{align*}
In the end, using the notations in section \ref{section:canonical_measure}:
\begin{align*}
  & \mathbb{E}\left( \varphi\left(\Gamma_\mu\left( \lambda \right) \right) \right)\\
= & \frac{1}{\psi_{\mu}(\lambda)} \int_{(\R_{>0})^m} \exp\left(\langle w_0 \mu,  \gamma( b_\lambda^{T}(g) ) \rangle - f_B\left( b_\lambda^{T}(g) \right) \right) \varphi(g) \prod_{j=1}^m \frac{dt_j}{t_j}\\
= & \frac{1}{\psi_{\mu}(\lambda)} \int_{\Bc(\lambda)} e^{ \langle w_0 \mu, \gamma(x) \rangle - f_B(x) } \varphi \circ \varrho^{T}(x) \omega(dx)\\
= & \E\left( \varphi \circ \varrho^{T}\left( C_{w_0 \mu}(\lambda) \right) \right)
\end{align*}
Notice that by taking $\varphi = 1$, we see that the Whittaker function defined using Brownian motion is the same as in  definition \ref{def:whittaker_functions}. It is known to be $W$-invariant (theorem \ref{thm:whittaker_functions_properties} (ii) ), allowing us to change $\psi_{w_0 \mu}$ to $\psi_\mu$.
\end{proof}

\section{Measure concentration}
\label{section:measure_concentration}
In this section, we investigate how the law of $C_\mu(\lambda)$ behaves as $\lambda$ goes to infinity in the opposite Weyl chamber. In subsection \ref{subsection:entrance_point}, this will be very important in order to complete the proofs of theorems \ref{thm:highest_weight_is_markov} and \ref{thm:canonical_measure}, by having the Whittaker process start at '$-\infty$'.

The following proposition is based on a weak version of the Laplace method. 
\begin{proposition}
\label{proposition:convergence_in_proba}
For given $\zeta \in \afrak$, $\mu \in \afrak$ and $M \rightarrow \infty$, we have convergence in probability for the $\Bc(\zeta)$-valued random variable:
$$ e^{-M \rho^\vee} C_\mu(\zeta - 2M \rho^\vee) e^{M \rho^\vee} \stackrel{\P}{\longrightarrow} m_\zeta$$
where $m_\zeta$ is the unique minimizer for the superpotential $f_B$ on $\Bc(\zeta)$.
\end{proposition}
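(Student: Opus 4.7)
The plan is to reduce this to a Laplace-method concentration statement on the single space $\Bc(\zeta)$ by pushing the canonical law on $\Bc(\lambda)$ through the torus conjugation $\varphi_M(x) := e^{-M\rho^\vee} x\, e^{M\rho^\vee}$, and then to exploit the fact that $f_B$ is ``coercive'' with a unique non-degenerate minimum.

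\textbf{Step 1 (Change of variables).} Using $w_0\rho^\vee = -\rho^\vee$, one has $\bar w_0^{-1} e^{M\rho^\vee}\bar w_0 = e^{-M\rho^\vee}$, so that if $x = z\bar w_0 e^{\lambda} u \in \Bc(\lambda)$ with $\lambda = \zeta - 2M\rho^\vee$, then $\varphi_M(x) = (e^{-M\rho^\vee} z e^{M\rho^\vee})\,\bar w_0\, e^{\zeta}\,(e^{-M\rho^\vee} u e^{M\rho^\vee})$, which lies in $\Bc(\zeta)$. In particular $\gamma \circ \varphi_M = \gamma$, the Lusztig and twisted Lusztig parameters rescale by $t_j \mapsto e^{-M} t_j$, and the toric reference measure $\omega$ (theorem \ref{thm:omega_invariance_on_crystal}) is preserved under $\varphi_M$.

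\textbf{Step 2 (Scaling of the superpotential).} For any $n \in U$ parametrized by $x_{\bf i}(t_1,\dots,t_m)$, the conjugation scales $t_j \mapsto e^{M}t_j$ (since $\alpha(\rho^\vee) = 1$ for every simple $\alpha$), so using that $\chi$ is a character of $U$ and $\chi_\alpha(x_{\bf i}(t_1,\dots,t_m)) = \sum_{j: i_j = \alpha} t_j$, one obtains $\chi(e^{M\rho^\vee} n e^{-M\rho^\vee}) = e^{M}\chi(n)$. From the factorization in Step~1, $\varrho^L(x) = e^{M\rho^\vee}\varrho^L(y) e^{-M\rho^\vee}$ and $\varrho^T(x) = e^{M\rho^\vee}\varrho^T(y) e^{-M\rho^\vee}$ where $y = \varphi_M(x)$, hence $f_B(x) = e^{M} f_B(y)$. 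Combining these transformations, the law of $\varphi_M\bigl(C_\mu(\zeta-2M\rho^\vee)\bigr)$ on $\Bc(\zeta)$ has density
\[
\frac{1}{Z_M} \exp\bigl(\langle \mu,\gamma(y)\rangle - e^M f_B(y)\bigr)\,\omega_{\Bc(\zeta)}(dy),
\qquad Z_M := \int_{\Bc(\zeta)} e^{\langle \mu,\gamma(y)\rangle - e^M f_B(y)}\,\omega(dy).
\]

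\textbf{Step 3 (Laplace-type concentration).} Let $f^* := f_B(m_\zeta) > 0$. For a small open neighborhood $V$ of $m_\zeta$ and $\delta > 0$, the compactness of sub-level sets (theorem \ref{thm:superpotential_well}) together with the uniqueness and non-degeneracy of the minimum (theorem \ref{thm:superpotential_minimum}) yield $\inf_{\Bc(\zeta)\setminus V} f_B \geq f^*+\delta$, while by continuity we can arrange $f_B(y)\leq f^*+\varepsilon$ and $|\langle\mu,\gamma(y)-\gamma(m_\zeta)\rangle|\leq\varepsilon$ on $V$ with $\varepsilon<\delta$. Writing $e^{\langle\mu,\gamma(y)\rangle - e^M f_B(y)} = e^{\langle\mu,\gamma(y)\rangle - f_B(y)}\cdot e^{-(e^M-1)f_B(y)}$ and using the finiteness of $\psi_\mu(\zeta) = \int e^{\langle\mu,\gamma\rangle - f_B}\omega$ gives the bound
\[
\int_{\Bc(\zeta)\setminus V} e^{\langle\mu,\gamma(y)\rangle - e^M f_B(y)}\,\omega(dy) \;\leq\; \psi_\mu(\zeta)\, e^{-(e^M-1)(f^*+\delta)},
\]
whereas the contribution of $V$ to $Z_M$ is bounded below by a constant times $e^{-e^M(f^*+\varepsilon)}$. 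Taking the ratio, the probability of $\{y \notin V\}$ is at most $C\, e^{-e^M(\delta-\varepsilon)} \to 0$, which is the claimed convergence in probability.

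The main technical point is the computation in Step 2 that the conjugation by $e^{-M\rho^\vee}$ acts on $f_B$ by the scalar $e^M$ while leaving the weight and the toric measure untouched; once this is in hand, the remaining argument is a fairly standard Laplace-method concentration at a non-degenerate minimum, made rigorous by the coercivity of $f_B$.
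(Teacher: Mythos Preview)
Your proof is correct and follows essentially the same route as the paper: conjugation by $e^{-M\rho^\vee}$ preserves $\gamma$ and $\omega$ while scaling $f_B$ by $e^M$, reducing the question to Laplace-type concentration at the unique minimizer $m_\zeta$. The only cosmetic difference is that the paper organizes Step~3 via nested sub-level sets $K_{\delta/2}\subset K_\delta\subset V$, which sidesteps the need to arrange $\varepsilon<\delta$ on a single neighborhood (your phrasing there is slightly loose, since for a generic $V$ one need not have $\sup_V f_B < \inf_{V^c} f_B$, but the fix is the obvious one of using a smaller ball for the lower bound).
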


\begin{corollary}
\label{corollary:convergence_in_proba}
With the same notations, as $M \rightarrow \infty$, the following limit holds in $B$:
$$ C_\mu(\zeta - 2M \rho^\vee) \stackrel{\P}{\longrightarrow} id$$
\end{corollary}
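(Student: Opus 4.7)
The plan is to deduce the corollary from the preceding proposition by a conjugation/decomposition argument, using that the minimizer $m_\zeta$ has zero weight and that conjugation by $e^{M\rho^\vee}$ with $\rho^\vee$ in the open Weyl chamber contracts $N$ toward the identity.

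First I would set $Y_M := e^{-M\rho^\vee} C_\mu(\zeta-2M\rho^\vee) e^{M\rho^\vee} \in \Bc(\zeta)$, so that by Proposition \ref{proposition:convergence_in_proba}, $Y_M \to m_\zeta$ in probability. By Theorem \ref{thm:superpotential_minimum}, $\gamma(m_\zeta)=0$, i.e.\ $[m_\zeta]_0 = \mathrm{id}$, so $m_\zeta \in N$. Writing the Gauss decomposition $Y_M = N_M \, e^{x_M}$ with $N_M \in N$ and $x_M = \gamma(Y_M) \in \afrak$, the continuity of the Gauss decomposition on $\Bc$ yields $N_M \to m_\zeta$ and $x_M \to 0$ in probability. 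Then the key identity is
\begin{equation*}
C_\mu(\zeta - 2M\rho^\vee) \;=\; e^{M\rho^\vee} Y_M e^{-M\rho^\vee} \;=\; \bigl(e^{M\rho^\vee} N_M e^{-M\rho^\vee}\bigr)\cdot e^{x_M},
\end{equation*}
so it suffices to show each factor converges in probability to the identity.

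The factor $e^{x_M}$ converges to $\mathrm{id}$ in $A$ by the continuous mapping theorem since $x_M \to 0$. For the $N$-factor, I would introduce $\phi_M : N \to N$, $\phi_M(n) := e^{M\rho^\vee} n e^{-M\rho^\vee}$, and exploit the fact that for every positive root $\beta \in \Phi^+$ one has $\beta(\rho^\vee) \geq 1 > 0$, so the adjoint action of $e^{M\rho^\vee}$ contracts each root space $\gfrak_{-\beta}$ by a factor $e^{-M\beta(\rho^\vee)}$. In particular, for any compact $K \subset N$, $\phi_M(K) \to \{\mathrm{id}\}$ in the Hausdorff sense as $M \to \infty$ (one checks this in, say, Lusztig coordinates $y_{\mathbf{i}}(t_1,\dots,t_m)$, where $\phi_M$ contracts each coordinate by $e^{-M\beta_j(\rho^\vee)}$).

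With this in hand, fix a neighborhood $V$ of $\mathrm{id}$ in $N$ and $\varepsilon>0$. Since $N_M \to m_\zeta$ in probability, there is a compact neighborhood $K$ of $m_\zeta$ with $\P(N_M \notin K) < \varepsilon$ for $M$ large; taking $M$ larger if necessary so that $\phi_M(K) \subset V$ gives $\P(\phi_M(N_M) \notin V) < \varepsilon$, i.e.\ $\phi_M(N_M) \to \mathrm{id}$ in probability. Combining the two factors finishes the proof. The main obstacle, though really a mild one, is justifying this last uniform-in-probability contraction: one must argue not just that $\phi_M$ contracts each fixed element but that it contracts the random $N_M$, for which the compactness/continuous-mapping argument above is the natural tool, and the positivity $\beta(\rho^\vee)>0$ for all $\beta \in \Phi^+$ is what makes it work.
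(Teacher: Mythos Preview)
Your proof is correct and follows essentially the same approach as the paper's: the paper's two-line argument simply invokes Proposition~\ref{proposition:convergence_in_proba}, the fact that $m_\zeta \in N$ (from $\gamma(m_\zeta)=0$), and the contraction $e^{M\rho^\vee} n e^{-M\rho^\vee} \to \mathrm{id}$ for $n \in N$. You have carefully filled in the details the paper leaves to the reader, namely the $NA$ decomposition $Y_M = N_M e^{x_M}$ and the uniform-on-compacts argument needed to upgrade the deterministic contraction to convergence in probability of $\phi_M(N_M)$.
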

\begin{proof}
Recall that $\gamma(m_\zeta) = 0$ ( theorem \ref{thm:superpotential_minimum}) and therefore $m_\zeta \in N$. The result is obtained using proposition \ref{proposition:convergence_in_proba} and the following:
$$ \forall n \in N, e^{M \rho^\vee} n e^{-M \rho^\vee} \stackrel{\P}{\longrightarrow} id$$
\end{proof}

\begin{proof}[Proof of proposition \ref{proposition:convergence_in_proba}]
For easier notation write:
$$ X_M := e^{-M \rho^\vee} C_\mu(\zeta - 2M \rho^\vee) e^{M \rho^\vee}$$
Because the map $\kappa: y \mapsto e^{-M \rho^\vee} y e^{M \rho^\vee}$ maps $\Bc(\zeta - 2M \rho^\vee)$ to $\Bc(\zeta)$, this random variable is indeed in $\Bc(\zeta)$. That is a consequence of the straightforward computation:
\begin{align*}
  & hw( e^{-M \rho^\vee} y e^{M \rho^\vee} )\\
= & -w_0 M \rho^\vee + hw\left( y \right) + M \rho^\vee\\
= & -w_0 M \rho^\vee + \left( \zeta - 2M \rho^\vee \right) + M \rho^\vee\\
= & \zeta
\end{align*}

Let $\varphi$ be a positive test function on $\Bc(\zeta)$.  By the canonical measure's definition:
\begin{align*}
    \E\left( \varphi( X_M )\right)
& = \E\left( \varphi( e^{-M \rho^\vee} C_\mu(\zeta-2M \rho^\vee) e^{M \rho^\vee} ) \right)\\
& = \frac{1}{\psi_\mu(\zeta - 2M\rho^\vee)} \int_{\Bc(\zeta - 2M \rho^\vee)} e^{ \langle \mu, \gamma(y) \rangle - f_B(y)} \varphi\left( e^{-M \rho^\vee} y e^{M \rho^\vee} \right) \omega(dy) 
\end{align*}
Now, let us make the change of variables $x =  e^{-M \rho^\vee} y e^{M \rho^\vee} = \kappa(y)$. Since $\omega(dy)$ is by definition the toric measure on the twisted Lusztig parameters of $y$ and:
$$ \varrho^T(x) = e^{-M \rho^\vee} \varrho^T(y) e^{M \rho^\vee}$$
it is left unchanged. Hence the image measure for $\omega$ on $\Bc(\zeta-2M\rho^\vee)$ under this change of variable is again $\omega$ on $\Bc(\zeta)$:
\begin{align}
\label{eqn:convergence_in_proba_1}
\omega(dy) = \omega(dx) 
\end{align}
It is obvious that:
\begin{align}
\label{eqn:convergence_in_proba_2}
\gamma(y) &= \gamma(x)
\end{align}
And moreover:
\begin{align}
\label{eqn:convergence_in_proba_3}
f_B(y) &=  e^M f_B\left( x \right)
\end{align}
Indeed, by writing:
$$ y = u' \bar{w}_0 e^{\zeta - 2M \rho^\vee} u$$
where $u \in U^{w_0}_{>0}$, $u' \in U^{w_0}_{>0}$, we have:
$$ x = e^{-M \rho^\vee} u' e^{M \rho^\vee} \bar{w}_0 e^\zeta e^{-M \rho^\vee} u' e^{M \rho^\vee}$$
Therefore:
\begin{align*}
f_B(x) & = \chi\left( e^{-M \rho^\vee} u' e^{M \rho^\vee} \right) + \chi\left( e^{-M \rho^\vee} u' e^{M \rho^\vee} \right)\\
& = e^{-M} \chi\left( u \right) + e^{-M} \chi\left( u' \right)\\
& = e^{-M} f_B(y)
\end{align*}
In the end, putting equations \ref{eqn:convergence_in_proba_1}, \ref{eqn:convergence_in_proba_2} and \ref{eqn:convergence_in_proba_3} together yields the appropriate formula for the Laplace method:
\begin{align}
\label{eqn:convergence_in_proba_formula}
  \E\left( \varphi( X_M )\right)
& = \frac{ \int_{\Bc(\zeta)} e^{ \langle \mu, \gamma(x) \rangle - e^M f_B(x)} \varphi\left( x \right) \omega(dx) } 
         { \int_{\Bc(\zeta)} e^{ \langle \mu, \gamma(x) \rangle - e^M f_B(x)} \omega(dx) }
\end{align}

The following is quite standard. By theorem \ref{thm:superpotential_minimum}, $f_B$ has a unique minimizer on $\Bc(\zeta)$ denoted by $m_\zeta$. Consider $V$ a neighborhood of $m_\zeta$. Because $m_\zeta$ is a non-degenerate critical point, such a neighborhood contains a compact set of form:
$$ K_\delta := \left\{  x \in \Bc(\zeta) | f_B(x) \leq f_B(m_\zeta) + \delta \right\}$$
for $\delta$ small enough. We denote by $V^c$ the complement of $V$. The theorem is proved once the following holds:
$$ \P\left( X_M \in V^c \right) \stackrel{M \rightarrow \infty}{\longrightarrow} 0 $$
We have:
\begin{align*}
       \P\left( X_M \in V^c \right)
\leq & \P\left( X_M \in K_\delta^c \right)\\
=    & \frac{ \int_{K_\delta^c} e^{ \langle \mu, \gamma(x) \rangle - e^M f_B(x)} \omega(dx) }
            { \int_{\Bc(\zeta)} e^{ \langle \mu, \gamma(x) \rangle - e^M f_B(x)} \omega(dx) }\\
=    & \frac{1}{1+
       \frac{\int_{K_\delta  } e^{ \langle \mu, \gamma(x) \rangle - e^M (f_B(x)-f_B(m_\zeta)-\delta)} \omega(dx) }
            {\int_{K_\delta^c} e^{ \langle \mu, \gamma(x) \rangle - e^M (f_B(x)-f_B(m_\zeta)-\delta)} \omega(dx) } }
\end{align*}
In the ratio of two integrals, the numerator goes to infinity as $M \rightarrow \infty$ because for instance of the contribution of $K_{\frac{\delta}{2}} \subset K_{\delta}$:
\begin{align*}
& \int_{K_\delta  } e^{ \langle \mu, \gamma(x) \rangle - e^M (f_B(x)-f_B(m_\zeta)-\delta)} \omega(dx)\\
& \geq \int_{K_{\frac{\delta}{2}}} e^{ \langle \mu, \gamma(x) \rangle - e^M (f_B(x)-f_B(m_\zeta)-\delta)} \omega(dx)\\
& \geq \int_{K_{\frac{\delta}{2}}} e^{ \langle \mu, \gamma(x) \rangle - e^M \frac{\delta}{2} } \omega(dx)\\
& \rightarrow \infty
\end{align*}
The denominator decreases to zero as $M \rightarrow \infty$ using the dominated convergence theorem.
\end{proof}

As a consequence:

\begin{lemma}
Let $x_0 = \zeta - 2M \rho^\vee$ for any $\zeta \in \afrak$. Then as $M \rightarrow \infty$, in term of left $N$-orbits:
 $$ N e^{x_0} e^{\theta} \Gamma_{\mu}(x_0) e^{-\theta} \stackrel{ \P }{\longrightarrow} N e^{\theta - w_0 \theta} \bar{w}_0^{-1}$$
\end{lemma}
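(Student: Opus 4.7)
The plan is to reduce the convergence in the $N$-orbit space to Corollary~\ref{corollary:convergence_in_proba}, which asserts $C_M := C_{w_0\mu}(x_0) \stackrel{\P}{\longrightarrow} id$ in $B$ as $M \to \infty$ (applying that corollary with $\mu$ replaced by $w_0\mu$). The main algebraic input is the fact that since $w_0$ sends positive roots to negative ones, $Ad(\bar{w}_0^{-1})$ maps $U$ isomorphically onto $N$.

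By Proposition~\ref{proposition:link_with_canonical}, I couple the random variables by setting $\Gamma_\mu(x_0) = \varrho^T(C_M)$, so that the decomposition of $C_M \in \Bc(x_0)$ reads $C_M = z_M \bar{w}_0 e^{x_0} \Gamma_\mu(x_0)$ with $z_M := \varrho^L(C_M) \in U^{w_0}_{>0}$. Solving for $e^{x_0}\Gamma_\mu(x_0)$ and writing $\bar{w}_0^{-1} z_M^{-1} = n_M \bar{w}_0^{-1}$ with $n_M := Ad(\bar{w}_0^{-1})(z_M^{-1}) \in N$, I obtain in the quotient $N\backslash G$:
$$N e^{x_0} \Gamma_\mu(x_0) = N \bar{w}_0^{-1} C_M.$$
Next I conjugate by $e^\theta \in A$. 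Since $e^{x_0}$ commutes with $e^\theta$, since $Ad(e^\theta)$ stabilises $N$, and since $e^\theta \bar{w}_0^{-1} = \bar{w}_0^{-1} e^{w_0\theta}$ (using $Ad(\bar{w}_0) e^\theta = e^{w_0\theta}$), this gives
$$N e^{x_0} e^\theta \Gamma_\mu(x_0) e^{-\theta} = N \bar{w}_0^{-1} e^{w_0\theta} C_M e^{-\theta}.$$

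Finally, by continuity of the group multiplication and of the quotient map $G \to N\backslash G$, the convergence $C_M \stackrel{\P}{\longrightarrow} id$ implies
$$N \bar{w}_0^{-1} e^{w_0\theta} C_M e^{-\theta} \stackrel{\P}{\longrightarrow} N \bar{w}_0^{-1} e^{w_0\theta - \theta} = N e^{\theta - w_0\theta} \bar{w}_0^{-1},$$
where the last equality uses $\bar{w}_0^{-1} e^a = e^{w_0 a} \bar{w}_0^{-1}$ with $a = w_0\theta - \theta$, so that $w_0 a = \theta - w_0\theta$ (recalling $w_0^2 = e$). The only potential obstacle is the bookkeeping of the successive $N$-absorptions and the torus--Weyl group commutation identities; the analytic content reduces cleanly to Corollary~\ref{corollary:convergence_in_proba}.
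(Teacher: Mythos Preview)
Your proof is correct and follows essentially the same route as the paper: both arguments couple $\Gamma_\mu(x_0)$ to $C_{w_0\mu}(x_0)$ via Proposition~\ref{proposition:link_with_canonical}, use the decomposition $C_M = z_M \bar{w}_0 e^{x_0} u_M$, and absorb the $U$-part into the left $N$-orbit through the observation that $Ad(\bar{w}_0^{-1})$ carries $U$ into $N$. The only difference is that the paper works with the rescaled variable $X_M = e^{-M\rho^\vee} C_{w_0\mu}(x_0) e^{M\rho^\vee}$ and invokes Proposition~\ref{proposition:convergence_in_proba} ($X_M \to m_\zeta$), which forces it to track the $\zeta$ and $M\rho^\vee$ factors through a Gauss decomposition before concluding; you instead appeal directly to Corollary~\ref{corollary:convergence_in_proba} ($C_M \to id$), which packages that endgame and makes the final continuity step immediate.
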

\begin{proof}
 Using proposition \ref{proposition:link_with_canonical}, we write:
$$ \Gamma_\mu(x_0) = \varrho^T\left( C_{w_0 \mu}(x_0) \right) = [\bar{w}_0^{-1} C_{w_0 \mu}(x_0)]_+$$
Let $X_M := e^{-M \rho^\vee } C_{w_0 \mu}(x_0) e^{M \rho^\vee }$. Thanks to proposition \ref{proposition:convergence_in_proba}, $X_M$ converges in probability to $m_\zeta$. Hence:
\begin{align*}
  & N e^{x_0} e^{\theta} \Gamma_{\mu}(x_0) e^{-\theta}\\
= & N e^{x_0 + \theta} [\bar{w}_0^{-1} e^{M \rho^\vee} X_M e^{-M \rho^\vee} ]_+ e^{-\theta}\\
= & N e^{x_0 + \theta + M \rho^\vee} [\bar{w}_0^{-1} X_M]_+ e^{-M \rho^\vee - \theta}\\
= & N e^{\zeta - M \rho^\vee + \theta } [\bar{w}_0^{-1} X_M]_{-0}^{-1} \bar{w}_0^{-1} X_M e^{-M \rho^\vee - \theta}\\
= & N e^{\zeta - M \rho^\vee + \theta } [\bar{w}_0^{-1} X_M]_{0}^{-1} \bar{w}_0^{-1} X_M e^{-M \rho^\vee - \theta}
\end{align*}
Moreover, $X_M \in \Bc(\zeta)$ and as such, $[\bar{w}_0^{-1} X_M]_{0}^{-1} = e^{-\zeta}$. Therefore:
\begin{align*}
  & N e^{x_0} \Gamma_\mu^\theta(x_0)\\
= & N e^{- M \rho^\vee + \theta } \bar{w}_0^{-1} X_M e^{-M \rho^\vee - \theta}\\
= & N e^{\theta - w_0 \theta} \bar{w}_0^{-1} e^{M \rho^\vee + \theta} X_M e^{-M \rho^\vee - \theta}
\end{align*}
The fact that $e^{M \rho^\vee + \theta} X_M e^{-M \rho^\vee - \theta} \rightarrow id$ concludes the proof.
\end{proof}

As a corollary, we can prove theorem \ref{thm:measure_concentration}.
\begin{proof}[Proof of theorem \ref{thm:measure_concentration}]
 Simply recall that the path transform $T_g \pi(t)$ on a path is defined for $t>0$ as:
$$ e^{T_g \pi(t)} = [ g B_t(\pi)]_0$$
Using the previous theorem, there is a sequence $n_M \in N$ such that as $M \rightarrow \infty$:
$$n_M e^{x_0} \Gamma_\mu^\theta(x_0) \stackrel{\P}{\rightarrow} e^{\theta - w_0 \theta} \bar{w}_0^{-1}$$
Hence:
\begin{align*}
  & \exp\left( x_0 + T_{e^{\theta} \Gamma_{\mu}(x_0) e^{-\theta}} \pi(t) \right)\\
= & [ e^{x_0} e^{\theta} \Gamma_{\mu}(x_0) e^{-\theta} B_t(\pi)]_0 \\
= & [ n_M e^{x_0} e^{\theta} \Gamma_{\mu}(x_0) e^{-\theta} B_t(\pi)]_0 \\
\rightarrow & \exp\left( \theta - w_0 \theta + \Tc_{w_0} \pi(t) \right)
\end{align*}

\end{proof}

\section{Intertwined Markov operators}
\label{section:intertwined_markov_kernels}
With theorem \ref{thm:whittaker_process_g}, we proved that for $\mu \in C$, if $W$ is a standard Brownian motion in $\afrak$ and $\Theta\left( g \right)$ independent following the law of $D_\mu\left( x_0 \right)$, then
$$ X^{x_0}_t = x_0 + T_{e^{\theta}g e^{-\theta}}\left( W^{ (w_0 \mu) } \right) ; t \geq 0$$
is Markovian, what we called the Whittaker process. The results of Rogers and Pitman in \cite{bib:RogersPitman} on Markov functions teach us that there should be an intertwining relation between the semi-groups of Brownian motion on the one hand, and the semi-group of the Whittaker process, using this remarkable law $D_\mu\left( x_0 \right)$. In fact, this is how the extensions of Pitman's theorem in \cite{bib:RogersPitman} and \cite{bib:OConnell} were proven. The only trick is that intertwining is easy to establish, once we know the answer.\\

What we did so far is identifying the right objects. Using intertwining Markov operators, we will strenghten the previous result to all possible drifts $\mu$ and not only for $\mu$ in the Weyl chamber. Then taking $x_0$ to '$-\infty$' will give us the highest weight process, finishing the proof of theorem \ref{thm:highest_weight_is_markov}.\\

Let us first quickly review the result of Pitman and Rogers on Markov function from \cite{bib:RogersPitman}.

\subsection{Markov functions }
Let $S$ and $S_0$ be topological spaces. Let $\phi: S \rightarrow S_0$ be a continuous function. Consider a Markov process $(X_t)_{t \geq 0}$ with state space $S$ and define the process $Y_t=\phi(X_t)$. We are interested in sufficient conditions that insure the Markov property for $Y$.

Of course one can suppose that $\phi$ is surjective by setting $S_0 = \phi(S)$. And clearly, in most cases of interest where $\phi$ is not injective, the inclusion between filtrations $\Fc^{Y} \subset \Fc^{X}$ is strict. Meaning that the observation of $Y$ contains only partial information on $X$. And in order to quantify this information, we need to ``filter'' $X$ through $\Fc^{Y}$.

In the sequel, we denote by $P_t$ the semi-group for $X$, and $Q_t$ the semi-group for $Y$, when it exists. 
$\Phi: \Cc(S_0) \rightarrow \Cc(S)$ is the Markov operator from $S$ to $S_0$ given by $\Phi(f)(x) = f \circ \phi(x)$. It just transports measures on $S$ to their image measure on $S_0$.

A first answer would be Dynkin's criterion, for cases where $Y$ is Markovian for all initial laws of $X$:
\begin{theorem}[Dynkin's criterion]
 If there exist a Markov operator $Q$ such that:
$$ \forall t \geq 0, P_t \circ \Phi = \Phi \circ Q_t $$
meaning, in terms of transporting measures, that the following diagram is commutative:
\begin{center}
\begin{tikzpicture}
\matrix(a)[matrix of math nodes, row sep=3em, column sep=3.5em,
text height=1.5ex, text depth=0.25ex]
{ S   & S  \\
  S_0 & S_0 \\ };
\path[->]
(a-1-1) edge node[auto] { $P_t$} (a-1-2)
(a-2-1) edge node[auto] { $Q_t$} (a-2-2)
(a-1-1) edge node[auto] { $\Phi$} (a-2-1)
(a-1-2) edge node[auto] { $\Phi$} (a-2-2);
\end{tikzpicture}
\end{center}
Then $Y$ is a Markov process and its semi-group is $Q_t$.
\end{theorem}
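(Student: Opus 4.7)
The plan is to deduce the Markov property of $Y$ from that of $X$ through a tower-property argument, where the intertwining hypothesis is exactly what makes a key conditional expectation automatically $\Fc^Y$-measurable.

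First I would rewrite the hypothesis in the most convenient form. Since $\Phi f = f \circ \phi$ for $f \in \Cc(S_0)$, the identity $P_t \circ \Phi = \Phi \circ Q_t$ unpacks, when evaluated at $x \in S$, as
$$ P_t(f \circ \phi)(x) \;=\; (Q_t f)(\phi(x)). $$
Equivalently, writing $\E_x$ for the expectation under $X_0 = x$,
$$ \E_x\bigl[f(\phi(X_t))\bigr] \;=\; (Q_t f)(\phi(x)) \;=\; (Q_t f)(Y_0). $$
This already tells us that if $Y$ is Markov, its one-step semigroup can only be $Q_t$.

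The core step is the tower-property computation. Fix bounded measurable $f:S_0\to\R$ and $s,t\geq 0$. Since $\phi$ is continuous, $\Fc^Y_s \subset \Fc^X_s$, so
$$ \E\bigl[f(Y_{t+s}) \,\bigm|\, \Fc^Y_s\bigr] \;=\; \E\Bigl[\, \E\bigl[f\circ\phi(X_{t+s}) \,\bigm|\, \Fc^X_s \bigr] \,\Bigm|\, \Fc^Y_s \Bigr]. $$
Applying the Markov property of $X$ to the inner expectation gives $P_t(f\circ\phi)(X_s)$, and the intertwining hypothesis rewrites this as $(Q_t f)(\phi(X_s)) = (Q_t f)(Y_s)$. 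Crucially, this last quantity is already $\Fc^Y_s$-measurable, so the outer conditional expectation acts as the identity:
$$ \E\bigl[f(Y_{t+s}) \,\bigm|\, \Fc^Y_s\bigr] \;=\; (Q_t f)(Y_s). $$
This is precisely the Markov property of $Y$ with transition semigroup $(Q_t)$. The semigroup property of $(Q_t)$, if not already part of the hypothesis that $Q_t$ is a Markov operator for each $t$, follows by iterating the intertwining: $\Phi \circ Q_{t+s} = P_{t+s}\circ\Phi = P_t \circ P_s \circ \Phi = P_t \circ \Phi \circ Q_s = \Phi \circ Q_t \circ Q_s$, and the injectivity of $\Phi$ (when $\phi$ is surjective) yields $Q_{t+s} = Q_t \circ Q_s$.

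The main point to appreciate -- rather than a genuine obstacle -- is the small miracle that makes the proof work: the intertwining hypothesis ensures that the a priori $\Fc^X_s$-measurable quantity $P_t(f\circ\phi)(X_s)$ actually depends on $X_s$ only through $\phi(X_s) = Y_s$. This is the whole content of ``filtering $X$ through $\Fc^Y$'' alluded to before the statement, and it is the property one will have to verify in each concrete application (highest weight process, hypoelliptic Brownian motion on $B$, etc.).
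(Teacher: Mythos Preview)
Your proof is correct and is exactly the argument the paper has in mind: the remark immediately following the statement says the condition means the law of $\phi(X_t)$ given $X_0=x$ depends only on $\phi(x)$, making the result ``quite trivial'', and the paper carries out the same tower-property computation verbatim in the proof of the subsequent Rogers--Pitman criterion. Your write-up simply spells out this argument in the simpler Dynkin setting.
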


\begin{rmk}
In probabilistic terms, the condition that $P_t \circ \Phi(f)(x)$ only depends on $\phi(x)$ translates as saying that the law of $(\phi( X_t ) | X_0 = x)$ only depends of $\phi(x)$. The theorem seems then quite trivial. We wrote it that way to stress the intertwining.
\end{rmk}

Another solution has been formalized in \cite{bib:RogersPitman}. In some cases, if $Y$ starts at $y \in S_0$, it is Markovian only for specific entrance laws $\Kc(y, .)$ on $X$. In such a case, this initial law for $X$ is going to be the ``missing'' information from $\Fc^Y \subsetneq \Fc^X$.\\

Furthermore, at each time, we must ask the missing information to be stationnary in law, otherwise filtering $ X_t | \Fc_t^Y $ will give a fluctuating distribution and will not be able to extract the law of $X_t$ conditionnally to $\Fc^Y$, in such a way that it depends only on $Y_t$. One could speak of a ``Markovian stationary coupling'' or a ``Markovian filtering'' phenomenon, which brings the following equivalent definitions due to Rogers and Pitman:

\begin{thm}
\label{thm:intertwining}
Let $\Kc: \Cc(S) \rightarrow \Cc(S_0)$ be a Markov operator, $X$ a Markov process with semigroup $P_t$ and $Y = \phi(X)$. $Y$ is assumed to start at $y$. The following propositions are equivalent:
\begin{description}
 \item[$(i)$] ( Markovian filtering )
 $$\forall t \geq 0,  \forall f \in \Cc(S), \forall y \in S_0, \E_{ X_0 \sim \Kc(y,.) }\left( f( X_{t} ) | \Fc_{t}^Y \right) = \Kc(f)(Y_t) \quad a.s $$
where the subscript $X_0 \sim \Kc(y,.)$ indicates the initial law for $X$.
 \item[$(ii)$] ( Intertwining operators ) For all $t\geq 0$, $Q_t := \Kc \circ P_t \circ \Phi$ satisfies:
 $$ \Kc \circ \Phi = id_{S_0} $$
 $$ \Kc \circ P_t = Q_t \circ \Kc$$
Meaning, in terms of transporting measures, that the following diagram is commutative:
\begin{center}
\begin{tikzpicture}
\matrix(a)[matrix of math nodes, row sep=3em, column sep=3.5em,
text height=1.5ex, text depth=0.25ex]
{ S   & S   &  \\
  S_0 & S_0 & S_0 \\ };
\path[->]
(a-1-1) edge node[auto] { $P_t$} (a-1-2)
(a-2-1) edge node[auto] { $\Kc$} (a-1-1)
(a-2-2) edge node[auto] { $\Kc$} (a-1-2)
(a-2-1) edge node[auto] { $Q_t$} (a-2-2)
(a-2-2) edge node[auto] { $id$} (a-2-3)
(a-1-2) edge node[auto] { $\Phi$} (a-2-3);
\end{tikzpicture}
\end{center}

\end{description}
In both cases, $Q_t$ is a semi-group and is interpreted as:
$$ Q_t(f)(y) = \E_{ X_0 \sim \Kc(y,.)}\left( f( Y_t ) \right) $$
\end{thm}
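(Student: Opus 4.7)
The strategy is to reduce both implications to manipulations involving the two hypotheses of (ii) together with the Markov property of $X$. Throughout, the key structural observation is that the condition $\Kc \circ \Phi = \mathrm{id}_{S_0}$ means exactly that $\int g(\phi(x))\,\Kc(z,dx) = g(z)$ for every bounded measurable $g$, which forces the kernel $\Kc(z, \cdot)$ to be supported on the fibre $\phi^{-1}(\{z\})$. This supports the key algebraic identity
\begin{equation*}
\Kc\bigl(f \cdot \Phi(g)\bigr)(z) \;=\; g(z)\,\Kc(f)(z),
\end{equation*}
which I will use repeatedly.

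For (ii) $\Rightarrow$ (i), I would proceed by simultaneous induction over finite time marginals. Concretely, for $0 = t_0 < t_1 < \dots < t_n$, prove jointly: (a) under $\Pp_{X_0 \sim \Kc(y,\cdot)}$, the vector $(Y_{t_0},\dots,Y_{t_n})$ is distributed as a Markov chain with transitions $Q_{t_{i+1}-t_i}$ started at $y$; and (b) $\E(f(X_{t_n}) \mid Y_{t_0},\dots,Y_{t_n}) = \Kc(f)(Y_{t_n})$. The base $n=0$ is immediate: $\Kc \circ \Phi = \mathrm{id}$ gives $Y_0 = y$ a.s., and (b) is the definition of the initial law $\Kc(y,\cdot)$. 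For the inductive step with $s = t_{n+1}-t_n$, the Markov property of $X$ together with $\Fc^Y_{t_n} \subset \Fc^X_{t_n}$ gives $\E(g(Y_{t_{n+1}}) \mid \Fc^X_{t_n}) = P_s(\Phi g)(X_{t_n})$; conditioning further on $\Fc^Y_{t_n}$ and using (b) at step $n$ yields $\E(g(Y_{t_{n+1}}) \mid \Fc^Y_{t_n}) = \Kc(P_s \Phi g)(Y_{t_n}) = Q_s(g)(Y_{t_n})$, which proves (a) at step $n+1$. For the inductive step of (b), I would test against $g(Y_{t_{n+1}})$ and run the same reduction to get $\E(f(X_{t_{n+1}}) g(Y_{t_{n+1}}) \mid \Fc^Y_{t_n}) = \Kc(P_s(f \cdot \Phi g))(Y_{t_n})$; applying the intertwining $\Kc \circ P_s = Q_s \circ \Kc$ followed by the support identity above turns this into $Q_s(\Kc(f) \cdot g)(Y_{t_n})$, which matches the expression obtained by conditioning $\Kc(f)(Y_{t_{n+1}}) g(Y_{t_{n+1}})$ via (a). Since this holds for arbitrary $g$, (b) follows at step $n+1$. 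Passing from finite-dimensional marginals to $\Fc^Y_{t_n}$ (and then to a generic $t$) is a routine monotone class argument.

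For (i) $\Rightarrow$ (ii), the condition $\Kc \circ \Phi = \mathrm{id}$ drops out by specialising (i) at $t=0$ with $f = \Phi(g)$ and using that $Y_0 = y$. For the intertwining, compute for $f \in \Cc(S)$:
\begin{equation*}
Q_t \circ \Kc(f)(y) \;=\; \E_y\bigl(\Kc(f)(Y_t)\bigr) \;=\; \E_y\bigl(\E(f(X_t)\mid \Fc^Y_t)\bigr) \;=\; \E_y(f(X_t)) \;=\; \Kc \circ P_t(f)(y),
\end{equation*}
where the second equality uses (i). This already is the advertised intertwining, and the definition $Q_t = \Kc \circ P_t \circ \Phi$ then follows from its action on functions of the form $\Phi(g)$ together with $\Kc \circ \Phi = \mathrm{id}$. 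The semigroup property of $Q_t$ and the interpretation $Q_t(g)(y) = \E_{X_0 \sim \Kc(y,\cdot)}(g(Y_t))$ then follow by iterating the computation above and noting that $Y$ becomes Markov with transitions $Q_s$ via $\E(g(Y_{t+s}) \mid \Fc^Y_t) = \E(P_s(\Phi g)(X_t) \mid \Fc^Y_t) = \Kc(P_s \Phi g)(Y_t) = Q_s(g)(Y_t)$.

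The main technical obstacle is the identity $\Kc(f \cdot \Phi(g))(z) = g(z)\,\Kc(f)(z)$ together with the correct handling of conditioning on $\Fc^Y_{t_n}$ versus $\sigma(Y_{t_0},\dots,Y_{t_n})$; once the support property of $\Kc$ is isolated, both implications reduce to clean bookkeeping, and the standard monotone class / regular conditional probability machinery extends the finite-dimensional identity to the full filtration $\Fc^Y$.
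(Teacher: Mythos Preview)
Your proposal is correct and follows essentially the same route as the paper. Both directions hinge on the support identity $\Kc(f\cdot\Phi(g))(z)=g(z)\,\Kc(f)(z)$ together with the intertwining $\Kc\circ P_t=Q_t\circ\Kc$; the paper packages the $(ii)\Rightarrow(i)$ direction as a single operator-algebra chain
\[
\Kc P_{s_1}\Phi(g_1)P_{s_2-s_1}\Phi(g_2)\cdots P_{t-s_n}f
\;=\;
Q_{s_1}g_1 Q_{s_2-s_1}g_2\cdots Q_{t-s_n}\Kc f,
\]
whereas you unfold the same computation as an induction on the number of times, isolating the support identity explicitly. The $(i)\Rightarrow(ii)$ direction and the semigroup property of $Q_t$ are handled identically in both.
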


\begin{proof}
The semi-group property of $Q_t$ is a consequence of $(ii)$:
\begin{align*}
  & Q_{t+s}\\
= & Q_{t+s} \circ \Kc \circ \Phi\\
= & \Kc \circ P_{t+s} \circ \Phi\\
= & \Kc \circ P_t \circ P_s \circ \Phi\\
= & Q_t \circ \Kc \circ P_s \circ \Phi\\
= & Q_t \circ Q_s \circ \Kc \circ \Phi\\
= & Q_t \circ Q_s
\end{align*}
$(i) \Rightarrow (ii):$ The first identity is easy. Indeed, let $g \in \Cc(S_0)$. By taking $f=g\circ \phi$ in $(i)$ and $t=0$, one gets:
$$ \Kc \circ \Phi (g)(y) = \E_{ X_0 \sim \Kc(y,.) }\left( \Phi (g)( X_{0} ) \right) = \E_{ X_0 \sim \Kc(y,.) }\left( g( Y_{0} ) \right)= g(y)$$
Concerning the second one:
 \begin{align*}
  & \Kc \circ P_t (f)(y)\\
= & \E_{ X_0 \sim \Kc(y, .) }\left( f( X_t ) \right)\\
= & \E_{ X_0 \sim \Kc(y, .) }\left( \E_{ X_0 \sim \Kc(y, .) }\left( f( X_t ) | \Fc_t^Y \right) \right)\\
= & \E_{ X_0 \sim \Kc(y, .) }\left( \Kc(f)(Y_t) \right)\\
= & \Kc \circ P_t \circ \Phi \circ \Kc (f)(y)\\
= & Q_t \circ \Kc (f)( y )
 \end{align*}

$(ii) \Rightarrow (i):$ Consider increasing times $0 \leq s_1 \leq s_2 \dots \leq s_n \leq t$ and test functions $g_1, \dots, g_n$ on $S_0$:
\begin{align*}
  & \E_{ X_0 \sim \Kc(y, .) }\left( g_1(Y_{s_1}) \dots g_n(Y_{s_n}) f(X_t) \right)\\
= & \Kc P_{s_1} \Phi(g_1) P_{s_2-s_1} \Phi(g_2) \dots P_{s_n-s_{n-1}} \Phi(g_n) P_{t-s_n} f (y)\\
= & Q_{s_1} \Kc \Phi(g_1) P_{s_2-s_1} \Phi(g_2) \dots P_{s_n-s_{n-1}} \Phi(g_n) P_{t-s_n} f (y)\\
= & Q_{s_1} g_1 Q_{s_2-s_1} g_2 \dots Q_{s_n-s_{n-1}} g_n \Kc P_{t-s_n} f (y)\\
= & Q_{s_1} g_1 Q_{s_2-s_1} g_2 \dots Q_{s_n-s_{n-1}} g_n Q_{t-s_n} \Kc f (y)\\
= & \E_{ X_0 \sim \Kc(y, .) }\left( g_1(Y_{s_1}) g_2( Y_{s_2} ) \dots g_n( Y_{s_n} ) \Kc(f)(Y_t) \right)
\end{align*}
This proves the Markovian filtering property.
\end{proof}

\begin{thm}[Pitman and Rogers criterion]
 \label{thm:RogersPitman}
 If the equivalent hypotheses previously cited are satisfied, take $X$ with initial law $\Kc(y, .)$ and $Y = \phi(X)$. Then $Y$ is a Markov process starting at $y$ and its semi-group is $Q_t$. 
\end{thm}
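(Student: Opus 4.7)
The plan is to establish both the initial condition and the Markov property of $Y$ by combining the intertwining relation $(ii)$, or equivalently the Markovian filtering property $(i)$, with the Markov property of $X$ and the tower property of conditional expectation.

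First I would check that $Y_0 = y$ almost surely. Taking a test function $g \in \Cc(S_0)$ and applying $(ii)$ at $t=0$ gives $\Kc(g \circ \phi)(y) = \Kc \circ \Phi(g)(y) = g(y)$. This means the entrance law $\Kc(y,\cdot)$ is supported on $\phi^{-1}(\{y\})$, so that $Y_0 = \phi(X_0) = y$ $\P_{X_0 \sim \Kc(y,\cdot)}$-almost surely.

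Next, for $0 \leq s \leq t$ and $f \in \Cc(S_0)$, I would compute $\E(f(Y_t) \mid \Fc_s^Y)$ by successive conditioning. Using $\Fc_s^Y \subset \Fc_s^X$ and the tower property, then the Markov property of $X$ with semigroup $P_t$, one writes
\begin{align*}
\E\bigl(f(Y_t) \mid \Fc_s^Y\bigr)
&= \E\bigl(f \circ \phi (X_t) \mid \Fc_s^Y\bigr)\\
&= \E\bigl(\E(f \circ \phi(X_t) \mid \Fc_s^X) \mid \Fc_s^Y\bigr)\\
&= \E\bigl( (P_{t-s} \circ \Phi (f))(X_s) \mid \Fc_s^Y\bigr).
\end{align*}
The Markovian filtering property $(i)$ applied at time $s$ with test function $P_{t-s} \circ \Phi (f)$ then yields
\[
\E\bigl(f(Y_t) \mid \Fc_s^Y\bigr) = \Kc \circ P_{t-s} \circ \Phi (f)(Y_s) = Q_{t-s}(f)(Y_s).
\]
This is precisely the Markov property for $Y$ with semigroup $Q_{t-s}$, and combined with $Y_0 = y$ completes the proof.

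The only subtle point, which is not really an obstacle since it was already addressed in Theorem \ref{thm:intertwining}, is that the Markovian filtering property $(i)$ is formulated at a single time $t$, but the argument above uses it at an arbitrary intermediate time $s$. This is legitimate because the filtering identity is established in $(i)$ for every $t \geq 0$ under the assumption $X_0 \sim \Kc(y,\cdot)$, and the derivation of $(i)$ from $(ii)$ in theorem~\ref{thm:intertwining} already relied on intertwining at every time, so no extra work is needed here beyond invoking the equivalence.
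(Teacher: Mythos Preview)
Your proof is correct and follows essentially the same route as the paper: condition first on $\Fc_s^X$ to invoke the Markov property of $X$, then apply the Markovian filtering identity $(i)$ to collapse the outer conditional expectation to $\Kc \circ P_{t-s} \circ \Phi(f)(Y_s) = Q_{t-s}(f)(Y_s)$. The only cosmetic difference is that the paper writes the time pair as $(s, s+t)$ rather than $(s,t)$, and it rewrites $\Kc \circ P_t \circ \Phi$ as $Q_t \circ \Kc \circ \Phi = Q_t$ via the intertwining before concluding, whereas you use the definition of $Q_t$ directly; your added verification that $Y_0 = y$ almost surely is a nice explicit touch that the paper leaves implicit.
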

\begin{proof}
Let us prove the Markov property:
\begin{align*}
  & \E_{X_0 \sim \Kc(y, .)}\left( f( Y_{t+s} ) | \Fc_{s}^Y \right)\\
= & \E_{X_0 \sim \Kc(y, .)}\left( \E\left( f \circ \phi(X_{t+s}) | \Fc_{s}^X \right)  | \Fc_{s}^Y \right)\\
= & \E_{X_0 \sim \Kc(y, .)}\left( P_t\left( f \circ \phi \right)(X_s)  | \Fc_{s}^Y \right)\\
= & \E_{X_0 \sim \Kc(y, .)}\left( P_t \circ \Phi \left( f \right)(X_s)  | \Fc_{s}^Y \right)
\end{align*}
Using the hypothesis (i) from theorem \ref{thm:intertwining}, we have:
\begin{align*}
  & \E_{X_0 \sim \Kc(y, .)}\left( f( Y_{t+s} ) | \Fc_{s}^Y \right)\\
= & \Kc \circ P_t \circ \Phi \left( f \right)(Y_s)\\
= & Q_t \circ \Kc \circ \Phi \left( f \right)(Y_s)\\
= & Q_t (f)\left( Y_s \right) 
\end{align*}
\end{proof}

\subsection{The canonical measure intertwines the hypoelliptic BM and the highest weight process}

Now, let us specialize the previous framework to our case. The semi-group for the hypoelliptic Brownian motion $(B_t^\theta(W^{(\mu)}), t\geq 0)$ is denoted by $P_t$:
$$ P_t := \exp\left( t \Dc^{(\mu)} \right)$$
Recall that the highest weight process is:
$$\Lambda_t := hw( B_t^\theta(W^{(\mu)}) ) = \theta -  w_0 \theta + \Tc_{w_0} W^{(\mu)}_t $$
Finally, define the Markov kernel $\Kc_\mu: \Cc\left( \Bc \right) \rightarrow \Cc(\afrak)$ from $\afrak$ to $\Bc$ by:
$$\forall \varphi \in \Cc( \Bc ),  \Kc_\mu(\varphi)(\lambda) := \E\left( \varphi\left( C_\mu(\lambda) \right) \right)$$
Since the random variable $C_\mu(\lambda)$ is $\Bc(\lambda)$ valued, it is clear that:
$$ \Kc_\mu \circ hw = id_{\afrak}$$

The following 'Markovian filtering' holds:
\begin{thm}
\label{thm:markovian_filtering}
Let $x_0 \in \afrak$, $\mu \in \afrak$, $W^{(\mu)}$ a BM in the Cartan subalgebra $\afrak$ and $C_\mu(x_0)$ an independent random variable whose distribution follows the canonical probability measure on $\Bc(x_0)$, with spectral parameter $\mu$.
If:
$$ X^{x_0}_t := hw\left( C_\mu(x_0) B_t^\theta(W^{(\mu)}) \right)$$
and $f: \Bc \longrightarrow \R$ is a bounded function, then:
$$ \E\left( f\left( C_\mu(x_0) B_t^\theta(W^{(\mu)}) \right) | \Fc_t^{X^{x_0}}, X^{x_0}_t = x \right) = \E\left( f( C_\mu(x) ) \right)$$
\end{thm}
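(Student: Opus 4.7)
This statement is the Markovian filtering condition $(i)$ of Theorem \ref{thm:intertwining} (in the sense of Rogers and Pitman) applied to the state spaces $S=\Bc$, $S_0=\afrak$, the projection $\phi=hw$, and the Markov kernel $\Kc=\Kc_\mu$; by that theorem it is equivalent to the semigroup intertwining $\Kc_\mu \circ P_t = Q_t \circ \Kc_\mu$, where $P_t$ is the transition semigroup of $b \mapsto b\,B_t^\theta(W^{(\mu)})$ on $\Bc$ (a left translate of the hypoelliptic Brownian motion, cf.\ Proposition \ref{proposition:inf_generator}) and $Q_t := \Kc_\mu \circ P_t \circ \Phi$. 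The plan is to verify this intertwining first on an open Weyl chamber and then extend by analyticity in $\mu$.

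The first step is to identify the $hw$-projection of the enriched process with a Whittaker process. Writing $b=z\,\bar w_0\,e^{hw(b)}\,u$ with $u=\varrho^T(b)$ and computing the Gauss decomposition of $\bar w_0^{-1}\,b\,B_t^\theta(W^{(\mu)})$ yields the key identity
\begin{equation*}
hw\bigl(b\,B_t^\theta(W^{(\mu)})\bigr)\;=\;hw(b)\;+\;T_{e^\theta\,\varrho^T(b)\,e^{-\theta}}\bigl(W^{(\mu)}\bigr)(t).
\end{equation*}
When $b$ is drawn from $\Kc_\mu(x_0,\cdot)$, Proposition \ref{proposition:link_with_canonical} gives $\varrho^T(b)\stackrel{\Lc}{=}\Gamma_{w_0\mu}(x_0)$, so $X^{x_0}$ has the same law as the Whittaker process $x_0+T_{e^\theta g e^{-\theta}}W^{(\mu)}$ with $g\sim\Gamma_{w_0\mu}(x_0)$. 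For $\mu$ in the relevant Weyl chamber this is Markov with generator $\tfrac12\Delta+\nabla\log\psi_\mu\cdot\nabla$ by Theorem \ref{thm:whittaker_process_g} combined with the $W$-invariance $\psi_{w_0\mu}=\psi_\mu$, so that $Q_t$ is genuinely a semigroup.

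The main technical obstacle is the filtering identity itself: conditionally on the full highest-weight path up to time $t$, the refreshed parameter $\varrho^T(Y_t)$ should have law $\Gamma_{w_0\mu}(X^{x_0}_t)$ independently of the past. This is established by viewing the Whittaker process as the Doob $h$-transform of the hypoelliptic Brownian motion by the harmonic function $\Phi_\mu$ of Subsection \ref{subsection:harmonicity_quantum_toda}: Girsanov (Corollary \ref{corollary:girsanov}) combined with the conditional representation of Theorem \ref{thm:conditional_representation} converts the statement into an explicit identity of Radon--Nikodym densities of gamma-type random variables on $U^{w_0}_{>0}$. The delicate point is that one is asserting regeneration of the full canonical distribution on $\Bc(X^{x_0}_t)$, not merely of its weight marginal.

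The extension to arbitrary $\mu\in\afrak$ is then obtained by analyticity. The Whittaker function $\psi_\mu$ is entire in $\mu$ (Theorem \ref{thm:whittaker_functions_properties}(i)), so the canonical kernel $\Kc_\mu$ extends holomorphically and both $P_t$ and $Q_t$ depend analytically on $\mu\in\hfrak$; since the intertwining is established on an open subset of $\afrak$, it holds on all of it.
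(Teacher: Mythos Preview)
Your outline follows essentially the same strategy as the paper: identify $X^{x_0}$ with the Whittaker process via the $hw$-projection formula, establish the filtering identity for $\mu$ in a Weyl chamber using the Doob-transform structure, then extend to general $\mu$. Your formula $hw(b\,B_t^\theta(W^{(\mu)})) = hw(b) + T_{e^\theta \varrho^T(b) e^{-\theta}}(W^{(\mu)})(t)$ is correct and matches the paper's computation of $X^{x_0}_t$.

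There is, however, a real gap in your third paragraph. Saying that Girsanov and the conditional representation theorem ``convert the statement into an explicit identity of Radon--Nikodym densities of gamma-type random variables'' does not identify the essential step. What the paper actually proves is a deterministic identity: writing $n=N_t^\theta(X^{x_0})$ and $x=X^{x_0}_t$, one manipulates the Gauss decomposition of $e^{x_0} g\,B_t^\theta(W^{(\mu)})$, together with the increment relation $N_\infty^\theta(X^{x_0}) = n\,e^x\,N_\infty^\theta(X^{x_0}_{t+\cdot}-X^{x_0}_t)\,e^{-x}$ and $N_\infty^\theta(X^{x_0}) = \Theta(e^{x_0} g\,e^{-x_0})$, to obtain
\[
C_\mu(x_0)\,B_t^\theta(W^{(\mu)}) \;=\; b_x^T \circ \Theta^{-1}\bigl(N_\infty^\theta(X^{x_0}_{t+\cdot}-X^{x_0}_t)\bigr).
\]
This is the heart of the argument: the full crystal element depends on the past only through $x=X^{x_0}_t$, with the remaining randomness sitting entirely in the \emph{future} increment $N_\infty^\theta(X^{x_0}_{t+\cdot}-X^{x_0}_t)$. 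Only after this reduction does the Bayes formula with the likelihood ratio $\tfrac{d\P^v}{d\P}\big/\tfrac{d\P^v}{d\P}|_{\Fc_t}$ identify the conditional law of that increment as $D_{w_0\mu}(x)$, and hence the conditional law of the product as that of $C_\mu(x)$. Your sketch does not supply this decomposition, and without it there is no ``explicit identity'' to check.

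A minor methodological difference: for the extension to arbitrary $\mu$ you invoke analyticity of $\Kc_\mu$ in $\mu$, whereas the paper uses an explicit Girsanov--Cameron--Martin change of drift from $\nu\in -C$ to $\mu$. The density factors as the product of the reweighting $C_\nu(x_0)\to C_\mu(x_0)$ and the Brownian density $e^{\langle W^{(\nu)}_t,\mu-\nu\rangle - \frac{t}{2}(\|\mu\|^2-\|\nu\|^2)}$, both of which depend only on $\gamma(C_\nu(x_0)B_t^\theta(W^{(\nu)}))$; the Bayes formula then transports the filtering identity from $\nu$ to $\mu$ directly. Your analyticity argument is plausible but less elementary.
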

\begin{proof}
For notational reason, we write $\Fc_t$ instead of $\Fc_t^{X^{x_0}}$.

As a first step, let us prove that the theorem for general $\mu \in \afrak$ is a consequence of the case $\mu \in -C$ using a change of probability measure. Assume for now that the result is true for $\mu \in -C$. It is straightforward to check that for $\nu \in \afrak$:
$$ \E\left( f\left( C_\mu(x_0) \right) \right) = \frac{\psi_\nu(x_0)}{\psi_\mu(x_0)} \E\left( e^{\langle \gamma\left( C_\nu(x_0) \right), \mu - \nu \rangle } f\left( C_\nu(x_0) \right) \right)$$
and using the Girsanov-Cameron-Martin theorem for Brownian motion (\cite{bib:KaratzasShreve91} theorem 5.1), for any functional $F$:
$$ \E\left( F\left( W^{(\mu)}_s; s \leq t \right) \right) = 
   \E\left( e^{ \langle W^{(\nu)}, \mu - \nu \rangle - \half (||\mu||^2 - ||\nu||^2) t }
            F\left( W^{(\nu)}_s; s \leq t \right) \right)$$
Hence, because $C_\mu(x_0)$ and $W^{(\mu)}$ are independent, in the following change of probability, the density is the product of the two previous densities:
$$ \frac{\psi_\nu(x_0)}{\psi_\mu(x_0)} \exp\left( \langle \gamma\left( C_\nu(x_0) \right) + W^{(\nu)}, \mu - \nu \rangle - \half (||\mu||^2 - ||\nu||^2) t \right)$$
Using the Bayes formula, we have that, on the set $\{ X_t^{x_0} = x \}$:
\begin{align*}
  & \E\left( f\left( C_\mu(x_0) B_t^\theta(W^{(\mu)}) \right) | \Fc_t \right)\\
= & \frac{\E\left( e^{ \langle \gamma\left( C_\nu(x_0) \right) + W^{(\nu)}, \mu - \nu \rangle - \half (||\mu||^2 - ||\nu||^2) t } f\left( C_\nu(x_0) B_t^\theta(W^{(\nu)}) \right) | \Fc_t \right)}
         {\E\left( e^{ \langle \gamma\left( C_\nu(x_0) \right) + W^{(\nu)}, \mu - \nu \rangle - \half (||\mu||^2 - ||\nu||^2) t } | \Fc_t \right)}\\
= & \frac{\E\left( e^{ \langle \gamma\left( C_\nu(x_0) B_t^\theta(W^{(\nu)}) \right), \mu - \nu \rangle }
                   f\left( C_\nu(x_0) B_t^\theta(W^{(\nu)}) \right) | \Fc_t \right)}
         {\E\left( e^{ \langle \gamma\left( C_\nu(x_0) B_t^\theta(W^{(\nu)}) \right), \mu - \nu \rangle }
                   | \Fc_t \right)}
\end{align*}
Applying the result for $\nu \in -C$, one has:
\begin{align*}
  & \E\left( f\left( C_\mu(x_0) B_t^\theta(W^{(\mu)}) \right) | \Fc_t^{X^{x_0}}, X_t^{x_0} = x \right)\\
= & \frac{\E\left( e^{ \langle \gamma\left( C_\nu(x) \right), \mu - \nu \rangle } f\left( C_\nu(x) \right) \right)}
         {\E\left( e^{ \langle \gamma\left( C_\nu(x) \right), \mu - \nu \rangle } \right)}\\
= & \E\left( f\left( C_\mu(x) \right) \right)
\end{align*}
\linebreak
Now let us prove the theorem when $\mu \in -C = w_0 C$. Using proposition \ref{proposition:link_with_canonical}, we write:
$$C_\mu(x_0) = z \bar{w}_0 e^{x_0} g$$
where $g \stackrel{\Lc}{=} \Gamma_{w_0 \mu}(x_0)$. Now notice that:
\begin{align*}
X^{x_0}_t & = hw\left( C_\mu(x_0) B_t^\theta(W^{(\mu)}) \right)\\
& = \log [ \bar{w}_0^{-1} z \bar{w}_0 e^{x_0} g  B_t^\theta(W^{(\mu)}) ]_0\\
& = x_0 + \log [ g  B_t^\theta(W^{(\mu)}) ]_0\\
& = x_0 + \log [ e^{\theta}g e^{-\theta}  B_t(W^{(\mu)}) ]_0\\
& = x_0 + T_{e^{\theta}g e^{-\theta}} \left( W^{(\mu)} \right)
\end{align*}
For shorter notations introduce $n= N_t^\theta\left( X^{x_0} \right)$ and $x = X_t^{x_0}$. Thus we have:
$$ n e^x = \left[ e^{x_0} g B_t^\theta(W^{(\mu)}) \right]_{-0}$$
Then, using the properties of the Gauss decomposition:
\begin{align*}
  & B_t^\theta(W^{(\mu)}) \\
= & \left(e^{x_0} g\right)^{-1} e^{x_0} g B_t^\theta(W^{(\mu)})\\
= & \left(e^{x_0} g\right)^{-1} n e^x \left[e^{x_0} g B_t^\theta(W^{(\mu)})\right]_+
\end{align*}
But since $B_t^\theta(W^{(\mu)}) \in B$, we have:
$$ B_t^\theta(W^{(\mu)}) = \left[ \left(e^{x_0} g\right)^{-1} n e^x \right]_{-0}$$
And the following decomposition holds:
\begin{align}
e^{x_0} g B_t^\theta(W^{(\mu)}) & = e^{x_0} g \left[ \left(e^{x_0} g\right)^{-1} n e^x \right]_{-0}\\
& = n e^x \left[ \left(e^{x_0} g\right)^{-1} n e^x \right]_{+}^{-1}\\
& = n e^x \left[ \left(e^{x_0} g e^{-x_0}\right)^{-1} n e^x \right]_{+}^{-1}\\
& = n e^x \left[ \left( e^{-x} n^{-1} e^{x_0} g e^{-x_0} e^x \right)^{-1} \right]_{+}^{-1}
\end{align}
Therefore:
\begin{align*}
  & C_\mu(x_0) B_t^\theta( W^{(\mu)} )\\
= & b_x^T\left( \left[ \bar{w}_0^{-1} C_\mu(x_0) B_t^\theta( W^{(\mu)} ) \right]_+ \right)\\
= & b_x^T\left( \left[ e^{x_0} g B_t^\theta( W^{(\mu)} ) \right]_+ \right)\\
= & b_x^T\left( \left[ \left( e^{-x} n^{-1} e^{x_0} g e^{-x_0} e^x \right)^{-1} \right]_{+}^{-1} \right)
\end{align*}

Thanks to theorem \ref{thm:conditional_representation}, we know that:
$$ N_\infty^\theta(X^{x_0}) = \Theta( e^{x_0} g e^{-x_0} ) $$
Moreover, equation (\ref{eqn:N_infty_decomposition}) tells us:
$$ N_\infty^\theta(X^{x_0}) = n e^{x} N_\infty^\theta(X^{x_0}_{t+.} - X^{x_0}_t) e^{-x}$$
Hence, since $\Theta(ng) = n \Theta(g)$ for $n \in N$:
$$ N_\infty^\theta(X^{x_0}_{t+.} - X^{x_0}_t) = \Theta( e^{-x} n^{-1} e^{x_0} g e^{-x_0} e^{x} )$$
And:
$$ C_\mu(x_0) B_t^\theta( W^{(\mu)} ) = b_x^T\left( \left[ \Theta^{-1}\left( N_\infty^\theta(X^{x_0}_{t+.} - X^{x_0}_t) \right)^{-1} \right]_{+}^{-1} \right) $$
As $ \Theta^{-1}\left( N_\infty^\theta(X^{x_0}_{t+.} - X^{x_0}_t) \right) \in U$, we have in the end:
\begin{align}
\label{eqn:final_decomposition}
C_\mu(x_0) B_t^\theta( W^{(\mu)} ) & = b_x^T \circ \Theta^{-1}\left( N_\infty^\theta(X^{x_0}_{t+.} - X^{x_0}_t) \right)
\end{align}

Recall that under our working probability measure $g \stackrel{\Lc}{=} \Gamma_\mu(x_0)$ and $X^{x_0}$ follows the Whittaker process. In the context of proposition \ref{proposition:choice_v}, our working probability measure can be considered of the form $\P^v$. Under the equivalent probability measure $\P$, $g$ has the same law as $\Gamma_\mu$ and $X^{x_0}$ is distributed as a BM with drift $w_0 \mu \in C$.
$$  \frac{d \P^v}{d \P}            = \exp\left( -\chi_-\left( N_\infty^\theta(X^{x_0}) \right) \right)
                                     \frac{ b(w_0 \mu) }
                                          { \psi_{w_0 \mu}(x_0) e^{- \langle w_0 \mu, x_0 \rangle }} $$

$$ \frac{ d \P^v}{d \P}_{| \Fc_t } = \exp\left( -\chi_-\left( n \right) \right)
                                     \frac{ \psi_{w_0 \mu}(x  ) e^{- \langle w_0 \mu, x   \rangle }} 
                                          { \psi_{w_0 \mu}(x_0) e^{- \langle w_0 \mu, x_0 \rangle }} $$
Thus, we get the simplification:
\begin{align*}
  & \frac{d \P^v}{d \P} / \frac{ d \P^v}{d \P}_{| \Fc_t }\\
= & \frac{ b(w_0\mu) }{ \psi_{w_0\mu}(x) e^{- \langle w_0 \mu, x \rangle }}
    \frac{ \exp\left( -\chi_-\left( n e^x N_\infty^\theta(X^{x_0}_{t+.}-X^{x_0}) e^{-x} \right) \right) }
         { \exp\left( -\chi_-\left( n \right) \right) }\\
= & \frac{ b(w_0\mu) }{ \psi_{w_0\mu}(x) e^{- \langle w_0 \mu, x \rangle }}
    \exp\left( -\chi_-\left( e^x N_\infty^\theta(X^{x_0}_{t+.}-X^{x_0}) e^{-x} \right) \right)
\end{align*}
Therefore, on the set $\{ X_t^{x_0} = x \}$, by equation (\ref{eqn:final_decomposition}) and using the fact that $\E = \P^v$:
\begin{align*}
  & \E\left( f\left( C_\mu(x_0) B_t^\theta(W^{(\mu)}) \right) | \Fc_t\right)\\
= & \E\left( f \circ b_x^T \circ \Theta^{-1}\left( N_\infty^\theta(X^{x_0}_{t+.} - X^{x_0}_t) \right)
           | \Fc_t \right)\\ 
= & \P^v\left( f \circ b_x^T \circ \Theta^{-1}\left( N_\infty^\theta(X^{x_0}_{t+.} - X^{x_0}_t) \right)
             | \Fc_t \right)
\end{align*}
By the Bayes formula:
\begin{align*}
  & \E\left( f\left( C_\mu(x_0) B_t^\theta(W^{(\mu)}) \right) | \Fc_t \right)\\
= & \frac{ \P\left( \frac{d \P^v}{d \P}
                    f \circ b_x^T \circ \Theta^{-1}\left( N_\infty^\theta(X^{x_0}_{t+.} - X^{x_0}_t) \right)
                    | \Fc_t \right)}
         { \frac{ d \P^v}{d \P}_{| \Fc_t } }\\
= & \frac{ b(w_0\mu) }{ \psi_{w_0\mu}(x) e^{- \langle w_0 \mu, x \rangle }}
    \P\left( e^{-\chi_-\left( e^x N_\infty^\theta(X^{x_0}_{t+.}-X^{x_0}) e^{-x} \right)}
             f \circ b_x^T \circ \Theta^{-1}\left( N_\infty^\theta(X^{x_0}_{t+.} - X^{x_0}_t) \right)
             | \Fc_t \right)
\end{align*}

Since under $\P$, $X^{x_0}$ is a Brownian motion with drift $w_0 \mu$, we know that $N_\infty^\theta\left( X^{x_0}_{t+.} - X^{x_0}_{t} \right)$ is independent from $\Fc_t$ and has the same law as $D_{w_0 \mu}$. In the end:
\begin{align*}
  & \E\left( f\left( C_\mu(x_0) B_t^\theta(W^{(\mu)}) \right) | \Fc_t^{X^{x_0}}, X^{x_0}_t = x \right)\\
= & \frac{ b(w_0\mu) }{ \psi_{w_0\mu}(x) e^{- \langle w_0 \mu, x \rangle }}
    \E\left( \exp\left( -\chi_-\left( e^x D_{w_0 \mu} e^{-x} \right) \right)
             f \circ b_x^T \circ \Theta^{-1}\left( D_{w_0 \mu} \right) \right)\\
= & \E\left( f \circ b_x^T \circ \Theta^{-1}\left( D_{w_0 \mu}(x) \right) \right)\\
= & \E\left( f \circ b_x^T \left( \Gamma_{w_0 \mu}(x) \right) \right)
\end{align*}
Proposition \ref{proposition:link_with_canonical} yields the result by giving:
$$ C_\mu(x) \stackrel{\Lc}{=} b_x^T \left( \Gamma_{w_0 \mu}(x) \right)$$
\end{proof}

As a consequence, the condition (i) of theorem \ref{thm:intertwining} is valid with an initial law for the hypoelliptic Brownian motion being $C_\mu(x_0)$. Moreover
$$ Q_t := \Kc_\mu \circ P_t \circ hw$$
is a semi-group making the following diagram commutative.
\begin{center}
\begin{tikzpicture}
\matrix(a)[matrix of math nodes, row sep=3em, column sep=3.5em,
text height=1.5ex, text depth=0.25ex]
{ \Bc    & \Bc    &  \\
  \afrak & \afrak & \afrak \\ };
\path[->]
(a-1-1) edge node[auto] { $e^{t \Dc^{(\mu)}}$} (a-1-2)
(a-2-1) edge node[auto] { $\Kc_\mu$} (a-1-1)
(a-2-2) edge node[auto] { $\Kc_\mu$} (a-1-2)
(a-2-1) edge node[auto] { $Q_t$} (a-2-2)
(a-2-2) edge node[auto] { $id$} (a-2-3)
(a-1-2) edge node[auto] { $hw$} (a-2-3);
\end{tikzpicture}
\end{center}

The theorem \ref{thm:RogersPitman} is applicable and tells us that $X^{x_0}$ is Markov with semigroup $Q_t$. It can be easily identified:
\begin{proposition}
The semigroup $Q$ is generated by the Doob transform of the quantum Toda Hamiltonian:
$$ Q_t = \exp\left( t \Lc \right) $$
with:
$$ \Lc = \psi_\mu^{-1} (H - \half \langle \mu, \mu \rangle) \psi_\mu
       = \half \Delta_\afrak + \langle \nabla \log \psi_\mu, \nabla \rangle$$
\end{proposition}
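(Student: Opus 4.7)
My plan is to close the loop left open after Theorem \ref{thm:whittaker_process_g}, whose proof had been given directly only for $\mu\in C$ in \ref{proof:whittaker_process_g_mu_in_C}. In that case the Girsanov computation of Section \ref{subsection:condition_N_infty}, together with the choice of $N$-character in Proposition \ref{proposition:choice_v}, exhibits $X_t^{x_0}$ as a diffusion with generator $\half\Delta+\nabla\log\psi_\mu\cdot\nabla$; re-indexing $\mu\leadsto w_0\mu$ and using the $W$-invariance $\psi_\mu=\psi_{w_0\mu}$, the same holds for $\mu\in -C$. By Theorems \ref{thm:intertwining}--\ref{thm:RogersPitman} combined with Theorem \ref{thm:markovian_filtering}, the intertwined semigroup $Q_t$ coincides with the transition semigroup of $X_t^{x_0}$, so $Q_t=\exp(t\Lc_\mu)$ for $\mu\in -C$, with $\Lc_\mu=\half\Delta+\nabla\log\psi_\mu\cdot\nabla$.

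To extend this identification to arbitrary $\mu\in\afrak$, my plan is to re-use the Cameron--Martin change of measure already invoked in the proof of Theorem \ref{thm:markovian_filtering}, but now tracking its induced effect on the SDE satisfied by $Y_t:=X_t^{x_0}$. Fixing a reference drift $\nu\in -C$ for which the generator is already identified, the joint density between the $\mu$- and $\nu$-cases on $\Fc_t$ is the product of a Cameron--Martin exponential in $W^{(\nu)}$ and the canonical-measure density in $C_\nu(x_0)$; conditioning it onto $\Fc_t^Y$ via the Markovian filtering of Theorem \ref{thm:markovian_filtering} collapses the exponential, up to a deterministic factor in $t$, to the $\Fc_t^Y$-martingale $\psi_\mu(Y_t)/\psi_\nu(Y_t)$. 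This is exactly the Doob $h$-transform likelihood ratio that, by Girsanov's theorem applied in the filtration of $Y$, converts the drift $\nabla\log\psi_\nu$ into $\nabla\log\psi_\mu$ while leaving the diffusion coefficient unchanged. A safer fallback is to observe that $\psi_\mu$ is entire in $\mu$ (Theorem \ref{thm:whittaker_functions_properties}(i)) and strictly positive on $\afrak\times\afrak$ for real $\mu$, so both $Q_tf(\lambda)$ and $\exp(t\Lc_\mu)f(\lambda)$ are real-analytic in $\mu\in\afrak$ for fixed $t,\lambda$ and a sufficiently rich class of test functions $f$; agreement on the open set $-C$ then extends everywhere by the identity principle.

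Finally, the equivalence of the two presentations of $\Lc$ is a routine Doob-transform computation. The Leibniz rule for $\Delta$, together with the fact that the potential $-\half\sum_\alpha\langle\alpha,\alpha\rangle e^{-\alpha}$ in $H$ is a multiplication operator and so commutes with conjugation by $\psi_\mu$, yields
$$\psi_\mu^{-1}H(\psi_\mu f)=\half\Delta f+\nabla\log\psi_\mu\cdot\nabla f+\psi_\mu^{-1}(H\psi_\mu)\,f,$$
and the quantum Toda eigenequation $H\psi_\mu=\half\langle\mu,\mu\rangle\psi_\mu$ from Theorem \ref{thm:whittaker_functions_properties}(iii) makes the subtraction of $\half\langle\mu,\mu\rangle$ cancel the last term exactly.

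The main obstacle lies in the projection of the Cameron--Martin likelihood onto $\Fc_t^Y$: the raw exponential involves $\gamma(C_\nu(x_0))+W_t^{(\nu)}=\gamma(C_\nu(x_0)B_t^\theta(W^{(\nu)}))$ rather than $Y_t=hw(\cdot)$, so one must use the Gauss decomposition together with the conditional law supplied by Theorem \ref{thm:markovian_filtering} to trade $\gamma$ for $hw$ and collapse the remaining expectation into $\psi_\mu(Y_t)/\psi_\nu(Y_t)$ via the integral definition of the Whittaker function. Should this bookkeeping become too delicate, the analytic-continuation route is the cleaner fallback.
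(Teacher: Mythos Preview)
Your proposal is correct. For $\mu\in -C$ and for the algebraic identity between the two forms of $\Lc$, you follow exactly the paper's reasoning. For the extension to general $\mu$, your \emph{fallback} route---analytic continuation in $\mu$ using that $\psi_\mu$ is entire---is precisely what the paper does, though the paper applies it at the level of the generator rather than the semigroup: it writes $\Lc_\mu=\Kc_\mu\circ\Dc^{(\mu)}\circ hw$ directly from $Q_t=\Kc_\mu\circ P_t\circ hw$, observes that $\Lc_\mu f(x)$ is analytic in $\mu$ for each smooth $f$ and each $x$, and matches it with $\half\Delta+\nabla\log\psi_\mu\cdot\nabla$ on $-C$.

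Your \emph{primary} route is genuinely different and also valid. Projecting the Cameron--Martin density onto $\Fc_t^Y$ via the Markovian filtering of Theorem~\ref{thm:markovian_filtering} indeed collapses $\E_\nu\bigl[e^{\langle\mu-\nu,\gamma(C_\nu(x_0)B_t^\theta(W^{(\nu)}))\rangle}\mid\Fc_t^Y,Y_t=x\bigr]$ to $\psi_\mu(x)/\psi_\nu(x)$, because the conditional law is that of $C_\nu(x)$ and the integral definition of $\psi_\mu$ gives $\E[e^{\langle\mu-\nu,\gamma(C_\nu(x))\rangle}]=\psi_\mu(x)/\psi_\nu(x)$. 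This produces the Doob $h$-transform with $h=\psi_\mu/\psi_\nu$ and shifts the drift from $\nabla\log\psi_\nu$ to $\nabla\log\psi_\mu$, exactly as you outline. The advantage of your approach is that it stays probabilistic and makes the $h$-transform structure explicit; the paper's analytic-continuation shortcut is quicker but less illuminating.
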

\begin{proof}
When $\mu \in -C = w_0 C$, we are in the same situation as theorem \ref{thm:whittaker_process_g}, where we identified the infinitesimal generator as:
$$  \half \Delta_\afrak + \langle \nabla \log \psi_{w_0\mu}, \nabla \rangle$$
Hence the result as $\psi_{w_0\mu} = \psi_{\mu}$.\\
For general $\mu \in \afrak$, we use the fact that:
$$ \forall t \geq 0, Q_t = \Kc_\mu \circ P_t \circ hw$$
Therefore $Q_t$ has infinitesimal generator:
$$ \Lc_\mu = \Kc_\mu \circ \left( \half \Delta_\afrak + \langle \mu, \nabla_\afrak \rangle + \half \sum_{\alpha \Delta} \langle \alpha, \alpha \rangle f_\alpha \right) \circ hw$$
Against a smooth function $f: \afrak \rightarrow \R$, at a point $x$, $\Lc_\mu(f)(x)$ is analytic in the parameter $\mu$ and equal to
$$ \half \Delta_\afrak + \langle \nabla \log \psi_{\mu}, \nabla \rangle$$
for $\mu \in -C$. The result holds by analytic extension.
\end{proof}

\subsection{Entrance point at \texorpdfstring{'$-\infty$'}{minus infinity}}
\label{subsection:entrance_point}
Now it is quite easy to give a proof of theorems \ref{thm:highest_weight_is_markov} and \ref{thm:canonical_measure}.

\begin{proof}[Proof of theorems \ref{thm:highest_weight_is_markov} and \ref{thm:canonical_measure}]
Take in theorem \ref{thm:markovian_filtering} $x_0 = \zeta - 2M \rho^\vee$ and as in corollary \ref{corollary:convergence_in_proba} take $M \rightarrow \infty$, giving:
$$ C_\mu(\zeta - 2M \rho^\vee) \stackrel{\P}{\longrightarrow} id$$
The Markov process $X^{x_0}_t = hw\left( C_\mu(x_0) B_t^\theta(W^{(\mu)}) \right)$ will converge in probability to the highest weight process $\Lambda_t = hw\left( B_t^\theta(W^{(\mu)}) \right)$. The filtering equation in theorem \ref{thm:markovian_filtering} degenerates to the relation in theorem \ref{thm:canonical_measure}.

It also shows the Markov property in theorem \ref{thm:highest_weight_is_markov}, leaving only the fixed time marginal to prove. This fixed time marginal is the correct way of defining the entrance law. As we will see, it is a simple matter of diagonalizing the infinitesimal generator thanks to the Whittaker-Plancherel transform (theorem \ref{thm:whittaker_plancherel}). Let $g(t,.)$ be the density at time $t>0$ of the process $X^{x_0}_t$. It is obtained by solving the forward Kolmogorov equation ( or Fokker-Planck equation):
$$
\left\{ \begin{array}{ll}
\frac{\partial g}{\partial t} =  \Lc^* g \\
g(0,.) = \delta_{x_0}
\end{array} \right.
$$
where $\Lc^*$ is the dual of $\Lc$. Setting $g(t,x) = h(t,x) \psi_\mu(x) e^{-t\frac{\langle \mu, \mu \rangle}{2}}$, $h$ solves:
$$
\left\{ \begin{array}{ll}
\frac{\partial h}{\partial t} =  H h \\
h(0,.) = \psi_\mu(x_0)^{-1} \delta_{x_0}
\end{array} \right.
$$
where $H$ is the quantum Toda Hamiltonian. It is diagonalized by the the Whittaker-Plancherel transform. Indeed, since $H\psi_{i\nu} = -\half \langle \nu, \nu \rangle \psi_{i\nu}$, the function:
$$ \hat{h}(t,\nu) := \int_\afrak h(t, x) \psi_{i\nu}(x) dx$$
solves the simple PDE:
$$
\left\{ \begin{array}{ll}
\frac{\partial \hat{h}}{\partial t}(t, \nu) =  -\frac{\langle \nu, \nu \rangle}{2} \hat{h}(t,\nu) \\
\hat{h}(0,\nu) = \frac{\psi_{i\nu}(x_0)}{\psi_\mu(x_0)}
\end{array} \right.
$$
Hence:
$$ g(t,x) = \psi_\mu(x) e^{-\frac{t \langle \mu, \mu \rangle}{2}}
            \int_\afrak \tilde{h}(0, \nu) e^{-\frac{t \langle \nu, \nu \rangle}{2}} \psi_{-i\nu}(x) s(\nu) d\nu$$
Again, we use corollary \ref{corollary:convergence_in_proba} to finish the proof. It gives us as $M \rightarrow \infty$:
$$ \tilde{h}(0, \nu) = \E\left( e^{\langle i\nu - \mu, C_\mu(x_0) \rangle} \right) \rightarrow 1$$
\end{proof}

\subsection{Intertwining property at the torus level}
The geometric Duistermaat-Heckman measure intertwines Brownian motion and the quantum Toda Hamiltonian. Formally, introduce the Markov kernel $\hat{\Kc}_\mu: \Cc\left( \afrak \right) \rightarrow \Cc(\afrak)$ defined by:
\begin{align*}
 \forall \varphi \in \Cc( \afrak ),  \hat{\Kc}_\mu(\varphi)(\lambda) 
& := \E\left( \varphi \circ \gamma \left( C_\mu(\lambda) \right) \right)\\
& = \frac{1}{\psi_\mu(\lambda)} \int_\afrak e^{\langle \mu, \gamma(x) \rangle}f(y) DH^\lambda(dy)
\end{align*}
The following diagram is commutative.
\begin{center}
\begin{tikzpicture}
\matrix(a)[matrix of math nodes, row sep=3em, column sep=3.5em,
text height=1.5ex, text depth=0.25ex]
{ \afrak & \afrak \\
  \afrak & \afrak \\ };
\path[->]
(a-1-1) edge node[above] { $\exp\left( t\left( \half \Delta_\afrak + \langle \mu, \nabla_\afrak \rangle \right) \right)$} (a-1-2)
(a-2-1) edge node[left]  { $\hat{\Kc}_\mu$} (a-1-1)
(a-2-2) edge node[right] { $\hat{\Kc}_\mu$} (a-1-2)
(a-2-1) edge node[below] { $Q_t$} (a-2-2);
\end{tikzpicture}
\end{center}
This can be easily checked by applying the earlier intertwining:
$$\Kc_\mu \circ e^{t \Dc^{\mu}} = Q_t \circ \Kc_\mu$$
to functions depending on the weight only.

\chapter{Degenerations}
\label{chapter:degenerations}
As we have seen, there is a natural $q$-deformation of the geometric Littelmann path model that is given by rescaling paths and corresponding actions. We will interpret this deformation as a change of semi-fields and describe the deformed structures. The $q \rightarrow 0$ limit makes sense, and is exactly the free version of the continuous Littelmann model given in $\cite{bib:BBO2}$. A cutting procedure is needed in order to ``prune'' such a free Kashiwara crystal, and obtain a polytope. While Berenstein and Kazhdan have used the superpotential function $f_B$ (\cite{bib:BK00, bib:BK04}) to encode this cutting procedure, there is not a clear reason why it should be that way. In our point of view, the superpotential $f_B$ appeared naturally in the canonical measure on geometric crystals.

While describing deformations, we will see that the $q$-deformations of theorem \ref{thm:highest_weight_is_markov} uses the operator $q \Tc_{w_0} q^{-1} \stackrel{q \rightarrow 0}{\longrightarrow} \Pc_{w_0}$. This recovers the crystalline generalisation of Pitman's theorem proved in $\cite{bib:BBO}$ where $\Pc_{w_0} W$ is Brownian motion conditionned to never leave the Weyl chamber. In the $A_n$ type, this is a realization of Dyson's Brownian motion which gives a connection to Random Matrix theory.

Also, in this crystallization procedure, the canonical measure degenerates to the uniform measure on a polytope, which is nothing but the string polytope in the appropriate coordinates. This recovers previous results.\\

\section{Deformations}
\subsection{Semifields and Maslov quantification}
\label{subsection:semifields_and_maslov}
This subsection mainly follows the presentation of Itenberg in \cite{bib:Itenberg}. A semifield $\left( S, \oplus, \odot \right)$ is defined as the next best thing to a field, as we weaken the assumption of invertibility for the law $\oplus$. 

\begin{definition}
 A semifield is an algebraic structure $\left( S, \oplus, \odot \right)$ such that:
\begin{itemize}
 \item $\left( S, \oplus\right)$ is a commutative semigroup.
 \item $\left( S, \odot \right)$ is a commutative group with neutral element $e$.
 \item Distributivity of $\odot$ over $\oplus$:
	$$\forall a, b, c \in S, \left( a \oplus b \right) \odot c =  (a \odot c) \oplus (b \odot c)$$
\end{itemize}
\end{definition}

The universal semifield we have been working with so far is $\left(\R_{>0}, + , . \right)$. On this semi-field, the natural counterpart of rational functions with $n$ indeterminates is the set of \emph{rational and substraction free} expressions $\R_{>0}\left(x_1, \dots, x_n\right) $. Plainly, elements in $\R_{>0}\left(x_1, \dots, x_n\right)$ are rational functions with indeterminates $\left( x_1, \dots, x_n \right)$, real positive coefficients and using only operations $+$, $\times$ and $/$. For example $f(x_1, x_2) = \frac{x_1^3 + x_2^3}{x_1 + x_2} = x_1^2 - x_1 x_2 + x_2^2 \in \R_{>0}(x_1, x_2)$. It is easy to check that if endowed with the same operations, rational subtraction free expressions also form a semi-field.\\

Another classical example is the \emph{tropical} semifield $\left(\R, \min, + \right)$ as one easily checks that $+$ is distributive over $\min$. Its importance in representation theory is related to Kashiwara's crystal basis, as changes of coordinates are rational functions on $S_0$. The study of algebraic curves on this field has given rise to tropical geometry, now a field of its own, where $\max$ usually replaces $\min$. The name 'tropical' was coined by French computer scientists to honor their colleague Imre Simon for his work on the max-plus algebra. It has no intrinsic meaning aside from refering to the weather in Brazil.

In fact, we will see later that this semifield can be viewed like the zero temperature limit of family of semifields $S_q$. This suggests the name of ``crystallized'' semifield, that fits better in name to the crystal basis. However, it is too late to reverse the trend, already solidly established.\\

Tropicalization (or crystallization) is a procedure that takes as input objects on the semi-field $\left(\R_{>0}, +, . \right)$ and gives objects on $\left(\R_{>0}, \min, + \right) $. As such, if $f \in \R_{>0}(x_1, \dots, x_n)$, a substraction free rational function, one obtains $[f]_{trop}$ a function in the variables $\left(x_1, \dots, x_n\right)$ applying the morphism of semi-fields $[\ \ ]_{trop}$. If $a$ and $b$ are elements in $\R_{>0}(x_1, \dots, x_n)$ then:

$$ \begin{array}{cccc}
 [\ \ ]_{trop}: & \left(\R_{>0}, + , . \right) & \longrightarrow & \left(\R, \min, + \right) \\
                & a + b                        & \mapsto	 & \min( [\ a \ ]_{trop}, [\ b \ ]_{trop})\\
                & a . b                        & \mapsto	 & [\ a \ ]_{trop} + [\ b \ ]_{trop}\\
                & a / b                        & \mapsto 	 & [\ a \ ]_{trop} - [\ b \ ]_{trop}\\
                & a \in \R_{>0}                & \mapsto 	 & 0
    \end{array}
$$

A less algebraic definition could be used, using a limit that always exists:
\begin{proposition}
\label{proposition:analytic_tropicalization}
For $f$ a rational and substraction free expression in $k$ variables, we have for all $(x_1, \dots, x_k) \in \R^k$ and $q>0$:
\begin{align}
-q \log f\left( e^{-\frac{x_1}{q}}, \dots, e^{-\frac{x_k}{q}} \right) & = [f]_{trop}\left( x_1, \dots, x_k\right) + \Oc(q)
\end{align}
where $\Oc(q)$ is a quantity such that $\frac{\Oc(q)}{q}$ is bounded as $q \rightarrow 0$, uniformly in the variables $(x_1, \dots, x_k)$.
\end{proposition}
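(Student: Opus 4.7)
The plan is to proceed by induction on the syntactic structure of the subtraction-free rational expression $f$. Since every element of $\R_{>0}(x_1, \dots, x_k)$ is built from variables and positive constants via the operations $+$, $\cdot$, $/$, it suffices to check the base cases and verify that the asymptotic
$$-q \log f\bigl(e^{-x_1/q}, \dots, e^{-x_k/q}\bigr) = [f]_{trop}(x_1, \dots, x_k) + \Oc(q)$$
is preserved by each of these three operations. Let me abbreviate $\Phi_q(f)(x) := -q \log f(e^{-x_1/q}, \dots, e^{-x_k/q})$. I expect uniformity in $x = (x_1, \dots, x_k)$ to propagate easily through the induction because each elementary step will contribute a remainder bounded independently of $x$.

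For the base cases, $\Phi_q(x_i)(x) = x_i = [x_i]_{trop}(x)$ exactly, while for a positive constant $c$, $\Phi_q(c)(x) = -q \log c$ is $0 + \Oc(q) = [c]_{trop} + \Oc(q)$ uniformly. For the multiplicative operations, the identities $\Phi_q(fg) = \Phi_q(f) + \Phi_q(g)$ and $\Phi_q(f/g) = \Phi_q(f) - \Phi_q(g)$ combine with the corresponding tropical rules $[fg]_{trop} = [f]_{trop} + [g]_{trop}$ and $[f/g]_{trop} = [f]_{trop} - [g]_{trop}$, so the remainders simply add or subtract.

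The only non-trivial step is addition. Writing $u = \Phi_q(f)(x)$ and $v = \Phi_q(g)(x)$ and assuming without loss of generality that $u \leq v$, one computes
$$\Phi_q(f+g)(x) = -q \log\bigl(e^{-u/q} + e^{-v/q}\bigr) = u - q \log\bigl(1 + e^{-(v-u)/q}\bigr).$$
Since $v - u \geq 0$, the quantity $-q \log(1 + e^{-(v-u)/q})$ lies in the interval $[-q \log 2, 0]$ regardless of $u$ and $v$. Hence $\Phi_q(f+g)(x) = \min(u, v) + \Oc(q)$ uniformly. Invoking the induction hypothesis for $f$ and $g$ together with the $1$-Lipschitz property of $\min$ yields $\min(u, v) = \min([f]_{trop}(x), [g]_{trop}(x)) + \Oc(q) = [f+g]_{trop}(x) + \Oc(q)$.

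The main obstacle is really only one of bookkeeping: ensuring that the accumulated remainder remains $\Oc(q)$ uniformly in $x$. Since $f$ is a fixed subtraction-free rational expression involving only finitely many operations, and since each operation contributes either a remainder bounded by $q \log 2$ (additions) or a constant multiple of $q$ (appearances of constants), or else preserves the existing error exactly (multiplications and divisions), the total error is bounded by $C_f \cdot q$ for a constant $C_f$ depending on $f$ but not on $x$. This yields the claimed uniform estimate.
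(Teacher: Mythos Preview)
Your proof is correct and follows essentially the same approach as the paper: structural induction on the subtraction-free expression, with the only non-trivial step being addition, handled by factoring out the dominant exponential and bounding the remaining $-q\log(1+e^{-t/q})$ term by $q\log 2$. Your presentation is in fact slightly cleaner in making explicit the use of the $1$-Lipschitz property of $\min$ to pass from $\min(u,v)$ to $\min([f]_{trop},[g]_{trop})$, whereas the paper absorbs this step into its $\Oc(1)$ notation.
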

\begin{proof}
Let us prove the statement by induction on the size of the expression $f$, meaning the number of operations it uses (addition, multiplication and division). For the base case, notice that if $f$ is a monomial or a constant, then the statement is trivially true.\\
Now, for the inductive step, if $f$ is a product or ratio of two rational substraction free expressions, for which the statement is true, the statement carries on using the properties of the logarithm. If $f$ is a sum whose terms satisfies the induction hypothesis:
$$ f = f_1 + f_2$$
Then for ${\bf x} = \left( x_1, \dots, x_k \right) \in \R^k$:
\begin{align*}
  & -q \log f\left( e^{-\frac{x_1}{q}}, \dots, e^{-\frac{x_k}{q}} \right)\\
= & -q \log\left( f_1\left( e^{-\frac{x_1}{q}}, \dots, e^{-\frac{x_k}{q}} \right) 
                + f_2\left( e^{-\frac{x_k}{q}}, \dots, e^{-\frac{x_k}{q}} \right) \right)\\
= & -q \log\left( e^{ \Oc(1)-q^{-1} [f_1]_{trop}( {\bf x} )}
                + e^{ \Oc(1)-q^{-1} [f_2]_{trop}( {\bf x} )} \right)\\
= & \min\left( [f_1]_{trop}( {\bf x} ), [f_2]_{trop}( {\bf x} )\right) + \Oc(q)\\
  & \ \ -q \log\left( 1 + \exp\left(\Oc(1) - q^{-1} |[f_1]_{trop}-[f_2]_{trop}|( {\bf x} )\right) \right)\\
= & [f]_{trop}({\bf x}) + \Oc(q)
\end{align*}
\end{proof}

Such a limit suggests a continuous deformation from $\left(\R_{>0}, + , . \right)$ to $\left(\R, \min, + \right)$ called the Maslov quantification of real numbers. Define the continuous family of semifields 
$\left( S_q = \R, \oplus_q, \odot_q \right)$ for $q\geq 0$ with:
$$ a \oplus_q b = -q \log\left( e^{-\frac{a}{q}} + e^{-\frac{b}{q}}\right) $$
$$ a \odot_q b = a + b $$
At the limit, when $q$ goes to zero, we recover the previous example $\left(\R, \min, + \right)$. All the semifields $\left( S_q \right)_{q>0}$ are isomorphic to $\left(\R_{>0}, + , . \right)$ except for $q=0$. The isomorphism of semifields that transports structure is $\psi_q = -q \log: \left(\R_{>0}, + , . \right) \rightarrow \left( S_q = \R, \oplus_q, \odot_q \right)$\\
As such, $\psi_{q,q'} = \psi_{q'} \circ \psi_q^{-1}:\left( S_q = \R, \oplus_q, \odot_q \right) \rightarrow \left( S_{q'} = \R, \oplus_{q'}, \odot_{q'} \right)$ is a rescaling when identifying both semifields to $\R$: $\psi_{q,q'}\left( x \right) = \frac{q'}{q} x$

\begin{notation}
A tilde will refer to quantified variables when there is the possibility of confusing them with variables in $\R_{>0}$. In $c = e^{-\frac{\tilde{c}}{q}}$, $c$ is seen as a variable in the usual semi-field $\R_{>0}$ while $\tilde{c}$ is in $S_q$.
\end{notation}

\subsection{A remark on integrals of semifield valued functions}
Let $f: [0, T] \rightarrow S_q$ be a (smooth) function with values in the semifield $S_q$. For readability purposes, the subscript $q$ will be dropped when designating operations on $S_q$. The monoid of integers in $S_q$ is the monoid generated by the neutral element $0$. It is in fact given by all numbers $n_q = \mathop{\bigoplus}_{i=1}^n 0 = -q \log\left(n\right)$. 
As such, Riemann sums in $S_q$ take the form:
\begin{align*}
  & \left( 1 \oslash n_q \right) \odot \mathop{\bigoplus}_{i=1}^n  f(t_i)\\
= & -q \log\left( \frac{1}{n} \sum_{i=1}^n e^{-\frac{f(t_i)}{q}} \right)\\
\mathop{\longrightarrow}_{n \rightarrow +\infty} & -q \log\left( \int_0^T e^{-\frac{f}{q}} \right) 
\end{align*}
Therefore, the natural candidate for integrals on the semifield $S_q$ are exponential functionals and the $q=0$ limit gives 
$ \inf_{0 \leq s \leq T} f(s)$ using the Laplace method.

We will define a $q$-Littelmann model using exponential integrals over paths. It can be formulated in such a way that no minus sign appears. In the formalism of semi-fields, all actions become in fact rational, in the sense of the semi-field $S_q$. Exponential integrals are simply semifield integrals that degenerate to infimums.\\

\subsection{Deformed Lusztig and Kashiwara varieties}
Following the same idea as \cite{bib:BFZ96} section (2.2), one can define the Lusztig and Kashiwara varieties $U^{w_0}_{>0}$ and $C^{w_0}_{>0}$ by their parametrizations, by identifying $m$-tuples that give the same element. Since changes of parametrization $x_{\bf i'}^{-1} \circ x_{\bf i}$ are rational and substraction free, one can view them as rational for the semi-field $S_q$ and define:

\begin{definition}[ Lusztig and Kashiwara varieties on $S_q$ ]
  $$U_{>0}^{w_0}(S_q) := \left\{ ({\bf t}^{\bf i})_{{\bf i} \in R(w_0)} \in \left(S_q^m\right)^{R(w_0)} | \forall {\bf i}, {\bf i'} \in R(w_0), x_{\bf i'}^{-1} \circ x_{\bf i}( {\bf t^{\bf i}} ) = {\bf t^{\bf i'}} \right\}$$
  $$C_{>0}^{w_0}(S_q) := \left\{ ({\bf c}^{\bf i})_{{\bf i} \in R(w_0)} \in \left(S_q^m\right)^{R(w_0)} | \forall {\bf i}, {\bf i'} \in R(w_0), x_{\bf-i'}^{-1} \circ x_{\bf-i}( {\bf c^{\bf i}} ) = {\bf c^{\bf i'}} \right\}$$
\end{definition}

The $q \rightarrow 0$ limit gives the tropicalized version of the changes of parametrization. As such, by theorem 5.2 \cite{bib:BZ01}, the Lusztig variety $U_{>0}^{w_0}(S_0)$ really encodes the Lusztig parametrization of the $G^\vee$ canonical basis; while the tropical Kashiwara variety encodes the string parametrization.

\subsection{Deformed structure of Littelmann crystals}
We have seen that $q$-Littelmann models for different $q$ are equivalent, provided that we properly rescale the reals in the actions, and values taken by $\varepsilon_\alpha$ and $\varphi_\alpha$. In fact, the set of real numbers had to be considered as the semifield $S_q$, and this rescaling becomes natural as we also have to change the structure semifield. Now we are ready to list the $q$-deformation of our previous results.\\

In order to distinguish between structures at $q = 1$ and for $q$, let $L = \left( \gamma, \left( \varepsilon_\alpha, \varphi_\alpha, e^._\alpha \right)_{\alpha \in \Delta} \right)$ the path crystal structure for $q=1$ and $L_{q} = \left( \gamma', \left( \varepsilon^{'}_{\alpha}, \varphi^{'}_{\alpha}, e^{'.}_{\alpha} \right)_{\alpha \in \Delta} \right)$ for generic $q$. We will use the subscript $q$ in $\langle \pi \rangle_q$ to indicate the crystal generated by $\pi$ using the $q$-deformed structure.
 
\paragraph{Generated crystal:} Let $\langle \pi \rangle_q$ be the $q$-Littelmann crystal generated by $\pi$. After transporting the structure to $q=1$ by rescaling, we have to consider the geometric crystal generated by $q^{-1}\pi $. In the end, in term of the geometric structure (q=1), we have:
$$ \langle \pi \rangle_q = q \langle \frac{\pi}{q} \rangle$$

\paragraph{Highest weight:} The natural invariant under crystal action, which plays the role of highest weight, is then 
$$ \Tc^q_{w_0} := q \Tc_{w_0} q^{-1} $$
It is natural because varying $q$ interpolates between different path models, and gives for each $q$ the highest weight path. And it gives the rescaling considered in $\cite{bib:BBO}$ and $\cite{bib:BBO2}$ in order to recover the Pitman operator:
$$ \Pc_{w_0} = \lim_{q \rightarrow 0} q \Tc_{w_0} q^{-1} $$

Analogously, we have:
$$ \forall w \in W, \Tc_w^q := q \Tc_w q^{-1} \stackrel{q \rightarrow 0}{\longrightarrow} \Pc_{w}$$

\paragraph{Parametrizations:} Fix ${\bf i} \in R(w_0)$.\\
Transporting the semi-field structure from $\R_{>0}$ to $S_q$ and using the results from subsection \ref{subsection:string_params}, we define the $q$-deformed string parameter of a path as the $m$-tuple in  $\left(S_q\right)^m$ given by the map :
$$ \begin{array}{cccc}
\varrho_{\bf i}^{q, K}: & C_0\left( [0, T], \afrak \right) & \longrightarrow & \left(S_q\right)^m \\
                        &         \pi                      & \mapsto	     & \left( c_1, \dots, c_m \right)\\
    \end{array}
$$
For $\pi \in C_0\left( [0, T], \afrak \right)$, the $m$-tuple ${\bf c} = \left( c_1, \dots, c_m \right) = \varrho_{\bf i}^{q, K}(\pi)$ is defined recursively as:
$$ \forall 1 \leq k \leq m, c_k = q \log\int_0^T \exp\left( -q^{-1}\alpha_{i_k}\left( \Tc_{s_{i_1} \dots s_{i_{k-1}}}^q \pi \right) \right)$$
Clearly, as $q \rightarrow 0$, one recovers the definition of string parameters in the classical Littelmann path model (see \cite{bib:BBO2}):
$$ \forall 1 \leq k \leq m, q \log\int_0^T \exp\left( -q^{-1}\alpha_{i_k}\left( \Tc_{s_{i_1} \dots s_{i_{k-1}}}^q \pi \right) \right) \rightarrow - \inf_{0 \leq t \leq T} \alpha_{i_k}\left( \Pc_{s_{i_1} \dots s_{i_{k-1}}}(\pi) \right)$$
Finally, thanks to diagram \ref{fig:parametrizations_diagram} and the morphism of semi-fields $\psi_q = -q \log$  we have:
\begin{align}
\label{eqn:q_kashiwara_for_paths}
\forall \pi \in C_0\left( [0, T], \afrak \right), \varrho_{\bf i}^{q, K}(\pi) = \psi_q \circ x_{\bf -i}^{-1} \circ \varrho^K\left( B_T(q^{-1} \pi) \right)
\end{align}
where we applied the semi-field morphism $\psi_q = -q \log$ on $\R_{>0}^m$ point-wise.\\

Similarly, $q$-deformed Lusztig parameters are constructed. Indeed, all elements of $\langle \pi \rangle_q$ can be projected on the lowest path $\eta = q e^{-\infty}_\alpha q^{-1} \pi$ and every single path can be recovered via:
$$ \pi = q T_g q^{-1} \eta$$
where 
\begin{itemize}
 \item $ g = x_{i_1}\left( e^{-\frac{t_1}{q}} \right) \dots x_{i_m}\left( e^{-\frac{t_1}{q}} \right) \in U^{w_0}_{>0}$
 \item $\eta_j = q e^{-\infty}_{s_{i_1} \dots s_{i_j}} \cdot \frac{\pi}{q} = q e^{-\infty}_{\alpha_{i_j}} \cdot \frac{\eta_{j-1}}{q}$
 \item $t_j = q \log\int_0^T \exp\left( -q^{-1}\alpha_{i_k}\left( q e_{s_{i_1} \dots s_{i_{k-1}}}^{-\infty} q^{-1} \pi \right) \right)$
\end{itemize}

We are aiming at understanding the law $\varrho_{\bf i}^{q, K}\left( \pi \right)$ when $\pi$ is taken as a Brownian motion. This is the natural $q$-deformation of the previously studied canonical measure, viewed in Kashiwara coordinates.

\subsection{Brownian scaling and consequences}
In order to obtain the announced deformation of our probabilistic results, the tool we will use is simply Brownian scaling property. For $W$ a Brownian motion in $\afrak$, $\mu \in \afrak$ and $c>0$, it is the equality in law between processes:
\begin{align}
\label{eqn:brownian_scaling}
W_{t}^{(\mu)}; t \geq 0 \stackrel{\Lc}{=} c W_{t/c^2}^{(c\mu)}; t \geq 0
\end{align}

Let us first examine the effect of scaling on the flow $B_.(.)$:
\begin{lemma}[Effect of accelerating a path $X$ on $B_.\left(X\right)$]
\label{lemma:accelerating_paths}
Given a continuous path $X$ in $\afrak$:
 $$ B_t\left( X_{./c^2} \right) = c^{-2\rho^\vee} B_{t/c^2}\left( X \right) c^{2\rho^\vee}$$
\end{lemma}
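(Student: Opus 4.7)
The plan is to proceed by direct computation using the explicit series expansion for $B_t(\cdot)$ given in equation~\eqref{lbl:process_B_explicit}. Setting $Y_s := X_{s/c^2}$, I would substitute this into the formula
$$ B_t(Y) = \sum_{k \geq 0} \sum_{ i_1, \dots, i_k } \int_{ t \geq t_k \geq \dots \geq t_1 \geq 0} e^{ -\alpha_{i_1}(Y_{t_1}) - \dots - \alpha_{i_k}(Y_{t_k}) } f_{i_1} \cdots f_{i_k} \, dt_1 \cdots dt_k \cdot e^{Y_t} $$
and perform the change of variables $u_j = t_j/c^2$ in each iterated integral. The domain becomes $0 \leq u_1 \leq \cdots \leq u_k \leq t/c^2$ and the Jacobian produces a factor $c^{2k}$, so that
$$ B_t(Y) = \sum_{k \geq 0} \sum_{ i_1, \dots, i_k } c^{2k} \int_{ t/c^2 \geq u_k \geq \dots \geq u_1 \geq 0} e^{ -\alpha_{i_1}(X_{u_1}) - \dots - \alpha_{i_k}(X_{u_k}) } f_{i_1} \cdots f_{i_k} \, du_1 \cdots du_k \cdot e^{X_{t/c^2}}. $$

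The second and key step is to recognize the combinatorial factor $c^{2k}$ as coming from conjugation by $c^{2\rho^\vee}$. Using the commutation relation $a f_\alpha a^{-1} = a^{-\alpha} f_\alpha$ from \eqref{lbl:comm2} with $a = c^{2\rho^\vee}$, together with $\alpha(\rho^\vee) = 1$ for every simple root $\alpha$, I get
$$ c^{-2\rho^\vee} f_\alpha \, c^{2\rho^\vee} = c^{2} f_\alpha \quad \text{for all } \alpha \in \Delta. $$
Iterating over the $k$ Chevalley generators yields $c^{-2\rho^\vee} f_{i_1} \cdots f_{i_k} c^{2\rho^\vee} = c^{2k} f_{i_1} \cdots f_{i_k}$, while $c^{-2\rho^\vee} e^{X_{t/c^2}} c^{2\rho^\vee} = e^{X_{t/c^2}}$ since the torus is abelian.

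Putting it all together, inserting $c^{2\rho^\vee} c^{-2\rho^\vee} = \mathrm{id}$ between the product of $f$'s and the final exponential in the expression for $B_{t/c^2}(X)$, and factoring out the outer conjugations, produces exactly the computed series for $B_t(Y)$. This gives the identity $B_t(X_{./c^2}) = c^{-2\rho^\vee} B_{t/c^2}(X) c^{2\rho^\vee}$.

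There is no real obstacle here; the only point requiring care is the sign convention in the adjoint action of the torus on the negative root spaces, and the observation that $\alpha(\rho^\vee) = 1$ for simple roots, which is precisely what makes the single scaling $c^{2\rho^\vee}$ work uniformly across all generators $f_\alpha$.
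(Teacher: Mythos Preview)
Your proof is correct and follows essentially the same approach as the paper: the change of variables $u_j = t_j/c^2$ producing the factor $c^{2k}$, followed by the identification $c^{2k} f_{i_1}\cdots f_{i_k} = c^{-2\rho^\vee} f_{i_1}\cdots f_{i_k}\, c^{2\rho^\vee}$ via $\alpha(\rho^\vee)=1$. In fact you spell out the conjugation step more explicitly than the paper does.
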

\begin{proof}
Using the change of variable $u_j = t_j/c^2$, we have:
 \begin{align*}
 B_t\left( X_{\frac{.}{c^2}} \right) = & \left( \sum_{k \geq 0} \sum_{ i_1, \dots, i_k } \int_{ t \geq t_k \geq \dots \geq t_1 \geq 0} e^{ -\alpha_{i_1}(X_{t_1/c^2}) \dots -\alpha_{i_k}(X_{t_k/c^2}) } f_{i_1} \dots f_{i_k} dt_1 \dots dt_k \right) e^{X_{\frac{t}{c^2}}}\\
 = & \left( \sum_{k \geq 0} \sum_{ i_1, \dots, i_k } \int_{ \frac{t}{c^2} \geq t_k \geq \dots \geq t_1 \geq 0} e^{ -\alpha_{i_1}(X_{u_1}) \dots -\alpha_{i_k}(X_{u_k}) } f_{i_1} \dots f_{i_k} du_1 \dots du_k c^{2k} \right) e^{X_{\frac{t}{c^2}}}\\
 = & \left( \sum_{k \geq 0} \sum_{ i_1, \dots, i_k } \int_{ \frac{t}{c^2} \geq u_k \geq \dots \geq u_1 \geq 0} e^{ -\alpha_{i_1}(X_{u_1}) \dots -\alpha_{i_k}(X_{u_k}) } c^{-2\rho^\vee} f_{i_1} \dots f_{i_k} c^{2\rho^\vee} du_1 \dots du_k \right) e^{X_{\frac{t}{c^2}}}\\
 = & c^{-2\rho^\vee} B_{\frac{t}{c^2}}\left( X \right) c^{2\rho^\vee}
 \end{align*}
\end{proof}
Hence:
\begin{lemma}
\label{lemma:B_scaling}
$$ B_{t}\left( q^{-1} W^{(\mu)} \right) ; t \geq 0 \stackrel{\Lc}{=} q^{-2\rho^\vee}  B_{t/q^2}\left( W^{(q \mu)} \right) q^{2\rho^\vee}; t \geq 0$$
\end{lemma}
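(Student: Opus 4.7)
The strategy is to combine two ingredients already at our disposal: the Brownian scaling identity \eqref{eqn:brownian_scaling}, which redistributes the scalar $q^{-1}$ in front of $W^{(\mu)}$ into a time change plus a modified drift, and Lemma \ref{lemma:accelerating_paths}, which describes precisely how the flow $B_t(\cdot)$ transforms when one accelerates the driving path. Since $B_t(\cdot)$ is a deterministic and measurable functional of the driving path, any equality in law at the path level transfers directly to an equality in law of the associated $B$-processes. Consequently the lemma is a direct bookkeeping consequence of these two facts, and the whole argument fits in a few lines.

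Concretely, the first step is to apply Brownian scaling with $c=q$: as processes indexed by $t \geq 0$,
\begin{equation*}
  \bigl( q^{-1} W^{(\mu)}_t ; t\geq 0 \bigr)
  \;\stackrel{\mathcal{L}}{=}\;
  \bigl( W^{(q\mu)}_{t/q^2} ; t \geq 0 \bigr).
\end{equation*}
Applying the measurable map $\pi \mapsto (B_t(\pi); t \geq 0)$ to both sides preserves the equality in law, yielding
\begin{equation*}
  \bigl( B_t( q^{-1} W^{(\mu)} ) ; t \geq 0 \bigr)
  \;\stackrel{\mathcal{L}}{=}\;
  \bigl( B_t( W^{(q\mu)}_{./q^2} ) ; t \geq 0 \bigr).
\end{equation*}

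The second step is deterministic. For any continuous path $X$ in $\mathfrak{a}$, Lemma \ref{lemma:accelerating_paths} applied with $c = q$ gives the pathwise identity
\begin{equation*}
  B_t( X_{./q^2} ) \;=\; q^{-2\rho^\vee}\, B_{t/q^2}(X)\, q^{2\rho^\vee}.
\end{equation*}
Specializing to $X = W^{(q\mu)}$ and inserting this into the equality in law above produces
\begin{equation*}
  \bigl( B_t( q^{-1} W^{(\mu)} ); t \geq 0 \bigr)
  \;\stackrel{\mathcal{L}}{=}\;
  \bigl( q^{-2\rho^\vee}\, B_{t/q^2}( W^{(q\mu)} )\, q^{2\rho^\vee}; t \geq 0 \bigr),
\end{equation*}
which is exactly the claim.

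Because each step is essentially forced, I do not anticipate any genuine obstacle; the only point requiring a small amount of care is to make sure that Brownian scaling is invoked with the correct choice of $c$ (so that the scalar $q^{-1}$ is fully absorbed into the time change and the drift becomes $q\mu$) and that the exponents $-2\rho^\vee$ and $+2\rho^\vee$ coming from Lemma \ref{lemma:accelerating_paths} end up on the correct sides of the conjugation. Once this is done, the lemma follows without any additional computation.
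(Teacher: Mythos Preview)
Your proof is correct and follows exactly the same approach as the paper, which simply states: ``Use the scaling equation (\ref{eqn:brownian_scaling}) with $c=q$ and lemma \ref{lemma:accelerating_paths}.'' You have spelled out precisely these two steps in the right order with the right choice of $c$.
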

\begin{proof}
 Use the scaling equation (\ref{eqn:brownian_scaling}) with $c=q$ and lemma \ref{lemma:accelerating_paths}.
\end{proof}

Therefore, we can give a deformation of theorems \ref{thm:highest_weight_is_markov} and \ref{thm:canonical_measure}. Define the rescaled highest weight process as:
$$ \forall t>0, \Lambda_t^q := q \ hw\left( B_t^\theta( q^{-1} W^{(\mu)} ) \right) = q(\theta - w_0 \theta) + \Tc^q_{w_0}(W^{(\mu)})_t$$
The properly rescaled Whittaker function on $\afrak$ is, with $m = \ell(w_0)$:
$$\forall \lambda \in \afrak, \psi_{q, \mu}(\lambda) = q^{m}\psi_{q\mu}\left( \frac{\lambda-4 q \log(q) \rho^\vee}{q}\right)$$
Using theorem \ref{thm:whittaker_functions_properties}, it is immediate that when $\mu \in C$, $\psi_{q, \mu}$ solves the eigenfunction equation:
\begin{align}
\label{eqn:q_deformed_toda}
\frac{1}{2} \Delta \psi_{q, \mu} - \sum_{\alpha \in \Delta} \frac{1}{2} \langle \alpha, \alpha \rangle q^2 e^{-q^{-1} \alpha\left( . \right)}\psi_{q, \mu}  = & \frac{ \langle \mu, \mu \rangle }{2} \psi_{q, \mu} 
\end{align}
with $\psi_{q, \mu}(x) e^{-\langle \mu, x \rangle}$ being bounded and having growth condition:
$$ \lim_{x \rightarrow \infty, x \in C} \psi_{q, \mu}(x) e^{-\langle \mu, x \rangle} = q^{m} b(q \mu)$$

\begin{thm}[Markov property for rescaled highest weight]
\label{thm:q_highest_weight_is_markov}
The process $\Lambda^q$ is a diffusion with infinitesimal generator
$$ \psi_{q, \mu}^{-1}
   \left( \frac{1}{2} \Delta - \sum_{\alpha \in \Delta} \frac{1}{2} \langle \alpha, \alpha \rangle q^2 e^{-q^{-1} \alpha\left( x \right)} - \frac{ \langle \mu, \mu \rangle }{2} \right)
   \psi_{q, \mu}
 = \frac{1}{2} \Delta + \nabla \log\left(\psi_{q, \mu}\right) \cdot \nabla $$
\end{thm}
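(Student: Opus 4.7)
The plan is to reduce the $q$-deformed statement to the already-proven theorem \ref{thm:highest_weight_is_markov} via Brownian scaling, so the bulk of the work consists in tracking the scaling carefully and identifying the resulting generator with the claimed form.

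First I would apply lemma \ref{lemma:B_scaling} to write, as an equality in law of processes,
$$B^\theta_t(q^{-1} W^{(\mu)}) \stackrel{\Lc}{=} q^{-2\rho^\vee}\, B^\theta_{t/q^2}(W^{(q\mu)})\, q^{2\rho^\vee},$$
the conjugation by $\theta$ passing through since it commutes with the dilation by $q^{\pm 2\rho^\vee}$. Next, using property (iii) of \ref{properties:hw} applied with $x = -2\log(q)\rho^\vee$ and $y = 2\log(q)\rho^\vee$, together with $w_0 \rho^\vee = -\rho^\vee$, one obtains
$$hw\bigl(q^{-2\rho^\vee}\, B^\theta_{t/q^2}(W^{(q\mu)})\, q^{2\rho^\vee}\bigr) = 4\log(q)\,\rho^\vee + \widetilde{\Lambda}_{t/q^2},$$
where $\widetilde{\Lambda}_s := hw(B^\theta_s(W^{(q\mu)}))$. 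Theorem \ref{thm:highest_weight_is_markov} applied to the drift $q\mu$ tells us that $\widetilde{\Lambda}$ is a diffusion on $\afrak$ with infinitesimal generator $\tfrac{1}{2}\Delta + \nabla\log\psi_{q\mu}\cdot\nabla$. Hence
$$\Lambda^q_t = 4q\log(q)\,\rho^\vee + q\,\widetilde{\Lambda}_{t/q^2}$$
is obtained from $\widetilde{\Lambda}$ by a time change (ratio $q^{-2}$), a dilation of space by $q$, and a deterministic translation — all of which preserve the Markov property.

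The second step would be to compute the generator of $\Lambda^q$ explicitly. If $Y_t = cX_{t/c^2}$ with $X$ a diffusion whose generator $L$ has diffusion matrix $\tfrac{1}{2}\mathrm{Id}$ and drift $b(x)$, then a routine chain-rule calculation gives that $Y$ has generator $\tfrac{1}{2}\Delta_y + \tfrac{1}{c}b(y/c)\cdot\nabla_y$. Applied to $c=q$ and $b = \nabla \log\psi_{q\mu}$, and then composing with the translation by $4q\log(q)\rho^\vee$, the generator of $\Lambda^q$ becomes
$$\tfrac{1}{2}\Delta + \nabla\log\psi_{q\mu}\!\left(\tfrac{\lambda - 4q\log(q)\rho^\vee}{q}\right)\cdot \nabla.$$
By the definition of the rescaled Whittaker function, $\log\psi_{q,\mu}(\lambda) = m\log q + \log\psi_{q\mu}((\lambda-4q\log(q)\rho^\vee)/q)$, so the additive constant $m\log q$ disappears under gradient and one recovers $\tfrac{1}{2}\Delta + \nabla \log \psi_{q,\mu}\cdot\nabla$.

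Finally, to get the Doob-transform form of the generator, I would verify directly from the definition of $\psi_{q,\mu}$ together with the Toda eigenfunction equation for $\psi_{q\mu}$ (theorem \ref{thm:whittaker_functions_properties}(iii), with spectral parameter $q\mu$) that $\psi_{q,\mu}$ satisfies the rescaled equation (\ref{eqn:q_deformed_toda}); the change of variables $\lambda \mapsto (\lambda - 4q\log(q)\rho^\vee)/q$ multiplies the Laplacian by $q^{-2}$ and the potential $\sum_\alpha \tfrac{1}{2}\langle\alpha,\alpha\rangle e^{-\alpha(\cdot)}$ by $q^{-2}$ as well (once the shift is chosen so that $e^{-\alpha(\cdot)}$ gets rescaled to $q^2 e^{-q^{-1}\alpha(\cdot)}$), and dividing by $q^{-2}$ produces exactly (\ref{eqn:q_deformed_toda}). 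The Doob-transform identity then yields the first expression for the generator in the statement. The only real obstacle is bookkeeping the various factors of $q$ through the scaling, shift, and normalization — in particular making sure that the prescribed shift $4q\log(q)\rho^\vee$ in the definition of $\psi_{q,\mu}$ is precisely the one generated by the scaling identity in \ref{lemma:B_scaling}, which the computation above confirms.
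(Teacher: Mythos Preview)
Your proof is correct and follows essentially the same route as the paper: apply lemma \ref{lemma:B_scaling} together with property (iii) of \ref{properties:hw} to express $\Lambda^q$ as an affine time-space rescaling of the original highest weight process with drift $q\mu$, then invoke theorem \ref{thm:highest_weight_is_markov} and track the generator under the rescaling. Your treatment is in fact more explicit than the paper's, which simply states the general scaling rule for diffusion generators; your additional verification of the Doob-transform form via equation (\ref{eqn:q_deformed_toda}) is also handled in the paper, but just before the theorem statement rather than inside the proof.
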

\begin{proof}
Thanks to lemma \ref{lemma:B_scaling} and properties \ref{properties:hw}:
$$ \Lambda_t^q ; t \geq 0 \stackrel{\Lc}{=} 4 q \log(q) \rho^\vee + q \ hw\left(B_{t/q^2}^\theta\left( W^{(q \mu)} \right) \right) ; t \geq 0$$
The result is a consequence of theorem \ref{thm:highest_weight_is_markov} and the following general fact applied to the highest weight process. Consider an Euclidian space $V$ and $a \in V$. If $(X_t; t \geq 0)$ is a diffusion on $V$ with generator $\Lc$ then $(q X_{t/q^2} + a ; t \geq 0)$ is a diffusion with generator $\Gc$. For a smooth function $f: V \rightarrow \R$, we have $\Gc(f): x \mapsto q^{-2} \Lc\left( f(q. + a)\right)(\frac{x-a}{q})$. Here, one needs to take $V = \afrak$, $X$ is the highest weight process and $a = 4 q \log(q) \rho^\vee$.\\
\end{proof}

The deformation of theorem \ref{thm:canonical_measure} is:
\begin{thm}[Rescaled canonical measure]
\label{thm:q_canonical_measure}
 $$\forall t>0, \left( B_t^\theta\left( q^{-1} W^{(\mu)} \right) | \Lambda_t^q = \lambda \right)
\stackrel{\Lc}{=} q^{-2\rho^\vee} C_{q\mu}\left( \frac{\lambda - 4 q\log(q) \rho^\vee}{q} \right) q^{2\rho^\vee}
$$
\end{thm}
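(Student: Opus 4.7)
The proof will proceed by reducing to Theorem \ref{thm:canonical_measure} via Brownian scaling, exactly parallel to the strategy used in Theorem \ref{thm:q_highest_weight_is_markov}. The starting point is Lemma \ref{lemma:B_scaling}, which gives the equality in law of processes
$$ B_t\left(q^{-1} W^{(\mu)}\right);\ t\geq 0 \stackrel{\Lc}{=} q^{-2\rho^\vee} B_{t/q^2}\left(W^{(q\mu)}\right) q^{2\rho^\vee};\ t\geq 0.$$
Since $q^{-2\rho^\vee} \in H$ commutes with $e^{-\theta} \in H$, conjugating both sides by $e^\theta$ yields the analogous identity for $B^\theta$:
$$ B_t^\theta\left(q^{-1} W^{(\mu)}\right) \stackrel{\Lc}{=} q^{-2\rho^\vee} B_{t/q^2}^\theta\left(W^{(q\mu)}\right) q^{2\rho^\vee}.$$

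Next, I would compute how $hw$ behaves under the conjugation by $q^{-2\rho^\vee}$. Using property (iii) of Properties \ref{properties:hw} with $x = -2\log(q)\rho^\vee$ and $y = 2\log(q)\rho^\vee$, together with $w_0 \rho^\vee = -\rho^\vee$, one finds
$$ hw\!\left(q^{-2\rho^\vee} b\, q^{2\rho^\vee}\right) = hw(b) + 2\log(q)(\rho^\vee - w_0\rho^\vee) = hw(b) + 4\log(q)\,\rho^\vee.$$
Applying $q \cdot hw$ to the above scaling identity then gives
$$ \Lambda_t^q \stackrel{\Lc}{=} 4q\log(q)\,\rho^\vee + q\,hw\!\left(B_{t/q^2}^\theta(W^{(q\mu)})\right),$$
so conditioning on $\Lambda_t^q = \lambda$ is precisely the same as conditioning $hw(B_{t/q^2}^\theta(W^{(q\mu)}))$ to equal $\tfrac{\lambda - 4q\log(q)\rho^\vee}{q}$.

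The final step invokes Theorem \ref{thm:canonical_measure} applied to the Brownian motion $W^{(q\mu)}$ at horizon $t/q^2$, which gives
$$\left(B_{t/q^2}^\theta(W^{(q\mu)}) \,\Big|\, hw(\cdot) = \tfrac{\lambda - 4q\log(q)\rho^\vee}{q}\right) \stackrel{\Lc}{=} C_{q\mu}\!\left(\tfrac{\lambda - 4q\log(q)\rho^\vee}{q}\right),$$
and conjugating back by $q^{-2\rho^\vee}$ yields the desired identity. There is no real obstacle here: the entire argument is a rescaling reduction, and the only point requiring a small check is the commutation of $q^{-2\rho^\vee}$ with $e^{-\theta}$ and the computation of the shift $4q\log(q)\rho^\vee$, both of which follow from $\rho^\vee, \theta \in \afrak$ and the Weyl-group identity $w_0 \rho^\vee = -\rho^\vee$.
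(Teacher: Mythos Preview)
Your proof is correct and follows essentially the same route as the paper: apply Lemma~\ref{lemma:B_scaling} (passing to $B^\theta$ since $q^{-2\rho^\vee}$ and $e^{-\theta}$ commute in $H$), compute the shift in $hw$ under conjugation by $q^{-2\rho^\vee}$ via Properties~\ref{properties:hw}(iii) and $w_0\rho^\vee=-\rho^\vee$, and then invoke Theorem~\ref{thm:canonical_measure} at drift $q\mu$ and horizon $t/q^2$. The only difference is that you spell out the commutation and the shift computation a bit more explicitly than the paper does.
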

\begin{proof}
Using lemma \ref{lemma:B_scaling}, we have for fixed $t>0$:
\begin{align*}
                  & \left( B_t^\theta\left( q^{-1} W^{(\mu)} \right) | \Lambda_t^q = \lambda \right)\\
\stackrel{\Lc}{=} & \left( q^{-2\rho^\vee} B_{t/q^2}^\theta\left( W^{(q \mu)} \right) q^{2\rho^\vee} |
                hw\left( q^{-2\rho^\vee} B_{t/q^2}^\theta\left( W^{(q \mu)} \right) q^{2\rho^\vee} \right)= q^{-1} \lambda \right)\\
               =  & \left( q^{-2\rho^\vee} B_{t/q^2}^\theta\left( W^{(q \mu)} \right) q^{2\rho^\vee} |
                hw\left( B_{t/q^2}^\theta\left( W^{(q \mu)} \right) \right)= q^{-1} \lambda - 4 \log q \rho^\vee\right)\\
\end{align*}
Combining this with theorem \ref{thm:canonical_measure} yields the result.
\end{proof}

\subsection{Explicit computation in string coordinates}

We are now able to give an integral formula for the law of $q$-deformed string parameters extracted from a finite Brownian path. We present it in a form that allows to compute the $q\rightarrow 0$ limit. It uses the map $\eta^{w_0, e}$ defined as:
$$ \forall v \in B \cap U \bar{w}_0 U, \eta^{w_0, e}\left( v \right) = [\left( \bar{w}_0 v^T\right)^{-1}]_+$$
Recall that $\eta^{w_0, e}$ restricts to a bijection from $C_{>0}^{w_0}$ to $U_{>0}^{w_0}$ (theorem \ref{thm:geom_from_lusztig_to_kashiwara}).

\begin{proposition}
\label{proposition:q_string_formula}
Let ${\bf i} \in R(w_0)$ and $t>0$. Consider in $\afrak$ a Brownian motion with drift $\mu$ up to time $t$, $\left( W^{(\mu)}_u; 0 \leq u \leq t \right)$. Then for any $\varphi: \R^m \rightarrow \R$ bounded measurable function:
\begin{align*}
  & \E\left( \varphi\left( \varrho_{\bf i}^{q, K}\left( W^{(\mu)}_u; 0 \leq u \leq t \right) \right) \ | \ \Tc_{w_0}^q W^{(\mu)}_t = \lambda \right)\\
= &  \frac{1}{\psi_{q, \mu}\left(\lambda+q(w_0 \theta - \theta)\right)}
     \int_{\R^m} {\bf dc} \varphi({\bf c}) \exp\left( \langle \mu, \lambda - \sum_{k=1}^m c_k \alpha_{i_k}^\vee \rangle- f_{B,q,\lambda}^{K, {\bf i}}({\bf c})\right)
\end{align*}
where ${\bf dc}$ is the Lebesgue measure on $\R^m$ and the deformed superpotential in string coordinates is given by:
\begin{align}
f_{B,q,\lambda}^{K, {\bf i}}({\bf c}) 
& := \sum_{\alpha \in \Delta} \frac{2 q^2}{\langle \alpha, \alpha \rangle} \chi_\alpha \circ \eta^{w_0, e} \circ x_{\bf-i}\left( e^{-q^{-1}c_1}, \dots, e^{-q^{-1}c_m} \right)\\
& \ \ + \sum_{j=1}^m \frac{2 q^2}{\langle \alpha_{i_j}, \alpha_{i_j} \rangle} \exp\left( -\frac{\lambda-\left(c_j+\sum_{k=j+1}^m c_k \alpha_{i_j}(\alpha_{i_k}^\vee)\right)}{q} \right)
\end{align}
\end{proposition}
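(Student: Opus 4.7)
The strategy is to reduce the $q$-deformed conditional expectation to the known canonical measure on the geometric crystal $\Bc(\lambda')$ for a suitable weight $\lambda'$, and then to translate everything into the logarithmic string coordinates. First, by Theorem \ref{thm:q_canonical_measure}, the conditional law of $B_t^\theta(q^{-1}W^{(\mu)})$ given $\Lambda_t^q = \lambda$ is that of the conjugated canonical variable $q^{-2\rho^\vee} C_{q\mu}(\lambda') q^{2\rho^\vee}$ with $\lambda' = q^{-1}(\lambda - 4q\log(q)\rho^\vee)$. Using the defining equation \eqref{eqn:q_kashiwara_for_paths} together with the relation $B_t = e^{\theta} B_t^\theta e^{-\theta}$ and the transparent way in which $\varrho^K$ behaves under conjugation by elements of the torus (via the commutation relations \eqref{lbl:comm1}--\eqref{lbl:comm2}), I would express $\varrho_{\bf i}^{q,K}\bigl(W^{(\mu)}\bigr)$ as $\psi_q\circ x_{-\bf i}^{-1}\circ\varrho^K$ applied to this conjugated variable. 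The shift absorbed by $\theta$ is precisely $q(\theta-w_0\theta)$, which is why the $\psi_{q,\mu}$ in the denominator of the formula is evaluated at $\lambda + q(w_0\theta-\theta)$.

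Next, I would write the distribution of $C_{q\mu}(\lambda')$ in Kashiwara coordinates explicitly, using Definition \ref{def:canonical_probability_measure}, Theorem \ref{thm:omega_invariance_on_crystal} (which gives $\omega(dx)=\prod_j dc_j/c_j$ in coordinates), Proposition \ref{proposition:semi_explicit_expressions} (the Kashiwara-coordinate expression of $f_B$), and Theorem \ref{thm:geom_weight_map} (the Kashiwara-coordinate expression of $\gamma$). At this point the formula is an integral over $\R_{>0}^m$ with integrand involving the product $\chi(\eta^{w_0,e}(v)) + \sum e^{-\alpha_{i_k}(\lambda')}c_k^{-1}\prod_{j>k}c_j^{-\alpha_{i_k}(\alpha_{i_j}^\vee)}$ and the weight monomial $\prod c_k^{-\langle q\mu,\alpha_{i_k}^\vee\rangle}$.

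Then I would perform the change of variables $c_k = e^{-\tilde c_k/q}$ (i.e.\ apply the semifield isomorphism $\psi_q^{-1}$ componentwise), which is exactly the inverse of $\psi_q$ that appears in the definition of $\varrho_{\bf i}^{q,K}$. The Jacobian turns $\prod dc_j/c_j$ into $q^{-m}\prod d\tilde c_j$, yielding the Lebesgue measure ${\bf dc}$ and the $q^m$ prefactor that defines $\psi_{q,\mu}$. The weight monomial becomes $\exp\bigl(q^{-1}\langle q\mu,\sum\tilde c_k\alpha_{i_k}^\vee\rangle\bigr) = \exp(\langle \mu,\sum\tilde c_k\alpha_{i_k}^\vee\rangle)$, and combined with the $e^{\langle q\mu,\lambda'\rangle}$ prefactor and the shift $\lambda' = q^{-1}(\lambda - 4q\log(q)\rho^\vee)$, reproduces the factor $e^{\langle\mu,\lambda-\sum \tilde c_k\alpha_{i_k}^\vee\rangle}$. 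The superpotential term $f_B$ is substraction-free in the variables $c_k$ by Theorem \ref{thm:superpotential_structure} (and its rank-two version visible in Proposition \ref{proposition:semi_explicit_expressions}); the $4q\log(q)\rho^\vee$ shift in $\lambda'$ produces exactly the $2q^2/\langle\alpha,\alpha\rangle$ prefactors in the second sum of $f_{B,q,\lambda}^{K,{\bf i}}$, while the twist-map term $\chi\circ\eta^{w_0,e}$ retains its structural form and receives the other $2q^2/\langle\alpha,\alpha\rangle$ coefficients after the rescaling.

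The main obstacle will be bookkeeping: reconciling the two $\theta$-conjugations (from the definition of $B^\theta$ and from the shift in the Cartan), the conjugation by $q^{-2\rho^\vee}$ coming from Brownian scaling, and the overall Jacobian, so that all shifts align to produce \emph{exactly} the stated normalization $\psi_{q,\mu}(\lambda+q(w_0\theta-\theta))$ and the precise $2q^2/\langle\alpha_{i_j},\alpha_{i_j}\rangle$ coefficients. Once the substitution $c_k=e^{-\tilde c_k/q}$ is performed carefully, however, every term in $f_{B,q,\lambda}^{K,{\bf i}}$ matches a term in the rescaled $f_B$, and the identity follows.
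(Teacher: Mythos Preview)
Your proposal is correct and follows essentially the same route as the paper: reduce to Theorem~\ref{thm:q_canonical_measure}, write the canonical measure in Kashiwara coordinates via Theorems~\ref{thm:omega_invariance_on_crystal} and~\ref{thm:geom_weight_map} together with Proposition~\ref{proposition:semi_explicit_expressions}, and perform the logarithmic substitution $c_k=e^{-\tilde c_k/q}$. The paper organizes the torus conjugations slightly differently---it makes an explicit intermediate change of variable $y=e^{\theta}q^{-2\rho^\vee}x\,q^{2\rho^\vee}e^{-\theta}$ so that $y\in\Bc(q^{-1}\lambda)$, and then reads off $f_B(e^{-\theta}q^{2\rho^\vee}y\,q^{-2\rho^\vee}e^{\theta})$ directly---whereas you propose to track the conjugation through the coordinates; these are equivalent. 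One small correction to your bookkeeping: both occurrences of the prefactor $2q^2/\langle\alpha,\alpha\rangle$ arise from the \emph{same} source, namely the combined conjugation by $e^{-\theta}q^{2\rho^\vee}$ acting on the character $\chi_\alpha$ (the $q^{2\rho^\vee}$ contributes $q^{2}$ since $\alpha(\rho^\vee)=1$, and $e^{-\theta}$ contributes $2/\langle\alpha,\alpha\rangle$), not from the $4q\log(q)\rho^\vee$ shift in $\lambda'$, which is there only to bring the highest weight back to $q^{-1}\lambda$ after conjugation.
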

\begin{proof}
Using equation (\ref{eqn:q_kashiwara_for_paths}), while writing $f = \varphi \circ \psi_q \circ x_{- \bf i}^{-1} \circ \varrho^K$:
\begin{align*}
  & \E\left( \varphi\left( \varrho_{\bf i}^{q, K}\left( W^{(\mu)}_u; 0 \leq u \leq t \right) \right) \ | \ \Tc_{w_0}^q W^{(\mu)}_t = \lambda \right)\\
= & \E\left( f\left( B_t(W^{(\mu)}) \right) \ | \ \Tc_{w_0}^q W^{(\mu)}_t = \lambda \right)
\end{align*}
As a consequence of theorem \ref{thm:q_canonical_measure} and then the integral formula from equation (\ref{eqn:canonical_measure_density}), we have:
\begin{align*}
  & \E\left( \varphi\left( \varrho_{\bf i}^{q, K}\left( W^{(\mu)}_u; 0 \leq u \leq t \right) \right) \ | \ \Tc_{w_0}^q W^{(\mu)}_t = \lambda \right)\\
= & \E\left( f\left( e^{\theta} \rho^{-2\rho^\vee} C_{q\mu }(\frac{\lambda - 4 q \log q \rho^\vee + q(\theta-w_0 \theta)}{q}) \rho^{2\rho^\vee} e^{-\theta} \right)  \right)\\
= & \frac{q^m}{\psi_{q, \mu}\left(\lambda + q (\theta-w_0\theta) \right)}
    \int_{\Bc\left( \frac{\lambda - 4 q \log q \rho^\vee + q(\theta-w_0 \theta)}{q}\right)}
    f( e^{\theta} \rho^{-2\rho^\vee} x \rho^{2\rho^\vee} e^{-\theta} ) e^{\langle q\mu, \gamma(x) - f_B(x)} \omega(dx)
\end{align*}
Making the change of variable $y = e^{\theta} \rho^{-2\rho^\vee} x \rho^{2\rho^\vee} e^{-\theta}$, which maps $\Bc\left( \frac{\lambda - 4 q \log q \rho^\vee + q(\theta-w_0 \theta)}{q}\right)$ to $\Bc\left( \frac{\lambda}{q}\right)$:
\begin{align*}
  & \E\left( \varphi\left( \varrho_{\bf i}^{q, K}\left( W^{(\mu)}_u; 0 \leq u \leq t \right) \right) \ | \ \Tc_{w_0}^q W^{(\mu)}_t = \lambda \right)\\
= & \frac{q^m}{\psi_{q, \mu}\left(\lambda + q (\theta-w_0\theta) \right)}
    \int_{\Bc\left( q^{-1} \lambda \right)}
    f( y ) e^{\langle q \mu, \gamma(y) - f_B(e^{-\theta} \rho^{2\rho^\vee} y \rho^{-2\rho^\vee} e^{\theta})} \omega(dy)
\end{align*}
And for:
$${\bf c} = \psi_q \circ x_{- \bf i}^{-1} \circ \varrho^K(y)$$
or equivalently
$$y = b_{q^{-1}\lambda}^K \circ x_{- \bf i}\left( e^{-\frac{c_1}{q}}, \dots, e^{-\frac{c_m}{q}}\right)$$
We explicit the previous integral in terms of the variable ${\bf c}$. Theorem \ref{thm:omega_invariance_on_crystal} leads to:
\begin{align}
\label{eqn:explicit_measure}
\omega(dy) & = q^m \prod_{k=1}^m d c_k = q^m {\bf dc}
\end{align}
Theorem \ref{thm:geom_weight_map} gives:
\begin{align}
\label{eqn:explicit_weight}
\gamma(y) & = \frac{1}{q}\left( \lambda - \sum_{k=1}^m c_k \alpha_{i_k}^\vee \right)
\end{align}
And as we will see:
\begin{align}
\label{eqn:explicit_superpotential}
f_B(e^{-\theta} \rho^{2\rho^\vee} y \rho^{-2\rho^\vee} e^{\theta}) & = f_{B, q, \lambda}^{K, {\bf i}}({\bf c})
\end{align}
Putting together equations (\ref{eqn:explicit_measure}), (\ref{eqn:explicit_weight}) and (\ref{eqn:explicit_superpotential}) yields the result. Now, we only need to prove the last equation. Recall that, by writing $v = x_{- \bf i}\left( e^{-\frac{c_1}{q}}, \dots, e^{-\frac{c_m}{q}}\right) \in C_{>0}^{w_0}$ and using proposition \ref{proposition:crystal_param_maps}:
\begin{align*}
y = & b_{q^{-1}\lambda}^K \circ x_{- \bf i}\left( e^{-\frac{c_1}{q}}, \dots, e^{-\frac{c_m}{q}}\right)\\
  = & b_{q^{-1}\lambda}^K (v)\\
  = & \eta^{w_0, e}\left( v \right) \bar{w}_0 v^T [v^T]_0^{-1} e^{q^{-1} \lambda}
\end{align*}
Therefore:
\begin{align*}
  & f_B(e^{-\theta} \rho^{2\rho^\vee} y \rho^{-2\rho^\vee} e^{\theta})\\
= & f_B(e^{-\theta} \rho^{2\rho^\vee} \eta^{w_0, e}\left( v \right) \bar{w}_0 v^T [v^T]_0^{-1} e^{q^{-1} \lambda} \rho^{-2\rho^\vee} e^{\theta})\\
= & \chi\left( e^{-\theta} \rho^{2\rho^\vee} \eta^{w_0, e}\left( v \right) \rho^{-2\rho^\vee} e^{\theta} \right)
  + \chi\left( e^{-\theta} \rho^{2\rho^\vee} e^{-q^{-1} \lambda} v^T [v^T]_0^{-1} e^{q^{-1} \lambda} \rho^{-2\rho^\vee} e^{\theta} \right) \\
= & \sum_{\alpha \in \Delta} \frac{2 q^2}{\langle \alpha, \alpha \rangle} \chi_\alpha \circ \eta^{w_0, e}(v)
  + \sum_{j=1}^m \frac{2 q^2}{\langle \alpha_{i_j}, \alpha_{i_j} \rangle} \exp\left( -\frac{\lambda-\left(c_j+\sum_{k=j+1}^m c_k \alpha_{i_j}(\alpha_{i_k}^\vee)\right)}{q} \right)
\end{align*}
\end{proof}

\section{Crystallization}
As the following proposition indicates, the superpotential degenerates to an indicator function of a polytope. It is the string polytope for the Langlands dual $G^\vee$, thereby recovering where the string parameters sit for the usual highest weight Kashiwara crystals $\Bfrak(\lambda)$.  

\begin{proposition}
For every ${\bf i} \in R(w_0)$, there is a cone $\Cc_{\bf i}^\vee$ such that:
$$ \forall {\bf c} \in \R,
   \lim_{q \rightarrow 0} e^{-f_{B, q, \lambda}^{K, {\bf i}}({\bf c})}
 = \mathds{1}_{\left\{ {\bf c} \in \Cc_{\bf i}^\vee\right\}} \mathds{1}_{\left\{ c_j \leq \lambda-\sum_{k=j+1}^m c_k \alpha_{i_j}(\alpha_{i_k}^\vee)\right\}}$$
It is the string cone for the group $G^\vee$ and it is given for any choice of ${\bf i'} \in R(w_0)$ by:
$$ \Cc_{\bf i}^\vee = \left\{ {\bf c} \in \R^m \ | \ [x_{\bf i'}^{-1} \circ \eta^{w_0, e} \circ x_{\bf-i}]_{trop}({\bf c}) \in \R_+^m \right\} $$
\end{proposition}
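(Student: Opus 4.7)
The plan is to split the deformed superpotential into the two explicit sums appearing in its definition, and analyze each group of terms separately as $q\to 0$. For the second sum
$$ \sum_{j=1}^m \frac{2q^2}{\langle\alpha_{i_j},\alpha_{i_j}\rangle}\exp\!\left(-q^{-1}\bigl[\alpha_{i_j}(\lambda)-c_j-\textstyle\sum_{k>j}c_k\alpha_{i_j}(\alpha_{i_k}^\vee)\bigr]\right), $$
I would use the elementary fact that $q^2 e^{-\beta/q}\to 0$ when $\beta\geq 0$ and diverges to $+\infty$ when $\beta<0$. Thus the corresponding factor in $e^{-f_{B,q,\lambda}^{K,\mathbf{i}}}$ converges to the indicator of $\{c_j\leq\alpha_{i_j}(\lambda)-\sum_{k>j}c_k\alpha_{i_j}(\alpha_{i_k}^\vee)\}$ for each $j$.

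For the first sum, the key input is that $\chi_\alpha\circ\eta^{w_0,e}\circ x_{\bf -i}$ is a rational substraction-free expression in $(c_1,\dots,c_m)$. To see this, pick any $\mathbf{i'}\in R(w_0)$ and decompose $\chi_\alpha\circ\eta^{w_0,e}=\chi_\alpha\circ x_{\bf i'}\circ(x_{\bf i'}^{-1}\circ\eta^{w_0,e})$; the inner map is rational substraction-free by theorem~\ref{thm:tropical_from_lusztig_to_kashiwara}, and $\chi_\alpha\circ x_{\bf i'}(t_1,\dots,t_m)=\sum_{k:\,i'_k=\alpha}t_k$ is a positive linear form, so composition preserves positivity. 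Applying proposition~\ref{proposition:analytic_tropicalization} yields $q^2\,\chi_\alpha\circ\eta^{w_0,e}\circ x_{\bf -i}(e^{-c_1/q},\dots,e^{-c_m/q})=q^2\,e^{-[\chi_\alpha\circ\eta^{w_0,e}\circ x_{\bf -i}]_{trop}(\mathbf{c})/q+O(1)}$, which by the same dichotomy goes to $0$ if the tropicalization is non-negative and to $+\infty$ otherwise. Hence the limiting factor is the indicator of $\{[\chi_\alpha\circ\eta^{w_0,e}\circ x_{\bf -i}]_{trop}(\mathbf{c})\geq 0\}$ for each simple root $\alpha$.

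Finally, I would identify the intersection over $\alpha$ of these conditions with the cone $\Cc_{\bf i}^\vee$. Writing $\mathbf{t}(\mathbf{c})=x_{\bf i'}^{-1}\circ\eta^{w_0,e}\circ x_{\bf -i}(\mathbf{c})$, the additivity of $\chi_\alpha$ and the substraction-free property of each $t_k$ give $[\chi_\alpha\circ\eta^{w_0,e}\circ x_{\bf -i}]_{trop}(\mathbf{c})=\min_{k:\,i'_k=\alpha}[t_k]_{trop}(\mathbf{c})$, so the condition $\geq 0$ for all $\alpha$ is equivalent to $[t_k]_{trop}(\mathbf{c})\geq 0$ for every $k$; this is precisely the defining inequality of $\Cc_{\bf i}^\vee$. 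The independence of the choice of $\mathbf{i'}$ comes for free because $x_{\bf i''}^{-1}\circ x_{\bf i'}$ is itself rational substraction-free with inverse of the same type, so its tropicalization is a piecewise-linear bijection preserving the positive orthant.

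The main obstacle is the bookkeeping needed to justify identifying $[\chi_\alpha\circ\eta^{w_0,e}\circ x_{\bf -i}]_{trop}$ with the minimum over indices $k$ with $i'_k=\alpha$, together with checking that no degenerate cancellation can occur in the limit on the boundary $[t_k]_{trop}(\mathbf{c})=0$ (which is harmless thanks to the prefactor $q^2$, but must be stated). Everything else reduces to proposition~\ref{proposition:analytic_tropicalization} and the already established substraction-free property of the relevant parametrization changes.
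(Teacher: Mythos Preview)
Your proposal is correct and follows essentially the same route as the paper: split the deformed superpotential into its two sums, handle the second via the elementary dichotomy for $q^2 e^{-\beta/q}$, and handle the first via the rational substraction-free property of $x_{\bf i'}^{-1}\circ\eta^{w_0,e}\circ x_{\bf -i}$ together with proposition~\ref{proposition:analytic_tropicalization}. The only cosmetic difference is that the paper reindexes the sum $\sum_{\alpha\in\Delta}\frac{2q^2}{\langle\alpha,\alpha\rangle}\chi_\alpha(\cdot)$ directly as $\sum_{j=1}^m\frac{2q^2}{\langle\alpha_{i'_j},\alpha_{i'_j}\rangle}f_j(\cdot)$ before tropicalizing, which sidesteps your $\min$ identification step entirely; your version reaches the same conclusion through the (immediate) identity $[\sum_{k:i'_k=\alpha}t_k]_{trop}=\min_{k:i'_k=\alpha}[t_k]_{trop}$.
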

Notice the appearance of exactly the same cutting condition of the string cone $\Cc_{\bf i}^\vee$ as the condition given in \cite{bib:Littelmann} page 5, giving the string polytope associated to the highest weight $\lambda$.
\begin{proof}
Looking at proposition \ref{proposition:q_string_formula}, the deformed superpotential $e^{-f_{B, q, \lambda}^{K, {\bf i}}({\bf c})}$ is the product of two terms. Each one of them leads to an indicator function. The easier one to deal with is:
\begin{align*}
& \exp\left( - \sum_{j=1}^m \frac{2 q^2}{\langle \alpha_{i_j}, \alpha_{i_j} \rangle} \exp\left( -\frac{\lambda-\left(c_j+\sum_{k=j+1}^m c_k \alpha_{i_j}(\alpha_{i_k}^\vee)\right)}{q} \right) \right)\\
& \stackrel{q \rightarrow 0}{\rightarrow} \mathds{1}_{\left\{ c_j \leq \lambda-\sum_{k=j+1}^m c_k \alpha_{i_j}(\alpha_{i_k}^\vee)\right\}}
\end{align*}
For the other term:
$$ \exp\left( - \sum_{\alpha \in \Delta}^m \frac{2 q^2}{\langle \alpha, \alpha \rangle} \chi_\alpha \circ \eta^{w_0, e} \circ x_{\bf-i}\left( e^{-q^{-1}c_1}, \dots, e^{-q^{-1}c_m} \right) \right) $$
Start by choosing a reduced word ${\bf i'} \in R(w_0)$, independently of ${\bf i} \in R(w_0)$. This choice will not play any role. Let $f$ be the rational substraction free function given by (theorem \ref{thm:tropical_from_lusztig_to_kashiwara}):
$$ f = x_{\bf i'}^{-1} \circ \eta^{w_0, e} \circ x_{\bf-i}: \R_{>0}^m \rightarrow \R_{>0}^m $$
Component-wise, we write $f = \left( f_1, \dots, f_m \right)$. Then, after organizing that term and using the analytic  tropicalization procedure given in proposition (\ref{proposition:analytic_tropicalization}):
\begin{align*}
  & \exp\left( - \sum_{\alpha \in \Delta}^m \frac{2 q^2}{\langle \alpha, \alpha \rangle} \chi_\alpha \circ \eta^{w_0, e} \circ x_{\bf-i}\left( e^{-q^{-1}c_1}, \dots, e^{-q^{-1}c_m} \right) \right)\\
& = \exp\left( - \sum_{j=1}^m \frac{2 q^2}{\langle \alpha_{i_j'}, \alpha_{i_j'} \rangle} f_j\left( e^{-q^{-1}c_1}, \dots, e^{-q^{-1}c_m} \right) \right)\\
& = \exp\left( - \sum_{j=1}^m \frac{2 q^2}{\langle \alpha_{i_j'}, \alpha_{i_j'} \rangle} \exp\left( -\frac{-q\log f_j\left( e^{-q^{-1}c_1}, \dots, e^{-q^{-1}c_m} \right)}{q} \right) \right)\\
& = \exp\left( - \sum_{j=1}^m \frac{2 q^2}{\langle \alpha_{i_j'}, \alpha_{i_j'} \rangle} \exp\left( -\frac{[f_j]_{trop}({\bf c}) + \Oc(q)}{q} \right) \right)\\
& \stackrel{q \rightarrow 0}{\rightarrow} \mathds{1}_{\left\{ {\bf c} \in \R^m \ | \ \forall 1 \leq j\leq m, [f_j]_{trop}({\bf c}) \geq 0 \right\}}
\end{align*}
Finally:
$$ \left\{ {\bf c} \in \R^m \ | \ \forall 1 \leq j\leq m, f_j({\bf c}) \geq 0 \right\}
 = \left\{ {\bf c} \in \R^m \ | \ [x_{\bf i'}^{-1} \circ \eta^{w_0, e} \circ x_{\bf-i}]_{trop}({\bf c}) \in \R_+^m \right\}$$
is the set of ${\bf i}$-string parameters that are mapped to non-negative ${\bf i'}$-Lusztig parameters. It has to be exactly the string cone $\Cc_{{\bf i}}^\vee$ thanks to theorem \ref{thm:tropical_from_lusztig_to_kashiwara}. Clearly, changing ${\bf i'}$ tantamounts to changing charts for the Lusztig parameters, and these charts are bijections of the positive orthant $\R_+^m$.
\end{proof}

As a consequence, the geometric Duistermaat-Heckman measure degenerates to the classical one, and its Laplace transform degenerates to the 'asymptotic' Schur functions (see \cite{bib:BBO2} theorem 5.5):
$$ \forall (\lambda, \mu) \in \afrak^2, 
   h_\mu(\lambda) := \frac{\sum_{w \in W} (-1)^{\ell(w)} e^{\langle \mu, w \lambda \rangle} }
                          {\prod_{\beta \in \Phi^+} \langle \beta^\vee, \mu \rangle }$$
\begin{proposition}
For $\lambda \in \afrak$:
$$\lim_{q \rightarrow 0} \psi_{q, \mu}\left( \lambda \right) = h_\mu(\lambda)$$
where:
$$h_\mu\left(\lambda\right) = \int_{\Cc_{\bf i}^\vee} {\bf dc} \varphi({\bf c}) \exp\left( \langle \mu, \lambda - \sum_{k=1}^m c_k \alpha_{i_k}^\vee \rangle \right) \mathds{1}_{\left\{ c_j \leq \lambda-\sum_{k=j+1}^m c_k \alpha_{i_j}(\alpha_{i_k}^\vee)\right\}}$$
Moreover, for $\mu \in C$, $h_\mu$ is a harmonic function on $C$ the Weyl chamber with Dirichlet boundary conditions and growth condition:
$$ \lim_{\lambda \rightarrow \infty, \lambda \in C} h_\mu(\lambda) e^{-\langle \mu, \lambda \rangle} =
   \frac{1}{\prod_{\beta \in \Phi^+} \langle \beta^\vee, \mu \rangle}$$
\end{proposition}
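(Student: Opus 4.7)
The plan splits into three threads: deriving the integral formula as the $q \to 0$ limit of $\psi_{q,\mu}$, establishing harmonicity with Dirichlet boundary values, and pinning down the asymptotic normalisation.

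First I would apply proposition \ref{proposition:q_string_formula} with $\varphi \equiv 1$, which, since the left-hand conditional expectation equals $1$, yields
$$\psi_{q,\mu}\bigl(\lambda + q(w_0\theta - \theta)\bigr) = \int_{\R^m} \exp\Bigl(\langle \mu,\, \lambda - \sum_k c_k\,\alpha_{i_k}^\vee \rangle - f_{B,q,\lambda}^{K,{\bf i}}({\bf c})\Bigr)\, d{\bf c}.$$
The preceding proposition already computes the pointwise limit of $e^{-f_{B,q,\lambda}^{K,{\bf i}}({\bf c})}$ as the product of two indicator functions that cut out exactly the truncated string cone appearing in $h_\mu$. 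The main technical work is to justify passing to the limit under the integral sign. For this I would establish a $q$-uniform lower bound on $f_{B,q,\lambda}^{K,{\bf i}}$ of the type provided by theorem \ref{thm:superpotential_estimate}, controlling both the explicit Gaussian-type terms $q^2 e^{-q^{-1}(\lambda-c_j-\dots)/q}$ and the twisted character term via the Laurent-positivity structure, and then apply dominated convergence. Continuity of $\psi_{q,\mu}$ in $\lambda$ then gives $\lim_{q \to 0}\psi_{q,\mu}(\lambda) = h_\mu(\lambda)$ with the announced integral representation.

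Next, for harmonicity on $C$ when $\mu \in C$, I would pass to the limit $q \to 0$ in the $q$-deformed Toda eigenfunction equation (\ref{eqn:q_deformed_toda}). For $\lambda \in C$, every simple $\alpha$ satisfies $\alpha(\lambda) > 0$, so each potential term $q^2 e^{-q^{-1}\alpha(\lambda)} \to 0$, and the equation degenerates to $\tfrac{1}{2}\Delta h_\mu = \tfrac{1}{2}\langle\mu,\mu\rangle\, h_\mu$; the derivatives can be passed through thanks to uniform convergence on compact subsets of $C$, obtained from the same domination argument applied locally in $\lambda$. For the Dirichlet condition, as $\lambda$ approaches the wall $\alpha_{i_1}(\lambda) = 0^+$ the truncation inequality $c_1 \le \alpha_{i_1}(\lambda)$ forces the $c_1$-slice of the integration domain to shrink to zero Lebesgue measure, so $h_\mu(\lambda) \to 0$; walls associated to other simple roots are handled by choosing a reduced word ${\bf i}$ starting with the relevant index, which is legitimate since the proposition is independent of ${\bf i}$.

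Finally, the growth condition I would compute directly from the integral by the monomial change of variables from string to Lusztig coordinates. The tropical limit of the identity in lemma \ref{lemma:change_of_coordinates_UC} (equivalent to the two weight-map expressions in theorem \ref{thm:geom_weight_map}) gives $\sum_k c_k\,\alpha_{i_k}^\vee = \sum_k t_k\,\beta_k^\vee$, and maps $\Cc^\vee_{\bf i}$ bijectively onto $\R_+^m$ with unit Jacobian (the transformation being upper-triangular with unit diagonal in logarithmic coordinates). Letting $\lambda \to \infty$ inside $C$ makes the truncation constraints vacuous by dominated convergence, and
$$\lim_{\lambda\to\infty,\,\lambda\in C} h_\mu(\lambda)\, e^{-\langle\mu,\lambda\rangle} = \int_{\R_+^m} e^{-\sum_k \langle\beta_k^\vee,\mu\rangle t_k}\, d{\bf t} = \prod_{\beta\in\Phi^+}\frac{1}{\langle\beta^\vee,\mu\rangle},$$
since $(\beta_k)$ is a permutation of $\Phi^+$. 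The identification with the closed form $h_\mu(\lambda) = \sum_w (-1)^{\ell(w)} e^{\langle\mu,w\lambda\rangle}/\prod_{\beta \in \Phi^+}\langle\beta^\vee,\mu\rangle$ then follows from the uniqueness argument of proposition \ref{proposition:whittaker_characterization_boc} adapted to the Laplacian in place of the Toda operator: any two functions harmonic on $C$ with Dirichlet boundary and the same exponential asymptotics must coincide. The main obstacle is the domination argument in the first step, and secondarily the legitimacy of exchanging the $\lambda \to \infty$ and $q \to 0$ limits, which is neatly avoided by computing the asymptotic directly from the integral formula rather than from the $q > 0$ normalisation $q^m b(q\mu)$ (although the latter gives the same answer via $\Gamma(x) \sim 1/x$ as $x \to 0^+$ and serves as a useful consistency check).
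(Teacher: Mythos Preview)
Your proposal is correct and tracks the paper's argument closely for the first two parts: both obtain the integral formula for $\psi_{q,\mu}$ from proposition~\ref{proposition:q_string_formula} with $\varphi\equiv 1$ and pass to the limit via the preceding proposition, and both derive harmonicity on $C$ by letting $q\to 0$ in the deformed Toda equation~(\ref{eqn:q_deformed_toda}). Your Dirichlet argument via shrinking of the $c_1$-slice is more explicit than the paper's one-line remark that the potential blows up outside $C$.

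Where you genuinely diverge is the growth condition. The paper argues that $\psi_{q,\mu}(\lambda)e^{-\langle\mu,\lambda\rangle}$ is monotone along rays, invokes Dini's theorem to interchange the $q\to 0$ and $\lambda\to\infty$ limits, and then reads off $\lim_{q\to 0} q^m b(q\mu)=\prod_{\beta}\langle\beta^\vee,\mu\rangle^{-1}$ from $q\Gamma(qz)\to 1/z$. Your route is to compute the limit directly from the $q=0$ integral by changing to Lusztig coordinates, which sidesteps the limit exchange entirely and is arguably more elementary. One caution: the map you need is the tropical $\eta^{e,w_0}$, which takes $\R_+^m$ bijectively onto $\Cc_{\bf i}^\vee$ by definition of the string cone and preserves Lebesgue measure by the tropical version of theorem~\ref{thm:omega_invariance_on_crystal}; the weight identity $\sum c_k\alpha_{i_k}^\vee=\sum t_k\beta_k^\vee$ then comes from theorem~\ref{thm:geom_weight_map}. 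Your citation of lemma~\ref{lemma:change_of_coordinates_UC} for a globally upper-triangular linear map with unit Jacobian conflates this with a different monomial relation (between string parameters for ${\bf i}$ and Lusztig parameters for ${\bf i}^{\mathrm{op}}$), which does not obviously send the string cone to $\R_+^m$. With that attribution corrected, your computation goes through and yields the same constant as the paper's limit of $q^m b(q\mu)$.
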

\begin{proof}
The function $\psi_{q, \mu}\left( \lambda \right)$ plays the role of normalization constant in proposition \ref{proposition:q_string_formula}, hence:
$$\psi_{q, \mu}\left(\lambda\right) = \int_{\R^m} {\bf dc} \varphi({\bf c}) \exp\left( \langle \mu, \lambda -q(w_0 \theta - \theta) - \sum_{k=1}^m c_k \alpha_{i_k}^\vee \rangle- f_{B,q,\lambda-q(w_0 \theta - \theta)}^{K, {\bf i}}({\bf c})\right)$$
The previous proposition yields the convergence of $\psi_{q, \mu}$ to $h_\mu$.\\
Now consider $\mu \in C$. In order to see it is a harmonic function on the Weyl chamber with Dirichlet boundary conditions, one can look at equation (\ref{eqn:q_deformed_toda}) and notice that the potential goes to zero inside the Weyl chamber and $+\infty$ outside.\\
For the growth condition, since $\psi_{q, \mu}(\lambda)e^{-\langle \mu, \lambda \rangle}$ is monotonically increasing for any sequence $\lambda_n \in C, \lambda_n \rightarrow \infty$ along a ray, the convergence to $q^m b(q\mu)$ is uniform in $q$ by Dini's theorem. Therefore, we can obtain the limiting behaviour of $h_\mu(\lambda) e^{-\langle \mu, \lambda \rangle}$ by inspecting:
$$ \lim_{q \rightarrow 0} q^m b(q\mu) = \lim_{q \rightarrow 0} \prod_{\beta \in \Phi^+ } q \Gamma(q \langle \beta^\vee, \mu \rangle)$$
Recalling that for all $z>0$, $\lim_{q\rightarrow 0} q \Gamma(qz) = \frac{1}{z}$ finishes the proof.
\end{proof}

Also, now we can recover the following results, already known to \cite{bib:BBO} and \cite{bib:BBO2}, as degenerations of theorems \ref{thm:q_highest_weight_is_markov} and \ref{thm:q_canonical_measure}.
\begin{thm}
For $W^{(\mu)}$ a Brownian motion in $\afrak$ with drift $\mu$, $\Pc_{w_0}( W^{(\mu)})$ is Brownian motion conditioned to stay in the Weyl chamber by a Doob transform. It has the infinitesimal generator:
$$ \half \Delta + \langle \log \nabla h_\mu, \nabla \rangle$$
And for every $\varphi$ bounded measurable function on $\R^m$ and $t>0$:
\begin{align}
\label{eqn:zero_canonical_measure}
  & \E\left( \varphi\left( \varrho_{\bf i}^{q=0, K}\left( W^{(\mu)}_u; 0 \leq u \leq t \right) \right) \ | \ \Pc_{w_0}( W^{(\mu)})_t = \lambda \right)\\
= &  \frac{1}{h_\mu\left(\lambda\right)}
     \int_{\Cc_{\bf i}^\vee} {\bf dc} \varphi({\bf c}) \exp\left( \langle \mu, \lambda - \sum_{k=1}^m c_k \alpha_{i_k}^\vee \rangle \right) \mathds{1}_{\{ c_j \leq \lambda-\sum_{k=j+1}^m c_k \alpha_{i_j}(\alpha_{i_k}^\vee)\}}
\end{align}
\end{thm}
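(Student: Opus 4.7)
The plan is to obtain this theorem as the $q \to 0$ degeneration of Theorems \ref{thm:q_highest_weight_is_markov} and \ref{thm:q_canonical_measure}, using the convergences $q\Tc_{w_0}q^{-1} \to \Pc_{w_0}$ and $\psi_{q,\mu} \to h_\mu$ that have already been established in this chapter. The two parts of the theorem (the Markov property with its generator, and the conditional law of the string parameters) will be handled separately but in parallel spirit.

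For the first part, I would start from the rescaled highest weight process $\Lambda_t^q = q(\theta - w_0 \theta) + \Tc_{w_0}^q(W^{(\mu)})_t$, which by Theorem \ref{thm:q_highest_weight_is_markov} is a diffusion with generator $\Lc_q = \half \Delta + \langle \nabla \log \psi_{q,\mu}, \nabla \rangle$. As $q \to 0$, the shift $q(\theta - w_0\theta)$ vanishes and $\Tc_{w_0}^q \to \Pc_{w_0}$ pointwise on continuous paths, so $\Lambda_t^q \to \Pc_{w_0}(W^{(\mu)})_t$. The just-proved proposition gives $\psi_{q,\mu} \to h_\mu$, and since the convergence holds at the level of the eigenfunction equation \eqref{eqn:q_deformed_toda} (whose potential degenerates to the indicator of $C$), the drifts $\nabla \log \psi_{q,\mu}$ converge to $\nabla \log h_\mu$ on $C$. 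Passing the Markov property through this limit then identifies $\Pc_{w_0}(W^{(\mu)})$ as a diffusion with the announced generator, which is precisely the Doob $h$-transform of Brownian motion with drift $\mu$ by the harmonic function $h_\mu$, i.e.\ Brownian motion conditioned to stay in $C$.

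For the second part concerning the string parameter distribution, I would apply Proposition \ref{proposition:q_string_formula} with the test function $\varphi$ and pass to the limit $q \to 0$ under the integral sign. The integrand splits into three factors: the normalization $\psi_{q,\mu}(\lambda + q(w_0\theta - \theta))^{-1}$ which converges to $h_\mu(\lambda)^{-1}$; the exponential weight $\exp(\langle \mu, \lambda - \sum c_k \alpha_{i_k}^\vee\rangle)$ which is already $q$-independent; and the deformed superpotential factor $\exp(-f_{B,q,\lambda}^{K,{\bf i}}({\bf c}))$ which by the preceding proposition converges to the indicator of the string polytope $\Cc_{\bf i}^\vee \cap \{c_j \leq \lambda - \sum_{k>j} c_k \alpha_{i_j}(\alpha_{i_k}^\vee)\}$. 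This yields the stated formula \eqref{eqn:zero_canonical_measure}.

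The main obstacle is the justification of these limits: interchanging $\lim_{q \to 0}$ with the integral, and transferring the Markov property through the convergence of generators. For the integrand, one can dominate $e^{-f_{B,q,\lambda}^{K,{\bf i}}({\bf c})}$ uniformly in $q$ by leveraging the estimate of Theorem \ref{thm:superpotential_estimate}, which guarantees integrability on the whole of $\R^m$ and allows dominated convergence; the positivity and monotonicity properties of the superpotential make this estimate degenerate well as $q \to 0$. For the Markov property, the cleanest route is to check convergence of finite-dimensional distributions together with tightness of the family $(\Lambda^q)$ on path space, which follows from the uniform convergence of drifts on compact subsets of $C$ and standard diffusion estimates; alternatively, one can argue directly at the level of martingale problems, using that $\Lc_q f \to \Lc_0 f$ uniformly on compacts for $f$ smooth and compactly supported in $C$. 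Neither step requires new ideas beyond what has already been developed, so the proof is essentially a careful degeneration argument.
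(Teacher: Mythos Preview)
Your approach is exactly the paper's: the proof there reads simply ``Immediate,'' meaning the result is obtained as the $q\to 0$ degeneration of Theorems \ref{thm:q_highest_weight_is_markov} and \ref{thm:q_canonical_measure} via the already-established convergences $\Tc_{w_0}^q \to \Pc_{w_0}$, $\psi_{q,\mu} \to h_\mu$, and $e^{-f_{B,q,\lambda}^{K,{\bf i}}} \to$ indicator of the string polytope. Your write-up is in fact more careful than the paper's about the analytic justifications (dominated convergence, passage of the Markov property through the limit), but the strategy is identical.
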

\begin{proof}
 Immediate.
\end{proof}

\appendix
\chapter{Reminder of geometric crystal's parametrizations}
\label{appendix:parametrizations_reminder}

\begin{figure}[htp!]
\centering
\begin{tikzpicture}[baseline=(current bounding box.center)]
\matrix(m)[matrix of math nodes, row sep=5em, column sep=6em, text height=3ex, text depth=1ex, scale=2]
{
                    & x \in \Bc(\lambda) &                    \\
 z \in U^{w_0}_{>0} & v \in C^{w_0}_{>0} & u \in U^{w_0}_{>0} \\
};
\path[->, font=\scriptsize] (m-1-2) edge node[above]{$\varrho^L$} (m-2-1);
\path[->, font=\scriptsize] (m-1-2) edge node[right]{$\varrho^K$} (m-2-2);
\path[->, font=\scriptsize] (m-1-2) edge node[above]{$\varrho^T$} (m-2-3);

\draw [->] (m-2-1) to [bend left=30]  node[above, swap]{$b^L_\lambda$} (m-1-2);
\draw [->] (m-2-2) to [bend left=45]  node[auto, swap]{$b^K_\lambda$} (m-1-2);
\draw [->] (m-2-3) to [bend right=30] node[above, swap]{$b^T_\lambda$} (m-1-2);

\draw [->] (m-2-1) to [bend left=0 ]  node[above, swap]{$\eta^{e, w_0}$} (m-2-2);
\draw [->] (m-2-2) to [bend left=30]  node[auto , swap]{$\eta^{w_0, e}$} (m-2-1);
\end{tikzpicture}
\caption{Reminder: Charts for the highest weight geometric crystal $\Bc(\lambda)$}
\label{fig:reminder_geom_parametrizations}
\end{figure}
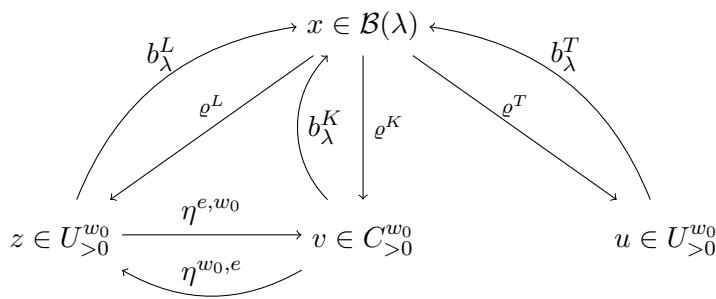

\begin{figure}[htp!]
\centering
\begin{tikzpicture}[baseline=(current bounding box.center)]
\matrix(m)[matrix of math nodes, row sep=3em, column sep=5em, text height=3ex, text depth=1ex, scale=1.2]
{ 
U_{>0}^{w_0} & \left( \R_{>0} \right)^m & \langle \pi \rangle    & \left( \R_{>0} \right)^m & C_{>0}^{w_0}\\
             &                          & \Bc(\lambda)\\
};
\path[->, font=\scriptsize] (m-1-3) edge node[above, scale=1.5]{$\varrho_{\bf i}^K$} (m-1-4);
\path[->, font=\scriptsize] (m-1-4) edge node[above, scale=1.5]{$x_{-\bf i}$} (m-1-5);
\path[->, font=\scriptsize] (m-1-3) edge node[above, scale=1.5]{$\varrho_{\bf i}^L$} (m-1-2);
\path[->, font=\scriptsize] (m-1-2) edge node[above, scale=1.5]{$x_{ \bf i}$} (m-1-1);

\path[->, font=\scriptsize] (m-1-3) edge node[left,  scale=1.5]{$p$} (m-2-3);
\draw[->] (m-2-3) to [bend right=30] node[right]{$p^{-1}$} (m-1-3);

\path[->, font=\scriptsize] (m-1-1) edge node[below, scale=1.5]{$b^L_\lambda$} (m-2-3);
\draw[->] (m-2-3) to [bend left=30] node[below]{$\varrho^L$} (m-1-1);
\path[->, font=\scriptsize] (m-1-5) edge node[below, scale=1.5]{$b^K_\lambda$} (m-2-3);
\draw[->] (m-2-3) to [bend right=30] node[below]{$\varrho^K$} (m-1-5);
\end{tikzpicture} 
\caption{Reminder: Parametrizations for a connected crystal $\langle \pi \rangle$, with $\pi \in C_0([0, T], \afrak)$ and $\lambda = \Tc_{w_0} \pi(T)$}
\label{fig:reminder_parametrizations_diagram}
\end{figure}

\chapter{Kostant's Whittaker model}
\label{appendix:whittaker_model}
For further details, we refer to the first section in \cite{bib:Sevostyanov00} as it gives a very good summary of Kostant's work on the Whittaker model and Whittaker modules. Here, we will mainly be interested in the image of the Casimir operator in the Whittaker model, seen as a right invariant differential operator on the lower Borel subgroup $B$.

\paragraph{Center:} 
$$\Zc(\mathfrak{g}) := \left\{ x \in \Uc(\gfrak) | \forall y \in \Uc(\gfrak) [x, y] = 0 \right\}$$
The center $\Zc(\gfrak)$ forms a commutative algebra. It is at the heart of both classical and quantum integrable systems. 

In Hamiltonian mechanics the Lie bracket is interpreted as a Poisson bracket and the center is an algebra of Poisson commuting functions. These functions are the observables that are integrals of motion. They are called the invariants. In quantum mechanics, the story is a bit different. Observables are differential operators acting on a Hilbert space of wave functions. Commuting observables give simultaniously measurable observables, which is a very desired property. The center, again called the set of invariants, is a commutative algebra of differential operators, and the Lie bracket is simply the commutator $[A, B] = AB - BA$. Having this in mind, it will be no surprise that an integrable quantum system will arise in our work.

The integrability property means that we have a maximal number of independent invariants. Chevalley's theorem tells us that the maximal number of independent central elements is $n$, the rank of Lie algebra.

\begin{thm}[Chevalley]
 $\Zc(\mathfrak{g})$ is a polynomial algebra with $n$ independent generators $I_1, I_2, \dots, I_n$. 
$\Zc(\mathfrak{g}) = \C[I_1, I_2, \dots, I_n]$
\end{thm}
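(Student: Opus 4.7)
The plan is to prove Chevalley's theorem via the Harish-Chandra isomorphism, which identifies $\Zc(\gfrak)$ with a ring of Weyl-group invariants on the Cartan subalgebra, combined with the Chevalley--Shephard--Todd theorem for finite reflection groups.

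First, I would construct the Harish-Chandra homomorphism. Using the PBW theorem applied to the triangular decomposition $\gfrak = \nfrak \oplus \hfrak \oplus \ufrak$, one obtains a vector space direct sum
\[
\Uc(\gfrak) = \Uc(\hfrak) \oplus \bigl(\nfrak \Uc(\gfrak) + \Uc(\gfrak)\ufrak\bigr).
\]
Let $p: \Uc(\gfrak) \to \Uc(\hfrak)$ be the projection along this decomposition. The crucial observation is that if $z \in \Zc(\gfrak)$, then $z$ preserves weight spaces, and a direct weight argument shows $p$ restricts to an \emph{algebra} homomorphism $\Zc(\gfrak) \to \Uc(\hfrak) \cong S(\hfrak)$ (the Cartan being abelian). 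One then twists by the half-sum of positive roots $\rho$ to define the Harish-Chandra homomorphism
\[
\gamma: \Zc(\gfrak) \to S(\hfrak), \qquad \gamma(z) = \tau_{-\rho}\bigl(p(z)\bigr),
\]
where $\tau_{-\rho}$ is the automorphism of $S(\hfrak)$ induced by the shift $h \mapsto h - \rho(h)$.

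Next, I would show $\gamma$ is injective and lands inside $S(\hfrak)^W$. For injectivity, given a nonzero $z \in \ker \gamma$, I would act by $z$ on a highest weight module $V(\lambda)$ and compute the eigenvalue of $z$ on the highest weight vector: it equals $\gamma(z)(\lambda)$ evaluated appropriately. Since this vanishes for all $\lambda$, we get a contradiction with $z \neq 0$ by considering Verma modules, where the action is faithful enough. For $W$-invariance of the image, one uses the fact that the central characters of Verma modules $M(\lambda)$ and $M(w \cdot \lambda)$ (for the dot action $w \cdot \lambda = w(\lambda + \rho) - \rho$) coincide, because linked Verma modules have composition factors sharing the same central character; the $\rho$-shift in the definition of $\gamma$ is precisely what converts the dot action into the ordinary $W$-action on $\hfrak^*$.

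The surjectivity onto $S(\hfrak)^W$ is the main obstacle and requires more work. One strategy is to show that the associated graded map $\mathrm{gr}\,\gamma$, which takes values in $S(\gfrak)^{G}$ restricted to $\hfrak$, coincides up to leading terms with the Chevalley restriction map $S(\gfrak)^G \to S(\hfrak)^W$, known to be an isomorphism. One then lifts generators of $S(\hfrak)^W$ to $\Zc(\gfrak)$ by a degree-by-degree induction, using the injectivity already established.

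Finally, once $\Zc(\gfrak) \cong S(\hfrak)^W$ is established, Chevalley's theorem reduces to the Chevalley--Shephard--Todd theorem: the Weyl group $W$ acts on $\hfrak \cong \C^n$ as a finite complex reflection group, and for any such group, the invariant ring is a polynomial algebra on $n = \dim \hfrak$ algebraically independent homogeneous generators. The degrees of these generators are the exponents of $W$ plus one, and their number equals the rank. This gives the desired description $\Zc(\gfrak) = \C[I_1, \dots, I_n]$. The hard step in this whole program is the surjectivity of $\gamma$ onto the $W$-invariants; everything else is either PBW bookkeeping or appeal to the reflection group theorem.
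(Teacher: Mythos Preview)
The paper does not prove this theorem; it is simply stated as a classical result in the appendix on Kostant's Whittaker model, with no proof environment following it. Your outline via the Harish-Chandra isomorphism and the Chevalley--Shephard--Todd theorem is the standard textbook route and is correct as sketched.
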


\paragraph{Casimir element:} 
The only element of order 2 in the center is the Casimir element $C$. If $(X_1, \dots, X_n)$ is an orthonormal basis of $\hfrak$ with respect to the Killing form, then:
\begin{align}
\label{lbl:casimir}
\begin{array}{ll}
C & := \half \sum_{i=1}^n X_i^2 +  \half \sum_{ \beta \in \Phi^+}\left( f_\beta e_\beta + e_\beta f_\beta \right)\\
& = \half \sum_{i=1}^n X_i^2 +  \sum_{ \beta \in \Phi^+} f_\beta e_\beta + \rho^\vee\\
\end{array}
\end{align}
The second expression uses the Weyl co-vector $\rho^\vee$, which is the vector in $\afrak$ such that $\alpha\left( \rho^\vee \right) = 1$. $\rho^\vee$ is also the half sum of all positive coroots. In a way, $C$ is the simplest and most important element. In representation theory, it is used in order to prove the reducibility of certain classes of representations. In analysis, because it is of order 2, it can be considered as a heat kernel, when elliptic.

\paragraph{Reduction to $\Uc(\bfrak)$:} 
Let $\chi: U \longrightarrow \C$ be the standard (additive) character on the unipotent elements in $U$:
$$\forall \alpha \in \Delta, \chi\left( e^{t e_\alpha} \right) = t$$
Define the space of functions:
$$ \Cc^\infty_\chi(G) := \left\{ f \in \Cc^\infty\left(G\right) \ | \ \forall u \in U, f(g u) = f(g) e^{\chi(u)} \right\}$$
Because the subset $B B^+$, the cell where a Gauss decomposition holds, is dense in $G$, any function in $\Cc^\infty_\chi$ is entirely determined by its restriction to the lower Borel subgroup $B$. Moreover, differential operators in $\Uc\left( \mathfrak{g} \right)$ are reduced to elements of $\Uc\left( \bfrak \right)$ when acting on such functions. By simple differentiation and restriction to $\Cc^\infty(B)$, $C$ reduces to:
\begin{align}
\label{lbl:casimir_chi}
C_\chi & = \sum_{i=1}^n X_i^2 + \sum_{\alpha \in \Delta} f_\alpha + \rho^\vee
\end{align}

\paragraph{Whittaker model $W(\bfrak)$:} 
The algebraic construction by Kostant tantamounts to reducing central elements to elements in $\Uc(\bfrak)$. We reproduce the presentation of \cite{bib:Sevostyanov00} keeping the same notations. $\chi$ extends to the $\Uc(\nfrak^+)$ and gives a direct sum:
$$ \Uc(\nfrak^+) = \C \mathds{1} \oplus \ker \chi$$
Since $\Uc(\gfrak) = \Uc(\bfrak)\otimes \Uc(\nfrak^+)$ because of the PBW basis theorem, we have:
$$ \Uc(\gfrak) = \Uc(\bfrak) \oplus I_\chi$$
where $I_\chi = \Uc(\gfrak) \ker \chi$ is the left ideal generated by $\ker \chi$. Now let $\rho_\chi$ define the 
canonical projection:
$$ \rho_\chi: \Uc(\gfrak) \longrightarrow \Uc(\bfrak)$$

It defines the Whittaker model for the center thanks to:
\begin{thm}[Kostant, \cite{bib:Kostant78}, theorem 2.4.2]
Let $W(\bfrak) = \rho_\chi\left( \Zc(\gfrak) \right)$. The map:
$$ \rho_\chi: \Uc(\gfrak) \longrightarrow W(\bfrak)$$
is an isomorphism.
\end{thm}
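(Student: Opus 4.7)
Proof proposal. First, I would point out what seems to be a typo in the statement: as written, $\rho_\chi$ cannot be an isomorphism from all of $\Uc(\gfrak)$ onto $W(\bfrak)$, since $W(\bfrak)$ is defined as the image of the (much smaller) center $\Zc(\gfrak)$. The intended statement, due to Kostant, is that the restriction
\[
\rho_\chi\big|_{\Zc(\gfrak)}:\Zc(\gfrak)\longrightarrow W(\bfrak)
\]
is an isomorphism of algebras. Surjectivity is tautological from the definition of $W(\bfrak)$, so the real content is that this restriction is an injective algebra homomorphism.

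The plan is to realize $\rho_\chi$ via the action of $\Zc(\gfrak)$ on a specific Whittaker module and then prove faithfulness of that action. Concretely, let $\C_\chi$ be the one-dimensional $\Uc(\nfrak^+)$-module on which $\nfrak^+$ acts by $\chi$, and form the induced module
\[
Q_\chi := \Uc(\gfrak)\otimes_{\Uc(\nfrak^+)}\C_\chi.
\]
By the PBW theorem combined with the decomposition $\Uc(\gfrak)=\Uc(\bfrak)\oplus I_\chi$ recalled just before the statement, the map $b\otimes 1\mapsto b$ identifies $Q_\chi$ with $\Uc(\bfrak)$ as a left $\Uc(\bfrak)$-module, and for any $u\in\Uc(\gfrak)$ one has $u\cdot(1\otimes 1)=\rho_\chi(u)\otimes 1$. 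Once this is set up, the algebra-homomorphism property of $\rho_\chi$ on $\Zc(\gfrak)$ follows formally: for $z_1,z_2\in\Zc(\gfrak)$, centrality lets me commute $z_1$ past $\rho_\chi(z_2)\in\Uc(\bfrak)$ acting on $1\otimes 1$, giving $\rho_\chi(z_1z_2)=\rho_\chi(z_1)\rho_\chi(z_2)$ (and the same computation shows $W(\bfrak)$ is commutative).

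The main obstacle is injectivity, which amounts to showing that $\Zc(\gfrak)$ acts \emph{faithfully} on the cyclic vector $1\otimes 1\in Q_\chi$, i.e.\ that $Q_\chi$ is a free $\Zc(\gfrak)$-module of rank one. I would attack this with the standard filtration argument: equip $\Uc(\gfrak)$ with the PBW filtration and pass to the associated graded $\mathrm{gr}\,\Uc(\gfrak)=S(\gfrak)=\C[\gfrak^*]$. Under this degeneration, $\rho_\chi$ becomes the restriction map from polynomials on $\gfrak^*$ to polynomials on the affine subspace $\chi+\bfrak^*\subset\gfrak^*$ (identifying $\bfrak^*\cong(\gfrak/\nfrak^+)^*$), and the image of $\mathrm{gr}\,\Zc(\gfrak)=S(\gfrak)^G$ is the algebra of restrictions of $G$-invariant polynomials to this affine slice. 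Kostant's theorem on the principal slice states that $\chi+\bfrak^*$ is transverse to the regular coadjoint orbits and meets each one in exactly one point, so the restriction map $S(\gfrak)^G\to\C[\chi+\bfrak^*]$ is injective. Lifting this through the filtration yields injectivity of $\rho_\chi$ on $\Zc(\gfrak)$, completing the proof.

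In practice, for the purposes of this appendix — whose only real use further down is the explicit formula $C_\chi$ in equation \eqref{lbl:casimir_chi} for the Casimir — I would not reproduce the full Kostant argument but instead cite \cite{bib:Kostant78} (or \cite{bib:Sevostyanov00} for a modern exposition) after outlining the Whittaker-module construction above, since the faithfulness step is a deep result on the geometry of the principal slice $\chi+\bfrak^*$ that would take us far afield from the probabilistic content of the thesis.
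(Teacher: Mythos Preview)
Your observation about the typo is correct: the map should be $\rho_\chi\big|_{\Zc(\gfrak)}:\Zc(\gfrak)\to W(\bfrak)$, not from all of $\Uc(\gfrak)$. More importantly, the paper does not prove this theorem at all --- it is simply quoted as a background result with a citation to Kostant's original paper \cite{bib:Kostant78}, theorem 2.4.2, and the appendix opens by referring the reader to \cite{bib:Sevostyanov00} for details. So your final instinct --- to cite rather than reproduce Kostant's argument --- is exactly what the paper does, and your sketch of the Whittaker-module and filtration argument, while reasonable, goes well beyond what the paper attempts.
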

One can easily compute the image of the Casimir element $C_\chi = \rho_\chi\left( C \right)$: if $\beta$ is a simple root, $e_\alpha$ acts like $1$ after reduction while if $\beta \in \Phi^+\\ \Delta$ acts like $0$. From the second line in (\ref{lbl:casimir}), we recover the same operator as in (\ref{lbl:casimir_chi}).

$C_\chi$ is then interpreted as an operator on the solvable group $B = NA$. The Laplacian $\half \sum_{i=1}^n  X_i^2$ is the infinitesimal generator of Brownian motion on $\afrak \approx \R^n$. Hence, $C_\chi$ as a whole is the infinitesimal generator of a Markov process driven by a simple Euclidian Brownian motion on $\afrak$.

Since, morally speaking, the Wiener measure charges all paths, we will need to study invariant ordinary differential equations driven by a deterministic path, giving us tools for a path-wise approach. This is another way of looking at the study of the flow $\left( B_t(.), t \geq 0 \right)$ defined by equation $\ref{lbl:process_B_ode}$.

\paragraph{Quantum Toda Hamiltonian:} 
A further reduction to a space of $\chi^-\chi$-binvariant functions:
$$ \Cc^\infty_\chi(G) := \left\{ f \in \Cc^\infty\left(G\right) \ | \ \forall u \in U, n \in n,  f(n g u) = e^{-\chi^-(n)} f(g) e^{\chi(u)} \right\}$$
Since a function in $\Cc^\infty_\chi(G)$ is entirely determined by its values on $A = \exp(\afrak)$, it can be viewed as a function on $\afrak$. This gives a Sch\"odinger operator on $\afrak \approx \R^n$ known as the quantum Toda Hamiltonian:
\begin{align}
\label{lbl:quantum_toda}
H & = \half \Delta + \rho^\vee - \sum_\alpha e^{-\alpha(x)}
\end{align}

\chapter{Enumeration of positive roots}
\label{appendix:positive_roots_enumeration}
There is a very simple yet very useful identity that can be found in the book by Kumar (corollary 1.3.22 \cite{bib:Kumar02}). We will use it several times. 

\begin{lemma}
\label{lbl:kumar} 
For $\lambda \in \mathfrak{a}$ and $w = s_{i_1} \dots s_{i_l}$ a reduced expression for 
the Weyl group element $w$ of length $l$, we have:
\begin{eqnarray}
\lambda - w \lambda & = & \sum_{k=1}^l \alpha_{i_k}(\lambda) \beta_k^\vee\\
\lambda - w^{-1} \lambda & = & \sum_{k=1}^l \beta_{k}(\lambda) \alpha_{i_k}^\vee
\end{eqnarray}
\end{lemma}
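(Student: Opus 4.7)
The plan is to prove both identities by induction on $l = \ell(w)$, choosing complementary factorizations of $w$ for each identity. The base case $l=0$ is trivial: $w = e$, both sides vanish as empty sums. The two identities look dual (swap $\alpha_{i_k}$ with $\beta_k$), but this duality is not completely symmetric at the level of proof, so I expect to handle them separately rather than deduce one from the other via $W$-invariance of the Killing form.

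For the first identity, the natural move is to split off the rightmost simple reflection: write $w = w' s_{i_l}$ with $w' = s_{i_1}\cdots s_{i_{l-1}}$. By the definition of a positive roots enumeration, the truncation $(\beta_1,\dots,\beta_{l-1})$ is exactly the enumeration attached to this reduced expression of $w'$. Then I decompose
$$\lambda - w\lambda = (\lambda - w'\lambda) + w'(\lambda - s_{i_l}\lambda).$$
The first summand is handled by the induction hypothesis, and the second simplifies via $\lambda - s_{i_l}\lambda = \alpha_{i_l}(\lambda)\,\alpha_{i_l}^\vee$ together with the crucial identity $w'\alpha_{i_l}^\vee = \beta_l^\vee$, which holds because the Weyl group acts on coroots the same way it acts on roots (both sit in a single $W$-module via the identification given by the Killing form, and $\beta_l = w'\alpha_{i_l}$ by definition). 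This produces exactly the missing $k=l$ term.

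For the second identity, the same right-splitting does \emph{not} work cleanly: one gets an extra factor of $s_{i_l}$ on all the coroots $\alpha_{i_k}^\vee$ from the induction hypothesis, and no way to absorb it. The right move instead is to split off the \emph{leftmost} reflection: write $w = s_{i_1}w''$ with $w'' = s_{i_2}\cdots s_{i_l}$. The positive roots enumeration of $w''$ is $(\gamma_1,\dots,\gamma_{l-1})$ with $\gamma_k = s_{i_2}\cdots s_{i_k}\alpha_{i_{k+1}}$, and the key relation is $\beta_{k+1} = s_{i_1}\gamma_k$. Decomposing
$$\lambda - w^{-1}\lambda = (\lambda - s_{i_1}\lambda) + (s_{i_1}\lambda - (w'')^{-1}s_{i_1}\lambda),$$
the first term is $\alpha_{i_1}(\lambda)\alpha_{i_1}^\vee = \beta_1(\lambda)\alpha_{i_1}^\vee$, and the induction hypothesis applied to $w''$ evaluated at $s_{i_1}\lambda$ yields $\sum_{k=1}^{l-1}\gamma_k(s_{i_1}\lambda)\alpha_{i_{k+1}}^\vee = \sum_{k=1}^{l-1}(s_{i_1}\gamma_k)(\lambda)\alpha_{i_{k+1}}^\vee = \sum_{k=1}^{l-1}\beta_{k+1}(\lambda)\alpha_{i_{k+1}}^\vee$. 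Reindexing assembles the full sum $\sum_{k=1}^l \beta_k(\lambda)\alpha_{i_k}^\vee$.

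The whole argument is elementary; the only real obstacle is choosing the factorization that matches the structural asymmetry between the two statements (right factorization for the first, left factorization for the second). Once that is seen, each step is a one-line computation.
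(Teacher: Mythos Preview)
Your proof is correct and is essentially the same argument as the paper's, just phrased inductively rather than as a single telescoping sum. The paper writes $\lambda - w\lambda = \sum_{k=1}^l \bigl(s_{i_1}\cdots s_{i_{k-1}}\lambda - s_{i_1}\cdots s_{i_k}\lambda\bigr)$ and simplifies each term directly, and similarly for $w^{-1}$; your right/left factorization choices correspond exactly to the direction of this telescoping in each case.
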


\begin{proof}
\begin{align*}
\lambda - w \lambda & = \sum_{k=1}^l s_{i_1} \dots s_{i_{k-1}} \lambda - s_{i_1} \dots s_{i_k} \lambda\\
& = \sum_{k=1}^l s_{i_1} \dots s_{i_{k-1}} \left( \lambda - s_{i_k} \lambda\right)\\
& = \sum_{k=1}^l s_{i_1} \dots s_{i_{k-1}} \left( \alpha_{i_k}(\lambda) \alpha_{i_k}^\vee \right)\\
& = \sum_{k=1}^l \alpha_{i_k}(\lambda) \beta_{k}^\vee
\end{align*}
\begin{align*}
\lambda - w^{-1} \lambda & = \sum_{k=1}^l s_{i_{k-1}} \dots s_{i_1} \lambda - s_{i_k} \dots s_{i_1} \lambda\\
& = \sum_{k=1}^l \left( id - s_{i_k} \right) \left( s_{i_{k-1}} \dots s_{i_1} \lambda\right)\\
& = \sum_{k=1}^l \alpha_{i_k}\left( s_{i_{k-1}} \dots s_{i_1} \lambda \right) \alpha_{i_k}^\vee\\
& = \sum_{k=1}^l \beta_{k}(\lambda) \alpha_{i_k}^\vee
\end{align*} 
\end{proof}

One can check on tables \ref{tab:roots_A2}, \ref{tab:roots_B2}, \ref{tab:roots_C2}, \ref{tab:roots_G2}, and \ref{tab:roots_A3} the previous facts.
\begin{table}
\centering
\caption{Positive roots enumerations for type $A_2$}
\label{tab:roots_A2}
$\alpha_1 = \begin{pmatrix} 1 \\ -1 \\ 0 \end{pmatrix} , \alpha_2 = \begin{pmatrix} 0 \\ 1 \\ -1 \end{pmatrix} $
  \begin{tabular}{|c|cc|}
  \hline
            &          121          &        212           \\
  \hline
  $\beta_1$ &      $\alpha_1$       &      $\alpha_2$      \\
  $\beta_2$ &   $\alpha_1+\alpha_2$ &  $\alpha_1+\alpha_2$ \\
  $\beta_3$ &      $\alpha_2$       &      $\alpha_1$      \\
  \hline
  \end{tabular}
\end{table} 

\begin{table}
\centering
\caption{Positive roots enumerations for type $B_2$}
\label{tab:roots_B2}
$\alpha_1 = \begin{pmatrix} 1 \\ -1 \end{pmatrix} , \alpha_2 = \begin{pmatrix} 0 \\ 1 \end{pmatrix} $
  \begin{tabular}{|c|cc|}
  \hline
            &          1212         &        2121           \\
  \hline
  $\beta_1$ &      $\alpha_1$       &      $\alpha_2$       \\
  $\beta_2$ &   $\alpha_1+\alpha_2$ &  $\alpha_1+2\alpha_2$ \\
  $\beta_3$ &  $\alpha_1+2\alpha_2$ &   $\alpha_1+\alpha_2$ \\
  $\beta_4$ &      $\alpha_2$       &      $\alpha_1$       \\
  \hline
  \end{tabular}
\end{table} 

\begin{table}
\centering
\caption{Positive roots enumerations for type $C_2$}
\label{tab:roots_C2}
$\alpha_1 = \begin{pmatrix} 1 \\ -1 \end{pmatrix} , \alpha_2 = \begin{pmatrix} 0 \\ 2 \end{pmatrix} $
  \begin{tabular}{|c|cc|}
  \hline
            &          1212         &        2121           \\
  \hline
  $\beta_1$ &      $\alpha_1$       &      $\alpha_2$       \\
  $\beta_2$ &  $2\alpha_1+\alpha_2$ &   $\alpha_1+\alpha_2$ \\
  $\beta_3$ &   $\alpha_1+\alpha_2$ &  $2\alpha_1+\alpha_2$ \\
  $\beta_4$ &      $\alpha_2$       &      $\alpha_1$       \\
  \hline
  \end{tabular}
\end{table} 

\begin{table}
\centering
\caption{Positive roots enumerations for type $G_2$}
\label{tab:roots_G2}
$$\alpha_1 = \begin{pmatrix} 0 \\ 1 \\ -1\end{pmatrix} ,
 \alpha_2 = \begin{pmatrix} 1 \\ -2 \\ 1\end{pmatrix},
 $$
  \begin{tabular}{|c|cc|}
  \hline
            &             121212           &          212121              \\
  \hline
  $\beta_1$ &          $\alpha_1$          &          $\alpha_2$          \\
  $\beta_2$ &    $3\alpha_1+\alpha_2$      &       $\alpha_1+\alpha_2$    \\
  $\beta_3$ &     $2 \alpha_1+\alpha_2 $   &      $3\alpha_1 + 2\alpha_2$ \\
  $\beta_4$ &     $3 \alpha_1+2\alpha_2 $  &      $2\alpha_1+\alpha_2$    \\
  $\beta_5$ &     $\alpha_1+ \alpha_2$     &      $3\alpha_1+\alpha_2$    \\
  $\beta_6$ &          $\alpha_2$          &          $\alpha_1$          \\
  \hline
  \end{tabular}
\end{table} 

\begin{table}
\centering
\caption{Some positive roots enumerations for type $A_3$}
\label{tab:roots_A3}
$$\alpha_1 = \begin{pmatrix} 1 \\ -1 \\ 0 \\ 0\end{pmatrix} ,
 \alpha_2 = \begin{pmatrix} 0 \\ 1 \\ -1 \\ 0\end{pmatrix},
 \alpha_3 = \begin{pmatrix} 0 \\ 0 \\ 1 \\ -1 \end{pmatrix}
 $$
  \begin{tabular}{|c|cc|}
  \hline
            &             123121           &          121321              \\
  \hline
  $\beta_1$ &          $\alpha_1$          &          $\alpha_1$          \\
  $\beta_2$ &     $\alpha_1+\alpha_2$      &      $\alpha_1+\alpha_2$     \\
  $\beta_3$ & $\alpha_1+\alpha_2+\alpha_3$ &          $\alpha_2$          \\
  $\beta_4$ &          $\alpha_2$          & $\alpha_1+\alpha_2+\alpha_3$ \\
  $\beta_5$ &     $\alpha_2+\alpha_3$      &      $\alpha_2+\alpha_3$     \\
  $\beta_6$ &          $\alpha_3$          &          $\alpha_3$          \\
  \hline
  \end{tabular}
\end{table}


\bibliographystyle{alpha} 



\listoftables
\listoffigures


\renewcommand{\indexname}{Notations index} 
\printindex 

\end{document}